\documentclass[11pt,a4paper,makeidx,final]{book}

\usepackage[danish,english]{babel}
\usepackage{amsmath,amscd}
\usepackage{amssymb}
\usepackage{amsthm}
\usepackage{mathrsfs}
\usepackage{fixme}
\usepackage{enumerate}
\usepackage{graphicx}                    
\usepackage{pgf}
\usepackage{tikz}
\usetikzlibrary{arrows}
\usetikzlibrary{positioning}
\usetikzlibrary{shapes.geometric}
\usetikzlibrary{calc}
\usepackage[notref,notcite]{showkeys}
\usepackage[perpage,para,symbol*]{footmisc}
\usepackage{url}
\usepackage{makeidx}
\usepackage[vmargin={3.7cm,3.7cm},textwidth=.6\paperwidth]{geometry}
\usepackage{fancyhdr}


\fancyhead{}
\fancyhead[RO,LE]{\scriptsize{\thepage}}
\fancyhead[CE]{\scriptsize{\leftmark}}
\fancyhead[CO]{\scriptsize{\rightmark}}
\fancyfoot{}

\fancypagestyle{plain}{%
\fancyhf{} 
\fancyfoot[C]{\scriptsize{\thepage}} 

}

\pagestyle{fancy}


\DeclareMathOperator{\id}{id}
\DeclareMathOperator{\Id}{Id}
\DeclareMathOperator{\rl}{rl}
\DeclareMathOperator{\leftl}{ll}
\DeclareMathOperator{\BF}{BF}
\DeclareMathOperator{\Diag}{Diag}
\DeclareMathOperator{\diag}{diag}
\DeclareMathOperator{\sgn}{sgn}
\DeclareMathOperator{\cov}{cov}


\newcommand*{\threesim}{%
\mathrel{\vcenter{\offinterlineskip
\hbox{$\sim$}\vskip-.35ex\hbox{$\sim$}\vskip-.35ex\hbox{$\sim$}}}}

\newcommand{\N}{\mathbb{N}}
\newcommand{\Z}{\mathbb{Z}}

\newcommand{\R}{\mathbb{R}}

\renewcommand{\AA}{\mathcal{A}}
\newcommand{\BB}{\mathcal{B}}
\newcommand{\CC}{\mathcal{C}}
\newcommand{\DD}{\mathcal{D}}
\newcommand{\FF}{\mathcal{F}}
\newcommand{\LL}{\mathcal{L}}
\newcommand{\OO}{\mathcal{O}}
\newcommand{\PP}{\mathcal{P}}
\newcommand{\GG}{\mathcal{G}}
\newcommand{\EE}{\mathcal{E}}
\newcommand{\HH}{\mathcal{H}}

\newcommand{\X}{\mathsf{X}}

\newcommand{\psc}{W}
\newcommand{\FE}{\sim_{\textrm{FE}}}
\renewcommand{\frac}[2]{#1 / #2}
\renewcommand{\diamond}{\diamondsuit}

\newcommand{\Xd}[1]{\X_{\textrm{diag}(#1) }}

\newcommand{\fracpart}[1]{ \langle #1 \rangle}

\newcommand{\intpart}[1]{ \lfloor #1 \rfloor}

\newcommand{\class}[1]{ \left[ #1 \right]}
\newcommand{\cclass}[1]{\texttt{#1}}
\newcommand{\program}[1]{\texttt{#1}}

\setcounter{tocdepth}{2}

\setcounter{MaxMatrixCols}{15}

\newcommand{\Cs}{C^\ast}


%
\makeindex





\theoremstyle{plain}
\newtheorem{thm}{Theorem}[chapter]
\newtheorem{prop}[thm]{Proposition}
\newtheorem{lem}[thm]{Lemma}
\newtheorem{cor}[thm]{Corollary}
\newtheorem{conj}[thm]{Conjecture}

\theoremstyle{definition}
\newtheorem{definition}[thm]{Definition}
\newtheorem{example}[thm]{Example}
\newtheorem{prob}[thm]{Problem}
\newtheorem{remark}[thm]{Remark}

\newtheorem*{cond}{Condition}



\begin{document}
\frontmatter

\begin{titlepage}
\newgeometry{hmargin={.2\paperwidth,.2\paperwidth},vmargin={.15\paperheight,.08\paperheight}, centering} 
\noindent
\rule{\linewidth}{0.5mm}
\begin{center}
\Huge{ \textsc{On Flow Equivalence of Sofic Shifts}}
\end{center}
\rule{\linewidth}{0.5mm}
\vspace{0.5 em}\fxnote{\quad \\ DRAFT \\ \today }

\begin{flushleft}
PhD thesis by
\end{flushleft}
\vspace{0.5 em}

\begin{flushleft}
\textsc{\Large{Rune Johansen}} \\ \vspace{0.5 em}
Department of Mathematical Sciences \\ University of Copenhagen \\ Denmark
\end{flushleft}

\vspace{\stretch{1}}

\begin{flushleft}
PhD School of Science \hspace{\stretch{1}} -- \hspace{\stretch{1}} Faculty of Science \hspace{\stretch{1}} -- \hspace{\stretch{1}} University of Copenhagen 
\end{flushleft}

\restoregeometry
\end{titlepage}

\setcounter{page}{2}
\thispagestyle{empty}

\begin{flushleft}
Rune Johansen \\
Department of Mathematical Sciences \\
University of Copenhagen \\
Universitetsparken 5 \\
DK-2100 K\o benhavn \O \\
Denmark \\
\vspace{0.5 em}
\url{rune@math.ku.dk} \\
\url{http://www.math.ku.dk/~rune/}
\end{flushleft}

\vspace{\stretch{0.15}}

\noindent
PhD thesis submitted to the PhD School of Science, Faculty of Science, University of Copenhagen, Denmark in May 2011.

\vspace{0.5em}

\begin{flushleft}
\begin{tabular}{l l}
Academic advisor: & S\o ren Eilers \\
                                  & University of Copenhagen, Denmark \\
& \\
Assessment committee:  
& Marie-Pierre B\' eal \\
& Universit\'e Paris-Est Marne-la-Vall\' ee, France\\
&\\
& Erik Christensen (chair) \\
& University of Copenhagen, Denmark \\
&\\
& Takeshi Katsura \\
& Keio University, Japan \\
\end{tabular}
\end{flushleft}

\vspace{\stretch{0.85}}

\begin{flushleft}
\copyright{} Rune Johansen, 2011 -- except for  the material in Chapter \ref{chap_covers} based on: \\
\quad \emph{On the Structure of Covers of Sofic Shifts}  \copyright{} Documenta Mathematica.

\vspace{0.5 em}

ISBN 978-87-91927-61-4
\end{flushleft}
\thispagestyle{empty}

\addcontentsline{toc}{chapter}{Abstract}
\begin{center}
\textbf{\large{Abstract}}

\vspace{1 em}

\begin{minipage}{0.9 \textwidth}
The flow equivalence of sofic shifts is examined using results about the structure of the corresponding covers. A canonical cover generalising the left Fischer cover to arbitrary sofic shifts is introduced and used to prove that the left Krieger cover and the past set cover of a sofic shift can be divided into natural layers. These results are used to find the range of a flow invariant and to investigate the ideal structure of the universal $\Cs$-algebras associated to sofic shifts. The right Fischer covers of sofic beta-shifts are constructed, and it is proved that the covering maps are always $2$ to $1$. This is used to construct the corresponding fiber product covers and to classify these up to flow equivalence. 
Additionally, the flow equivalence of renewal systems is studied, and several partial results are obtained in an attempt to find the range of the Bowen-Franks invariant over the set of  renewal systems of finite type. In particular, it is shown that the Bowen-Franks group is cyclic for every member of a class of renewal systems known to attain all entropies realised by shifts of finite type.
\end{minipage}
\end{center}

\vspace{1.5cm}
\noindent
The following is a Danish translation of the abstract as required by the rules of the University of Copenhagen.
\vspace{1cm}

\begin{otherlanguage}{danish}
\begin{center}
\textbf{\large{Resum\' e}}

\vspace{1 em}

\begin{minipage}{0.9 \textwidth}
Str\o mnings\ae kvivalens af sofiske skiftrum unders\o ges vha. 
 resultater om strukturen af de tilsvarende repr\ae sentationer p\aa{} grafer.
En repr\ae sentation, der generaliserer Fischer-rep\ae sentationen til vilk\aa rlige sofiske skiftrum, introduceres og bruges til at vise, at Krieger-repr\ae sentationen og en anden hyppigt anvendt repr\ae sentation begge kan inddeles i naturlige lag.
Disse resultater benyttes til at finde billedet af en invariant for str\o mnings\ae kviva\-lens og til at unders\o ge idealgitrene i de universelle $\Cs$-algebraer knyttet til sofiske skiftrum.
Fischer-repr\ae sentationerne af sofiske beta-skift konstrueres, og det vises, at de dertilh\o rende afbildninger altid er $2$ til $1$. Dette resultat benyttes til at konstruere fiberprodukt-repr\ae sentationerne af sofiske beta-skift og til at klassificere disse op til str\o mnings\ae kvivalens.
Desuden unders\o ges str\o mnings\ae kvivalens af fornyelsessystemer, og en r\ae kke del\-resultater opn\aa s i et fors\o g p\aa{} at finde billedet af Bowen-Franks-invarianten over m\ae ngden af fornyelsessystemer af endelig type. Specielt vises det, at Bowen-Franks-gruppen er cyklisk for alle medlemmer af en klasse af fornyelsessystemer, der realiserer alle entropier, som kan realiseres af irreducible skift af endelig type.
\end{minipage}
\end{center}
\end{otherlanguage}

\vspace{\stretch{1}}

\addcontentsline{toc}{chapter}{Preface}
\section*{Preface}

This text constitutes my thesis for the PhD degree in mathematics from the PhD School of Science at the Faculty of Science, University of Copenhagen where I have been enrolled from May 2007 to May 2011. My time as a PhD student has primarily been spent studying representations and flow invariants of sofic shift spaces. 

In symbolic dynamics, it is generally very difficult to determine whether two objects (shift spaces) are equivalent in the natural way, and this has lead to the development of a large number of powerful invariants which can be used to provide negative answers to such questions. It has also lead to an interest in weaker notions of equivalence, and flow equivalence is one of these. One of the simplest classes of shift spaces is the class of irreducible shifts of finite type, and Franks \cite{franks} has given a very satisfactory classification of these up to flow equivalence in terms of a complete invariant that is both easy to compute and easy to compare. This result has been extended to general shifts of finite type by Boyle and Huang  \cite{boyle,boyle_huang,huang}, but very little is known about the flow equivalence of the class of irreducible sofic shifts even though it constitutes a natural first generalisation of the class of shifts of finite type.

The driving force behind this thesis has been a desire to understand the structure of various standard presentations of sofic shifts on labelled graphs and to use this understanding to derive results about flow equivalence. The complete flow equivalence classification of general irreducible sofic shifts is still a very distant goal, so focus has been on understanding the flow equivalence of various special classes of sofic shifts.

Chapter \ref{chap_shift} gives an introduction to the basic properties of shift spaces, conjugacy, and covers. In particular, the Fischer cover and the Krieger cover, which will play important roles in the following chapters, are introduced, and their basic features are examined. This chapter contains no original work, and for an expert in symbolic dynamics, it will only serve to establish notation.

Chapter \ref{chap_flow} introduces the basic definitions and properties of flow equivalence and symbol expansion. The first part of the chapter contains no original work and can be skipped by experts. 
The chapter also contains a series of simple lemmas about symbol expansion which will be used to study flow equivalence repeatedly in the following chapters, some of these lemmas may not have been seen before, but the results are unsurprising. The chapter concludes with an application of the results to a class of shift spaces known as gap shift to illustrate some of the problems encountered when working with symbol expansion.

Chapter \ref{chap_covers} is a slightly extended version of the paper \emph{On the Structure of Covers of Sofic Shifts} which has recently been published in Documenta Mathematica \cite{johansen_covers}. It investigates the structure of various standard covers of sofic shifts by introducing a new canonical cover generalising the Fischer cover and using it to prove that the left Krieger cover and the past set cover of a sofic shift can be divided into natural layers. This structure is used to find the range of a flow invariant and to investigate the ideal-lattices of the $\Cs$-algebras associated to sofic shifts. The material in this chapter was mainly developed during visits at the University of Wollongong, Australia and the University of Tokyo, Japan in 2008.

Chapter \ref{chap_beta} investigates the flow equivalence of a class of sofic shifts known as beta-shifts. To each sofic beta-shift, one can associate a shift of finite type equipped with a group action, and the main result of the chapter is a classification up to flow equivalence of these group-shifts. With a conjectured result by Boyle, this gives a complete flow classification of sofic beta-shifts. The results in this chapter were developed in the beginning of 2011.

Chapter \ref{chap_rs} concerns a special class of shift spaces called renewal systems. It has long been an open question in symbolic dynamics whether every irreducible shift of finite type is conjugate to a renewal system, and the goal of this work has been to answer the corresponding question for flow equivalence. The aim has been to find this answer by finding the range of the Bowen-Franks invariant, which is a complete invariant for flow equivalence of irreducible shifts of finite type, over the set of renewal systems of finite type. The main results have been achieved by combining insight gained through an experimental investigation with a study of the Fischer covers of renewal systems, and this has allowed a wide variety of values of the invariant to be constructed, but this is still a work in progress, and the range remains unknown. During the investigation, a flow classification is given of the class of renewal systems which Hong and Shin have proved to achieve all the entropies attained by shifts of finite type \cite{hong_shin}.

Appendix \ref{app_programs} describes the experimental approach used in the study of renewal systems. It accounts for the experimental strategy and gives a short description of the computer programs used. Additionally, it contains extra results about the range of the Bowen-Franks invariant which were not suitable for the presentation in Chapter \ref{chap_rs}. 

Where possible, the notation has been chosen to conform with the notation of the textbook \emph{Symbolic Dynamics and Coding} by Lind and Marcus \cite{lind_marcus}. An index is provided to make it easier to navigate between the chapters.

\addcontentsline{toc}{section}{Acknowledgements}
\subsection*{Acknowledgements}
This thesis was made possible by the financial support of the Faculty of Science, University of Copenhagen and the SNF-Research Center in Non-commutative Geometry at the Department of Mathematical Sciences, University of Copenhagen.
Additionally, the work has been supported by the Danish National Research Foundation (DNRF) through the Centre for Symmetry and Deformation. I am grateful to Valdemar Andersen's \emph{Rejselegat for Matematikere} for an exceptional opportunity to spend ten months visiting mathematical departments abroad, and I would like to thank the Fields Institute in Toronto, Canada, the University of Wollongong, Australia and the University of Tokyo, Japan for kind hospitality during these travels.

Among the many individuals who deserve thanks, I would first and foremost thank my advisor S\o ren Eilers for his dedication, for the many interesting discussions, and for his ability to ask the right questions.
I am grateful to David Pask and Toke Meier Carlsen for enlightening conversations and helpful comments about the material in Chapter \ref{chap_covers}, to an anonymous referee for useful comments which improved the exposition in the associated paper, and to Takeshi Katsura whose thorough corrections improved the thesis in general and Section \ref{sec_gap_shifts} in particular.
Additionally, I wish to thank Jes Frellsen for invaluable technical advice concerning the techniques used in the computer programs described in Appendix \ref{app_programs}.
Thanks is also due to my fellow PhD students for creating a great environment these past four years. In particular, I am happy to have worked here at the same time as Sara Arklint and Tarje Bargheer without whom nothing would have been the same.
I would also like to thank Signe -- my love and travel companion -- for her loving support and unwavering confidence, and finally, I am eternally grateful to our daughter Elin for all the smiles that kept me sane.

\vspace{1 cm}

\begin{flushright}
Rune Johansen \\
Copenhagen, \today
\end{flushright}

\vspace{\stretch{1}}

\tableofcontents
\cleardoublepage
\addcontentsline{toc}{chapter}{List of Figures}
\listoffigures

\mainmatter
\chapter{Shift spaces}
\label{chap_shift}
Here, a short introduction to the definition and properties of shift spaces is given to make the presentation self-contained. For a thorough treatment of shift spaces see \cite{lind_marcus}. 

\section{Basic definitions and properties of shift spaces}
\label{sec_background}\index{full shift}\index{shift map}\index{shift space}\index{alphabet}\index{A@$\AA(X)$}\index{shift|see{shift space}}
Let $\AA$ be a finite set with the discrete topology. The \emph{full shift} over $\AA$ consists of the space $\AA^\Z$ endowed with the product topology and the \emph{shift map} $\sigma \colon \AA^\Z \to \AA^\Z$ defined  by $\sigma(x)_i = x_{i+1}$ for all $i \in \Z$. Let $\AA^*$ be the collection of finite words (also known as blocks) over $\AA$. A closed and shift invariant subset $X \subseteq \AA^\Z$ is called a \emph{shift space} (or sometimes just a \emph{shift}), and $\AA$ is called the \emph{alphabet} of $X$. When the alphabet is not given a priori, $\AA(X)$ will denote the alphabet of $X$.

\subsection{Languages and forbidden words}
\index{forbidden words}\index{factor}\index{empty word}\index{language}\index{word!forbidden}\index{word!empty}\index{$\dashv$}\index{rl@$\rl$}\index{ll@$\leftl$}\index{XF@$\X_\FF$}\index{e*@$\epsilon$} \index{B*@$\BB(X)$}\index{B*n@$\BB_n(X)$}\index{word}\index{prefix}\index{suffix}
For each $\FF \subseteq \AA^*$, define  $\X_\FF$ to be the set of biinfinite sequences in $\AA^\Z$ which do not contain any of the \emph{forbidden words} from $\FF$.  A subset $X \subseteq \AA^\Z$ is a shift space if and only if there exists $\FF \subseteq \AA^*$ such that $X = \X_\FF$
(cf.\ \cite[Proposition 1.3.4]{lind_marcus}).
A word $w \in \AA^*$ is said to be a \emph{factor} (or subword or sub block) of $x \in X$ if there exist $i,j \in \Z$ with $i \leq j$ such that $x_i x_{i+1} \ldots x_j = w$; this will be denoted $w = x_{[i,j]}$, and a similar notation is used for factors of finite words. For $w,u \in \AA^*$ write $u \dashv w$ if $u$ is a factor of $w$. If $u = w_{[1,|u|]}$, then $u$ is said to be a \emph{prefix} of $w$; a \emph{suffix} is defined analogously.
The maps $\rl, \leftl \colon \AA^* \to \AA$ map a word to respectively its rightmost and leftmost symbol. The \emph{empty word} $\epsilon$ is considered a factor of every word and biinfinite sequence. The \emph{language} of a shift space $X$ is the set of all factors of elements of $X$ and it is denoted $\BB(X)$. The set of words of length $n$ is denoted $\BB_n(X)$.  

\begin{prop}[{\cite[Proposition 1.3.4]{lind_marcus}}]
\label{prop_language}
Let $\AA$ be an alphabet. A set $\BB \subset \AA^*$ is the language of a shift space if and only if for every $w \in \BB$ 
\begin{itemize}
\item if $v$ is a factor of $w$, then $v \in \BB$, and
\item there exist $u,v \in \BB \setminus \{ \epsilon \}$ such that $uwv \in \BB$.
\end{itemize}
If $X$ is a shift space, then $X = \X_{\AA^* \setminus \BB(X) }$.
\end{prop}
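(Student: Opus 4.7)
The plan is to prove the biconditional by treating each direction separately and then to obtain the final identity as a consequence of the construction used in the sufficiency direction.

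For the forward direction, suppose $\BB = \BB(X)$ for some shift space $X$ and verify the two bullet points directly from the definition. Closure under factors is immediate: if $w$ is a factor of some $x \in X$, every factor $v$ of $w$ is also a factor of $x$, hence $v \in \BB(X)$. For the extension property, writing $w = x_{[i,j]}$ for some $x \in X$, the nonempty words $u = x_{i-1}$ and $v = x_{j+1}$ satisfy $uwv = x_{[i-1,j+1]} \in \BB(X)$.

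For the converse, set $X = \X_{\AA^* \setminus \BB}$ and show $\BB(X) = \BB$. The inclusion $\BB(X) \subseteq \BB$ is immediate from the definition of $\X_\FF$ with $\FF = \AA^* \setminus \BB$, since any factor of an element of $X$ must avoid $\FF$. For the opposite inclusion, the main step is to show that every $w \in \BB$ appears as a factor of some $x \in X$. I would iterate the extension property: setting $w_0 = w$, choose $u_n, v_n \in \BB \setminus \{\epsilon\}$ with $w_n := u_n w_{n-1} v_n \in \BB$, noting that the lengths of $w_n$ tend to infinity on both sides because $|u_n|, |v_n| \geq 1$. By closure under factors, every subword of every $w_n$ lies in $\BB$. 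Placing each $w_n$ so that the copy of $w$ sits at a fixed position and extending arbitrarily outside its support yields a sequence in $\AA^\Z$; by compactness of $\AA^\Z$, it has a convergent subsequence with limit $x$ containing $w$ as a factor, and every finite factor of $x$ coincides with a factor of some $w_n$ and hence belongs to $\BB$, giving $x \in X$.

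For the closing claim, let $X$ be an arbitrary shift space and set $X' = \X_{\AA^* \setminus \BB(X)}$. The inclusion $X \subseteq X'$ holds because factors of elements of $X$ lie in $\BB(X)$ by definition, so no element of $X$ contains a word from $\AA^* \setminus \BB(X)$. For $X' \subseteq X$, take $x \in X'$; each finite window $x_{[-n,n]}$ lies in $\BB(X)$, so there exists $y^{(n)} \in X$ with $y^{(n)}_{[-n,n]} = x_{[-n,n]}$, and then $y^{(n)} \to x$ in the product topology; closedness of $X$ forces $x \in X$. The main technical obstacle in the whole argument is the two-sided extension and compactness step in the sufficiency direction: one has to simultaneously extend to the left and to the right while preserving membership in $\BB$ and then extract a biinfinite limit, but this is handled cleanly by iterating the given extension property and invoking compactness of $\AA^\Z$.
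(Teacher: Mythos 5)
Your proof is correct; the paper itself gives no argument for this proposition, citing it directly from Lind--Marcus, and your argument is essentially the standard textbook one (factor-closure and one-letter extensions for necessity, iterated extension plus a compactness/limit argument for sufficiency, and a closedness argument for the final identity). No gaps worth flagging beyond the trivial edge cases of the empty word and the empty shift space, which the cited source treats the same way.
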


\noindent 
In particular, two shift spaces are equal if and only if they have the same language.

\index{ray}\index{shift space!one sided}
For each $x \in X$, define the \emph{left-ray} of $x$ to be $x^- = \cdots x_{-2} x_{-1}$ and define the \emph{right-ray} of $x$ to be $x^+ = x_0 x_1 x_2 \cdots$. The sets of all left-rays and all right-rays are, respectively, denoted  $X^-$ and $X^+$. The set $X^+$ is invariant under the shift map, and the pair $(X^+, \sigma)$ is an example of a \emph{one-sided shift space} (cf.\ \cite[p.\ 140]{lind_marcus}).

\index{shift space!irreducible}
A shift space $X$ is said to be \emph{irreducible} if there for every $u,w \in \BB(X)$ exists $v \in \BB(X)$ such that $uvw \in \BB(X)$. In many contexts, irreducible shift spaces can be thought of as the building blocks from which more complicated shift spaces are constructed. $X$ is irreducible if and only if there exists $x \in X$ such that the forward orbit $\{ \sigma^n(x) \mid n \geq 0 \}$ is dense in $X$.

\begin{example} \index{golden mean shift}
\label{ex_golden_mean}
Consider the alphabet $\AA = \{ a, b \}$ and the set of forbidden words $\FF = \{ bb \}$. The shift space $X = \X_\FF$ consists of all biinfinite sequences over $\AA$ where there are no consecutive $b$s. This is known as the \emph{golden mean shift} for reasons that will become apparent later.
\end{example}

\subsection{Shifts of finite type}
\index{shift of finite type}\index{SFT|see{shift of finite type}}\index{shift space!of finite type|\\see{shift of finite type}}\index{shift of finite type!$M$-step}
A shift space $X$ over $\AA$ is said to be a \emph{shift of finite type} (SFT) if $X = \X_\FF$ for some finite set $\FF$; $X$ is said to be \emph{$M$-step} if $\FF$ can be chosen such that $\lvert w \rvert = M+1$ for all $w \in \FF$. Clearly, every SFT is $M$-step for some $M \in \N$.

\begin{thm}[{\cite[Theorem 2.1.8]{lind_marcus}}]
\label{thm_m-step}
A shift space $X$ is an $M$-step SFT if and only if $uv, vw \in \BB(X)$ and $\lvert v \rvert \geq M$ implies that $uvw \in \BB(X)$. 
\end{thm}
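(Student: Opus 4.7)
The plan is to prove both implications directly from the definitions, using Proposition \ref{prop_language} to pass between shift spaces and their languages.

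For the forward direction, assume $X = \X_\FF$ with every $f \in \FF$ of length exactly $M+1$. Given $uv, vw \in \BB(X)$ with $|v| \geq M$, I will first check that $uvw$ contains no forbidden word. The key combinatorial observation: any length-$(M+1)$ factor of $uvw$ that begins in $u$ must end no later than position $|u|+M \leq |u| + |v|$, so it lies inside $uv$; any length-$(M+1)$ factor that ends in $w$ similarly begins at position at least $|u|+1$, hence lies inside $vw$. Since neither $uv$ nor $vw$ contains a forbidden word, $uvw$ contains none either. To conclude $uvw \in \BB(X)$ via Proposition \ref{prop_language}, I still have to exhibit a point of $X$ containing $uvw$. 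I would take $x, y \in X$ witnessing $uv \in \BB(X)$ and $vw \in \BB(X)$ respectively, aligned so that $v$ sits at the same coordinates in both, and form the spliced sequence $z$ that agrees with $x$ up to the end of $v$ and with $y$ from the start of $w$ onward. The same length-$(M+1)$-window argument as above, now applied at the two splice points, shows every length-$(M+1)$ factor of $z$ lies either inside a factor of $x$ or inside a factor of $y$, so $z \in X$.

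For the reverse direction, set $\FF = \AA^{M+1} \setminus \BB_{M+1}(X)$, which is finite, and show $X = \X_\FF$. The inclusion $X \subseteq \X_\FF$ is immediate. For the other inclusion, pick $x \in \X_\FF$; I must show every finite factor of $x$ lies in $\BB(X)$ and then invoke Proposition \ref{prop_language} to deduce $x \in X$. Factors of length at most $M+1$ are in $\BB(X)$ by the definition of $\FF$ together with the fact that $\BB(X)$ is closed under taking subfactors. Factors of length $n > M+1$ are handled by induction on $n$: if $s = a_1 a_2 \cdots a_{n+1}$ is a factor of $x$, then $u = a_1$, $v = a_2 \cdots a_n$, $w = a_{n+1}$ satisfy $uv, vw \in \BB(X)$ by the inductive hypothesis and $|v| = n-1 \geq M$, so the assumed property gives $s = uvw \in \BB(X)$.

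The main obstacle is the splicing construction in the forward direction; one has to make sure the spliced biinfinite sequence has no forbidden window created at either join, which is where the hypothesis $|v| \geq M$ is used twice (once on each side of $v$) to guarantee that any window of length $M+1$ is trapped inside a piece of one of the original points $x$ or $y$. The reverse direction is essentially a clean induction and is straightforward once one has the right definition of $\FF$.
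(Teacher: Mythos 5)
Your proof is correct and is essentially the standard argument from Lind--Marcus that the paper cites without reproving: the forward direction via splicing two points of $X$ along the common block $v$ and checking that every window of length $M+1$ of the spliced point lies in one of the two originals, and the reverse direction via $\FF = \AA^{M+1}\setminus\BB_{M+1}(X)$ and induction on block length. The only blemish is a harmless indexing slip in the induction (you announce factors of length $n$ but then write a factor $a_1\cdots a_{n+1}$ of length $n+1$); the substance is right.
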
  

\index{graph}\index{path}\index{circuit}\index{vertex!connected}
\index{vertex!order}
\index{vertex!maximal}\index{graph!irreducible}
\index{graph!root of}\index{graph!essential}
\index{graph!adjacency matrix of}\index{subgraph!induced by subset}\index{irreducible component}
For countable sets $E^0$ and $E^1$ and maps $r,s \colon E^1 \to E^0$, the quadruple $E = (E^0,E^1,r,s)$ is called a \emph{directed graph} or simply a \emph{graph}. The elements of $E^0$ and $E^1$ are, respectively, the vertices and the edges of the graph, while the maps give the directions of the edges: For each edge $e \in E^1$, $s(e)$ is the vertex where $e$ starts, and $r(e)$ is the vertex where $e$ ends. A \emph{path} $\lambda = e_1 \cdots e_n$ is a sequence of edges such that $r(e_i) = s(e_{i+1})$ for all $i \in \{1, \ldots n-1 \}$. The vertices in $E^0$ are considered to be paths of length $0$. For each $n \in \N_0$, the set of paths of length $n$ is denoted $E^n$, and the set of all finite paths is denoted $E^*$. Define $E^+ = \{ e_1 e_2 \cdots \in (E^1)^\N \mid r(e_i) = s(e_{i+1}) \}$ and define the set of left-infinite paths $E^-$ in the same manner.
Extend the maps $r$ and $s$ to $E^*$, $E^+$, and $E^-$ in the natural way, e.g.\ by defining $s(e_1 \cdots e_n) = s(e_1)$. A \emph{circuit} is a path $\lambda$ with $r(\lambda) = s(\lambda)$ and $|\lambda| > 0$.
For $u,v \in E^0$, $u$ is said to be \emph{connected} to $v$ if there is a path $\lambda \in E^*$ such that $s(\lambda) = u$ and $r(\lambda) = v$, and this is denoted by $u \geq v$ \cite[Section 4.4]{lind_marcus}.
A vertex is said to be \emph{maximal}, if it is connected to all other
vertices. $E$ is said to be \emph{irreducible} (or strongly connected or transitive) if all vertices are maximal. If $E$ has a unique maximal vertex, this vertex is said to be the \emph{root} of $E$. $E$ is said to be \emph{essential} if every vertex emits and receives an edge. 
If $H \subseteq E^0$, then the \emph{subgraph of $E$ induced by $H$} is the subgraph of $E$ with vertices $H$ and edges $\{ e \in E^1 \mid s(e), r(e) \in H \}$. An \emph{irreducible component} of $E$ is an irreducible subgraph that is not contained in any larger irreducible subgraph.

\index{adjacency matrix}
\index{edge shift}
\index{X*A@$\X_A$}
For a finite directed graph $E$, the edge shift $\X_E$ is defined by
\begin{displaymath}
  \X_E = \left\{ x \in (E^1)^\Z \mid r(x_i) = s(x_{i+1}) \textrm{ for all }  i \in \Z \right\}.
\end{displaymath}
By construction, an edge shift is a $1$-step SFT over $E^1$ with $\FF = \{ ef \mid r(e) \neq s(f) \}$. If $H$ is the maximal essential subgraph of $E$, then $\X_E = \X_H$. The  edge shift $\X_E$ is irreducible if and only if the maximal essential subgraph of $E$ is an irreducible graph. 
For $E^0 = \{ v_1 , \ldots , v_n \}$, define the \emph{adjacency matrix} of $E$ to be the $n \times n$ matrix $A$ where $A_{ij} = \lvert \{ e\in E^1 \mid s(e) = v_i, r(e) = v_j \}  \rvert$. Many results about edge shifts are conveniently formulated using the adjacency matrix, so to ease the notation, $\X_A$ will denote the edge shift of the graph with adjacency matrix $A$ when $A$ is an integer matrix with non-negative entries.

\subsection{Sofic shifts}
\index{shift intertwining}\index{shift space!homomorphism of}\index{sofic shift!strictly}\index{factor map}\index{shift space!sofic|see{sofic shift}}\index{sofic shift}\index{shift space!factor of}
Let $X_1$ and $X_2$ be shift spaces with shift maps $\sigma_1$ and $\sigma_2$, respectively. A map $\varphi \colon X_1 \to X_2$ is said to be \emph{shift intertwining} if $\varphi \circ \sigma_1 = \sigma_2 \circ  \varphi$, and it is said to be a \emph{homomorphism} if it is both continuous and shift intertwining. A surjective homomorphism $\varphi \colon X_1 \to X_2$ is called a \emph{factor map}, and when such a map exists, $X_2$ is said to be a \emph{factor} of $X_1$. A shift space is called \emph{sofic}\footnote{An English transliteration of a Hebrew word that means \emph{finite}.} \cite{weiss} if it is a factor of an SFT. Every SFT is sofic, and a sofic shift which is not an SFT is called \emph{strictly sofic}. Sofic shift spaces are often simply called \emph{sofic shifts}.
labelling
\index{labelled graph}\index{labelled graph!isomorphism of}\index{adjacency matrix!symbolic}\index{labelled graph!adjacency matrix of} \index{subgraph!induced by subset} 
A \emph{labelled graph} $(E, \LL)$ over an alphabet $\AA$ consists of a directed graph $E$ and a surjective labelling map $\LL \colon E^1 \to \AA$. Extend the labelling map to $E^*$, $E^+$, and $E^-$ in the natural way, e.g.\ by defining $\LL(e_1 \cdots e_n) = \LL(e_1) \cdots \LL(e_n) \in \AA^*$. The labelled graph $(E, \LL)$ is called, respectively, \emph{essential} and \emph{finite} if $E$ is, respectively, essential and finite. Other terminology is similarly inherited by labelled graphs. An isomorphism between labelled graphs $(E, \LL_E)$ and $(F, \LL_F)$ is a pair of bijections $\varphi^i \colon E^i \to F^i$ for $i \in \{0,1\}$ such that  $r(\varphi^1(e)) = \varphi^0(r(e))$, $s(\varphi^1(e)) = \varphi^0(s(e))$, and $\LL_E(e) = \LL_F(\varphi^1(e))$ for all $e \in E^1$. If $E^0 = \{ v_1, \ldots , v_n\}$, then the \emph{symbolic adjacency matrix} of $(E, \LL)$ is the $n \times n$ matrix $A$ where $A_{ij}$ is the formal sum of the labels of all edges from $v_i$ to $v_j$. If $H \subseteq E^0$, then the \emph{subgraph of $(E,\LL)$ induced by $H$} is 
the subgraph of $E$ induced by $H$ with labelling inherited from $(E,\LL)$.

\index{shift space!presentation of|see{presentation}}
\index{sofic shift!presentation of|see{presentation}}
\index{presentation}
\index{sofic shift!irreducible}
\index{representative!of word}\index{representative!of ray}\index{source set}\index{range set}\index{s(x+)@$s(x^+)$}\index{r(x^-)@$r(x^-)$}\index{language!regular}\index{labelled graph!left-resolving}\index{labelled graph!right-resolving}\index{presentation!left-resolving}\index{presentation!right-resolving}
Given a labelled graph $(E, \LL)$, define the shift space $\X_{(E, \LL)}$ by 
\begin{displaymath}
  \X_{(E, \LL)} = \left\{ \left( \LL(x_i) \right)_i \in \AA^\Z \mid 
                          x \in \X_E  \right\}.
\end{displaymath}
The labelled graph $(E, \LL)$ is said to be a \emph{presentation} of the shift space $\X_{(E, \LL)}$, and a \emph{representative} of a word $w \in \BB(\X_{(E, \LL)}) $ is a path $\lambda \in E^*$ such that $\LL(\lambda) = w$. Representatives of rays are defined analogously. For $x \in \BB(\X_{(E, \LL)}) \cup \X_{(E, \LL)}^+$ define  the \emph{source set} by $s(x) = \{ s(\lambda) \mid \lambda \in E^* \cup E^+, \LL(\lambda) = x \}$. For each $x \in \BB(\X_{(E, \LL)}) \cup \X_{(E, \LL)}^-$, the \emph{range set} $r(x)$ is defined analogously. 

Fischer proved that a shift space is sofic if and only if it can be presented by a finite labelled graph \cite{fischer}. Equivalently, a shift space is sofic if and only if the corresponding language is \emph{regular}, i.e.\ it can be recognised by a deterministic finite automaton. A sofic shift space is irreducible if and only if it can be presented by an irreducible labelled graph (see \cite[Section 3.1]{lind_marcus}).
A presentation $(E,\LL)$ of $X$ is said to be \emph{left-resolving} if no vertex in $E^0$ receives two edges with the same label. Right-resolving presentations are defined analogously.

\begin{example}
\label{ex_even_def}
\index{even shift}
Let $\AA = \{ 1, 0 \}$ and let $\FF = \{ 10^{2n-1}1 \mid n \in \N \}$. The \emph{even shift} $\X_\FF$ is the standard example of a strictly sofic shift. Figure \ref{fig_presentation_even_shift} shows a labelled graph presenting $X$, so $X$ is sofic. If $X$ was an SFT, it would be $M$-step for some $M \in \N$, and since $10^n, 0^n1 \in \BB(X)$ for all $n \in \N$, Theorem \ref{thm_m-step} would imply that $10^n1 \in \BB(X)$ for all $n \geq M$ in contradiction with the definition of $X$.
\begin{figure} 
\begin{center}
\begin{tikzpicture}
  [bend angle=45,
   knude/.style = {circle, inner sep = 0pt},
   vertex/.style = {circle, draw, minimum size = 1 mm, inner sep =
      0pt},
   textVertex/.style = {rectangle, draw, minimum size = 6 mm, inner sep =
      1pt},
   to/.style = {->, shorten <= 1 pt, >=stealth', semithick}]
  
  \node[textVertex] (u) at (0,0) {$$};
  \node[textVertex] (v) at (2,0) {$$};

  \draw[to, bend left=45] (u) to node[auto] {0} (v);
  \draw[to, bend left=45] (v) to node[auto] {0} (u);
  \draw[to, loop left] (u) to node[auto] {1} (u);

\end{tikzpicture}
\end{center}
\caption{A presentation of the even shift.}
\label{fig_presentation_even_shift}
\end{figure}
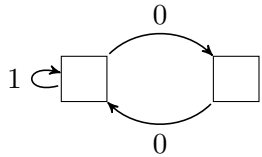
\end{example}

\begin{example}
Let $\AA = \{ 1, 0 \}$ and let $\FF = \{ 10^n 1 \mid n \textrm{ not prime} \}$. The \emph{prime gap shift} $\X_\FF$ is an example of a shift space that is not sofic.
\end{example}

\section{Conjugacy}
\label{sec_conjugacy}
\index{conjugacy}\index{conjugate}
Because shift spaces are compact, a bijective homomorphism automatically has a continuous inverse, so the isomorphisms of shift spaces are the invertible, continuous, and shift intertwining maps. Such a map is called a \emph{conjugacy}.
Two shift spaces are said to be \emph{conjugate} when there exists a conjugacy between them.
\emph{Flow  equivalence} is a weaker equivalence relation generated by conjugacy and \emph{symbol expansion} \cite{parry_sullivan} which will be examined in greater detail in Chapter \ref{chap_flow}.

\subsection{Sliding block codes}
\index{sliding block code}\index{sliding block code!memory of}
\index{sliding block code!anticipation of}\index{*f@$\Phi_\infty$}
\index{*fmn@$\Phi_\infty^{[m,n]}$}
Let $\AA_1$ and $\AA_2$ be alphabets, and let $X_1$ be a shift space over $\AA_1$. A map $\varphi \colon X_1 \to \AA_2^\Z$ is said to be a \emph{sliding block code}  with \emph{memory} $m$ and \emph{anticipation} $n$ if there exists a map $\Phi \colon \BB_{m+n+1}(X_1) \to \AA_2$ such that 
\begin{displaymath}
  (\varphi(x))_i =  \Phi(x_{i-m} x_{i-m+1} \cdots x_{i+n-1} x_{i+n}).
\end{displaymath}
The notation $\varphi = \Phi_\infty^{[m,n]}$ will be used when $\varphi$ is determined by $\Phi$ in this way; if $m=n=0$, this will simply be written $\varphi = \Phi_\infty$. The image $X_2 = \varphi(X_1)$ is a  shift space whenever $\varphi$ is a sliding block code \cite[Theorem 1.5.13]{lind_marcus}.

\begin{prop}[{\cite[Proposition 1.5.8]{lind_marcus}}]
\index{shift space!homomorphism of}
Let $X_1$ and $X_2$ be shift spaces. A map $\varphi \colon X_1 \to X_2$ is a homomorphism if and only if it is a sliding block code.
\end{prop}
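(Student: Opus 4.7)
The plan is to prove both implications, the \emph{if} direction being routine and the \emph{only if} direction resting on the standard compactness argument underlying the Curtis--Hedlund--Lyndon theorem.

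First I would dispose of the easy direction. Suppose $\varphi = \Phi_\infty^{[m,n]}$ for some $\Phi \colon \BB_{m+n+1}(X_1) \to \AA_2$. Shift-intertwining is immediate from the defining formula: evaluating $\varphi(\sigma_1(x))_i$ and $\sigma_2(\varphi(x))_i$ both yield $\Phi(x_{i+1-m} \cdots x_{i+1+n})$. For continuity, if $x$ and $y$ agree on the block $[i-m, i+n]$, then $\varphi(x)_i = \varphi(y)_i$; hence agreement on a sufficiently large central block forces agreement of the images on any prescribed central block, which is precisely convergence in the product topology.

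The substantive direction is the converse. Given a homomorphism $\varphi \colon X_1 \to X_2$, I would study the coordinate map $\psi \colon X_1 \to \AA_2$ defined by $\psi(x) = \varphi(x)_0$. Since $\AA_2$ carries the discrete topology and $\psi$ is continuous, each fibre $\psi^{-1}(\{a\})$ is clopen in $X_1$. Because cylinders $[x_{-N} \cdots x_N]$ form a neighbourhood basis at each point $x$, continuity of $\psi$ at $x$ provides an integer $N_x$ such that $\psi$ is constant on the cylinder determined by $x_{[-N_x, N_x]}$. As $X_1$ is compact (being closed in $\AA_1^\Z$), finitely many such cylinders cover $X_1$; taking $N$ to be the maximum of the associated integers yields a uniform window size on which $\psi$ depends only on $x_{[-N,N]}$.

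Consequently there is a well-defined map $\Phi \colon \BB_{2N+1}(X_1) \to \AA_2$ satisfying $\Phi(x_{[-N,N]}) = \varphi(x)_0$, and applying shift-intertwining gives
\begin{displaymath}
\varphi(x)_i = \sigma_2^i(\varphi(x))_0 = \varphi(\sigma_1^i(x))_0 = \Phi(x_{[i-N,i+N]}),
\end{displaymath}
so $\varphi = \Phi_\infty^{[N,N]}$ as required. The only step that is not purely formal is the extraction of a uniform window size from pointwise continuity, but this is exactly what compactness of $X_1$ delivers, so I expect no genuine obstacle.
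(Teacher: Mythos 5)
Your proof is correct; the paper itself gives no argument but simply cites \cite[Proposition 1.5.8]{lind_marcus}, and what you have written is precisely the standard Curtis--Hedlund--Lyndon proof found there: the routine verification for the easy direction, and compactness of $X_1$ upgrading pointwise continuity of $x \mapsto \varphi(x)_0$ to a uniform window, with shift-intertwining propagating the local rule to all coordinates.
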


\begin{example}
\label{ex_conjugacy_golden_mean}
Let $X$ be the golden mean shift as defined in Example \ref{ex_golden_mean}. Define $\Phi \colon \BB_2(X) \to \{1,2,3 \}$ by
\begin{displaymath}
\Phi(aa) =1, 
\qquad \Phi(ab) = 2, 
\quad \textrm{and} \quad \Phi(ba) = 3,  
\end{displaymath}
and define  $\varphi = \Phi_\infty^{[0,1]} \colon X \to \{ 1,2,3\}^\Z$. Let $Y = \varphi(X)$ and note that $Y$ is the edge shift of the graph with two vertices $\{v_1,v_2\}$ and edges $\{1,2,3\}$ with $s(1) = r(1) = s(2) = r(3) = v_1$, $r(2) = s(3) = v_2$. Define $\Psi \colon \{1,2,3 \} \to \{a,b\}$ by $\Psi(1) = \Psi(2) = a$ and $\Psi(3) =  b$. It is straightforward to check that $\Psi_\infty = \varphi^{-1}$, so $X$ and $Y$ are conjugate.
\end{example}

\subsection{Conjugacy of shifts of finite type}
Even for nicely behaved shifts such as SFTs, it is generally very difficult to determine whether two shift spaces are conjugate.
Williams \cite{williams} introduced an equivalence relation, called strong shift equivalence, on the set of integer matrices with non-negative entries such that two edge shifts are conjugate if and only if the corresponding adjacency matrices are strong shift equivalent, and this result will be examined in the following.

The first proposition demonstrates the usefulness of Williams's result by reducing the conjugacy problem for SFTs to a question about conjugacy of edge shifts. A sketch of the proof is included to show how the associated graph is constructed.

\begin{prop}
\label{prop_higher_block}
Every SFT is conjugate to an edge shift.
\end{prop}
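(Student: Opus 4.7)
The plan is to construct the standard higher block presentation. By Theorem \ref{thm_m-step}, the given SFT $X$ over $\AA$ is $M$-step for some $M \in \N$; I may assume $M \geq 1$ by replacing $M$ with $\max(M,1)$ if necessary (a $0$-step shift is also $1$-step). The idea is to remember the last $M$ symbols seen so that consecutive symbols become independent choices constrained only by local compatibility.

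First I would define a directed graph $E$ by setting $E^0 = \BB_M(X)$ and taking one edge $e_{u}$ for each word $u = a_0 a_1 \cdots a_M \in \BB_{M+1}(X)$, with $s(e_u) = a_0 \cdots a_{M-1}$ and $r(e_u) = a_1 \cdots a_M$. Then I would define a candidate conjugacy $\varphi \colon X \to \X_E$ by the sliding block code with memory $0$ and anticipation $M$ given by $\Phi(x_{[i,i+M]}) = e_{x_{[i,i+M]}}$; consecutive images automatically satisfy $r(\varphi(x)_i) = s(\varphi(x)_{i+1}) = x_{[i+1,i+M]}$, so $\varphi(x)$ is indeed a point of $\X_E$. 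In the other direction, define $\psi \colon \X_E \to \AA^\Z$ by the $1$-block code that sends an edge $e_{a_0 \cdots a_M}$ to its first symbol $a_0$; both $\varphi$ and $\psi$ are sliding block codes, hence homomorphisms.

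The two main things left to verify are that $\psi \circ \varphi = \id_X$ (immediate, since $\psi(\varphi(x))_i = x_i$ by construction), that $\varphi \circ \psi = \id_{\X_E}$ (immediate from the definition of $E^1$), and — this is the real content — that $\psi$ takes values in $X$, so that $\psi$ is actually a map into $X$ and $\varphi$ is actually surjective onto $\X_E$. Given any $y \in \X_E$ and writing $y_i = e_{u_i}$ with $u_i \in \BB_{M+1}(X)$, the point $z = \psi(y) \in \AA^\Z$ has the property that every factor of length $M+1$ lies in $\BB(X)$. This is where the $M$-step hypothesis is essential: iterating Theorem \ref{thm_m-step}, any finite factor $z_{[i,j]}$ with $j-i \geq M$ can be built up by overlapping $(M+1)$-blocks that share $M$ symbols, and so it lies in $\BB(X)$; hence $z \in X$ by Proposition \ref{prop_language}.

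The step I expect to require the most care is precisely this verification that the $M$-step property promotes local admissibility (of consecutive $(M+1)$-blocks sharing $M$ symbols) to global admissibility of the biinfinite sequence; everything else is bookkeeping about higher block codes. Once this is established, $\varphi$ and $\psi$ are mutually inverse sliding block codes, giving a conjugacy $X \cong \X_E$.
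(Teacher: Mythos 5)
Your proposal is correct and follows essentially the same route as the paper: both construct the higher block graph with vertices $\BB_M(X)$ and edges $\BB_{M+1}(X)$ and pass to the induced edge shift. The paper leaves the conjugacy itself as a sketch (deferring to Lind--Marcus), whereas you fill in the details — in particular the use of Theorem \ref{thm_m-step} to promote local admissibility of overlapping $(M+1)$-blocks to membership in $X$ — and that verification is carried out correctly.
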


\begin{proof}
Let $X$ be an $M$-step SFT. Define a graph with vertex set $H^0 = \BB_M(X)$, edge set $H^1 = \BB_{M+1}(X)$, and range and source maps $r,s \colon H^1 \to H^0$ such that
$s(w) = w_{[1,M]}$ and $r(w) = w_{[2,M+1]}$. Now,
\begin{displaymath}
\X_H = \{ (w_i)_{i \in \Z} \in \BB_{M+1}(X)^\Z \mid
                          (w_{i+1})_{[1,M]} = (w_{i})_{[2,M+1]}  \},
\end{displaymath}
and it is easy to construct a conjugacy from $X$ to $\X_H$. See \cite[Definition 1.4.1]{lind_marcus} for details.
\end{proof}

\index{higher block shift}
\noindent
The edge shift $\X_H$ constructed in the proof of Proposition \ref{prop_higher_block} is called the \emph{higher block shift of} $X$. The following theorem shows that it makes sense to talk about a conjugacy problem for shifts of finite type.

\begin{thm}[{\cite[Theorem 2.1.10]{lind_marcus}}]
A shift space that is conjugate to an SFT is itself an SFT.
\end{thm}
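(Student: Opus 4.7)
By Proposition \ref{prop_higher_block}, any SFT is conjugate to an edge shift, so I can replace $X$ by a conjugate edge shift and assume $X$ is $1$-step. I then verify that $Y$ is $L$-step for a suitable $L$ using Theorem \ref{thm_m-step}: given $uv, vw \in \BB(Y)$ with $|v| \geq L$, I need to produce an element of $Y$ exhibiting $uvw$ as a factor.

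Write the conjugacy as $\varphi = \Phi_\infty^{[m_1,n_1]}$ with inverse $\psi = \varphi^{-1} = \Psi_\infty^{[m_2,n_2]}$, and set $L = m_1 + m_2 + n_1 + n_2$ (enlarged by $1$ if needed to guarantee a nontrivial overlap below). Choose $y, y' \in Y$ and an integer interval $[p,q]$ with $|v| = q-p+1 \geq L$ such that $y_{[p-|u|, q]} = uv$ and $y'_{[p, q+|w|]} = vw$; in particular $y_{[p,q]} = y'_{[p,q]} = v$. Let $x = \psi(y)$ and $x' = \psi(y')$. The key observation is that for every $i \in [p+m_2, q-n_2]$ the symbol $\psi(\cdot)_i$ depends only on coordinates inside $[p,q]$, so $x_i = x'_i$ on this overlap.

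I then glue $x$ and $x'$ along this common segment. Define $\tilde x \in (E^1)^\Z$ by $\tilde x_i = x_i$ for $i \leq q - n_2$ and $\tilde x_i = x'_i$ for $i \geq p + m_2$; this is well-defined by the preceding agreement, and since $X$ is $1$-step and every consecutive pair in $\tilde x$ either lies entirely in $x$ or entirely in $x'$, one gets $\tilde x \in X$. Now set $\tilde y = \varphi(\tilde x) \in Y$. Because $\tilde x$ and $x$ coincide on $(-\infty, q-n_2]$, the sliding block code $\varphi$ gives $\tilde y_i = y_i$ whenever $i \leq q - n_1 - n_2$; symmetrically, $\tilde y_i = y'_i$ whenever $i \geq p + m_1 + m_2$. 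With the choice of $L$, these two half-lines together cover all of $\Z$, and in particular the interval $[p - |u|, q + |w|]$, so $\tilde y_{[p-|u|, q+|w|]} = uvw$. Hence $uvw \in \BB(Y)$, and Theorem \ref{thm_m-step} yields that $Y$ is an $L$-step SFT.

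The only delicate point is the bookkeeping in the last step: the window $[i - m_1, i + n_1]$ used by $\varphi$ must lie entirely within the region where $\tilde x$ agrees with $x$ (or with $x'$), which forces the two shifts $m_1 + m_2$ and $n_1 + n_2$ into the estimate defining $L$. Getting these constants right — and ensuring the overlap used for the gluing is nonempty — is the main obstacle, but once $L = m_1 + m_2 + n_1 + n_2$ (possibly increased by a small constant to guarantee overlap), everything fits together.
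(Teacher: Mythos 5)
The paper does not prove this statement itself (it is quoted from Lind--Marcus), and your argument is exactly the standard one from that source: express the conjugacy and its inverse as sliding block codes, glue two points of the $1$-step SFT along the common central block, and verify the overlap criterion of Theorem \ref{thm_m-step} for $Y$ with $L = m_1+m_2+n_1+n_2$. The constant bookkeeping and the caveat about guaranteeing a nonempty overlap are handled correctly, so the proof is complete.
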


\begin{definition}[{\cite[Definition 7.2.1]{lind_marcus}}]
\index{elementary equivalence}\index{$\threesim$}\index{strong shift equivalence}
Let $A,B$ be integer matrices with non-negative entries. An \emph{elementary equivalence from $A$ to $B$} is a pair of integer matrices with non-negative entries $(R,S)$ such that $A = RS$ and $B = SR$. This is written $(R,S)\colon A \threesim B$. A \emph{strong shift equivalence of lag $l$} from $A$ to $B$ is a sequence of $l$ elementary equivalences $(R_i,S_i)\colon A_{i-1} \threesim A_i$ with $A_0 = A$ and $A_l = B$. $A$ and $B$ are said to be \emph{strong shift equivalent} if there exists a strong shift equivalence of lag $l$ from $A$ to $B$ for some $l$.
\end{definition}

\index{state-splitting}\index{state-amalgamation}
\noindent
It is straightforward to check that strong shift equivalence is an equivalence relation while elementary shift equivalence is not  transitive. An important special class of elementary equivalences are \emph{state-splittings} and \emph{state-amalgamations}, see \cite[\S 2.4]{lind_marcus} for details.

\begin{thm}[Williams {\cite{williams}}]
\label{thm_williams}
\index{strong shift equivalence!and conjugacy}
Let $A,B$ be integer matrices with non-negative entries. The edge shifts $\X_A$ and $\X_B$ are conjugate if and only if $A$ and $B$ are strong shift equivalent.
\end{thm}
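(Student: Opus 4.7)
The plan is to prove the two implications separately. Both directions are organized around the class of \emph{splittings} and \emph{amalgamations}: elementary graph operations which, on the one hand, realize all elementary strong shift equivalences and, on the other hand, suffice to decompose any conjugacy between edge shifts.

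For the direction ``strong shift equivalent implies conjugate,'' it is enough, by induction on the lag, to show that a single elementary equivalence $(R,S)\colon A \threesim B$ produces a conjugacy $\X_A \cong \X_B$. Given $A = RS$ and $B = SR$ with $A$ an $n\times n$ matrix and $B$ an $m\times m$ matrix, I would form the auxiliary bipartite $(n+m) \times (n+m)$ matrix
\begin{displaymath}
C = \begin{pmatrix} 0 & R \\ S & 0 \end{pmatrix},
\end{displaymath}
whose square is the block diagonal matrix with $A$ and $B$ on the diagonal. Geometrically, $C$ encodes a graph $F$ whose vertices split into two classes (``$A$-vertices'' and ``$B$-vertices'') and whose edges alternate between them. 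The edge shift $\X_C$ is conjugate (via the ``squaring'' map pairing consecutive edges $ef$ starting at an $A$-vertex) to the edge shift $\X_A$, and similarly to $\X_B$ by starting at $B$-vertices; equivalently, $\X_A$ and $\X_B$ are both obtained from $\X_C$ as its ``even'' factors. From these explicit sliding block codes one reads off the conjugacy $\X_A \cong \X_B$. The point worth emphasizing is that these bipartite conjugacies are precisely state-splittings/amalgamations in disguise, which is the bridge to the other direction.

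For the direction ``conjugate implies strong shift equivalent,'' I would proceed in two steps. First, observe that each (in- or out-)splitting and its inverse amalgamation yields an elementary equivalence: if $A$ is the adjacency matrix of $E$ and $A'$ is the adjacency matrix of a splitting $E'$, then a division matrix $D$ and an edge-to-partition matrix $E$ can be chosen so that $A = DE$ and $A' = ED$ (this is the content of \cite[\S 2.4]{lind_marcus}). Second, invoke the decomposition theorem which asserts that every conjugacy between two edge shifts can be written as a finite composition of splittings and amalgamations. Composing the associated elementary equivalences then produces a strong shift equivalence $A \threesim A_1 \threesim \cdots \threesim B$ of some lag.

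The main obstacle is the decomposition theorem. The idea is to take a conjugacy $\varphi\colon\X_A \to \X_B$ realized as a sliding block code $\Phi_\infty^{[m,n]}$ and show that by passing through the higher block presentation (Proposition~\ref{prop_higher_block}) one may assume $m = n = 0$. Once $\varphi$ is a one-block code, one analyzes the fibers of $\Phi$ over each vertex of the target graph and partitions the incoming/outgoing edges at each state of $A$ accordingly; this produces a sequence of in- and out-splittings that resolves the code into an isomorphism of labelled graphs. The bookkeeping here is the technical heart of the argument, but once it is in place the assembly of the elementary equivalences $(R_i,S_i)$ is routine.
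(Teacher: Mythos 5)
The paper does not prove this theorem; it is quoted directly from Williams \cite{williams} (the surrounding text even remarks that ``the proof is not constructive''), so there is no internal proof to compare against. Your outline is the standard textbook argument (essentially Lind--Marcus, \S 7.1--7.2), and both halves are organized correctly: elementary equivalences are realized by bipartite graphs, and conjugacies are decomposed into splittings and amalgamations whose matrices give elementary equivalences.

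Two remarks on precision. First, the claim that ``$\X_C$ is conjugate to $\X_A$'' for $C = \left( \begin{smallmatrix} 0 & R \\ S & 0 \end{smallmatrix} \right)$ is not literally true: the bipartite graph of $C$ has no fixed points of $\sigma$, while $\X_A$ may, and the pairing map you describe intertwines $\sigma^2$ on the appropriate phase subset of $\X_C$ with $\sigma$ on $\X_A$. What is true, and what your argument actually needs, is that the composite regrouping map $\X_A \to \X_B$ (send an $A$-edge to its length-two path $ef$, then regroup as $f e'$) is a genuine conjugacy; these are exactly the ``standard bipartite codes'' $\Phi_\pm$ that the thesis later introduces in Remark \ref{rem_standard_code_of_bipartite_graph} following Nasu. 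You should also note that the bijection between $A$-edges and length-two paths exists precisely because $A = RS$, and that a choice of bijection must be fixed. Second, your treatment of the decomposition theorem is only a gesture at the genuinely hard step: after recoding so that $\varphi$ is a one-block code, one must run an induction on the anticipation of $\varphi^{-1}$, using out-splittings determined by the fibers of the block map, and a symmetric induction on memory using in-splittings. As written, ``the bookkeeping is the technical heart'' concedes rather than supplies this; a complete proof would have to carry it out. With those caveats the proposal is a correct plan for the classical proof.
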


\index{shift equivalence}\index{shift equivalence problem}\index{Williams conjecture}
\noindent
This illustrates the problem of determining whether or not two shift spaces are conjugate: The proof is not constructive and neither the lag $l$ nor the dimensions of the matrices involved in the elementary equivalences are bounded. Hence, there is no way to use this theorem to check whether two SFTs are conjugate. 
A weaker equivalence relation called \emph{shift equivalence} \cite[Definition 7.3.1]{lind_marcus} was introduced by Williams \cite{williams} in an attempt to alleviate this problem, and a long-standing conjecture in symbolic dynamics stated that two matrices are shift equivalent if and only if they are strong shift equivalent. This is known as the \emph{shift equivalence problem} or the \emph{Williams conjecture}, and for a long time it was arguably the most important problem in symbolic dynamics. Eventually, shift equivalence and strong shift equivalence were proved to be different by Kim and Roush who constructed a counterexample \cite{kim_roush_1,kim_roush_2}. 

\subsection{Conjugacy invariants}
\label{sec_conjugacy_ivariants}
As mentioned in the previous section, it is generally difficult to determine whether two shift spaces are conjugate, so it is useful to have a collection of invariants in order to be able to provide negative answers to such questions. The first and arguably most important of these invariants is the entropy:

\begin{definition}[{\cite[Definition 4.1.1]{lind_marcus}}]
\index{entropy}
The \emph{entropy} of a shift space $X$ is $h(X) = \lim_{n \to \infty} \frac{1}{n} \log \left \lvert \BB_n(X) \right \rvert$.
\end{definition}

\index{golden mean shift!entropy of}
\noindent
If $X$ is a shift space with alphabet $\AA$, then $\BB_n(X) \leq \lvert \AA \rvert^n$, so $h(X) \leq  \log \lvert \AA \rvert$. Equality holds if $X$ is the full $\AA$-shift. The golden mean shift gets it name from the fact that it's entropy is $(1+\sqrt 5)/2$ (cf.\ \cite[Example 4.1.4]{lind_marcus}).

\begin{prop}[{\cite[Proposition 4.1.9]{lind_marcus}}]
Let $X$ and $Y$ be shift spaces. If $Y$ is a factor of $X$, then $h(Y) \leq  h(X)$. In particular, $h(X) = h(Y)$ when $X$ and $Y$ are conjugate.
\end{prop}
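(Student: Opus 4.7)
The plan is to exploit the fact that any factor map $\varphi\colon X \to Y$ is by Proposition~1.5.8 a sliding block code, say $\varphi = \Phi_\infty^{[m,n]}$ for some $m,n \geq 0$ and some block map $\Phi\colon \BB_{m+n+1}(X) \to \AA(Y)$. The underlying intuition is simple: a sliding block code cannot create new complexity out of nothing, so $Y$ cannot have more $n$-blocks than $X$ has blocks of a slightly larger length, up to a fixed additive constant in the length that washes out in the limit defining entropy.

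First I would verify the core counting inequality. Because $\varphi$ is surjective, every word $w \in \BB_k(Y)$ arises as $\Phi$ applied symbol-by-symbol to some word $u \in \BB_{k+m+n}(X)$. Choosing one such preimage for each $w$ gives an injection $\BB_k(Y) \hookrightarrow \BB_{k+m+n}(X)$, hence
\begin{displaymath}
|\BB_k(Y)| \leq |\BB_{k+m+n}(X)|.
\end{displaymath}

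Next I would take logarithms, divide by $k$, and pass to the limit. Using that the limit defining entropy exists (this is the standard subadditivity argument for $\log |\BB_k(\cdot)|$, which I would cite from Lind--Marcus rather than reprove), I get
\begin{displaymath}
h(Y) = \lim_{k\to\infty}\frac{1}{k}\log|\BB_k(Y)| \leq \lim_{k\to\infty}\frac{k+m+n}{k}\cdot \frac{1}{k+m+n}\log|\BB_{k+m+n}(X)| = h(X),
\end{displaymath}
since $(k+m+n)/k \to 1$ and the inner limit is exactly $h(X)$ along a cofinal subsequence of lengths. This gives the main claim.

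For the ``in particular'' clause, a conjugacy $\varphi\colon X \to Y$ and its inverse $\varphi^{-1}\colon Y \to X$ are both factor maps, so applying the inequality in both directions yields $h(X)\leq h(Y)$ and $h(Y)\leq h(X)$, hence equality. There is no real obstacle here; the only mildly delicate point is making sure the surjectivity of $\varphi$ is used when producing the preimage word $u$, and that the preimage is chosen to have length exactly $k+m+n$ (which it does, since applying $\Phi$ along a window of length $m+n+1$ to a word of length $k+m+n$ produces a word of length exactly $k$).
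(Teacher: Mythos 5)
Your proof is correct and is essentially the standard argument from Lind--Marcus that the paper simply cites without reproving: a factor map is a sliding block code, surjectivity gives the injection $\BB_k(Y) \hookrightarrow \BB_{k+m+n}(X)$, and the additive constant $m+n$ in the block length vanishes in the entropy limit. The symmetric application to a conjugacy and its inverse for the equality clause is likewise the intended argument.
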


The entropy of an edge shift (and hence of an arbitrary SFT) can be computed using Perron-Frobenius theory \cite[\S 4.2]{lind_marcus}:

\begin{thm}[{\cite[Theorem 4.4.4]{lind_marcus}}]
If $A$ is an integer matrix with non-negative entries, then $h(\X_A) = \log \lambda_A$ where $\lambda_A >0$ is the maximal eigenvalue of $A$.  
\end{thm}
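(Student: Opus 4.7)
The plan is to count paths in the graph directly and then apply spectral theory. Let $E$ be the graph with adjacency matrix $A$, so $\X_A = \X_E$. A fundamental observation is that a word $x_1 \cdots x_n \in \BB_n(\X_A)$ is precisely a path of length $n$ in $E$; since $(A^n)_{ij}$ equals the number of paths of length $n$ from vertex $v_i$ to vertex $v_j$, one gets
\begin{displaymath}
  |\BB_n(\X_A)| \;=\; \sum_{i,j} (A^n)_{ij} \;=\; \mathbf{1}^T A^n \mathbf{1},
\end{displaymath}
where $\mathbf{1}$ is the all-ones vector. The task thus reduces to determining the exponential growth rate of $\mathbf{1}^T A^n \mathbf{1}$.

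Next I would compare $\mathbf{1}^T A^n \mathbf{1}$ with a matrix norm of $A^n$. Since all entries of $A^n$ are non-negative integers, $\mathbf{1}^T A^n \mathbf{1}$ equals the $\ell^1$-norm of $A^n$ viewed as a vector, and this is equivalent (up to a factor depending only on the size of $A$) to the operator norm $\|A^n\|$. Gelfand's spectral radius formula gives $\|A^n\|^{1/n} \to \rho(A)$, and Perron--Frobenius theory for non-negative matrices identifies $\rho(A)$ with the maximal eigenvalue $\lambda_A$. Taking logarithms,
\begin{displaymath}
  h(\X_A) \;=\; \lim_{n \to \infty} \frac{1}{n} \log |\BB_n(\X_A)|
          \;=\; \lim_{n \to \infty} \frac{1}{n} \log \mathbf{1}^T A^n \mathbf{1}
          \;=\; \log \lambda_A.
\end{displaymath}

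The main obstacle is handling potentially reducible $A$: if $A$ fails to be irreducible, the Jordan structure can contribute polynomial factors $n^k$ to the growth of $\mathbf{1}^T A^n \mathbf{1}$, and the Perron eigenvector need not be strictly positive. To bypass this one can either invoke Gelfand's formula as above (where polynomial corrections wash out after taking $n$-th roots), or decompose $E$ into its irreducible components $E_1, \dots, E_r$ with Perron eigenvalues $\lambda_1, \dots, \lambda_r$, note that every sufficiently long path is built from segments within components and transitions between them, and conclude $\lambda_A = \max_k \lambda_k$ dominates the growth. Either route completes the argument.
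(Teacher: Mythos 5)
The paper does not prove this result — it is quoted from Lind and Marcus — so the relevant comparison is with the standard proof there, and your argument is essentially that proof: count words of length $n$ via powers of $A$, then identify the exponential growth rate with the spectral radius, handling reducibility either by Gelfand's formula (which absorbs the polynomial Jordan-block corrections) or by decomposing into irreducible components. Both of your suggested routes for the reducible case are viable, and the Gelfand route is arguably the cleaner of the two.

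There is one genuine gap in the write-up: the ``fundamental observation'' $|\BB_n(\X_A)| = \mathbf{1}^T A^n \mathbf{1}$ is false for non-essential graphs. A word of $\BB_n(\X_A)$ must occur in some bi-infinite point of $\X_A$, so it is a path of length $n$ that extends bi-infinitely; a path that runs into a sink (or emanates from a source) is counted by $\mathbf{1}^T A^n \mathbf{1}$ but does not lie in the language. For example, for $A = \left(\begin{smallmatrix} 1 & 1 \\ 0 & 0 \end{smallmatrix}\right)$ one has $|\BB_n(\X_A)| = 1$ while $\mathbf{1}^T A^n \mathbf{1} = 2$. The repair is short but should be stated: the inequality $|\BB_n(\X_A)| \leq \mathbf{1}^T A^n \mathbf{1}$ always holds and gives $h(\X_A) \leq \log \lambda_A$; for the reverse, pass to the maximal essential subgraph $H$ (so $\X_A = \X_H$ and the equality of the counts does hold for $A_H$), and then check that $\lambda_{A_H} = \lambda_A$. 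The latter is where a small argument is needed: the spectral radius of $A$ is the maximum of the spectral radii of its strongly connected components, and any component realising a positive spectral radius contains a cycle, hence none of its vertices is ever deleted when stranded vertices are removed, so that component survives intact in $H$. With this inserted, your proof is complete.
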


\index{Perron number}\index{Perron number!weak}
\noindent
A real number $\lambda \geq 1$ is a 
\emph{Perron number} if it is an algebraic integer that strictly dominates all its other algebraic conjugates. A \emph{weak Perron number} is a real number $\lambda^p$ for which $\lambda$ is a Perron number and $p \in \N$. A number $\log \mu$ is the entropy of an SFT if and only if $\mu$ is a weak Perron number \cite[Theorem 11.1.5]{lind_marcus}.
The following proposition shows that these results about the entropies of  SFTs can also be used to compute the entropies of sofic shifts.

\begin{prop}[{\cite[Proposition 4.1.13]{lind_marcus}}]
\label{prop_entropy_graph}
If a labelled graph $(G, \LL)$ is either right- or left-resolving, then $h(\X_{(G, \LL)}) = h(\X_G)$.
\end{prop}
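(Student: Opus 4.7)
The plan is to sandwich $h(\X_{(G,\LL)})$ between $h(\X_G)$ on both sides. For the easy direction, note that the labelling map $\LL$ extends coordinatewise to a map $\X_G \to \X_{(G,\LL)}$, which by construction is surjective, shift intertwining, and (being a $1$-block code) a sliding block code, hence a factor map. By Proposition 4.1.9 (already invoked earlier in the excerpt), this gives $h(\X_{(G,\LL)}) \leq h(\X_G)$.

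For the reverse inequality, the key observation is a counting bound that uses the resolving hypothesis. Since $\X_G$ is a $1$-step SFT on the alphabet $E^1$, a word of length $n$ in $\BB_n(\X_G)$ is exactly a path of length $n$ in $G$, so $|\BB_n(\X_G)| = |G^n|$. Suppose $(G,\LL)$ is right-resolving. Then for any vertex $v \in G^0$ and any word $w \in \BB_n(\X_{(G,\LL)})$, there is at most one path $\lambda \in G^n$ with $s(\lambda) = v$ and $\LL(\lambda) = w$: the first edge is determined uniquely by $v$ and $\rl(w_1)$ (in the sense that among edges emitted by $v$ at most one carries the label $w_1$), and then the argument iterates on $r$ of that edge. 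Summing over the possible source vertices,
\begin{displaymath}
  |G^n| \;\leq\; |G^0| \cdot |\BB_n(\X_{(G,\LL)})|.
\end{displaymath}
If instead $(G,\LL)$ is left-resolving, the analogous argument fixing the endpoint $r(\lambda)$ yields the same inequality.

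Taking logarithms, dividing by $n$, and letting $n \to \infty$, the term $\frac{1}{n}\log |G^0|$ vanishes since $G$ is finite, so
\begin{displaymath}
  h(\X_G) \;=\; \lim_{n \to \infty} \frac{1}{n} \log |G^n| \;\leq\; \lim_{n \to \infty} \frac{1}{n} \log |\BB_n(\X_{(G,\LL)})| \;=\; h(\X_{(G,\LL)}).
\end{displaymath}
Combined with the first inequality, this gives $h(\X_{(G,\LL)}) = h(\X_G)$. The only genuine obstacle is verifying the counting bound, and that follows immediately from the one-to-one correspondence between paths emitted from (respectively absorbed into) a fixed vertex and their labels, which is exactly what the resolving hypothesis provides; no appeal to Perron–Frobenius theory is needed.
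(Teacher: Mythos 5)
Your proof is correct and is precisely the standard argument for the cited result \cite[Proposition 4.1.13]{lind_marcus} --- the paper itself gives no proof, deferring entirely to that reference, and your two steps (the $1$-block factor code $\LL_\infty$ for $h(\X_{(G,\LL)}) \leq h(\X_G)$, and the counting bound $|G^n| \leq |G^0|\cdot|\BB_n(\X_{(G,\LL)})|$ from the resolving hypothesis for the reverse) are exactly what that proof does. The only pedantic caveat is your identification $|\BB_n(\X_G)| = |G^n|$, which requires $G$ to be essential; this is the standing convention and is harmless anyway, since passing to the maximal essential subgraph preserves both $\X_G$ and $\X_{(G,\LL)}$ as well as the resolving property.
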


\noindent
Using Examples \ref{ex_even_def} and \ref{ex_conjugacy_golden_mean} and Proposition \ref{prop_entropy_graph}, it follows that the even shift and the golden mean shift have the same entropy.

\index{periodic point}\index{period}
Let $X$ be a shift space. An element $x \in X$ is said to have \emph{period} $p$ if $\sigma^p(x) = x$. It is straightforward to prove that for each $n \in \N$, the number of points with period $n$ and the number of points with least period $n$ are conjugacy invariants.

\begin{definition}[{\cite[Definition 7.4.15]{lind_marcus}}]
\index{Bowen-Franks group}
\index{Bowen-Franks group}
\index{BF@$\BF$|see{Bowen-Franks group}}
Let $A$ be a $n \times n$ integer matrix with non-negative entries. The co-kernel $\BF(A) = \Z^n / \Z^n (\Id - A)$ is called the \emph{Bowen-Franks group} of $A$. 
\end{definition}

\index{Smith normal form}
\index{Bowen-Franks group!computation of}
\noindent
When $A$ is an $n \times n$ integer matrix with non-negative entries, row and column operations over $\Z$ can be used to transform $\Id - A$ into a unique diagonal matrix $\diag(d_1, \ldots, d_n)$ where $d_i \vert d_{i+1}$. This is  called the \emph{Smith normal form} of $A$ (see e.g.\ \cite{newman}). The row and column operations are given by matrices invertible over $\Z$, so $\BF(A) \simeq \Z / d_1\Z \oplus \cdots \oplus \Z / d_n\Z$, and there is an algorithm for computing the Smith normal form, so it is straightforward to compute the Bowen-Franks group when the adjacency matrix is known. 

\begin{thm}[{\cite[Theorem 7.4.17]{lind_marcus}}]
If $A$ and $B$ are strong shift equivalent integer matrices with non-negative entries, then $\BF(A) \simeq \BF(B)$. \label{thm_bf_group_invariant}
\end{thm}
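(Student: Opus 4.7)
My plan is to reduce the statement to the case of a single elementary equivalence and then build an explicit isomorphism from $R$ and $S$.

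Since strong shift equivalence is, by definition, a finite chain of elementary equivalences and isomorphism of groups is transitive, it suffices to prove the statement under the assumption that $(R,S)\colon A \threesim B$ for some pair of non-negative integer matrices. So suppose $A$ is $n\times n$, $B$ is $m\times m$, $R$ is $n\times m$, $S$ is $m\times n$, with $A = RS$ and $B = SR$.

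I would then define two maps by right multiplication: $\phi\colon \Z^n \to \Z^m$, $v \mapsto vR$, and $\psi\colon \Z^m \to \Z^n$, $w \mapsto wS$ (viewing elements of $\Z^n$ and $\Z^m$ as row vectors, consistent with the definition $\BF(A) = \Z^n/\Z^n(\Id - A)$). The key computation is
\begin{displaymath}
  \phi\bigl(v(\Id_n - A)\bigr) = vR - vAR = vR - vRSR = vR(\Id_m - SR) = \phi(v)(\Id_m - B),
\end{displaymath}
which shows $\phi$ maps the subgroup $\Z^n(\Id_n - A)$ into $\Z^m(\Id_m - B)$, so it descends to a homomorphism $\bar\phi\colon \BF(A) \to \BF(B)$. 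By the symmetric computation with the roles of $R$ and $S$ swapped, $\psi$ descends to $\bar\psi\colon \BF(B) \to \BF(A)$.

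Finally, I would verify these are mutual inverses. For any $v \in \Z^n$,
\begin{displaymath}
   \bar\psi(\bar\phi(v + \Z^n(\Id_n - A))) = vRS + \Z^n(\Id_n - A) = vA + \Z^n(\Id_n - A),
\end{displaymath}
and since $v - vA = v(\Id_n - A) \in \Z^n(\Id_n - A)$, this class equals $v + \Z^n(\Id_n - A)$, so $\bar\psi \circ \bar\phi = \id_{\BF(A)}$. The identical argument using $B = SR$ gives $\bar\phi \circ \bar\psi = \id_{\BF(B)}$, establishing the isomorphism. There is no real obstacle here — the only subtlety is bookkeeping with the side of multiplication to ensure consistency with the one-sided-ideal definition of $\BF$, but once the conventions are fixed the argument is a direct calculation.
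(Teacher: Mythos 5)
Your proof is correct, and it is essentially the standard argument: the paper itself gives no proof but defers to \cite[Theorem 7.4.17]{lind_marcus}, whose proof is exactly this kind of cokernel computation with $R$ and $S$ (there carried out for the more general relation of shift equivalence, where one uses that $A$ acts as the identity on $\BF(A)$ so that $RS = A^\ell$ induces the identity; your lag-$1$ case is the same idea with $\ell = 1$). The reduction to a single elementary equivalence, the descent of $v \mapsto vR$ and $w \mapsto wS$ to the quotients, and the verification that the composites act as multiplication by $A$ and $B$, hence as the identity, are all in order.
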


\index{Bowen-Franks group!of general SFT}
\noindent
In fact, the Bowen-Franks group is also an invariant of shift equivalence, but that will not be important in this context; see the proof in \cite{lind_marcus} for details. For an arbitrary SFT $X$, Proposition \ref{prop_higher_block} and Theorem \ref{thm_bf_group_invariant} are used to define the \emph{Bowen-Franks group} $\BF(X)$ to be the unique Bowen-Franks group of the adjacency matrix of an edge shift conjugate to $X$.

\section{Covers}
\label{sec_covers}
Every sofic shift can be presented by infinitely many different labelled graphs, so it is natural to look for general ways to find presentations with nice properties.

\index{cover}
\begin{definition}
Let $S$ be a subclass of the class of sofic shifts. A \emph{cover} defined on $S$ is a pair of maps $(\cov, \pi_{\cov})$ such that for each $X \in S$, $\cov (X)$ is an SFT and $\pi_{\cov} (X) \colon \cov(X) \to X$ is a factor map.
\end{definition}

\index{covering map}
\noindent 
To improve readability, the notation $\pi_{\cov (X)}$ will be used instead of $\pi_{\cov} (X)$ in the following. The map $\pi_{\cov (X)}$ is called the \emph{covering map}.

\index{cover!as graph}
\begin{remark} 
\label{rem_cover_graph}
If $S$ is a subclass of the class of sofic shifts and if each $X \in S$ has a distinguished presentation $(G_X , \LL_X)$, then $\cov(X) = \X_{G_X}$ and $\pi_{\cov (X)} = (\LL_X)_\infty$ defines a cover. 
I.e.\ $\cov(X)$ is the edge shift of the distinguished presentation of $X$, and $\pi_{\cov (X)}$ maps a path $\lambda \in \X_{G_X}$ to the corresponding biinfinite sequence of labels $(\LL_X(\lambda_i))_{i \in \Z}$.
With a slight abuse of terminology, the labelled graph $(G_X , \LL_X)$ itself will be said to be a \emph{cover} of $X$ since this is done in large parts of the literature on the subject. The covers considered in the following will almost exclusively be of this kind.
\end{remark}

\index{predecessor set}\index{P@$P_\infty$}
\begin{definition}
(cf.\ \cite[Sections I and III]{jonoska_marcus} and \cite[Exercise
    3.2.8]{lind_marcus}) Let $X$ be a shift space. For a right-ray or word $x \in X^+ \cup \BB(X)$, define the \emph{predecessor set} of $x$ to be the set of left-rays which may  precede $x$ in $X$, i.e.\ $P_\infty(x^+) = \{ y^- \in X^- \mid y^- x^+ \in X \}$ when $x^+ \in X^+$. 
\end{definition}

\index{F@$F_\infty$}\index{follower set}\index{F@$F(w)$}\index{P@$P(w)$}
\noindent The \emph{follower set} $F_\infty(x)$ of a word or left-ray $x$ is defined analogously. Predecessor and follower sets consisting of finite words instead of rays are useful in some contexts, so for $w \in \BB(X)$, define $P(w) = \{ v \in \BB(X) \mid vw \in \BB(X) \}$ and $F(w) = \{ v \in \BB(X) \mid wv \in \BB(X) \}$.

\index{predecessor set!of vertex}
\begin{definition}
\label{def_predecessor_set_of_vertex}
\index{vertex!predecessor set of}
\index{labelled graph!predecessor-separated}
\index{labelled graph!follower-separated}
Let $(E, \LL)$ be a labelled graph presenting a sofic shift $X$. For each $v \in E^0$, define the \emph{predecessor set} of $v$ to be the set of left-rays in $X$ which have a presentation terminating at $v$. This is denoted $P_\infty^E(v)$, or just $P_\infty(v)$ when $(E, \LL)$ is understood from the context. The presentation $(E, \LL)$ is said to be \emph{predecessor-separated} if $P_\infty^E(u) \neq P_\infty^E(v)$ when $u,v \in E^0$ and $u \neq v$. \emph{Follower-separated} vertices and labelled graphs are defined analogously.
\end{definition}

\noindent
In the following, there will be particular focus on covers of the type discussed in Remark \ref{rem_cover_graph} for which the labelled graphs are predecessor-separated.

\subsection{The Krieger cover}
\label{sec_shift_lkc}
The first, and arguably most important, example of a cover defined on the entire class of sofic shifts is the Krieger cover introduced in \cite{krieger_sofic_I}. This is an example of a cover defined as described in Remark \ref{rem_cover_graph}.

\begin{definition}[{Krieger \cite{krieger_sofic_I}}]
\index{Krieger cover}\index{sofic shift!Krieger cover of|\\see{Krieger cover}}
The \emph{left Krieger cover} of a shift space $X$ is the labelled graph
$(K, \LL_K)$ where $K^0 = \{ P_\infty(x^+) \mid x^+ \in X^+\}$,
and where there is an edge labelled $a \in \AA(X)$ from $P \in K^0$ to
$P' \in K^0$ if and only if there exists $x^+ \in X^+$ such that $P
= P_\infty(a x^+)$ and $P' = P_\infty(x^+)$.
\end{definition}

The right Krieger cover is defined analogously using the follower sets of left-rays as vertices. A shift space is sofic if and only if the number of predecessor sets is finite \cite[\S 2]{krieger_sofic_I}, so the left Krieger cover is a finite labelled graph exactly when the shift space is sofic. It is easy to check that the left Krieger cover of $X$ is a left-resolving and predecessor-separated presentation of $X$, and that $K$ is always an essential graph.

The left Krieger cover is sometimes (\cite{carlsen,carlsen_matsumoto}) defined using predecessor sets consisting of the finite words which can precede a given right-ray. This is also sometimes called the Perron-Frobenius  cover \cite{samuel}. However, there is a natural bijective  correspondence between the predecessor sets consisting of left-rays, and the predecessor sets consisting of finite words, and the two definitions are equivalent. A benefit of using words instead of rays is that the definition can then be used for both one-sided and two-sided shift spaces.

\begin{prop}[{\cite[Exercise 2.2.8]{lind_marcus}}]
\label{prop_lkc_sft}
Let $X$ be a sofic shift with left Krieger cover $(K, \LL)$, then $X$ is an SFT if and only if the sliding block code $\LL_\infty \colon \X_K \to X$ induced by $\LL$ is a conjugacy.
\end{prop}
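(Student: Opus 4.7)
Of the two implications, the one from right to left---that if $\LL_\infty$ is a conjugacy then $X$ is an SFT---is immediate: $\X_K$ is an edge shift and hence an SFT, so the theorem following Proposition \ref{prop_higher_block} (stating that any shift conjugate to an SFT is itself an SFT) forces $X$ to be an SFT.

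For the converse, assume $X$ is an $M$-step SFT. Since $\LL_\infty$ is already a continuous, shift-intertwining, surjective sliding block code (being the covering map of a cover), it suffices to establish injectivity. The plan is to show that any bi-infinite path $\lambda \in \X_K$ with $\LL_\infty(\lambda) = x$ is fully determined by $x$. The key tool is a stability property of predecessor sets: if $x^+, y^+ \in X^+$ agree on their first $M$ symbols, then $P_\infty(x^+) = P_\infty(y^+)$. This is a direct consequence of Theorem \ref{thm_m-step}: for a left-ray $z^-$, the condition $z^- x^+ \in X$ boils down to $z^- x^+_{[0,M-1]}$ being a valid left-ray, which only involves the first $M$ symbols of $x^+$. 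A second, purely definitional observation is that $P_\infty(a y^+)$ depends on $y^+$ only through $P_\infty(y^+)$ (and on the label $a$).

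Unfolding the left Krieger cover definition at the edge $\lambda_i$ yields $s(\lambda_i) = P_\infty(x_i z_i^+)$ for some right-ray $z_i^+$ with $P_\infty(z_i^+) = r(\lambda_i) = s(\lambda_{i+1}) = P_\infty(x_{i+1} z_{i+1}^+)$. Using the second observation I rewrite $s(\lambda_i) = P_\infty(x_i x_{i+1} z_{i+1}^+)$, and iterating this substitution $n$ times gives $s(\lambda_i) = P_\infty(x_i x_{i+1} \cdots x_{i+n-1} w^+)$ for some right-ray $w^+$ depending on $n$. Taking $n = M$ and invoking the stability property, I conclude $s(\lambda_i) = P_\infty(x_i x_{i+1} x_{i+2} \cdots)$, a quantity depending only on $x$. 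Consequently $r(\lambda_i) = s(\lambda_{i+1})$ is likewise determined by $x$, and since the left Krieger cover admits at most one edge with a given label between any ordered pair of vertices (immediate from its definition), $\lambda_i$ and hence $\lambda$ is determined by $x$. This gives injectivity of $\LL_\infty$ and completes the proof.

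The main obstacle of the argument is the step forcing $s(\lambda_i)$ to equal the ``expected'' vertex $P_\infty(x_i x_{i+1} \cdots)$. The $M$-step hypothesis enters exactly here, through the stability property, and it is indispensable: for a strictly sofic shift such as the even shift, stability fails, and indeed the all-zero sequence admits multiple lifts in the left Krieger cover---one running along a self-loop at the predecessor set of $0^\infty$, and another cycling between two further vertices---so $\LL_\infty$ cannot be a conjugacy.
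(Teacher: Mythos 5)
Your proof is correct. The paper does not actually prove this proposition — it is quoted directly from Lind and Marcus with a citation to Exercise 2.2.8 — so there is no in-text argument to compare against; your solution (the easy direction via invariance of the SFT property under conjugacy, and for the converse reducing to injectivity, establishing that $P_\infty(x^+)$ depends only on the first $M$ symbols of $x^+$ via Theorem \ref{thm_m-step}, and propagating along a lift to pin down $s(\lambda_i) = P_\infty(x_i x_{i+1}\cdots)$) is the standard argument for that exercise, and every step checks out, including the closing observation that left-resolvingness then determines each edge.
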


The following simple facts are needed in the investigation of the
structure of the left Krieger cover. 

\begin{lem} 
\label{lem_K_basic_facts} 
Let $X$ be a sofic shift with left Krieger cover $(K,\LL_K)$.
\begin{enumerate}
\item Each $x^+ \in X^+$ has a
  presentation in $(K,\LL_K)$ starting at $P_\infty(x^+)$.\label{K_facts_start} 
\item If $P \in K^0$, then $P_\infty^K(P) = P$.\label{K_facts_equal} 
\item \label{K_facts_subset} If $P \in K^0$, and there is a presentation of $x^+$ starting at $P$, then $P \subseteq P_\infty(x^+)$.
\end{enumerate}
\end{lem}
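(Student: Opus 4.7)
The plan is to unwind the definitions of the left Krieger cover and then handle the three items in the order given, with (2) and (3) both leaning on (1).

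For (1), I would argue inductively from the definition of edges in $K$. Writing $x^+ = x_0 x_1 x_2 \cdots$, the definition provides an edge labelled $x_0$ from $P_\infty(x^+)$ to $P_\infty(\sigma(x^+))$, since setting $a=x_0$ and taking $\sigma(x^+)$ as the witnessing right-ray satisfies exactly the condition in the definition. Concatenating such edges for $i=0,1,2,\ldots$ produces a right-infinite path in $K$ whose label is $x^+$ and whose initial vertex is $P_\infty(x^+)$.

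For (2), fix a vertex $P \in K^0$, so $P = P_\infty(x^+)$ for some $x^+ \in X^+$, and show the two inclusions separately. For $P \subseteq P_\infty^K(P)$, take $y^- \in P$, so $y^- x^+ \in X$ and hence $y^- \in X^-$; applying the same inductive construction as in (1) now from $i = -1, -2, \ldots$ (i.e.\ using the edge labelled $(y^-)_{i}$ from $P_\infty(\sigma^i(y^- x^+)^+)$ to $P_\infty(\sigma^{i+1}(y^- x^+)^+)$) yields a left-infinite path in $K$ with label $y^-$ terminating at $P_\infty(x^+) = P$, so $y^- \in P_\infty^K(P)$. For the reverse inclusion, take $y^- \in P_\infty^K(P)$; then there is a path in $K$ presenting $y^-$ and ending at $P$, and by (1) there is a path starting at $P$ presenting $x^+$. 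Concatenating, we get a biinfinite path in $K$ whose label is $y^- x^+$, so $y^- x^+ \in \X_{(K,\LL_K)} = X$, i.e.\ $y^- \in P_\infty(x^+) = P$.

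For (3), suppose there is a presentation $\lambda$ of $x^+$ in $K$ starting at $P$, and let $y^- \in P$. By (2), $P = P_\infty^K(P)$, so $y^-$ has a presentation $\mu$ in $K$ ending at $P$; concatenating $\mu$ with $\lambda$ produces a presentation of $y^- x^+$ in $K$. Since $\X_{(K,\LL_K)} = X$, this gives $y^- x^+ \in X$, so $y^- \in P_\infty(x^+)$, as required.

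There is no real obstacle here — the only thing to be careful about is keeping straight the two meanings of ``predecessor set'': the purely combinatorial $P_\infty(x^+)$ on the shift side and the graph-theoretic $P_\infty^K(v)$ of Definition~\ref{def_predecessor_set_of_vertex} on the cover side. Once (1) is in hand, both (2) and (3) reduce to the observation that paths in $K$ concatenate if they agree at the shared vertex, combined with the fact that $K$ is a presentation of $X$.
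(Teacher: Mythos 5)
Your proof is correct and follows essentially the same route as the paper: (1) directly from the definition of edges in the Krieger cover, (2) by combining the path from (1) with the explicit left-infinite path $\cdots \to P_\infty(y_{-1}x^+) \to P_\infty(x^+)$ and the fact that $(K,\LL_K)$ presents $X$, and (3) as an immediate consequence of (2). The only difference is that you spell out the inductive concatenation in (1) and the two inclusions in (2) more explicitly than the paper does.
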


\begin{proof}
\quad
\begin{enumerate}
\item This follows directly from the definition of the left Krieger
cover. 
\item Choose $x^+ \in X^+$ such that $P = P_\infty(x^+)$. By (\ref{K_facts_start}) there
is a path with label $x^+$ starting 
at $P$. The left Krieger cover is a presentation of $X$, so if there
is a path with label $y^-$ terminating at $P$, then $y^-x^+ \in X$, and
hence, $y^- \in P_\infty(x^+)$. On the other hand, if $ \ldots y_{-2} y_{-1} =
y^- \in P_\infty(x^+)$, then there is a path 
\begin{displaymath}
  \cdots \stackrel{y_{-3}}{\longrightarrow} P_\infty(y_{-2} y_{-1} x^+)
         \stackrel{y_{-2}}{\longrightarrow} P_\infty(y_{-1} x^+) 
         \stackrel{y_{-1}}{\longrightarrow} P_\infty (x^+)
\end{displaymath} 
in the left Krieger cover.  
\item If $y^- \in P$, then there is a path
labelled $y^-$ terminating at $P$ by (\ref{K_facts_equal}), so $y^-x^+ \in X$ and therefore
$y^- \in P_\infty(x^+)$. 
\end{enumerate} 
\end{proof}

\noindent Consider the right-ray $x^+ = 0^\infty$ in the even shift
(see Examples \ref{ex_even_def} and \ref{ex_even}), to see that a right-ray can start at a vertex $P \neq P_\infty(x^+)$. 

\subsection{The past set cover}

The following definition gives a cover that is closely related to -- but not always isomorphic to -- the left Krieger cover. 

\index{past set cover}
\index{sofic shift!past set cover of|\\see{past set cover}}
\begin{definition}[{\cite[p.\ 73]{lind_marcus}}]
The \emph{past set cover} of a shift space $X$ is the labelled graph $(\psc, \LL_\psc)$ where $\psc^0 = \{ P_\infty(w) \mid w \in \BB(X) \}$, and where there is an edge labelled $a \in \AA(X)$ from $P \in \psc^0$ to $P'\in \psc^0$ if and only if there exists $w \in \BB(X)$ such that $P = P_\infty(a w)$ and $P' = P_\infty(w)$.
\end{definition}

\index{future set cover}
\index{sofic shift!future set cover of|\\see{future set cover}}
It is easy to check that the past set cover of $X$ is a left-resolving and predecessor-separated presentation of $X$. The past set cover is sometimes (e.g.\ \cite{bates_eilers_pask}) defined using predecessor sets consisting of the finite words which can precede a given word. However, as with the Krieger cover, there is a natural bijective correspondence between the predecessor sets consisting of left-rays, and the predecessor sets consisting of finite words, and the two definitions are equivalent. The definition used here was chosen to highlight the similarities with the left Krieger cover, but
despite the similarity of the definitions, the left Krieger cover and the past set cover are not always identical. A later example (Example \ref{ex_difference_lkc_psc}) illustrates this difference.
The \emph{future set cover} is defined analogously to the past set cover by using the follower sets of words as vertices.

\subsection{The Fischer cover}
\label{sec_fischer}
\index{Fischer cover}
\index{sofic shift!Fischer cover of|\\see{Fischer cover}}
Fischer proved that up to labelled graph isomorphism every irreducible sofic shift has a unique left-resolving presentation with fewer vertices than any other left-resolving presentation \cite{fischer}. This defines a cover of the type discussed in Remark \ref{rem_cover_graph} called the \emph{left Fischer cover} of $X$. 

\index{intrinsically synchronizing}
\begin{definition}
(cf.\ \cite[Section III]{jonoska_marcus}) A word $v \in \BB(X)$ is \emph{intrinsically synchronizing} if $uvw \in \BB(X)$ whenever $uv \in \BB(X)$ and $vw \in \BB(X)$. A right-ray $x^+$ is \emph{intrinsically synchronizing} if it contains an intrinsically synchronizing word as a factor. 
\end{definition}

\noindent
By Theorem \ref{thm_m-step}, a shift space $X$ is an $M$-step SFT if and only if every $w \in \BB(X)$ with $\lvert w \rvert \geq M$ is intrinsically synchronizing.

\index{Fischer cover!as subgraph of Krieger cover}\index{Krieger cover!top irreducible component of}
The left Fischer cover of an irreducible sofic shift $X$ is isomorphic to an irreducible subgraph of the left Krieger
cover of $X$ induced by the set of vertices that are predecessor sets of intrinsically synchronizing right-rays. This result can be traced back to \cite[Lemma 2.7]{krieger_sofic_I}. In a left-resolving and predecessor-separated labelled graph, every right-ray can be extended on the left to an intrinsically synchronizing right-ray \cite[Proposition 3.3.16]{lind_marcus}, so if $P, P' \in K^0$ and $P$ is a vertex in the irreducible subgraph of the left Krieger cover identified with the left Fischer cover, then $P \geq P'$. Hence, this irreducible component will be called the \emph{top irreducible component} of the left Krieger cover. This leads to the following two  corollaries to Proposition \ref{prop_lkc_sft}.

\begin{cor}
If $X$ is an irreducible SFT then the left Fischer cover and the left Krieger cover of $X$ are isomorphic.
\end{cor}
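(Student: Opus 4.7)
The plan is to combine the $M$-step characterisation of SFTs with the fact, recalled in the paragraph preceding the corollary, that the left Fischer cover of an irreducible sofic shift is isomorphic to the induced subgraph of the left Krieger cover on those vertices which arise as predecessor sets of intrinsically synchronizing right-rays. So it suffices to show that under the SFT hypothesis every vertex of the left Krieger cover has this form.

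First I would use the assumption that $X$ is an SFT to pick $M \in \N$ such that $X$ is $M$-step. By Theorem \ref{thm_m-step}, every $w \in \BB(X)$ with $|w| \geq M$ is intrinsically synchronizing. Given any $x^+ \in X^+$, the prefix $x_{[0,M]}$ has length $M+1 \geq M$ and so is intrinsically synchronizing, which means $x^+$ itself is intrinsically synchronizing in the sense of the definition. Hence every vertex of the form $P_\infty(x^+) \in K^0$ lies in the vertex set of the top irreducible component.

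Next I would observe that the top irreducible component is, by construction, the subgraph of the left Krieger cover \emph{induced} by its vertex set, i.e.\ it carries all edges of $K$ between those vertices with the inherited labelling. Since the previous step shows that this vertex set is all of $K^0$, the top irreducible component coincides with the full left Krieger cover $(K,\LL_K)$. Composing the isomorphism between the left Fischer cover and the top irreducible component with this identification yields the desired labelled graph isomorphism between the left Fischer cover and the left Krieger cover.

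The only mild obstacle is book-keeping: one must remember that the subgraph identified with the Fischer cover is taken with all edges of $K$ between the chosen vertices (otherwise filling in $K^0$ would not automatically fill in $K^1$), and one should note in passing that irreducibility of $X$ guarantees that the Fischer cover is defined in the first place. Both points are essentially immediate from the discussion preceding the corollary, so no further work is required.
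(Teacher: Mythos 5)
Your proof is correct, and it is essentially the argument the paper intends: the corollary is stated without an explicit proof, but the preceding paragraph (identifying the left Fischer cover with the induced subgraph of the left Krieger cover on predecessor sets of intrinsically synchronizing right-rays) together with the remark that an $M$-step SFT has every word of length at least $M$ intrinsically synchronizing is exactly what you assemble. Your book-keeping points — that the identification is with the \emph{induced} subgraph, and that irreducibility is needed for the Fischer cover to exist — are the right ones to flag.
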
 

\noindent
The opposite implication is, however, not true in general (see e.g.\ Example \ref{ex_justifying}).

\begin{cor}\label{cor_lfc_conj}
Let $X$ be an irreducible sofic shift with left Fischer cover $(F, \LL)$, then $X$ is an SFT if and only if the sliding block code $\LL_\infty \colon \X_F \to X$ induced by $\LL$ is a conjugacy.
\end{cor}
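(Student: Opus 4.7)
The plan is to derive this corollary directly from Proposition \ref{prop_lkc_sft} via the identification of the left Fischer cover with the top irreducible component of the left Krieger cover.

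For the forward direction, assume $X$ is an irreducible SFT. By the preceding corollary, the left Fischer cover $(F,\LL)$ and the left Krieger cover $(K,\LL_K)$ are isomorphic as labelled graphs. This isomorphism induces a conjugacy of edge shifts $\X_F \to \X_K$ which intertwines the two labelling-induced block codes. Proposition \ref{prop_lkc_sft} tells us that $(\LL_K)_\infty \colon \X_K \to X$ is a conjugacy under the SFT hypothesis, and composing with the edge-shift conjugacy $\X_F \to \X_K$ yields that $\LL_\infty \colon \X_F \to X$ is a conjugacy as well.

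For the reverse direction, suppose $\LL_\infty \colon \X_F \to X$ is a conjugacy. Then $X$ is conjugate to $\X_F$, which is by construction a $1$-step SFT (the edge shift of the underlying graph of $F$). Since the class of SFTs is closed under conjugacy (by the theorem stated in Section \ref{sec_conjugacy}), $X$ is itself an SFT.

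There is essentially no obstacle here beyond carefully transporting the conjugacy across the labelled-graph isomorphism in the first direction; the second direction is immediate from the fact that edge shifts are SFTs together with conjugacy-invariance of the SFT property. The content of the result is really that the left Fischer cover is ``large enough'' to detect the SFT property in the irreducible case, which is a direct consequence of its identification with the top irreducible component of the left Krieger cover.
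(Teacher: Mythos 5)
Your proof is correct and follows the route the paper intends: the forward direction transports Proposition \ref{prop_lkc_sft} across the isomorphism between the Fischer and Krieger covers of an irreducible SFT, and the reverse direction uses that $\X_F$ is an edge shift together with conjugacy-invariance of the SFT property. The paper states this as an immediate corollary without writing out the argument, and your write-up supplies exactly that argument.
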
 

The following theorem gives an important characterisation of the left Fischer cover which will be used repeatedly in the following.

\begin{thm}[{\cite[Corollary 3.3.19]{lind_marcus}}]\label{thm_lfc_char}
Let $X$ be an irreducible sofic shift. Then a labelled graph $(G, \LL)$ presenting $X$ is (isomorphic to) the left Fischer cover of $X$ if and only if it is irreducible, left-resolving, and predecessor-separated.  
\end{thm}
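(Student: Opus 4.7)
The forward direction is essentially immediate once one identifies the left Fischer cover $F$ with the top irreducible component of the left Krieger cover, as described before Corollary \ref{cor_lfc_conj}. It is irreducible by construction, is left-resolving as an induced subgraph of a left-resolving graph, and is predecessor-separated because its vertices already \emph{are} distinct predecessor sets of right-rays; indeed, for each vertex $P$ sitting in the top irreducible component, Lemma \ref{lem_K_basic_facts}(\ref{K_facts_equal}) gives $P_\infty^F(P) = P$, so distinct vertices still have distinct predecessor sets relative to $F$ itself.

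For the converse, suppose $(G, \LL)$ is an irreducible, left-resolving, and predecessor-separated presentation of $X$. My plan is to build an isomorphism onto the left Fischer cover by sending each vertex to its predecessor set: define $\varphi \colon G^0 \to K^0$ by $\varphi(v) = P_\infty^G(v)$. Injectivity of $\varphi$ is precisely the predecessor-separated hypothesis. The key step, and the main technical obstacle, is to show that $\varphi(v)$ lies in the top irreducible component of the left Krieger cover and equals $P_\infty(x^+)$ for some intrinsically synchronizing right-ray $x^+$ having a representative in $G$ that starts at $v$. For this I would invoke \cite[Proposition 3.3.16]{lind_marcus}, which guarantees that every right-ray in a left-resolving and predecessor-separated presentation can be extended on the left to an intrinsically synchronizing right-ray. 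Pick any right-infinite path out of $v$ (which exists because $G$ is irreducible, hence essential) with label $y^+$, then extend on the left along an appropriate path terminating at $v$ to obtain an intrinsically synchronizing ray $x^+$ whose representative still starts at $v$. One inclusion $P_\infty^G(v) \subseteq P_\infty(x^+)$ is immediate because $(G, \LL)$ presents $X$; the reverse inclusion uses that $x^+$ contains an intrinsically synchronizing factor $w$, which together with left-resolvingness forces any representative of $u^- x^+$ in $G$ to pass through vertices determined by $w$ and therefore to terminate at $v$.

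Once this identification is in place, the remainder is routine. If $e \in G^1$ carries an $a$-labelled edge from $u$ to $v$, applying the preceding construction at $v$ yields an intrinsically synchronizing $z^+$ with $\varphi(v) = P_\infty(z^+)$, whence $\varphi(u) = P_\infty(a z^+)$ — precisely the definition of an $a$-edge in the Krieger cover from $\varphi(u)$ to $\varphi(v)$. Left-resolvingness on both sides together with injectivity on vertices upgrades $\varphi$ to a bijection on edges. Finally, surjectivity onto the top irreducible component: every such vertex equals $P_\infty(x^+)$ for some intrinsically synchronizing $x^+$; by the synchronizing property and irreducibility of $G$, the ray $x^+$ has a representative in $G$, and the starting vertex of that representative is mapped to $P_\infty(x^+)$ by $\varphi$. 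This completes the identification of $(G, \LL)$ with the left Fischer cover.
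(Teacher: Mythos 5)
First, a remark on scope: the paper offers no proof of this statement at all — it is imported verbatim from Lind and Marcus as Corollary 3.3.19 — so there is no in-paper argument to compare yours against. Your overall strategy (identify each vertex of $(G,\LL)$ with its predecessor set and match it against the top irreducible component of the left Krieger cover) is the standard textbook route, and the forward direction is essentially fine, though note that Lemma \ref{lem_K_basic_facts}(\ref{K_facts_equal}) gives $P_\infty^K(P)=P$, not $P_\infty^F(P)=P$; to transfer it you should observe that maximality of the top component prevents any edge of $K$ from entering $F$ from outside, so every path in $K$ terminating at $P\in F^0$ already lies in $F$.

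The converse, however, has a genuine gap at precisely the step you identify as the main obstacle. You take a right-ray $y^+$ with a representative starting at $v$ and ``extend on the left along an appropriate path terminating at $v$'' to an intrinsically synchronizing ray $x^+$ ``whose representative still starts at $v$.'' That last claim is false for the construction described: prepending the label $u$ of a path ending at $v$ yields the ray $uy^+$, whose representatives start at the \emph{source} of the prepended path, not at $v$; left extension moves the base point, so $P_\infty(uy^+)$ is in general $P_\infty^G(v')$ for a different vertex $v'$. What you actually need, for each $v$, is a word $w_v$ with $s(w_v)=\{v\}$, i.e.\ every path in $G$ labelled $w_v$ starts at $v$; then $x_v^+=w_vz^+$ does the job. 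One gets this by first running the shrinking argument behind \cite[Proposition 3.3.16]{lind_marcus} (this is where left-resolvingness and predecessor-separation genuinely enter) to produce a single word $w_0$ with $s(w_0)=\{v_0\}$, and then, for arbitrary $v$, choosing by irreducibility a path from $v$ to $v_0$ labelled $u_v$ and setting $w_v=u_vw_0$: left-resolvingness forces every representative of $w_v$ to begin at $v$. A related slip occurs in your reverse inclusion and surjectivity arguments, where you pass from ``$x^+$ contains an intrinsically synchronizing word $w$ for $X$'' to ``the vertices of any representative of $x^+$ are determined by $w$.'' The latter says $w$ is synchronizing \emph{for the presentation} $G$ (all $G$-paths labelled $w$ share their initial vertex), which does not follow from intrinsic synchronization for $X$ alone; it is exactly what the lemma above supplies, and assuming it outright for $G$ is circular, since that property is only known for the Fischer cover after the isomorphism is established. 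With the words $w_v$ in hand, the remaining steps (injectivity from predecessor-separation, the edge correspondence, surjectivity) go through as you outline them.
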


\begin{example}
\label{ex_even}
\index{even shift!Krieger cover of}\index{even shift!Fischer cover of}
Let $X$ be the even shift discussed in Example \ref{ex_even_def} and consider the predecessor sets:
\begin{align*}
  P_1 &= P_\infty( 0^{2n}1 x^+) = \{ y^-10^{2k} \in X^- \mid k \in \N_0 \}
                           \cup \{ 0^\infty \} \\
  P_2 &= P_\infty( 0^{2n+1}1 x^+) = \{ y^-10^{2k+1} \in X^- \mid k \in \N_0 \}
                           \cup \{ 0^\infty \}  \\
  P_3 &= P_\infty(0^\infty) = X^-.   
\end{align*}
A word  (and hence a right-ray) is intrinsically synchronizing if and
only if it contains a $1$, so the left Fischer cover is the
irreducible subgraph of the left Krieger cover which contains the
two vertices corresponding to the first two predecessor sets.
Figure \ref{fig_even_shift} shows the left Fischer cover and the
left Krieger cover of the even shift.
\end{example}

\begin{figure} 
\begin{center}
\begin{tikzpicture}
  [bend angle=45,
   knude/.style = {circle, inner sep = 0pt},
   vertex/.style = {circle, draw, minimum size = 1 mm, inner sep =
      0pt,},
   textVertex/.style = {rectangle, draw, minimum size = 6 mm, inner sep =
      1pt},
   to/.style = {->, shorten <= 1 pt, >=stealth', semithick}]
  
  \node[knude] (trans) at (5,0) {} ;

  \node[textVertex] (u) at (0,0) {$P_1$};
  \node[textVertex] (v) at (2,0) {$P_2$};

  \node[textVertex] (P1) at ($(u)+(trans)$) {$P_1$};
  \node[textVertex] (P2) at ($(v)+(trans)$) {$P_2$};
  \node[textVertex] (P3) at ($(0,-2)+(trans)$) {$P_3$};

  \node[knude] (fiscer)  at (1,1.5) {$(F, \LL_F)$};
  \node[knude] (krieger) at ($(fiscer)+(trans)$) {$(K, \LL_K)$};

  \draw[to, bend left=45] (u) to node[auto] {0} (v);
  \draw[to, bend left=45] (v) to node[auto] {0} (u);
  \draw[to, loop left] (u) to node[auto] {1} (u);
  \draw[to, bend left=45] (P1) to node[auto] {0} (P2);
  \draw[to, bend left=45] (P2) to node[auto] {0} (P1);
  \draw[to, loop left] (P1) to node[auto] {1} (P1);
  \draw[to] (P1) to node[auto, swap] {1} (P3);
  \draw[to, loop below] (P3) to node[auto] {0} (P3);
\end{tikzpicture}
\end{center}
\caption[Covers of the even shift.]{Left Fischer cover and left Krieger cover of the even shift. Note that the left Fischer cover is the top irreducible component of the left Krieger cover.}
\label{fig_even_shift}
\end{figure}
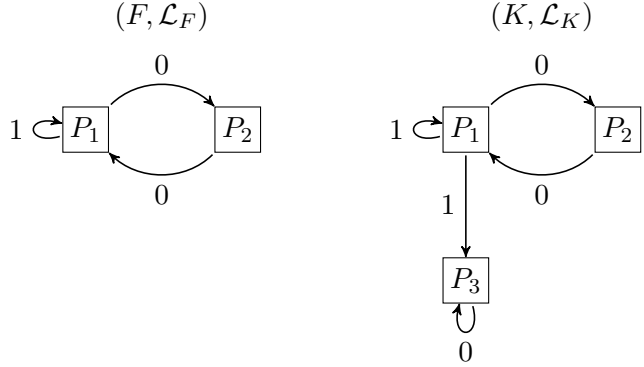

\index{labelled graph!right-closing}\index{labelled graph!right-closing!at vertex}
The introduction of the left Fischer cover allows simple definitions of two nicely behaved classes of sofic shifts called, respectively, almost finite type shifts \cite{marcus,nasu_aft} and near Markov shifts \cite{boyle_krieger}. A labelled graph $(E, \LL)$ is said to be \emph{right-closing with lag $l$ at $v \in E^0$} if whenever $\lambda, \mu \in E^l$, $s(\lambda) = s(\mu) = v$, and $\LL(\lambda) = \LL(\mu)$, then $\lambda_1 = \mu_1$. 
$(E, \LL)$ is said to be \emph{right-closing with lag $l$} if it is right-closing with lag $l$ at every vertex $v \in E^0$. This is a generalisation of the right-resolving graphs introduced earlier since a graph is right-resolving if and only if it is right-closing with lag $1$. Left-closing graphs are defined analogously.

\begin{definition}[{\cite[Definition 5.1.4]{lind_marcus}}]
\index{shift space!of almost finite type}\index{AFT}
An irreducible sofic shift $X$ is said to be of \emph{almost finite type} (AFT) if the left Fischer cover is right-closing.
\end{definition}

\noindent
This definition appears asymmetric, but in fact, an irreducible sofic shift is AFT if and only if the right Fischer cover is left-closing. These are two of many different equivalent definitions of AFT shifts. Note that every SFT is an AFT. The following notation is taken from \cite{boyle_carlsen_eilers}.

\index{Fischer cover!$m$ to $1$}
\begin{definition}
Let $X$ be an irreducible sofic shift with left Fischer cover $(F,\LL_F)$ and let $\pi \colon \X_F \to X$ be the covering map. Define
\begin{itemize}
\item the \emph{multiplicity} $m(\pi) = \sup \{ \lvert \pi^{-1}(x) \rvert \mid x \in X \}$,
\item the \emph{multiplicity set} $M(\pi) = \{ x \in X \mid \lvert \pi^{-1}(x) \rvert > 1 \}  \subseteq X$, and
\item the \emph{multiplicity set} $M^{-1}(\pi) = \pi^{-1}(M(\pi))  \subseteq \X_F$.
\end{itemize}
\end{definition}

\noindent
Since the left Fischer cover is left resolving, $m(\pi) < \lvert F^0 \rvert < \infty$, so the map $\pi$ is $m(\pi)$ to $1$.
An irreducible sofic shift is AFT if and only if $M^{-1}(\pi)$ is closed \cite{marcus,nasu_aft}, so for an AFT, both $M^{-1}(\pi)$ and $M(\pi)$ are shift spaces. 

\index{near Markov shift}
\begin{definition}[{Boyle and Krieger \cite{boyle_krieger}}]
Let $X$ be an irreducible sofic shift with left Fischer cover $(F,\LL_F)$ and let $\pi \colon \X_F \to X$ be the covering map. $X$ is said to be \emph{near Markov} if $M(\pi)$ is finite. 
\end{definition}

\noindent
Note that every near Markov shift is an AFT and that every SFT is near Markov.

\subsection{Canonical covers}
A cover is particularly useful if it respects conjugacy. This is captured by the following definition.

\index{cover!canonical}
\begin{definition}
A cover $(\cov, \pi_{\cov})$ defined on a subclass $S$ of the class of sofic shifts is said to be \emph{canonical} if whenever $X,Y \in S$ and $\varphi \colon X \to Y$ is a conjugacy, there exists a unique conjugacy $\cov(\varphi) \colon \cov(X) \to \cov(Y)$ such that the following diagram commutes
\begin{center}
\begin{tikzpicture}
  [bend angle=10,
   clear/.style = {rectangle, minimum width = 5 mm, minimum height = 5 mm, inner sep = 0pt}, 
   map/.style = {->, shorten >= 4 pt, shorten <= 4 pt,  semithick}]
  
  \node[clear] (X) at (0,0) {$X$};
  \node[clear] (Y) at (4,0) {$Y$};
  \node[clear] (cX) at (0,2) {$\cov (X)$};
  \node[clear] (cY) at (4,2) {$\cov (Y)$};

  \draw[map] (X) to node[auto] {$\varphi$} (Y);
  \draw[map] (cX) to node[auto,swap] {$\pi_{\cov(X)}$} (X);
  \draw[map] (cX) to node[auto] {$\cov(\varphi)$} (cY);
  \draw[map] (cY) to node[auto] {$\pi_{\cov(Y)}$} (Y);

\end{tikzpicture}
\end{center}
\end{definition}

\index{Krieger cover!is canonical}\index{Fischer cover!is canonical}
\begin{thm}[{Krieger \cite{krieger_sofic_I}}]
\label{thm_lkc_lfc_canonical}
The left and right Krieger and Fischer covers are canonical.
\end{thm}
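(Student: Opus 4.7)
The plan is to exploit the intrinsic characterization of vertices of the left Krieger cover as predecessor sets of right-rays. First, by passing to higher block representations, I would reduce to the case where the conjugacy $\varphi \colon X \to Y$ is a $1$-block code, so that $\varphi$ induces well-defined maps $\varphi^\pm$ on one-sided rays satisfying $\varphi(z^- x^+) = \varphi^-(z^-)\varphi^+(x^+)$. A direct computation gives $\varphi^-(P_\infty^X(x^+)) = P_\infty^Y(\varphi^+(x^+))$, so $P_\infty^Y(\varphi^+(x^+))$ depends only on the vertex $P_\infty^X(x^+) \in K_X^0$, and the assignment $P_\infty^X(x^+) \mapsto P_\infty^Y(\varphi^+(x^+))$ defines a bijection $K_X^0 \to K_Y^0$ whose inverse arises from the same construction applied to $\varphi^{-1}$.

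For existence of $\cov(\varphi)$, I would extend this bijection to a labelled graph isomorphism $K_X \to K_Y$ by sending the edge from $P_\infty^X(ax^+)$ to $P_\infty^X(x^+)$ labelled $a$ to the unique edge in $K_Y$ from $P_\infty^Y(\varphi(a)\varphi^+(x^+))$ to $P_\infty^Y(\varphi^+(x^+))$ labelled $\varphi(a)$; this edge exists by the definition of edges in $K_Y$ and is unique because the Krieger cover has at most one edge per triple of source, target, and label. The induced $1$-block code $\cov(\varphi) \colon \X_{K_X} \to \X_{K_Y}$ is then a conjugacy, and $\pi_{\cov(Y)} \circ \cov(\varphi) = \varphi \circ \pi_{\cov(X)}$ holds by construction.

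The hard part is uniqueness. Given a second lift $\psi$, define $f = \psi \circ \cov(\varphi)^{-1} \colon \X_{K_Y} \to \X_{K_Y}$, so that $\pi_{\cov(Y)} \circ f = \pi_{\cov(Y)}$ and hence $f$ preserves labels. Fix $\mu \in \X_{K_Y}$ and $z^- \in s(\mu_0)$; Lemma \ref{lem_K_basic_facts}(\ref{K_facts_equal}) produces a left-ray in $K_Y$ labelling $z^-$ and ending at $s(\mu_0)$, whose concatenation with $\mu_{\geq 0}$ is a path $\mu''$ agreeing with $\mu$ on all positions $\geq 0$. Consequently $f(\mu'')_i = f(\mu)_i$ for every $i$ beyond the anticipation of $f$, and the left-resolving property propagates this equality back to $i = 0$; in particular $s(f(\mu'')_0) = s(f(\mu)_0)$. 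Since $f(\mu'')_{<0}$ labels $z^-$ and ends at this common vertex, Lemma \ref{lem_K_basic_facts}(\ref{K_facts_equal}) gives $z^- \in s(f(\mu)_0)$, so $s(\mu_0) \subseteq s(f(\mu)_0)$. Applying the same argument to $f^{-1}$ yields the reverse inclusion, and predecessor-separation of $K_Y$ promotes this set equality to equality of vertices; shift-invariance together with uniqueness of edges by source, target, and label then force $f = \id$, whence $\psi = \cov(\varphi)$.

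For the Fischer cover, canonicity is inherited by restriction: intrinsic synchronization is a conjugacy invariant, so $\cov(\varphi)$ restricts to a conjugacy between the top irreducible components of the Krieger covers, which are precisely the Fischer covers of irreducible sofic shifts; uniqueness is inherited likewise.
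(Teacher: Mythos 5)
There is a genuine gap in the existence step, and it sits exactly where the real difficulty of this theorem lies. Your reduction ``to the case where $\varphi$ is a $1$-block code'' does not buy the splitting you need: even when $\varphi$ itself is induced by a single-symbol map (so that $\varphi(z^-x^+)=\varphi^-(z^-)\varphi^+(x^+)$ does hold), the inverse $\varphi^{-1}$ is in general a sliding block code with positive memory, and then the ``direct computation'' $\varphi^-(P_\infty^X(x^+))=P_\infty^Y(\varphi^+(x^+))$ fails --- only the inclusion $\subseteq$ is automatic. Concretely, let $X$ be the edge shift of the golden mean graph, with edges named $aa$, $ab$, $ba$, and let $\varphi$ be the $1$-block conjugacy onto the golden mean shift $Y$ sending each edge to its second letter, so that $\varphi^{-1}$ has memory $1$. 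The right-rays $(aa)(aa)(aa)\cdots$ and $(ab)(ba)(aa)\cdots$ have the same predecessor set in $X$ (both are paths starting at the vertex $a$), yet their images $aaa\cdots$ and $baa\cdots$ have different predecessor sets in $Y$; so the assignment $P_\infty^X(x^+)\mapsto P_\infty^Y(\varphi^+(x^+))$ is not even well defined, let alone a bijection. The same example shows that $\varphi^+$ need not be injective on right-rays ($(aa)^\infty$ and $(ba)(aa)^\infty$ have the same image), which is precisely where your argument for the reverse inclusion would have to go through, and that the inclusion $\varphi^-(P_\infty^X(x^+))\subseteq P_\infty^Y(\varphi^+(x^+))$ can be strict. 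The preliminary reduction is also circular as stated: the higher-block recoding $X\to X^{[N]}$ is itself a conjugacy that is not $1$-block, so lifting it to the level of Krieger covers is already an instance of the theorem being proved.

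This obstruction is exactly what Nasu's bipartite-code machinery is designed to remove, and it is the route the paper takes when it proves the analogous Theorem \ref{thm_GFC_canonical}: an arbitrary conjugacy is first decomposed into bipartite codes (Theorem \ref{thm_nasu_decomposition}), each of which splits compatibly in \emph{both} directions at half-integer cuts of the recoded shifts, and each such code is then realised as a standard bipartite code of a bipartite Krieger cover (Lemma \ref{lem_nasu_4.6}), from which the lift is read off via the induced pair of labelled graphs. Your uniqueness argument (a label-preserving self-conjugacy of a left-resolving, predecessor-separated cover commuting with the covering map is the identity) and the deduction of the Fischer case by restriction to the top irreducible component are sound in outline, but they presuppose an existence proof that the proposal does not supply.
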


\noindent
This result was also proved by Nasu \cite{nasu}, and the technique of that proof will be discussed in detail in Chapter \ref{chap_covers} where the same method will be used to show that a generalisation of the Fischer cover is also canonical. It should also be possible to use the same technique to prove that the past set cover is canonical.

\chapter{Flow Equivalence}
\label{chap_flow}

As mentioned in Section \ref{sec_conjugacy}, it is generally hard to determine when two shift spaces are conjugate. This makes it attractive to consider weaker equivalence relations as well, and flow equivalence is one of these. The definition is purely topological, but a result by Parry and Sullivan \cite{parry_sullivan} gives a description of flow equivalence in terms of symbolic dynamics. Flow equivalence of shift spaces has been studied much less intensely than conjugacy, but there are several interesting results. In particular, Franks \cite{franks} has proved that irreducible SFTs can be classified up to flow equivalence by a complete invariant that is both easy to compute and easy to compare, and Boyle and Huang \cite{boyle,boyle_huang,huang} have extended this to reducible SFTs at the cost of introducing a much less tractable invariant. Much less is known about the flow equivalence of strictly sofic shifts, but Boyle, Carlsen, and Eilers have given a classification of near Markov shifts and AFT shifts where the covering maps of the Fischer covers are $2$ to $1$ \cite{boyle_carlsen_eilers}. The complete flow classification of (irreducible) sofic shifts is still a very distant goal, so the work in this thesis has been focused on investigating invariants and classifying special classes of sofic shift spaces such as the ones considered in Chapters \ref{chap_beta} and \ref{chap_rs}.

Section \ref{sec_fe_se} gives basic definitions and properties of flow equivalence and introduces the fundamental result of Parry and Sullivan. Section \ref{sec_fe_inv} gives a brief account of classification results and invariants which will be used in the following, and Section \ref{sec_se_lemmas} gives a number of lemmas describing concrete flow equivalences which will be used repeatedly in later chapters. Finally, Section \ref{sec_gap_shifts} applies these results to a class of sofic shifts called gap shifts to illustrate some of the problems that arise when working with flow equivalence.

\section{Flow equivalence and symbol expansion}
\label{sec_fe_se}
A short introduction to the basic definitions and properties of flow equivalence will be given in the following. Particular focus will be on a process called symbol expansion and on the important result by Parry and Sullivan which allows a dynamical interpretation of flow equivalence. For a more detailed introduction to flow equivalence of shift spaces, see \cite{thamsborg}. 

\subsection{Flow equivalence}
\label{sec_flow_def}
Let $(X,\sigma)$ be a shift space. Intuitively, the goal of the following definition is to make sense of $\sigma^r(x)$ for $x \in X$ and $r \in \R \setminus \Z$. 

\index{suspension flow}\index{shift space!suspension flow of}\index{S@$SX$}
\begin{definition}
Let $(X ,\sigma)$ be a shift space, equip $X \times \R$ with the product topology, and define an equivalence relation on $X \times \R$ by $(x,t) \sim (\sigma(x), t-1)$. The \emph{suspension flow} $SX$ of $X$ is the quotient space $X \times \R / {\sim}$. The equivalence class of $(x,t)$ in $SX$ will be denoted $[x,t]$. 
\end{definition}

\index{mapping torus}\index{continuous flow}\index{shift space!homomorphism of!suspension of}
\noindent
The suspension flow is also sometimes (e.g.\ in \cite{boyle_carlsen_eilers}) called the \emph{mapping torus}.
The suspension flow is a special case of a more general topological concept called a \emph{continuous flow} (see e.g.\ \cite[p.\ 456]{lind_marcus}). The suspension flow of a shift space is a compact Hausdorff space. A homomorphism $\varphi \colon X \to Y$ between shift spaces induces a homeomorphism $S\varphi \colon SX \to SY$.

\index{flow line}
Let $X$ be a shift space. For each $[x,t] \in SX$ and $r \in \R$, define $[x,t] + r = [x,t+r]$. For each $z \in SX$, the set $\{ z + r \in SX \mid r \in \R \}$ is called a \emph{flow line}.
If $Y$ is a shift space and if $\Phi \colon SX \to SY$ is a homeomorphism, then for each $z \in SX$, there exists a map $\varphi_z \colon \R \to \R$ such that $\Phi(z + r) = \Phi(z) + \varphi_{z}(r)$ for all $r \in \R$ (see e.g.\ \cite[Theorem 6]{thamsborg} for details), i.e.\ a homeomorphism maps flow lines to flow lines.

\index{flow equivalence}\index{shift space!flow equivalent}\index{$\FE$}
\begin{definition}
Let $X$ and $Y$ be shift spaces. A homeomorphism $\Phi \colon SX \to SY$ is said to be a \emph{flow equivalence} if
there for each $z \in SX$ exists a monotonically increasing map $\varphi_z \colon \R \to \R$ such that $\Phi(z + r) = \Phi(z) + \varphi_{z}(r)$. In this case, $X$ and $Y$ are said to be \emph{flow equivalent} and this is denoted $X \FE Y$.
\end{definition}

\noindent
I.e.\ a homeomorphism is a flow equivalence if it maps flow lines to flow lines in a direction preserving manner. It is easy to check that flow equivalence is an equivalence relation and that conjugate shift spaces are also flow equivalent.

\subsection{Symbol expansion}
\index{exp@$\exp_A$}
Let $X$ be a shift space with alphabet $\AA$, let $A \subseteq \AA$, and introduce a new symbol $\diamond_a \notin \AA$ for each $a \in A$.
The goal is to define a new shift space obtained from $X$ by replacing each occurrence of $a \in  A$ by $a \diamond_a$ in each $x \in X$. Let $\AA_{\exp_A} = \AA \cup \{ \diamond_a \mid a \in A \}$, and  define $\tau_A \colon \AA  \to \AA_{\exp_A}^*$ by 
\begin{displaymath}
\tau_A(a) = \left\{ \begin{array}{l c l}
a \diamond_a & , & a \in A \\
a                     & , & a \notin A 
\end{array} \right. .
\end{displaymath}
Extend $\tau_A$ to a map on $\AA^*$ in the natural way. In order to construct a corresponding map of biinfinite sequences, it is necessary to choose how to index the entries in the image, so define $\exp_A \colon X \to \AA_{\exp_A}^\Z$ by
\begin{displaymath}
  \exp_A( \cdots x_{-1}. x_0 x_1 \cdots ) = \cdots \tau_A (x_{-1}). \tau_A (x_{0}) \tau_A (x_{1}) \cdots  	                         
\end{displaymath}
Here, the period denotes the position to the left of entry number 0, so if $x \in X$ with $x_0 = a$ and $y = \exp_A(x)$, then $y_0 = a$ and $y_1 = \diamond_a$. The image $\exp_A(X)$ is closed but not shift invariant. This leads to the following definition.

\begin{definition}
Define $X^{\exp_A} = \exp_A(X) \cup \sigma(\exp_A(X))$. 
\end{definition}

\index{symbol expansion}\index{$a \mapsto a \diamond$}
\noindent
$X^{\exp_A}$ it is said to be \emph{the shift space obtained from $X$ via a symbol expansion of $A$}. This is justified by the following proposition. For $A = \{ a \}$, write $\exp_a$ instead of $\exp_{\{ a\}}$. In this case, the symbol expanded shift $X^{\exp_a}$ will often be denoted $X^{a \mapsto a \diamond}$ to emphasise the procedure.

\begin{prop}
$X^{\exp_A}$ is a shift space over $\AA_{\exp_A}$ with language $\BB_{\exp_A} = \tau_A( \BB(X) ) \cup \{ w  \mid \exists a \in A : aw \in \tau_A( \BB(X) ) \}$.
\end{prop}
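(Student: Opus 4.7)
The plan is to verify the two assertions separately: first that $X^{\exp_A}$ is a shift space over $\AA_{\exp_A}$, and second that its language is exactly the set $\BB_{\exp_A}$. Throughout, I would think of each $y = \exp_A(x)$ as naturally decomposed into blocks $\tau_A(x_k)$ of length one or two, and track which block each coordinate belongs to. For $x_k \in A$, the block $\tau_A(x_k) = x_k \diamond_{x_k}$ has a distinguished ``middle'' position carrying the symbol $\diamond_{x_k}$; otherwise the block is a single ordinary letter. This block picture controls every subsequent argument.

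To see that $X^{\exp_A}$ is closed, I would first observe that $\exp_A \colon X \to \AA_{\exp_A}^{\Z}$ is continuous: if two points of $X$ agree on the central window $[-n,n]$, their images agree on at least $[-n,n]$. Since $X$ is compact, $\exp_A(X)$ is compact, hence closed; $\sigma$ is a homeomorphism of the full shift, so $\sigma(\exp_A(X))$ is also closed, and $X^{\exp_A}$ is the union of two closed sets. For shift invariance, I would show $\sigma(X^{\exp_A}) = X^{\exp_A}$ by cases on $y = \exp_A(x)$. If $x_0 \notin A$, then $\sigma(y) = \exp_A(\sigma x)$; if $x_0 \in A$, then $\sigma(y)$ starts with $\diamond_{x_0}$ and lies in $\sigma(\exp_A(X))$, while $\sigma^2(y) = \exp_A(\sigma x)$. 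Hence $\sigma$ maps $\exp_A(X)$ into $X^{\exp_A}$, and applying $\sigma$ once more to an element of $\sigma(\exp_A(X))$ either returns to $\exp_A(X)$ or to $\sigma(\exp_A(X))$ by the same case analysis. The reverse inclusion is obtained symmetrically: if $y = \exp_A(x)$ and $x_{-1}\notin A$ then $\sigma^{-1}(y) = \exp_A(\sigma^{-1}x)$, while if $x_{-1}\in A$ then $\sigma^{-1}(y) = \sigma(\exp_A(\sigma^{-1}x))$; the case $y\in \sigma(\exp_A(X))$ is immediate.

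For the language, one inclusion is very easy. Any $w \in \tau_A(\BB(X))$ is a factor of some $\exp_A(x)$, hence of an element of $X^{\exp_A}$. If instead $aw = \tau_A(u)$ with $a\in A$ and $u \in \BB(X)$, choose $x\in X$ containing $u$; then $aw$ occurs as a factor of $\exp_A(x)$, and truncating its first symbol exhibits $w$ as a factor as well. Hence $\BB_{\exp_A} \subseteq \BB(X^{\exp_A})$.

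The interesting direction, and the main bookkeeping step, is the reverse inclusion. Given a factor $w$ of some $y \in X^{\exp_A}$, I may shift once and reduce to the case $y = \exp_A(x)$ for some $x\in X$. Let $w = y_{[i,j]}$ and identify the indices $k_1,k_2$ with position $i$ lying in the block $\tau_A(x_{k_1})$ and position $j$ in $\tau_A(x_{k_2})$. If position $i$ is the start of its block, then regardless of whether position $j$ is the start or the middle of its block, $w$ already equals $\tau_A(x_{[k_1,k_2-1]})$ or $\tau_A(x_{[k_1,k_2]})$, and therefore lies in $\tau_A(\BB(X))$. If instead position $i$ is the middle of its block, then $x_{k_1} \in A$, the first symbol of $w$ is $\diamond_{x_{k_1}}$, and prepending $a := x_{k_1}$ yields $aw = \tau_A(x_{[k_1,k_2]})$ or $\tau_A(x_{[k_1,k_2-1]})$, either of which lies in $\tau_A(\BB(X))$. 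This handles all four subcases and shows $\BB(X^{\exp_A}) \subseteq \BB_{\exp_A}$, completing the proof. The only real obstacle is disciplined position-tracking in this last step; everything else is a routine continuity or set-theoretic observation.
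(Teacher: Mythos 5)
Your first half --- closedness of $X^{\exp_A}$ via compactness of $X$ and continuity of $\exp_A$, plus the case analysis showing $\sigma^{\pm 1}$ preserve $\exp_A(X)\cup\sigma(\exp_A(X))$ --- is correct, and considerably more explicit than the paper's proof, which only appeals to the factor characterisation of languages. The genuine problem is in the reverse inclusion of the language computation, in a subcase you wave through. If position $j$ is the \emph{first} symbol of a block $\tau_A(x_{k_2})$ of length two (i.e.\ $x_{k_2}\in A$), then $w$ ends with the letter $x_{k_2}$ but not with its companion $\diamond_{x_{k_2}}$, so $w=\tau_A(x_{[k_1,k_2-1]})\,x_{k_2}$. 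This word is not $\tau_A(u)$ for any $u$ (any $\tau_A(u)$ whose last underlying letter lies in $A$ must end in the corresponding $\diamond$-symbol), and prepending a letter $a\in A$ cannot repair it either, since $aw$ would then have to begin with $a\diamond_a$. So your assertion that ``$w$ already equals $\tau_A(x_{[k_1,k_2-1]})$ or $\tau_A(x_{[k_1,k_2]})$'' fails in exactly this subcase, and the subcase where $i$ is the middle of its block and $j$ is the start of a length-two block has the same defect.

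This is not merely a gap in the write-up: the identity being proved is false as stated. Take $X=\{a,b\}^{\Z}$ and $A=\{a\}$. Then $ba$ is a factor of $\exp_A(x)$ whenever $ba$ occurs in $x$, yet $ba\notin\tau_A(\BB(X))$ and $aba\notin\tau_A(\BB(X))$, so $ba\notin\BB_{\exp_A}$. Equivalently, $ba\diamond_a\in\tau_A(\BB(X))\subseteq\BB_{\exp_A}$ while its factor $ba$ is not in $\BB_{\exp_A}$; the displayed set is not factor-closed, hence cannot be the language of any shift space (this also shows the paper's one-line appeal to its language criterion is itself too quick). The correct description must include the right-truncated words as well, namely $\BB(X^{\exp_A})=\{\,w \mid pws\in\tau_A(\BB(X)) \text{ for some } p\in\{\epsilon\}\cup A,\ s\in\{\epsilon\}\cup\{\diamond_a\mid a\in A\}\,\}$, i.e.\ the set of all factors of words in $\tau_A(\BB(X))$. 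Your block-position bookkeeping, carried out honestly through all four truncation patterns, proves precisely this corrected formula; you should record the counterexample and the corrected statement rather than force the argument into the form given.
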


\begin{proof}
$\BB_{\exp_A}$ satisfies the conditions in Proposition \ref{prop_language} since $\BB(X)$ satisfies them, and it is easy to check that $w \in \AA_{\exp_A}^*$ is a factor of some $x \in X^{\exp_A}$ if and only if $w \in \BB_{\exp_A}$. 
\end{proof}

\noindent
Similarly, if $\FF$ is a set of forbidden words for $X$, then 
$\FF(X^{\exp_A}) = \tau_A \FF \cup \{ b \diamond_a \mid b \neq a\}$ is 
a set of forbidden words for $X^{\exp_A}$, and $X^{\exp_A}$ is the shift space obtained from $X$ by replacing every $a$ by $a \diamond_a$ in every $x \in X$ for each $a \in A$. Symbol expansion is often thought of as a time-delay: Consider a biinfinite sequence $x \in X$ as being read one symbol at a time by a machine taking equally long reading each symbol. In $X^{\exp_A}$, the word $a \diamond_a$ takes twice as long to read as the individual symbols, but in many ways it still behaves like a single symbol. 

Symbol expansion is important in the study of flow equivalence because of the following fundamental theorem which allows flow equivalence to be interpreted in a natural way in terms of symbolic dynamics.

\begin{thm}[{Parry and Sullivan \cite{parry_sullivan}}]
\label{thm_parry-sullivan}
\index{flow equivalence!and symbol expansion}
Shift spaces $X$ and $Y$ are flow equivalent if and only if there exist shift spaces $X = X_1, \ldots, X_n = Y$ such that  for each $1 \leq i \leq n-1$, either
\begin{itemize}
\item $X_i$ and $X_{i+1}$ are conjugate,
\item $X_{i+1}$ is obtained from $X_{i}$ via a symbol expansion, or  
\item $X_{i}$ is obtained from $X_{i+1}$ via a symbol expansion.
\end{itemize}
\end{thm}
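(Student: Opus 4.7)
The plan is to prove the two directions separately. The backward direction ($\Leftarrow$) reduces, since flow equivalence is transitive and conjugacies already induce flow equivalences via the map $S\varphi$ noted earlier in the excerpt, to showing $X \FE X^{\exp_A}$ for every $A \subseteq \AA(X)$. First I would build an explicit homeomorphism $\Phi \colon SX \to SX^{\exp_A}$ by reparametrising time so that each symbol $a \in A$ in $X$ occupies length $2$ instead of length $1$, the extra unit of length accommodating the $\diamond_a$ in the image. Concretely, represent a point of $SX$ as $[x,t]$ with $t \in [0,1)$ and send it to the point of $SX^{\exp_A}$ on the flow line through $[\exp_A(x),0]$ whose time coordinate is $t$ shifted by an extra unit for every previously passed occurrence of a letter of $A$ (and by an extra $t$ itself when $x_0 \in A$). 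A direct check shows $\Phi$ is well-defined on $\sim$-classes, a homeomorphism, and monotonically time-preserving on every flow line, so it is a flow equivalence.

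For the forward direction ($\Rightarrow$), given a flow equivalence $\Phi \colon SX \to SY$, the strategy is to find a common refinement of the two natural cross-sections and to identify it combinatorially with a sequence of symbol expansions of $X$ and of $Y$. Set $C_X = X \times \{0\} \subseteq SX$ and $C_Y = \Phi^{-1}(Y \times \{0\}) \subseteq SX$; both are compact, totally disconnected cross-sections, and by compactness and monotonicity of $\Phi$ on each flow line the first-return times to $C_X$, to $C_Y$, and to $C_X \cup C_Y$ are bounded above and bounded away from zero. The first-return system to $C_X \cup C_Y$ then defines a new shift space $Z$, and the claim is that $Z$ is reachable from $X$ by finitely many symbol expansions followed by a conjugacy, and symmetrically from $Y$; concatenating the two sequences produces the required chain $X = X_1, \ldots, X_n = Y$.

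The step I expect to be the main obstacle is controlling the combinatorics of the refinement. Concretely one must show that $C_Y \setminus C_X$ decomposes into finitely many clopen pieces each of which sits ``inside a single symbol'' of $X$ at a common fractional position along the flow segment, so that passage from $X$ to $Z$ really is a finite composition of moves of the form $a \mapsto a \diamond_a$. This finiteness rests on $C_X$ and $C_Y$ being compact and clopen and on $X$ being zero-dimensional, which together let one refine the partition of $X$ induced by the hits of $C_Y$ to a finite clopen cylinder partition with only finitely many distinct types. The direction-preserving hypothesis is essential here: without it, we would also need to reverse flow lines, and the move $a \mapsto a \diamond_a$ alone would not suffice. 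Once this finite classification is available, each type of ``insertion point'' is bijectively labelled by a letter of an alphabet being expanded, and the first-return map exhibits $Z$ as $X$ after finitely many symbol expansions up to a higher-block conjugacy. Applying the same analysis to $C_X$ viewed inside $SY$ via $\Phi$ yields the symmetric statement on the $Y$ side and completes the proof.
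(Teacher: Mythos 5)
The paper does not prove this theorem at all: it attributes it to Parry and Sullivan and explicitly refers the reader to \cite[pp.\ 87]{parry_tuncel} and \cite[Theorem 17]{thamsborg} for detailed proofs, so there is no in-paper argument to compare against. Your sketch is precisely the standard cross-section/first-return argument from those references --- time-reparametrisation for the easy direction, and for the converse the common refinement $C_X \cup C_Y$ together with a higher-block recoding to make the number of insertion points depend only on a bounded cylinder --- and it is correct in outline (the only imprecision is that the hits of $C_Y$ need not sit at a ``common fractional position'' inside a symbol; only their count and order per symbol occurrence matter for the combinatorial identification, which your cylinder-partition step already supplies).
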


\noindent
In this way, flow equivalence is the coarsest equivalence relation finer than conjugacy and symbol expansion. The proof in \cite{parry_sullivan} is very compact; for a more detailed proof, see  \cite[pp.\ 87]{parry_tuncel} or \cite[Theorem 17]{thamsborg}.

\begin{remark}
In the definition of symbol expansion, a direction was chosen by inserting the new symbol $\diamond_a$ to the right of the original symbol $a$. One could think that a different notion of flow equivalence would be obtained by considering a form of symbol expansion inserting the new symbol to the left of the original symbol, but this is not the case. Indeed, it is straightforward to check that such a left symbol expansion can be obtained by a right symbol expansion followed by a conjugacy. This is expectable, since there is no preferred direction in the topological definition of flow equivalence.
\end{remark}

\section{Flow invariants and classification}
\label{sec_fe_inv}
\index{flow invariant}
Irreducible shifts of finite type have been completely classified up to flow equivalence by Franks \cite{franks}, and this has been extended to a classification of arbitrary shifts of finite type by Boyle and Huang \cite{boyle,boyle_huang, boyle_sullivan, huang}. Boyle, Carlsen, and Eilers have reduced the classification problem of AFT shifts
to a problem involving reducible SFTs equipped with group actions 
and used this to give classifications of near Markov shifts and AFT shifts where the covering maps of the Fischer covers are $2$ to $1$ \cite{boyle_carlsen_eilers}, but very little is known about the classification of general (irreducible) sofic shifts. The purpose of this section is to give an overview of some  classification results and invariants for sofic shifts. Invariants of flow equivalence will be called \emph{flow invariants} in the following.

The following well-known result shows that many of the classes of shift spaces considered so far are invariant under flow equivalence.

\begin{prop}
If $X$ is an SFT, sofic, or irreducible and if $Y$ is flow equivalent to $X$, then $Y$ is, respectively, an SFT, sofic, and irreducible.
\end{prop}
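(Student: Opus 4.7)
The strategy is to invoke Theorem~\ref{thm_parry-sullivan}, reducing the problem to checking, for each of the three properties, that it is preserved by conjugacy and by symbol expansion in \emph{both} directions ($X \rightsquigarrow X^{\exp_A}$ and $X^{\exp_A} \rightsquigarrow X$). Preservation under conjugacy is immediate in all three cases: the SFT case is recorded explicitly in the excerpt; soficness is preserved because composing a factor map $Z \to X$ (with $Z$ an SFT) with a conjugacy $X \to Y$ yields a factor map $Z \to Y$; and irreducibility is preserved because a conjugacy is a shift-intertwining homeomorphism, so the image of a point with dense forward orbit again has dense forward orbit.

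For the SFT case under symbol expansion, the forward direction follows from the description $\FF(X^{\exp_A}) = \tau_A \FF \cup \{\, b \diamond_a \mid b \neq a \,\}$ recorded in the excerpt, which is finite whenever $\FF$ is. For the converse I would invoke Theorem~\ref{thm_m-step}. Suppose $X^{\exp_A}$ is $M$-step and $uv,\, vw \in \BB(X)$ with $|v| \geq M$ and $u$ nonempty. Then $\tau_A(uv), \tau_A(vw) \in \BB(X^{\exp_A})$ and they overlap in $\tau_A(v)$, whose length is at least $M$; hence $\tau_A(uvw) \in \BB(X^{\exp_A})$. Since this word begins with a letter of $\AA$ rather than a $\diamond$-symbol, it must in fact lie in $\tau_A(\BB(X))$ (the other summand in the description of $\BB(X^{\exp_A})$ consists of words that begin with a $\diamond$), and injectivity of $\tau_A$ gives $uvw \in \BB(X)$; the degenerate case $u = \epsilon$ is immediate.

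For soficness under symbol expansion, the forward direction is handled by taking a finite labelled presentation $(G, \LL)$ of $X$ and subdividing every edge labelled $a \in A$ into two consecutive edges labelled $a$ and $\diamond_a$, producing a finite presentation of $X^{\exp_A}$. For the converse I would use Krieger's characterisation (noted in the excerpt) that a sofic shift is precisely one with only finitely many distinct predecessor sets. The assignment $P_\infty^X(w) \mapsto P_\infty^{X^{\exp_A}}(\tau_A(w))$ is well-defined and injective on distinct predecessor sets: any witness $v$ to $P_\infty^X(w_1) \neq P_\infty^X(w_2)$ yields a witness $\tau_A(v)$ to the corresponding inequality of image sets, using once more that a word in $\BB(X^{\exp_A})$ beginning with a letter of $\AA$ must come from $\tau_A(\BB(X))$. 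Finiteness for $X^{\exp_A}$ thus forces finiteness for $X$.

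For irreducibility, I would use the characterisation in the excerpt that a shift is irreducible iff it contains a point with dense forward orbit. If $x \in X$ has dense forward orbit, then every word in $\BB(X^{\exp_A})$ is either of the form $\tau_A(w)$ or the $\diamond$-initial tail of such a word, and each occurs as a factor of $\exp_A(x)$, so $\exp_A(x)$ has dense forward orbit in $X^{\exp_A}$; conversely, erasing the $\diamond$-symbols from a point of dense forward orbit in $X^{\exp_A}$ yields one in $X$. The main obstacle throughout is that $\exp_A$ is not a sliding block code---it alters the alphabet and the effective time-axis---but the arguments above all take place at the level of languages, predecessor sets, and orbit closures, where the combinatorial map $\tau_A$ gives a clean handle and the indexing issues do not intrude.
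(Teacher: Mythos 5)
Your proposal is correct and follows the same route as the paper: reduce via Theorem \ref{thm_parry-sullivan} to preservation under conjugacy and under symbol expansion, with your forward expansion arguments (finite forbidden set for the SFT case, subdividing labelled edges for the sofic case) matching the paper's verbatim. The paper's proof is far terser---it only spells out the direction $X \rightsquigarrow X^{\exp_a}$ and dismisses irreducibility with ``follows in a similar manner''---so your explicit treatment of the converse direction (recovering that $X$ is an SFT, sofic, or irreducible from the corresponding property of $X^{\exp_A}$ via the $M$-step criterion applied through $\tau_A$, Krieger's finite-predecessor-set characterisation, and dense forward orbits) supplies detail the paper leaves to the reader, and those arguments are all sound.
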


\begin{proof}
The three properties are preserved under conjugacy \cite[Theorem 2.1.10 and Corollary 3.2.3]{lind_marcus}, so by Theorem \ref{thm_parry-sullivan}, it is sufficient to check that they are also preserved under symbol expansions. Let $a \in \AA(X)$. 
If $X = \X_\FF$ for a finite set $\FF$, then $X^{\exp_a}$ can also be described by a finite set of forbidden words.
If $X$ is presented by a labelled graph $(G, \LL)$, then $X^{\exp_a}$ is presented by the graph obtained from $(G, \LL)$ by replacing every edge labelled $a$ with two edges in succession labelled respectively $a$ and $\diamond_a$. The invariance of irreducibility follows in a similar manner.
\end{proof}

\index{entropy!not flow invariant}
\index{periodic point!not flow invariant}
In this context, it is also worth mentioning that many important invariants of conjugacy are not flow invariants. The entropy, in particular, is not a flow invariant because a symbol expansion will change the number of allowed words of a given length. The symbol expansion of a periodic point is again a periodic point, but the period will change, so the number of periodic points of a given length is not a flow invariant.

\subsection{Shifts of finite type}

By Proposition \ref{prop_higher_block}, every SFT is conjugate (and hence also flow equivalent) to an edge shift, so it is sufficient to understand the flow equivalence of edge shifts. Bowen and Franks \cite{bowen_franks} have proved that the Bowen-Franks group of an edge shift $\X_A$ is not only an invariant of conjugacy but also an invariant of flow equivalence, and the same is true for the determinant of $\Id - A$ \cite{parry_sullivan}.
Since $\lvert \det(\Id -A) \rvert$ is determined by the Smith normal form of $\Id - A$, it is sufficient to know the Bowen-Franks group and the sign of the determinant in order to find the determinant. The following important theorem shows that these invariants are sufficient to give a complete flow classification of irreducible SFTs.

\begin{thm}[{Franks \cite{franks}}]
\label{thm_franks}
\index{shift of finite type!irreducible!flow equivalence of}
Let $\X_A, \X_B$ be irreducible edge shifts that are not flow equivalent to the trivial shift with one element. $\X_A \FE \X_B$ if and only if $\BF(A) = \BF(B)$ and $\sgn \det(\Id - A) = \sgn \det(\Id - B)$. 
\end{thm}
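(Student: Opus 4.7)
The plan is to treat the two directions asymmetrically: the ``only if'' direction is essentially stated in the preceding discussion, while the ``if'' direction is the substantive content and requires an explicit reduction to a canonical form.

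For the ``only if'' direction, both $\BF(A)$ and $\sgn \det(\Id - A)$ have already been identified as flow invariants (by Bowen--Franks and Parry--Sullivan respectively, as mentioned just before the statement), so if $\X_A \FE \X_B$ then equality of both quantities is automatic. So this direction requires no further argument beyond invoking these known invariance results.

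For the ``if'' direction my strategy is to translate flow equivalence of edge shifts into a combinatorial equivalence relation on the non-negative integer matrices themselves. Using Theorem \ref{thm_parry-sullivan} and Theorem \ref{thm_williams}, flow equivalence of $\X_A$ and $\X_B$ is generated by two moves on adjacency matrices: (i) strong shift equivalence $A \threesim B$, corresponding to conjugacy; and (ii) a ``Parry--Sullivan expansion'' replacing an edge from $v_i$ to $v_j$ by a length-two path through a new vertex, corresponding to a symbol expansion of the associated edge label. I would then verify directly that each such move induces a transformation of $\Id - A$ by elementary row/column operations over $\Z$ (possibly after augmentation by a $1 \times 1$ block $[1]$, to account for the new vertex in a Parry--Sullivan move). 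Each such transformation preserves both the cokernel $\BF(A) = \Z^n/\Z^n(\Id - A)$ and the sign of the determinant of $\Id - A$ (the augmentation by $[1]$ changes neither).

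The substantive step is the converse: given irreducible $A,B$ with $\BF(A) \simeq \BF(B)$ and $\sgn \det(\Id-A) = \sgn \det(\Id-B)$, I would show the matrices can be connected by these moves. The plan is to fix a canonical form, for instance the (signed) Smith normal form of $\Id - A$, and show that any irreducible $A$ can be transformed into this canonical form using only the allowed moves. The key technical point --- and the main obstacle --- is that integer elementary row/column operations are generally not of the positive form available to us; however, after sufficiently many Parry--Sullivan expansions the matrix can be embedded in a larger one with enough slack (positive entries in every row and column) that arbitrary $\mathrm{SL}$-type operations on $\Id - A$ can be realised as compositions of positive state-splittings/amalgamations and expansions. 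This is precisely where the hypotheses ``irreducible'' and ``not flow equivalent to the trivial shift'' enter: they guarantee that the spectral radius of $A$ exceeds $1$ and that the graph admits the enlargements needed to absorb sign changes and to implement the reductions to diagonal form. The final step is to observe that the Smith normal form of $\Id - A$ together with $\sgn \det(\Id - A)$ determines the canonical form uniquely, so matrices with the same pair of invariants are connected by the allowed moves, hence $\X_A \FE \X_B$.
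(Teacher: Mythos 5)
The paper does not prove this theorem at all: it is quoted verbatim from Franks \cite{franks} as a known classification result, so there is no internal proof to compare your attempt against. Your ``only if'' direction is fine and matches the paper's framing --- both $\BF(A)$ and $\sgn\det(\Id-A)$ are already established as flow invariants in the surrounding text, so equality is immediate. The issue is entirely with the ``if'' direction.

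There you have correctly identified the standard strategy (reduce every matrix to a canonical form for its pair of invariants using only moves realizable by flow equivalence), but the decisive step is asserted rather than proved, and it is not a routine verification --- it is the whole content of Franks' theorem. Concretely: you claim that ``after sufficiently many Parry--Sullivan expansions the matrix can be embedded in a larger one with enough slack\ldots that arbitrary $\mathrm{SL}$-type operations on $\Id - A$ can be realised as compositions of positive state-splittings/amalgamations and expansions.'' Knowing that $\Id-A$ and $\Id-B$ are $\mathrm{GL}_n(\Z)$-equivalent (same Smith form, same determinant sign) gives you a finite sequence of integer row/column operations connecting them, but each individual operation generally destroys the constraint that $A$ remain a non-negative irreducible matrix, and ``enough positive entries'' is not by itself sufficient to fix this: one needs the specific lemmas of Franks (or, in modern language, Boyle's positive-equivalence machinery) showing how a row addition or subtraction on $\Id-A$ can be simulated by an explicit chain of splittings, amalgamations and expansions that stays inside the admissible class, with a careful case analysis on signs and on which entries vanish. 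One also needs an explicit family of canonical representatives realizing every pair $(\textrm{group},\textrm{sign})$ and a proof that each non-trivial irreducible $A$ reaches one of them; your appeal to irreducibility and non-triviality correctly locates where these hypotheses are used (positive entropy, room to route paths), but does not supply the argument. As written, the proposal is a correct roadmap of Franks' proof rather than a proof.
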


\index{Bowen-Franks group!signed|\\see{Bowen-Franks invariant}}\index{BF+@$\BF_+$|see{Bowen-Franks invariant}}\index{Bowen-Franks invariant}
\index{Bowen-Franks invariant!range of}
\index{Bowen-Franks invariant!of full shift}
\index{Bowen-Franks invariant!of general SFT}
\index{full shift!Bowen-Franks invariant of}
\noindent
As mentioned in Section \ref{sec_conjugacy}, it is straightforward to compute the Bowen-Franks group of an edge shift, so the complete invariant is both easy to compare and easy to compute, and this makes the result very useful. The pair consisting of $\sgn \det(\Id - A)$ and $\BF(\X_A)$ is called \emph{the signed Bowen-Franks group} or the \emph{Bowen-Franks invariant} and it is denoted $\BF_+(\X_A)$. If, for example, $\BF(\X_A) = G$ and $\det(\Id - A) < 0$, then $\BF_+(\X_A) = -G$. As for the unsigned Bowen-Franks group, this terminology and notation will be extended to arbitrary SFTs via Proposition \ref{prop_higher_block} and Theorem \ref{thm_franks}.

The Bowen-Franks group is a finitely generated abelian group, so it can be written as a finite direct sum of cyclic groups. 
If $\BF(\X_A) = G$ and if one of the terms in $G$ is $\Z$, then $\det(\Id -A) = 0$, and the Bowen-Franks invariant is written $\BF_+(\X_A) = G$. 
It is easy to check that every possible combination of sign and finitely generated abelian group can be achieved as the Bowen-Franks invariant of an irreducible SFT. If $X^{[n]}$ is the full shift with $n$ symbols, then $\BF_+(X^{[n]}) = - \Z / (n-1) \Z$.

\index{shift of finite type!reducible!flow equivalence of}
In order to classify reducible SFTs, Boyle and Huang \cite{boyle,boyle_huang, boyle_sullivan, huang} use a collection of groups and group homomorphisms called the $K$-web as an invariant. The details of the construction will be omitted here since the result will not be needed in the following.

\subsection{Covers of sofic shifts}
When $(\cov, \pi_{\cov})$ is a cover defined on some subclass $S$ of the sofic shifts, $\cov X$ is an SFT for each $X \in S$. The following will demonstrate when the flow class of this SFT can be used as a flow invariant of $X$.

\index{cover!respecting symbol expansion}
\begin{definition}
A cover $(\cov, \pi_{\cov})$ defined on a subclass $S$ of the class of sofic shifts is said to \emph{respect symbol expansion} if for all $X \in S$, $a \in \AA(X)$, and $\diamond \notin \AA(X)$ there exists a map $\LL_{\cov(X)} \colon \AA(\cov(X)) \to \AA(X)$ such that 
\begin{itemize}
\item $\pi_{\cov(X) } ((y_n)_{n\in \Z}) = ( \LL_{\cov(X)} (y_n))_{n \in \Z}$ for all $(y_n)_{n\in \Z} \in \cov(X)$, and
\item for $A = \LL_{\cov(X)}^{-1}( \{ a \} )$, there exists a conjugacy $\varphi \colon \cov( X^{a \mapsto a \diamond}) \to  (\cov(X))^{\exp_A }$ such that the following diagram commutes
\begin{center}
\begin{tikzpicture}
  [bend angle=10,
   clear/.style = {rectangle, minimum width = 5 mm, minimum height = 5 mm, inner sep = 0pt}, 
   map/.style = {->, shorten >= 4 pt, shorten <= 4 pt,  semithick}]
  
  \node[clear] (cov_exp_aX) at (0,0) {$\cov( X^{a \mapsto a \diamond})$};
  \node[clear] (exp_A_covX) at (5,0) {$(\cov(X))^{\exp_A }$};
  \node[clear] (exp_aX) at (0,-2) {$X^{a \mapsto a \diamond}$};
  \node[clear] (covX) at (5,-2) {$\cov X$};
  
  \draw[map] (cov_exp_aX) to node[auto] {$\varphi$} (exp_A_covX);
  \draw[map] (cov_exp_aX) to node[auto,swap] {$\pi_{\cov(X^{a \mapsto a \diamond})}$} (exp_aX);  
  \draw[map] (exp_A_covX) to node[auto] {$\exp_A^{-1}$} (covX);
  \draw[map] (covX) to node[auto] {$\exp_a \circ \, \pi_{\cov(X)} $} (exp_aX);
\end{tikzpicture}
\end{center} 
\end{itemize}
\end{definition}

\begin{remark}
\label{rem_flow_cover_respect}
Let $S$ be a subclass of the class of sofic shifts
where each $X \in S$ has a distinguished presentation $(G_X , \LL_X)$ as described in Remark \ref{rem_cover_graph}, and consider the cover defined by $\cov(X) = \X_{G_X}$ and $\pi_{\cov}(X) = \LL_X$. This cover respects symbol expansion if $(G_{X^{a \mapsto a \diamond}} , \LL_{X^{a \mapsto a \diamond}})$ is the labelled graph obtained from $(G_X , \LL_X)$ by replacing each edge labelled $a$ by two edges in succession labelled respectively $a$ and $\diamond$. 
\end{remark}

\begin{thm}[{Boyle, Carlsen, and Eilers \cite{boyle_carlsen_eilers}}]
Let $(\cov, \pi_{\cov})$ be a  ca\-nonical cover respecting symbol expansion defined on a subclass $S$ of the class of sofic shifts. If $X,Y \in S$ and $\varphi \colon SX \to SY$ is a flow equivalence, then there exists a unique flow equivalence $\cov(\varphi) \colon S \cov(X) \to S \cov(Y)$ such that the following diagram commutes\label{thm_cover_fe}
\begin{center}
\begin{tikzpicture}
  [bend angle=10,
   clear/.style = {rectangle, minimum width = 5 mm, minimum height = 5 mm, inner sep = 0pt}, 
   map/.style = {->, shorten >= 6 pt, shorten <= 6 pt,  semithick}]
  
  \node[clear] (SX) at (0,0) {$SX$};
  \node[clear] (SY) at (4,0) {$SY$};
  \node[clear] (ScX) at (0,2) {$S \cov (X)$};
  \node[clear] (ScY) at (4,2) {$S \cov (Y)$};

  \draw[map] (SX) to node[auto] {$\varphi$} (SY);
  \draw[map] (ScX) to node[auto,swap] {$S \pi_{\cov}(X)$} (SX);
  \draw[map] (ScX) to node[auto] {$\cov(\varphi)$} (ScY);
  \draw[map] (ScY) to node[auto] {$S \pi_{\cov}(Y)$} (SY);

\end{tikzpicture}
\end{center}
\end{thm}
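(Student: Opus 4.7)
The plan is to reduce the problem to the two elementary building blocks provided by the Parry--Sullivan theorem (Theorem \ref{thm_parry-sullivan}): conjugacies and symbol expansions. For each building block the hypotheses of the theorem provide an essentially tautological lift, so the work is to string these lifts together and to handle uniqueness.

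First I would treat the two building blocks. If $\varphi$ is the suspension of a conjugacy $f \colon X' \to Y'$ between elements of $S$, then canonicity of the cover yields a unique conjugacy $\cov(f) \colon \cov(X') \to \cov(Y')$ intertwining the covering maps, and its suspension $S\cov(f)$ is the required lift. If instead $\varphi \colon SX' \to S(X')^{\exp_a}$ is the canonical flow equivalence induced by a symbol expansion $a \mapsto a \diamond$ (obtained by reparametrising flow lines so that the block $a\diamond$ occupies two units of time), then the definition of a cover respecting symbol expansion supplies a conjugacy $\psi \colon \cov((X')^{\exp_a}) \to (\cov X')^{\exp_A}$ with $A = \LL_{\cov(X')}^{-1}(\{a\})$, fitting into the prescribed commutative square. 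Suspending $\psi^{-1}$ and composing with the natural flow equivalence $S\cov(X') \to S(\cov X')^{\exp_A}$ produces a lift of $\varphi$ at the cover level.

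Using Theorem \ref{thm_parry-sullivan}, any flow equivalence $\varphi \colon SX \to SY$ factors as $\varphi_n \circ \cdots \circ \varphi_1$, where each $\varphi_i$ is of one of the two types above, running through a chain $X = X_1, \ldots, X_{n+1} = Y$ of intermediate shift spaces. At this stage one needs to verify that every intermediate $X_i$ lies in $S$ so that $\cov(X_i)$ is defined; this is immediate when $S$ is the class of (irreducible) sofic shifts or SFTs, since those classes are preserved under conjugacy and symbol expansion. Lifting each factor and composing yields a flow equivalence $\cov(\varphi) \colon S\cov(X) \to S\cov(Y)$, and commutativity of each individual square implies commutativity of the outer square.

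The main obstacle is uniqueness, since the Parry--Sullivan factorisation is far from unique. Here I would argue directly, without reference to any decomposition. If $\psi_1, \psi_2 \colon S\cov(X) \to S\cov(Y)$ are two flow equivalences making the diagram commute, then for every $z \in S\cov(X)$ the points $\psi_1(z)$ and $\psi_2(z)$ lie in the same fibre of $S\pi_{\cov(Y)}$, which is finite on the complement of the (measure zero) set of flow lines passing through vertex points because the covering map is bounded-to-one on covers of sofic shifts. Since each $\psi_j$ preserves flow direction, the function $r \mapsto \psi_j(z)+r$ is continuous on a small interval around $0$, and a standard connectedness argument along the flow line forces $\psi_1 = \psi_2$ on an open dense subset of $S\cov(X)$, and hence everywhere by continuity. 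The delicate part is handling flow lines through vertex points, where one appeals to uniqueness of the lifted conjugacy on the SFT level supplied by canonicity; making this final step precise is the hardest bookkeeping in the argument.
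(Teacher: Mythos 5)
First, a point of comparison: the paper offers no proof of this theorem at all --- it is quoted from \cite{boyle_carlsen_eilers} and used as a black box --- so there is no in-paper argument to measure yours against. Your overall strategy (decompose via Parry--Sullivan, lift conjugacies by canonicity, lift symbol expansions by the respecting-symbol-expansion hypothesis, compose) is indeed the natural route and matches how such results are established in the literature. However, as written the argument has two genuine gaps.

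The first is your use of Theorem \ref{thm_parry-sullivan}. That theorem only asserts that when $X \FE Y$ there is \emph{some} chain of conjugacies and symbol expansions connecting $X$ to $Y$; it does not assert that the \emph{given} flow equivalence $\varphi$ equals the composition $\varphi_n \circ \cdots \circ \varphi_1$ of the elementary flow equivalences induced by such a chain. In general $\varphi$ is only isotopic along flow lines to such a composition, and an isotopic map would make the square commute only up to a time change, not on the nose. To close this you must either invoke the stronger decomposition statement and then lift the residual time change $z \mapsto z + h(z)$ on $SY$ to $w \mapsto w + h(S\pi_{\cov(Y)}(w))$ on $S\cov(Y)$, or argue as in \cite{boyle_carlsen_eilers}; you also need $S$ closed under the operations appearing in the chain, which is an assumption, not something the theorem's hypotheses grant for arbitrary $S$. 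The second gap is uniqueness. Your claim that the fibres of $S\pi_{\cov(Y)}$ are finite off a negligible set is unjustified for a general cover (the definition only requires $\pi_{\cov}$ to be a factor map), and even granted finiteness, agreement of $\psi_1(z)$ and $\psi_2(z)$ within a finite fibre plus continuity does not yield $\psi_1=\psi_2$ unless you know the set of points with a \emph{unique} preimage is dense --- which is false for many covers and is not what canonicity gives you. The correct use of canonicity here is at the level of lifted conjugacies between cross-sections, and that reduction is precisely the step you defer; as it stands the uniqueness half of the theorem is not proved.
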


\index{cover!flow invariant}
\noindent
A cover satisfying the conditions of Theorem \ref{thm_cover_fe} is said to be \emph{flow invariant}.

The following proposition shows that the main covers considered in Section \ref{sec_covers} can all be used as invariants of flow equivalence of sofic shifts. 

\index{Krieger cover!is flow invariant}\index{Fischer cover!is flow invariant}
\begin{prop}[{Boyle, Carlsen, and Eilers \cite{boyle_carlsen_eilers}}]
\label{prop_covers_respect_se}
The left and right  Fischer and Krieger covers as well as the past set and future set covers are all flow invariant.
\end{prop}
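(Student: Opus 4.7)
The plan is to invoke Theorem \ref{thm_cover_fe}, so for each of the five covers listed it suffices to establish two things: that it is canonical, and that it respects symbol expansion in the sense of Remark \ref{rem_flow_cover_respect}. Canonicity of the left and right Fischer and Krieger covers is precisely Theorem \ref{thm_lkc_lfc_canonical}; for the past set and future set covers one can repeat the Nasu-style argument alluded to after Theorem \ref{thm_lkc_lfc_canonical}, lifting a conjugacy $\varphi\colon X\to Y$ (which by the sliding block code proposition has some window $[m,n]$) to a map on predecessor sets of words and checking that it is a graph isomorphism; the left-resolving and predecessor-separated character of the past set cover makes the lift unique, just as for the Krieger cover.

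For the verification that each cover respects symbol expansion, I would rely on the criterion of Remark \ref{rem_flow_cover_respect}: if $X$ has distinguished presentation $(G_X,\LL_X)$ and $X^{a\mapsto a\diamond}$ has distinguished presentation obtained from $(G_X,\LL_X)$ by subdividing every $a$-edge $e$ into two successive edges labelled $a$ and $\diamond$, then the corresponding cover automatically respects symbol expansion. So for each of the five covers I would show that the edge-subdivided graph is (up to isomorphism) the cover of $X^{a\mapsto a\diamond}$. The key observation is that the symbol expansion $\exp_a\colon X\to X^{a\mapsto a\diamond}$ induces a bijection between predecessor sets (of rays or of words) in $X$ and the predecessor sets of those left-rays or words in $X^{a\mapsto a\diamond}$ which end on a symbol different from $\diamond$, while a left-ray or word in $X^{a\mapsto a\diamond}$ ending on $\diamond$ is uniquely of the form $\exp_a(z)\cdot \diamond$ for a $z$ ending on $a$, giving a second copy of the predecessor set of $z\cdot a$. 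These two families of predecessor sets correspond exactly to the two vertices arising from subdividing each $a$-edge, and the incidence relations match by construction.

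Concretely, for the left Krieger cover $(K,\LL_K)$ of $X$, I would define a labelled graph isomorphism from the subdivided graph onto the left Krieger cover of $X^{a\mapsto a\diamond}$ by sending the original vertex $P_\infty(x^+)\in K^0$ to $P_\infty(\exp_a(x^+))$ and sending the new midpoint vertex inserted in the subdivision of an $a$-edge $e$ with $s(e)=P_\infty(ax^+)$ to $P_\infty(\diamond\cdot \exp_a(x^+))$; one then checks that every vertex of the left Krieger cover of $X^{a\mapsto a\diamond}$ arises exactly once in this way and that the edges are in bijection. The same argument, restricted to predecessor sets of finite words, handles the past set cover; restricting to the top irreducible component and using that intrinsic synchronization is preserved under symbol expansion handles the left Fischer cover; and reversing the direction handles the right Fischer, right Krieger, and future set covers.

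The main obstacle is the bookkeeping in the previous paragraph: one must verify that two predecessor sets in $X$ collapse under the bijection if and only if the corresponding predecessor sets in $X^{a\mapsto a\diamond}$ collapse (so that no two distinct vertices of the subdivided graph get identified), and dually that no new predecessor set appears in $X^{a\mapsto a\diamond}$ beyond those already accounted for. Both facts follow from the observation that every left-ray of $X^{a\mapsto a\diamond}$ is either of the form $\exp_a(y^-)$ or $\exp_a(y^-)\cdot\diamond$ for a unique $y^-\in X^-$ (with the latter form only admissible when $y^-$ ends on $a$), but writing this out cleanly and checking that edges and labels match on both sides of the diagram of Remark \ref{rem_flow_cover_respect} is where the real work lies.
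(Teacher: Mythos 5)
Your proposal follows exactly the same route as the paper's own (one-line) proof, which simply observes that these covers are all canonical and have the edge-subdivision property of Remark \ref{rem_flow_cover_respect}, so Theorem \ref{thm_cover_fe} applies. You supply considerably more detail than the paper does — in particular the explicit matching of midpoint vertices with predecessor sets of rays beginning with $\diamond$ — and that bookkeeping is sound.
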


\begin{proof}
These covers are all canonical and have the property mentioned in Remark \ref{rem_flow_cover_respect}.
\end{proof}

The previous proposition is useful, because it allows the powerful invariants of SFTs to be used for sofic shifts, but information is obviously lost in this process. Some of this can be reclaimed using the following proposition.

\begin{prop}[{Boyle, Carlsen, and Eilers \cite{boyle_carlsen_eilers}}]
\label{prop_fischer_n_to_1}
Let $X_1$ and $X_2$ be flow equivalent irreducible sofic shifts. For $i \in \{1,2 \}$, let $\pi_i \colon \X_{F_i} \to \X_i$ be the covering map of the left Fischer cover $(F_i, \LL_i)$ of $X_i$, then $m(\pi_1) = m(\pi_2)$.
\end{prop}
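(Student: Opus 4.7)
The plan is to promote the flow equivalence from $X_1, X_2$ to their left Fischer covers and then read off multiplicity from the geometry of flow lines. By Proposition \ref{prop_covers_respect_se} the left Fischer cover is a flow invariant cover, so Theorem \ref{thm_cover_fe} applies: a flow equivalence $\Phi \colon SX_1 \to SX_2$ lifts to a flow equivalence $F(\Phi) \colon S\X_{F_1} \to S\X_{F_2}$ fitting into a commuting square with $S\pi_1$ and $S\pi_2$.

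The next step is to reinterpret $m(\pi_i)$ in purely flow-theoretic terms. For a flow line $\ell \subseteq SX_i$, the preimage $(S\pi_i)^{-1}(\ell)$ is a disjoint union of flow lines in $S\X_{F_i}$, and I would define $\widetilde m(\pi_i) = \sup_\ell \#\{\text{flow lines in } (S\pi_i)^{-1}(\ell)\}$. Because $F(\Phi)$ sends flow lines bijectively to flow lines, commutativity of the lifted square forces $\widetilde m(\pi_1) = \widetilde m(\pi_2)$. So it only remains to show that $\widetilde m(\pi_i) = m(\pi_i)$ for each irreducible sofic shift.

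For the inequality $\widetilde m(\pi_i) \leq m(\pi_i)$, one observes that for any $x \in X_i$ the map $\pi_i^{-1}(x) \to \{\text{flow lines above } \ell_x\}$ sending $y$ to its flow line is surjective, so the number of preimage flow lines of $\ell_x$ is at most $|\pi_i^{-1}(x)| \leq m(\pi_i)$. For the converse, I would take an $x \in X_i$ with $|\pi_i^{-1}(x)| = m(\pi_i)$ and argue that two distinct $y,y' \in \pi_i^{-1}(x)$ lie on the same flow line in $S\X_{F_i}$ only if some shift of $y$ equals $y'$, which forces $x$ to be periodic; hence for aperiodic $x$ the count of preimage flow lines over $\ell_x$ equals $|\pi_i^{-1}(x)|$ exactly.

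The main obstacle is therefore to exhibit an aperiodic $x \in X_i$ achieving $m(\pi_i)$. Fiber size is upper semicontinuous for a factor map of compact shift spaces, so $A_k = \{x \in X_i : |\pi_i^{-1}(x)| \geq k\}$ is closed and shift-invariant, hence a subshift. Taking $k = m(\pi_i)$ gives a non-empty subshift $A$; if $A$ consisted only of periodic points it would be a finite union of periodic orbits, and I would use irreducibility of $X_i$ together with the left-resolving, irreducible structure of the Fischer cover to extend a representative of such an orbit on the left by a non-eventually-periodic left-ray (staying in $A$ by upper semicontinuity and the shift invariance of $A$), producing an aperiodic point in $A$. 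This aperiodic point then realises $m(\pi_i) = \widetilde m(\pi_i)$, which combined with the first part completes the proof.
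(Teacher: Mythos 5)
There is a genuine gap, and it is located exactly where you flagged the "main obstacle": the supremum $m(\pi_i)$ need not be attained at any aperiodic point, so your identity $\widetilde m(\pi_i)=m(\pi_i)$ is simply false. The even shift is a counterexample: its left Fischer cover (Figure \ref{fig_even_shift}) has covering map with $m(\pi)=2$, but the \emph{only} point with two preimages is the fixed point $0^\infty$, and its two preimages $(ef)^\infty$ and $(fe)^\infty$ are shifts of one another, hence lie on the same flow line. Thus $\widetilde m(\pi)=1\neq 2=m(\pi)$. Your proposed repair --- extending a periodic representative on the left by an aperiodic left-ray while "staying in $A$" --- also fails there: any such modification of $0^\infty$ introduces a $1$, which synchronizes, and the resulting point has a unique preimage. (Upper semicontinuity, even where it holds, only controls limits of points of $A$, not perturbations of them.) Note that the paper's own application of this proposition, to gap shifts $\X(S)$ with $S=\{s_1,\dots,s_k\}+n\N_0$, is precisely a situation where the multiplicity $n$ is witnessed \emph{only} by the periodic point $0^\infty$, so your argument would break down in the very cases the proposition is used for.

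The fix is to treat closed flow lines differently rather than to avoid them. If $x$ is periodic with least period $p$ and $|\pi^{-1}(x)|=n$, then $\sigma^p$ permutes the finite fiber, so every preimage is periodic with least period $k p$ for some $k\geq 1$; the flow line $\ell_x$ is a circle of circumference $p$, its preimage is a disjoint union of circles of circumferences $k_1p,\dots,k_rp$, each covering $\ell_x$ with degree $k_j$, and $\sum_j k_j=n$ (the circle of degree $k_j$ carries exactly $k_j$ of the $n$ fiber points). So the flow-theoretic quantity you want is: number of preimage flow lines when $\ell$ is not a closed orbit (your argument for this case is fine), and the \emph{sum of the covering degrees} of the preimage circles when $\ell$ is a closed orbit. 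Both are preserved by the lifted flow equivalence $F(\Phi)$, since it carries closed orbits to closed orbits, restricts to homeomorphisms between corresponding preimage circles, and commutes with the projections, hence preserves each covering degree. Taking the supremum over all flow lines then gives $m(\pi_1)=m(\pi_2)$ without ever needing an aperiodic maximizer. (For what it is worth, the thesis does not prove this proposition at all --- it is quoted from Boyle, Carlsen, and Eilers --- so your goal of a self-contained argument is reasonable, but it must go through the periodic-orbit bookkeeping above.)
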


\noindent
Naturally, an analogous result holds for the right Fischer cover.

\section{Working with symbol expansion}
\label{sec_se_lemmas}
As seen in Theorem \ref{thm_parry-sullivan}, conjugacies and symbol expansions allow far-reaching deformations of a shift space. This section will show a number of concrete constructions that can be used to construct explicit conjugacies and symbol expansions between flow equivalent shift spaces.
This will allow a more intuitive use of symbol expansion, and the lemmas are used repeatedly in the following chapters. The results are unsurprising and at least some of them have already been used in arguments in the literature, but  there are traps which must be avoided when working with symbol expansion, so full proofs are given to show what can (and cannot) be achieved.

\subsection{Expanding with letters from the original alphabet}
In symbol expansion, a letter that does not belong to the original alphabet is inserted after each occurrence of a specific letter, but often it is undesirable to expand the alphabet in this way, so the first goal is to show how to symbol expand a letter by symbol from the original alphabet.

Let $X$ be a shift space over $\AA$ and let $a,b \in \AA$. Choose a
symbol $\diamond \notin \AA$ and construct $X^{a \mapsto
a \diamond}$. Define a one-block map $\Phi \colon \AA \cup \{ \diamond \} 
\to \AA$ by $\Phi(\diamond) = b$ and $\Phi \vert_{\AA} =
\Id_{\AA}$, and let  
$\varphi = \Phi_\infty \colon X^{a \mapsto a \diamond} \to \AA^\Z$ be the induced
sliding block code with memory and anticipation 0.
Define the shift space $X^{a \mapsto  ab} = \varphi(X^{a \mapsto a \diamond})$.

\index{$a \mapsto ab$}
\index{symbol expansion!with original letter}
\begin{lem} \label{lem_se_of_letter}
If $X$ is a shift space over $\AA$ and $a,b \in \AA$ with $a \neq b$, then $X \FE X^{a \mapsto ab}$.
\end{lem}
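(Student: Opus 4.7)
The plan is to invoke Theorem \ref{thm_parry-sullivan}: we already have $X \FE X^{a \mapsto a\diamond}$ by definition, so by transitivity it suffices to exhibit a conjugacy between $X^{a \mapsto a\diamond}$ and $X^{a \mapsto ab}$. The map $\varphi = \Phi_\infty \colon X^{a\mapsto a\diamond} \to \AA^\Z$ is a sliding block code (memory and anticipation zero), so its image $X^{a \mapsto ab}$ is a shift space by \cite[Theorem 1.5.13]{lind_marcus}, and $\varphi$ is surjective onto it by construction. Continuity and shift-intertwining are automatic, so what remains is to verify that $\varphi$ is injective; compactness then upgrades the resulting continuous bijection to a conjugacy.

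The structural fact I would use is that in every $x \in X^{a \mapsto a\diamond} = \exp_a(X) \cup \sigma(\exp_a(X))$, each occurrence of $a$ is immediately followed by $\diamond$ and each occurrence of $\diamond$ is immediately preceded by $a$. This holds for every sequence in $\exp_a(X)$ by construction, and the property is shift-invariant, so it extends to the whole of $X^{a \mapsto a\diamond}$. Using that $a \neq b$ (this is where the hypothesis enters), I would then observe that under $\varphi$ a symbol $y_i$ in $y = \varphi(x)$ satisfies $y_{i-1} y_i = ab$ if and only if $x_i = \diamond$: an "original" $b$ sitting directly after an $a$ in $x$ is impossible, since in $x$ every $a$ is followed by $\diamond \neq b$. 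This gives an explicit rule for inverting $\varphi$.

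Concretely, I would define a two-block map $\Psi \colon \BB_2(X^{a \mapsto ab}) \to \AA \cup \{\diamond\}$ by $\Psi(ab) = \diamond$ and $\Psi(uv) = v$ otherwise, and set $\psi = \Psi_\infty^{[1,0]}$. The observation above immediately gives $\psi \circ \varphi = \id_{X^{a \mapsto a\diamond}}$; the reverse composition $\varphi \circ \psi = \id_{X^{a \mapsto ab}}$ follows by the same inspection, since $\psi$ changes exactly the $b$-symbols preceded by $a$ into $\diamond$-symbols, which $\Phi$ then sends back to $b$. Hence $\varphi$ is a conjugacy, and combining with $X \FE X^{a \mapsto a\diamond}$ we obtain $X \FE X^{a \mapsto ab}$.

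The main obstacle, and the only subtle point, is the injectivity of $\varphi$: I need to be sure that no "inserted" $b$ can be confused with an "original" $b$. This is precisely what the hypothesis $a \neq b$ guarantees, via the shift-invariant identification $\{\diamond\text{'s in }x\} = \{$positions $i$ with $y_{i-1} y_i = ab\}$; I should take care to note that this identification remains valid on the shifted sequences in $\sigma(\exp_a(X))$ as well, not just on $\exp_a(X)$.
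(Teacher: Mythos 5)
Your proof is correct and follows essentially the same route as the paper: both reduce to showing $\varphi \colon X^{a \mapsto a\diamond} \to X^{a \mapsto ab}$ is a conjugacy and both exhibit the inverse as the sliding block code $\Psi_\infty^{[1,0]}$ with $\Psi(ab)=\diamond$ and $\Psi(w)=\rl(w)$ otherwise. You simply spell out the verification that $\psi=\varphi^{-1}$ (the shift-invariant identification of $\diamond$-positions with occurrences of $ab$, which is where $a\neq b$ enters), a step the paper leaves to the reader.
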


\begin{proof}
Choose a symbol $\diamond \notin \AA$ and construct $\varphi \colon X^{a \mapsto a
  \diamond} \to X^{a \mapsto ab}$ as above. Since $X \FE X^{a \mapsto a \diamond}$, it is sufficient to prove that $\varphi$ is a conjugacy.
Define $\Psi: \BB_2(X^{a \mapsto ab}) \to \AA \cup \{\diamond \}$ by 
\begin{displaymath}
\Psi(w) = \left \{ \begin{array}{l c l}
\diamond & , & w = ab \\
\rl(w)        & , & w \neq ab
\end{array} \right. ,
\end{displaymath}
let $\psi = \Psi_\infty^{[1,0]} \colon X^{a \mapsto ab} \to (\AA \cup 
\{\diamond\})^\Z$  be the induced sliding block code with memory 1 and
anticipation 0, and note that $\psi = \varphi^{-1}$.
\end{proof}

The map $\varphi \colon X^{a \mapsto a \diamond} \to X^{a \mapsto ab}$ generally fails to be injective when $a = b$ since
$\varphi( (a \diamond)^\infty ) = a^\infty = \varphi( (\diamond a)^\infty)$.
A symbol can, however, be replaced by two copies of itself in a shift
space where this never happens. 

\begin{lem}
Let $X$ be a shift space and let $a \in \AA(X)$. If there exists $N \in \N$ such that $a^N \notin \BB_N(X)$, then $X \FE X^{a \mapsto aa}$.
\label{lem_a_to_aa}
\end{lem}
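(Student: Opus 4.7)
My plan is to carry out the argument of Lemma~\ref{lem_se_of_letter} verbatim with $b = a$, and then verify that the hypothesis $a^N \notin \BB(X)$ rescues the one step where that argument breaks down when $b=a$. Concretely, pick $\diamond \notin \AA$ and form $X^{a \mapsto a\diamond} = X^{\exp_a}$; define $\Phi \colon \AA \cup \{\diamond\} \to \AA$ by $\Phi(\diamond) = a$ and $\Phi|_\AA = \Id_\AA$; let $\varphi = \Phi_\infty$ and set $X^{a \mapsto aa} = \varphi(X^{a \mapsto a\diamond})$. Since $X \FE X^{a \mapsto a\diamond}$ by definition of symbol expansion, it suffices to show that $\varphi$ is a conjugacy, and by \cite[Theorem 1.5.14]{lind_marcus} this reduces to proving $\varphi$ is bijective. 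Surjectivity is built into the construction, so the entire task is injectivity.

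The key structural fact, read off from the description of $\BB(X^{\exp_a})$ in the proposition following the definition of $X^{\exp_A}$, is that inside any biinfinite sequence in $X^{a \mapsto a\diamond}$ every $a$ is immediately followed by $\diamond$ and every $\diamond$ is immediately preceded by $a$. Suppose $\varphi(x) = \varphi(y)$. At positions $p$ with $\varphi(x)_p \neq a$ one has $x_p = y_p = \varphi(x)_p$ for free, so any disagreement must lie inside a maximal $a$-run $[i,j]$ of $\varphi(x)$. Assuming this run is finite, $x_{i-1} \in \AA \setminus \{a\}$ is distinct from $\diamond$, so the rule about $\diamond$ forces $x_i = a$; the rule about $a$ then forces $x_{i+1} = \diamond$; this in turn forces $x_{i+2} = a$, and by induction $x_i x_{i+1} \cdots x_j = a\diamond a\diamond \cdots \diamond$ is completely determined (and $j - i + 1$ is even). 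The same computation applied to $y$ gives $x_k = y_k$ throughout the run.

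The remaining obstacle --- and the sole place where the hypothesis enters --- is ruling out the possibility that some maximal $a$-run of $\varphi(x)$ is one-sidedly infinite. A length-$L$ maximal $a$-run of $\varphi(x)$ lifts to a length-$L$ alternating word in $\{a,\diamond\}$ inside $x \in X^{\exp_a}$; the explicit description of $\BB(X^{\exp_a})$ shows that such an alternating word is a factor of some $\tau_{\{a\}}(u)$ with $u \in \BB(X)$ containing a block $a^{\lceil L/2 \rceil}$. Hence $a^{\lceil L/2 \rceil} \in \BB(X)$, and the assumption $a^N \notin \BB(X)$ forces $\lceil L/2 \rceil < N$, so $L < 2N$. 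In particular no maximal $a$-run is infinite, the finite-run analysis applies to each of them, and $\varphi$ is injective. Thus $\varphi$ is a conjugacy and $X \FE X^{a \mapsto aa}$. Everything except this bound on run lengths is a direct transcription of the proof of Lemma~\ref{lem_se_of_letter}.
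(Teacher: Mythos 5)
Your argument is correct and is essentially the paper's proof viewed from the other side: where you establish injectivity by showing that the preimage of each (necessarily finite, since $L<2N$) maximal $a$-run is forced to alternate $a\diamond a\diamond\cdots$ from its left anchor, the paper packages exactly this parity-from-the-last-non-$a$-symbol information into an explicit inverse sliding block code $\Psi_\infty^{[2N,0]}$ with memory $2N$. Both hinge on the same use of the hypothesis $a^N\notin\BB(X)$ to bound the lengths of $a$-runs and thereby kill the ambiguity $\varphi((a\diamond)^\infty)=\varphi((\diamond a)^\infty)$.
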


\begin{proof}
As above, it is sufficient to prove that the map $\varphi \colon X^{a \mapsto a\diamond} \to X^{a \mapsto aa}$ is injective. Define $\Psi: \BB_{2N+1}(X^{a \mapsto aa}) \to \AA \cup \{ \diamond \}$ by
\begin{displaymath}
\Psi(w) = \left \{ \begin{array}{l c l}
\rl(w)        & , & \rl(w) \neq a \\
a              & , & ba^{2n-1} \textrm{ is a suffix of } w \textrm{ for } b \neq a, n \in \N \\
\diamond & , & ba^{2n} \textrm{ is a suffix of } w \textrm{ for } b \neq a, n \in \N \\
\end{array} \right . ,
\end{displaymath}
and let $\psi = \Psi_\infty^{[2N,0]} \colon X^{a \mapsto aa} \to X^{a \mapsto a \diamond}$. Now $\psi =
\varphi^{-1}$. 
\end{proof}

\subsection{Symbol contraction}
\index{symbol contraction}
In this section, an inverse operation to symbol expansion is constructed.
Let $X$ be a shift space over $\AA \cup \{ \diamond \}$. Define the map $e_\diamond \colon \AA \cup \{ \diamond \} \to \AA^*$ by  $e_\diamond(\diamond) = \epsilon$ and $e_\diamond \vert_{\AA} = \Id_{\AA}$. Extend this to a map $e_\diamond \colon (\AA \cup \{ \diamond \})^* \to \AA^*$ in the natural way. 
Let $\BB_{\diamond \mapsto \varepsilon} = e_\diamond(\BB(X))$.
Assume that there exists $N \in \N$ such that $\diamond^N \notin
\BB(X)$. It is straightforward to prove that $\BB_{\diamond \mapsto \epsilon}$ is then the language of a shift space $X^{\diamond \mapsto \epsilon}$.

\begin{lem} \label{lem_sc_letter}
Let $X$ be a shift space over $\AA \cup \{ \diamond \}$ with $a \in
\AA$ such that $x_i = a$ if and only if $x_{i+1} = \diamond$ for all
$x \in X$ and $i \in \Z$. Then $X$ is obtained from $X^{\diamond
\mapsto \epsilon}$ by a symbol expansion of $a$ to $a \diamond$. In particular $X^{\diamond \mapsto \epsilon} \FE X$. 
\end{lem}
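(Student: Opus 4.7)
The plan is to prove the lemma by identifying $X$ directly as the symbol expansion $(X^{\diamond \mapsto \epsilon})^{a \mapsto a \diamond}$; once this is established, the flow equivalence follows immediately from Theorem \ref{thm_parry-sullivan}.

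First I would verify that $X^{\diamond \mapsto \epsilon}$ is well-defined under the hypothesis. If $x \in X$ and $x_i = \diamond$, then $x_i \neq a$, so the ``only if'' direction of the hypothesis forces $x_{i+1} \neq \diamond$. Hence $\diamond \diamond \notin \BB(X)$, and the construction recalled just above the lemma applies with $N = 2$, giving a well-defined shift space $Y := X^{\diamond \mapsto \epsilon}$ over the alphabet $\AA$.

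The heart of the proof is then to show $\BB(Y^{a \mapsto a \diamond}) = \BB(X)$; since shift spaces are determined by their languages, this yields $X = Y^{a \mapsto a \diamond}$. I would characterize both languages as the set of words $w$ over $\AA \cup \{\diamond\}$ satisfying: (i) $w_i = a$ with $i < |w|$ implies $w_{i+1} = \diamond$, (ii) $w_i = \diamond$ with $i > 1$ implies $w_{i-1} = a$, and (iii) deleting every $\diamond$ from $w$ produces a word in $\BB(Y)$. For $\BB(X)$, conditions (i) and (ii) are immediate from the hypothesis, while (iii) holds by the very definition $\BB(Y) = e_\diamond(\BB(X))$. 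For $\BB(Y^{a \mapsto a \diamond})$, conditions (i) and (ii) follow because $\tau_a$ inserts each $\diamond$ directly after an $a$ and introduces no stray symbols, and the extra words of the form $\{w : aw \in \tau_a(\BB(Y))\}$ from the definition of $X^{\exp_A}$ account precisely for those factors that begin with $\diamond$, covering the boundary case where the preceding $a$ has been shifted off. Condition (iii) holds because $e_\diamond \circ \tau_a$ is the identity on $\BB(Y)$.

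The main obstacle is the bookkeeping at the left endpoint: a word in $\BB(X)$ that begins with $\diamond$ has that $\diamond$ stripped by $e_\diamond$ and is \emph{not} reintroduced by $\tau_a$, so one cannot hope for a pointwise identity $\tau_a \circ e_\diamond = \id$ on $\BB(X)$. This is exactly why the definition of $X^{\exp_A}$ includes the correction term $\sigma(\exp_A(X))$, and my proof must invoke this correction to absorb such boundary words. Once the language identity $\BB(Y^{a \mapsto a \diamond}) = \BB(X)$ is confirmed, we conclude $X = Y^{a \mapsto a \diamond}$, so $X$ is obtained from $Y$ by a symbol expansion of $a$, and Theorem \ref{thm_parry-sullivan} then gives $X^{\diamond \mapsto \epsilon} = Y \FE X$.
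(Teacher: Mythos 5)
Your overall strategy is exactly the paper's: the printed proof is the one line ``let $Y = X^{\diamond \mapsto \epsilon}$ and check that $Y^{a \mapsto a\diamond}$ has the same language as $X$,'' and you are carrying out precisely that check (plus the worthwhile observation that $\diamond\diamond \notin \BB(X)$, so $X^{\diamond\mapsto\epsilon}$ is well defined). However, the explicit characterization you use to perform the check is not correct as stated. Condition (iii) is too weak for words beginning with $\diamond$. Take $Z$ to be the shift over $\{a,b,c\}$ with $ab$ forbidden and let $X = Z^{a\mapsto a\diamond}$, so that $Y = Z$. The word $w = \diamond b$ satisfies (i) vacuously, (ii) vacuously (its only $\diamond$ sits at position $1$), and (iii) since $e_\diamond(\diamond b) = b \in \BB(Z)$. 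But $\diamond b \notin \BB(X)$: every $\diamond$ occurring in an element of $X$ is preceded by $a$, so $\diamond b \in \BB(X)$ would force $a\diamond b \in \BB(X)$ and hence $ab \in \BB(Z)$. For the same reason $\diamond b \notin \BB(Y^{a\mapsto a\diamond})$, since membership there requires $a\diamond b \in \tau_a(\BB(Y))$, i.e.\ $ab \in \BB(Y)$. So neither language equals the set you describe, and your gloss that the correction term $\{w \mid aw \in \tau_a(\BB(Y))\}$ ``accounts precisely for those factors that begin with $\diamond$'' is where the slip occurs: that set imposes the condition $a\,e_\diamond(w) \in \BB(Y)$, not merely $e_\diamond(w) \in \BB(Y)$.

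The repair is local: replace (iii) by the condition that $e_\diamond(w) \in \BB(Y)$ when $w$ does not begin with $\diamond$, and $a\,e_\diamond(w) \in \BB(Y)$ when it does. With that version, both $\BB(X)$ and $\BB(Y^{a\mapsto a\diamond})$ do coincide with the characterized set --- on the $X$ side because every $\diamond$ in an element of $X$ is preceded by $a$, so a factor beginning with $\diamond$ extends on the left by $a$; on the $Y^{a\mapsto a\diamond}$ side by the definition of the correction term. The rest of your argument, including the appeal to Theorem \ref{thm_parry-sullivan}, then goes through.
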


\begin{proof}
Let $Y = X^{\diamond \mapsto \epsilon}$ and check that $Y^{a \mapsto a \diamond}$ has the same language as $X$.
\end{proof}

Let $X$ be a shift space over $\AA$, and assume that there exist $a,b \in \AA$ with $a \neq b$ such that $F(a) = b F(ab)$ (i.e.\ in $X$, the letter $a$ is always followed by the letter $b$). Choose $\diamond \notin \AA$ and define $\Phi \colon \BB_2(X) \to \AA \cup \{\diamond \}$ by
\begin{displaymath}
\Phi(w) = \left \{ \begin{array}{l c l}
\diamond & , & w = ab \\
\rl(w)        & , & w \neq ab \\
\end{array} \right. .
\end{displaymath}
Let $\varphi = \Phi_\infty^{[1,0]} \colon X \to (\AA \cup \{\diamond \})^\Z$ be the induced sliding block code with memory 1 and anticipation 0. Let $Y = \varphi(X)$. Now $\varphi \colon X \to Y$ is a conjugacy by the same argument as in the proof of Lemma \ref{lem_se_of_letter}.  In $Y$, $a$ is always followed by $\diamond$, and clearly $\diamond$
is always preceded by $a$. Let $X^{ab \mapsto a} = Y^{\diamond \mapsto
  \epsilon}$.

\index{$ab \mapsto a$}
\index{symbol contraction}  
\begin{lem} \label{lem_sc_of_letter}
If $X$ is a shift space over $\AA$ with $a,b \in \AA$, $a \neq b$ such that $F(a) = b F(ab)$, then $X \FE X^{ab \mapsto a}$. 
\end{lem}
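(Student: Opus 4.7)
The plan is to reduce the claim to the two building blocks already set up in the paragraph preceding the statement, namely the conjugacy $\varphi \colon X \to Y$ and the symbol contraction $Y^{\diamond \mapsto \epsilon}$.

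First I would verify that the hypothesis of Lemma \ref{lem_sc_letter} holds for $Y$. By construction $\varphi = \Phi_\infty^{[1,0]}$ with $\Phi(x_{i-1}x_i) = \diamond$ precisely when $x_{i-1}x_i = ab$, and $\Phi(x_{i-1}x_i) = \rl(x_{i-1}x_i) = x_i$ otherwise. Reading this off: $\varphi(x)_i = \diamond$ iff $x_{i-1} = a$ and $x_i = b$, and $\varphi(x)_i = a$ iff $x_i = a$. Since the standing assumption $F(a) = bF(ab)$ forces $x_{i-1} = a$ to imply $x_i = b$, we get for every $y \in Y$ that $y_i = a$ iff $y_{i+1} = \diamond$. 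Thus $Y$ satisfies exactly the condition of Lemma \ref{lem_sc_letter}.

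Next I would invoke Lemma \ref{lem_sc_letter} to conclude $Y \FE Y^{\diamond \mapsto \epsilon}$, and by definition $Y^{\diamond \mapsto \epsilon} = X^{ab \mapsto a}$. Combined with the conjugacy $X \cong Y$ asserted in the paragraph before the statement (which gives $X \FE Y$), transitivity of flow equivalence yields $X \FE X^{ab \mapsto a}$.

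The only real thing to check carefully is that $\varphi$ is indeed a conjugacy, which the text defers to ``the same argument as in the proof of Lemma \ref{lem_se_of_letter}''. Here one exhibits a block-inverse: the $1$-block map $\Psi$ with $\Psi(\diamond) = b$ and $\Psi|_\AA = \Id_\AA$ induces $\psi = \Psi_\infty$, and a direct check using $\varphi(x)_i = \diamond \Rightarrow x_i = b$ (via $F(a) = bF(ab)$) shows $\psi \circ \varphi = \Id_X$, while surjectivity of $\varphi$ onto $Y$ is by the definition $Y = \varphi(X)$. The subtlety one must not overlook is that the assumption $a \neq b$ is essential for this inverse to be well-defined: if $a = b$ the recoding collapses runs of $a$'s ambiguously, exactly the failure already noted after Lemma \ref{lem_se_of_letter}. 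I do not expect any genuine obstacle beyond this bookkeeping.
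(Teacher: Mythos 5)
Your proof is correct and follows exactly the route the paper takes: it verifies that $Y = \varphi(X)$ satisfies the hypothesis of Lemma \ref{lem_sc_letter} (every $a$ is followed by $\diamond$ and every $\diamond$ is preceded by $a$), applies that lemma to get $Y \FE Y^{\diamond \mapsto \epsilon} = X^{ab \mapsto a}$, and chains this with the conjugacy $X \cong Y$. The paper's own proof is just the one-line remark that this follows from the preceding construction and Lemma \ref{lem_sc_letter}; you have merely spelled out the same bookkeeping in more detail.
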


\begin{proof}
This follows from  the construction above and Lemma \ref{lem_sc_letter}. 
\end{proof}

\index{$aw \mapsto a$}
\noindent
If $X$ is a shift space, $a \in \AA(X)$, and $w \in \BB(X)$ such that $F(a) = wF(aw)$, then Lemma \ref{lem_sc_of_letter} can be applied repeatedly and the resulting shift space is denoted $X^{aw \mapsto a}$.

\subsection{Symbol expansion of words}
The goal of this section is to show how and when it is possible to insert a symbol after each occurrence of a word rather than after each occurrence of a letter. 

\index{symbol expansion!of word}\index{$w \mapsto w \diamond$}
Let $X$ be a shift space over $\AA$, let $\diamond, \heartsuit \notin \AA$, and consider a non-empty word $w \in \BB(X)$. Let $N = \vert w \vert \geq 1$, and define $\Phi \colon \BB_N(X) \to \AA \cup \{\heartsuit \}$ by
\begin{displaymath}
\Phi(v) = \left\{ \begin{array}{l c l}
\heartsuit & , & v = w \\
\rl(v)        & , & v \neq w
\end{array} \right. .
\end{displaymath}
Let $\varphi = \Phi_\infty^{[N-1,0]} \colon X \to (\AA \cup \{\heartsuit \})^\Z$. The map $\varphi$ is injective, so $Y = \varphi(X)$ is a shift space conjugate to $X$. Construct $Y^{\heartsuit \mapsto \heartsuit \diamond}$ and define $\Psi \colon \AA \cup \{ \heartsuit,  \diamond \} \to \AA \cup \{\diamond \}$ by 
\begin{displaymath}
\Psi(a) = \left\{ \begin{array}{l c l}
\rl(w) & , & a = \heartsuit \\
a       & , & a \neq \heartsuit
\end{array} \right. .
\end{displaymath}
Let $\psi = \Psi_\infty \colon Y^{\heartsuit \mapsto \heartsuit \diamond} \to (\AA \cup \{\diamond \})^\Z$ be the induced sliding block code with memory and anticipation 0.
The inverse of $\psi$ is the sliding block code induced by a function which maps $\rl(w)$ to $\heartsuit$ if it is followed by $\diamond$, so $\psi$ is a conjugacy.
The image $\psi(Y^{\heartsuit \mapsto \heartsuit \diamond})$ is denoted $X^{w \mapsto w \diamond}$. Note that the construction can be carried out even if $w$ can overlap with itself. 

\begin{lem} \label{lem_se_word_new}
For any shift space $X$ over $\AA$, any word $w \in \BB(X)$, and any symbol $\diamond \notin \AA$, $X^{w \mapsto w \diamond} \FE X$.
\end{lem}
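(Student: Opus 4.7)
The plan is to chain together the three maps constructed in the paragraph immediately preceding the lemma statement:
\[
X \xrightarrow{\varphi} Y \longrightarrow Y^{\heartsuit \mapsto \heartsuit \diamond} \xrightarrow{\psi} X^{w \mapsto w \diamond},
\]
where the unlabelled middle arrow is the symbol expansion $\exp_\heartsuit$ used to form $Y^{\heartsuit \mapsto \heartsuit \diamond}$. By Theorem \ref{thm_parry-sullivan} the middle arrow is a flow equivalence, so if the outer two arrows can be shown to be conjugacies then transitivity of flow equivalence yields $X \FE X^{w \mapsto w \diamond}$.

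For $\varphi$, I would observe that it is a sliding block code by construction and surjects onto $Y$ by definition, so it suffices to exhibit a left inverse. Define a one-block map $\Psi_0 \colon \AA \cup \{\heartsuit\} \to \AA$ by $\Psi_0(\heartsuit) = \rl(w)$ and $\Psi_0|_\AA = \Id_\AA$, and verify $(\Psi_0)_\infty \circ \varphi = \Id_X$ position by position: if $\varphi(x)_i \in \AA$ then the definition of $\Phi$ forces $\varphi(x)_i = \rl(x_{i-N+1} \cdots x_i) = x_i$, while if $\varphi(x)_i = \heartsuit$ then $x_{i-N+1} \cdots x_i = w$ and in particular $x_i = \rl(w)$.

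For $\psi$, the crucial observation is that in $Y^{\heartsuit \mapsto \heartsuit \diamond}$ the symbol $\diamond$ occurs only immediately after $\heartsuit$, and $\heartsuit$ is always followed by $\diamond$. Applying $\psi$ replaces each $\heartsuit$ by $\rl(w)$ while keeping $\diamond$ in place, so in the image the symbol $\diamond$ appears precisely immediately after those copies of $\rl(w)$ that originated from a $\heartsuit$. Hence $\psi$ admits an inverse in the form of a sliding block code with memory $0$ and anticipation $1$ that sends $\rl(w)$ to $\heartsuit$ whenever it is followed by $\diamond$ and acts as the identity otherwise.

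The main conceptual subtlety worth flagging is that $w$ may overlap itself, but the construction sidesteps this cleanly by marking only the endpoint of each occurrence with $\heartsuit$; since $\Phi$ inspects only the length-$N$ window ending at position $i$, even heavily overlapping words such as $w = aba$ inside a sequence like $\cdots ababa \cdots$ are treated correctly, and no additional case analysis is required.
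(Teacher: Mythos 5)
Your proposal is correct and follows essentially the same route as the paper: the composite $X \to Y \to Y^{\heartsuit \mapsto \heartsuit \diamond} \to X^{w \mapsto w \diamond}$ with the outer maps shown to be conjugacies via explicit sliding-block inverses and the middle step handled by Theorem \ref{thm_parry-sullivan}. Your explicit verification of the inverses and the remark about overlaps simply spell out details the paper leaves in the construction paragraph.
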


\begin{proof}
In the construction above, $X$ is conjugate to $Y$ and $Y^{\heartsuit \mapsto \heartsuit \diamond}$ is conjugate to $X^{w \mapsto w \diamond}$. The result follows since $Y \FE Y^{\heartsuit \mapsto \heartsuit \diamond}$ by Theorem \ref{thm_parry-sullivan}.
\end{proof}

The next goal is to define the symbol expansion of a word by a letter that is already in the alphabet. Let $X$ be a shift space over $\AA$, and consider a word $w \in \BB(X)$ and a letter $b \in \AA$. Choose some symbol $\diamond$ not in $\AA$ and construct $X^{w \mapsto w \diamond}$ as above. Define $\Phi \colon \AA \cup \{\diamond \} \to \AA$ by 
\begin{displaymath}
\Phi(a) = \left\{ \begin{array}{l c l}
b & , & a = \diamond \\
a & , & a \neq \diamond
\end{array} \right. . 
\end{displaymath}
Construct the induced sliding block code $\varphi = \Phi_\infty \colon X^{w \mapsto w \diamond} \to (\AA)^\Z$, and define the shift space $X^{w \mapsto w b} = \varphi(X^{w \mapsto w \diamond})$. Note that $X^{w \mapsto wb}$ need not be flow equivalent to $X$, since $\varphi$ is not generally a conjugacy.

\index{overlap}\index{overlap!non-trivial}\index{overlap!maximal}
To formulate conditions for when $X$ and $X^{w \mapsto wb}$ are flow equivalent, it is useful to introduce some new notation.
Consider $w \in \BB_n(X)$. A word $v$ is said to be a \emph{$w$-overlap} if there exist  $1 = k_1 < k_2 <  \ldots < k_m = |v|-n+1$ with $k_{i+1} < k_i + n$ such that $v_{[k_i, k_i + n-1]} = w$ for all $i \in \{ 1, \ldots, m \}$.
For example, the word $ababa$ is a $aba$-overlap, while it is not a $abab$-overlap.
For $x \in X$, a factor $x_{[i,j]}$ is said to be a \emph{maximal $w$-overlap} if there do not exist $i' < i$ and $j' > j$ such that $x_{[i',j']}$ is a $w$-overlap (i.e.\ it cannot be extended to a longer $w$-overlap).
$X$ is said to \emph{admit} (non-trivial) $w$-overlaps if there exists a  $w$-overlap $v \neq w$ in $\BB(X)$. 

\begin{lem} \label{lem_se_word}
Let $X$ be a shift space over $\AA$, let $w \in \BB(X)$, and let $b \in \AA$. If $X$ does not allow non-trivial $w$-overlaps and if for all $v \in F(w)$ with $|v| \leq |w|$, $wbv$ does not have
  $wb$ as a suffix, then $X \FE X^{w \mapsto wb}$.
\end{lem}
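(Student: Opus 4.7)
Plan: The strategy is to combine Lemma \ref{lem_se_word_new}, which gives $X \FE X^{w \mapsto w \diamond}$, with a proof that the natural sliding block code $\varphi \colon X^{w \mapsto w \diamond} \to X^{w \mapsto wb}$ induced by replacing $\diamond$ by $b$ is a conjugacy. This map is surjective by the very construction of $X^{w \mapsto wb}$, so the task reduces to showing that $\varphi$ is injective; once this is achieved, being a bijective shift-commuting sliding block code between compact spaces, $\varphi$ is automatically a conjugacy, and transitivity of $\FE$ yields the conclusion.

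The first subgoal is to exploit the no-overlap hypothesis to establish the following local characterisation of $\diamond$ positions: for every $y \in X^{w \mapsto w \diamond}$ and every $i \in \Z$, $y_i = \diamond$ if and only if $y_{[i - |w|, i - 1]}$ consists of $|w|$ consecutive symbols from $\AA$ (i.e.\ no $\diamond$) and spells $w$. The ``if'' direction is immediate from the construction of $X^{w \mapsto w \diamond}$; for the ``only if'' direction, if $y_i = \diamond$ and a further $\diamond$ appeared at some position $i - k$ with $1 \leq k \leq |w|$, then tracing through the order-preserving bijection between non-$\diamond$ positions of $y$ and positions of the underlying $x \in X$, these two $\diamond$s would correspond to two occurrences of $w$ in $x$ whose positions overlap non-trivially, contradicting the no-overlap hypothesis.

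With the characterisation in hand, I would define the candidate inverse $\psi \colon X^{w \mapsto wb} \to (\AA \cup \{\diamond\})^\Z$ by the sliding block code $\psi(z)_i = \diamond$ if $z_{[i - |w|, i]} = wb$ and $\psi(z)_i = z_i$ otherwise, and verify that $\psi \circ \varphi = \Id$. The only nontrivial case is $y_i = b \neq \diamond$: one must show $z = \varphi(y)$ satisfies $z_{[i - |w|, i]} \neq wb$. Suppose otherwise. If $y_{[i - |w|, i - 1]}$ is $\diamond$-free the characterisation forces $y_i = \diamond$, a contradiction; so let $i - j$ be the rightmost $\diamond$ in that range, with $1 \leq j \leq |w|$ and $y_{[i - j + 1, i - 1]}$ free of $\diamond$. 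Applying the characterisation at $y_{i - j} = \diamond$ gives $y_{[i - j - |w|, i - j - 1]} = w$, and reading off $y_{[i - j + 1, i - 1]}$ from $z_{[i - |w|, i - 1]} = w$ shows that this segment equals the last $j - 1$ letters of $w$. Removing the single $\diamond$ at $i - j$ and reading the underlying $x$ on this region exhibits a factor $wv$ of $X$, where $v = w_{|w| - j + 2} \cdots w_{|w|} b$ has length $j$; hence $v \in F(w)$ with $1 \leq |v| \leq |w|$. Correspondingly $z_{[i - j - |w|, i]} = wbv$ as a word, and the assumption $z_{[i - |w|, i]} = wb$ asserts precisely that $wbv$ has $wb$ as its length-$(|w| + 1)$ suffix, contradicting the second hypothesis of the lemma.

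The main technical obstacle is the combinatorial bookkeeping in the last step, where one must pinpoint the word $v$ from the data of $y$ and $z$, verify that $wv$ really is a factor of some $x \in X$ so that $v \in F(w)$, and confirm $1 \leq |v| \leq |w|$ in order to apply the second hypothesis to precisely this $v$. Both hypotheses are genuinely needed, as is visible in the full shift on $\{a, b\}$ with $w = ab$ and extension letter $a$: the second hypothesis fails for $v = ba$, and correspondingly $(ab)^\infty$ acquires two distinct $\varphi$-preimages in $X^{ab \mapsto ab \diamond}$, namely $(ab \diamond b)^\infty$ and its shift by two.
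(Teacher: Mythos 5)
Your proposal is correct and follows the same route as the paper, whose entire proof is the one-line assertion that the two hypotheses make the sliding block code $\varphi \colon X^{w \mapsto w\diamond} \to X^{w \mapsto wb}$ injective. You have simply supplied the missing details: the no-overlap hypothesis yields the local characterisation of $\diamond$-positions, and the suffix hypothesis rules out spurious occurrences of $wb$ in the image, so the explicit sliding block inverse $\psi$ is well defined and $\varphi$ is a conjugacy.
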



\begin{proof}
The conditions guarantee that the sliding block code $\varphi$ used in the construction above is injective. 
\end{proof}


\noindent The following example shows that the conditions in Lemma \ref{lem_se_word} are more restrictive than necessary. 

\begin{example} \label{ex_gap_shift}
\index{run length limited shift}
Consider the run length limited shift $X = \X(1,3)$ (cf.\ \cite[Example 1.2.5]{lind_marcus}). This is the shift space over $\AA = \{ 0, 1\}$ where 1 occurs infinitely often in all elements of $X$ and where the number of 0's between two successive occurrences of 1 is either 1, 2, or 3. Construct $X^{00 \mapsto 000}$ as described above. Note that $X^{00 \mapsto 000}$ is the shift space over $\{ 0,1 \}$ where 1 occurs infinitely often in all elements and where the number of 0's between two successive 1's is an element of $\{1,3,5\}$.
Clearly, $w=00$ overlaps non-trivially with itself in $000 \in \BB(X)$, so the conditions of Lemma \ref{lem_se_word} are not satisfied, but it is also clear that the map $\varphi$ used in the construction of $X^{00 \mapsto 000}$ is in fact injective because there is an upper bound on the length of strings of 0s in $X$ and $X^{00 \mapsto 000}$ which makes it possible to distinguish newly added 0s from the original ones. Hence, $X  \FE X^{00 \mapsto 000}$.
\end{example}

\subsection{Symbol contraction of words}

\index{$wb \mapsto w$}
Let $X$ be a shift space over $\AA$ and let $w \in \BB(X)$. Assume that $F(w) = b F(wb)$ for some $b \in \AA$
and that $w$ is not a power of $b$. The latter only excludes the shifts where a sufficient number of $b$'s is necessarily followed by an infinite string of $b$'s. Let $n = |w|$, let $\diamond \notin \AA$, and define a map $\Phi :\BB_n(X) \to \AA \cup \{\diamond\}$ by 
\begin{displaymath}
\Phi(v) = \left\{ \begin{array}{l c l}
\diamond & , & v = w \\
\rl(v)         & , & v \neq w \\
\end{array} \right. .
\end{displaymath}
Let $\varphi = \Phi_\infty^{[n-1,0]} \colon X \to (\AA \cup \{ \diamond \})^\Z$, and let $Y = \varphi(X)$. By assumption, $\diamond$ is always followed by $b$ in $Y$, so it is possible to define $Y^{\diamond b \mapsto \diamond}$. Define $\Psi \colon \AA \cup \{ \diamond \} \to \AA$ by 
\begin{displaymath}
\Psi(a) = \left\{ \begin{array}{l c l}
\rl(w) & , & a = \diamond \\
a       & , & a \neq \diamond \\
\end{array} \right. ,
\end{displaymath}
let $\psi = \Psi_\infty \colon Y^{\diamond b \mapsto \diamond} \to \AA^\Z$ be the induced sliding block code with memory and anticipation 0, and define $X^{wb \mapsto w} = \psi(Y^{\diamond b \mapsto \diamond})$. Note that $\psi$ need not be a conjugacy, so $X^{wb \mapsto w}$ is not flow equivalent to $X$ in general. In particular, this process will erase letters inside some occurrences of $w$ if $w$ can overlap with itself, so the construction is mainly useful when there are some constraints on $w$.

\begin{lem} \label{lem_sc_words}
Let  $X$ be a shift space for which there exist $w \in \BB(X)$ and 
$b \in \AA$ such that $F(w) = b F(wb)$. If 
\begin{itemize}
  \item $X$ does not admit non-trivial $wb$-overlaps, and
  \item there is no $u \in F(wb)$ with $1 \leq |u| < |w|$ such that  
   $w$ is a suffix of $wu$, 
\end{itemize}
then $X \FE X^{wb \mapsto w}$.
\end{lem}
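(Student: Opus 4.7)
The plan is to verify that the three-step construction preceding the lemma,
\begin{displaymath}
X \xrightarrow{\varphi} Y \longrightarrow Y^{\diamond b \mapsto \diamond} \xrightarrow{\psi} X^{wb \mapsto w},
\end{displaymath}
composes a conjugacy, a flow equivalence (supplied by Lemma \ref{lem_sc_of_letter}), and a conjugacy. The work therefore breaks into three pieces: (i) $\varphi$ is a conjugacy; (ii) the hypothesis of Lemma \ref{lem_sc_of_letter} holds in $Y$ with $a = \diamond$; (iii) $\psi$ is a conjugacy.

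First I would dispatch (i). The map $\varphi$ is a sliding block code onto $Y$, so only injectivity requires comment: from $y = \varphi(x)$ one recovers $x$ position-wise by setting $x_i = w_n$ whenever $y_i = \diamond$ and $x_i = y_i$ otherwise, since $\diamond$ is the output of $\Phi$ exactly when $w$ ends at that position. For (ii) it suffices that $\diamond$ is always followed by $b$ in $Y$: if $y_i = \diamond$ then $x_{[i - n + 1, i]} = w$ and $x_{i + 1} = b$ by $F_X(w) = b F_X(w b)$, so $y_{i + 1} = \Phi(w_{[2, n]} b)$; this equals $b$ unless $w_{[2, n]} b = w$, a forced self-equality that would yield $w = b^n$, excluded by the construction's standing assumption that $w$ is not a power of $b$.

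The main obstacle is (iii), where both conditions enter. Only injectivity of $\psi$ is nontrivial, and the strategy is to show that the positions of $\diamond$ in any $y \in Y^{\diamond b \mapsto \diamond}$ are exactly the indices $i$ for which $z_{[i - n + 1, i]} = w$, where $z = \psi(y)$. The forward implication uses condition 1: a non-trivial $w$-overlap in $X$ would, after appending the forced trailing $b$ to each $w$, produce a non-trivial $wb$-overlap, so condition 1 rules out $w$-self-overlaps in $X$ and ensures that each $\diamond$ in $Y$ is preceded by a clean copy of $w_{[1, n-1]}$; tracking this information carefully through the $\diamond b \mapsto \diamond$ contraction yields $w$ at $z_{[i - n + 1, i]}$ under $\psi$. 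For the reverse implication I would argue by contradiction: an occurrence of $w$ in $z$ with $y_i \neq \diamond$ can only arise by ``straddling'' a genuine contraction site, and unwinding this scenario through the contraction and $\varphi$ back into $x$ exhibits a word $u \in F_X(wb)$ with $1 \leq |u| < n$ for which $w$ is a suffix of $wu$ — precisely the configuration forbidden by condition 2.

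Combining (i)--(iii) gives $X \cong Y \FE Y^{\diamond b \mapsto \diamond} \cong X^{wb \mapsto w}$, hence $X \FE X^{wb \mapsto w}$. The delicate point is the position bookkeeping through the contraction step in the injectivity argument for $\psi$; the two hypotheses are each needed to kill a distinct mode of failure (overlapping $w$'s producing overlapping $\diamond$'s, and contracted blocks manufacturing new $w$'s as suffixes), and together they just suffice to recover the positions of $\diamond$ in $y$ from $z$ alone.
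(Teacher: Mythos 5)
Your proposal is correct and follows essentially the same route as the paper: the paper's own proof is just a two-sentence sketch asserting that condition 1 keeps the original $w$'s intact and condition 2 prevents new $w$'s from appearing, so that $\psi$ is invertible, and your parts (i)--(iii) are a faithful expansion of exactly that argument (including the correct appeal to the standing assumption that $w$ is not a power of $b$ to get $\diamond$ always followed by $b$ in $Y$).
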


\begin{proof}
The first condition guarantees that none of the original
$w$'s are altered by the process, while the second condition
guarantees that no new $w$'s are created in $X^{wb \mapsto w}$, so the map $\psi$ constructed above is invertible.     
\end{proof}

\begin{example}
Consider the shift space $X$ over $\{a,b\}$ where the set of forbidden words is $\FF = \{ bab \}$. Clearly, any occurrence of $ba$ must be followed by $a$, and neither $ba$ nor $baa$ overlaps with itself, so the conditions of Lemma \ref{lem_sc_words} are satisfied. Hence, $X \FE X^{baa \mapsto ba}$, and it is easy to check that $X^{baa \mapsto ba} = \{ a, b \}^\Z$.
\end{example}

%
%

\subsection{Replacing words}

\index{flow equivalence!replacing words}\index{$w \mapsto \diamond$}
Let $X$ be a shift space over $\AA$. Consider $w \in \BB_n(X)$ for which $X$ does not admit non-trivial $w$-overlaps, and let $\diamond$ be some symbol not in $\AA$. The goal is to replace $w$ by $\diamond$ in every $x \in \X$.
Define $\Phi \colon \BB_n(X) \to (\AA \cup \{ \diamond \})$ by 
\begin{displaymath}
\Phi(v) = \left\{ \begin{array}{l c l}
\diamond & , & v = w \\
\leftl(v)     & , & v \neq w \\
\end{array} \right. ,
\end{displaymath}
and let $\varphi = \Phi_\infty^{[0,n-1]}$ be the induced sliding block code with memory 0 and anticipation $n-1$. This is clearly a conjugacy. Let $Y = \varphi(X)$. Since $w$ does not overlap with itself in any allowed word in $\BB(X)$, each occurrence of $\diamond$ in an element of $Y$ is followed by the final $n-1$ letters of $w$. Since $\diamond$ is a new symbol, all these letters can be removed using Lemma \ref{lem_sc_of_letter}. The resulting shift space is denoted $X^{w \mapsto \diamond}$.  Note that $X^{w \mapsto \diamond}$ is obtained from $X$ by replacing each occurrence of $w$ by $\diamond$ in every $x \in X$. 

\begin{lem} \label{lem_replace}
For any shift space $X$ and any $w \in \BB(X)$ for which $X$ does not
admit non-trivial $w$-overlaps, $X \FE X^{w \mapsto \diamond}$. 
\end{lem}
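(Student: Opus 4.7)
The plan is simply to carry out, rigorously, the construction sketched immediately before the lemma and then invoke Lemma \ref{lem_sc_of_letter} repeatedly. Write $w = w_1 w_2 \cdots w_n$ with $n = |w|$. First, I would show that the sliding block code $\varphi = \Phi_\infty^{[0,n-1]} \colon X \to Y$ defined above is a conjugacy. Surjectivity onto $Y = \varphi(X)$ is automatic, so the issue is injectivity. The hypothesis that $X$ admits no non-trivial $w$-overlaps means that in any $x \in X$, two distinct positions where the factor $w$ begins must be at distance at least $n$. Consequently, whenever $y_i = \diamond$ in some $y \in Y$, the letters $y_{i+1}, \ldots, y_{i+n-1}$ must equal $w_2, \ldots, w_n$ and cannot themselves be $\diamond$. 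This gives an explicit inverse sliding block code: replace each $\diamond$ at position $i$ by $w_1$ and leave everything else fixed. Hence $\varphi$ is a conjugacy and $X \FE Y$.

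Next, note the structural fact just extracted from the no-overlap hypothesis: in $Y$, every occurrence of $\diamond$ is followed by $w_2 w_3 \cdots w_n$ in this exact order, and in particular $F_Y(\diamond) = w_2 F_Y(\diamond w_2)$. Since $\diamond \notin \AA$ we have $\diamond \neq w_2$, so Lemma \ref{lem_sc_of_letter} applies and gives $Y \FE Y^{\diamond w_2 \mapsto \diamond}$. In the resulting shift, the same analysis shows that $\diamond$ is now always followed by $w_3 w_4 \cdots w_n$, so $F(\diamond) = w_3 F(\diamond w_3)$ and Lemma \ref{lem_sc_of_letter} applies again. Iterating $n-1$ times contracts the trailing letters $w_2, \ldots, w_n$ one at a time, and concatenating these flow equivalences with the conjugacy $X \cong Y$ produces the desired shift space in which every occurrence of $w$ in a point of $X$ has been replaced by a single $\diamond$; by definition this is $X^{w \mapsto \diamond}$.

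The only step requiring care is verifying, inductively, that the hypothesis of Lemma \ref{lem_sc_of_letter} is maintained after each contraction — i.e.\ that $\diamond$ continues to be followed deterministically by the next remaining letter of $w$, and that no spurious new occurrences of $\diamond$ appear. Both facts reduce to the same input, namely the absence of non-trivial $w$-overlaps in $X$: this prevents one occurrence of $w$ in $X$ from producing, after contraction, a letter of $w$ abutting a later $\diamond$ that would violate the deterministic successor condition, and it prevents two $\diamond$'s from ever being adjacent. This is the one nontrivial part of the verification and is really the whole content of the lemma; once it is in hand, transitivity of $\FE$ closes the argument.
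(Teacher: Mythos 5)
Your proposal is correct and follows essentially the same route as the paper: the paper's proof simply points back to the construction preceding the lemma (the conjugacy $\varphi = \Phi_\infty^{[0,n-1]}$ followed by repeated application of Lemma \ref{lem_sc_of_letter} to strip the trailing letters $w_2,\ldots,w_n$ after each $\diamond$), and you have filled in exactly the details it declares obvious, correctly identifying that the no-overlap hypothesis is what guarantees each $\diamond$ is deterministically followed by the remaining tail of $w$ at every stage of the contraction.
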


\begin{proof}
This is obvious from the construction above.
\end{proof}

\section{Application: Flow equivalence of gap shifts}
\label{sec_gap_shifts}\index{gap shift}
In this section, the preceding lemmas are applied to an investigation of the flow equivalence of a certain class of sofic shifts. 
For $S \subseteq \N_0$, the $S$-gap shift $\X(S)$ (cf.\ \cite[pp.\ 7]{lind_marcus}) is the shift space over $\{ 0, 1 \}$ for which the set of forbidden words is 
\begin{displaymath}
\FF = \left\{ \begin{array}{l c l}
\{ 10^n1 \mid n \notin S \} & , & S \textrm{ infinite} \\ 
\{ 10^n1 \mid n \notin S \} \cup \{ 0^{\max S +1} \} & , & S \textrm{ finite} \\ 
\end{array} \right. .
\end{displaymath}
$\X(S)$ is sofic if and only if there exist $e_1, \ldots , e_k, f_1, \ldots , f_l, N \in \N_0$
such that $S = \{e_1, \ldots, e_k\} \cup (\{f_1, \ldots, f_l\} + N \N_0)$ (see e.g.\ \cite[problem 3.1.10]{lind_marcus}).
Without loss of generality, it can be assumed that $\{e_1, \ldots, e_k\} \cap (\{f_1, \ldots, f_l\} + N \N_0) = \emptyset$, that $f_1 < \cdots < f_l$, and that $f_l-f_1 < N$. $\X(S)$ is an SFT if and only if $N \in \{0,1\}$.  

\begin{lem}\label{lem_gap_fe}
Let $S \subseteq \N_0$.
\begin{enumerate}
\item If $k \in \N$, then $\X(S+k) \FE \X(S)$.\label{lem_gap_fe_trans}
\item If $a,b \in \N_0 \setminus \{S\}$, then $\X(S \cup \{a \}) \FE \X(S \cup \{b \})$.\label{lem_gap_fe_add}
\end{enumerate}
\end{lem}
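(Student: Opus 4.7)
For part (1), my plan is to apply Lemma \ref{lem_se_of_letter} with the letters $1$ and $0$, yielding $\X(S) \FE \X(S)^{1 \mapsto 10}$. Inspecting the gap structure, replacing each $1$ by the block $10$ lengthens every gap by exactly one $0$, so $\X(S)^{1 \mapsto 10} = \X(S+1)$. Iterating $k$ times and using symmetry of $\FE$ gives the claim.

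For part (2), assume without loss of generality that $a < b$ and write $k = b - a$. View $\X(S \cup \{a\})$ as the shift generated by biinfinite concatenations of blocks from $\{1 0^c : c \in S \cup \{a\}\}$. The plan is to construct a chain of flow equivalences that selectively lengthens the block $10^a$ into $10^b$ while leaving every other generating block intact. Introduce auxiliary symbols $\diamond_1, \ldots, \diamond_k$ and inductively define shifts $Y_0 = \X(S \cup \{a\}), Y_1, \ldots, Y_k$ so that $Y_j$ is generated by $\{1 0^c : c \in S\} \cup \{10^a \diamond_1 \cdots \diamond_j\}$. At the $j$-th stage, the word $w_j = 10^a \diamond_1 \cdots \diamond_{j-1} 1$ appears in $Y_{j-1}$ exactly at the start of each modified block, since by induction the new symbols occur only inside modified blocks. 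Applying Lemma \ref{lem_se_word_new} with $w = w_j$ and new letter $\diamond_j$ gives $Y_{j-1} \FE Y_{j-1}^{w_j \mapsto w_j \diamond_j}$, inserting $\diamond_j$ right after the trailing $1$ of each occurrence. Then compose with the $2$-block sliding conjugacy that swaps every adjacent pair $1 \diamond_j$ into $\diamond_j 1$ to produce $Y_j$; this is a conjugacy since $\diamond_j$ is always preceded by $1$, making the swap reversible by a corresponding $2$-block code.

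After $k$ stages, apply the one-block code $\Phi$ sending each $\diamond_j \mapsto 0$ and fixing $0, 1$; its image $\Phi_\infty(Y_k)$ is generated by $\{10^c : c \in S \cup \{b\}\}$ and therefore coincides with $\X(S \cup \{b\})$. The hard part will be checking that $\Phi_\infty$ is a conjugacy rather than a mere factor map, even when $S$ is infinite. This rests on the observation that ``does position $i$ lie in a gap of length exactly $b$?'' is decidable using only the window of radius $b + 1$ about $i$: the answer is yes iff the two bounding $1$s both lie within this window at distance $b + 1$. Since $b \notin S$, any such gap must originate from a modified block, and the offset of $i$ from the left bounding $1$ then determines whether the preimage at $i$ is $0$ or a specific $\diamond_j$; elsewhere, the preimage of $0$ is simply $0$. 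Hence $\Phi_\infty^{-1}$ is a sliding block code, $\Phi_\infty$ is a conjugacy, and chaining the equivalences yields $\X(S \cup \{a\}) \FE \X(S \cup \{b\})$.
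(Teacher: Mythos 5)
Your proof of part (1) is exactly the paper's argument: apply Lemma \ref{lem_se_of_letter} $k$ times to expand $1$ by $0^k$, which lengthens every gap by $k$.

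For part (2) your argument is correct, but it takes a genuinely different route. The paper marks the \emph{opening} $1$ of each exact-$a$-gap: the injective $(a+2)$-block code sending the window $10^a1$ to a fresh symbol $\diamond$ (and every other window to its leftmost letter) produces a shift in which every $\diamond$ is followed by $0^a x$ with $x\in\{1,\diamond\}$; then $|b-a|$ applications of the letter-level expansion $\diamond\mapsto\diamond 0$ of Lemma \ref{lem_se_of_letter} (or the contraction of Lemma \ref{lem_sc_of_letter}, so no WLOG is needed) adjust the gap to length $b$, and a final recoding sends $\diamond$ back to $1$. You instead keep all the $1$s intact and pad the \emph{interior} of the gap with $k=b-a$ pairwise distinct new symbols, one per stage, each stage consisting of a word-level expansion via Lemma \ref{lem_se_word_new} followed by the transposition conjugacy $1\diamond_j\mapsto\diamond_j 1$, and you finish with a single collapse $\diamond_j\mapsto 0$. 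Both constructions hinge on the same key point — that $a,b\notin S$ makes gaps of length exactly $a$ or exactly $b$ locally recognizable — but it enters at different ends: in the paper it makes the \emph{initial} recoding injective, in yours it makes the \emph{final} collapse injective (and you correctly supply the bounded-window argument for this, which is the step most likely to fail if $b$ were in $S$). The price of your version is the extra bookkeeping: the swap maps must be checked to be conjugacies even when $w_1=10^a1$ overlaps itself (consecutive $a$-gaps), which works because no two $\diamond_j$'s are ever adjacent and each is preceded by $1$; the paper's version avoids this by never touching the bounding $1$s. What your version buys is that every intermediate shift is again visibly a ``gap shift with a decorated block,'' and that only symbol expansion, never contraction, is needed.
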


\begin{proof}
The first statement follows by using Lemma \ref{lem_se_of_letter} $k$ times on $\X(S)$ to expand $1$ by $0^k$. For the second statement, introduce a new symbol $\diamond$ and define $\Phi \colon \BB_{a+2} (\X(S \cup \{a \})) \to \{0,1,\diamond\}$ by 
\begin{displaymath}
\Phi(w) = \left \{ \begin{array}{l c l}
\diamond & , & w = 10^a1 \\
\leftl(w)    & , & \textrm{otherwise} 
\end{array}\right. .
\end{displaymath}
Clearly, $\varphi = \Phi_{[0,a+1]} \colon \X(S \cup \{a \}) \to \{0,1,\diamond\}^\Z$ is injective. Use Lemma \ref{lem_se_of_letter} or \ref{lem_sc_of_letter} on $\varphi(\X(S \cup \{a \}))$ to construct a shift space
where every $\diamond$ is followed by $0^bx$ for some $x \in \{1,\diamond\}$. Finally, construct a conjugacy from the resulting shift to $\X(S \cup \{b \})$ as above. The result follows by Theorem \ref{thm_parry-sullivan}.
\end{proof}

\begin{prop} \label{prop_s-gap_reduction}
Let $N \in \N$ and let 
\begin{displaymath}
S  = \{e_1, \ldots, e_k\} \cup (\{f_1, \ldots, f_l\} + N \N_0) \subseteq \N_0
\end{displaymath}
with $\{e_1, \ldots, e_k\} \cap (\{f_1, \ldots, f_l\} + N \N_0) = \emptyset$, $f_1 < \cdots < f_l$, and $f_l-f_1 < N$. Let $1 \leq j \leq l$, $j \equiv 1-k \pmod{l}$, and 
\begin{displaymath}
S' = \{ 0, f_{j+1} -f_j, \ldots , f_l -f_j, f_1+N-f_j, \ldots , f_{j-1} + N - f_j \}+N\N_0,
\end{displaymath}
then $\X(S) \FE \X(S')$. 
\end{prop}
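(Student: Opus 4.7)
My plan is to transform $\X(S)$ into $\X(S')$ via a chain of explicit flow equivalences obtained from Lemma \ref{lem_gap_fe}: first translate $S$ upward by a carefully chosen amount $m$ to align its periodic part with that of $S'$, then use single-element swaps to fill in the $k$ positions at the bottom of $S'$ with the shifted exceptional elements. The key structural fact is that after translating by $m$, the periodic part $T+m = \{f_1 + m, \ldots, f_l + m\} + N\N_0$ has the same residues modulo $N$ as $S'$ and in fact equals $S'$ with its first $k$ elements removed, so that after $k$ swaps the result is exactly $S'$.

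The trivial case $k=0$ forces $j=1$ and $S' = T - f_1$, handled by one application of Lemma \ref{lem_gap_fe}(1). For $k \geq 1$, after a preliminary normalization via Lemma \ref{lem_gap_fe}(1) and (2) (moving the $e_i$ above $f_l$ and then shifting down) to arrange $0 \leq f_1 < N$, I set $L = \lceil k/l \rceil$ and $m = LN - f_j \geq 1$. Lemma \ref{lem_gap_fe}(1) gives $\X(S) \FE \X(S+m)$, and I decompose $S+m = (T+m) \cup (E+m)$ with $E = \{e_1,\ldots,e_k\}$. The residues of $T+m$ modulo $N$ are $\{f_i - f_j \bmod N : 1 \leq i \leq l\}$, matching the residues of $S'$ in $[0,N)$. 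Writing $g_0 = 0, g_1, \ldots, g_{l-1}$ for the elements of $S' \cap [0,N)$ in increasing order (which coincides with their listing in the defining formula for $S'$) and $s_n = \lfloor n/l\rfloor\, N + g_{n \bmod l}$ for the $n$-th element of $S'$ in its natural enumeration, the condition $j \equiv 1-k \pmod{l}$ forces $s_k = f_1 + m$, so the smallest element of $T+m$ equals $s_k$ and consequently $T+m = S' \setminus \{s_0, s_1, \ldots, s_{k-1}\}$.

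I then apply Lemma \ref{lem_gap_fe}(2) exactly $k$ times, bijectively swapping the shifted exceptional elements $E + m$ onto the target positions $\{s_0, \ldots, s_{k-1}\}$. When $f_l < N$ the shifted exceptional values all satisfy $e_i + m \geq m > s_{k-1} = (L-1)N + f_l - f_j$, so $E + m$ and the target set are disjoint and the swaps can be carried out in any order. When $f_l \geq N$ the two sets may overlap, but one can always order the swaps so that each overlapping target is introduced only after its shifted-exceptional counterpart has been removed by its own swap; the scheduling exists because the required bijection need only avoid fixed points on the overlap set. After all $k$ swaps the set equals $(T+m) \cup \{s_0, \ldots, s_{k-1}\} = S'$, chaining to $\X(S) \FE \X(S+m) \FE \X(S')$.

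The main obstacle is verifying the arithmetic identity $s_k = f_1 + m$. This splits along $j = 1$ (where $k$ is a multiple of $l$, so $L = k/l$, $k \bmod l = 0$, $g_0 = 0$, and $s_k = LN = f_1 + m$ since $f_j = f_1$) and $j \geq 2$ (where $k \bmod l = l+1-j$, so by the second clause of the formula $g_{l+1-j} = f_1 + N - f_j$, giving $s_k = (L-1)N + g_{l+1-j} = LN + f_1 - f_j = f_1 + m$). Once this identity is in hand, the remaining verifications are routine bookkeeping.
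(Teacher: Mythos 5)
Your proof follows essentially the same route as the paper's: translate $S$ upward so that its periodic part becomes $S'$ with its $k$ smallest elements deleted, then apply Lemma \ref{lem_gap_fe}(\ref{lem_gap_fe_add}) exactly $k$ times to trade the shifted exceptional elements for those deleted elements. The paper translates by $mN$ with $k = ml-j+1$ and finishes with a downward translation by $f_j$; you simply fold that final translation into the initial one, and the counting identity you check (that $s_k = f_1 + m$, i.e.\ that exactly $k$ elements of $S'$ lie below the translated periodic part) is the same arithmetic the paper relies on via $k = ml - j + 1$.

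Two points need repair. First, your translation amount $m = LN - f_j$ with $L = \lceil k/l\rceil$ need not be positive: the hypotheses only require $f_l - f_1 < N$, not $f_j < N$. For example $N=5$, $\{f_1,f_2\}=\{3,6\}$, $k=1$ forces $j=2$ and gives $m = 5-6 = -1$, and your normalization to $0 \leq f_1 < N$ does not exclude this (here $f_1 = 3 < 5$ already). A negative translation is not covered by Lemma \ref{lem_gap_fe}(\ref{lem_gap_fe_trans}) as invoked, and is not even defined as a set operation when $\min S < |m|$ (e.g.\ $e_1 = 0$ above). The clean fix is to keep the two translations separate as the paper does: translate up by $mN$ (always positive when $k \geq 1$), perform the swaps, then translate down by $f_j$. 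Second, your scheduling of the swaps when $E+m$ meets $\{s_0,\ldots,s_{k-1}\}$ is justified backwards: a bijection that avoids fixed points on the overlap can contain a two-cycle $a \mapsto b$, $b \mapsto a$ with both $a,b$ in the overlap, and then neither swap can go first, since each would add an element still present. The correct move is the opposite one — leave the overlap untouched and biject $(E+m)\setminus\{s_0,\ldots,s_{k-1}\}$ onto $\{s_0,\ldots,s_{k-1}\}\setminus(E+m)$, after which every element being added is absent from the current set when it is added. (The paper glosses over this overlap issue entirely, so this is a flaw in your justification rather than a divergence from the paper's argument.)
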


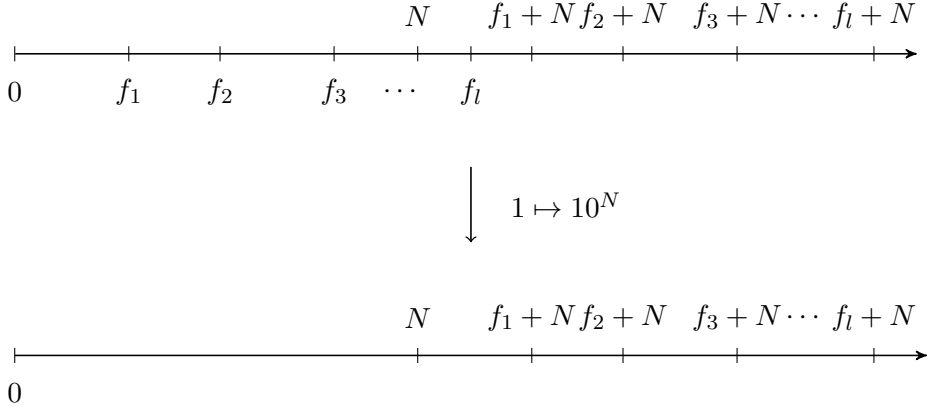
\begin{figure}
\begin{center}
\begin{tikzpicture}
  [bend angle=45,
   clearRound/.style = {circle, inner sep = 0pt, minimum size = 17mm},
   clear/.style = {rectangle, minimum width = 8 mm, minimum height = 5 mm, inner sep = 0pt},  
   greyRound/.style = {circle, draw, minimum size = 1 mm, inner sep =
      0pt, fill=black!10},
   grey/.style = {rectangle, draw, minimum size = 6 mm, inner sep =
      1pt, fill=black!10},
    white/.style = {rectangle, draw, minimum size = 6 mm, inner sep =
      1pt},
   tick/.style = {rectangle, draw, minimum height = 2 mm, minimum width = 0pt, inner sep = 0pt},
   to/.style = {->, >=stealth', semithick},
   mapsto/.style = {->, semithick}]
  
  \node[tick] (O) at (0,0) {};
  \node (max) at (12,0) {};

  \node[tick] (f1) at (1.5,0) {};
  \node[tick] (f2) at (2.7,0) {};
  \node[tick] (f3) at (4.2,0) {};
  \node[tick] (fl) at (6,0) {}; 
  
  \node[clear] (0text) at ($(O)-(0,0.5)$) {$0$};
  \node[clear] (f1text) at ($(f1)-(0,0.5)$) {$f_1$};
  \node[clear] (f2text) at ($(f2)-(0,0.5)$) {$f_2$};
  \node[clear] (f3text) at ($(f3)-(0,0.5)$) {$f_3$};
  \node[clear] (fltext) at ($(fl)-(0,0.5)$) {$f_l$}; 
  \node[clear] (dots) at (5.1,-0.5) {$\cdots$};

  \node[tick] (N) at (5.3,0) {};
  \node[tick] (f1+N) at ($(f1)+(N)$) {};
  \node[tick] (f2+N) at ($(f2)+(N)$) {};
  \node[tick] (f3+N) at ($(f3)+(N)$) {};
  \node[tick] (fl+N) at ($(fl)+(N)$) {}; 
  
  \node[clear] (Ntext) at ($(N)+(0,0.5)$) {$N$};
  \node[clear] (f1Ntext) at ($(f1text)+(N)+(0,1)$) {$f_1+N$};
  \node[clear] (f2Ntext) at ($(f2text)+(N)+(0,1)$) {$f_2+N$};
  \node[clear] (f3Ntext) at ($(f3text)+(N)+(0,1)$) {$f_3+N$};
  \node[clear] (flNtext) at ($(fltext)+(N)+(0,1)$) {$f_l+N$}; 
  \node[clear] (dotsN) at ($(dots)+(N)+(0,1)$) {$\cdots$};

  \draw[to] (O) to (max);

   \node (vt) at (0,-4) {};

  \draw[mapsto] (6,-1.5) to node[auto] {\quad $1 \mapsto 10^N$} (6,-2.5);

  \node[tick] (O') at ($(O)+(vt)$) {};
  \node[tick] (N') at ($(N)+(vt)$) {};
  \node[tick] (f1+N') at ($(f1+N)+(vt)$) {};
  \node[tick] (f2+N') at ($(f2+N)+(vt)$) {};
  \node[tick] (f3+N') at ($(f3+N)+(vt)$) {};
  \node[tick] (fl+N') at ($(fl+N)+(vt)$) {}; 

  \node[clear] (0text') at ($(0text)+(vt)$) {$0$};  
  \node[clear] (Ntext') at ($(Ntext)+(vt)$) {$N$};
  \node[clear] (f1Ntext') at ($(f1Ntext)+(vt)$) {$f_1+N$};
  \node[clear] (f2Ntext') at ($(f2Ntext)+(vt)$) {$f_2+N$};
  \node[clear] (f3Ntext') at ($(f3Ntext)+(vt)$) {$f_3+N$};
  \node[clear] (fltext') at ($(flNtext)+(vt)$) {$f_l+N$}; 
  \node[clear] (dotsN') at ($(dotsN)+(vt)$) {$\cdots$};

  \draw[to] ($(O)+(vt)$) to ($(max)+(vt)$);

\end{tikzpicture}
\end{center}
\caption[Translation of $S$.]{Illustration of the proof of Proposition \ref{prop_s-gap_reduction}. Translation of the periodic structure of $S$ in the case $0 < k \leq l$ where $m=1$.} 
\label{fig_s_translation}
\end{figure}

\begin{proof}
Choose $m \in \N_0$ such that $k = ml-j+1$, and consider $T = S+mN$. By Lemma \ref{lem_gap_fe}(\ref{lem_gap_fe_trans}), $\X(T) \FE \X(S)$. 
Consider Figure \ref{fig_s_translation}, and note how the periodic structure is translated. Use Lemma \ref{lem_gap_fe}(\ref{lem_gap_fe_add}) $k$ times to see that the sets
\begin{displaymath}
T =  \{e_1+mN, \ldots, e_k+mN\} \cup (\{f_1, \ldots, f_l\}+ N(m+ \N_0))
\end{displaymath}
and 
\begin{align*}
T' &= \{ f_j , \ldots, f_l\} \\ 
    &\qquad
          \cup( \{f_1, \ldots, f_l\}+N) \cup \cdots 
          \cup ( \{f_1, \ldots, f_l\}+(m-1)N) \\
    &\qquad \qquad \qquad \qquad \qquad \qquad \qquad \qquad      
          \cup (\{f_1, \ldots, f_l \} + N(m+\N_0)) \\
    &= \{ f_j , f_{j+1}, \ldots, f_l, f_1+N, \ldots, f_{j-1}+N\} + N\N_0 
\end{align*}
generate flow equivalent gap shifts. The result now follows by Lemma \ref{lem_gap_fe}(\ref{lem_gap_fe_trans}).
\end{proof}

\noindent
This result allows a reduction of the generating set of a sofic gap shift to a standard form, and two gap shifts with the same reduced form are clearly flow equivalent.
However, it is still unclear whether two sofic gap shifts can be flow equivalent without having the same reduced form.

The following proposition uses the reduction from Proposition \ref{prop_s-gap_reduction} to give a complete classification of SFT gap shifts.
If $S \subseteq \N_0$ and $\X(S)$ is an SFT, then either $S$ or $\N_0 \setminus  S$ is finite, i.e.\ $S = \{e_1, \ldots, e_k \}$ or $S = \{e_1, \ldots , e_k\} \cup (f+\N_0)$ for some $0 \leq e_1 < e_2 < \ldots < e_k < f$.

\index{gap shift!SFT}
\begin{prop} \label{cor_finite_gap_shift}
Let $S \subseteq \N_0$ such that $\X(S)$ is an SFT.
\begin{itemize}
\item If $\lvert S \rvert = k$, then $\X(S)$ is flow equivalent to the full $k$-shift.
\item
If $S$ is infinite, then $\X(S)$ is flow equivalent to the full 2-shift.
\end{itemize}
\end{prop}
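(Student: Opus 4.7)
The plan is to treat the finite and infinite cases separately, in each case reducing to a standard form where the full-shift structure is immediate.

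For the infinite case, since $\X(S)$ is an SFT we may write $S = \{e_1, \ldots, e_k\} \cup (f + \N_0)$ with $0 \leq e_1 < \cdots < e_k < f$, so the hypotheses of Proposition \ref{prop_s-gap_reduction} are satisfied with $N = 1$, $l = 1$, and $f_1 = f$. The congruence $j \equiv 1 - k \pmod{1}$ is vacuous, forcing $j = 1$, and the formula yields $S' = \{0\} + \N_0 = \N_0$. Hence $\X(S) \FE \X(\N_0) = \{0, 1\}^\Z$, which is the full $2$-shift because no words are forbidden in its definition.

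For finite $S = \{e_1, \ldots, e_k\}$, I would first show $\X(S) \FE \X(\{0, 1, \ldots, k-1\})$ by iterating Lemma \ref{lem_gap_fe}(\ref{lem_gap_fe_add}): whenever $S \neq \{0, 1, \ldots, k-1\}$, the two sets have equal size $k$, so there must exist $e \in S$ with $e \geq k$ together with $p \in \{0, 1, \ldots, k-1\} \setminus S$. Applying the lemma with the base set $T = S \setminus \{e\}$ then gives $\X(S) \FE \X((S \setminus \{e\}) \cup \{p\})$, which strictly decreases $\lvert S \setminus \{0, 1, \ldots, k-1\} \rvert$. The procedure terminates at $\X(\{0, 1, \ldots, k-1\})$. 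What remains is to verify $\X(\{0, 1, \ldots, k-1\}) \FE X^{[k]}$, and for $k = 1$ this is immediate because the letter $0$ is itself forbidden in $\X(\{0\})$, so $\X(\{0\}) = \{1^\infty\}$ coincides with the full $1$-shift.

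For $k \geq 2$, the plan is to build an explicit flow equivalence from $X^{[k]}$ to $\X(\{0, 1, \ldots, k-1\})$ by a chain of symbol expansions followed by a conjugacy, viewing each letter $i \in \{1, \ldots, k\}$ of the full $k$-shift as a code for the block $1 0^{i-1}$ in the gap shift. Starting from $X^{[k]}$ on the alphabet $\{1, 2, \ldots, k\}$, I would first perform the basic symbol expansion $2 \mapsto 2 \diamond$ and rename $\diamond$ as $0$; then for each $i \in \{3, \ldots, k\}$ I would apply Lemma \ref{lem_se_of_letter} with $a = i$ and $b = 0$ a total of $i - 1$ times. Tracking what happens at each step shows that in the resulting shift $Y$ on $\{0, 1, \ldots, k\}$, every letter $i \in \{2, \ldots, k\}$ is followed by exactly $i - 1$ consecutive zeros and then a non-zero letter, while $1$ is never followed by $0$. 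The final step is the $1$-block code $\Phi$ with $\Phi(0) = 0$ and $\Phi(i) = 1$ for $i \geq 1$; its image is precisely $\X(\{0, 1, \ldots, k-1\})$, and it is a conjugacy because its inverse is a sliding block code (of anticipation at most $k$) that recovers the original letter in $\{1, \ldots, k\}$ by counting the zeros between successive $1$s.

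The main obstacle will be the bookkeeping in this last chain of symbol expansions: verifying at each intermediate stage that the shift has precisely the structure needed for both the next application of Lemma \ref{lem_se_of_letter} to add exactly one new zero in the right place and the final relabeling to be injective. The key invariant to maintain is that no non-zero letter is ever followed by more than $k - 1$ zeros, which is what gives the inverse block code bounded anticipation and hence makes $\Phi_\infty$ a conjugacy rather than merely a factor map.
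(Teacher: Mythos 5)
Your proof is correct. For the infinite case you do exactly what the paper does: apply Proposition \ref{prop_s-gap_reduction} (with $l=1$, $N=1$) to reduce to $\X(\N_0)=\{0,1\}^\Z$. For the finite case, however, your route is genuinely different. The paper works directly on $\X(S)$ and \emph{contracts}: it invokes Lemma \ref{lem_replace} to replace each block $10^{e_i}$ by a fresh symbol $\diamond_i$, longest blocks first, landing immediately on the full shift $\{\diamond_1,\ldots,\diamond_k\}^\Z$. You instead first normalise the generating set to $\{0,\ldots,k-1\}$ by iterating Lemma \ref{lem_gap_fe}(\ref{lem_gap_fe_add}), and then run the construction in the opposite direction, \emph{expanding} each letter $i$ of the full $k$-shift into the block $10^{i-1}$ via Lemma \ref{lem_se_of_letter} and finishing with an explicit relabelling conjugacy. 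The two arguments are morally inverse to one another. The paper's version is shorter and handles arbitrary $e_1<\cdots<e_k$ in one pass, at the cost of leaning on the overlap hypothesis of Lemma \ref{lem_replace} (satisfied because $10^{e_i}$ cannot overlap itself) and on the unproved but easily checked remark that the longest blocks must be replaced first. Your version pays for this with an extra normalisation step and heavier bookkeeping in the expansion chain, but every intermediate map is concrete, and the injectivity of the final one-block code is correctly secured by the invariant you single out (no run of more than $k-1$ zeros, so the inverse has bounded anticipation); your separate treatment of $k=1$, where $\X(\{0\})$ is the trivial one-point shift excluded from Theorem \ref{thm_franks}, is also a sensible precaution.
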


\begin{proof}
Let $S = \{ e_1 , \ldots , e_k \}$ for $0 \leq e_1 < e_2 < \ldots < e_k$, and choose $k$ different symbols $\diamond_1, \ldots , \diamond_k \notin \AA({\X(S)}) = \{0,1\}$.
First, use Lemma \ref{lem_replace} to replace the word $10^{e_k}$ by
$\diamond_n$. Then replace $10^{e_{k-1}}$ by $\diamond_{k-1}$ and
continue so that $10^{e_i}$ gets replaced by $\diamond_i$ for all $i
\in \{1 ,\ldots ,n\}$. Note that it is important to replace the
longest strings of 0's first. The result is the shift space
$\{\diamond_1 , \ldots , \diamond_n \}^\Z$. 

For the second statement, assume that $S = \{e_1, \ldots , e_k\} \cup (f+\N_0)$ with $0 \leq e_1 < e_2 < \ldots < e_k < f$. By Proposition \ref{prop_s-gap_reduction}, $\X(S) \FE \X(\N_0) = \{ 0, 1 \}^\Z$. 
\end{proof}

\begin{lem}
\label{lem_s-gap_rfc}\index{gap shift!Fischer cover of}
If $S = \{s_1, \ldots , s_k\} + n \N_0$ with minimal $n$ and $s_1 = 0 < s_2 < \cdots < s_k < n$, then the right Fischer cover $(F,\LL_F)$ of $\X(S)$ is the labelled graph shown in Figure \ref{fig_rkc_s-gap}, and $\BF_+(\X_F) = - \Z / k\Z$.
\end{lem}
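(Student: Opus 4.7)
The plan is to describe the graph $(F, \LL_F)$ explicitly, verify the three conditions in the right-handed analogue of Theorem \ref{thm_lfc_char}, and then read off $\BF_+(\X_F)$ from the adjacency matrix. Set $F^0 = \{v_0, v_1, \ldots, v_{n-1}\}$ and add a $0$-labelled edge from $v_i$ to $v_{(i+1) \bmod n}$ for each $i$ together with a $1$-labelled edge from $v_{s_j}$ to $v_0$ for each $j \in \{1, \ldots, k\}$. Traversing the $0$-cycle from $v_0$ shows that the $1$'s in any biinfinite label sequence are separated by $0$-runs whose lengths lie in $\{s_1, \ldots, s_k\} + n \N_0 = S$, and conversely every element of $\X(S)$ is the label of a biinfinite path in $F$, so $\X_{(F, \LL_F)} = \X(S)$.

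I now verify the three conditions in the right-handed analogue of Theorem \ref{thm_lfc_char}. Right-resolving is immediate since every $v_i$ emits at most one $0$-edge and at most one $1$-edge; irreducibility follows because the $0$-edges alone form a single directed cycle through all vertices. For follower-separatedness, observe that the follower set of $v_i$ consists of $0^\infty$ together with right-rays $0^{a_1} 1 0^{a_2} 1 \cdots$ satisfying $i + a_1 \in S$ and $a_r \in S$ for $r \geq 2$. If the follower sets of $v_i$ and $v_j$ agree for some $0 \leq i < j < n$, then $i + a \in S \iff j + a \in S$ for every $a \geq 0$; combined with the period-$n$ structure of $S$, this forces $S$ to be invariant under translation by $d = \gcd(j-i, n)$, so $S = (S \cap [0, d)) + d \N_0$ with $d < n$, contradicting the minimality of $n$. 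Thus $(F, \LL_F)$ is follower-separated, and the right-handed analogue of Theorem \ref{thm_lfc_char} identifies it with the right Fischer cover of $\X(S)$.

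For the Bowen-Franks invariant, replacing the first row of $I - A$ by the sum of all rows is a determinant-preserving operation whose $(0, j)$-entry becomes $1 - (\textrm{in-degree of } v_j)$. Since $v_0$ has in-degree $1 + k$ (the unique $0$-edge from $v_{n-1}$ together with the $k$ return edges labelled $1$) while every other vertex has in-degree $1$, the new first row equals $(-k, 0, 0, \ldots, 0)$. Cofactor expansion along it gives
\begin{displaymath}
\det(I - A) = -k \cdot M_{0, 0},
\end{displaymath}
where $M_{0, 0}$ is the minor obtained by deleting row and column $0$; direct inspection shows $M_{0,0}$ is an $(n-1) \times (n-1)$ upper-triangular matrix with $1$'s on the diagonal, so $M_{0,0} = 1$. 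Consequently $\det(I - A) = -k$, and $M_{0, 0} = \pm 1$ forces the $(n-1)$-th determinantal divisor of $I - A$ to equal $1$, making the Smith normal form $\diag(1, \ldots, 1, k)$. Hence $\BF(\X_F) = \Z/k\Z$, and since $\sgn \det(I - A) = -1$ we conclude $\BF_+(\X_F) = -\Z/k\Z$.

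The hardest part will be the follower-separation step, where the minimality of $n$ has to be exploited carefully to exclude coincidences among the candidate vertices; once the cover has been identified, the $\BF_+$ calculation is routine provided the right row operation is chosen.
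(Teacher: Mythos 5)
Your proof is correct and follows the same route as the paper: identify the labelled graph of Figure \ref{fig_rkc_s-gap} as the right Fischer cover via the right-handed analogue of Theorem \ref{thm_lfc_char}, then compute $\det(\Id-A)$ and the Smith normal form of $\Id-A$ by row operations. You have merely filled in two details the paper dismisses as easy to check — the follower-separation argument exploiting the minimality of $n$, and the explicit row replacement yielding the row $(-k,0,\dots,0)$ — and both are carried out correctly.
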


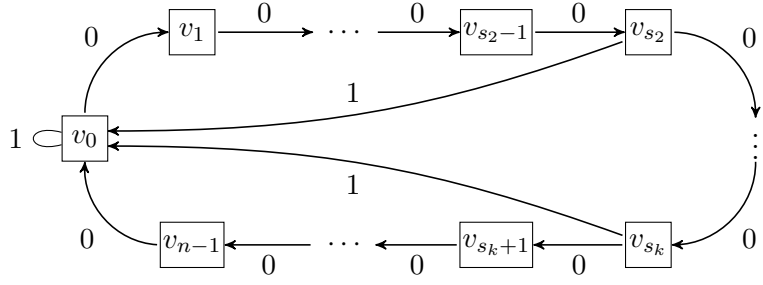
\begin{figure}
\begin{center}
\begin{tikzpicture}
  [bend angle=45,
   clearRound/.style = {circle, inner sep = 0pt, minimum size = 17mm},
   clear/.style = {rectangle, minimum width = 8 mm, minimum height = 5 mm, inner sep = 0pt},  
   greyRound/.style = {circle, draw, minimum size = 1 mm, inner sep =
      0pt, fill=black!10},
   grey/.style = {rectangle, draw, minimum size = 6 mm, inner sep =
      1pt, fill=black!10},
    white/.style = {rectangle, draw, minimum size = 6 mm, inner sep =
      1pt},
   to/.style = {->, shorten <= 1 pt, >=stealth', semithick}]
  
  \node[white] (v0) at (0,0) {$v_0$};
  \node[white] (v1) at (1.41,1.41) {$v_1$};
  \node[clear] (dots1) at (3.41,1.41) { $\, \cdots$};  
  \node[white] (vs1-1) at (5.41,1.41) {$v_{s_2 -1}$};  
  \node[white] (vs1) at (7.41,1.41) {$v_{s_2}$};  
  \node[clear] (dots2) at (8.82,0) {$\vdots$};  
  \node[white] (vsk) at (7.41,-1.41) {$v_{s_k}$};  
  \node[white] (vsk+1) at (5.41,-1.41) {$v_{s_k +1}$};  
  \node[clear] (dots3) at (3.41,-1.41) { $\, \cdots$};  
  \node[white] (vn-1) at (1.41,-1.41) {$v_{n-1}$};
   
  \draw[loop left] (v0) to node[auto] {$1$} (v0);
  \draw[to,bend left] (v0) to node[auto] {$0$} (v1);
  \draw[to] (v1) to node[auto] {$0$} (dots1);
  \draw[to] (dots1) to node[auto] {$0$} (vs1-1);
  \draw[to] (vs1-1) to node[auto] {$0$} (vs1);
  \draw[to,bend left] (vs1) to node[auto] {$0$} (dots2);  
  \draw[to,bend left = 10] (vs1) to node[auto,swap] {$1$} ($(v0) + (0.3,0.1)$);      

  \draw[to,bend left] (vn-1) to node[auto] {$0$} (v0);  
  \draw[to] (dots3) to node[auto] {$0$} (vn-1);
  \draw[to] (vsk+1) to node[auto] {$0$} (dots3);
  \draw[to] (vsk) to node[auto] {$0$} (vsk+1);
  \draw[to,bend left] (dots2) to node[auto] {$0$} (vsk);  
  \draw[to, bend right = 10] (vsk) to node[auto] {$1$} ($(v0) + (0.3,-0.1)$);      
    
\end{tikzpicture}
\end{center}
\caption[Right Fischer covers of sofic gap shifts.]{Right Fischer cover of the gap shift $\X(S)$ with $S = \{s_1, \ldots , s_k\} + n \N_0$ for minimal $n$ and $s_1 = 0 < s_2 < \cdots < s_k < n$.} 
\label{fig_rkc_s-gap}
\end{figure}

\begin{proof}
The graph in Figure \ref{fig_rkc_s-gap} is a presentation of $\X(S)$, and it is both right-resolving and follower-separated, so it is the right Fischer cover of $\X(S)$ by (an analogue of) Theorem \ref{thm_lfc_char}. The (non-symbolic) adjacency matrix of the underlying graph is the $n \times n$ matrix $A$ defined by
\begin{displaymath}
A_{i,j} = \left\{ \begin{array}{l c l}
1 & , & j \equiv i+1 \pmod{n} \\
1 & , & j =1,  i \in \{ s_1, \ldots , s_k\}, \textrm{ and } i < n-1 \\
2 & , & j =1,  i = s_k = n-1 \\
0 & , & \textrm{otherwise } 
\end{array} \right. .
\end{displaymath}
Using row and column additions and subtractions, it is easy to check that the Smith normal form of $\Id - A$ is $\Diag (k, 1,1, \ldots, 1)$ and that $\det( \Id - A) = -k$. 
\end{proof}

\begin{prop}
Let $S = \{s_1, \ldots , s_k\} + n \N_0$ and $T = \{t_1, \ldots , t_l\} +
m \N_0$ with minimal $n,m$. If $\X(S) \FE \X(T)$, then $k = l$ and $n = m$.
\end{prop}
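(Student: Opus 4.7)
The plan is to derive $k$ and $n$ as flow invariants from the right Fischer cover of $\X(S)$, and likewise for $\X(T)$. The invariants I will use are (i) the signed Bowen-Franks group of the underlying edge shift, and (ii) the multiplicity of the covering map. By Proposition~\ref{prop_covers_respect_se} combined with Theorem~\ref{thm_cover_fe}, a flow equivalence $\X(S) \FE \X(T)$ induces a flow equivalence of the underlying edge shifts of the two right Fischer covers, and the right analog of Proposition~\ref{prop_fischer_n_to_1} gives that the multiplicities of the two covering maps agree.

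To recover $k$, I would first apply Proposition~\ref{prop_s-gap_reduction} to bring the generators into canonical form with $s_1 = 0 < s_2 < \cdots < s_k < n$ (and analogously for $T$), noting that this reduction preserves both the period and the number of generators. Lemma~\ref{lem_s-gap_rfc} then yields an explicit description of the right Fischer covers and computes their Bowen-Franks invariants as $- \Z / k\Z$ and $- \Z / l\Z$, respectively. The underlying graph in Figure~\ref{fig_rkc_s-gap} is irreducible, so Franks' Theorem~\ref{thm_franks} applies to the flow equivalent edge shifts and forces $k = l$.

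To recover $n$, let $\pi$ denote the right Fischer covering map of $\X(S)$. I will compute $m(\pi)$ by analyzing the fibers of $\pi$. The $0$-labeled edges in Figure~\ref{fig_rkc_s-gap} form the single $n$-cycle $v_0 \to v_1 \to \cdots \to v_{n-1} \to v_0$, so bi-infinite $0$-paths in the cover are classified by the vertex visited at any fixed time; hence $\pi^{-1}(0^\infty)$ has exactly $n$ elements, giving $m(\pi) \geq n$. For the reverse bound, every $1$-labeled edge terminates at $v_0$, so right-resolvingness propagates the preimage uniquely forward from each $1$. If $x \in \X(S)$ contains $1$'s at positions $i < j$, then tracing through $0$-edges between them forces $s(y_j) = v_{(j-i-1) \bmod n}$, which in turn equals a unique $v_{s_r}$; thus consecutive $1$'s pin down each other's positions in any preimage. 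Sequences with only finitely many $1$'s or with $1$'s in only one direction have at most one unconstrained $1$-edge, giving at most $k$ preimages, and $k \leq n-1 < n$ by minimality of $n$ (the residues $s_1, \ldots, s_k$ cannot exhaust $[0,n)$ unless $n = 1$, in which case $k = n = 1$ trivially).

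The main technical obstacle is this fiber analysis: one must carefully enumerate the points of $\X(S)$ with multiple preimages, check that the value $n$ is attained (from $0^\infty$), and verify that no fiber exceeds $n$ (using minimality of the period to bound the size of the unconstrained $1$-edge case by $k < n$). Once $m(\pi) = n$ and the analogous $m(\pi') = m$ are established, flow invariance of the multiplicity yields $n = m$, which together with $k = l$ completes the proof.
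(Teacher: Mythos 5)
Your proposal is correct and follows essentially the same route as the paper: reduce to canonical form via Proposition \ref{prop_s-gap_reduction}, obtain $k=l$ from Lemma \ref{lem_s-gap_rfc} together with Franks' theorem applied to the flow equivalent edge shifts of the right Fischer covers, and obtain $n=m$ by showing the covering map is $n$ to $1$ and invoking (the right-resolving analogue of) Proposition \ref{prop_fischer_n_to_1}. Your fiber analysis is in fact more careful than the paper's, which merely asserts that $0^\infty$ is the only left-ray with multiple presentations; your explicit bound of $k\leq n$ on the fibers of sequences containing $1$'s is the detail needed to make that assertion yield $m(\pi)=n$.
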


\begin{proof}
Assume that $\X(S) \FE \X(T)$, and let $S'$, $T'$ be the sets obtained by applying Proposition \ref{prop_s-gap_reduction} to $S$ and $T$ respectively. Then $\X(S') \FE \X(T')$. The edge shifts of the underlying graphs of the right Fischer covers of $\X(S')$ and $\X(T')$ are flow equivalent by Proposition \ref{prop_covers_respect_se}, so $k=l$ by Theorem \ref{thm_franks} and Lemma \ref{lem_s-gap_rfc}.
Let $(F, \LL_F)$ be the right Fischer cover of $\X(S')$. The only left-ray that has more than one presentation in $(F, \LL_F)$ is $0^\infty \in \X(S')^-$ which has $n$ presentations. Hence, the covering map of $(F, \LL_F)$ is $n$ to $1$. By symmetry, the covering map of the right Fischer cover of $\X(T')$ is $m$ to $1$, so $n = m$ by Proposition \ref{prop_fischer_n_to_1}.
\end{proof}

According to Boyle \cite{boyle_personal}, the first part of this result can be replaced by a significantly stronger statement: If $N \in \N$, $S,T \subseteq \{0, \ldots, N-1\}$, and $\X(S +N\N_0) \FE \X(T +N\N_0)$ then $S \equiv T \pmod N$. I.e.\ the periodic structure of the generating set is a flow invariant. This can, for example, be used to show that the gap shifts generated by $\{0,1\} +5\N_0$ and $\{0,2\} +5\N_0$ are not flow equivalent. With Lemma \ref{lem_s-gap_rfc}, this shows that the Bowen-Franks invariant of the right Fischer cover is not a complete invariant of flow equivalence of sofic gap shifts.

\begin{example}
Consider $S = \{ 0, 1\} + 3\N_0$ and $T = \{ 0, 2\} + 3\N_0$. All the invariants discussed above have the same value for the two corresponding gap shifts, and this is arguably the simplest example where it is still unknown whether there exists a flow equivalence.
\end{example}

\chapter{Structure of covers of sofic shifts}
\label{chap_covers}

The purpose of this chapter is to investigate the structure of -- and relationships between -- various standard presentations (the Fischer cover, the Krieger cover, and the past set cover) of sofic shift spaces.
These results are used to find the range of the flow invariant introduced in \cite{bates_eilers_pask}, and to investigate the ideal structure of the $\Cs$-algebras associated to sofic shifts. In this way, the work can be seen as a continuation of the strategy applied in
\cite{carlsen_eilers_ergod_2004,carlsen_eilers_doc_2004,matsumoto_2001}, where invariants for shift spaces are extracted from the associated $\Cs$-algebras.  

Section \ref{sec_foundation} introduces the concept of a foundation of a cover, which is used to prove that the left Krieger cover and the past set cover can be divided into natural layers and to show that the left Krieger cover of an arbitrary sofic shift can be identified with a subgraph of the past set cover.

In Section \ref{sec_invariant}, the structure of the layers of the left Krieger cover of an irreducible sofic shift is used to find the range of the flow invariant introduced in \cite{bates_eilers_pask}. 
Section \ref{sec_cs} uses the results about the structure of covers of sofic shifts to investigate ideal lattices of the associated $\Cs$-algebras. Additionally, it is proved that 
Condition $(\ast)$ introduced by Carlsen and Matsumoto \cite{carlsen_matsumoto} holds if and
only if the left Krieger cover is the maximal essential subgraph of
the past set cover.

\index{transpose}
\index{shift space!transpose}
\index{graph!transpose}
\index{labelled graph!transpose}
\index{X*T@$X^\textrm{T}$}
Every result developed for left-resolving covers in this chapter has an analogue for the corresponding right-resolving cover. These results can easily be obtained by considering the transposed shift space $X^\textrm{T} = \{ (x_{-i})_{i \in \Z} \mid x \in X\}$ and the transposed graphs obtained by reversing all edges (see e.g.\ \cite[p. 39]{lind_marcus}).


\section{Generalising the Fischer cover}
\label{sec_gfc} 
Jonoska \cite{jonoska} proved that a reducible sofic shift does not necessarily have a unique minimal left-resolving presentation, so there is no direct analogue of the left Fischer cover for reducible sofic shifts. The aim of this section is to define a generalisation of the left Fischer cover as the subgraph of the left Krieger cover induced by a certain subset of vertices. 

\index{indecomposable}\index{vertex!indecomposable}
\index{predecessor set!indecomposable}
Let $X$ be a sofic shift space, and let $(K, \LL_K)$ be the
left Krieger cover of $X$. 
A predecessor set $P  \in K^0$ is said to be
\emph{indecomposable} if $V \subseteq K^0$ and $P = \bigcup_{Q \in V} Q$ implies that $P \in V$.

\begin{lem}
\label{lem_decomposable}
If a predecessor set $P \in K^0$ is indecomposable, then the  subgraph of $(K, \LL_K)$ induced by $K^0 \setminus \{ P \}$ is not a presentation of $X$.  
\end{lem}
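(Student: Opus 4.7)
The plan is to argue by contrapositive: assume that the subgraph $H$ of $(K, \LL_K)$ induced by $K^0 \setminus \{P\}$ is a presentation of $X$, and show that $P$ is then decomposable. The basic tools are the three parts of Lemma~\ref{lem_K_basic_facts}, which control how rays sit above vertices of the left Krieger cover.

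First I would choose $x^+ \in X^+$ with $P = P_\infty(x^+)$. For each left-ray $y^- \in P$ the biinfinite sequence $y^- x^+$ belongs to $X$, so by assumption it is the label of some biinfinite path $\pi^{(y^-)}$ in $H$. Let $Q_{y^-} \in K^0 \setminus \{P\}$ be the vertex of $\pi^{(y^-)}$ at position $0$, so that the portion of $\pi^{(y^-)}$ indexed by negative integers is a left-infinite path in $K$ with label $y^-$ terminating at $Q_{y^-}$, while the portion indexed by non-negative integers is a right-infinite path in $K$ starting at $Q_{y^-}$ with label $x^+$.

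The key step is to show that $P = \bigcup_{y^- \in P} Q_{y^-}$, which exhibits $P$ as a union of vertices of $K^0 \setminus \{P\}$ and hence proves $P$ decomposable. The inclusion $P \subseteq \bigcup Q_{y^-}$ is immediate from Lemma~\ref{lem_K_basic_facts}\,(\ref{K_facts_equal}), which gives $P_\infty^K(Q_{y^-}) = Q_{y^-}$: the existence of a left-infinite path labelled $y^-$ terminating at $Q_{y^-}$ says precisely $y^- \in Q_{y^-}$. The reverse inclusion $Q_{y^-} \subseteq P$ comes from Lemma~\ref{lem_K_basic_facts}\,(\ref{K_facts_subset}) applied to the right-ray $x^+$: a presentation of $x^+$ starts at $Q_{y^-}$, so $Q_{y^-} \subseteq P_\infty(x^+) = P$.

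Putting these together, $V = \{Q_{y^-} : y^- \in P\} \subseteq K^0 \setminus \{P\}$ satisfies $P = \bigcup_{Q \in V} Q$ with $P \notin V$, contradicting the indecomposability of $P$. There is no serious obstacle here; the only thing to be mildly careful about is that the vertex $Q_{y^-}$ is chosen from the path lying in $H$ (and therefore is not equal to $P$), and that the two inclusions invoke different parts of Lemma~\ref{lem_K_basic_facts} applied to the two halves of the same biinfinite path.
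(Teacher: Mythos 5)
Your proof is correct and is essentially the paper's argument run in the contrapositive direction: both decompose $P$ as the union of the predecessor sets of the vertices in $K^0\setminus\{P\}$ at which presentations of $x^+$ start, using Lemma~\ref{lem_K_basic_facts}(\ref{K_facts_equal}) and (\ref{K_facts_subset}) in exactly the same roles. The paper instead uses indecomposability up front to produce a $y^-\in P$ not covered by that union and exhibits $y^-x^+$ as a point with no presentation, but the content is identical.
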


\begin{proof}
Let $E$ be the
subgraph of $K$ induced by $K^0 \setminus \{P\}$.
Choose $x^+ \in X^+$ such that $P = P_\infty(x^+)$.
Let $V \subseteq K^0 \setminus\{ P \}$ be the set of vertices where a
presentation of $x^+$ can start. By Lemma \ref{lem_K_basic_facts}(\ref{K_facts_subset}),
$Q \subseteq P_\infty(x^+) = P$ for each $Q \in V$, and by assumption, there exists $y^- \in P \setminus \bigcup_{Q \in V} Q$. Hence, there is no presentation of $y^-x^+$ in $(E, \LL_K|_E)$.       
\end{proof}

\noindent
Lemma \ref{lem_decomposable} shows that a subgraph of the left Krieger cover which presents the same shift must contain all the indecomposable vertices. The next example shows that this subgraph is not always large enough.

\begin{example}
\label{ex_gfc_justifying}
It is easy to check that the labelled graph in Figure
\ref{fig_gfc_justifying} is the left Krieger cover of a reducible
sofic shift $X$. Note that the predecessor set $P$ is decomposable since $P = P_1
\cup P_2$, and that the graph obtained by removing the vertex $P$ and
all edges starting at or terminating at $P$ is not a presentation of
the same sofic shift since there is no presentation of $f^\infty
dbjk^\infty$ in this graph. Note that there is a path from
$P$ to the vertex $P'$ which is indecomposable. 
\end{example}

\begin{figure} 
\begin{center}
\begin{tikzpicture}
  [bend angle=45,
   knude/.style = {circle, inner sep = 0pt},
   vertex/.style = {circle, draw, minimum size = 1 mm, inner sep =
      0pt},
   textVertex/.style = {rectangle, draw, minimum size = 6 mm, inner sep =
      1pt},
   to/.style = {->, shorten <= 1 pt, >=stealth', semithick}]
  
  \matrix[row sep=5mm, column sep=10mm]{
   & & & & \node[textVertex] (P1) {$P_1$}; & \node[textVertex] (h1)
    {}; & \\
  \node[textVertex] (v2) {}; & \node[textVertex] (v1) {$P'$};  & & 
  \node[textVertex] (O) {}; & & & \node[textVertex] (h3) {}; \\
  &  & \node[textVertex] (P) {$P$}; & & \node[textVertex] (P2) {$P_2$};
  & \node[textVertex] (h2) {}; & \\
  };
  \draw[to, loop above]  (O) to node[auto] {f} (O);
  \draw[to, bend right=20] (O) to node[auto,swap] {c} (v1);
  \draw[to] (O) to node[auto] {d} (P1);
  \draw[to] (O) to node[auto,swap] {e} (P2);
  \draw[to] (P1) to node[auto] {b} (h1);
  \draw[to] (P2) to node[auto] {b} (h2);
  \draw[to, loop above]  (h1) to node[auto] {a} (h1);
  \draw[to, loop below]  (h2) to node[auto] {a} (h2);
  \draw[to] (h1) to node[auto] {g} (h3);
  \draw[to] (h2) to node[auto,swap] {h} (h3);
  \draw[to, loop right]  (h3) to node[auto] {i} (h3);
 
  \draw[to, bend right=20] (O) to node[auto,swap] {d} (P);
  \draw[to, bend left=20] (O) to node[auto] {e} (P);

  \draw[to] (P) to node[auto] {b} (v1);
  \draw[to, loop above]  (v1) to node[auto] {a} (v1);

  \draw[to] (v1) to node[auto] {j} (v2);
  \draw[to, loop left]  (v2) to node[auto] {k} (v2);

\end{tikzpicture}
\end{center}
\caption[Justification for the definition of the generalised Fischer cover.]{Left Krieger cover of the shift considered in Example
  \ref{ex_gfc_justifying}.
  Note that the labelled graph is no longer a presentation of the same
  shift if the decomposable predecessor set $P = P_1 \cup P_2$ is
  removed.
}  
\label{fig_gfc_justifying}
\end{figure}
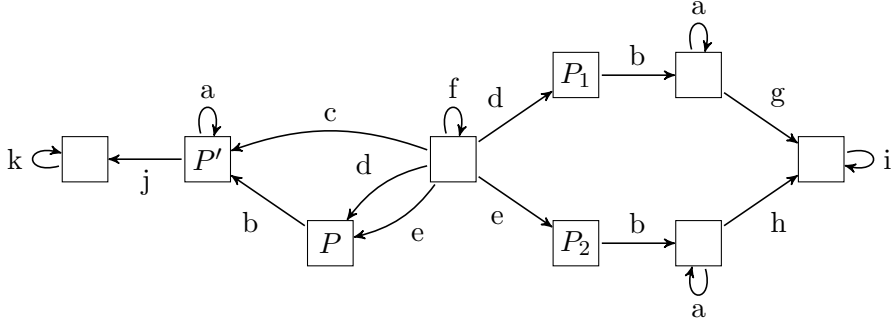

\noindent Together with Lemma \ref{lem_decomposable}, this example
motivates the following definition. 

\index{generalised Fischer cover}
\index{Fischer cover!generalised|\\see{generalised Fischer cover}}
\index{sofic shift!generalised Fischer cover of|\\see{generalised Fischer cover}}
\begin{definition}
The \emph{generalised left Fischer cover} $(G, \LL_G)$ of a sofic
shift $X$ is defined to be the subgraph of the left Krieger
cover induced by $G^0 = \{ P \in K^0 \mid  P \geq P', P' \textrm{ indecomposable} \}$. 
\end{definition}

\noindent The following proposition justifies the term generalised left Fischer cover.

\begin{prop}
\label{prop_justifying}
\qquad
\begin{enumerate}[(i)]
\item The generalised left Fischer cover of a sofic shift $X$
  is a left-resolving and predecessor-separated presentation of $X$. 
\item If $X$ is an irreducible sofic shift, then the generalised left
  Fischer cover is isomorphic to the left Fischer cover. 
\item If $X_1,X_2$ are sofic shifts with
  disjoint alphabets, then the generalised left Fischer cover
  of $X_1 \cup X_2$ is the disjoint union of the generalised left
  Fischer covers of $X_1$ and $X_2$. 
\end{enumerate}
\end{prop}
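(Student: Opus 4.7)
For part (i), left-resolving is inherited from $(K, \LL_K)$ since $(G, \LL_G)$ is a full subgraph. The presentation property reduces to the key claim that every $P \in K^0$ satisfies $P \in G^0$, i.e.\ connects in $K$ to some indecomposable vertex. Finiteness of $K^0$ (since $X$ is sofic) forces the forward trajectory from $P = P_\infty(x^+)$ along the labels of $x^+$ to enter a terminal strongly connected component $S$, and I would argue that $S$ must contain an indecomposable vertex: otherwise, iteratively decomposing vertices of $S$ into strictly smaller predecessor sets in $K^0$ produces a descending sequence of reachable vertices, contradicting finiteness. Granted this claim, Lemma~\ref{lem_K_basic_facts} yields that the canonical representation of any $x \in X$ lies entirely in $G$ (giving the presentation property), and an analogous argument applied to canonical left-extensions of $y^- \in P$ shows $P_\infty^G(P) = P$, giving predecessor-separated.

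For part (ii), I would show $G^0 = F^0$ by a pair of inclusions. First, every $P = P_\infty(x^+) \in F^0$ is indecomposable: since $x^+$ is intrinsically synchronizing with some synchronizing prefix $w$, the set $P$ equals $\{y^- \mid y^- w \in \BB(X)\}$, and $P_\infty(u) = P$ whenever $u$ starts with $w$; in any decomposition $P = \bigcup_i P_\infty(u_i)$ with $P \notin \{P_\infty(u_i)\}$ no $u_i$ can start with $w$, and the synchronizing property then forces a contradiction. Hence $F^0 \subseteq G^0$. For the reverse inclusion, if $P \in G^0$, then $P \geq P'$ for some indecomposable $P'$, which by the previous point lies in $F^0$; since $F^0$ is the top irreducible component of $K$, we also have $P' \geq P$, placing $P$ and $P'$ in a common strongly connected subgraph, necessarily $F^0$. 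The induced labellings agree, so $(G, \LL_G) \cong (F, \LL_F)$; this can also be double-checked via Theorem~\ref{thm_lfc_char}, since $(G, \LL_G)$ is then irreducible, left-resolving, and predecessor-separated.

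For part (iii), disjointness of $\AA(X_1)$ and $\AA(X_2)$ forces every ray of $X_1 \cup X_2$ to lie entirely in one of $X_1$ or $X_2$, so $P_\infty^{X_1 \cup X_2}(x^+) = P_\infty^{X_i}(x^+)$ for $x^+ \in X_i^+$, and no left-ray of $X_1$ can precede a right-ray of $X_2$ (or vice versa). Consequently the left Krieger covers split as $K(X_1 \cup X_2) = K(X_1) \sqcup K(X_2)$ as labelled graphs, with both the indecomposability relation and the $\geq$ relation internal to each summand; it follows that $G^0(X_1 \cup X_2) = G^0(X_1) \sqcup G^0(X_2)$, with matching labels and edges. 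The main obstacle in the whole argument is the descent step in part (i): extracting from a set-theoretic decomposition $Q = \bigcup_{T \in V} T$ with $T \subsetneq Q$ the graph-theoretic conclusion that some $T$ is reachable from $Q$ in $K$, since the edge structure of $K$ is tied to the set structure on predecessor sets only loosely via Lemma~\ref{lem_K_basic_facts}(\ref{K_facts_subset}); the remaining steps are essentially bookkeeping once this bridge is in place.
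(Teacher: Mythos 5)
Part (iii) is correct and is essentially the paper's argument. Parts (i) and (ii), however, each hinge on a claim that is false.

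In (i) you reduce the presentation property to the assertion that every $P \in K^0$ lies in $G^0$, i.e.\ that $G = K$. This already fails for the even shift: $P_\infty(0^\infty) = X^-$ is a Krieger-cover vertex that is decomposable ($X^- = P_1 \cup P_2$ in the notation of Example \ref{ex_even}) and whose only outgoing edge is a self-loop, so it connects to no indecomposable vertex and is not in $G^0$ — consistently with part (ii), which forces $G$ to be the two-vertex Fischer cover while $K$ has three vertices. The ``main obstacle'' you flag, namely converting a set-theoretic decomposition into reachability in $K$, is therefore not a bridge to be built but a genuine impossibility, and the canonical representation of $0^\infty$ sits at $X^-$ forever, outside $G$. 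The argument that works needs no such claim: given $y^-$, choose $x^+$ with $y^- \in P_\infty(x^+)$, write $P_\infty(x^+)$ as a union of indecomposable vertices (a purely set-theoretic descent, terminating because $K^0$ is finite), pick an indecomposable $P_i$ containing $y^-$, and take the path labelled $y^-$ terminating at $P_i$ supplied by Lemma \ref{lem_K_basic_facts}(\ref{K_facts_equal}); every vertex on that path connects to $P_i$ and hence lies in $G^0$. The same device gives $P_\infty^G(P) = P$ and thus predecessor-separation.

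In (ii), Fischer-cover vertices need not be indecomposable. Consider the irreducible, left-resolving, predecessor-separated labelled graph with vertices $s,v,u_1,u_2$ and edges $s\xrightarrow{a}v$, $s\xrightarrow{a}u_1$, $s\xrightarrow{b}v$, $s\xrightarrow{b}u_2$, $v\xrightarrow{c}s$, $u_1\xrightarrow{d}s$, $u_2\xrightarrow{e}s$, $s\xrightarrow{f}s$. By Theorem \ref{thm_lfc_char} this is a left Fischer cover, yet $P_\infty(v) = P_\infty(u_1)\cup P_\infty(u_2)$ with all three sets distinct (left-rays reaching $u_1$ end in $a$, those reaching $u_2$ end in $b$, those reaching $v$ end in either), and $P_\infty(u_1),P_\infty(u_2)\in K^0$ since they are the predecessor sets of the synchronizing right-rays $df^\infty$ and $ef^\infty$; so $P_\infty(v)$ is decomposable. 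The inclusion $F^0\subseteq G^0$ instead follows from $F$ being the top irreducible component: each of its vertices connects to every vertex of $K$, in particular to some indecomposable one, which exists by the descent argument. Your reverse inclusion also quotes ``the previous point'' to place an indecomposable vertex inside $F^0$, but that is the converse of what you claimed to prove; the implication actually needed (indecomposable $\Rightarrow$ in $F^0$) is obtained by writing $P=\bigcup_{v\in S}P_\infty(v)$ over the set $S$ of Fischer vertices where $x^+$ can start and applying indecomposability to conclude $P\in\{P_\infty(v)\mid v\in S\}\subseteq F^0$.
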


\begin{proof}
Given $y^- \in X^-$, choose $x^+ \in X^+$ such that $y^- \in
P_\infty(x^+) = P$. By definition of the generalised left Fischer cover,
there exist vertices $P_1, \ldots, P_n \in G^0$ such that $P = \bigcup_{i=1}^n
P_i$. Choose $i$ such that $y^- \in P_i$.
By construction, the left Krieger cover contains a path labelled $y^-$ terminating at
$P_i$. Since $P_i \in G^0$, this is also a path in the generalised
left Fischer cover. This proves that the generalised left Fischer
cover is a presentation of $X^-$, and hence also a presentation of
$X$. Since the left Krieger cover is left-resolving and
predecessor-separated, so is the generalised left Fischer cover. 

Let $X$ be an irreducible sofic shift, and identify the left Fischer cover $(F, \LL_F)$ with the top irreducible component of the left Krieger cover $(K,\LL_K)$.
By the construction of the generalised left Fischer cover, it follows that the left Fischer cover is a subgraph of the generalised left Fischer cover.
Let $x^+ \in X^+$ such that $P = P_\infty(x^+)$ is indecomposable. Let $S \subseteq F^0$ be the set of vertices where a presentation of $x^+$ in $(F,\LL_F)$ can start. Then $P = \bigcup_{v \in S} P_\infty(v)$, so $P \in S \subseteq F^0$ by assumption. Hence, the generalised left Fischer cover is also a subgraph of the left Fischer cover.

Since $X_1$ and $X_2$ have no letters in common, the left Krieger
cover of $X_1 \cup X_2$ is just the disjoint union of the left
Krieger covers of $X_1$ and $X_2$. The generalised left Fischer
cover inherits this property from the left Krieger cover.
\end{proof}

The shift consisting of two non-interacting copies of the even shift (see Example \ref{ex_even}) is a simple reducible example where the generalised left Fischer cover is a proper subgraph of the left Krieger cover.

\begin{lem}
\label{lem_GLFC_essential}
Let $X$ be a sofic shift with left Krieger cover $(K, \LL_K)$. If there
is an edge labelled $a$ from an indecomposable $P \in K^0$ to a
decomposable $Q \in K^0$, then there exists an indecomposable $Q' \in
K^0$ and an edge labelled $a$ from $P$ to $Q'$.   
\end{lem}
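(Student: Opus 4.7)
The plan is to use the edge $P \stackrel{a}{\to} Q$ to transfer any indecomposable decomposition of $Q$ back to a decomposition of $P$, and then to exploit the indecomposability of $P$ to locate the desired indecomposable successor $Q'$.

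First, I would establish the identity
\begin{displaymath}
P \;=\; \{\, y^- \in X^- : y^- a \in Q \,\}.
\end{displaymath}
For the inclusion $\subseteq$, if $y^- \in P$, then Lemma \ref{lem_K_basic_facts}(\ref{K_facts_equal}) gives a presentation of $y^-$ terminating at $P$; appending the given edge $P \stackrel{a}{\to} Q$ yields a presentation of $y^- a$ ending at $Q$, and a second application of Lemma \ref{lem_K_basic_facts}(\ref{K_facts_equal}) shows $y^- a \in Q$. Conversely, if $y^- a \in Q$, choose a presentation of $y^- a$ terminating at $Q$; since $(K, \LL_K)$ is left-resolving, at most one edge labelled $a$ enters $Q$, and the hypothesis that $P \stackrel{a}{\to} Q$ is one such edge shows it is the unique one. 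Hence the terminal $a$-edge of the chosen presentation necessarily emanates from $P$, and the preceding sub-path witnesses $y^- \in P_\infty^K(P) = P$.

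Next, I would decompose $Q$ into indecomposable predecessor sets. For each $y^- \in Q$ the collection $\{ R \in K^0 : y^- \in R \subseteq Q \}$ is non-empty (it contains $Q$) and finite since $X$ is sofic, so it has an inclusion-minimal element $R^*$; any strict decomposition of $R^*$ would produce a smaller element of the collection still containing $y^-$, contradicting minimality, so $R^*$ is indecomposable. Hence there exist indecomposable vertices $R_1, \ldots , R_m \in K^0$ with $Q = \bigcup_{k=1}^m R_k$. For each $k$ pick $z_k^+ \in X^+$ with $R_k = P_\infty(z_k^+)$, and set
\begin{displaymath}
S_k \;=\; \{\, y^- \in X^- : y^- a \in R_k \,\}.
\end{displaymath}
Whenever $S_k \neq \emptyset$, any witness $y^- \in S_k$ shows that $az_k^+$ is a right-ray of $X$, and then $S_k = P_\infty(a z_k^+) \in K^0$.

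Combining the two preceding steps, the identity from the first step together with $Q = \bigcup_k R_k$ gives $P = \bigcup_{k : S_k \neq \emptyset} S_k$. This is a union of elements of $K^0$, so indecomposability of $P$ forces $P = S_k$ for some $k$; that is, $P = P_\infty(a z_k^+)$ and $R_k = P_\infty(z_k^+)$. By the definition of the left Krieger cover there is then an edge labelled $a$ from $P$ to $R_k$, and $Q' := R_k$ is indecomposable, as required.

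The delicate point is the right-to-left inclusion in the first step: without left-resolving, a presentation of $y^- a$ ending at $Q$ could arrive along an $a$-edge from some other vertex, and the identity $P = \{y^- : y^- a \in Q\}$ would fail. Once this identity is secured the remainder is a routine transfer of an indecomposable decomposition through the letter $a$ and a single invocation of indecomposability.
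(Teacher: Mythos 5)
Your proof is correct and follows essentially the same route as the paper's: decompose $Q$ into indecomposable vertices, pull the decomposition back through the letter $a$ to write $P$ as a union of vertices of $K^0$, and invoke indecomposability of $P$. The only cosmetic difference is that you derive the identity $P = \{y^- : y^-a \in Q\}$ from left-resolving and Lemma \ref{lem_K_basic_facts}, whereas the paper obtains it directly from a right-ray $x^+$ witnessing the edge (i.e.\ $P = P_\infty(ax^+)$, $Q = P_\infty(x^+)$); both are valid.
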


\begin{proof}
Choose $x^+ \in X^+$ such that $P = P_\infty(a x^+)$ and $Q =
P_\infty(x^+)$. Since $Q$ is decomposable, there exist $n > 1$ and
indecomposable predecessor sets $Q_1, \ldots, Q_n \in K^0 \setminus \{ Q \}$
such that $Q = Q_1 \cup \cdots \cup Q_n$.
Let $S$ be the set of predecessor sets $P' \in K^0$ for which
there is an edge labelled $a$ from $P'$ to $Q_j$ for some $1 \leq j
\leq n$.
Given $y^- \in P$, $y^-ax^+ \in X$, so $y^-a \in Q$.
Choose $1 \leq i \leq n$ such that $y^-a \in Q_i$.
By construction, there exists $P' \in S$ such that $y^- \in
P'$. Reversely, if $y^- \in P' \in S$, then there is an edge labelled
$a$ from $P'$ to $Q_i$ for some $1 \leq i \leq n$, so $y^- a \in Q_i
\subseteq Q$. This implies that $y^-ax^+ \in X$, so $y^- \in P$.
Thus $P = \bigcup_{P' \in S} P'$, but $P$ is indecomposable, so this
means that $P \in S$.
Hence, there is an edge labelled $a$ from $P$ to $Q_i$ for some $i$,
and $Q_i$ is indecomposable. 
\end{proof}

\noindent The following proposition is an immediate consequence of this
result and the definition of the generalised left Fischer cover.

\begin{prop}
\label{prop_GLFC_essential}
The generalised left Fischer cover is essential.
\end{prop}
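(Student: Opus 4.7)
The plan is to check the two defining conditions of an essential graph for the generalised left Fischer cover $(G, \LL_G)$, i.e.\ to show that every vertex of $G^0$ both receives and emits an edge in $G$, using the fact (noted earlier in the excerpt) that the left Krieger cover $(K, \LL_K)$ is itself essential, together with Lemma \ref{lem_GLFC_essential}.

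The receiving direction should be the easy half. Given $P \in G^0$, by definition there is some indecomposable $P' \in K^0$ with $P \geq P'$. Since $K$ is essential, $P$ receives at least one edge in $K$, say from $Q \in K^0$. Then the edge $Q \to P$ together with the path witnessing $P \geq P'$ gives $Q \geq P'$, so $Q \in G^0$ and the edge $Q \to P$ lies in $G$.

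For the emitting direction I would split into two cases according to whether $P$ itself is indecomposable. If $P \neq P'$, the path witnessing $P \geq P'$ has positive length, so its first edge $P \to Q$ satisfies $Q \geq P'$, hence $Q \in G^0$ and this edge lies in $G$. If $P = P'$ is indecomposable, pick any outgoing edge $P \xrightarrow{a} Q$ in $K$ (one exists because $K$ is essential). If $Q$ happens to be indecomposable, then $Q \in G^0$ trivially and we are done. Otherwise $Q$ is decomposable, and Lemma \ref{lem_GLFC_essential} delivers an indecomposable $Q' \in K^0$ together with an edge $P \xrightarrow{a} Q'$; since indecomposable vertices automatically belong to $G^0$, this edge lies in $G$.

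There is essentially no obstacle here provided the preliminary lemmas are in hand: the receiving side is just transitivity of $\geq$, and the emitting side reduces to Lemma \ref{lem_GLFC_essential} in exactly the one case (indecomposable $P$) where the reachability argument does not immediately produce an edge in $G$. The only point requiring a little care is to observe that indecomposability of $P$ was crucial in order to invoke Lemma \ref{lem_GLFC_essential} in the second subcase, and that in the first subcase this hypothesis is replaced by the mere existence of a nontrivial path from $P$ to an indecomposable vertex.
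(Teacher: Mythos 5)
Your proof is correct and follows exactly the route the paper intends: the paper simply asserts that the proposition is an immediate consequence of Lemma \ref{lem_GLFC_essential} and the definition of the generalised left Fischer cover, and your case analysis (reachability of an indecomposable vertex for the receiving side and for non-indecomposable $P$, Lemma \ref{lem_GLFC_essential} for indecomposable $P$) is precisely the argument being left implicit. No gaps.
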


The left Fischer cover of an irreducible sofic shift $X$ is minimal
in the sense that no other left-resolving presentation of $X$
has fewer vertices.
The next example shows that this is not true for the generalised left
Fischer cover.

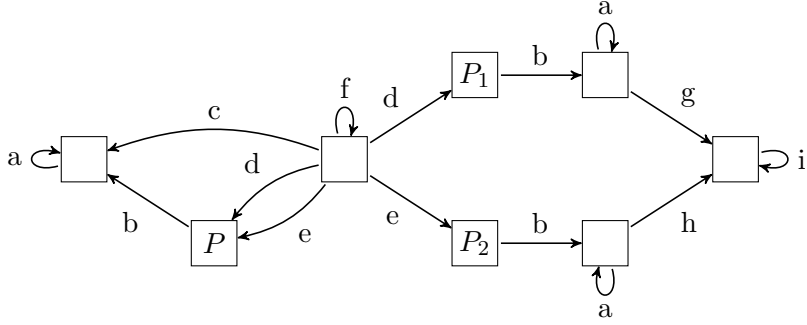
\begin{figure} 
\begin{center}
\begin{tikzpicture}
  [bend angle=45,
   knude/.style = {circle, inner sep = 0pt},
   vertex/.style = {circle, draw, minimum size = 1 mm, inner sep =
      0pt},
   textVertex/.style = {rectangle, draw, minimum size = 6 mm, inner sep =
      1pt},
   to/.style = {->, shorten <= 1 pt, >=stealth', semithick}]
  
  \matrix[row sep=5mm, column sep=11mm]{
  & & & \node[textVertex] (P1) {$P_1$}; & \node[textVertex] (h1)
    {}; & \\
  \node[textVertex] (v1) {};  & & 
  \node[textVertex] (O) {}; & & & \node[textVertex] (h3) {}; \\
  & \node[textVertex] (P) {$P$}; & & \node[textVertex] (P2) {$P_2$};
  & \node[textVertex] (h2) {}; & \\
  };
  \draw[to, loop above]  (O) to node[auto] {f} (O);
  \draw[to, bend right=20] (O) to node[auto,swap] {c} (v1);
  \draw[to] (O) to node[auto] {d} (P1);
  \draw[to] (O) to node[auto,swap] {e} (P2);
  \draw[to] (P1) to node[auto] {b} (h1);
  \draw[to] (P2) to node[auto] {b} (h2);
  \draw[to, loop above]  (h1) to node[auto] {a} (h1);
  \draw[to, loop below]  (h2) to node[auto] {a} (h2);
  \draw[to] (h1) to node[auto] {g} (h3);
  \draw[to] (h2) to node[auto,swap] {h} (h3);
  \draw[to, loop right]  (h3) to node[auto] {i} (h3);
 
  \draw[to, bend right=20] (O) to node[auto,swap] {d} (P);
  \draw[to, bend left=20] (O) to node[auto] {e} (P);

  \draw[to] (P) to node[auto] {b} (v1);
  \draw[to, loop left]  (v1) to node[auto] {a} (v1);
\end{tikzpicture}
\end{center}
\caption[Non-minimality of the generalised left Fischer cover.]{Left Krieger cover of the shift considered in Example
  \ref{ex_gfc_not_minimal}.
}  
\label{fig_gfc_not_minimal}
\end{figure}

\begin{example} \label{ex_gfc_not_minimal}
\index{generalised Fischer cover!non-minimality of}
It is easy to check that the labelled graph in Figure
\ref{fig_gfc_not_minimal} is the left Krieger cover of a reducible
sofic shift $X$. Note the similarity with the shift considered in
Example \ref{ex_gfc_justifying}.
The predecessor set $P$ is decomposable since $P = P_1 \cup P_2$. All
other predecessor sets are indecomposable, so the labelled graph in
Figure \ref{fig_gfc_not_minimal} is also the generalised left Fischer
cover of $X$.
Consider the labelled graph obtained by removing the vertex $P$ and
all edges starting at or terminating at $P$. This is clearly also a
presentation of $X$. 
Hence, there exist sofic shifts for which a proper subgraph of the
generalised left Fischer cover is also a presentation of the shift.
Note that Lemma \ref{lem_decomposable} shows that such a subgraph must
contain all vertices given by indecomposable predecessor sets. 
\end{example}

\subsection{Canonical and flow invariant}
The next goal is to prove that the generalised left Fischer cover is canonical by using results and methods used by Nasu
\cite{nasu} to prove that the left Krieger cover is canonical. With Theorem \ref{thm_cover_fe}, this will be used to prove that the generalised left Fischer cover is also flow invariant.

\begin{definition}
\label{def_bipartite_code}
\index{bipartite code}\index{bipartite expression}
When $\AA, \CC, \DD$ are alphabets, an injective map $f \colon \AA \to \CC \DD$ is called a \emph{bipartite expression}. If $X_1, X_2$ are shift spaces with alphabets $\AA_1$ and $\AA_2$, respectively, and if $f_1 \colon \AA_1 \to \CC \DD$ is a bipartite expression, then a map $\Phi \colon X_1 \to X_2$ is said to be a \emph{bipartite code induced by} $f_1$ if there exists a bipartite expression $f_2 \colon \AA_2 \to \DD \CC$ such that one of the following two conditions is satisfied:
\begin{enumerate}[(i)]
\item If $x \in X_1$, $y = \Phi(x)$, and $f_1(x_i) = c_i d_i$ with $c_i
  \in \CC$ and $d_i \in \DD$ for all $i \in \Z$, then $f_2(y_i) = d_i
  c_{i+1}$ for all $i \in \Z$. 
\item If $x \in X_1$, $y = \Phi(x)$, and $f_1(x_i) = c_i d_i$ with $c_i
  \in \CC$ and $d_i \in \DD$ for all $i \in \Z$, then $f_2(y_i) = d_{i-1}
  c_i$ for all $i \in \Z$.
\end{enumerate}
A mapping $\Phi \colon X_1 \to X_2$ is called a \emph{bipartite
  code}, if it is the bipartite code induced by some bipartite
expression.
\end{definition}

\noindent It is clear that a bipartite code is a conjugacy and that the inverse
of a bipartite code is a bipartite code.
Bipartite codes are of interest in this context because of the following theorem.

\begin{thm}[Nasu {\cite[Theorem 2.4]{nasu}}]
\label{thm_nasu_decomposition}
\index{bipartite code!composition of}\index{conjugacy!as product of bipartite codes}
Any conjugacy
can be decomposed into a product of bipartite codes. 
\end{thm}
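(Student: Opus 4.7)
The plan is to factor any conjugacy $\Phi \colon X_1 \to X_2$ through a higher block shift and to verify that both the resulting higher block recoding and the induced one-block conjugacy are bipartite codes or compositions of such. Since homomorphisms between shift spaces are sliding block codes, $\Phi = \Psi_\infty^{[m,n]}$ for some $m, n \geq 0$ with block map $\Psi \colon \BB_{m+n+1}(X_1) \to \AA(X_2)$. Let $\beta \colon X_1 \to X_1^{[m+n+1]}$ be the higher block conjugacy with offset chosen so that $\beta(x)_i = x_{[i-m, i+n]}$, and let $\alpha \colon X_1^{[m+n+1]} \to X_2$ be the one-block map induced by $\Psi$. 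Then $\Phi = \alpha \circ \beta$, and $\alpha$ is a one-block conjugacy, with injectivity inherited from that of $\Phi$ and $\beta$.

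The work therefore reduces to two claims. First, the elementary two-block map $\gamma \colon X \to X^{[2]}$, $x \mapsto (x_i x_{i+1})_i$, is a bipartite code of type (i): take $\CC = \DD = \AA(X)$, set $f_1 \colon \AA(X) \to \CC\DD$ to be the diagonal $a \mapsto aa$, and set $f_2 \colon \BB_2(X) \to \DD\CC$ to be the natural inclusion $ab \mapsto ab$. Then $c_i = d_i = x_i$, so $f_2(\gamma(x)_i) = x_i x_{i+1} = d_i c_{i+1}$ as required. The inverse map $X^{[2]} \to X$, $y \mapsto \leftl(y_i)$, is symmetrically a bipartite code of type (ii), with $f_1(ab) = ab$ and $f_2$ the diagonal map on $\AA(X)$. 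The general higher block conjugacy $\beta$ then arises by iterating two-block constructions, $X_1 \to X_1^{[2]} \to X_1^{[3]} \to \cdots \to X_1^{[m+n+1]}$, so $\beta$ is a finite composition of bipartite codes, and likewise for its inverse.

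Second, a one-block conjugacy with underlying bijection $\phi \colon \AA_1 \to \AA_2$ is a bipartite code of type (ii): choose $\DD = \{\ast\}$ a singleton and $\CC = \AA_1$, set $f_1(a) = a\ast$, and set $f_2(\phi(a)) = \ast a$. Then $c_i = x_i$, $d_{i-1} = \ast$, and $f_2(\alpha(x)_i) = f_2(\phi(x_i)) = \ast x_i = d_{i-1} c_i$ as required by condition (ii). Combining, $\Phi = \alpha \circ \beta$ is a finite composition of bipartite codes. The only real subtlety is the bookkeeping around offsets: if one used the naive convention $\beta(x)_i = x_{[i,i+m+n]}$, then $\alpha \circ \beta$ would equal $\sigma^m \circ \Phi$ rather than $\Phi$, so the offset in $\beta$ must be chosen to absorb the memory of $\Psi$; alternatively, one can prepend or append finitely many power-of-$\sigma$ factors, each of which is itself trivially a bipartite code, to correct the alignment.
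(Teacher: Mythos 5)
Your reduction $\Phi = \alpha\circ\beta$ through the higher block shift is fine, and the verification that the elementary $2$-block recoding $x \mapsto (x_i x_{i+1})_i$ is a bipartite code (diagonal $f_1(a)=aa$, $f_2$ the inclusion $\BB_2(X)\hookrightarrow \AA(X)^2$) is correct. The gap is in the second half: the one-block conjugacy $\alpha\colon X_1^{[m+n+1]}\to X_2$ is induced by the block map $\Psi\colon\BB_{m+n+1}(X_1)\to\AA(X_2)$, and this symbol map is almost never a bijection --- $\Psi_\infty$ can be injective on the shift space while $\Psi$ collapses many distinct blocks to the same letter, which is exactly what happens whenever $\Phi^{-1}$ has nonzero memory or anticipation. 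Your argument for one-block conjugacies, with $\DD=\{\ast\}$ and $f_2(\phi(a))=\ast a$, needs $\phi$ to be injective for $f_2$ to be well defined and to be a bipartite expression, so it only covers relabelings of the alphabet. A concrete failure already appears in the paper: in Example~\ref{ex_conjugacy_golden_mean} the map $\Psi_\infty$ with $\Psi(1)=\Psi(2)=a$, $\Psi(3)=b$ is a one-block conjugacy onto the golden mean shift whose symbol map is not injective, and your construction produces nothing for it.

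Decomposing a one-block conjugacy with non-injective symbol map is the entire content of Nasu's theorem; the higher-block recoding is the easy, formal part. The standard route (Nasu's, generalising Williams' treatment of SFTs) goes through strong shift equivalence and state splittings/amalgamations: each elementary equivalence $(R,S)$ with $A=RS$ and $B=SR$ produces a bipartite (labelled) graph whose induced pair of labelled graphs presents the two shifts, and the associated standard bipartite codes of Remark~\ref{rem_standard_code_of_bipartite_graph} realise one step of the factorisation. Some structural input of this kind is unavoidable; it cannot be replaced by alphabet bookkeeping and powers of the shift alone.
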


\index{recoded shift}\index{shift space!recoded}\index{bipartite code!recoded}
Let $\Phi \colon X_1 \to X_2$ be a bipartite code corresponding to
bipartite expressions $f_1 \colon \AA_1 \to \CC \DD$ and $f_2
\colon \AA_2 \to \DD \CC$, and use the bipartite expressions to recode $X_1$ and $X_2$ to
\begin{align*}
  \hat X_1 &= \{ (f_1(x_i))_i \mid x \in X_1 \} \subseteq (\CC \DD)^\Z \\
  \hat X_2 &= \{ (f_2(x_i))_i \mid x \in X_2 \} \subseteq (\DD \CC)^\Z.
\end{align*}
For $i \in \{1,2\}$, $f_i$ induces a one-block conjugacy from $X_i$
to $\hat X_i$, and $\Phi$ induces a bipartite code $\hat \Phi \colon \hat X_1
\to \hat X_2$ which commutes with these conjugacies.
If $\Phi$ satisfies condition (i) in the definition of a bipartite
code, then $(\hat \Phi (\hat x))_i = d_i c_{i+1}$ when $\hat x = (c_i d_i)_{i \in \Z}
\in \hat X_1$.
If it satisfies condition (ii), then $(\hat \Phi (\hat x))_i = d_{i-1}
c_i$ when $\hat x = (c_i d_i)_{i\in \Z} \in \hat X_1$. 
The shifts $\hat X_1$ and $\hat X_2$ will be called the \emph{recoded shifts}
of the bipartite code, and $\hat \Phi$ will be called the \emph{recoded bipartite code}.


\index{graph!bipartite}\index{labelled graph!bipartite}\index{labelled graph!induced pair of}
A labelled graph $(G, \LL)$ is said to be \emph{bipartite} if $G$ is a
bipartite graph, i.e.\ the vertex set can be partitioned into two sets
$(G^0)_1$ and $(G^0)_2$ such that no edge has it's range and source in
the same set.
When $(G, \LL)$ is a bipartite labelled 
graph over an alphabet $\AA$, define two graphs $G_1$ and $G_2$ as
follows: For $i \in \{1,2\}$, the vertex set of $G_i$ is $(G^0)_i$,
the edge set is 
the set of paths of length 2 in $(G, \LL)$ for which both range
and source are in $(G^0)_i$, and the range and source maps are
inherited from $G$.
For $i \in \{1,2\}$, define $\LL_i \colon G_i^1 \to \AA^2$ by
$\LL_i(ef) = \LL(e) \LL(f)$.
The pair $(G_1, \LL_1)$, $(G_2, \LL_2)$ is called the \emph{induced
pair of labelled graphs} of $(G, \LL)$. This decomposition is not
necessarily unique, but whenever a bipartite labelled graph is
mentioned, it will be assumed that the induced graphs are specified.

\begin{remark}[Nasu {\cite[Remark 4.2]{nasu}}]
\label{rem_standard_code_of_bipartite_graph}
\index{labelled graph!bipartite!standard bipartite code of}
\index{bipartite code!of bipartite graph}
Let $(G, \LL)$ be a bipartite label\-led graph for which the induced
pair of labelled graphs is $(G_1, \LL_1)$, $(G_2, \LL_2)$. Let
$X_1$ and $X_2$ be the sofic shifts presented by these graphs, and
let $\X_{G_1}, 
\X_{G_2}$ be the edge shifts generated by $G_1$, $G_2$.
The natural embedding $f \colon G_1^1 \to (G^1)^2$ is a bipartite
expression which induces two bipartite codes $\varphi_\pm \colon \X_{G_1}
\to \X_{G_2}$ such that $(\varphi_+(x))_i = f_i e_{i+1}$ and
$(\varphi_-(x))_i = f_{i-1} e_i$ when $x = (e_i f_i)_{i \in \Z} \in
\X_{G_1}$.     
Similarly, the embedding $F \colon \LL_1(G_1^1) \to (\LL(G^1))^2$ is a
bipartite expression which induces bipartite codes $\Phi_\pm \colon
X_1 \to X_2$ such that
$(\Phi_+(x))_i = b_i a_{i+1}$ and $(\Phi_-(x))_i = b_{i-1} a_i$ when
$x = (a_i b_i)_{i \in \Z} \in X_1$.
By definition, 
$\Phi_\pm \circ \pi_1 = \pi_2 \circ \varphi_\pm$ when 
$\pi_1 \colon \X_{G_1} \to X_1$, $\pi_2 \colon \X_{G_2} \to X_2$ are
the covering maps.
The bipartite codes $\varphi_\pm$ and $\Phi_\pm$ are called the
\emph{standard bipartite codes induced by} $(G, \LL)$. 
\end{remark}


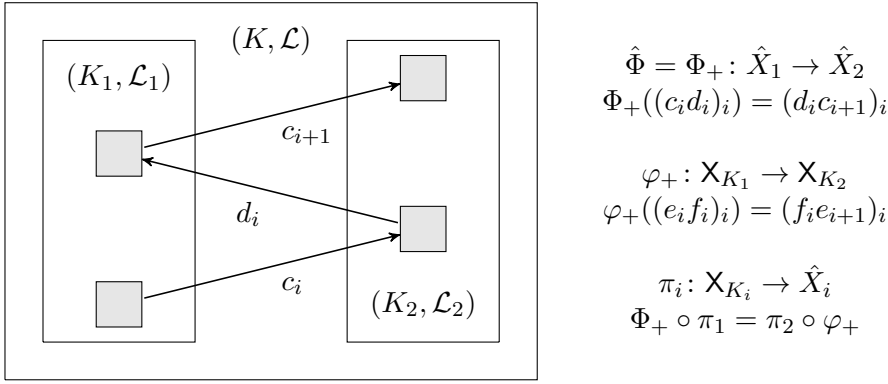
\begin{figure}
\begin{center}
\begin{tabular}{c c}
\begin{tabular}{c}
\begin{tikzpicture}
  [bend angle=45,
   knude/.style = {circle, inner sep = 1pt, minimum size = 6 mm},
   vertex/.style = {circle, draw, minimum size = 1 mm, inner sep =
      0pt, fill=black!10},
   textVertex/.style = {rectangle, draw, minimum size = 6 mm, inner sep =
      1pt, fill=black!10},
   to/.style = {->, shorten <= 1 pt, >=stealth', semithick}]
  

  \node[knude] (P1) at   (-2,-1.5) {};
  \node[knude] (P2) at   (-2,0.5) {};
  \node[knude] (Q1) at   (2,-0.5) {};
  \node[knude] (Q2) at   (2,1.5) {};

  \node[textVertex] (P1text) at ($(P1)$) {};
  \node[textVertex] (P2text) at ($(P2)$) {};

  \draw (-3,-2) -- (-1,-2) -- (-1,2) -- (-3,2) -- (-3,-2); 
  \node[knude] (K1) at (-2,1.5) {$(K_1, \LL_1 )  $};

  \node[textVertex] (Q1text) at ($(Q1)$) {};
  \node[textVertex] (Q2text) at ($(Q2)$) {};

  \draw (3,-2) -- (1,-2) -- (1,2) -- (3,2) -- (3,-2); 
  \node[knude] (K2) at (2,-1.5) {$(K_2, \LL_2 )  $};
   
  \draw (-3.5,-2.5) -- (3.5,-2.5) -- (3.5,2.5) -- (-3.5,2.5) -- (-3.5,-2.5); 
  \node[knude] (K) at (0,2) {$(K, \LL)  $};

  \draw[to] (P1) to node[auto,swap] {$c_i$} (Q1);
  \draw[to] (Q1) to node[auto] {$d_i$} (P2);	 
  \draw[to] (P2) to node[auto,swap] {$c_{i+1}$} (Q2);

\end{tikzpicture}
\end{tabular}
&
\begin{tabular}{c}
$\hat \Phi = \Phi_+ \colon \hat X_1 \to \hat X_2$ \\
$\Phi_+( (c_i d_i)_i ) = (d_i c_{i+1} )_i$ \\
\\
$\varphi_+ \colon \X_{K_1} \to \X_{K_2}$ \\
$\varphi_+( (e_i f_i)_i ) = (f_i e_{i+1} )_i$ \\
\\
$\pi_i \colon \X_{K_i} \to \hat X_i$ \\ 
$\Phi_+ \circ \pi_1 = \pi_2 \circ \varphi_+$
\end{tabular}
\end{tabular}
\end{center}
\caption[Krieger covers of recoded shifts.]{An illustration of Lemma \ref{lem_nasu_4.6}. For $i \in \{1,2\}$, $(K_i, \LL_i)$ is the left Krieger cover of the recoded shift $\hat X_i$. The recoded bipartite code $\hat \Phi$ is equal to the standard bipartite code $\Phi_+$ induced by the bipartite left Krieger cover $(K, \LL)$ of the sofic shift $\hat X$ constructed in Lemma \ref{lem_nasu_4.6}. }
\label{fig_nasu_illustration}
\end{figure}
 
The following lemma is illustrated in Figure \ref{fig_nasu_illustration}. 
 
\begin{lem}[Nasu {\cite[Corollary 4.6(1)]{nasu}}]
\label{lem_nasu_4.6}
Let $\Phi \colon X_1 \to X_2$ be a bipartite code between sofic
shifts $X_1$ and $X_2$.
Let $\hat X_1$ and $\hat X_2$ be the recoded shifts of $X_1$ and $X_2$
respectively, and let $(K_1, \LL_1)$ and $(K_2, \LL_2)$ be the left
Krieger covers of $\hat X_1$ and $\hat X_2$ respectively.
Then there exists a sofic shift $\hat X$ for which the left Krieger cover
is a bipartite labelled graph such that the induced pair of labelled
graphs is $(K_1, \LL_1)$, $(K_2, \LL_2)$ and such that the recoded
bipartite code $\hat \Phi \colon \hat X_1 \to \hat X_2$ of $\Phi$ is
one of the standard bipartite codes $\Phi_\pm$ induced by the left Krieger
cover of $\hat X$ as defined in Remark \ref{rem_standard_code_of_bipartite_graph}. 
\end{lem}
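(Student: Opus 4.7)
The plan is to construct $\hat X$ as a sofic shift over $\CC \cup \DD$ whose bipartite structure realises both recoded shifts simultaneously. I would set $\hat X := \hat X_1 \cup \hat X_2 \subseteq (\CC \cup \DD)^\Z$, viewing each $\hat X_i$ as a subshift in which $\CC$- and $\DD$-letters alternate in fixed phase. Since $\hat \Phi$ is a conjugacy, case (i) of Definition~\ref{def_bipartite_code} yields $\hat \Phi = \sigma\vert_{\hat X_1}$ and hence $\sigma(\hat X_1) = \hat X_2$; case (ii) gives the same conclusion with $\hat \Phi = \sigma^{-1}\vert_{\hat X_1}$. Either way $\hat X$ is closed and shift-invariant, and it is sofic as a finite union of sofic shifts.

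The heart of the argument is identifying the left Krieger cover $(K, \LL)$ of $\hat X$ with the promised bipartite graph. Every right-ray $x^+ \in \hat X^+$ lies in exactly one of $\hat X_1^+$ or $\hat X_2^+$, determined by whether $x_0 \in \CC$ or $x_0 \in \DD$. For $x^+ \in \hat X_1^+$, the forced $\CC/\DD$ alternation ensures that any $y^- \in \hat X^-$ with $y^- x^+ \in \hat X$ must end in a $\DD$-letter, and then automatically $y^- x^+ \in \hat X_1$; therefore
\[
  P_\infty^{\hat X}(x^+) \;=\; P_\infty^{\hat X_1}(x^+),
\]
and analogously for $x^+ \in \hat X_2^+$. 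Predecessor sets from these two families are disjoint as subsets of $\hat X^-$, since their elements end in letters from disjoint alphabets. Hence $K^0 = K_1^0 \sqcup K_2^0$.

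Next I would analyse the edges: an edge $P \to Q$ of $K$ labelled $a$ records some $x^+$ with $P = P_\infty^{\hat X}(ax^+)$ and $Q = P_\infty^{\hat X}(x^+)$, and the case split on $a$ shows every edge crosses between the two sides, so $K$ is bipartite. A length-two path $P \xrightarrow{c} R \xrightarrow{d} Q$ with $c \in \CC$, $d \in \DD$ then has $P, Q \in K_1^0$ and $R \in K_2^0$, and corresponds bijectively to the $K_1$-edge $P \to Q$ labelled $cd$, since the middle vertex $R = P_\infty^{\hat X}(dx^+)$ is uniquely determined by the witnessing $x^+$. The symmetric statement holds for length-two paths labelled in $\DD \CC$. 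Hence the induced pair of $(K, \LL)$ is $(K_1, \LL_1), (K_2, \LL_2)$.

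To finish, unwinding the definition of the standard bipartite codes in Remark~\ref{rem_standard_code_of_bipartite_graph} shows that $\Phi_+$ sends $(c_i d_i)_{i \in \Z} \in \hat X_1$ to $(d_i c_{i+1})_{i \in \Z} \in \hat X_2$, which in case (i) is precisely $\hat \Phi$; case (ii) yields $\hat \Phi = \Phi_-$ by the same computation. The hard part will be the predecessor-set analysis in paragraphs two and three: one must verify not merely that each Krieger vertex of $\hat X_i$ survives as a vertex of $\hat X$, but that no extra merges among predecessor sets occur and that the bijection between $K_1$-edges and length-two paths through $K_2^0$ is exact. Both rest entirely on the rigidity imposed by the enforced $\CC/\DD$ alternation coming from the bipartite expressions $f_1$ and $f_2$.
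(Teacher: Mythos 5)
Your construction is correct, and it is essentially the standard one: the paper itself gives no proof of this lemma but cites it from Nasu, and what you describe is precisely Nasu's bipartite subshift $\hat X = \tilde X_1 \cup \sigma(\tilde X_1)$ over $\CC \cup \DD$ obtained by unblocking $\hat X_1$. The predecessor-set analysis is sound: right-rays of $\hat X$ split by their initial letter into those coming from $\hat X_1^+$ and those from $\hat X_2^+$, their predecessor sets are the unblocked predecessor sets computed in $\hat X_1$ and $\hat X_2$ respectively, and the two families cannot collide since their elements terminate in letters from disjoint alphabets; this gives $K^0 = K_1^0 \sqcup K_2^0$ and the bipartiteness of the edge set.

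Two small points deserve tightening. First, $\tilde X_1$ and $\tilde X_2$ are only $\sigma^2$-invariant, so "finite union of sofic shifts" is not quite the right justification that $\hat X$ is sofic; it is cleaner to note that $\hat X$ is presented by the labelled graph obtained from any presentation of $X_1$ by splitting each edge labelled $a$ into two consecutive edges labelled by the two halves of $f_1(a)$. Second, in the correspondence between length-two paths $P \xrightarrow{c} R \xrightarrow{d} Q$ of $K$ and edges of $K_1$, the two edges are a priori witnessed by different right-rays ($P = P_\infty(cw^+)$, $R = P_\infty(w^+)$ versus $R = P_\infty(dz^+)$, $Q = P_\infty(z^+)$), and your appeal to the middle vertex being "uniquely determined by the witnessing $x^+$" does not quite address this. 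The correct observation is that $P_\infty(a u^+)$ depends only on $a$ and on $P_\infty(u^+)$, so $P_\infty(w^+) = P_\infty(dz^+)$ forces $P = P_\infty(cw^+) = P_\infty(cdz^+)$, and the single witness $z^+$ then produces the edge of $K_1$ labelled $cd$ from $P$ to $Q$. With these repairs the argument is complete, and the identification of $\hat\Phi$ with $\Phi_+$ (case (i)) or $\Phi_-$ (case (ii)) follows by direct computation as you say.
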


The proof of the following theorem is very similar to the proof of the
corresponding result by Nasu \cite[Theorem 3.3]{nasu} for the left Krieger cover.

\index{generalised Fischer cover!is canonical}
\begin{thm} \label{thm_GFC_canonical}
The generalised left Fischer cover is canonical.
\end{thm}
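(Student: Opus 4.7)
The plan is to mimic Nasu's proof of the canonicity of the left Krieger cover, adapting it to respect the subgraph structure defining the generalised left Fischer cover. By Theorem \ref{thm_nasu_decomposition}, any conjugacy decomposes into bipartite codes, so it suffices to produce a lift of a single bipartite code $\Phi \colon X_1 \to X_2$. The one-block recodings $X_i \to \hat X_i$ induce canonical isomorphisms of generalised left Fischer covers, so we may pass to the recoded bipartite code $\hat \Phi \colon \hat X_1 \to \hat X_2$. Invoking Lemma \ref{lem_nasu_4.6}, there is a sofic shift $\hat X$ whose left Krieger cover $(K, \LL)$ is bipartite with induced pair the left Krieger covers $(K_1, \LL_1), (K_2, \LL_2)$ of $\hat X_1, \hat X_2$, and such that $\hat \Phi$ is realised as one of the standard bipartite codes induced by $(K, \LL)$.

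The central claim is that the generalised left Fischer cover $(G, \LL_G)$ of $\hat X$ is itself bipartite with induced pair $(G_1, \LL_{G_1}), (G_2, \LL_{G_2})$, the generalised left Fischer covers of $\hat X_1, \hat X_2$. Once this is in place, the standard bipartite codes induced by $(G, \LL_G)$ (cf.\ Remark \ref{rem_standard_code_of_bipartite_graph}) produce the required conjugacy $\X_{G_1} \to \X_{G_2}$, and the commutation $\hat \Phi \circ \pi_1 = \pi_2 \circ \cov(\hat\Phi)$ is immediate from that remark.

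To establish the claim I would first observe that a predecessor set $P \in (K^0)_i$ consists of left-rays of $\hat X$ all ending with the same parity, so any decomposition $P = \bigcup_{Q \in V} Q$ can only involve sets $Q \in (K^0)_i$; hence decomposability in $K$ for vertices of $(K^0)_i$ agrees with decomposability in $K_i$. Next, I would verify that $G^0 \cap (K^0)_i = G_i^0$. The inclusion $G_i^0 \subseteq G^0$ is immediate since any path of length $n$ in $K_i$ is a path of length $2n$ in $K$, and indecomposability transfers by the previous observation. For the reverse, suppose $P \in G^0 \cap (K^0)_i$ reaches an indecomposable $P'$ in $K$. If $P' \in (K^0)_i$ there is nothing to do; if $P' \in (K^0)_{3-i}$, then since $K$ is essential $P'$ emits an edge, and by Lemma \ref{lem_GLFC_essential} it emits an edge to an indecomposable vertex $P'' \in (K^0)_i$, giving $P \geq P''$ in $K_i$. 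Finally, the edge structure matches because any length-two path $ef$ in $K$ between vertices of $G_i^0$ passes through an intermediate vertex $r(e) = s(f)$ that reaches $r(f) \in G_i^0 \subseteq G^0$ and is therefore itself in $G^0$; conversely any length-two path in $G$ between $G_i^0$ vertices is automatically an edge of $K_i$ between $G_i^0$ vertices and hence an edge of $G_i$.

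Uniqueness of the lift follows from the same argument Nasu uses for the Krieger cover: because $(G, \LL_G)$ is left-resolving and predecessor-separated (Proposition \ref{prop_justifying}), a second lift would differ from the first by a non-trivial automorphism of $\X_G$ commuting with the covering map, which is ruled out by predecessor-separatedness. The principal obstacle is the asymmetric case in the proof that $G^0 \cap (K^0)_i \subseteq G_i^0$, where the witnessing indecomposable vertex has the wrong parity and one must cross an edge using essentialness of $K$ together with Lemma \ref{lem_GLFC_essential}; all remaining steps are bookkeeping on the bipartite decomposition.
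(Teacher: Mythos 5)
Your proposal is correct and follows essentially the same route as the paper: decompose via Nasu's bipartite codes, apply Lemma \ref{lem_nasu_4.6}, show that the generalised left Fischer cover of $\hat X$ is bipartite with induced pair the generalised left Fischer covers of $\hat X_1, \hat X_2$ (using Lemma \ref{lem_GLFC_essential} exactly where you do, for the parity-crossing case), and lift via the standard bipartite codes. You in fact supply a few details the paper leaves implicit — the parity justification for the decomposability correspondence, the explicit matching of the induced edge sets, and a uniqueness sketch — but the argument is the same.
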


\begin{proof}
Let $\Phi \colon X_1 \to X_2$ be a bipartite code. 
Let $\hat X_1, \hat X_2$ be the recoded shifts, let $(K_1, \LL_1)$, $(K_2,
\LL_2)$ be the corresponding left Krieger covers, and let $\hat \Phi
\colon \hat X_1 \to \hat X_2$ be the recoded bipartite code. 
Use Lemma \ref{lem_nasu_4.6} to find a sofic shift $\hat X$ such that the
left Krieger cover $(K, \LL)$ of $\hat X$ is a bipartite labelled
graph for which 
the induced pair of labelled graphs is $(K_1, \LL_1)$, $(K_2, \LL_2)$.
Let $(G_1, \LL_1)$, $(G_2, \LL_2)$, and $(G, \LL)$ be the
generalised left Fischer covers of respectively $\hat X_1$, $\hat X_2$, and
$\hat X$.

The labelled graph $(G, \LL)$ is bipartite since $G$ is a
subgraph of $K$. Note that a predecessor set $P$ in $K_1^0$ or $K_2^0$ is
decomposable if and only if the corresponding predecessor set in $K^0$
is decomposable. 
If $i \in \{1,2\}$ and $Q \in G_i^0 \subseteq K_i^0$, then there is a
path in $K_i$ from $Q$ to an indecomposable $P \in K_i^0$.
By considering the corresponding path in $K$, it is clear that the
vertex in $K^0$ corresponding to $Q$ is in $G^0$.
Conversely, if $Q \in G^0$, then there is a path in $K$ from $Q$ to a
indecomposable $P \in K^0$.
If $P$ and $Q$ belong to the same partition $K_i^0$, then the vertex in
$K_i$ corresponding to $Q$ is in $G_i^0$ by definition.
On the other hand, if $Q$ corresponds to a vertex in $K_i$ and if $P$ 
belongs to the other partition, then Lemma \ref{lem_GLFC_essential}
shows that there exists an indecomposable $P'$ in the same partition
as $Q$ and an edge from $P$ to $P'$ in $K$. Hence, there is also a
path in $K_i$ from the vertex corresponding to $Q$ to the vertex
corresponding to $P'$, so $Q \in G_i^0$. This proves that the pair of
induced labelled graphs of $(G, \LL)$ is $(G_1, \LL_1)$, $(G_2,
\LL_2)$.   

Let $\hat \Psi_\pm \colon \hat X_1 \to \hat X_2$ be the standard
bipartite codes induced by $(G, \LL)$.
Remark \ref{rem_standard_code_of_bipartite_graph} shows that
there exist bipartite codes $\hat \psi_\pm \colon \X_{G_1} \to \X_{G_2}$
such that $\hat \Psi_\pm \circ \hat \pi_1|_{\X_{G_1}} = \hat
\pi_2|_{\X_{G_2}} \circ \hat \psi_\pm$.
The labelled graph $(G, \LL)$ presents the same sofic shift as $(K,
\LL)$, so they both induce the same standard bipartite codes from
$\hat X_1$ to $\hat X_2$, and by Lemma \ref{lem_nasu_4.6},
$\hat \Phi$ is one of these standard bipartite codes, so $\hat
\Phi = \hat \Psi_+$ or $\hat \Phi = \hat \Psi_-$. In particular, there
exists a bipartite code $\hat \psi \colon \X_{G_1} \to \X_{G_2}$ such
that $\hat \Phi \circ \hat \pi_1|_{\X_{G_1}} = \hat \pi_2|_{\X_{G_2}}
\circ \hat \psi$. 

By recoding $\hat X_1$ to $X_1$ and $\hat X_2$ to $X_2$ via
the bipartite expressions inducing $\Phi$,
this gives a bipartite code $\psi$ 
such that $\Phi \circ \pi_1 = \pi_2 \circ \psi$ when $\pi_1, \pi_2$
are the covering maps of the generalised left Fischer covers of $X_1$
and $X_2$ respectively.
By Theorem \ref{thm_nasu_decomposition}, any conjugacy can be
decomposed as a product of bipartite codes, so this proves that the
generalised left Fischer cover is canonical. 
\end{proof}


\begin{thm}
\label{thm_GFC_FE}
\index{generalised Fischer cover!is flow invariant}
The generalised left Fischer cover is flow invariant.
\end{thm}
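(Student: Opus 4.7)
The plan is to invoke Theorem \ref{thm_cover_fe}: since Theorem \ref{thm_GFC_canonical} already gives canonicity, it suffices to show that the generalised left Fischer cover respects symbol expansion. Following Remark \ref{rem_flow_cover_respect}, fix $X$, $a \in \AA(X)$, and $\diamond \notin \AA(X)$; the task reduces to verifying that the generalised left Fischer cover of $X^{a \mapsto a\diamond}$ is isomorphic to the labelled graph obtained from the generalised left Fischer cover $(G, \LL_G)$ of $X$ by subdividing every $a$-edge into an $a$-edge followed by a $\diamond$-edge.

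The starting point is Proposition \ref{prop_covers_respect_se}, which gives the analogous statement for the left Krieger covers: if $(K, \LL_K)$ is the left Krieger cover of $X$, then the subdivided graph $(K', \LL_{K'})$ is the left Krieger cover of $X^{a \mapsto a\diamond}$. Since $(G, \LL_G)$ sits inside $(K, \LL_K)$ as the subgraph induced by those predecessor sets with a path to some indecomposable predecessor set, all that is required is to verify that subdividing the $a$-edges in $(G, \LL_G)$ produces exactly the subgraph of $(K', \LL_{K'})$ defining the generalised left Fischer cover of $X^{a \mapsto a\diamond}$.

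The main step is a careful analysis of indecomposability under subdivision. Vertices of $K'$ come in two flavours: the \emph{old} vertices inherited from $K$, corresponding to predecessor sets $P_\infty(x^+)$ of right-rays not starting with $\diamond$; and the \emph{new} midpoint vertices $v_{Q,R}$ introduced by subdividing an $a$-edge from $Q$ to $R$, corresponding to predecessor sets $P_\infty(\diamond y^+)$. The symbol expansion map induces a natural bijection between predecessor sets of $X$ and predecessor sets of old vertices of $K'$, which moreover respects unions, so an old vertex is indecomposable in $K'$ if and only if the corresponding vertex is indecomposable in $K$. For a new vertex $v_{Q,R}$, the predecessor set in $X^{a \mapsto a\diamond}$ consists of those left-rays ending in $a$ that can be followed by $\diamond y^+$, and tracing through the definition shows that $v_{Q,R}$ is indecomposable in $K'$ precisely when $Q$ is indecomposable in $K$. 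Combined with Lemma \ref{lem_GLFC_essential}, which forces indecomposable vertices to receive edges from indecomposable vertices, one then verifies that an old vertex (respectively, a new vertex $v_{Q,R}$) of $K'$ has a path to some indecomposable vertex of $K'$ if and only if the corresponding vertex of $K$ (respectively, the source $Q$) lies in $G^0$.

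The main obstacle will be the indecomposability analysis for the new midpoint vertices: one must carefully unravel how the predecessor set of a right-ray beginning with $\diamond$ in $X^{a \mapsto a\diamond}$ decomposes in terms of predecessor sets in the Krieger cover of $X$, and observe that this decomposition is governed by the source of the corresponding $a$-edge rather than its range. Once this is established, the subgraph of $K'$ forming the generalised left Fischer cover of $X^{a \mapsto a\diamond}$ is precisely the subdivision of $G$, so Remark \ref{rem_flow_cover_respect} applies and Theorem \ref{thm_cover_fe} yields flow invariance.
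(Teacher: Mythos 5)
Your proposal is correct and follows essentially the same route as the paper: canonicity from Theorem \ref{thm_GFC_canonical}, the symbol-expansion property of the left Krieger cover from Proposition \ref{prop_covers_respect_se}, and then Theorem \ref{thm_cover_fe}. The only difference is that the paper disposes of the middle step with the single remark that the generalised left Fischer cover ``clearly inherits'' the property from the Krieger cover, whereas you carry out the indecomposability bookkeeping for the old and subdivided vertices explicitly; your observation that the new midpoint vertices are governed by the \emph{source} of the subdivided $a$-edge is exactly the point the paper leaves implicit.
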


\begin{proof}
By Proposition \ref{prop_covers_respect_se}, the left Krieger cover respects symbol expansion: If $X$ is a sofic shift with alphabet $\AA$, $a \in \AA$, and $\diamond \notin \AA$,
then the left Krieger cover of $X^{a \mapsto a\diamond}$ is obtained by replacing each edge labelled $a$ in the left Krieger cover of $X$ by two edges in sequence labelled $a$ and $\diamond$
respectively. Clearly, the generalised left Fischer cover inherits
this property. By Theorems \ref{thm_cover_fe} and \ref{thm_GFC_canonical}, it follows that the generalised left Fischer cover is flow invariant.
\end{proof}

\section{Foundations and layers of covers}
\label{sec_foundation}

Let $\EE = (E, \LL)$ be a finite left-resolving and predecessor-separated labelled graph.
For each $V \subseteq E^0$ and each word $w$ over the alphabet $\AA$ of $\LL$  define 
\begin{displaymath}
   wV = \{ u \in E^0 \mid u \textrm{ is the source of a path labelled } w \textrm{ terminating in } V \}.
\end{displaymath}

\begin{definition}
\index{past closed}
Let $S$ be a subset of the power set $\PP(E^0)$, and let ${\sim}$ be an equivalence relation on $S$.
The pair $(S,{\sim})$ is said to be \emph{past closed} if for all $u,v \in E^0$, $U,V \in S$, and $a \in \AA$, 
\begin{itemize}
\item $\{ v \} \in S$,
\item $\{ u \} \sim \{ v \}$ implies $u = v$,
\item $aV \neq \emptyset$ implies $aV \in S$, and
\item $U \sim V$ and $aU \neq  \emptyset$ implies $aV \neq \emptyset$ and $aU \sim aV$.
\end{itemize}
\end{definition}

\index{receive}
Let $(S, {\sim})$ be past closed. For each $V \in S$, let $\class V$ denote the equivalence class of $V$ with respect to ${\sim}$.
When $a \in \AA$ and $V \in S$, $\class V$ is said to \emph{receive} $a$ if $aV \neq \emptyset$.
For each $\class V \in S / {\sim}$, define $\lvert \class V \rvert = \min_{V \in \class V} \lvert V \rvert$.

\begin{definition}
\index{layer}\index{foundation}\index{labelled graph!foundation of}
Define $\GG(\EE, S, {\sim})$ to be the labelled graph with vertex set $S / {\sim}$ for which there is an edge labelled $a$ from $\class{aV}$ to $\class V$ whenever $\class V$ receives $a$.
For each $n \in \N$, the $n$\emph{th layer} of $\GG(\EE, S,{\sim})$ is the labelled subgraph induced by $S_n = \{ \class V \in S / {\sim} \mid  n = \lvert \class V \rvert \}$. 
$\EE$ is said to be a \emph{foundation} of any labelled graph isomorphic to $\GG(\EE, S, {\sim})$.
 \end{definition}

\index{labelled graph!layer of}
If a labelled graph $\HH$ is isomorphic to $\GG(\EE, S, {\sim})$, then the subgraph of $\HH$ corresponding to the $n$th layer of $\GG(\EE, S, {\sim})$ is be said to be the $n$\emph{th layer of} $\HH$ \emph{with respect to} $\EE$, or simply the \emph{$n$th layer} if $\EE$ is understood from the context.
The following proposition motivates the use of the term layer by showing that edges can never go from higher to lower layers.

\index{layer!structure of}
\begin{prop}
\label{prop_layers}
If $\class V \in S/ {\sim}$ receives $a \in \AA$, then $\lvert \class{aV} \rvert  \leq$ $\lvert \class V \rvert$\fxnote{check hack!}. If
$\GG(\EE, S,{\sim})$ has an edge from a vertex in the $m$th layer to a vertex in the $n$th layer, then $m \leq n$.
\end{prop}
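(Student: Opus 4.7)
The plan is to establish the first inequality by exploiting the left-resolving property of $\EE$ together with the compatibility of $\sim$ with the operation $V \mapsto aV$, and then to read off the layer statement as an immediate corollary.

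First I would fix a representative $V \in \class V$ realising the minimum, i.e.\ $|V| = |\class V|$. Since $\class V$ receives $a$ by hypothesis, there exists some $V' \in \class V$ with $aV' \neq \emptyset$. The past closedness of $(S,\sim)$ then guarantees that $aV \neq \emptyset$ and $aV \sim aV'$, so $aV \in S$ and $\class{aV}$ is well-defined (independent of the choice of representative in $\class V$). This is the step where the compatibility clauses of past closedness do all the work, and is the only part where one has to be a bit careful: one must check that the equivalence class $\class{aV}$ does not depend on which $V \in \class V$ one picks, which is precisely what the final clause of the past-closed definition provides.

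Next I would bound $|aV|$ in terms of $|V|$. Since $\EE$ is left-resolving, each vertex $u \in E^0$ receives at most one edge labelled $a$, so the map that sends a vertex $u \in aV$ to the (unique) endpoint in $V$ of its outgoing $a$-labelled edge is an injection $aV \hookrightarrow V$. Hence $|aV| \leq |V|$. Combining this with the definition of $|\class{aV}|$ as a minimum gives
\[
   |\class{aV}| \leq |aV| \leq |V| = |\class V|,
\]
which is the first assertion.

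For the second assertion, suppose $\GG(\EE,S,\sim)$ has an edge from a vertex $\class U$ in the $m$th layer to a vertex $\class V$ in the $n$th layer. By construction of $\GG(\EE,S,\sim)$, this edge is labelled by some $a \in \AA$ with $\class U = \class{aV}$, and in particular $\class V$ receives $a$. Applying the first part, $m = |\class U| = |\class{aV}| \leq |\class V| = n$, as required. The main (and only mildly subtle) obstacle is the well-definedness issue in the first paragraph; once that is handled, the left-resolving property makes the inequality essentially automatic.
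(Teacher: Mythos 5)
Your proof is correct and takes essentially the same route as the paper's: choose a representative $V$ with $\lvert V\rvert = \lvert \class V\rvert$, use the left-resolving property to get $\lvert aV\rvert \leq \lvert V\rvert$, and read off the layer statement from the definition of $\GG(\EE,S,{\sim})$. One small wording point: left-resolving does not make the outgoing $a$-labelled edge of $u \in aV$ unique (that would be right-resolving), but choosing any one target in $V$ for each $u$ still yields an injection $aV \hookrightarrow V$ precisely because no vertex of $V$ receives two edges labelled $a$, so the argument stands.
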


\begin{proof}
Choose $V \in \class V$ such that $\lvert V \rvert = \lvert \class V \rvert$. Each $u \in aV$ emits at least one edge labelled $a$ terminating in $V$, and $\EE$ is left-resolving, so $\lvert \class{aV} \rvert \leq \lvert aV \rvert \leq \lvert V \rvert = \lvert \class V \rvert$. The second statement follows from the definition of $\GG(\EE, S,{\sim})$.
\end{proof}

\begin{prop}
$\EE$ and $\GG(\EE, S,{\sim})$ present the same sofic shift, and
$\EE$ is labelled graph isomorphic to the first layer of $\GG(\EE, S,{\sim})$. 
\end{prop}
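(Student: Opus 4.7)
The plan has two parts. First I would identify the first layer of $\GG(\EE,S,\sim)$ with $\EE$, and then I would show the languages of the two labelled graphs coincide.

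For the first part, note that by past closedness $\{v\}\in S$ for every $v\in E^0$, and $\{u\}\sim\{v\}$ forces $u=v$. Hence $v\mapsto \class{\{v\}}$ is an injection $E^0\hookrightarrow S/{\sim}$, and each image class has size $1$ (because it contains a singleton). Conversely, if $\class V$ belongs to the first layer, some $V'\in\class V$ is a singleton $\{v\}$, so $\class V=\class{\{v\}}$. Hence this map bijects $E^0$ with the vertex set of the first layer. On edges: an edge in the first layer labelled $a$ from $\class{\{u\}}$ to $\class{\{v\}}$ exists exactly when $\class{a\{v\}}=\class{\{u\}}$, i.e.\ $a\{v\}\sim\{u\}$. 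Because $\EE$ is left-resolving, $a\{v\}$ is either empty or a singleton, and when it equals $\{u\}$ this is the same as saying that $\EE$ has an edge labelled $a$ from $u$ to $v$. Left-resolvingness also guarantees at most one such edge, so the bijection on vertices lifts to a bijection on edges, giving an isomorphism of labelled graphs between $\EE$ and the first layer of $\GG(\EE,S,\sim)$.

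For the second part, the inclusion $\X_\EE\subseteq\X_{\GG(\EE,S,\sim)}$ is immediate from the first part, since the first layer is a labelled subgraph of $\GG(\EE,S,\sim)$. I would prove the reverse inclusion at the level of languages: if $w=a_1\cdots a_n\in\BB(\X_{\GG(\EE,S,\sim)})$, then there is a path
\begin{displaymath}
\class{V_0}\stackrel{a_1}{\longrightarrow}\class{V_1}\stackrel{a_2}{\longrightarrow}\cdots\stackrel{a_n}{\longrightarrow}\class{V_n}
\end{displaymath}
in $\GG(\EE,S,\sim)$, i.e.\ $a_iV_i\ne\emptyset$ and $a_iV_i\sim V_{i-1}$ for $i=1,\dots,n$. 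Set $W_n=V_n$ and $W_{i-1}=a_iW_i$, so that $W_0=a_1a_2\cdots a_nV_n$ is the set of sources in $E$ of paths labelled $w$ terminating in $V_n$.

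The key step is the downward induction $W_i\sim V_i$ for $i=n,n-1,\dots,0$. The base case $i=n$ is trivial. For the inductive step, assume $W_i\sim V_i$. Since $a_iV_i\ne\emptyset$, past closedness (applied to $U=V_i$, $V=W_i$, $a=a_i$) gives $a_iW_i\ne\emptyset$, $a_iW_i\in S$, and $a_iW_i\sim a_iV_i$. Combining with $a_iV_i\sim V_{i-1}$ yields $W_{i-1}\sim V_{i-1}$. At $i=0$ we obtain $W_0\sim V_0$, so in particular $W_0\ne\emptyset$; any vertex in $W_0$ is the source of a path in $E$ labelled $w$, so $w\in\BB(\X_\EE)$. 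Thus the two languages agree, and by Proposition \ref{prop_language} the two shifts are equal.

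The only delicate point is keeping the induction straight: one must verify at each step that the new set $W_{i-1}=a_iW_i$ really lies in $S$ and that the past-closed clauses combine into $W_{i-1}\sim V_{i-1}$, which is precisely where the hypotheses on $(S,\sim)$ pay off. Everything else is bookkeeping.
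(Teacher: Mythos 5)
Your proof is correct and follows the same route as the paper's (much terser) argument: identify $E^0$ with the singleton classes to get the first layer, match edges using left-resolvingness, and then transfer words presented by $\GG(\EE,S,\sim)$ back to $\EE$. The paper simply asserts that every finite word presented by $\GG(\EE,S,\sim)$ is presented by $\EE$; your downward induction $W_i\sim V_i$ is exactly the detail it leaves implicit, and it is carried out correctly.
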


\begin{proof}
By assumption, there is a bijection between $E^0$ and the set of vertices in the first layer of $\GG(\EE, S,{\sim})$. By Proposition \ref{prop_layers}, there is an edge labelled $a$ from $u$ to $v$ in $\EE$ if and only if there is an edge labelled $a$ from $\class{\{u\}}$ to $\class{\{v\}}$ in $\GG(\EE, S,{\sim})$.
Every finite word presented by $\GG(\EE, S, {\sim})$ is also presented by $\EE$, so they present the same sofic shift.
\end{proof}

\begin{example}
\index{multiplicity set cover}
Let $(F, \LL_F)$ be the left Fischer cover of an irreducible sofic shift $X$. 
For each $x^+ \in X^+$, let $s(x^+) \subseteq F^0$ to be the set of 
vertices where a presentation of $x^+$ can start. With the trivial relation $=$, the set $S = \{ s(x^+) \mid x^+ \in X^+ \} \subseteq \PP(F^0)$ is past closed since each vertex in the left Fischer cover is the predecessor set of an intrinsically synchronizing right-ray, so the \emph{multiplicity set cover} of $X$ can be defined to be $\GG((F, \LL_F), S, =)$.
 An analogous cover can be defined by considering the vertices where presentations of finite words can start.
Thomsen \cite{thomsen} constructs the
derived shift space $\partial X$ of $X$
using
right-resolving graphs, but an analogous construction works for left-resolving graphs.
The procedure from \cite[Example 6.10]{thomsen} shows that this
$\partial X$ is presented by the labelled graph obtained by removing the left Fischer cover from the multiplicity set cover.
\end{example}

Let $X$ be a sofic shift, and let $(K, \LL_K)$ be the left Krieger cover of $X$. In order to use the preceding results to investigate the structure of the left Krieger cover and the past set cover, define an equivalence relation on $\PP(K^0)$ by $U \sim_\cup V$ if and only if $\bigcup_{P \in U} P = \bigcup_{Q \in V} Q$. Clearly, $\{ P \} \sim_\cup \{ Q \}$ if and only if $P = Q$. If $U, V \subseteq K^0$, $a \in \AA$, $aV \neq \emptyset$, and $U \sim_\cup V$, then $aU \sim_\cup aV$ by the definition of the left Krieger cover.

\begin{thm}
\label{thm_gfc_foundation_lkc}
\index{generalised Fischer cover!as foundation of \\Krieger cover}\index{Fischer cover!as foundation of \\Krieger cover}
For a sofic shift $X$, the generalised left Fischer cover  is a foundation of the left Krieger cover, and no smaller subgraph is a foundation.
\end{thm}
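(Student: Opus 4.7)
The plan is to prove existence and minimality separately. For existence, I will construct an explicit past closed pair on the vertex set of the generalised left Fischer cover. Identifying each vertex of the GLFC with the predecessor set it represents in $K^0$, I set
\[
S = \bigl\{ V \subseteq G^0 : V \neq \emptyset \text{ and } \bigcup\nolimits_{v \in V} v \in K^0 \bigr\},
\]
equipped with $\sim_\cup$. The key structural observation is that if $v \in G^0$ and the LKC has an incoming edge $av \xrightarrow{a} v$, then $av \in G^0$: the defining condition $v \geq P'$ for some indecomposable $P'$ forces $av \geq v \geq P'$. Consequently $aV$ computed inside the GLFC agrees with $aV$ computed inside the LKC, and $\bigcup_{u \in aV} u = a\bigl(\bigcup_{v \in V} v\bigr) \in K^0$, yielding past closure. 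Defining $\Phi(\class V) = \bigcup_{v \in V} v$ gives a labelled graph isomorphism onto the LKC: well-definedness and injectivity are immediate, surjectivity follows by iteratively decomposing any $P \in K^0$ into indecomposables (which lie in $G^0$) in the finite poset $(K^0, \subseteq)$, and edges correspond because the LKC is left-resolving.

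For minimality, let $\EE' \subsetneq (G, \LL_G)$ be a proper subgraph. I consider two cases. If some indecomposable $P \in G^0$ is absent from $(E')^0$, I mimic the proof of Lemma \ref{lem_decomposable}: choosing $x^+ \in X^+$ with $P_\infty(x^+) = P$, any set of vertices $V'$ in $\EE'$ where a presentation of $x^+$ may begin satisfies $\bigcup V' \subsetneq P$ by indecomposability, so some left-ray in $P \setminus \bigcup V'$ cannot be presented. Hence $\EE'$ does not present $X$, and since $\GG(\EE', S', \sim')$ always presents the same shift as $\EE'$, it cannot be isomorphic to the LKC for any choice of past closed $(S', \sim')$.

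Otherwise $\EE'$ contains every indecomposable but omits some decomposable $P \in G^0$. By Proposition \ref{prop_GLFC_essential} the GLFC is essential and every vertex of $G^0$ is connected to an indecomposable; following such a path from $P$ locates a GLFC-edge $P' \xrightarrow{a} Q$ with $P' \notin (E')^0$ and $Q \in (E')^0$. Since the LKC is left-resolving, $P'$ is the unique $a$-predecessor of $Q$ in the LKC, so $Q$ loses its $a$-reception in $\EE'$. Assuming for contradiction that $\EE'$ is a foundation via $(S', \sim')$ and an isomorphism $\Phi$, a K\"onig's-lemma compactness argument establishes $\Phi(\class V) = \bigcup_{v \in V} \Phi(\class{\{v\}})$, which forces the singleton-image map $v \mapsto \Phi(\class{\{v\}})$ to hit every indecomposable in $K^0$, and the first-layer subgraph in the LKC (induced on this image) to be labelled-graph isomorphic to $\EE'$. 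The hard part will be exploiting the mismatch at $Q$: when $Q$ is itself decomposable, one must iterate the analysis along the GLFC-path and use the predecessor-separated property of the LKC to rule out every possible reassignment $v \mapsto \Phi(\class{\{v\}})$.
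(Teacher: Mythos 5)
Your existence half is correct and is essentially the paper's own argument: the same collection $S$ of subsets of $G^0$ whose unions lie in $K^0$, the same relation ${\sim_\cup}$, and the same bijection $\class V \mapsto \bigcup_{v \in V} v$. Your observation that $G^0$ is closed under taking sources of incoming edges of $K$ (so that $aV$ agrees whether computed in the generalised left Fischer cover or in the left Krieger cover), and your induction in the finite poset $(K^0,\subseteq)$ for surjectivity, correctly fill in details the paper leaves implicit.

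The genuine gap is in Case 2 of your minimality argument, and you flag it yourself: ``the hard part will be exploiting the mismatch at $Q$'' is a plan, not a proof, and the route you propose (iterating along the path and ruling out every possible reassignment $v \mapsto \Phi(\class{\{v\}})$) is neither carried out nor the natural way to finish; you also never treat a proper subgraph that keeps all of $G^0$ but omits an edge. Both cases close at once from facts you already have. Your compactness computation gives $\Phi(\class V) = \bigcup_{v \in V} \Phi(\class{\{v\}})$, so (i) every indecomposable element of $K^0$ lies in the image $H^0$ of the first layer of $\GG(\EE',S',{\sim'})$ under $\Phi$, and (ii) $H^0$ induces a subgraph of $(K,\LL_K)$ isomorphic to $\EE'$. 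Now invoke Proposition \ref{prop_layers}: no edge enters the first layer from a higher layer, so $H^0$ is itself closed under taking sources of incoming edges in $K$. Since $G^0$ is by definition exactly the set of vertices of $K$ from which some indecomposable vertex is reachable, any such in-closed subset of $K^0$ containing the indecomposables must contain $G^0$. Hence $H^0 \supseteq G^0$, the induced subgraph of $K$ on $H^0$ contains $(G,\LL_G)$, and $\EE'$, being isomorphic to that induced subgraph while also being a subgraph of $(G,\LL_G)$, must have exactly the vertices and edges of $(G,\LL_G)$ --- impossible for a proper subgraph. This disposes of your Case 2 (and of missing edges) without any analysis of the particular edge $P' \xrightarrow{a} Q$. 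The paper itself dispatches minimality with a one-line appeal to Lemma \ref{lem_decomposable}, which directly covers only your Case 1; your instinct that the decomposable case needs a separate argument is sound, but the missing ingredient is the layer structure of Proposition \ref{prop_layers}, not the reassignment bookkeeping you propose.
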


\begin{proof}
Let $(G, \LL_G)$ and $(K, \LL_K)$ be the generalised left Fischer cover and the left Krieger cover of $X$, respectively. Define
\begin{displaymath}
S = \Big \{ V \subseteq G^0 \mid \exists x^+ \in X^+ \textrm{ such that } P_\infty(x^+) = \bigcup_{P \in V} P \Big \}.
\end{displaymath} 
Note that $\{ P \} \in S$ for every $P \in G^0$.
If $x^+ \in X^+$ with $P_\infty(x^+) = \bigcup_{P \in V} P$ and if 
$aV \neq \emptyset$ for some $a \in \AA$, then $ax^+ \in X^+$ and $P_\infty(ax^+) = \bigcup_{P \in aV} P$.
This proves that  the pair $(S, {\sim_\cup})$ is past closed, so 
$\GG((G,\LL_G), S, {\sim_\cup})$ is well defined.
Since $(G, \LL_G)$ is a presentation of $X$, there is a bijection $\varphi \colon S / {\sim_\cup} \to K^0$ defined by $\varphi( \class V ) = \bigcup_{P \in V} P$.
By construction, there is an edge labelled $a$ from $\class U$ to $\class V$ in $\GG((G,\LL_G), S, {\sim_\cup})$ if and only if there exists $x^+ \in X^+$ such that
$P_\infty(ax^+) = \bigcup_{P \in U} P$ and $P_\infty(x^+) = \bigcup_{Q \in V} Q$, so $\GG((G,\LL_G), S, {\sim_\cup})$ is isomorphic to $(K, \LL_K)$.
It follows from Lemma \ref{lem_decomposable} that no proper subgraph of $(G, \LL_G)$ can be a foundation of the left Krieger cover.
\end{proof}

\begin{example} \label{ex_justifying}
Figure \ref{F_justifying_shift} shows a labelled graph which is predecessor-separated, irreducible, and left-resolving, so by Theorem \ref{thm_lfc_char}, it is the left Fischer cover of an irreducible sofic shift $X$. Note that $P_\infty(a^\infty) = P_\infty(u) \cup P_\infty(v) = P_\infty(u)$, so this is a vertex in the first layer of the Krieger cover with respect to the Fischer cover even though $a^\infty$  has presentations starting at two different vertices. This illustrates the difference between the left Krieger cover and the left multiplicity set cover.
This example was inspired by the example in \cite[Section 4]{carlsen_matsumoto}. 
\end{example}

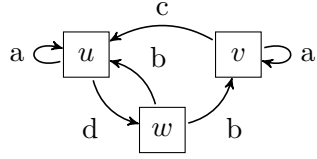
\begin{figure}
\begin{center}
\begin{tikzpicture}
  [bend angle=45,
   knude/.style = {circle, inner sep = 0pt},
   vertex/.style = {circle, draw, minimum size = 1 mm, inner sep =
      0pt},
   textVertex/.style = {rectangle, draw, minimum size = 6 mm, inner sep =
      1pt},
   to/.style = {->, shorten <= 1 pt, >=stealth', semithick}]
  
  \node[textVertex] (u) at (-1, 0) {$u$};
  \node[textVertex] (v) at ( 1, 0) {$v$};  
  \node[textVertex] (w) at (0, -1) {$w$}; 
  
  \draw[to, bend right=30] (v) to node[auto,swap] {c} (u);
  \draw[to, loop left]     (u) to node[auto] {a} (u);
  \draw[to, loop right]    (v) to node[auto] {a} (v);
  \draw[to, bend right=30] (u) to node[auto, swap] {d} (w);
  \draw[to, bend right=30] (w) to node[auto, swap] {b} (u); 
  \draw[to, bend right=30] (w) to node[auto,swap] {b} (v);   
\end{tikzpicture}
\end{center}
\caption[Identical Fischer and Krieger covers.]{Left Fischer cover of an irreducible strictly sofic
  shift. The sequence 
  $a^\infty$ is $1$-synchronizing even though $s(a^\infty) = \{u,v\}$.}
\label{F_justifying_shift}
\end{figure}

The example from \cite[Section 4]{carlsen_matsumoto} shows that the left Krieger cover can be a proper subgraph of the past set cover. The following lemma will be used to further investigate this relationship.

\begin{lem}
\label{lem_prefix_with_same_past}
Let $X$ be a sofic shift. For each $x^+ = x_1 x_2 x_3 \ldots \in X^+$ there exists $n \in \N$ such that $P_\infty(x^+) = P_\infty(x_1 x_2 \ldots x_k)$ for all $k \geq n$. 
\end{lem}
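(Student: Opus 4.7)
The plan is to exhibit the sets $P_\infty(x_1\cdots x_k)$ as a decreasing chain of subsets of $X^-$ that all contain $P_\infty(x^+)$, then invoke soficity to make the chain stabilize, and finally argue that the stable value coincides with $P_\infty(x^+)$.

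First I would record two elementary monotonicity facts. For any $k$, if $y^- \in P_\infty(x_1\cdots x_{k+1})$ then $y^- x_1\cdots x_{k+1}$ extends to an element of $X$, so in particular its prefix $y^- x_1\cdots x_k$ does, giving $P_\infty(x_1\cdots x_{k+1})\subseteq P_\infty(x_1\cdots x_k)$. Similarly, if $y^-\in P_\infty(x^+)$, then $y^- x^+\in X$, and since $X$ is shift invariant and closed, any prefix $y^- x_1\cdots x_k$ lies in $\BB(X^-\cdot X^+)$, so $y^-\in P_\infty(x_1\cdots x_k)$. Thus the chain $P_\infty(x_1)\supseteq P_\infty(x_1 x_2)\supseteq\cdots$ decreases and every term contains $P_\infty(x^+)$.

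Next I would use soficity. Since $X$ is sofic, the set $\{P_\infty(w)\mid w\in\BB(X)\}$ is finite (this is the essence of Krieger's characterisation of soficity recalled in Section~\ref{sec_shift_lkc}, and it is exactly what makes the past set cover a finite labelled graph). A decreasing chain in a finite collection of sets can strictly decrease only finitely many times, so there exists $n\in\N$ with $P_\infty(x_1\cdots x_k)=P_\infty(x_1\cdots x_n)$ for all $k\ge n$. Denote this stable value by $P_\star$.

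Finally I would show $P_\star=P_\infty(x^+)$. The inclusion $P_\infty(x^+)\subseteq P_\star$ is already established. For the converse, take $y^-\in P_\star$; then for every $k\ge n$ there exists a right-ray $z^+_k\in X^+$ with $y^- x_1\cdots x_k z^+_k\in X$. Every finite factor of the candidate biinfinite sequence $y^- x^+$ has the form $y_{-m}\cdots y_{-1}x_1\cdots x_j$, which appears inside $y^- x_1\cdots x_{j\vee n} z^+_{j\vee n}\in X$ and is therefore in $\BB(X)$. By Proposition~\ref{prop_language}, $y^- x^+\in X$, so $y^-\in P_\infty(x^+)$. There is no genuine obstacle here: the only point to be careful about is the book-keeping in this last step, making sure one uses the right-ray witnesses $z^+_k$ consistently to certify that every finite subword of $y^- x^+$ lies in the language, which is where one implicitly relies on $X$ being a shift space (hence closed and determined by its language) rather than just a shift invariant set of right-rays.
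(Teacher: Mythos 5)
Your proof is correct and follows essentially the same route as the paper's: establish the decreasing chain $P_\infty(x_1)\supseteq P_\infty(x_1x_2)\supseteq\cdots\supseteq P_\infty(x^+)$, use the finiteness of the set of predecessor sets of words (soficity) to stabilise it, and then check that the stable value is contained in $P_\infty(x^+)$ because $y^-x^+$ contains no forbidden word. Your last step is merely a more carefully book-kept version of the paper's one-line argument, so there is nothing to add.
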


\begin{proof} 
It is clear that $P_\infty(x_1) \supseteq P_\infty(x_1 x_2) \supseteq
\cdots \supseteq P_\infty(x^+)$. Since $X$ is
sofic, there are only finitely many different predecessor sets of
words, so there must exist $n \in \N$ such that $P_\infty(x_1 x_2 \ldots x_k)
= P_\infty(x_1 x_2 \ldots x_n)$ for all $k \geq n$. If $y^- \in
P_\infty(x_1 x_2 \ldots x_n)$ is given, then $y^- x_1 x_2 \ldots x_k \in X$ for
all $k \geq n$, so $y^- x^+$ contains no forbidden words, and
therefore $y^- \in P_\infty(x^+)$. Since $y^-$ was arbitrary,
$P_\infty(x^+) =  P_\infty(x_1 x_2 \ldots x_n)$.
\end{proof}

\begin{thm}
\label{thm_foundations_psc}
\index{generalised Fischer cover!as foundation of \\ past set cover}\index{Fischer cover!as foundation of \\ past set cover}
\index{Krieger cover!as foundation of  \\past set cover}
For a sofic shift $X$, the generalised left Fischer cover and the left Krieger cover  are both foundations of the past set cover.
\end{thm}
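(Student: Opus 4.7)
The plan is to exhibit, for each of the two covers in question, an explicit past closed pair $(S, \sim_\cup)$ whose induced graph $\GG$ coincides with $(\psc, \LL_\psc)$. In both cases $S$ will be the collection of subsets $V$ of the cover's vertex set whose union equals the past set $P_\infty(w)$ of some $w \in \BB(X)$, with $\sim_\cup$ identifying $U$ and $V$ iff $\bigcup_{P \in U} P = \bigcup_{Q \in V} Q$, and the identification $\psc^0 \cong S/{\sim_\cup}$ will be $\class V \mapsto \bigcup_{P \in V} P$. The conceptual input that makes this work is the identity $P_\infty(w) = \bigcup_{w \prefix x^+} P_\infty(x^+)$, which follows immediately by unpacking the definition of $P_\infty(w)$; together with finiteness of $K^0$, it shows that every past set of a word is a finite union of Krieger vertices.

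First I would handle the Krieger cover. Set $S = \{V \subseteq K^0 \mid \exists w \in \BB(X),\, \bigcup_{P \in V} P = P_\infty(w)\}$. Singleton membership $\{P\} \in S$ is the content of Lemma \ref{lem_prefix_with_same_past} (write $P = P_\infty(x^+)$ and pass to a sufficiently long prefix); the distinctness condition on singletons is trivial. The substantive part of past closedness is to show that, given $\bigcup_{P \in V} P = P_\infty(w)$ and $aV \neq \emptyset$, both $aw \in \BB(X)$ and $\bigcup_{Q \in aV} Q = P_\infty(aw)$. I would establish these simultaneously by a double inclusion, chasing a left-ray $z^-$ between the two sides via the equivalence $z^- \in P_\infty(ay^+) \iff z^- a \in P_\infty(y^+)$ together with the inclusion $P_\infty(y^+) \subseteq P_\infty(w)$ whenever $P_\infty(y^+) \in V$. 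The compatibility $U \sim_\cup V \Rightarrow aU \sim_\cup aV$ drops out, since both images express $P_\infty(aw)$. Matching of labelled edges between $\GG$ and $\psc$ is then automatic: an $a$-edge exists in either precisely when $aw \in \BB(X)$.

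For the generalised left Fischer cover the argument is formally identical with $G^0$ replacing $K^0$, using $S' = \{V \subseteq G^0 \mid \exists w \in \BB(X),\, \bigcup_{P \in V} P = P_\infty(w)\}$. Two bookkeeping points need checking. First, the operation $V \mapsto aV$ computed in $G$ coincides with the one computed in $K$ when $V \subseteq G^0$: any $Q \in K^0$ emitting an $a$-edge into some $P \in V \subseteq G^0$ inherits a path to an indecomposable predecessor set through $P$, hence lies in $G^0$. Second, the vertex map $S'/{\sim_\cup} \to \psc^0$ is surjective because every $P_\infty(w)$ decomposes as a finite union of indecomposable predecessor sets, which are by definition elements of $G^0$; this is exactly the feature that made $(G,\LL_G)$ a foundation of the Krieger cover in Theorem \ref{thm_gfc_foundation_lkc}.

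The step I expect to be the main obstacle is the double inclusion $\bigcup_{Q \in aV} Q = P_\infty(aw)$. It is short once written out but bridges two genuinely different encodings of past behaviour — predecessor sets of right-rays, which index the Krieger vertices, and predecessor sets of finite words, which index the past set cover vertices — so the interplay of the appended symbol $a$ with both encodings has to be managed carefully. Once this identity is secured, past closedness, the bijection of vertex sets, and the matching of labelled edges all follow cleanly, and the generalised Fischer cover case is obtained from the Krieger case simply by restricting to $G^0$.
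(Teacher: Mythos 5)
Your proposal is correct and follows essentially the same route as the paper: the same sets $S$ and $T$ of subsets of $G^0$ resp.\ $K^0$ whose unions are past sets of words, the same relation $\sim_\cup$, Lemma \ref{lem_prefix_with_same_past} for singleton membership, and the compatibility of appending a letter to verify past closedness and match the edges of $(\psc,\LL_\psc)$. The only differences are cosmetic — you treat the Krieger cover first and spell out details (the double inclusion for $\bigcup_{Q\in aV}Q = P_\infty(aw)$ and the closure of $G^0$ under predecessors) that the paper compresses into ``arguments analogous to the proof of Theorem \ref{thm_gfc_foundation_lkc}.''
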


\begin{proof}
Let $(G, \LL_G)$ be the generalised left Fischer cover of $X$, let $(K, \LL_K)$ be the left Krieger cover of $X$, and let $(\psc, \LL_\psc)$ be the past set cover of $X$. Define
\begin{displaymath}
S = \Big \{ V \subseteq G^0 \mid \exists w \in \BB(X) \textrm{ such that } P_\infty(w) = \bigcup_{P \in V} P \Big \},
\end{displaymath}
and use Lemma \ref{lem_prefix_with_same_past} to conclude that $S$ contains $\{ P \}$ for every $P \in G^0$. By arguments analogous to the ones used in the proof of Theorem \ref{thm_gfc_foundation_lkc}, it follows that $\GG((G,\LL_G),S, {\sim_\cup})$ is isomorphic to $(\psc, \LL_\psc)$. 
To see that $(K, \LL_K)$ is also a foundation, define 
$T = \{ V \subseteq K^0 \mid \exists w \in \BB(X) \textrm{ such that } P_\infty(w) = \bigcup_{P \in V} P \}$,
 and apply arguments analogous to the ones used above to prove that $(\psc, \LL_\psc)$ is isomorphic to $\GG((K,\LL_K),T, {\sim_\cup})$.
\end{proof}

\index{*1@$\frac{1}{n}$-synchronizing}
\index{word!$\frac{1}{n}$-synchronizing}
\index{ray!$\frac{1}{n}$-synchronizing}
\index{Krieger cover!layer of}
\index{past set cover!layer of}
In the following, the $n$th layer of the left Krieger cover (past set cover) will always refer to the $n$th layer with respect to the generalised left Fischer cover $(G, \LL_G)$. For a right-ray (word) $x$, $P_\infty(x)$ is a vertex in the $n$th layer of the left Kriger cover (past set cover) for some $n \in \N$, and such an $x$ is said to be $\frac{1}{n}$-\emph{synchronizing}. Note that $x$ is $\frac{1}{n}$-synchronizing if and only if $n$ is the smallest number such that there exist $P_1, \ldots, P_n \in G^0$ with $\bigcup_{i=1}^n P_i = P_\infty(x)$.
In an irreducible sofic shift with left Fischer cover $(F, \LL_F)$, this happens if and only if $n$ is the smallest number such that there exist $u_1, \ldots, u_n \in F^0$ with $\bigcup_{i=1}^n P_\infty(u_i) = P_\infty(x)$.

\begin{cor}
\label{cor_PSC_reducible_if_LKC_reducible}
If the left Krieger cover of a sofic shift is reducible, then so is the past set cover.
\end{cor}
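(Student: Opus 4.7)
The plan is to apply Theorem \ref{thm_foundations_psc} directly. That theorem gives that the left Krieger cover $(K,\LL_K)$ is a foundation of the past set cover $(\psc,\LL_\psc)$, so $(\psc,\LL_\psc)$ is isomorphic to $\GG((K,\LL_K),T,{\sim_\cup})$ for a suitable past-closed pair. The very construction of $\GG$ identifies its first layer (with respect to the foundation $(K,\LL_K)$) with $(K,\LL_K)$ as labelled graphs, so I obtain a canonical embedding of $K^0$ into $\psc^0$ in which edges inside the first layer correspond exactly to edges of $(K,\LL_K)$.

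Next I would invoke Proposition \ref{prop_layers}: any edge of $(\psc,\LL_\psc)$ from a vertex in layer $m$ to a vertex in layer $n$ satisfies $m\le n$. Iterating along a path, any finite path in $(\psc,\LL_\psc)$ whose source and range both lie in the first layer is forced to stay in the first layer throughout. Hence such a path corresponds, under the identification above, to a path in $(K,\LL_K)$.

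To finish, assume $(K,\LL_K)$ is reducible and pick $P,Q\in K^0$ with no path from $P$ to $Q$ in $(K,\LL_K)$. Viewing $P$ and $Q$ as first-layer vertices of $(\psc,\LL_\psc)$, the previous paragraph shows that any path from $P$ to $Q$ in $(\psc,\LL_\psc)$ would induce such a path in $(K,\LL_K)$, contradicting the choice of $P$ and $Q$. Thus no path from $P$ to $Q$ exists in $(\psc,\LL_\psc)$, so $(\psc,\LL_\psc)$ is reducible. There is no real obstacle here — the content is absorbed in Theorem \ref{thm_foundations_psc} and Proposition \ref{prop_layers}, and the only point requiring care is that the identification of $(K,\LL_K)$ with the first layer is edge-preserving in both directions, which is part of the statement that $\EE$ is isomorphic to the first layer of $\GG(\EE,S,{\sim})$.
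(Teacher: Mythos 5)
Your proof is correct and is exactly the argument the paper intends: the paper's one-line proof cites precisely Theorem \ref{thm_foundations_psc} and Proposition \ref{prop_layers}, and your unpacking — identifying $(K,\LL_K)$ with the first layer of the past set cover and using the monotonicity of layers along paths to trap any path between first-layer vertices inside the first layer — is the intended reasoning. No further comment is needed.
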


\begin{proof}
This follows from Proposition \ref{prop_layers} and Theorem  \ref{thm_foundations_psc}. 
\end{proof}

\begin{example}
Figures \ref{F_LFC_3cc} and \ref{F_LKC_3cc} show, respectively, the left Fischer and the left Krieger cover of the 3-charge constrained 
shift (see e.g.\ \cite[Example 1.2.7]{lind_marcus} for the definition of charge
constrained shifts).
There are 3 vertices in the second layer of the left Krieger cover and
two in the third.
Note how the left Fischer cover can be identified with the first layer
of the left Krieger cover.
Note also that the second layer is the left Fischer cover
of the 2-charge constrained shift and that the third layer is the left
Fischer cover of the 1-charge constrained shift.
\end{example}

\begin{figure}
\begin{center}
\begin{tikzpicture}
  [
   knude/.style = {circle, inner sep = 0pt},
   vertex/.style = {circle, draw, minimum size = 1 mm, inner sep =
      0pt},
   textVertex/.style = {rectangle, draw, minimum size = 6 mm, inner sep =
      1pt},
   to/.style = {->, shorten <= 1 pt, >=stealth', semithick}]
  

    \node[textVertex] (u) at (-3,4) {$u$};  
    \node[textVertex] (v) at (-1,4) {$v$};  
    \node[textVertex] (w) at (1,4) {$w$};  
    \node[textVertex] (x) at (3,4) {$x$}; 

  \draw[to, bend left=30] (u) to node[auto] {$+$} (v);
  \draw[to, bend left=30] (v) to node[auto] {$+$} (w);
  \draw[to, bend left=30] (w) to node[auto] {$+$} (x);
  \draw[to, bend left=30] (x) to node[auto] {$-$} (w);
  \draw[to, bend left=30] (w) to node[auto] {$-$} (v);
  \draw[to, bend left=30] (v) to node[auto] {$-$} (u);

\end{tikzpicture}
\end{center}
\caption{Left Fischer cover of the 3-charge
  constrained shift.} 
\label{F_LFC_3cc}
\end{figure}
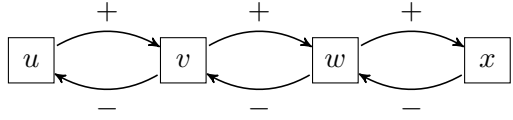
\begin{figure}
\begin{center}
\begin{tikzpicture}
  [
   knude/.style = {circle, inner sep = 0pt},
   vertex/.style = {circle, draw, minimum size = 1 mm, inner sep =
      0pt},
   textVertex/.style = {rectangle, draw, minimum size = 6 mm, inner sep =
      1pt},
   to/.style = {->, shorten <= 1 pt, >=stealth', semithick}]
  
    \node[textVertex] (Pu) at (-4.5,1) {$P_\infty(u)$};
    \node[textVertex] (Pv) at (-1.5,1) {$P_\infty(v)$};
    \node[textVertex] (Pw) at ( 1.5,1) {$P_\infty(w)$}; 
    \node[textVertex] (Px) at ( 4.5,1) {$P_\infty(x)$}; 

    \node[textVertex] (Puv) at (-4,-1) {$P_\infty(u) \cup P_\infty(v)$};
    \node[textVertex] (Pvw) at ( 0,-1) {$P_\infty(v) \cup P_\infty(w)$};
    \node[textVertex] (Pwx) at ( 4,-1) {$P_\infty(w) \cup P_\infty(x)$};

    \node[textVertex] (Puvw) at (-3,-3) {$P_\infty(u) \cup
      P_\infty(v) \cup P_\infty(w)$};
    \node[textVertex] (Pvwx) at (3,-3) {$P_\infty(v) \cup
      P_\infty(w) \cup P_\infty(x)$};

  \draw[to, bend left=20] (Pu) to node[auto] {$+$} (Pv);
  \draw[to, bend left=20] (Pv) to node[auto] {$+$} (Pw);
  \draw[to, bend left=20] (Pw) to node[auto] {$+$} (Px);
  \draw[to, bend left=20] (Px) to node[auto] {$-$} (Pw);
  \draw[to, bend left=20] (Pw) to node[auto] {$-$} (Pv);
  \draw[to, bend left=20] (Pv) to node[auto] {$-$} (Pu);
 
  \draw[to] (Pu) to node[auto,swap] {$+$} (Puv);
  \draw[to] (Px) to node[auto] {$-$} (Pwx);

  \draw[to, bend left=10] (Puv) to node[auto] {$+$} (Pvw);
  \draw[to, bend left=10] (Pvw) to node[auto] {$+$} (Pwx);
  \draw[to, bend left=10] (Pwx) to node[auto] {$-$} (Pvw);
  \draw[to, bend left=10] (Pvw) to node[auto] {$-$} (Puv);

  \draw[to] (Puv) to node[auto,swap] {$+$} (Puvw);
  \draw[to] (Pwx) to node[auto] {$-$} (Pvwx);

  \draw[to, bend left=5] (Puvw) to node[auto] {$+$} (Pvwx);
  \draw[to, bend left=5] (Pvwx) to node[auto] {$-$} (Puvw);
\end{tikzpicture}
\end{center}
\caption{Left Krieger cover of the 3-charge
  constrained shift.} 
\label{F_LKC_3cc}
\end{figure}
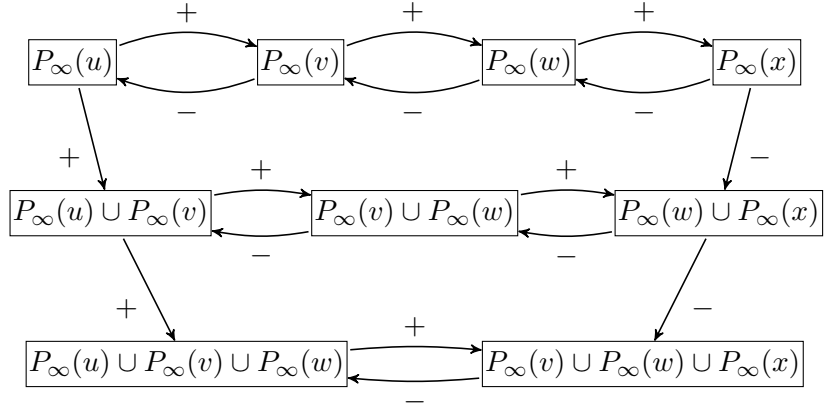

\begin{example}
\label{ex_difference_lkc_psc}
For many shifts (e.g.\ the even shift) the left Krieger cover and the past set cover are equal. To see that this is not always the case, consider the labelled graph in Figure \ref{fig_difference_lkc_psc_I}. The graph is irreducible, left-resolving, and predecessor-separated, so by Theorem \ref{thm_lfc_char}, it is the left Fischer cover of an irreducible sofic shift $X$. Note that $P_\infty(a^\infty) = P_\infty(u_1) \cup P_\infty(u_2) \cup P_\infty(u_3) = P_\infty(u_3)$, and it is easy to check that all other right-rays are also $1$-synchronizing, so the left Krieger cover is equal to the left Fischer cover.
However, $P_\infty(a^n c) = P_\infty(u_1) \cup P_\infty(u_2)$ for each $n \in \N$, and there is clearly no vertex $v$ in the left Fischer cover such that  $P_\infty(a^n c) = P_\infty(v)$, so $P_\infty(a^n c)$ is a vertex in the second layer of the past set cover which is shown in Figure \ref{fig_difference_lkc_psc_II}. This example was inspired by the example from \cite[Section 4]{carlsen_matsumoto}.

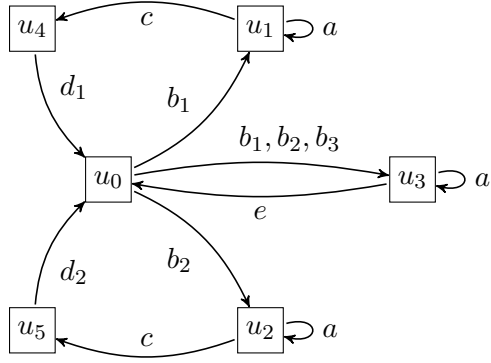
\begin{figure}[t]
\begin{center}
\begin{tikzpicture}
  [bend angle=20,
   knude/.style = {circle, inner sep = 0pt},
   vertex/.style = {circle, draw, minimum size = 1 mm, inner sep =
      0pt},
   textVertex/.style = {rectangle, draw, minimum size = 6 mm, inner sep =
      1pt},
   to/.style = {->, shorten <= 1 pt, >=stealth', semithick}]
  
  \node[textVertex] (u0) at (0,0) {$u_0$};
  \node[textVertex] (u1) at (2,2) {$u_1$};
  \node[textVertex] (u2) at (2,-2) {$u_2$};
  \node[textVertex] (u3) at (4,0) {$u_3$};
  \node[textVertex] (u4) at (-1,2) {$u_4$};
  \node[textVertex] (u5) at (-1,-2) {$u_5$};

  \draw[to, bend right] (u0) to node[auto] {$b_1$} (u1);
  \draw[to, bend left] (u0) to node[auto,swap] {$b_2$} (u2);
  \draw[to, bend left=10] (u0) to node[auto] {\qquad $b_1, b_2, b_3$} (u3);
  \draw[to, bend left=10] (u3) to node[auto] {$e$} (u0);  

  \draw[to, loop right] (u1) to node[auto] {$a$} (u1);
  \draw[to, loop right] (u2) to node[auto] {$a$} (u2);
  \draw[to, loop right] (u3) to node[auto] {$a$} (u3);

  \draw[to, bend right] (u1) to node[auto] {$c$} (u4);
  \draw[to, bend left] (u2) to node[auto,swap] {$c$} (u5);
  \draw[to, bend right] (u4) to node[auto] {$d_1$} (u0);
  \draw[to, bend left] (u5) to node[auto,swap] {$d_2$} (u0);

\end{tikzpicture}
\end{center}
\caption[Left Krieger cover vs. past set cover I.]{Left Krieger cover of the sofic shift considered in Example \ref{ex_difference_lkc_psc}.}
\label{fig_difference_lkc_psc_I}
\end{figure}

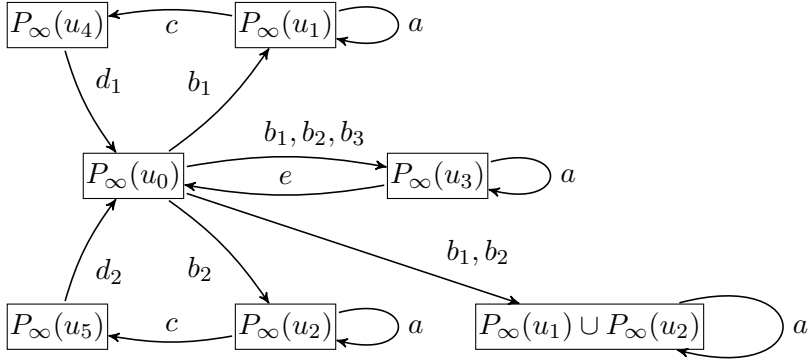
\begin{figure}
\begin{center}
\begin{tikzpicture}
  [bend angle=10,
   knude/.style = {circle, inner sep = 0pt},
   vertex/.style = {circle, draw, minimum size = 1 mm, inner sep =
      0pt},
   textVertex/.style = {rectangle, draw, minimum size = 6 mm, inner sep =
      1pt},
   to/.style = {->, shorten <= 1 pt, >=stealth', semithick}]
  
  \node[textVertex] (u0) at (0,0) {$P_\infty(u_0)$};
  \node[textVertex] (u1) at (2,2) {$P_\infty(u_1)$};
  \node[textVertex] (u2) at (2,-2) {$P_\infty(u_2)$};
  \node[textVertex] (u3) at (4,0) {$P_\infty(u_3)$};
  \node[textVertex] (u4) at (-1,2) {$P_\infty(u_4)$};
  \node[textVertex] (u5) at (-1,-2) {$P_\infty(u_5)$};
  \node[textVertex] (u6) at (6,-2) {$P_\infty(u_1) \cup P_\infty(u_2)$};

  \draw[to, bend right] (u0) to node[auto] {$b_1$} (u1);
  \draw[to, bend left] (u0) to node[auto,swap] {$b_2$} (u2);
  \draw[to, bend left] (u0) to node[auto] {\qquad $b_1, b_2, b_3$} (u3);
  \draw[to, bend left] (u3) to node[auto,swap] {$e$} (u0);  

  \draw[to, loop right] (u1) to node[auto] {$a$} (u1);

  \draw[to, loop right] (u2) to node[auto] {$a$} (u2);
  \draw[to, loop right] (u3) to node[auto] {$a$} (u3);

  \draw[to, bend right] (u1) to node[auto] {$c$} (u4);
  \draw[to, bend left] (u2) to node[auto,swap] {$c$} (u5);
  \draw[to, bend right] (u4) to node[auto] {$d_1$} (u0);
  \draw[to, bend left] (u5) to node[auto,swap] {$d_2$} (u0);

  \draw[to] (u0) to node[auto,near end] {$b_1, b_2$} (u6);
  \draw[to, loop right] (u6) to node[auto] {$a$} (u6);
\end{tikzpicture}
\end{center}
\caption[Left Krieger cover vs. past set cover II.]{Past set cover of the sofic shift considered in Example \ref{ex_difference_lkc_psc}.}
\label{fig_difference_lkc_psc_II}
\end{figure}
\end{example}

\section{The range of a flow invariant}
\label{sec_invariant}
\index{vertex!properly communicate}\index{proper communication set}\index{proper communication graph}\index{PCE@$PC(E)$}
Let $E$ be a directed graph.
Vertices $u,v \in E^0$ \emph{properly communicate} \cite{bates_eilers_pask}
if there are paths
$\mu, \lambda \in E^*$ of length greater than or equal to 1 such that
$s(\mu) = u$, $r(\mu) = v$, $s(\lambda) = v$, and $r(\lambda) =
u$. 
This relation is used to construct maximal disjoint subsets of $E^0$, called \emph{proper communication sets of vertices}, such
that $u,v \in E^0$ properly communicate if and only if they belong to
the same subset. 
The \emph{proper communication graph} $PC(E)$ is defined to be
the directed graph for which the vertices are the proper
communication sets of vertices of $E$ and for which there is an edge
from one proper communication set to another if and only if there is
a path from a vertex in the first set to a vertex in the second. 
The proper communication graph of the left Krieger cover of a sofic shift space is a flow invariant \cite{bates_eilers_pask}.

Let $X$ be an irreducible sofic shift with left Fischer cover $(F, \LL_F)$ and left Krieger cover $(K, \LL_K)$, and let $E$ be the proper communication graph of $K$. By construction, $E$ is finite and contains no circuit. The left Fischer cover  is isomorphic to an irreducible subgraph of $(K,\LL_K)$ corresponding to a root $r \in E^0$, and by definition, there is an edge from $u \in E^0$ to $v \in E^0$ whenever $u > v$. 
The following proposition gives the range of the flow invariant 
by proving that all such graphs can occur. 

\begin{prop}
\label{prop_range_invariant}
\index{proper communication graph!range of}
Let $E$ be a finite directed graph with a root and without circuits. $E$ is the proper communication graph of the left
Krieger cover of an AFT shift if there is an edge from $u \in E^0$ to $v \in E^0$ whenever $u > v$.
\end{prop}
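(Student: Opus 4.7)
The plan is to construct an AFT shift $X$ realising $E$ as the proper communication graph of its left Krieger cover, proceeding by induction on $n = |E^0|$. For the base case $n = 1$, $E$ consists of the root alone and the full $2$-shift suffices, since its left Fischer cover is a single vertex with two loops and coincides with its left Krieger cover. For the inductive step, I would enumerate the vertices of $E$ so that $v_n$ is a sink, set $E' = E \setminus \{v_n\}$, and use the induction hypothesis to obtain an AFT shift $X'$ whose left Krieger cover $(K', \LL_{K'})$ realises $E'$. Let $W \subseteq (E')^0$ be the set of immediate predecessors of $v_n$ in $E$; for each $w \in W$, pick an indecomposable vertex $q_w$ in the irreducible component of $K'$ associated to $w$, chosen so that the family $\{P_\infty(q_w)\}_{w \in W}$ is an antichain whose union is not realised by any proper subfamily.

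Next, I would extend the generalised left Fischer cover of $X'$ (which by Proposition \ref{prop_justifying} sits as a subgraph of $K'$) by adjoining two fresh letters $a_n, b_n$, a new vertex $c_n$ with an $a_n$-loop, and a $b_n$-edge from each $q_w$ to $c_n$; let $X$ be the sofic shift presented by the resulting labelled graph $(F, \LL)$. The graph $(F, \LL)$ remains left-resolving and predecessor-separated, which, combined with the construction, should ensure that it is the generalised left Fischer cover of $X$, and Theorem \ref{thm_gfc_foundation_lkc} then identifies the left Krieger cover of $X$ as the labelled graph obtained from $(F, \LL)$ by adjoining unions of indecomposable predecessor sets. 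Right-closedness of $(F, \LL)$ is inherited from $X'$ since the new edges carry fresh labels, so $X$ is AFT. The fresh nature of $a_n$ and $b_n$ further implies that predecessor sets of old right-rays are unchanged, that any path in the Krieger cover of $X$ entering the new component does so through some $q_w$, and therefore, via transitivity and the structure of $E'$, that in the proper communication graph of the Krieger cover of $X$ the new component communicates exactly with the components corresponding to the ancestors of $v_n$ in $E$, as required.

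The main obstacle will be verifying that the new irreducible component of the Krieger cover is a single proper communication set realising precisely $v_n$, and not several spurious components or a vertex that collapses into an existing one. Concretely, one must show that the predecessor set of $a_n^\infty$ in the new shift is indecomposable in the new Krieger cover, and that any additional unions of indecomposable predecessor sets arising from right-rays passing through the new $b_n$-edges all lie in the single new component rather than spawning extra proper communication sets. The selection of the $q_w$ as predecessor-set-maximal indecomposable vertices inside their respective components, the assumption that $W$ consists of pairwise incomparable immediate predecessors of $v_n$, and the layered structure from Theorem \ref{thm_gfc_foundation_lkc} should together force the correct structure on the proper communication graph; carefully carrying out this verification, alongside the bookkeeping that the AFT property persists at each inductive step, is where the bulk of the technical effort lies.
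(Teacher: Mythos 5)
Your inductive strategy has a fatal structural problem: the graph you build in the inductive step is not the Fischer cover of an irreducible shift. Adjoining a vertex $c_n$ that only \emph{receives} edges (the $b_n$-edges from the $q_w$) and carries an $a_n$-loop makes the presenting graph reducible, so the shift $X$ it presents is reducible and therefore not AFT under the definition used here (Definition 5.1.4 of Lind--Marcus requires irreducibility). There is also no way to choose the $q_w$ as you describe: for an irreducible sofic shift every indecomposable predecessor set lies in the top irreducible component of the Krieger cover (the Fischer cover), so the non-root components of $PC(K')$ contain no indecomposable vertices at all. Most importantly, the non-root proper communication classes of the left Krieger cover of an irreducible shift are not obtained by adding new vertices to a presentation; they are predecessor sets of non-synchronizing right-rays, i.e.\ vertices in the higher layers of Theorem \ref{thm_gfc_foundation_lkc}, each of which is a union of \emph{several} Fischer-cover predecessor sets. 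Because your letters $a_n, b_n$ are fresh and $c_n$ is a single vertex, the right-ray $a_n^\infty$ would have a unique starting vertex, so $P_\infty(a_n^\infty)$ would be $1$-synchronizing and would contribute nothing outside the first layer: no new proper communication class is created.

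The paper's proof is a direct (non-inductive) construction that supplies exactly the mechanism your proposal lacks. For each $v \in E^0$ one puts $n(v) = 2^{l(v)}$ copies $v_1, \ldots, v_{n(v)}$ of $v$ into the Fischer cover ($l(v)$ the longest path from the root), each carrying a loop labelled $a_v$; an edge $u \to v$ in $E$ is implemented by a fan of $n(v)/n(u)$ distinctly labelled edges out of each $u_i$ so that every $v_j$ receives exactly one, and uniquely labelled edges from the copies of the sinks back to the root keep the graph irreducible, left- and right-resolving, and predecessor-separated. The point is that $a_v^\infty$ then has presentations starting at all $n(v)$ copies, so $P_\infty(a_v^\infty) = \bigcup_i P_\infty(v_i)$ is a genuinely new Krieger-cover vertex in layer $n(v)$, carrying an $a_v$-loop, and these (together with the Fischer cover as the root class) are the only proper communication classes. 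If you want to rescue an inductive argument you would have to preserve irreducibility at every step and engineer this kind of ambiguity of starting vertices for a distinguished periodic ray in the new component, which essentially forces you back to the duplication idea.
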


\begin{proof} 
Let $E$ be an arbitrary finite directed graph which contains no circuit and which has a root $r$, and let $\tilde E$ be the directed graph obtained from $E$ by adding an edge from $u \in E^0$ to $v \in E^0$ whenever $u > v$. 
The goal is to construct a labelled graph $(F, \LL_F)$ which is the
left Fischer cover of an irreducible sofic shift with the desired
properties. 
For each $v \in E^0$, let $l(v)$ be the length of the
longest path from $r$ to $v$. This is well-defined since $E$
does not contain any circuits.
For each $v \in E^0$, define $n(v) = 2^{l(v)}$ vertices $v_1, \ldots ,
v_{n(v)}  \in F^0$. The single vertex corresponding to the root $r
\in E^0$ is denoted $r_1$.
For each $v \in E^0$, draw a loop of length 1 labelled $a_v$ at each
of the vertices $v_1, \ldots , v_{n(v)}  \in F^0$. 
If there is an edge from $u \in E^0$ to $v \in E^0$, then $l(v) >
l(u)$. From each vertex $u_1, \ldots , u_{n(u)}$ draw
$n(u,v) = \frac{n(v)}{n(u)} = 2^{l(v)-l(u)} \geq 2$ edges labelled $a_{u,v}^1,
\ldots , a_{u,v}^{n(u,v)}$ such that every vertex $v_1,
\ldots , v_{n(v)}$ receives exactly one of these edges. 
For each sink $v \in E^0$, draw a uniquely labelled edge from each
vertex $v_1, \ldots , v_{n(v)}$ to $r_1$. 
This finishes the construction of $(F, \LL_F)$.

By construction, $F$ is irreducible, right-resolving, and
left-resolving. Additionally, it is 
predecessor-separated because there is a uniquely labelled path to every
vertex in $F^0$ from $r_1$.
Thus,  $(F, \LL_F)$ is the left Fischer
cover of an AFT shift $X$. Let $(K, \LL_K)$ be the
left Krieger cover of $X$. 

\begin{figure}[t]
\begin{center}
\begin{tikzpicture}
  [bend angle=45,
   knude/.style = {circle, inner sep = 0pt},
   vertex/.style = {circle, draw, minimum size = 1 mm, inner sep =
      0pt},
   textVertex/.style = {rectangle, draw, minimum size = 6 mm, inner sep =
      1pt},
   to/.style = {->, shorten <= 1 pt, >=stealth', semithick}]
  
  \node[textVertex] (r) at (0,0) {$r$}; 
  \node[textVertex] (x) at (-1.5,-1.5) {$x$}; 
  \node[textVertex] (y) at (0,-1.5) {$y$}; 
  \node[textVertex] (z) at (1.5,-1.5) {$z$}; 

  \draw[to] (r) to (x);
  \draw[to] (r) to (y);
  \draw[to] (r) to (z);
  \draw[to] (y) to (z);

\end{tikzpicture}
\end{center}
\caption[Construction of a proper communication graph I.]{A directed graph with root $r$ and without circuits.}
\label{fig_ex_construction_E}
\end{figure}
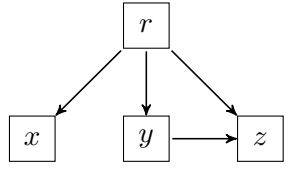  
\begin{figure}[t]
\begin{center}
\begin{tikzpicture}
  [bend angle=5,
   knude/.style = {circle, inner sep = 0pt},
   vertex/.style = {circle, draw, minimum size = 1 mm, inner sep =
      0pt},
   textVertex/.style = {rectangle, draw, minimum size = 6 mm, inner sep =
      1pt},
   to/.style = {->, shorten <= 1 pt, >=stealth', semithick}]
  
  \node[textVertex] (v0) at (0,0) {$r_1$}; 

  \node[textVertex] (vx1) at (-7, -2) {$x_1$}; 
  \node[textVertex] (vx2) at (-6, -2) {$x_2$}; 
  \node[textVertex] (vy1) at (-2.6, -2) {$y_1$}; 
  \node[textVertex] (vy2) at (2.6, -2) {$y_2$}; 

  \node[textVertex] (vz1) at (-3.9, -4) {$z_1$}; 
  \node[textVertex] (vz2) at (-1.3, -4) {$z_2$}; 
  \node[textVertex] (vz3) at (1.3, -4) {$z_3$}; 
  \node[textVertex] (vz4) at (3.9, -4) {$z_4$}; 

  \node[textVertex] (v0') at (0,-7) {$r_1$}; 

  \draw[to, bend right = 20] (v0) to node[near end, above] {$a_{r,x}^1$} (vx1);
  \draw[to, bend right = 18] (v0) to node[near end, below] {$a_{r,x}^2$} (vx2);
  \draw[to, bend right] (v0) to node[near end, right] {$a_{r,y}^1$} (vy1);
  \draw[to, bend left] (v0) to node[near end, left] {$a_{r,y}^2$} (vy2);

  \draw[to, bend right = 40] (v0) to node[near end, left] {$a_{r,z}^1$} (vz1);
  \draw[to, bend right] (v0) to node[near end, right] {$a_{r,z}^2$} (vz2);
  \draw[to, bend left]  (v0) to node[near end, left] {$a_{r,z}^3$} (vz3);
  \draw[to, bend left= 40] (v0) to node[near end, right]  {$a_{r,z}^4$} (vz4);

  \draw[to, bend right] (vy1) to node[very near end, right] {$a_{y,z}^1$} (vz1);
  \draw[to, bend left]  (vy1) to node[very near end, left] {$a_{y,z}^2$} (vz2);
  \draw[to, bend right] (vy2) to node[very near end, right] {$a_{y,z}^1$} (vz3);
  \draw[to, bend left]  (vy2) to node[very near end, left]  {$a_{y,z}^2$} (vz4);

  \draw[to, loop below] (vx1) to node[auto] {$a_x$} (vx1);
  \draw[to, loop below] (vx2) to node[auto] {$a_x$} (vx2) ;
  \draw[to, loop right] (vy1) to node[auto] {$a_y$} (vy1);
  \draw[to, loop left] (vy2) to node[auto] {$a_y$} (vy2);
  \draw[to, loop below] (vz1) to node[auto] {$a_z$} (vz1);
  \draw[to, loop below] (vz2) to node[auto] {$a_z$} (vz2) ;
  \draw[to, loop below] (vz3) to node[auto] {$a_z$} (vz3);
  \draw[to, loop below] (vz4) to node[auto] {$a_z$} (vz4);

  \draw[to, bend right = 35] (vx1) to (v0');
  \draw[to, bend right = 30] (vx2) to (v0');
  \draw[to, bend right = 10] (vz1) to (v0');
  \draw[to, bend left = 10] (vz2) to (v0');
  \draw[to, bend right = 10] (vz3) to (v0');
  \draw[to, bend left = 10] (vz4) to (v0');
\end{tikzpicture}
\end{center}
\caption[Construction of a proper communication graph II.]{Left Fischer cover of the sofic shift $X$ con\-sidered in
  Example \ref{ex_construction}.
 }
\label{fig_ex_construction_F}
\end{figure}

For every $v \in E^0$, the set $\{ v_i \mid 1 \leq i \leq n(v) \}$ satisfies 
$\bigcup_{i=1}^{n(v)} P_\infty(v_i) = P_\infty(a_v^\infty)$ and no smaller set of vertices has this property, so
$P_\infty(a_v^\infty)$ is a vertex in the $n(v)$th layer of the left
Krieger cover. There is clearly a loop labelled $a_v$ at the
vertex $P_\infty(a_v^\infty)$, so it belongs to a proper
communication set of vertices. 
Furthermore, $b a_v^\infty \in X^+$ if and only if $b =
a_v$ or $b = a_{u,v}^i$ for some $u \in E^0$ and $1 \leq i \leq
n(u,v)$. By construction,   
$P_\infty(a_{u,v}^i a_v^\infty) = \bigcup_{j=1}^{n(u)} P_\infty(u_j) =
P_\infty(a_u^\infty)$, so there is an edge from
$P_\infty(a_u^\infty)$ to $P_\infty(a_v^\infty)$ if and only if there
is an edge from $u$ to $v$ in $E$. This proves that $E$, and hence also $\tilde E$, are subgraphs
of the proper communication graph of $K$.

Since the edges which terminate at $r_1$ are uniquely labelled, any
$x^+ \in X^+$ which contains one of these letters must be intrinsically
synchronizing. If $x^+ \in X^+$ does not contain any
of these letters, then $x^+$ must be eventually periodic with $x^+ = w
a_v^\infty$ for some $v \in E^0$ and $w \in \BB(X)$. Thus, $K$ only
has the vertices described above, and therefore the proper
communication graph of $K$ is $\tilde E$. 
\end{proof}

\begin{example}
\label{ex_construction}
\index{proper communication graph!construction of}
To illustrate the construction from
the proof of Proposition \ref{prop_range_invariant}, let
$E$ be the directed graph drawn in Figure
\ref{fig_ex_construction_E}. $E$ has a unique
maximal vertex $r$ and contains no circuit, so 
it is the proper communication graph of the left Krieger cover of an
irreducible sofic shift.
Note that $l(x) = l(y) = 1$ and that $l(z) = 2$. Figure
\ref{fig_ex_construction_F} shows the left Fischer cover of a sofic
shift $X$ constructed using the method from the proof of Proposition
\ref{prop_range_invariant}. Note that the top and bottom vertices
should be identified, and that the labelling of the edges terminating 
at $r_1$ has been suppressed. Figure \ref{fig_ex_construction_K} shows
the left Krieger cover of $X$, but the structure of the irreducible
component corresponding to the left Fischer cover has been suppressed
to emphasise the structure of the higher layers. 
\end{example}

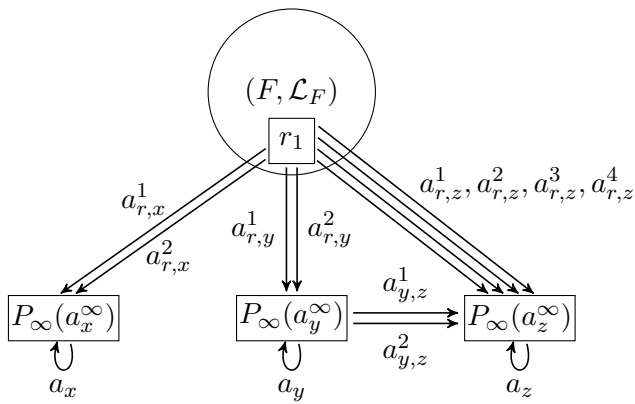
\begin{figure}
\begin{center}
\begin{tikzpicture}
  [bend angle=45,
   knude/.style = {circle, inner sep = 0pt},
   vertex/.style = {circle, draw, minimum size = 1 mm, inner sep =
      0pt},
   textVertex/.style = {rectangle, draw, minimum size = 6 mm, inner sep =
      1pt},
   to/.style = {->, shorten <= 1 pt, >=stealth', semithick}]
  
  \node[circle, draw, minimum size = 22 mm] (F) at
                           (0,0) {$(F, \LL_F)$}; 
  \node[textVertex] (vr) at (0, -0.65) {$r_1$};

  \node[textVertex] (x) at (-3,-3) {$P_\infty (a_x^\infty)$}; 
  \node[textVertex] (y) at (0,-3) {$P_\infty (a_y^\infty)$}; 
  \node[textVertex] (z) at (3,-3) {$P_\infty (a_z^\infty)$}; 

  \draw[->, shorten <= 5 pt, shorten >= 16 pt, >=stealth', semithick]
  ($(vr)+(-0.2,0)$) -> node[midway, above] {$a_{r,x}^1$ \,} ($(x)+(-0.5,0)$);
  \draw[->, shorten <= 12 pt, shorten >= 16 pt, >=stealth', semithick]
  ($(vr)+(0,0)$) -> node[midway, below] {\,$a_{r,x}^2$} ($(x)+(-0.3,0)$) ;

  \draw[->, shorten <= 10 pt, shorten >= 10 pt, >=stealth', semithick]
  ($(vr)+(-0.07,0)$) to node[auto,swap] {$a_{r,y}^1$} ($(y)+(-0.07,0)$);
  \draw[->, shorten <= 10 pt, shorten >= 10 pt, >=stealth', semithick]
  ($(vr)+(0.07,0)$) to node[auto] {$a_{r,y}^2$} ($(y)+(0.07,0)$);

  \draw[->, shorten <= 12 pt, shorten >= 15 pt, >=stealth', semithick]
  ($(vr)+(0.0,0.0)$) -> ($(z)+(0.0,0)$);
  \draw[->, shorten <=  5 pt, shorten >= 15 pt, >=stealth', semithick]
  ($(vr)+(0.2,0.0)$) -> ($(z)+(0.2,0)$);
  \draw[->, shorten <= -2 pt, shorten >= 15 pt, >=stealth', semithick]
  ($(vr)+(0.4,0.0)$) -> ($(z)+(0.4,0)$);
  \draw[->, shorten <= -9 pt, shorten >= 15 pt, >=stealth', semithick]
  ($(vr)+(0.6,0.0)$) -> 
  node[near start, right] {\, $a_{r,z}^1,a_{r,z}^2,a_{r,z}^3,a_{r,z}^4$} ($(z)+(0.6,0)$);

  \draw[->, shorten <= 23 pt, shorten >= 23 pt, >=stealth', semithick]
  ($(y)+(0,0.07)$) -> node[auto] {$a_{y,z}^1$} ($(z)+(0,0.07)$);
  \draw[->, shorten <= 23 pt, shorten >= 23 pt, >=stealth', semithick]
  ($(y)+(0,-0.07)$) -> node[auto,swap] {$a_{y,z}^2$} ($(z)+(0,-0.07)$);

  \draw[to, loop below] (x) to node[auto] {$a_x$} (x);
  \draw[to, loop below] (y) to node[auto] {$a_y$} (y);
  \draw[to, loop below] (z) to node[auto] {$a_z$} (z);

\end{tikzpicture}
\end{center}
\caption[Construction of a proper communication graph III.]{Left Krieger cover of the shift space $X$ considered in
  Example \ref{ex_construction}. 
  The structure of the irreducible component corresponding to the left
  Fischer cover has been suppressed. 
}
\label{fig_ex_construction_K}
\end{figure}  

In \cite{bates_eilers_pask}, it was also remarked that an 
invariant analogous to the one discussed in Proposition
\ref{prop_range_invariant} is obtained by considering the proper
communication graph of the right Krieger  cover. The following example shows that the two invariants may carry different information.

\begin{figure}
\begin{center}
\begin{tikzpicture}
  [bend angle=45,
   knude/.style = {circle, inner sep = 0pt},
   vertex/.style = {circle, draw, minimum size = 1 mm, inner sep =
      0pt},
   textVertex/.style = {rectangle, draw, minimum size = 6 mm, inner sep =
      1pt},
   to/.style = {->, shorten <= 1 pt, >=stealth', semithick}]
  
  \matrix[row sep=10mm, column sep=15mm]{
    & \node[textVertex] (u) {$u$}; &  \\
    \node[textVertex] (v) {$v$}; & \node[textVertex] (w) {$w$}; 
                             & \node[textVertex] (x) {$x$};  \\
    & \node[textVertex] (y) {$y$}; & \\
  };
  \draw[to, loop left]    (v) to node[auto] {$a'$} (v);
  \draw[to, loop above]   (u) to node[auto] {$a$} (u);
  \draw[to, loop below]   (y) to node[auto] {$a$} (y);
  \draw[to, loop above]   (v) to node[auto] {$a$} (v);
  \draw[to, loop left]    (u) to node[auto] {$a'$} (u);
  \draw[to, loop left]    (y) to node[auto] {$a'$} (y);
  \draw[to, bend right=20] (w) to node[auto,swap] {$b$} (v);
  \draw[to] (x) to node[auto,swap] {$f$} (u);
  \draw[to] (w) to node[auto,swap] {$g$} (x);
  \draw[to] (u) to node[auto,swap] {$e$} (w);
  \draw[to, bend right=20] (v) to node[auto,swap] {$c$} (w);
  \draw[to, bend left=20] (y) to node[auto] {$d$} (w);
  \draw[to] (x) to node[auto] {$f$} (y);
  \draw[to, bend left=20] (w) to node[auto] {$b$} (y);
\end{tikzpicture}
\end{center}
\caption[Irreducible left Krieger cover - reducible right Krieger cover I.]{Left Fischer cover of the irreducible sofic shift $X$ discussed
  in Example \ref{ex_2_invariants}.} 
\label{fig_2_invariants_1}
\end{figure}
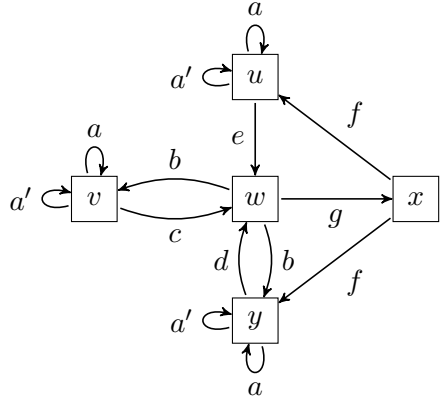
\begin{figure}[t!] 
\begin{center}
\begin{tikzpicture}
  [bend angle=45,
   knude/.style = {circle, inner sep = 0pt},
   vertex/.style = {circle, draw, minimum size = 1 mm, inner sep =
      0pt},
   textVertex/.style = {rectangle, draw, minimum size = 6 mm, inner sep =
      1pt},
   to/.style = {->, shorten <= 1 pt, >=stealth', semithick}]
  
  \matrix[row sep=10mm, column sep=15mm]{
    & \node[textVertex] (u') {$u'$}; &  \\
    \node[textVertex] (v') {$v'$}; & \node[textVertex] (w') {$w'$}; 
                             & \node[textVertex] (x') {$x'$};  \\
  };
  \draw[to, loop left]    (v') to node[auto] {$a'$} (v');
  \draw[to, loop above]   (u') to node[auto] {$a$} (u');
  \draw[to, loop above]   (v') to node[auto] {$a$} (v');
  \draw[to, loop left]    (u') to node[auto] {$a'$} (u');
  \draw[to, bend right=30] (w') to node[auto,swap] {$b$} (v');
  \draw[to] (x') to node[auto,swap] {$f$} (u');
  \draw[to] (w') to node[auto,swap] {$g$} (x');
  \draw[to, bend right=20] (u') to node[auto,swap] {$e$} (w');
  \draw[to, bend left=20] (u') to node[auto] {$d$} (w');
  \draw[to, bend right=30] (v') to node[auto,swap] {$d$} (w');
  \draw[to, bend right=10] (v') to node[auto] {$c$} (w');
\end{tikzpicture}
\end{center}
\caption[Irreducible left Krieger cover - reducible right Krieger cover II.]{Right Fischer cover of the irreducible sofic shift $X$ discussed
  in Example \ref{ex_2_invariants}.}
\label{fig_2_invariants_2}
\end{figure}
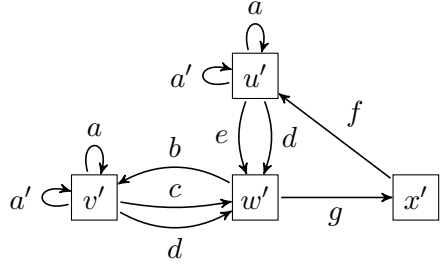

\begin{example}
\label{ex_2_invariants}
The labelled graph in Figure \ref{fig_2_invariants_1} is 
left-resolving, irreducible, and predecessor-separated, so  by Theorem \ref{thm_lfc_char}, it is the
left Fischer 
cover of an irreducible sofic shift. Similarly, the labelled graph in
Figure \ref{fig_2_invariants_2} is irreducible, right-resolving and
follower-separated, so it is the right Fischer cover of an irreducible
sofic shift. By considering the edges labelled $d$, it is easy to see
that the two graphs present the same sofic shift space $X$.  

Every right-ray which contains a letter different from $a$ or $a'$ is
intrinsically synchronizing, so consider a right-ray $x^+ \in 
X^+$ such that $(x^+)_i \in \{a, a'\}$ for all $i \in \N$.
By considering Figure \ref{fig_2_invariants_1}, it is clear that
$P_\infty(x^+) = P_\infty(u) \cup P_\infty(v) \cup 
P_\infty(y) = P_\infty(y)$, so $P(x^+)$ is also in the first layer of the left Krieger cover. Hence, the proper communication graph has only one vertex and no edges.

Every left-ray containing a letter different from $a$ or $a'$ is
intrinsically synchronizing, so consider the left-ray $a^\infty \in
X^-$. Figure \ref{fig_2_invariants_2} shows that $F_\infty(a^\infty)
= F_\infty(u') \cup F_\infty(v')$ and that no single 
vertex $y'$ in the right Fischer cover has $F_\infty(y') =
F_\infty(a^\infty)$, so there is a vertex in the second layer
of the right Krieger cover. Hence, 
the 
proper
communication graph is non-trivial. 
\end{example}

\section{$\Cs$-Algebras associated to sofic shift spaces}
\label{sec_cs} 
\index{Cuntz-Krieger algebra}\index{C*@$\Cs(E)$}
\index{O@$\OO_X$}\index{O@$\OO_{X^\ast}$}
\index{Ca@$\Cs$-algebra!associated to shift space}
\index{O@$\OO_{X^+}$}
Cuntz and Krieger \cite{cuntz_krieger} introduced a class of
$\Cs$-algebras which can naturally be viewed as the universal
$\Cs$-algebras associated to shifts of finite type.
This was generalised by Matsumoto \cite{matsumoto_1997} who associated
two $\Cs$-algebras $\OO_X$ and $\OO_{X^\ast}$ to every shift space
$X$, and these Matsumoto algebras have been studied intensely 
\cite{carlsen, katayama_matsumoto_watatani,
  matsumoto_1997, matsumoto_1998, 
  matsumoto_1999_dimension_group,matsumoto_1999_relations,
matsumoto_1999_simple,
  matsumoto_2000_automorphisms,matsumoto_2000_stabilized,
  matsumoto_2001, matsumoto_2002, matsumoto_watatani_yoshida}.
The two Matsumoto algebras $\OO_X$ and $\OO_{X^\ast}$ are generated
by elements satisfying the same relations, but they are not isomorphic
in general \cite{carlsen_matsumoto}. 
This presentation will follow the approach of Carlsen in \cite{carlsen_2008}
where a universal $\Cs$-algebra $\OO_{\tilde X}$ is associated to every
one-sided shift space $\tilde X$. 
This 
also gives a way to associate $\Cs$-algebras to every two-sided
shift since a two-sided shift $X$ corresponds to two one-sided 
shifts $X^+$ and $X^-$.

\subsection{Ideal lattices}
\index{Ca@$\Cs$-algebra!associated to shift space!ideals of}
Let $X$ be a sofic shift space and let $\OO_{X^+}$ be the universal
$\Cs$-algebra associated to the one-sided shift $X^+$ as defined in
\cite{carlsen_2008}. 
Carlsen proved that $\OO_{X^+}$ is isomorphic to the Cuntz-Krieger
algebra of the left Krieger cover of $X$ \cite{carlsen}, so the
lattice of gauge invariant ideals in $\OO_{X^+}$ is given by the
proper communication graph of the left Krieger cover of $X$
\cite{bates_pask_raeburn_szymanski,kumjian_pask_raeburn_renault}, and all ideals are given in this way if the left Krieger cover satisfies Condition (K) \cite[Theorem 4.9]{raeburn}.
Hence, Proposition \ref{prop_layers} and Theorem \ref{thm_gfc_foundation_lkc} can be used to investigate the ideal
lattice of $\OO_{X^+}$. For a reducible sofic shift, a part of the ideal lattice is given by the structure of the generalised left Fischer cover, which is reducible, but if $X$ is an irreducible sofic shift, and the left Krieger cover of
$X$ satisfies Condition (K), then the fact that the left Krieger cover
has a unique top component implies that $\OO_{X^+}$ will always have
a unique maximal ideal.
The following proposition shows that all these lattices can be
realised.   

\begin{prop}
\label{prop_ideal_lattice_of_irreducible_shift}
Any finite lattice of ideals with a unique maximal ideal is the ideal
lattice of the universal $\Cs$-algebra $\OO_{X^+}$ associated to an
AFT shift $X$.   
\end{prop}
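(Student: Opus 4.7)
The plan is to reduce the statement to Proposition \ref{prop_range_invariant} via Birkhoff's representation theorem. Since the lattice of closed ideals of any $\Cs$-algebra is a distributive lattice, any finite candidate lattice $L$ is isomorphic to the lattice $\mathcal{O}(P)$ of downsets of the finite poset $P = J(L)$ of its join-irreducible elements. The coatoms of $\mathcal{O}(P)$ are precisely the complements $P \setminus \{m\}$ of maximal elements $m \in P$, so the hypothesis that $L$ has a unique maximal proper ideal translates into $P$ having a unique maximum.

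Next I would form the directed graph $E$ with $E^0 = P$ and an edge from $u$ to $v$ whenever $u > v$ in $P$. Then $E$ is finite, acyclic, has the unique maximum of $P$ as its root, and satisfies the edge condition of Proposition \ref{prop_range_invariant}. That proposition yields an AFT shift $X$ whose left Krieger cover $K$ has proper communication graph (isomorphic to) $E$. Carlsen's identification of $\OO_{X^+}$ with the Cuntz-Krieger algebra $\Cs(K)$ \cite{carlsen}, together with the standard description of gauge-invariant ideals of a graph $\Cs$-algebra \cite{bates_pask_raeburn_szymanski,kumjian_pask_raeburn_renault}, then puts the gauge-invariant ideal lattice of $\OO_{X^+}$ in bijection with the hereditary saturated subsets of $K^0$. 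Each proper communication component of $K$ is strongly connected and carries a loop at every vertex, so hereditary subsets are automatically saturated and must contain each component entirely or not at all; the resulting lattice isomorphism identifies the gauge-invariant ideal lattice with the downset lattice of $E$, i.e., with $\mathcal{O}(P) \cong L$.

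The main obstacle I expect is Condition (K) on $K$, which is required to promote the gauge-invariant ideal lattice to the lattice of all ideals. In the construction underlying Proposition \ref{prop_range_invariant}, a higher-layer vertex $P_\infty(a_v^\infty)$ lies on exactly one simple cycle --- the loop labelled $a_v$ --- so Condition (K) fails as written. To remedy this I would refine the construction, for instance by attaching an additional loop on a fresh letter at each vertex $v_i$ of the generalised Fischer cover corresponding to a non-root $v \in P$, so that every vertex on a cycle becomes part of at least two simple cycles. Such a modification leaves the predecessor sets $P_\infty(a_v^\infty)$ and all edges between distinct proper communication components intact, and preserves the left-resolving, irreducible and predecessor-separated character of the cover. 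The hard part will be verifying that the extra edges can always be added while keeping the right Fischer cover left-closing (so the shift stays AFT) and without altering the proper communication graph; this compatibility check is where the real work beyond the combinatorial reduction is concentrated.
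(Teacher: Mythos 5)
Your proposal is essentially the paper's proof: the paper likewise repairs Condition (K) by rerunning the construction of Proposition \ref{prop_range_invariant} with a second loop, labelled by a fresh letter $a_v'$, at every vertex $v_1,\ldots,v_{n(v)}$, and then invokes Carlsen's identification $\OO_{X^+}\cong \Cs(K)$ together with the hereditary-subset description of the ideal lattice (your Birkhoff reduction just makes explicit a correspondence the paper leaves implicit). The compatibility check you flag as the hard part is immediate: the extra uniquely-labelled loops keep the presentation left- and right-resolving (so $X$ stays AFT), create no new Krieger-cover vertices since $P_\infty$ of any $\{a_v,a_v'\}$-sequence equals $P_\infty(a_v^\infty)$, and therefore leave the proper communication graph unchanged while giving every vertex on a cycle at least two return paths.
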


\begin{proof}
Let $E$ be a finite directed graph without circuits and with a unique
maximal vertex.
Consider the following slight modification of the algorithm from the
proof of Proposition \ref{prop_range_invariant}. 
For each $v \in E$, draw two loops of length 1 at each vertex $v_1,
\ldots, v_{n(v)}$ associated to $v$: 
One labelled $a_v$ and one labelled $a_v'$. The rest of the
construction is as before.
Let $(K, \LL_K)$ be the left Krieger cover of the corresponding
sofic shift. As before, the proper communication graph of $K$ is given by $E$, and now $(K, \LL_K)$
satisfies Condition (K),
so there is a bijective correspondence between the hereditary
subsets of $E^0$ and the ideals of $\Cs(K) \cong \OO_{X^+}$.
Since $E$ was arbitrary, any finite ideal lattice with a unique maximal ideal can be obtained in this way.  
\end{proof}

\subsection{The $\Cs$-algebras $\OO_{X^+}$ and $\OO_{X^-}$}
Every two-sided shift space $X$ corresponds to two one-sided shift
spaces $X^+$ and $X^-$, and this gives two natural ways to associate a
universal $\Cs$-algebra to $X$.
The next goal is to show that these two $\Cs$-algebras may carry different information about the shift space.
Let $\OO_{X^-}$ be the universal $\Cs$-algebra associated to the
one-sided shift space $(X^\textrm{T})^+$ as defined in
\cite{carlsen_2008}. The left Krieger cover of $X^\textrm{T}$ is the
transpose of the right Krieger cover of $X$, so by \cite{carlsen},
$\OO_{X^-}$ is isomorphic to the Cuntz-Krieger algebra of the
transpose of the right Krieger cover of $X$.

\begin{example}
\label{ex_+-_not_isomorphic}
\index{Condition (K)}
Let $X$ be the sofic shift from Example \ref{ex_2_invariants}. Note
that the left and right Krieger covers of $X$ both satisfy Condition
(K) from \cite{raeburn}, so the corresponding proper communication graphs
completely determine the ideal lattices of $\OO_{X^+}$ and $\OO_{X^-}$.   
The proper communication graph of the left Krieger cover $(K, \LL_K)$ of $X$ is
trivial, so $\OO_{X^+}$ is simple, while there are precisely two
vertices in the proper communication graph of the right Krieger
cover of $X$, so there is exactly one non-trivial ideal in $\OO_{X^-}$.
In particular, $\OO_{X^+}$ and $\OO_{X^-}$ are not isomorphic.

Consider the edge shift $Y = \X_{K}$. This is an SFT, and the left and
right Krieger covers of $Y$ are both $(K, \LL_{\Id})$,  where
$\LL_{\Id}$ is the identity map on the edge set $K^1$. By \cite{carlsen},
$\OO_{X^+}$ and  $\OO_{Y^+}$ are isomorphic to $\Cs (K)$.
Similarly, $\OO_{Y^-}$ is isomorphic to $\Cs (K^\textrm T)$ and
$K^\textrm T$ is an irreducible graph satisfying Condition (K), so
$\OO_{Y^-}$ is simple.
In particular, $\OO_{Y^-}$ is not isomorphic to $\OO_{X^-}$.  
This shows that the $\Cs$-algebras
associated to $X^+$ and $X^-$ are not always isomorphic, and that
there can exist a shift space $Y$ such that $\OO_{Y^+}$ is
isomorphic to $\OO_{X^+}$ while $\OO_{Y^-}$ is not isomorphic to
$\OO_{X^-}$. 
\end{example}

\subsection{An investigation of Condition ($\ast$)}
\index{Condition ($\ast$)}
In \cite{carlsen_matsumoto}, two $\Cs$-algebras $\OO_X$ and
$\OO_{X^\ast}$ are associated to every two-sided shift space $X$.
The $\Cs$-algebras $\OO_{X}$, $\OO_{X^\ast}$, and $\OO_{X^+}$  
are generated by partial isometries satisfying the same
relations. Unlike $\OO_{X}$, however, $\OO_{X^+}$ is always universal \cite{carlsen_2008}.  
In \cite{carlsen_matsumoto}, it is proved that
$\OO_X$ and $\OO_{X^*}$ are isomorphic when $X$ satisfies a
condition called Condition ($\ast$). 
The example from \cite[Section 4]{carlsen_matsumoto} shows that not
all sofic shift spaces satisfy this condition by constructing a
sofic shift where the left Krieger and the past set cover are
not isomorphic. 
The final result of this chapter further clarifies the relationship between Condition ($\ast$) and the structure of the left Krieger and the past set covers.

\index{l-past@$l$-past equivalence}
For each $l \in \N$ and $w \in \BB(X)$ define 
$P_l(w) = \{ v \in \BB(X) \mid vw \in \BB(X), |v| \leq l \}$. Two
words $v, w \in \BB(X)$ are said to be \emph{$l$-past equivalent} if $P_l(v)
= P_l(w)$. For $x^+ \in X^+$, $P_l(x^+)$ and $l$-past equivalence are
defined analogously.  

\begin{cond}[$\ast$]
For every $l \in \N$ and every infinite $F \subseteq \BB(X)$ such
that $P_l(u) = P_l(v)$ for all $u,v \in F$ there exists $x^+ \in X^+$ such
that $P_l(w) = P_l(x^+)$ for all $w \in F$.
\end{cond}

The result in the following lemma is well known, but a proof is included for completeness.

\begin{lem}
\label{lem_m-past} 
When $X$ is a sofic shift there exists $m \in \N$ such that for all $x_1,x_2 \in X^+ \cup \BB(X)$, $P_m(x_1) = P_m(x_2)$ if and only if $P_\infty(x_1) = P_\infty(x_2)$. 
\end{lem}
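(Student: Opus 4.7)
The plan is to bound the number of distinct predecessor sets using soficity and then show that finitely many finite tests distinguish them. The easy direction is immediate: a word $v$ of length at most $m$ lies in $P_m(x)$ if and only if some left-ray in $P_\infty(x)$ ends in $v$, so $P_m(x)$ is a function of $P_\infty(x)$, and equality of the latter trivially yields equality of the former for every $m$.

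For the nontrivial direction, I would first observe that the set $\mathcal{P} = \{P_\infty(x) \mid x \in X^+ \cup \BB(X)\}$ is finite: the predecessor sets of words are in bijection with the vertices of the past set cover of $X$, which is a finite graph since $X$ is sofic, and Lemma \ref{lem_prefix_with_same_past} shows that every $P_\infty(x^+)$ already equals $P_\infty(w)$ for some finite prefix $w$ of $x^+$. Next, for each ordered pair $P \neq P'$ in $\mathcal{P}$, I would produce a finite level $\ell(P,P')$ at which the corresponding restricted predecessor sets disagree. Fixing representatives $x_P, x_{P'}$ with $P_\infty(x_P) = P$ and $P_\infty(x_{P'}) = P'$, and choosing $y^- \in P \setminus P'$ (swapping the roles of the two sets if necessary), one has $y^- x_P \in X$ while $y^- x_{P'} \notin X$. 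Hence some forbidden factor of $y^- x_{P'}$ must straddle the junction between $y^-$ and $x_{P'}$, so for all sufficiently large $\ell$ the length-$\ell$ suffix of $y^-$ lies in $P_\ell(x_P) \setminus P_\ell(x_{P'})$. Since $P_m$ factors through $P_\infty$, the level $\ell(P,P')$ depends only on the pair of predecessor sets, not on the chosen representatives.

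Taking $m$ to be the maximum of $\ell(P,P')$ over the finitely many ordered pairs of distinct elements of $\mathcal{P}$ will then yield the uniform bound required. I do not foresee a real obstacle; the only point needing care is confirming that $\ell(P,P')$ is intrinsic to the pair and not the representatives, which is immediate once one records that $P_m(x)$ is determined by $P_\infty(x)$ alone.
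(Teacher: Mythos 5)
Your proof is correct and follows essentially the same route as the paper's: the easy direction is immediate because $P_m$ is determined by $P_\infty$, and for the hard direction both arguments produce, for each of the finitely many ordered pairs of distinct predecessor sets, a finite suffix of a separating left-ray $y^-$ that already witnesses the difference, and then take the maximum of these finitely many lengths. The only cosmetic difference is that the paper describes the witness as a suffix of $y^-$ that is not a suffix of any element of the other predecessor set, whereas you locate it via a forbidden word straddling the junction; these amount to the same observation.
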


\begin{proof}
If $P_\infty(x_1) = P_\infty(x_2)$, then clearly $P_m(x_1) = P_m(x_2)$ for all $m \in \N$. Given $u,v \in \BB(X)$ with $y^- \in P_\infty(u) \setminus P_\infty(v)$, there must exist $w \in \BB(X)$ such that $w$ is a suffix of $y^-$ but not a suffix of any element of $P_\infty(v)$. In particular, it is possible to distinguish the two predecessor sets just by considering the suffixes of length $|w|$.
Define $m$ to be the maximum length needed to distinguish any two predecessor sets of words. This is well-defined since there are only finitely many different predecessor sets of words when $X$ is sofic. By Lemma \ref{lem_prefix_with_same_past}, every predecessor set of a right-ray is also the predecessor set of a word, so this finishes the proof. 
\end{proof}

\begin{lem}
\label{lem_infinitely_many_w}
A vertex $P$ in the past set cover of a sofic shift $X$ is in an essential subgraph
if and only if there exist infinitely many $w \in \BB(X)$ such that
$P_\infty(w) = P$. 
\end{lem}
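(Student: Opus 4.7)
The plan is to reduce ``lying in an essential subgraph'' to a concrete path condition in the finite labelled graph $(\psc, \LL_\psc)$, and then translate that condition back into a statement about words. Since $X$ is sofic, $\psc$ has only finitely many vertices, so the union of all essential subgraphs of $\psc$ is itself essential; hence $P$ lies in some essential subgraph if and only if $P$ lies in the (unique) maximal one, obtained by iteratively deleting sources and sinks. By a pigeonhole argument on the finite vertex set, this is equivalent to $P$ being both the source of arbitrarily long forward paths and the range of arbitrarily long backward paths.

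I would first dispense with the backward half as automatic. Given $P = P_\infty(w)$ with $w \in \BB(X)$, Proposition \ref{prop_language} guarantees some $a_1 \in \AA(X)$ with $a_1 w \in \BB(X)$; iterating the same proposition on $a_1 w$, $a_2 a_1 w$, and so on, one produces $a_n \cdots a_1 w \in \BB(X)$ for every $n \in \N$. The defining edges of the past set cover then furnish an arbitrarily long backward path
\[
P_\infty(a_n \cdots a_1 w) \stackrel{a_n}{\longrightarrow} \cdots \stackrel{a_2}{\longrightarrow} P_\infty(a_1 w) \stackrel{a_1}{\longrightarrow} P_\infty(w) = P
\]
terminating at $P$. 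Consequently, $P$ lies in the maximal essential subgraph exactly when arbitrarily long forward paths emanate from $P$.

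The core of the argument is then a direct correspondence, obtained by unwinding the edge definition of $(\psc, \LL_\psc)$. A forward path of length $n$ starting at $P$, labelled $a_1 a_2 \cdots a_n$ and ending at some vertex $P_\infty(u)$, forces $P = P_\infty(a_1 a_2 \cdots a_n u)$, thereby witnessing a word of length at least $n$ whose predecessor set equals $P$. Conversely, any word $w = w_1 w_2 \cdots w_m \in \BB(X)$ with $P_\infty(w) = P$ yields the forward path $P = P_\infty(w_1 w_2 \cdots w_m) \to P_\infty(w_2 \cdots w_m) \to \cdots \to P_\infty(w_m)$ of length $m-1$ emanating from $P$. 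Because $\AA(X)$ is finite, there are only finitely many words of any fixed length, so having arbitrarily long words with predecessor set $P$ is equivalent to having infinitely many such words; combining this with the backward-path observation closes the equivalence in both directions.

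The hard part is really just bookkeeping: tracking which direction an edge of $\psc$ goes, and making sure the extracted letters at each step genuinely lie in $\BB(X)$. Once the definitions are lined up, no deeper difficulty arises.
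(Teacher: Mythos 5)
Your proof is correct and follows essentially the same route as the paper's: both directions rest on the correspondence between forward paths of length $n$ emanating from $P$ and words of length $>n$ with predecessor set $P$, combined with the observation that the backward condition is automatic because the past set cover has no sources (your left-extension via Proposition~\ref{prop_language} is exactly why). The only cosmetic difference is that the paper phrases the forward direction via a right-ray presented from $P$ rather than via arbitrarily long finite paths, and it leaves the reduction to the maximal essential subgraph implicit where you spell it out.
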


\begin{proof}
Let $P$ be a vertex in an essential subgraph of the past set cover of
$X$, and let $x^+ \in  X^+$ be a right-ray with a presentation starting at $P$. Given $n \in \N$, there exists $w_n \in \BB(X)$ such that $P = P_\infty(x_1 x_2 \ldots x_n w_n)$. To prove the converse, let $P$ be a vertex in the past set cover for which there exist infinitely many $w \in \BB(X)$ such that $P = P_\infty(w)$. For each $w$, there is a path labelled $w_{[1,\rvert w \lvert -1]}$ starting at $P$. There are no sources in the past set cover, so this implies that $P$ is not stranded.
\end{proof}

\begin{prop}
\label{prop_condition_star}
A sofic shift $X$ satisfies Condition ($\ast$) if and only if the
left Krieger cover is the maximal essential subgraph of the past set
cover. 
\end{prop}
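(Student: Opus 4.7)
The plan is to identify the left Krieger cover with a subgraph of the past set cover and then argue that this subgraph contains every vertex in the maximal essential subgraph precisely when Condition $(\ast)$ holds. By Lemma \ref{lem_prefix_with_same_past}, each vertex $P_\infty(x^+)$ of the left Krieger cover equals $P_\infty(x_{[1,n]})$ for all sufficiently large $n$, so it is automatically a vertex of the past set cover; moreover, this yields infinitely many finite prefixes with the same predecessor set, so by Lemma \ref{lem_infinitely_many_w} every vertex of the left Krieger cover lies in the maximal essential subgraph of the past set cover. A short compatibility check, using that $P_\infty(x^+) = P_\infty(w)$ forces $P_\infty(ax^+) = P_\infty(aw)$ whenever $ax^+ \in X^+$, shows that this identification respects edges, so the question reduces to whether the two vertex sets coincide.

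For the forward direction, I will assume Condition $(\ast)$ and fix a vertex $P$ in the maximal essential subgraph of the past set cover. By Lemma \ref{lem_infinitely_many_w} there is an infinite $F \subseteq \BB(X)$ with $P_\infty(w) = P$ for all $w \in F$. Using Lemma \ref{lem_m-past}, I choose $m$ so that $P_m$-equivalence coincides with $P_\infty$-equivalence on both words and right-rays; then $F$ is automatically $P_m$-constant. Condition $(\ast)$ applied with $l = m$ produces $x^+ \in X^+$ with $P_m(x^+) = P_m(w)$ for all $w \in F$, and Lemma \ref{lem_m-past} upgrades this to $P_\infty(x^+) = P_\infty(w) = P$, so $P$ is a vertex of the left Krieger cover.

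For the converse, suppose the left Krieger cover equals the maximal essential subgraph of the past set cover, and let $l \in \N$ together with an infinite $P_l$-constant set $F \subseteq \BB(X)$ be given. Since $X$ is sofic there are only finitely many distinct predecessor sets, so by the pigeonhole principle some infinite $F' \subseteq F$ is $P_\infty$-constant, with common value $P$. Lemma \ref{lem_infinitely_many_w} places $P$ in the maximal essential subgraph of the past set cover, hence by hypothesis in the left Krieger cover, so there is $x^+ \in X^+$ with $P_\infty(x^+) = P$. Because $P_\infty$-equality of a word and a right-ray implies $P_l$-equality for every $l$, this $x^+$ satisfies $P_l(x^+) = P_l(w)$ for all $w \in F'$; combined with the hypothesis that $P_l$ is constant on $F$, the same identity holds for every $w \in F$, which is Condition $(\ast)$.

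The main subtlety throughout is the bridge between the word-level data in Condition $(\ast)$ and the ray-level data indexing the left Krieger cover, together with the verification that the vertex-level identification promotes to an identification of labelled graphs; both issues are handled by the combination of Lemmas \ref{lem_prefix_with_same_past}, \ref{lem_m-past}, and \ref{lem_infinitely_many_w}, together with the fact that the past set cover is left-resolving so that an edge between two vertices is determined by the vertices and label alone.
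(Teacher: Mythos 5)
Your proof is correct and follows essentially the same route as the paper's: both directions hinge on Lemma \ref{lem_infinitely_many_w} to translate membership in the maximal essential subgraph into an infinite family of words with a common predecessor set, Lemma \ref{lem_m-past} to pass between $P_m$-equivalence and $P_\infty$-equivalence in the forward direction, and a pigeonhole argument on the finitely many predecessor sets in the converse. Your opening paragraph identifying the left Krieger cover with a subgraph of the past set cover is scaffolding the paper draws from Theorem \ref{thm_foundations_psc} rather than re-deriving, but this does not change the substance of the argument.
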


\begin{proof}
Assume that $X$ satisfies Condition ($\ast$). Let $P$ be a vertex in an essential subgraph of the past set cover and define $F = \{ w \in
\BB(X) \mid P_\infty(w) = P \}$. 
Use Lemma \ref{lem_m-past} to 
choose $m \in \N$ such that for all $x,y
\in \BB(X) \cup X^+$, $P_\infty(x) = P_\infty(y)$ if and only if
$P_m(x) = P_m(y)$. 
By Lemma \ref{lem_infinitely_many_w}, $F$ is an infinite set, so
Condition ($\ast$) can be used to choose $x^+ \in X^+$ such that
$P_m(x^+) = P_m(w)$ for all $w \in F$. By the choice of $m$, this means
that $P_\infty(x^+) =  P_\infty(w) = P$ for all $w \in
F$, so $P$ is a vertex in the left Krieger cover.  

To prove the other implication, assume that the left Krieger cover is
the maximal essential subgraph of the past set cover. Let $l \in \N$
be given, and consider an infinite set $F \subseteq \BB(X)$ for which
$P_l(u) = P_l(v)$ for all $u,v \in F$. Since $X$ is sofic, there are
only finitely many different predecessor sets, so there must exist $w
\in F$ such that $P_\infty(w) = P_\infty(v)$ for infinitely many $v
\in F$. By Lemma \ref{lem_infinitely_many_w}, this proves that $P =
P_\infty(w)$ is a vertex in the maximal essential subgraph of the past
set cover. By assumption, this means that it is a vertex in the left
Krieger cover, so there exists $x^+ \in X^+$ such that $P_\infty(w) =
P_\infty(x^+)$. In particular, $P_l(x^+) = P_l(w) = P_l(v)$ for all $v
\in F$, so Condition ($\ast$) is satisfied.
\end{proof}

\noindent
In \cite{bates_pask}, it was proved that $\OO_{X^\ast}$ is isomorphic
to the Cuntz-Krieger algebra of the past set cover of $X$ when $X$
satisfies a condition called Condition (I).
According to Carlsen \cite{carlsen_privat}, a proof similar to the
proof which shows that $\OO_{X^+}$ is isomorphic to the Cuntz-Krieger
algebra of the left Krieger cover of $X$
should prove
that $\OO_{X^\ast}$ is isomorphic to the
Cuntz-Krieger algebra of the subgraph of the past set cover of $X$
induced by the vertices $P$ for which there exist
infinitely many words $w$ such that $P_\infty(w) = P$.
Using Lemma \ref{lem_infinitely_many_w}, this shows that
$\OO_{X^\ast}$ is always isomorphic to the Cuntz-Krieger algebra of
the maximal essential subgraph of the past set cover of $X$.

\section{Perspectives}

The results about the structure of the left Krieger cover and the past set cover developed in Section \ref{sec_foundation} have proved to be useful for constructing sofic shifts with specific properties both in the computation of the range of the flow invariant in Section \ref{sec_invariant} and in the construction of the examples used in Section \ref{sec_cs}. In the following, such arguments about layers will also be used to prove that certain Krieger covers are irreducible.

The existence of the generalised left Fischer cover is interesting because it shows how -- and to what extend -- it is possible to extend the left Fischer cover to reducible sofic shifts. It is mainly useful as a means to provide structure to the left Krieger cover via the layers since there is no way to construct it directly. Indeed, it would be interesting to have an algorithm for the construction of the generalised left Fischer cover that did not rely on a existing construction of the Krieger cover, but it seems unlikely that this can be done, since the definition of the generalised left Fischer cover relies not only on the set of indecomposable predecessor sets, but also on how these predecessor sets sit as vertices in the left Krieger cover. However, if one has the information necessary to construct the Krieger cover, then one also has all the information needed to construct the generalised left Fischer cover, so the generalised left Fischer cover is at least not harder to construct.

\chapter{Flow equivalence of beta-shifts}
\label{chap_beta}

Boyle \cite{boyle_personal} has conjectured that irreducible sofic shifts where the covering map of the left or right Fischer cover is $2$ to $1$ can be classified up to flow equivalence by an induced SFT -- called the fiber product cover -- with a $\Z / 2\Z$ action. The present work was motivated by a desire to apply this result to a concrete class of sofic shifts called beta-shifts.

Section \ref{sec_beta_introduction} gives an introduction to the basic definitions and properties of beta-shifts. In Section \ref{sec_beta_covers}, the right Fischer covers of sofic  beta-shifts are determined, and the covering map is shown to be $2$ to $1$ if the shift is strictly sofic. This result is used to show that the right Krieger cover is identical to the right Fischer cover and to construct 
the right fiber product cover. Section \ref{sec_beta_classification} concerns the flow classification of beta-shifts. It is shown that for every $\beta > 1$, there exists $1 < \beta' < 2$ such that the two beta-shifts are flow equivalent and such that the new generating sequence has a form with certain properties. Additionally, the Bowen-Franks groups of the covers considered above are computed.  Finally, the flow class of the fiber product cover of a sofic beta-shift is shown to depend only on a single integer. With the results conjectured by Boyle \cite{boyle_personal}, this will give a complete flow classification of sofic beta-shifts.

\section{Beta-shifts}
\label{sec_beta_introduction}\index{b@$\beta$-expansion} 
\index{generating sequence}\index{b@$\beta$-expansion!finite}\index{b@$\beta$-expansion!eventually periodic}\index{Xbeta@$\X_\beta$}
\index{e*beta@$e(\beta)$}
\index{g*beta@$g(\beta)$}
Here, a short introduction to the basic definitions and properties of beta-shifts are given. For a more detailed treatment of beta-shifts, see \cite{blanchard_beta}.
Let $\beta \in \R$ with $\beta > 1$. For each $t \in [0;1]$ define sequences $(r_n(t))_{n \in \N}$ and $(x_n(t))_{n \in \N}$ by 
\begin{align*}
     r_1(t)  &=  \fracpart{ \beta t }   ,   &r_n(t) = \fracpart{ \beta r_{n-1}(t) }, \\
     x_1(t) &=  \intpart{ \beta t }   ,     &x_n(t) = \intpart{ \beta r_{n-1}(t) },
\end{align*}
where $\intpart y $ is the integer part -- and $\fracpart y$ is the fractional part -- of $y \in \R$. The sequence $x_1(t)x_2(t) \cdots$ is said to be the $\beta$-\emph{expansion} of $t$, and R{\'e}nyi \cite{renyi} has proved that  
\begin{displaymath}
  t = \sum_{n=1}^\infty \frac{ x_n(t) }{ \beta^n }.
\end{displaymath} 
The $\beta$-expansion of $1$ is denoted $e(\beta)$. As an example, consider $\beta = (1+\sqrt 5)/2$ where the $\beta$-expansion of $1$ is $e(\beta) = 11000\cdots$. 
The $\beta$-expansion of $t$ is said to be \emph{finite} if there exists $N \in \N$ such that $x_n(t) = 0$ for all $n \geq N$. It is said to be \emph{eventually periodic} if there exist $N,k \in \N$ such that $x_{n+k}(t) = x_n(t)$ for all $n \geq N$.
Define the \emph{generating sequence of} $\beta$ to be
\begin{displaymath}
g(\beta) = \left \{ \begin{array}{l c l}
        e(\beta)                     & , & e(\beta) \textrm{ is infinite } \\
        (a_1 a_2 \cdots a_{k-1}(a_k-1))^\infty   & , & e(\beta) = a_1  \cdots a_k00\cdots \textrm{ and } a_k \neq 0
      \end{array} \right. .
\end{displaymath}

\index{beta-shift}
\index{golden mean shift!as beta-shift}
\index{beta-shift!order on}
Define $\BB_\beta = \{ x_n(t) \cdots x_m(t) \mid 1 \leq n \leq m, t \in [0;1] \}$. It is easy to check that $\BB_\beta$ is the language of a shift space $\X_\beta$. Such a shift space is called a \emph{beta-shift}. The alphabet $\AA_\beta$ of $\X_\beta$ is either $\{ 0, \ldots, \beta -1\}$ or $ \{0 , \ldots , \intpart \beta \}$ depending on whether $\beta$ is an integer or not. If $\beta \in \N$, then $\X_\beta$ is the full $\{0, \ldots, \beta-1\}$-shift. For $\beta = (1+\sqrt 5)/2$ as considered above, $\X_\beta$ is conjugate to the golden mean shift from Example \ref{ex_golden_mean}.

The words in $\BB(\X_\beta)$ and right-rays in $\X_\beta^+$ are ordered by lexicographical order $\leq$, and the following two theorems use this order to give fundamental descriptions of beta-shifts.

\begin{thm}[{R{\'e}nyi \cite{renyi}}] 
\label{thm_renyi}
Let $\beta >1$, let $g(\beta)$ be the generating sequence, let $\AA_\beta$ be the alphabet of $\X_\beta$, and let $x^+ = x_1 x_2 \ldots \in \AA_\beta^\N$. Then $x^+ \in \X_\beta^+$ if and only if 
$x_k x_{k+1} \cdots \leq g(\beta)$ for all $k \in \N$.
\end{thm}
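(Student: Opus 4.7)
The plan is to prove both implications by combining monotonicity of the greedy $\beta$-expansion map $t \mapsto x(t) := x_1(t) x_2(t) \cdots$, the shift identity $\sigma^k(x(t)) = x(r_k(t))$, and the algebraic equality $\sum_{n \geq 1} g_n(\beta)/\beta^n = 1$.

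First I would record three preliminary observations. \emph{Monotonicity}: for $t, s \in [0,1]$ with $t < s$, the sequences satisfy $x(t) < x(s)$ lexicographically, proved by induction on the first coordinate of disagreement (if $x_1(t) = x_1(s)$ then $r_1(t) < r_1(s)$, so we iterate; agreement at every coordinate would force $t = \sum_n x_n(t)/\beta^n = s$). \emph{Shift-compatibility}: $\sigma^k(x(t)) = x(r_k(t))$ for $k \geq 0$ is immediate from the recursions, and $r_k(t) \in [0,1)$ whenever $k \geq 1$ and $t \in [0,1)$. \emph{Sum identity}: $\sum_n g_n(\beta)/\beta^n = 1$ holds in the infinite case from R\'enyi's formula, and in the finite case follows from the polynomial identity $\beta^k = a_1 \beta^{k-1} + \cdots + a_k$ combined with a geometric-series manipulation on the period of $g(\beta)$.

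For the forward direction, let $x^+ \in \X_\beta^+$ and $k \geq 0$. Every finite prefix of $\sigma^k(x^+)$ lies in $\BB_\beta$, so by passing to limits it suffices to verify $\sigma^k(x(t)) \leq g(\beta)$ for every $\beta$-expansion $x(t)$ with $t \in [0,1)$. When $e(\beta)$ is infinite, $g(\beta) = e(\beta)$ and the preliminary observations give $\sigma^k(x(t)) = x(r_k(t)) \leq x(1) = e(\beta) = g(\beta)$ directly. When $e(\beta) = a_1 \cdots a_k 0^\infty$ is finite, a case-analysis is needed: I would argue that any $x(t)$ with $t < 1$ matching $a_1 \cdots a_{j-1}$ on its first $j - 1$ coordinates must have its $j$th coordinate at most $a_j$, and that equality everywhere through coordinate $k$ would force $t = 1$; thus after finitely many matched copies of the block $a_1 \cdots a_{k-1}(a_k - 1)$ a strict descent must occur, yielding $\sigma^k(x(t)) \leq g(\beta)$.

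For the converse, suppose $x^+ = x_1 x_2 \cdots$ satisfies $\sigma^k(x^+) \leq g(\beta)$ for every $k \geq 0$, and put $t_k = \sum_{n \geq 1} x_{k+n}/\beta^n$. Using the sum identity together with the hypothesis at every scale $k$, I would prove simultaneously by induction that $t_k \in [0,1]$ and that $x_{k+1} = \intpart{\beta t_k}$ with $r_1(t_k) = t_{k+1}$. This exhibits $x^+$ as the greedy $\beta$-expansion of $t_0$, so every one of its factors lies in $\BB_\beta$ and $x^+ \in \X_\beta^+$. The main obstacle is the finite-expansion case: $e(\beta) > g(\beta)$ lexicographically while $e(\beta) = x(1)$ is excluded from $\X_\beta^+$, so the shift-maximality of $g(\beta)$ must be argued delicately. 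Examples such as $x^+ = 110^\infty$ for $\beta$ the golden ratio (which has $\sum x_n/\beta^n = 1$ but already violates the shift condition at $k = 0$) show that the lex-inequality must be invoked at every scale in the induction for the converse; a bound only at $k = 0$ does not suffice to control $t_0$, because lex-order of right-rays is not the same as the order induced by the map $f(y) = \sum y_n/\beta^n$.
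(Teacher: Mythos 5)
The paper offers no proof of this theorem -- it is quoted from R\'enyi (and Parry) -- so there is no internal argument to compare against; I am assessing your proposal on its own terms. Your strategy (monotonicity of the greedy map, the shift identity $\sigma^k(x(t)) = x(r_k(t))$, and the identity $\sum_n g_n(\beta)\beta^{-n}=1$) is the standard and correct route, and your forward direction is essentially complete: reducing to ``$x(s)\le g(\beta)$ for all $s\in[0,1)$'' is right, and your finite-expansion analysis (a match of $a_1\cdots a_{j-1}$ forces the $j$th digit to be at most $a_j$; a full match of $a_1\cdots a_k$ forces $s=1$; matching $m$ copies of the period forces $1-s\le\beta^{-mk}$, so only finitely many periods can be matched before a strict descent) is sound. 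One caveat: the paper's definition of $\BB_\beta$ allows $t\in[0;1]$, under which the forward implication is literally false (for the golden mean, $11$ is a prefix of $e(\beta)=x(1)$, so $110^\infty$ would lie in $\X_\beta^+$ yet exceed $g(\beta)=(10)^\infty$); you silently and correctly restrict to $t<1$, which is the intended convention, but this should be said explicitly.

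The converse has a genuine gap. Your induction needs $t_{k+1}=\fracpart{\beta t_k}<1$ at every step in order to read off $x_{k+1}=\intpart{\beta t_k}$, but the lexicographic hypothesis only yields $t_k\le 1$, and equality does occur: in the finite-expansion case the sequence $x^+=g(\beta)$ itself (and more generally any admissible $x^+$ with $\sigma^k(x^+)=g(\beta)$ for some $k$) satisfies the hypothesis and has $t_k=1$. Such a sequence is \emph{not} the greedy expansion of any $t$ -- the greedy expansion of $1$ is $e(\beta)=a_1\cdots a_k0^\infty$, whose factors do not include, say, $a_1\cdots a_{k-1}(a_k-1)a_1$ -- so your concluding step ``this exhibits $x^+$ as the greedy expansion of $t_0$'' fails exactly there, and would in fact identify $x^+$ with the wrong sequence. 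You flag that ``the shift-maximality of $g(\beta)$ must be argued delicately,'' but you never supply the argument. The missing piece is an approximation step: show that every finite prefix of $g(\beta)$ is a prefix of $x(t)$ for $t<1$ sufficiently close to $1$ (i.e.\ $g(\beta)=\lim_{t\to1^-}x(t)$ coordinatewise), so that these boundary sequences still have all their factors in $\BB_\beta$; with that in place, your induction covers all remaining $x^+$, whose $t_k$ stay strictly below $1$.
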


\begin{thm}[{Parry \cite{parry}}]
\label{thm_beta_existence}
A sequence $a_1 a_2 \cdots$ is the $\beta$-expansion of $1$ for some $\beta > 1$ if and only if $a_k a_{k+1} \cdots < a_1 a_2 \cdots$ for all $k \in \N$. Such a $\beta$ is uniquely given by the expansion of $1$.
\end{thm}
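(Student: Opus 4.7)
The plan is to prove the two directions separately. For \textbf{necessity}, assume $a_1 a_2 \cdots = e(\beta)$ is the $\beta$-expansion of $1$ for some $\beta > 1$. Writing $r_0(1) = 1$ for convenience, the recursive definition of $x_n$ and $r_n$ shows that $a_k a_{k+1} \cdots$ is the $\beta$-expansion of $r_{k-1}(1)$. R{\'e}nyi's identity $t = \sum_n x_n(t) \beta^{-n}$ implies that the map $t \mapsto$ ($\beta$-expansion of $t$) is strictly order-preserving from $[0,1]$ to the space of digit sequences in lexicographic order. Since $r_{k-1}(1) \in [0,1)$ for $k \geq 2$, this immediately gives $a_k a_{k+1} \cdots < a_1 a_2 \cdots$.

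For \textbf{sufficiency}, I would first observe that the hypothesis forces $a_1 \geq 1$ and $a_n \leq a_1$ for every $n$; otherwise some shift of the sequence would exceed $a_1 a_2 \cdots$ lexicographically. Hence the series $f(\beta) := \sum_{n \geq 1} a_n \beta^{-n}$ converges on $(1, \infty)$ and is continuous and strictly decreasing there, with $f(\beta) \to 0$ as $\beta \to \infty$ and $\lim_{\beta \to 1^+} f(\beta) = \sum_n a_n > 1$ (the degenerate case $a_1 a_2 \cdots = 1 0^\infty$, for which $\sum_n a_n = 1$ and no $\beta > 1$ works, is the only exception and must be excluded from the statement). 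The intermediate value theorem then produces a $\beta > 1$ with $f(\beta) = 1$, and the strict monotonicity of $f$ gives both uniqueness and the final uniqueness claim of the theorem.

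It remains to verify that for this $\beta$ the actual $\beta$-expansion of $1$ is $a_1 a_2 \cdots$. I would proceed by induction on $k$, establishing simultaneously that $x_k(1) = a_k$ and $r_k(1) = \sum_{n \geq 1} a_{k+n} \beta^{-n}$. The inductive step reduces to showing $a_k = \intpart{\beta r_{k-1}(1)}$, which via the inductive hypothesis amounts to the numerical inequality $\sum_{n \geq 1} a_{k+n} \beta^{-n} < 1$ together with the trivial nonnegativity bound. \textbf{The main obstacle} lies precisely here: converting the lexicographic inequality $a_{k+1} a_{k+2} \cdots < a_1 a_2 \cdots$ into a strict inequality of the corresponding $\beta$-series is delicate, because a base-$\beta$ ``carry'' could a priori collapse lex-strict inequality into numerical equality (analogously to $0.\overline{9} = 1$ in decimal). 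The key observation is that the hypothesis supplies the strict lex inequality for \emph{every} $k \geq 2$: any numerical equality would force some shifted tail to agree with $a_1 a_2 \cdots$ digit-by-digit from a certain index onward, contradicting the strict lex condition at a sufficiently large $k$. This yields the strict numerical inequality needed to complete the induction and hence the theorem.
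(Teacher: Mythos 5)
The paper does not actually prove this theorem---it only cites Parry---so your argument has to stand on its own. The necessity direction, the intermediate-value argument producing $\beta$, the uniqueness via strict monotonicity of $f$, and the remark about the degenerate sequence $10^\infty$ are all correct. The gap sits exactly where you flag ``the main obstacle,'' and the resolution you sketch does not close it. Write $s_k = \sum_{n\ge 1} a_{k+n}\beta^{-n}$; your induction needs $s_k < 1$ for every $k\ge 1$. Two problems. First, you implicitly treat lexicographic order as implying order of values ``up to a possible carry,'' but for $\beta$-representations a lex-smaller sequence can have a strictly \emph{larger} value: with $d=\lfloor\beta\rfloor$, the sequence $0\,d\,d\,d\cdots$ is lex-smaller than $10^\infty$ yet has value $d/(\beta(\beta-1)) > 1/\beta$. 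So a priori $s_k$ could exceed $1$, not merely equal it, and ruling out equality is not enough. Second, the contradiction mechanism you invoke is false: a shifted tail that agrees with $a_1a_2\cdots$ digit-by-digit from some index onward does \emph{not} violate the hypothesis, which only constrains the first position of disagreement (e.g.\ $a = 3\,1\,1\,1\cdots$ satisfies the lex condition even though every tail coincides with $a$ from the second digit on, and it is a genuine expansion of $1$ for $\beta = 2+\sqrt2$).

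The standard repair is a bootstrap. If $m\ge 1$ is the first index with $a_{k+m} < a_m$, splitting both series at position $m$ gives $s_k - 1 \le \beta^{-m}\bigl(s_{k+m} - s_m - 1\bigr)$. Starting from the crude bound $s_j \le a_1/(\beta-1) =: C$ (valid since $a_n \le a_1$), this inequality yields $s_k \le 1 + (C-1)\beta^{-1}$ for all $k\ge 1$, and iterating gives $s_k \le 1 + (C-1)\beta^{-j}$ for every $j$, hence $s_k \le 1$. A final pass through the same inequality shows that $s_k = 1$ would force $s_m = 0$ (so $a_{m+n}=0$ for all $n\ge1$) together with $s_{k+m}=1$, which is incompatible since $s_{k+m}$ is then a sum of zero digits. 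Hence $s_k<1$ strictly, and your induction goes through. Without some argument of this kind the sufficiency direction is incomplete.
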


\noindent
This shows that the generating sequence is periodic if and only if the $\beta$-expansion of $1$ is finite.

Beta-shifts provide a link between symbolic dynamics and number theory, and it is natural to explore how the number theoretical properties of $\beta$ influence the dynamical properties of $\X_\beta$. The following two propositions give results of this kind. For detailed treatments of the number theoretical aspects of beta-expansions, see \cite{schmidt} and \cite{bertrand-mathis}.

\index{Pisot number}
\begin{prop}[{Parry \cite{parry}}]
If $\beta > 1$ is a Pisot number (i.e.\ an algebraic integer for which all  conjugates have absolute value less than 1), then $\X_\beta$ is sofic.
\end{prop}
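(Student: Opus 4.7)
The plan is to combine two intermediate facts: $(i)$ $\X_\beta$ is sofic whenever the generating sequence $g(\beta)$ is eventually periodic, and $(ii)$ if $\beta$ is a Pisot number then $e(\beta)$ -- and hence $g(\beta)$ -- is eventually periodic.

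For $(i)$, I would construct an explicit presentation directly from $g(\beta) = g_1 g_2 g_3 \cdots$. Define a labelled graph $(E, \LL)$ with vertex set $\N_0$ where from each vertex $n$ there is an edge labelled $g_{n+1}$ to vertex $n+1$, together with edges labelled $0, 1, \ldots, g_{n+1}-1$ from vertex $n$ back to vertex $0$. Reading a right-ray in this graph from vertex $0$, a symbol $a \leq g_{n+1}$ is permitted at stage $n$, and one is forced back to vertex $0$ whenever the symbol is strictly less than $g_{n+1}$. By Theorem \ref{thm_renyi}, this is exactly the condition that every suffix of $x_1 x_2 \cdots$ be lexicographically at most $g(\beta)$, so $(E, \LL)$ presents $\X_\beta$. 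The graph is countable in general, but if $g(\beta) = u (w)^\infty$ for finite words $u, w$, then identifying vertices whose outgoing "tails of $g(\beta)$" agree collapses $(E, \LL)$ to a finite labelled graph presenting $\X_\beta$, showing that $\X_\beta$ is sofic.

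For $(ii)$, let $\beta$ be Pisot of degree $d$ with algebraic conjugates $\beta_2, \ldots, \beta_d$, each satisfying $|\beta_i| < 1$, and let $\sigma_i \colon \Q(\beta) \to \C$ be the corresponding embeddings. The remainders $r_n := r_n(1)$ satisfy $\beta r_{n-1} = x_n + r_n$ with $r_0 = 1$ and $x_n \in \{0, 1, \ldots, \lfloor \beta \rfloor\}$, so inductively $r_n \in \Z[\beta] \cap [0,1)$. Applying $\sigma_i$ to the recursion gives $\sigma_i(r_n) = \beta_i \sigma_i(r_{n-1}) - x_n$; since $|\beta_i| < 1$ and $|x_n| \leq \lfloor \beta \rfloor$, iteration yields a uniform bound $|\sigma_i(r_n)| \leq \lfloor \beta \rfloor / (1 - |\beta_i|)$. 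Therefore the Minkowski image $(r_n, \sigma_2(r_n), \ldots, \sigma_d(r_n))$ lies in a bounded region of $\R \times \C^{d-1}$. Because $\Z[\beta]$ sits inside the ring of integers of $\Q(\beta)$ as a subgroup of finite index, its Minkowski image is a discrete subgroup, hence intersects any bounded region in a finite set. Thus only finitely many values of $r_n$ occur, so the sequence $(r_n)_{n \geq 0}$ is eventually periodic, and consequently so are $(x_n)_{n \geq 1} = e(\beta)$ and $g(\beta)$. Combined with $(i)$, this proves that $\X_\beta$ is sofic.

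The main obstacle is the discreteness argument in $(ii)$: one needs that the full Minkowski embedding of $\Z[\beta]$ in $\R \times \C^{d-1}$ is a lattice, which requires invoking some standard algebraic number theory (finite index inside the ring of integers of $\Q(\beta)$ and discreteness of that ring under the Minkowski embedding). The construction in $(i)$ is entirely elementary once Theorem \ref{thm_renyi} is in hand.
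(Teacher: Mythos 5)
Your proposal is correct. Note that the paper itself gives no proof of this proposition -- it is quoted from Parry with a citation -- so there is no in-text argument to compare against; but your step $(i)$ is precisely the content of the paper's own Proposition \ref{prop_beta_sofic} (soficity of $\X_\beta$ when $g(\beta)$ is eventually periodic, proved there by exhibiting the finite right-resolving, follower-separated collapse of the standard loop graph, exactly as you describe), and your step $(ii)$ is the classical Bertrand--Schmidt argument: the remainders $r_n$ lie in $\Z[\beta]$, satisfy $\sigma_i(r_n) = \beta_i\sigma_i(r_{n-1}) - x_n$ under each embedding, and are therefore confined to a bounded region of the Minkowski space, which meets the lattice $\Z[\beta]$ in a finite set; pigeonhole plus the determinism of the recursion $r_{n+1} = \fracpart{\beta r_n}$ then gives eventual periodicity of $(r_n)$, hence of $e(\beta)$ and $g(\beta)$. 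The only blemish is the stated constant $\lfloor\beta\rfloor/(1-|\beta_i|)$, which omits the contribution of the initial term $\beta_i^n\sigma_i(r_0)$; the correct bound is $1 + \lfloor\beta\rfloor/(1-|\beta_i|)$, but any uniform bound suffices, so this is immaterial. Everything else, including the reduction from $e(\beta)$ to $g(\beta)$ in the finite-expansion case, is sound.
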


\begin{prop}[{Denker et al. \cite{denker_grillenberger_sigmund}, Lind \cite{lind_beta}}]
Let $\beta > 1$. If $\X_\beta$ is sofic, then $\beta$ is a Perron number.
\end{prop}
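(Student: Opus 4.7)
My plan is to combine the classical identity $h(\X_\beta) = \log \beta$ with the explicit right Fischer cover of a sofic beta-shift (to be constructed in Section \ref{sec_beta_covers}) and then to invoke Perron--Frobenius theory to control the algebraic conjugates of $\beta$.

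First I would observe that if $\X_\beta$ is sofic then $g(\beta)$ must be eventually periodic: by Theorem \ref{thm_renyi} the language $\BB(\X_\beta)$ is completely determined by the tails of $g(\beta)$, and soficity forces only finitely many distinct such tails to occur. Writing $1 = \sum_{k \geq 1} a_k \beta^{-k}$ with $(a_k)$ eventually periodic and clearing denominators in the standard way yields a monic integer polynomial annihilated by $\beta$, so $\beta$ is already an algebraic integer; the remaining issue is thus the comparison of $\beta$ with its other conjugates.

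Next I would let $(F, \LL)$ denote the right Fischer cover of $\X_\beta$ constructed in Section \ref{sec_beta_covers}, and let $A$ be its adjacency matrix. Since $(F, \LL)$ is irreducible and right-resolving, Proposition \ref{prop_entropy_graph} gives $h(\X_\beta) = h(\X_F) = \log \lambda_A$; combined with the elementary identity $h(\X_\beta) = \log \beta$ (obtained from R\'enyi's formula by counting $\BB_n(\X_\beta)$), this forces $\beta = \lambda_A$, so $\beta$ occurs as the spectral radius of the non-negative integer matrix $A$. To promote this to the Perron property, I would use the combinatorial structure of $(F, \LL)$: since the leading digit $a_1 = \intpart{\beta} \geq 1$ of $g(\beta)$ produces at least one self-loop at the root vertex of the Fischer cover, the irreducible graph $F$ has a cycle of length $1$ and is therefore primitive. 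Perron--Frobenius theory for primitive non-negative integer matrices then gives that $\beta$ is a simple eigenvalue of $A$ and that every other eigenvalue $\mu$ satisfies $|\mu| < \beta$. Because the minimal polynomial of $\beta$ over $\Q$ divides the characteristic polynomial of $A$, every algebraic conjugate of $\beta$ occurs among the remaining eigenvalues and hence has absolute value strictly less than $\beta$, which is precisely the Perron condition.

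The hard part will be a clean verification of primitivity of $(F, \LL)$ from the combinatorial description of the cover, particularly in corner cases such as $\beta \in \N$ (where $\X_\beta$ is a full shift and the claim is immediate but should be checked separately) and where $g(\beta)$ is purely periodic of very small period. Once the Fischer cover construction in Section \ref{sec_beta_covers} is in place, this should reduce to a short case analysis on the leading segment of $g(\beta)$ together with the observation that any irreducible graph containing a self-loop is aperiodic.
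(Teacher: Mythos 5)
Your argument is correct, and it is essentially the standard proof of this fact (it is in substance Lind's argument); the paper itself offers no proof here, simply citing Denker et al.\ and Lind, so there is nothing internal to compare against. Two remarks on the details you defer. First, your opening step (sofic $\Rightarrow$ $g(\beta)$ eventually periodic) is not quite "only finitely many tails occur" by fiat: you need that distinct tails of $g(\beta)$ yield distinct follower/predecessor sets, which is exactly what the paper later establishes in Proposition \ref{prop_beta_sofic}; citing that (or reproducing its short comparison argument via Theorem \ref{thm_renyi}) closes the gap, and since that proposition does not depend on the present one, the forward reference creates no circularity. Second, the primitivity check is indeed just the observation that $v_1$ carries $g_1$ self-loops together with the verification that $g_1 \geq 1$ for every $\beta > 1$: the only way $g_1$ could vanish is $e(\beta) = a_1 0 0 \cdots$ with $a_1 = 1$, which forces $\beta = 1$. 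With those two points pinned down, the chain $\beta = \lambda_A$ for a primitive non-negative integer matrix $A$, plus divisibility of the characteristic polynomial by the minimal polynomial of $\beta$, gives the Perron property exactly as you describe.
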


\noindent
As a consequence of this, $\X_\beta$ is not sofic if $\beta$ is a non-integer rational number.

\section{Covers of beta-shifts}
\label{sec_beta_covers}

\index{beta-shift!loop graph of}
Let $\beta >1$, let $g(\beta) = g_1 g_2 \cdots$ be the generating sequence of $\X_\beta$, and define an infinite labelled graph $\GG_\beta = (G_\beta, \LL_\beta)$ with  vertices $G_\beta^0 = \{ v_i \mid i \in \N \}$ and edges $G^1 = \{ e_i^{k} \mid i \in \N, 0 \leq k \leq g_i \}$ such that  
\begin{displaymath}
s(e_i^k) = v_i, \qquad
 r(e_i^k) = \left \{ \begin{array}{l c l}
      v_{i+1} & ,  & k = g_i \\
      v_1       & , & k < g_i
\end{array} \right.,
\qquad \textrm{and} \qquad
\LL_\beta(e_i^k) = k
\end{displaymath}
for all $i \in \N$ and $0 \leq k \leq g_i$.
It is easy to check that this is a right-resolving presentation of $\X_\beta$.
The labelled graph $(G_\beta, \LL_\beta)$ is called the \emph{standard loop graph presentation} of $\X_\beta$; it is shown in  Figure \ref{fig_beta_loop}. Note that the structure of the standard loop graph shows that every beta-shift is irreducible. 

\begin{figure}
\begin{center}
\begin{tikzpicture}
  [bend angle=10,
   clearRound/.style = {circle, inner sep = 0pt, minimum size = 17mm},
   clear/.style = {rectangle, minimum width = 10 mm, minimum height = 6 mm, inner sep = 0pt},  
   greyRound/.style = {circle, draw, minimum size = 1 mm, inner sep =
      0pt, fill=black!10},
   grey/.style = {rectangle, draw, minimum size = 6 mm, inner sep =
      1pt, fill=black!10},
   white/.style = {rectangle, draw, minimum size = 6 mm, inner sep =
      1pt},
   to/.style = {->, shorten <= 1 pt, >=stealth', semithick}]
  
  \node[white] (P1) at (0,0) {};
  \node[white] (P2) at (3,0) {};   
  \node[white] (P3) at (6,0) {};
  \node[clear] (P4) at (9,0) {$\cdots$};

  \draw[to] (P1) to node[auto] {$g_1$} (P2); 
  \draw[to] (P2) to node[auto] {$g_2$} (P3);
  \draw[to] (P3) to node[auto] {$g_3$} (P4);

  \draw[to, loop left] (P1) to node[auto] {$0, \ldots , g_1-1$} (P1); 
  \draw[to, bend left = 30] (P2) to node[auto] {\qquad $0, \ldots , g_2-1$} (P1);     
  \draw[to, bend left = 60] (P3) to node[auto] {$0, \ldots , g_3-1$} (P1); 
  
\end{tikzpicture}
\end{center}
\caption[Standard loop graphs of beta-shifts.]{The standard loop graph of a beta-shift with generating sequence $g(\beta) = (g_n)_{n\in \N}$.} 
\label{fig_beta_loop}
\end{figure}
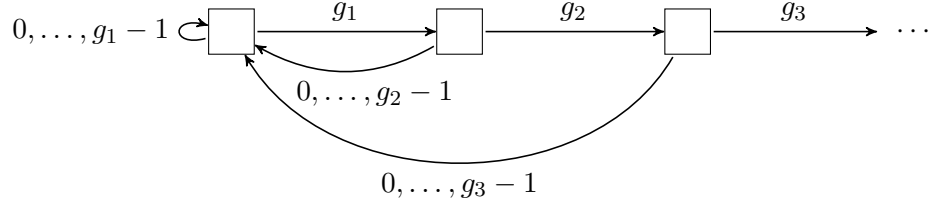

\subsection{Fischer cover}

Parry \cite{parry} proved that $\X_\beta$ is an SFT if and only if the generating sequence is periodic, and Betrand-Mathis \cite{bertrand-mathis_preprint} proved that $\X_\beta$ is sofic if and only if the generating sequence is eventually periodic. The latter result is apparently only available in a preprint, so a proof is given here for completeness.

\index{beta-shift!sofic}\index{beta-shift!right Fischer cover of}
\begin{prop}
\label{prop_beta_sofic}
Given $\beta > 1$, the beta-shift $\X_\beta$ is sofic if and only if the generating sequence $g(\beta)$ is eventually periodic. For minimal $n,p \in \N$ with $g(\beta) = g_1 \cdots g_n (g_{n+1} \cdots g_{n+p})^\infty$, the right Fischer cover of $\X_\beta$ is the labelled graph $(F_\beta, \LL_\beta)$ shown in Figure \ref{fig_beta_rfc} which has $F_\beta^0 = \{ v_1, \ldots v_{n+p} \}$, $F_\beta^1 = \{ e_i^k \mid 1 \leq i \leq n+p, 0 \leq k \leq g_i \}$, and 
\begin{displaymath}
s(e_i^k) = v_i, \textrm{  } 
 r(e_i^k) = \left \{ \begin{array}{l c l}
      v_1       & ,  & k < g_i \\ 
      v_{i+1}  & ,  & k = g_i , i < n+p \\
      v_{n+1} & ,  & k = g_{n+p} , i = n+p      
\end{array} \right.,
\textrm{ and } 
\LL_\beta(e_i^k) = k. 
\end{displaymath}
\end{prop}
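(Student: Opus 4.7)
The strategy is to analyse the follower sets of the vertices in the standard loop graph $\GG_\beta$ and leverage this description for both the soficness criterion and the explicit form of the right Fischer cover.

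First, I would establish the fundamental formula
\[
F_\infty(v_i) = \{x^+ \in \X_\beta^+ \mid x^+ \leq g_i g_{i+1} g_{i+2} \cdots \textrm{ lexicographically}\}
\]
for each $v_i \in G_\beta^0$. This is proved by unwinding the recursive structure of $\GG_\beta$: from $v_i$ one may emit $k < g_i$ and continue with any element of $F_\infty(v_1) = \X_\beta^+$, or emit $g_i$ and continue from $v_{i+1}$. An appeal to Theorem \ref{thm_renyi} then identifies the union of these possibilities with the lexicographic halfspace on the right-hand side. As an immediate consequence, $F_\infty(v_i) = F_\infty(v_j)$ if and only if the tails $g_i g_{i+1} \cdots$ and $g_j g_{j+1} \cdots$ of $g(\beta)$ coincide, because the maximal element of each follower set is the corresponding tail.

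For the forward implication, suppose $\X_\beta$ is sofic. Since $\X_\beta$ is irreducible, its right Fischer cover $(F,\LL)$ is finite and right-resolving. As $g(\beta) \in \X_\beta^+$ (a consequence of Theorem \ref{thm_beta_existence}), there is a unique path $v_1' v_2' \cdots$ in $F$ presenting $g(\beta)$, and since $F^0$ is finite the vertex sequence $(v_i')$ is eventually periodic. The labels $g_i$ are determined by the consecutive pairs $(v_i',v_{i+1}')$, so $g(\beta)$ itself is eventually periodic. For the reverse implication, suppose $g(\beta)$ has minimal pre-period $n$ and minimal period $p$. By the first step, exactly the $n+p$ follower sets $F_\infty(v_1), \ldots, F_\infty(v_{n+p})$ in $\GG_\beta$ are pairwise distinct, while $F_\infty(v_{n+p+i}) = F_\infty(v_{n+i})$ for all $i \geq 1$. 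Identifying follower-equivalent vertices in $\GG_\beta$ (and redirecting $e_{n+p}^{g_{n+p}}$ back to $v_{n+1}$) yields precisely the labelled graph $(F_\beta,\LL_\beta)$ in the statement. This graph presents $\X_\beta$, is right-resolving (the outgoing labels at $v_i$ are the distinct symbols $0,1,\ldots,g_i$), and is follower-separated by construction. It is irreducible since $v_1$ is reached from every $v_i$ via the edge labelled $0$ and every $v_i$ is reached from $v_1$ along the path labelled $g_1 \cdots g_{i-1}$. The right-hand analogue of Theorem \ref{thm_lfc_char} (the same one invoked in Lemma \ref{lem_s-gap_rfc}) then identifies $(F_\beta,\LL_\beta)$ as the right Fischer cover of $\X_\beta$.

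The main obstacle is the formula for $F_\infty(v_i)$, where one must handle the case distinction in the definition of $g(\beta)$ (whether $e(\beta)$ is finite or not) carefully so that the lexicographic halfspace is genuinely realised as a follower set and no boundary right-ray is lost or spuriously included. Theorem \ref{thm_beta_existence}, guaranteeing $g(\beta) \in \X_\beta^+$ together with the strict inequality of all its shifts, makes this bookkeeping tractable; the remaining verifications of right-resolvingness, follower-separation, and irreducibility of $(F_\beta, \LL_\beta)$ are routine.
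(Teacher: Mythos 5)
Your reverse direction and the identification of $(F_\beta, \LL_\beta)$ as the right Fischer cover are correct and essentially follow the paper's route: the standard loop graph is right-resolving, its finite quotient is right-resolving, follower-separated and irreducible, and the right-hand analogue of Theorem \ref{thm_lfc_char} applies. Your explicit computation $F_\infty(v_i) = \{ x^+ \in \X_\beta^+ \mid x^+ \leq g_i g_{i+1} \cdots \}$ makes the follower-separation check concrete and is a reasonable addition (one small repair: the edge labelled $0$ from $v_i$ to $v_1$ exists only when $g_i \geq 1$, so your irreducibility argument needs the observation that $g(\beta)$ has infinitely many nonzero entries, so that one can walk along the spine until such a $v_j$ is reached).

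The forward direction, however, contains a genuine gap --- in fact two. First, ``since $F^0$ is finite the vertex sequence $(v_i')$ is eventually periodic'' is false: a sequence with values in a finite set need not be eventually periodic, and the vertex itinerary of a path presenting an aperiodic label sequence is typically aperiodic. Second, even granting eventual periodicity of the itinerary, ``the labels $g_i$ are determined by the consecutive pairs $(v_i',v_{i+1}')$'' fails in a general labelled graph, since two vertices may be joined by several edges carrying distinct labels; indeed the very graph you are constructing has $g_i$ parallel edges from $v_i$ to $v_1$ labelled $0, \ldots, g_i-1$. Rescuing this route would require showing that the vertex $v_i'$ determines the tail $\sigma^{i-1}(g(\beta))$ (say, as the lexicographic maximum of its follower set), which is not available before the structure theorem is established. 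The paper instead argues the contrapositive entirely inside the shift: if $g(\beta)$ is not eventually periodic, take distinct prefixes $v \neq w$ with tails $x_v^+ \neq x_w^+$, say $x_v^+ > x_w^+$; then $w x_v^+ > w x_w^+ = g(\beta)$, so $w x_v^+ \notin \X_\beta^+$ by Theorem \ref{thm_renyi} while $w x_w^+ = g(\beta) \in \X_\beta^+$, whence $P_\infty(x_v^+) \neq P_\infty(x_w^+)$. This yields infinitely many predecessor sets and contradicts soficness. Your follower-set formula is the right kind of tool, but it must be applied to predecessor or follower sets intrinsic to $\X_\beta$ rather than to the vertices visited by a single path.
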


\noindent
Note that the right Fischer cover $(F_\beta, \LL_\beta )$ of a sofic beta-shift with generating sequence $g(\beta) = g_1 \cdots g_n (g_{n+1} \cdots g_{n+p})^\infty$ is the labelled graph obtained from the subgraph of the standard loop graph $\GG_\beta$ induced by the first $n+p$ vertices $v_1, \ldots , v_{n+p}$ of $G_\beta^0$ by adding an additional edge labelled $g_{n+p}$ from $v_{n+p}$ to $v_{n+1}$. 

\begin{figure}
\begin{center}
\begin{tikzpicture}
  [bend angle=10,
   clearRound/.style = {circle, inner sep = 0pt, minimum size = 17mm},
   clear/.style = {rectangle, minimum width = 10 mm, minimum height = 6 mm, inner sep = 0pt},  
   greyRound/.style = {circle, draw, minimum size = 1 mm, inner sep =
      0pt, fill=black!10},
   grey/.style = {rectangle, draw, minimum size = 6 mm, inner sep =
      1pt, fill=black!10},
   white/.style = {rectangle, draw, minimum size = 6 mm, inner sep =
      1pt},
   to/.style = {->, shorten <= 1 pt, >=stealth', semithick}]
  
  \node[white] (v1)     at (0,0) {$v_1$};
  \node[white] (v2)     at (2,0) {$v_2$};
  \node[clear] (dots)   at (4,0) {$\cdots$};
  \node[white] (vn+1)     at (6,0) {$v_{n+1}$};
  \node[clear] (dots2) at (8,0) {$\cdots$}; 
  \node[white] (vn+p)     at (10,0) {$v_{n+p}$};

  \draw[to] (v1) to node[auto] {$g_1$} (v2); 
  \draw[to] (v2) to node[auto] {$g_2$} (dots); 
  \draw[to] (dots) to node[auto] {$g_n$} (vn+1);
  \draw[to] (vn+1) to node[auto] {$g_{n+1}$} (dots2);
  \draw[to] (dots2) to node[auto] {$g_{n+p-1}$} (vn+p);  

  \draw[to, loop above] (v1) to node[auto] {$0, \ldots , g_1-1$} (v1); 
  \draw[to, bend left = 30] (v2) to node[auto] {\quad \qquad \qquad $0, \ldots , g_{2}-1$} (v1);    
  \draw[to, bend left = 50] (vn+1) to node[auto] {\qquad $0, \ldots , g_{n+1}-1$} (v1);     
  \draw[to, bend left = 70] (vn+p) to node[auto] {$0, \ldots , g_{n+p}-1$} (v1);     
  \draw[to, bend right = 45] (vn+p) to node[auto,swap] {$g_{n+p}$} (vn+1);     
  
\end{tikzpicture}
\end{center}
\caption[Right Fischer covers of sofic beta-shifts.]{Right Fischer cover of a sofic beta-shift with generating sequence $g(\beta) = g_1 \cdots g_n (g_{n+1} \cdots g_{n+p})^\infty$ for minimal $n,p$.} 
\label{fig_beta_rfc}
\end{figure}
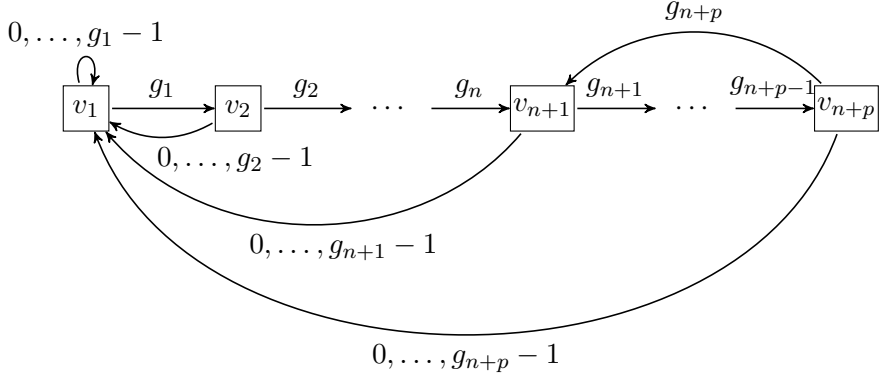

\begin{proof}[Proof of Proposition \ref{prop_beta_sofic}]
Assume that $g(\beta)$ is not eventually periodic and let $v,w$ be finite prefixes of $g(\beta)$ with $v \neq w$. Choose $x_v^+,x_w^+ \in \X_\beta^+$ such that $vx_v^+ = g(\beta) = wx_w^+$.
Since $g(\beta)$ is not eventually periodic, $x_v^+ \neq x_w^+$. Assume without loss of generality that $x_v^+ > x_w^+$. Then $w x_v^+ > wx_w^+ = g(\beta)$, so by Theorem \ref{thm_renyi}, $w x_v^+ \notin \X_\beta^+$. Hence, $P_\infty(x_v^+) \neq P_\infty(x_w^+)$. Since $g(\beta)$ is not eventually periodic, this proves that $\X_\beta$ has infinitely many different predecessor sets, so it is not a sofic shift (see Section \ref{sec_shift_lkc}).
The standard loop graph is a right-resolving presentation of $\X_\beta$, so it follows from the observation preceding this proof that $(F_\beta, \LL_\beta )$ is so as well. It is easy to check that $(F_\beta, \LL_\beta)$ is also follower-separated, so it is the right Fischer cover of $\X_\beta$ by Theorem \ref{thm_lfc_char}. 
\end{proof}

\begin{lem}[Johnson {\cite[Proposition 2.5.1]{johnson_thesis}}]
\label{lem_beta_w_unique_r}
Let $\beta > 1$, let $\X_\beta$ be sofic, and let $(F_\beta, \LL_\beta)$ be the right Fischer cover of $\X_\beta$. If there is a path $\lambda \in F_\beta^*$ with $s(\lambda) = v_1$ such that $\LL_\beta(\lambda) = w$ is not a factor of $g(\beta)$, then every path in $F_\beta^*$ labelled $w$ terminates at $r(\lambda)$.
\end{lem}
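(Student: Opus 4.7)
The plan is to use the follower-separation of the right Fischer cover to reduce the lemma to checking an equality of follower sets. Writing $r(\lambda) = v_q$ and supposing $\mu$ is another path labelled $w$ with $s(\mu) = v_j$ and $r(\mu) = v_{q'}$, the fact that $(F_\beta, \LL_\beta)$ is right-resolving, together with the observation that $F_\infty(v_1) = \X_\beta^+$ (any right-ray satisfies $x_k \cdots \leq g(\beta)$, so it never "overshoots" the backbone and can be read from $v_1$), yields $F_\infty(v_q) = \{x^+ \in \X_\beta^+ : wx^+ \in \X_\beta^+\}$ and $F_\infty(v_{q'}) = \{x^+ \in \X_\beta^+ : g_1 \cdots g_{j-1} w x^+ \in \X_\beta^+\}$. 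It therefore suffices to show these two sets coincide; one inclusion is immediate from the shift-invariance of $\X_\beta^+$.

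For the reverse inclusion, given $wx^+ \in \X_\beta^+$ I will verify $g_1 \cdots g_{j-1} w x^+ \in \X_\beta^+$ via Theorem \ref{thm_renyi}, i.e.\ by showing that every left shift is $\leq g(\beta)$. Shifts starting past position $j-1$ are shifts of $wx^+$, and hence safe. For $0 \leq i \leq j-2$, Parry's condition $g_{i+1}g_{i+2}\cdots < g_1g_2\cdots$ implies that either $g_{i+1}\cdots g_{j-1}$ is already lexicographically strictly smaller than $g_1\cdots g_{j-1-i}$ (in which case $g_{i+1}\cdots g_{j-1}wx^+ < g(\beta)$ trivially), or the two finite prefixes coincide. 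In the second case the inequality reduces to $wx^+ \leq g_{j-i}g_{j-i+1}\cdots$.

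The main obstacle is handling this last case, since $g_{j-i}g_{j-i+1}\cdots$ is strictly below $g(\beta)$ by Parry, so the bound $wx^+ \leq g(\beta)$ is not enough on its own. The fix is to use that $\mu$ exists: that gives $g_1\cdots g_{j-1}w \in \BB(\X_\beta)$, hence some extension $y^+$ with $g_1\cdots g_{j-1}wy^+ \in \X_\beta^+$, and Theorem \ref{thm_renyi} applied to the shift at position $i+1$ together with the coincidence of prefixes delivers $wy^+ \leq g_{j-i}g_{j-i+1}\cdots$. A lexicographic trichotomy of $w$ against $g_{j-i}\cdots g_{j-i+|w|-1}$ then closes the argument: if $w$ is strictly smaller than this block, then $wx^+ \leq g_{j-i}g_{j-i+1}\cdots$ for any extension; strict dominance of $w$ would contradict $wy^+ \leq g_{j-i}g_{j-i+1}\cdots$; and equality would exhibit $w$ as a factor of $g(\beta)$, contradicting the hypothesis. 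Only the first alternative can occur, which completes the plan.
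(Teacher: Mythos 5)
Your proof is correct, but it takes a genuinely different route from the paper's. The paper argues directly on the paths: given another path $\mu$ labelled $w$ starting at $v_k$ with $k>1$, it notes that $\mu$ must visit $v_1$ (otherwise its label would be a factor of $g(\beta)$), takes the first such visit at index $j$, compares it with the last visit of $\lambda$ to $v_1$ at or before index $j$, and either synchronizes the two paths at a common visit to $v_1$ (right-resolvingness then forces $r(\mu)=r(\lambda)$) or derives, via Theorems \ref{thm_renyi} and \ref{thm_beta_existence}, that $g(\beta)$ would have to be periodic with a short period, contradicting the choice of indices. You instead exploit the defining property of the Fischer cover: you compute $F_\infty(r(\lambda))=\{x^+ : wx^+\in\X_\beta^+\}$ and $F_\infty(r(\mu))=\{x^+ : g_1\cdots g_{j-1}wx^+\in\X_\beta^+\}$ (both legitimate, since $F_\infty(v_1)=\X_\beta^+$ and right-resolvingness makes the relevant paths from $v_1$ unique), prove these sets are equal using the R\'enyi criterion, and invoke follower-separation to conclude $r(\mu)=r(\lambda)$. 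The hypothesis that $w$ is not a factor of $g(\beta)$ enters your argument only once, in the final lexicographic trichotomy, and the existence of $\mu$ is used exactly where it should be, to supply the witness $y^+$ with $wy^+\le g_{j-i}g_{j-i+1}\cdots$. Your version is more uniform -- no case split on where the paths meet $v_1$ and no separate treatment of period one -- and it isolates cleanly where each hypothesis is used; the paper's version is more local and graph-theoretic, and its bookkeeping of visits to $v_1$ is reused almost verbatim in the proof of Proposition \ref{prop_beta_2_to_1}, which is presumably why that style was chosen. One small caution: the Parry inequality $g_{k}g_{k+1}\cdots < g_1g_2\cdots$ is strict only for the expansion of $1$; for the generating sequence in the periodic case it is merely $\le$, but your argument only needs $\le$ (to rule out the shifted block being lexicographically larger), so nothing breaks.
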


\noindent
Some cases are left unchecked in the proof in \cite{johnson_thesis}, so a slightly altered proof is given here for completeness.

\begin{proof}
Let $F_\beta^0 = \{ v_1 , \ldots , v_{n+p} \}$ as in Proposition \ref{prop_beta_sofic}.
Let $\mu \in F_\beta^*$ with $\LL_\beta(\mu) = w$. If $s(\mu) = s(\lambda)$, then $\lambda = \mu$ since $(F_\beta, \LL_\beta)$ is right-resolving, so assume that $s(\mu) = v_k$ for some $k > 1$. Since $w$ is not a factor of $g(\beta)$, $\mu$ must pass through $v_1$, so it is possible to choose minimal $1 \leq j \leq \lvert w \rvert$ such that $r(\mu_j) = v_1$.  
Define $0 \leq l \leq j$ by
\begin{displaymath}
l = \left\{ \begin{array}{l c l}
       0 & , & r(\lambda_i) \neq v_1 \textrm{ for all } 1 \leq i \leq j \\
       \max \{ i \mid 1 \leq i \leq j, r(\lambda_i) = v_1 \}  & , & \textrm{otherwise}\\       
\end{array} \right. .
\end{displaymath}
If $l = j$, then $r(\mu_j) = v_1 = r(\lambda_j)$, so $r(\mu) = r(\lambda)$ since $(F_\beta, \LL_\beta)$ is right-resolving.

The result is obvious if $g(\beta)$ is periodic with period $1$, so assume that this is not the case. 
Then there is no edge labelled $g_1$ terminating at $v_1$, so $\LL_\beta(\lambda_j) = \LL_\beta(\mu_j) < g_1$, and if $l = j-1$, then this implies that $r(\lambda_j) = v_1$ in contradiction with the definition of $l$. Assume therefore that $l < j-1$. Then
\begin{displaymath}
 \LL_\beta(\mu_{[l+1,j]}) =  w_{[l+1,j]} 
            = \LL_\beta(\lambda_{[l+1,j]}) = g_1 \cdots g_{j-l-1},
\end{displaymath}
so $g_1 \cdots g_{j-l-1} g_1 g_2 \cdots \in \X_\beta^+$ since $r(\mu_j) = v_1$.
By Theorems \ref{thm_renyi} and \ref{thm_beta_existence}, this is only possible if $g(\beta) = (g_1 \cdots g_{j-l-1})^\infty$. By the construction of $(F_\beta, \LL_\beta)$, every path labelled $g_1 \cdots g_{j-l-1}$ must then start at $v_1$, so $s(\mu_{l+1}) = v_1$ and $r(\mu_l) = v_1$ in contradiction with the construction of $l$ and the assumptions on $\mu$. 
\end{proof}

\index{beta-shift!of finite type}
\begin{prop}
\label{prop_beta_2_to_1}
Let $\beta > 1$ with eventually periodic $g(\beta)$, let $(F_\beta, \LL_\beta)$ be the right Fischer cover  of $\X_\beta$, and let $\pi \colon \X_{F_\beta} \to \X_\beta$ be the covering map. If $g(\beta)$ is periodic, then $\pi$ is $1$ to $1$, and if not, then it is $2$ to $1$.
\end{prop}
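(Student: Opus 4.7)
The plan is to split into the two cases indicated by the statement. If $g(\beta)$ is purely periodic, then by Parry's result (cited earlier in the chapter) $\X_\beta$ is an SFT, and applying the right-analog of Corollary \ref{cor_lfc_conj} to the irreducible right Fischer cover shows that $\LL_{\beta,\infty}\colon \X_{F_\beta}\to\X_\beta$ is a conjugacy, so $\pi$ is trivially $1$-to-$1$.

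Assume now that $g(\beta)$ is eventually but not purely periodic, and write $g(\beta)=g_1\cdots g_n v^\infty$ with $v=g_{n+1}\cdots g_{n+p}$ and $n,p\geq 1$ minimal. Since $(F_\beta,\LL_\beta)$ is right-resolving, any preimage of $x\in\X_\beta$ is determined by its left-infinite tail (the vertex at position $0$ forces the entire forward half), so the plan is to bound the number of left-infinite paths in $F_\beta$ labelled $x^-:=x_{(-\infty,-1]}$. First I would use Lemma \ref{lem_beta_w_unique_r}: if some finite suffix $x_{[-k,-1]}$ is not a factor of $g(\beta)$, then every path labelled that suffix terminates at the same vertex, so $V_0$ is forced; applying this argument to each shifted left-ray $x_{(-\infty,j-1]}$ (which remains non-periodic) forces every $V_j$, giving $|\pi^{-1}(x)|=1$.

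Otherwise every finite suffix of $x^-$ is a factor of $g(\beta)$. The key structural step is to show that, by minimality of $n$ and $p$, the infinite left-rays with this property are precisely the $p$ purely periodic rays $y^-_c = \ldots vvv$ for $c\in\{0,\ldots,p-1\}$: a left-ray containing the pre-periodic block $g_1\cdots g_n$ as an interior factor cannot be extended further backward inside the factor tree of $g(\beta)$ without forcing $\sigma^?(g(\beta))=g(\beta)$. For each $y^-_c$ I would then enumerate the left-infinite paths labelled $y^-_c$: the \emph{loop} path, which remains forever in $\{v_{n+1},\ldots,v_{n+p}\}$ and is uniquely determined by the phase $c$; and at most one \emph{spine} path, which visits $v_1$. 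Right-resolving forces the forward trajectory from any $v_1$-visit; the walk must return to $v_1$ after finitely many steps since otherwise $g_{n+1+j}=g_{1+j}$ for all $j$, contradicting the non-periodicity of $g(\beta)$; and the backward extension at each $v_1$-visit is forced because any branching source $v_k\neq v_{k-1}$ that is consistent with $y^-_c$ leads to a dead end after following the pre-spine. Hence at most two labelled paths exist, so $|\pi^{-1}(x)|\leq 2$.

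For the lower bound I would take $x=v^\infty\in\X_\beta$ (membership follows from Theorems \ref{thm_renyi} and \ref{thm_beta_existence}, since $\sigma^k(v^\infty)=\sigma^{n+k}(g(\beta))\leq g(\beta)$) and exhibit the two distinct preimages explicitly: the loop preimage circulating in $\{v_{n+1},\ldots,v_{n+p}\}$, and the spine preimage obtained by tracing the periodic trajectory that the forward dynamics at $v_1$ produces on the labels $v^\infty$ and extending periodically backward. They are distinct because one vertex sequence uses $v_1$ whereas the other does not. The main obstacle will be justifying the uniqueness of the spine path---showing that the periodic backward trajectory is the only non-dead-end continuation at each $v_1$-visit---and characterising the dangerous left-rays as exactly the $p$ shifts of $v^\infty$; both rely essentially on the minimality of the pre-period $n$ and the period $p$.
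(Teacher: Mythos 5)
Your strategy for the main (non-periodic) case is essentially the paper's: use Lemma \ref{lem_beta_w_unique_r} together with right-resolvingness to force uniqueness of the presentation whenever the left tail of $x$ is not eventually $w_p^\infty$, identify the sequences with left tail $w_p^\infty$ as the only candidates for multiple preimages, and then count their preimages as exactly two (the loop circuit on $\{v_{n+1},\ldots,v_{n+p}\}$ plus one circuit through $v_1$). Two remarks. First, your treatment of the periodic case (Parry's theorem plus the right-resolving analogue of Corollary \ref{cor_lfc_conj}) is logically admissible as a citation of an external result, but note that the thesis deliberately \emph{re-derives} Parry's characterisation as Corollary \ref{cor_beta_sft} from this very proposition; within the paper's architecture your shortcut is circular, and the paper instead argues directly that when $g(\beta)$ is periodic every circuit passes through $v_1$ and only one of them is labelled $w_p$, so $w_p^\infty$ has a unique presentation.

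Second, the step you flag as ``the main obstacle'' --- that there is \emph{at most one} spine path --- is indeed the crux, and your ``any branching source leads to a dead end'' is an observation about the example rather than an argument; going backward from $v_1$ there are in general many admissible predecessors $v_k$ (all $k$ with $g_k$ exceeding the required letter), and one must rule out that two distinct backward continuations both survive forever. The paper closes this by exploiting minimality of $p$ in a sharper way than you indicate: since $p$ is the minimal period, the $p$ cyclic permutations of $w_p$ are pairwise distinct, so exactly one of them, say $w_i\cdots w_p w_1\cdots w_{i-1}$, is lexicographically maximal; any circuit of length $p$ through $v_1$ labelled by $w_p$ must place its $v_1$-visit at that maximal phase, and right-resolvingness then determines the whole circuit from that single vertex. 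This pins down the second circuit uniquely and yields the bound of two. If you want to complete your version, you would need either this maximal-permutation argument or an explicit analysis showing that every left-infinite path labelled $\cdots w_p w_p$ that meets $v_1$ infinitely often must decompose into the unique first-return circuit at $v_1$; without one of these the upper bound $\lvert\pi^{-1}(x)\rvert\leq 2$ is not established.
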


\begin{proof}
Let $g(\beta) =  g_1 \cdots g_n (g_{n+1} \cdots g_{n+p})^\infty$ for minimal $n,p \in \N$, and let $w_p = g_{n+1} \cdots g_{n+p}$. The first goal is to prove that if $x \in \X_\beta$ has more than one presentation in $(F_\beta, \LL_\beta)$, then there exits $k \in \Z$ such that $\cdots x_{k-1} x_k = w_p^\infty$. 

Let $x \in \X_\beta$, and let $\lambda$ be a biinfinite path in $F_\beta$ with $\LL_\beta(\lambda) = x$. 
If there is a lower bound $l$ on the set 
\begin{displaymath}
A = \{ j \in \Z \mid \exists i < j : \LL_\beta(\lambda_{[i,j]}) \textrm{ is not a factor of } g(\beta) \},
\end{displaymath}
then there exists $k \leq l$ such that $\cdots x_{k-1} x_k = w_p^\infty$ since $x$ is biinfinite while $g(\beta)$ is not.
By Proposition \ref{prop_beta_sofic},
the only circuit in $(F_\beta, \LL_\beta)$ that does not pass through $v_1$ is labelled $w_p$, so if there is a lower bound $l$ on the set $B = \{ i \in \Z \mid r(\lambda_i) = v_1 \}$, then there exists $k < l$ such that $\cdots x_{k-1} x_k = w_p^\infty$.
Assume that both $A$ and $B$ are unbounded below, let $\mu \in \X_{F_\beta}$ with $\LL(\mu) = x$, and let $k \in \Z$. Then there exist $i < j < k$ with $s(\lambda_i) = v_1$ such that $w = \LL_\beta(\lambda_{[i,j]})$ is not a factor of $g(\beta)$. By Lemma \ref{lem_beta_w_unique_r}, $r(\mu_j) = r(\lambda_j)$, and $(F_\beta, \LL_\beta)$ is right-resolving, so this implies that $\mu_k = \lambda_k$. Since $k$ was arbitrary, $\lambda = \mu$, and this proves the claim.
 
If $g(\beta)$ is periodic, then every circuit in $F_\beta$ passes through $v_1$, and only one of these is labelled $w_p$ since $p$ is the minimal period. Hence, there is precisely one vertex in $F^0$ where a presentation of $w_p^\infty$ can end, so $\pi_\beta$ is $1$ to $1$ and $\X_\beta$ is an SFT.

Assume that $g(\beta)$ is eventually periodic without being periodic. Then $w_p < g(\beta)_{[1,p]}$, so there is a circuit $\mu$ in $F_\beta$ passing through $v_1$ with $\LL_\beta(\mu) = w_p$. Choose $0 \leq i \leq p$ such that $w_i \cdots w_p w_1 \cdots w_{p-1}$ is maximal among the cyclic permutations of the letters of $w_p$. The number $i$ is unique because $p$ is the minimal period. Now $s(\mu_i) = v_i$, so $\mu$ is unique because $(F_\beta, \LL_\beta)$ is right-resolving.
The only circuit that does not pass through $v_1$ is also labelled $w_p$, so $\pi_\beta$ is $2$ to $1$. 
\end{proof}

\begin{figure}
\begin{center}
\begin{tikzpicture}
  [bend angle=10,
   clearRound/.style = {circle, inner sep = 0pt, minimum size = 17mm},
   clear/.style = {rectangle, minimum width = 17 mm, minimum height = 6 mm, inner sep = 0pt},  
   greyRound/.style = {circle, draw, minimum size = 1 mm, inner sep =
      0pt, fill=black!10},
   grey/.style = {rectangle, draw, minimum size = 6 mm, inner sep =
      1pt, fill=black!10},
   white/.style = {rectangle, draw, minimum size = 6 mm, inner sep =
      1pt},
   to/.style = {->, shorten <= 1 pt, >=stealth', semithick}]
  
  \node[white] (P1) at (0,0) {};
  \node[white] (P2) at (2,0) {};   
  \node[white] (P3) at (4,0) {};
  \node[white] (P4) at (6,0) {};

  \draw[to] (P1) to node[auto] {$1$} (P2); 
  \draw[to] (P2) to node[auto] {$1$} (P3); 
  \draw[to] (P3) to node[auto] {$1$} (P4);

  \draw[to, loop left] (P1) to node[auto] {$0$} (P1);
  \draw[to, bend left = 30] (P2) to node[auto] {$0$} (P1);
  \draw[to, bend left = 60] (P3) to node[auto] {$0$} (P1);
  \draw[to, bend right = 45] (P4) to node[auto,swap] {$0$} (P3);  
   
\end{tikzpicture}
\end{center}
\caption[Right Fischer cover of a sofic beta-shift.]{Right Fischer cover of the sofic beta-shift from Example \ref{ex_beta_2_sofic}.} 
\label{fig_beta_2_sofic}
\end{figure}
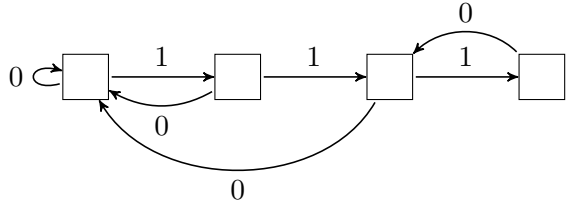

\begin{example}
\label{ex_beta_2_sofic}
Use Theorem \ref{thm_beta_existence} to choose $\beta > 1$ such that $g(\beta) = 11(10)^\infty$. The right Fischer cover of $\X_\beta$ is shown in Figure \ref{fig_beta_2_sofic}. Note that there are two presentations of the biinfinite sequence $(10)^\infty$ and only one presentation of every other sequence.
\end{example}

The following result was proved by Parry \cite{parry}, but it is repeated here since it follows from Propositions \ref{prop_beta_sofic} and \ref{prop_beta_2_to_1} and Corollary \ref{cor_lfc_conj}.

\begin{cor}
\label{cor_beta_sft}
For $\beta > 1$, the beta-shift $\X_\beta$ is an SFT if and only if the generating sequence $g(\beta)$ is periodic.
\end{cor}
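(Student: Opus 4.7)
The plan is to deduce the corollary directly from Propositions \ref{prop_beta_sofic} and \ref{prop_beta_2_to_1} together with the right-sided analogue of Corollary \ref{cor_lfc_conj}, as indicated by the author. I would argue both implications via the covering map $\pi \colon \X_{F_\beta} \to \X_\beta$ of the right Fischer cover.

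For the forward implication, suppose $\X_\beta$ is an SFT. Then in particular $\X_\beta$ is sofic, so by Proposition \ref{prop_beta_sofic} the generating sequence $g(\beta)$ must be eventually periodic, and the right Fischer cover $(F_\beta, \LL_\beta)$ is the finite labelled graph described there. Now apply the right-sided version of Corollary \ref{cor_lfc_conj}: since $\X_\beta$ is an SFT, the covering map $\pi$ must be a conjugacy and in particular injective. By Proposition \ref{prop_beta_2_to_1}, this forces $g(\beta)$ to be periodic, for otherwise $\pi$ would be $2$ to $1$.

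For the converse, suppose $g(\beta)$ is periodic. Then $g(\beta)$ is eventually periodic, so by Proposition \ref{prop_beta_sofic}, $\X_\beta$ is sofic with right Fischer cover $(F_\beta, \LL_\beta)$ as described. By Proposition \ref{prop_beta_2_to_1}, the covering map $\pi$ is $1$ to $1$. Since $\pi$ is a continuous, surjective, shift-intertwining bijection between compact shift spaces, it is automatically a conjugacy. Hence by the right-sided version of Corollary \ref{cor_lfc_conj}, $\X_\beta$ is an SFT.

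The argument is almost entirely bookkeeping, so there is no real obstacle; the only subtlety is invoking the right-sided analogue of Corollary \ref{cor_lfc_conj}, which is justified by transposition as noted in the opening paragraph of Chapter \ref{chap_covers}. Everything else follows immediately from the classification of the multiplicity of $\pi$ in terms of the periodicity of $g(\beta)$ given in Proposition \ref{prop_beta_2_to_1}.
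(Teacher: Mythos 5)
Your proof is correct and follows exactly the route the paper indicates: the text states that the corollary "follows from Propositions \ref{prop_beta_sofic} and \ref{prop_beta_2_to_1} and Corollary \ref{cor_lfc_conj}," and you have simply spelled out that deduction, including the standard observation that a bijective homomorphism of compact shift spaces is a conjugacy and the transposition argument for the right-sided analogue of Corollary \ref{cor_lfc_conj}. Nothing is missing.
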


A direction was chosen in the construction of $\X_\beta$ which makes right-rays behave qualitatively differently from left-rays. This means that, while the right Fischer cover is easy to construct using Proposition \ref{prop_beta_sofic}, it is generally non-trivial to find the left Fischer cover of a sofic beta-shift. This is illustrated by the following example.

\begin{example}
\label{ex_beta_lfc}\index{beta-shift!left Fischer cover of}
Use Theorem \ref{thm_beta_existence} to find $\beta > 1$ with 
$g(\beta) = (111010)^\infty$. Elementary computations show that the left Fischer cover of $\X_\beta$ is the labelled graph shown in Figure \ref{fig_beta_lfc}. This graph is arguably more complicated than the corresponding right Fischer cover. In particular, the left Fischer cover has non-intersecting loops.
\end{example}

\begin{figure}
\begin{center}
\begin{tikzpicture}
  [bend angle=10,
   clearRound/.style = {circle, inner sep = 0pt, minimum size = 17mm},
   clear/.style = {rectangle, minimum width = 17 mm, minimum height = 6 mm, inner sep = 0pt},  
   greyRound/.style = {circle, draw, minimum size = 1 mm, inner sep =
      0pt, fill=black!10},
   grey/.style = {rectangle, draw, minimum size = 6 mm, inner sep =
      1pt, fill=black!10},
   white/.style = {rectangle, draw, minimum size = 6 mm, inner sep =
      1pt},
   to/.style = {->, shorten <= 1 pt, >=stealth', semithick}]
  
  \node[white] (P1) at (0,0) {};
  \node[white] (P2) at (2,0) {};   
  \node[white] (P3) at (4,0) {};
  \node[white] (P4) at (6,0) {};
  \node[white] (P5) at (8,0) {};
  \node[white] (P6) at (10,0) {};

  \draw[to] (P1) to node[auto] {$1$} (P2); 
  \draw[to] (P2) to node[auto] {$0$} (P3);
  \draw[to, bend right = 30] (P2) to node[auto,swap] {$0$} (P4);    
  \draw[to, bend right = 45] (P3) to node[auto] {$1$} (P1); 
  \draw[to] (P3) to node[auto] {$1$} (P4);
  \draw[to] (P4) to node[auto] {$1$} (P5);
  \draw[to] (P5) to node[auto] {$1$} (P6);
  \draw[to, bend left = 30] (P6) to node[auto] {$0$} (P5);            
  \draw[to, bend left = 45] (P6) to node[auto,swap] {$0$} (P2);
  \draw[to, bend left = 60] (P6) to node[auto,swap] {$0$} (P1); 
  \draw[to, loop right] (P6) to node[auto] {$0$} (P6);                                 
\end{tikzpicture}
\end{center}
\caption[Left Fischer cover of a beta-shift of finite type.]{Left Fischer cover of the beta-shift of finite type considered in Example \ref{ex_beta_lfc}.} 
\label{fig_beta_lfc}
\end{figure}
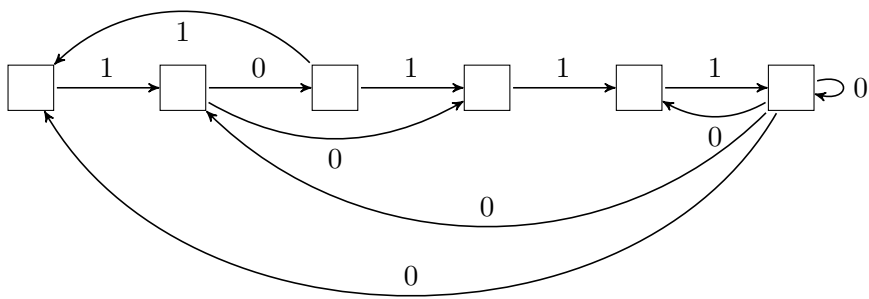

\subsection{Krieger cover}

\index{beta-shift!right Krieger cover of}
\begin{prop}
\label{prop_beta_krieger}
If $\beta > 1$ and the beta-shift $\X_\beta$ is sofic, then the right Krieger cover of $\X_\beta$ is identical to the right Fischer cover $(F_\beta, \LL_\beta)$ of $\X_\beta$.
\end{prop}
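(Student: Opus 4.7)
The strategy is to determine all follower sets $F_\infty(x^-)$ for left-rays $x^- \in \X_\beta^-$ and to show that each one coincides with $F_\infty(v_i)$ for some vertex $v_i$ of the right Fischer cover. The right-hand analogue of the results from Section~\ref{sec_fischer} guarantees that $(F_\beta, \LL_\beta)$ is isomorphic to a subgraph of the right Krieger cover of $\X_\beta$, so once the vertex sets are shown to agree the two right-resolving presentations of $\X_\beta$ must be identical as labelled graphs.

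Let $g(\beta) = g_1 \cdots g_n (g_{n+1} \cdots g_{n+p})^\infty$ with $n, p$ minimal, and extend $(g_i)_{i \in \N}$ periodically beyond $n+p$ by $g_{i+p} = g_i$ for $i \geq n+1$. The first step in the plan is to show by induction on the "distance" along $v_1 \to v_2 \to \cdots \to v_{n+p}$ in $(F_\beta, \LL_\beta)$ that
\begin{displaymath}
F_\infty(v_i) = \{ y^+ \in \X_\beta^+ \mid y^+ \leq g_i g_{i+1} g_{i+2} \cdots \}
\end{displaymath}
for every $i \in \{1, \ldots, n+p\}$. This is a direct unwinding of Proposition~\ref{prop_beta_sofic}: the edges with label $k < g_i$ return to $v_1$ (whose follower set is all of $\X_\beta^+$ by Theorem~\ref{thm_renyi}), while the unique edge with label $g_i$ steps to $v_{i+1}$, with the back edge from $v_{n+p}$ to $v_{n+1}$ supplying the correct index at the end.

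The second step is to characterize $F_\infty(x^-)$ for an arbitrary $x^- \in \X_\beta^-$. By Theorem~\ref{thm_renyi}, $x^- y^+ \in \X_\beta$ is equivalent to $x_{-j+1} \cdots x_0 y^+ \leq g(\beta)$ for every $j \geq 1$ together with $y^+ \in \X_\beta^+$. A standard lex-comparison shows that this requirement is automatic unless $x_{-j+1} \cdots x_0 = g_1 \cdots g_j$, in which case it becomes $y^+ \leq g_{j+1} g_{j+2} \cdots$. Setting
\begin{displaymath}
J = \{ j \geq 1 \mid x_{-j+1} \cdots x_0 = g_1 \cdots g_j \},
\end{displaymath}
I obtain $F_\infty(x^-) = \{ y^+ \in \X_\beta^+ \mid y^+ \leq g_{j+1} g_{j+2} \cdots \textrm{ for all } j \in J \}$, with the convention that $J = \emptyset$ gives $F_\infty(x^-) = \X_\beta^+ = F_\infty(v_1)$.

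The final step exploits the eventual periodicity of $g(\beta)$: the set of tails $\{ g_{j+1} g_{j+2} \cdots \mid j \geq 0 \}$ is finite, so when $J \neq \emptyset$ the lex-minimum of $g_{j+1} g_{j+2} \cdots$ over $j \in J$ is attained at some $j^*$. Reducing $j^* + 1$ modulo $p$ into the window $\{1, \ldots, n+p\}$ using the periodic extension of $(g_i)$ produces an index $i^*$ with $g_{i^*} g_{i^*+1} \cdots = g_{j^*+1} g_{j^*+2} \cdots$, and combining this with the formula from the first step yields $F_\infty(x^-) = F_\infty(v_{i^*})$. Thus every vertex of the right Krieger cover is already a vertex of $(F_\beta, \LL_\beta)$, and equality as labelled graphs follows. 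The main obstacle will be the careful index bookkeeping modulo $p$ in the final step and the verification that a single binding constraint really captures the whole intersection; both are clean but need a couple of lines to nail down precisely.
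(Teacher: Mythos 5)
Your proof is correct, but it takes a genuinely different route from the paper's. The paper treats the proposition as a quick corollary of Proposition \ref{prop_beta_2_to_1}: there it is already shown that a left-ray $x^-$ has more than one presentation in $(F_\beta,\LL_\beta)$ only when $x^-=(g_{n+1}\cdots g_{n+p})^\infty g_{n+1}\cdots g_{n+k}$, in which case $r(x^-)=\{v_i,v_j\}$ with $v_i$ reached through $v_1$ and $v_j$ lying on the isolated periodic circuit, and one checks $F_\infty(v_j)\subseteq F_\infty(v_i)$, so $F_\infty(x^-)=F_\infty(v_i)$; all other left-rays have $\lvert r(x^-)\rvert=1$. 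The conclusion then follows from the foundation theorem (Theorem \ref{thm_gfc_foundation_lkc}). You instead compute every follower set from scratch via R\'enyi's lexicographic criterion: $F_\infty(v_i)$ is the lex-downset below $g_ig_{i+1}\cdots$, $F_\infty(x^-)$ is the intersection of the downsets indexed by the suffixes of $x^-$ matching prefixes of $g(\beta)$, and eventual periodicity collapses that intersection to a single binding constraint. Your route is self-contained and exhibits the follower sets explicitly as order ideals, at the cost of redoing work the paper has already banked in the $2$-to-$1$ analysis; the paper's route is shorter but less transparent about what the follower sets actually are. The only points to tighten are the ones you flag yourself: the ``induction'' for $F_\infty(v_i)$ wraps around the periodic circuit $v_{n+p}\to v_{n+1}$, so it is really a uniqueness-of-paths argument using right-resolvingness rather than a well-founded induction, and when $J$ is infinite you must (as you do) pass to the finite set of distinct tails before taking the lex-minimum.
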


\begin{proof}
Let $n,p$ be minimal such that $g(\beta) = g_1 \cdots g_n(g_{n+1} \cdots g_{n+p})^\infty$ and let $F_\beta^0 = \{ v_1 , \ldots, v_{n+p} \}$ as in Proposition \ref{prop_beta_sofic}.
Let $x^- \in \X_\beta^-$, and let $r(x^-) \subseteq F_\beta^0$ be the set of vertices where a presentation of $x^-$ can end. The goal is to prove that $x^-$ is $1$-synchronizing. Assume that $x^- = (g_{n+1} \cdots g_{n+p})^\infty g_{n+1} \cdots g_{n+k}$ for some $k \leq p$. By the proof of Proposition \ref{prop_beta_2_to_1}, there is a unique $i$ such that there is a circuit labelled $w_p' = g_{n+k+1} \cdots g_{n+p} g_{n+1} \cdots g_{n+k}$ passing through $v_1$ and terminating at $v_i$. Similarly, there is a unique $j \neq i$  such that there is a circuit labelled $w_p'$ which terminates at $v_j$ without passing through $v_1$. Now, $r(x^-) = \{ v_i, v_j \}$ and $F_\infty(v_j) \subseteq F_\infty(v_i)$, so $x^-$ is $1$-synchronizing.
If $x^-$ is not of the form considered above, then the proof of Proposition \ref{prop_beta_2_to_1} shows that $\lvert r(x^-) \rvert = 1$, so $x^-$ is $1$-synchronizing. Hence, the right Krieger cover is equal to the right Fischer cover by Theorem \ref{thm_gfc_foundation_lkc}.
\end{proof}

\noindent
This result also follows from \cite{katayama_matsumoto_watatani} where it is shown that the Matsumoto algebra associated to $\X_\beta$ is simple.

\subsection{Fiber product}
This section contains an introduction to fiber products and a construction of the right fiber product covers of sofic beta-shifts.

\begin{definition}
\index{fiber product graph}
Let $X$ be an irreducible sofic shift and let $(F, \LL_F)$ be the right Fischer cover of $X$. The \emph{(right) fiber product graph} of $X$ is defined to be the labelled graph with vertex set $\{ (u,v) \mid u,v \in F^0 \}$ where there is an edge labelled $a$ from $(u_1, v_1)$ to $(u_2, v_2)$ if and only if there are edges labelled $a$ from $u_1$ to $u_2$ and from $v_1$ to $v_2$.
\end{definition}

\begin{lem} 
\label{lem_fiber_structure}
Let $X$ be a sofic shift over $\AA$, let $(F, \LL_F)$ be the right Fischer cover of $X$, let $n = \lvert F^0 \rvert$, and let $A$ be the corresponding symbolic adjacency matrix.
Then the symbolic adjacency matrix of the fiber product graph is $(B(i,j))_{1 \leq i,j \leq n}$ where $B(i,j)$ is the $n \times n$ matrix over formal sums over $\AA$ obtained from $A$ by omitting all symbols not appearing in the entry $A_{ij}$.
\end{lem}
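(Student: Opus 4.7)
The argument is essentially a bookkeeping exercise, and the main task is to fix notation carefully so that the block structure of the fiber product's adjacency matrix becomes transparent. The key dynamical input — that each symbol appears at most once in any given entry of $A$ — comes directly from the fact that the right Fischer cover is right-resolving.

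First I would fix an ordering of the vertex set of the fiber product graph by listing pairs lexicographically: identify the $n^2$ vertices with $\{(v_i, v_k) \mid 1 \leq i, k \leq n\}$, ordered so that $(v_i, v_k)$ precedes $(v_j, v_l)$ iff $i < j$, or $i = j$ and $k < l$. With this ordering, the adjacency matrix of the fiber product graph acquires a natural $n \times n$ block structure: writing $C$ for this matrix, the block in position $(i, j)$ is the $n \times n$ matrix whose $(k, l)$ entry is the formal sum of labels of edges from $(v_i, v_k)$ to $(v_j, v_l)$.

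Next I would unpack the definition of the fiber product. By construction, there is an edge labelled $a$ from $(v_i, v_k)$ to $(v_j, v_l)$ if and only if there is an edge labelled $a$ from $v_i$ to $v_j$ and an edge labelled $a$ from $v_k$ to $v_l$ in $(F, \LL_F)$. Since $(F, \LL_F)$ is right-resolving, at most one edge from any fixed vertex carries any given label, so each symbol in $\AA$ appears at most once in each entry of $A$; each $A_{ij}$ may therefore be regarded as a subset of $\AA$. Under this identification, the preceding biconditional says that the formal sum of labels of edges from $(v_i, v_k)$ to $(v_j, v_l)$ is exactly the formal sum of the symbols in $A_{ij} \cap A_{kl}$.

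Finally I would match this with the definition of $B(i,j)$. By definition, $B(i,j)$ is obtained from $A$ by deleting every occurrence of every symbol not appearing in $A_{ij}$, so its $(k,l)$ entry is the formal sum of those symbols in $A_{kl}$ that also lie in $A_{ij}$, i.e.\ the formal sum of symbols in $A_{ij} \cap A_{kl}$. Comparing this with the expression computed in the previous step for the $(k,l)$ entry of the $(i,j)$ block of $C$ gives the desired equality of block matrices. The hardest part, such as it is, will be keeping the two independent pairs of indices $(i,j)$ and $(k,l)$ from being conflated — which the block-matrix viewpoint handles cleanly — and pointing out explicitly that right-resolvingness lets us treat formal sums as sets, so that intersection makes sense.
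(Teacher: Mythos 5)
Your proposal is correct and takes the same (indeed, the only) route as the paper, which simply states that the lemma follows directly from the definition of the fiber product graph; you have spelled out the bookkeeping that the paper leaves implicit. The observation that right-resolvingness lets each entry of $A$ be treated as a subset of $\AA$, so that the $(k,l)$ entry of the $(i,j)$ block is the formal sum over $A_{ij} \cap A_{kl}$, is exactly the content of the claim.
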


\begin{proof}
This follows directly from the definition of the fiber product graph.
\end{proof}

\index{fiber product cover} \noindent 
The fiber product graph of $X$ is a presentation of $X$, and it contains the right Fischer cover as the subgraph induced by the diagonal vertices $\{ (v,v) \mid v \in F^0 \}$. The fiber product graph is generally not essential, so it is often useful to pass to the maximal essential subgraph. This subgraph will be called the \emph{fiber product cover} in the following.

\index{fiber product cover!$\Z / 2\Z$ action on}
Let $X$ be a sofic shift with right Fischer cover $(F, \LL_F)$ such that the covering map $\pi \colon \X_F \to X$ is $2$ to $1$, and let $(P, \LL_P)$ be the fiber product cover of $X$. 
Let $\lambda \in \X_P$ be a biinfinite sequence, let $i \in \Z$, let $s(\lambda_i) = (u_i,v_i) \in P^0$, and let $ r(\lambda_i) = (u_{i+1},v_{i+1}) \in P^0$. Then $(v_i,u_i), (v_{i+1},u_{i+1}) \in P^0$ as well, and there is a unique edge labelled $\LL_P(\lambda_i)$ from $(v_i,u_i)$ to $(v_{i+1},u_{i+1})$. This defines a continuous and shift commuting map $\varphi \colon \X_P \to \X_P$ with $\varphi^2 = \id$. In this way, the labelling induces a continuous and shift commuting $\Z / 2\Z$ action on the edge shift $\X_P$. This is said to be the $\Z / 2\Z$ action on $\X_P$ \emph{induced by the labels}. This also induces a corresponding continuous $\Z / 2\Z$ action on the suspension $S\X_P$. See \cite{boyle_sullivan} for an investigation of equivariant flow equivalence of shift spaces equipped with group actions.

\begin{conj}[{Boyle \cite{boyle_personal}}]
\label{conj_boyle}
For $i \in \{1,2\}$, let $X_i$ be a sofic shift with right Fischer cover $(F_i, \LL_{F_i})$ for which the covering map $\pi_i \colon \X_{F_i} \to X_i$ is $2$ to $1$, and let $(P_i, \LL_{P_i})$ be the fiber product cover of $X_i$. Then $X_1$ and $X_2$ are flow equivalent if and only if there is a flow equivalence $\Phi  \colon S\X_{P_1} \to S\X_{P_2}$ which commutes with the $\Z /2\Z$ actions induced by the labels.
\end{conj}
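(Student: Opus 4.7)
Proof proposal. The plan divides into two implications of quite different character. The ``only if'' direction --- that flow equivalence of $X_1$ and $X_2$ forces $\Z/2\Z$-equivariant flow equivalence of their fiber product covers --- would follow the inductive pattern used in the proofs of Theorems \ref{thm_lkc_lfc_canonical} and \ref{thm_cover_fe}. First, invoke Theorem \ref{thm_parry-sullivan} to decompose any flow equivalence $SX_1 \to SX_2$ into a finite chain of conjugacies and symbol expansions. For a symbol expansion step, Remark \ref{rem_flow_cover_respect} applied to the right Fischer cover shows that the fiber product cover is modified in the natural way (each $(a,a)$-diagonal edge becomes a length-two sequence $(a,a)(\diamond,\diamond)$, and off-diagonal edges transform analogously), and this operation manifestly commutes with the coordinate swap. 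For a conjugacy step, Nasu's decomposition into bipartite codes (Theorem \ref{thm_nasu_decomposition}) combined with the canonicity of the right Fischer cover (the right-analog of Theorem \ref{thm_lkc_lfc_canonical}) furnishes a lifted bipartite code $\varphi$ at the Fischer level, which extends diagonally to the fiber product by $(u, v) \mapsto (\varphi(u), \varphi(v))$. This diagonal lift is $\Z/2\Z$-equivariant tautologically, since $(u, v)$ and $(v, u)$ are sent to $(\varphi(u), \varphi(v))$ and $(\varphi(v), \varphi(u))$ respectively.

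The ``if'' direction is the genuinely difficult implication, and this is where the main obstacle lies. Given a $\Z/2\Z$-equivariant flow equivalence $\Phi \colon S\X_{P_1} \to S\X_{P_2}$, the labelling $\LL_{P_i}$ identifies precisely the edges swapped by the induced action, so the covering map $\pi_{P_i} \colon \X_{P_i} \to X_i$ descends to a quotient map $\bar\pi_i \colon \X_{P_i}/(\Z/2\Z) \to X_i$, and $\Phi$ descends to an ordinary flow equivalence between the orbit suspensions. The difficulty is that $\bar\pi_i$ is typically not a homeomorphism: for each point $x \in X_i$ with two Fischer presentations $\lambda_1, \lambda_2$, the fiber $\bar\pi_i^{-1}(x)$ contains three distinct orbits --- the two diagonal fixed points $(\lambda_j, \lambda_j)$ and the swap pair $\{(\lambda_1, \lambda_2), (\lambda_2, \lambda_1)\}$. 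Bridging the gap between the flow equivalence of quotients and the desired flow equivalence $SX_1 \to SX_2$ requires a reconstruction argument showing that these extra orbits can be absorbed up to flow equivalence, and it is this reconstruction that is the central technical challenge.

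A plausible route to the reconstruction would borrow from the Boyle--Sullivan framework for equivariant flow equivalence of SFTs with group actions: treat $\X_{P_i}$ as a reducible SFT carrying a $\Z/2\Z$ action, and show that equivariant flow equivalence in this category descends through the covering map after a controlled modification on the preimage of the multiplicity set (cf.\ Proposition \ref{prop_fischer_n_to_1}, which at least guarantees that the maximum multiplicity is preserved). For sofic beta-shifts, however, Proposition \ref{prop_beta_2_to_1} combined with the explicit description of the right Fischer cover in Proposition \ref{prop_beta_sofic} confines the off-diagonal dynamics of the fiber product cover to a single finite periodic orbit generated by cyclic permutations of the repeating block $g_{n+1} \cdots g_{n+p}$. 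This drastic simplification should reduce the reconstruction step to concrete combinatorics and make the beta-shift case substantially more tractable than the full conjecture, which is presumably why the classification of fiber product covers of sofic beta-shifts is within reach even while Conjecture \ref{conj_boyle} itself remains open.
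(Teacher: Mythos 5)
This statement is a \emph{conjecture} in the paper, not a theorem: it is attributed to Boyle via personal communication, the paper supplies no proof, and Section \ref{sec_beta_perspectives} states explicitly that proving it ``will rely on deep results in symbolic dynamics'' and was beyond the scope of the thesis. So there is no proof in the paper against which to compare yours, and the real question is whether your proposal closes the gap. It does not. Your ``if'' direction is the entire content of the conjecture, and you openly leave its key step --- reconstructing a flow equivalence $SX_1 \to SX_2$ from the equivariant flow equivalence of the fiber product suspensions, i.e.\ absorbing the three orbits (two diagonal, one swapped pair) sitting over each point of the multiplicity set --- as ``the central technical challenge.'' Naming the obstacle is not the same as overcoming it; gesturing at the Boyle--Sullivan equivariant framework does not specify the ``controlled modification on the preimage of the multiplicity set,'' and it is precisely this descent argument that nobody (including the paper) has written down. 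As it stands, your proposal is a plan for a proof, not a proof.

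The ``only if'' direction of your sketch is more nearly complete and follows the pattern the paper uses for other covers (Theorem \ref{thm_cover_fe}, Proposition \ref{prop_covers_respect_se}), but even there you are assuming facts the paper does not establish for the fiber product cover: that it is canonical and that it respects symbol expansion. The diagonal lift $(u,v) \mapsto (\cov(\varphi)(u), \cov(\varphi)(v))$ of the lifted Fischer-cover conjugacy is a sensible candidate, but you must still check that it carries the maximal essential subgraph to the maximal essential subgraph and that uniqueness holds, before Theorem \ref{thm_cover_fe} applies. These are fixable gaps; the ``if'' direction is not, at least not without genuinely new input. Your closing observation --- that for sofic beta-shifts the off-diagonal dynamics reduce to a single periodic orbit --- is accurate and is exactly why the paper can classify the fiber product covers (Proposition \ref{prop_beta_fiber_fe}) while leaving the conjecture itself open, but note that the paper's beta-shift classification is stated \emph{conditionally} on the conjecture, not as a proof of a special case of it.
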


\noindent
The present work was motivated by a desire to apply this conjectured result to a concrete class of shift spaces. Strictly sofic beta-shifts are natural for this because the covering maps are always $2$ to $1$ by Proposition \ref{prop_beta_2_to_1}.

\begin{figure}[b]
\begin{center}
\begin{tikzpicture}
  [bend angle=10,
   clearRound/.style = {circle, inner sep = 0pt, minimum size = 17mm},
   clear/.style = {rectangle, minimum width = 10 mm, minimum height = 6 mm, inner sep = 0pt},  
   greyRound/.style = {circle, draw, minimum size = 1 mm, inner sep =
      0pt, fill=black!10},
   grey/.style = {rectangle, draw, minimum size = 6 mm, inner sep =
      1pt, fill=black!10},
   white/.style = {rectangle, draw, minimum size = 6 mm, inner sep =
      1pt},
   to/.style = {->, shorten <= 1 pt, >=stealth', semithick}]
  
  \node[white] (P1) at (0,0) {$(1,1)$};
  \node[white] (P2) at (2,0) {$(2,2)$};   
  \node[white] (P3) at (4,0) {$(3,3)$};

  \node[white] (P12) at (0,1.5) {$(1,2)$};
  \node[white] (P23) at (2,1.5) {$(2,3)$};   

  \node[white] (P21) at (0,-1.5) {$(2,1)$};
  \node[white] (P32) at (2,-1.5) {$(3,2)$};

  \draw[to] (P1) to node[auto] {$1$} (P2); 
  \draw[to, loop left] (P1) to node[auto] {$0$} (P1);   
  \draw[to] (P2) to node[auto] {$1$} (P3);
  \draw[to, bend left = 30] (P2) to node[auto] {$0$} (P1);  
  \draw[to, bend right = 45] (P3) to node[auto,swap] {$0$} (P2);    

  \draw[to,bend left] (P12) to node[auto] {$1$} (P23); 
  \draw[to,bend left] (P23) to node[auto] {$0$} (P12);
  \draw[to] (P12) to node[auto,swap] {$0$} (P1);   
   
  \draw[to,bend left] (P21) to node[auto] {$1$} (P32); 
  \draw[to,bend left] (P32) to node[auto] {$0$} (P21);
  \draw[to] (P21) to node[auto] {$0$} (P1);   

\end{tikzpicture}
\end{center}
\caption[Fiber product cover of a sofic beta-shift.]{Fiber product cover of the sofic shift from Example \ref{ex_beta_fiber}.} 
\label{fig_beta_fiber_ex}
\end{figure}

\begin{example}
\label{ex_beta_fiber}
Use Theorem \ref{thm_beta_existence} to find $1 < \beta < 2$ such that $g(\beta) = 1(10)^\infty$. The symbolic adjacency matrix of the right Fischer cover of the corresponding beta-shift $\X_\beta$ is
\begin{displaymath}
\left( \begin{array}{ c c c }
0 & 1 &      \\
0 &    & 1   \\
   & 0 &      \\
\end{array} \right), 
\end{displaymath}
where a blank entry signifies that there is no edge between the two vertices.
By Lemma \ref{lem_fiber_structure}, the symbolic adjacency matrix of the fiber product graph of $\X_\beta$ is
\begin{equation}
\label{eq_beta_fiber_ex_adj}
A = \left( \begin{array}{ c c c | c c c | c c c }
0 &    & &           &1 &    &               & &    \\
0 &    & &           &   & 1 &              & &    \\
   & 0 & &           &   &    &              & &    \\
\hline   
0 &    & &           &   &    &               &  1 &       \\
0 &    & &           &   &    &               &     & 1    \\
   & 0 & &           &   &    &               &     &       \\
\hline
   &    & &        0 &    &   &                & &    \\
   &    & &        0 &    &   &                & &    \\
   &    & &           & 0 &   &                & &    \\
\end{array} \right)
\begin{array}{l}
(1,1) \\
(1,2) \\
(1,3) \\
(2,1) \\
(2,2) \\
(2,3) \\
(3,1) \\
(3,2) \\
(3,3) 
\end{array}.
\end{equation} 
By disregarding stranded vertices, this can be reduced to 
\begin{displaymath}
A' = \left( \begin{array}{ c c | c c c | c c }
0 &    &    & 1 &    &    &    \\
0 &    &    &    & 1 &    &    \\
\hline
0 &    &    &    &    & 1 &    \\
0 &    &    &    &    &    & 1 \\
   & 0 &    &    &    &    &    \\
\hline
0 &    & 0 &    &    &    &    \\
0 &    &    & 0 &    &    &    \\
\end{array} \right)
\begin{array}{l}
(1,1) \\
(1,2) \\
(2,1) \\
(2,2) \\
(2,3) \\
(3,2) \\
(3,3) \\
\end{array}.
\end{displaymath}
Figure \ref{fig_beta_fiber_ex} shows this fiber product cover. Note that it contains the right Fischer cover as the connected component containing the diagonal vertices.
\end{example}

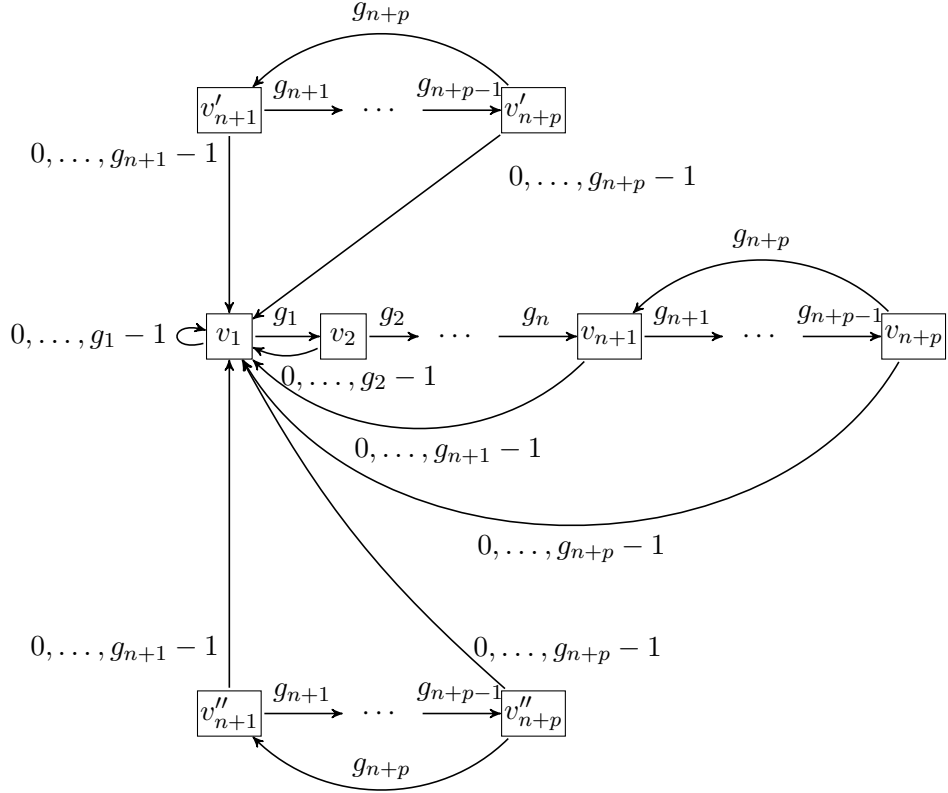
\begin{figure}[ht]
\begin{center}
\begin{tikzpicture}
 [bend angle=10,
   clearRound/.style = {circle, inner sep = 0pt, minimum size = 17mm},
   clear/.style = {rectangle, minimum width = 10 mm, minimum height = 6 mm, inner sep = 0pt},  
   greyRound/.style = {circle, draw, minimum size = 1 mm, inner sep =
      0pt, fill=black!10},
   grey/.style = {rectangle, draw, minimum size = 6 mm, inner sep =
      1pt, fill=black!10},
   white/.style = {rectangle, draw, minimum size = 6 mm, inner sep =
      1pt},
   to/.style = {->, shorten <= 1 pt, >=stealth', semithick}]
  
  \node[white] (v1)     at (0,0) {$v_1$};
  \node[white] (v2)     at (1.5,0) {$v_2$};
  \node[clear] (dots)   at (3,0) {$\cdots$};
  \node[white] (vn+1) at (5,0) {$v_{n+1}$};
  \node[clear] (dots2) at (7,0) {$\cdots$}; 
  \node[white] (vn+p) at (9,0) {$v_{n+p}$};

  \node[white] (vn+1')     at (0,3) {$v_{n+1}'$};
  \node[clear] (dots2')     at (2,3) {$\cdots$}; 
  \node[white] (vn+p')     at (4,3) {$v_{n+p}'$};

  \node[white] (vn+1'')     at (0,-5) {$v_{n+1}''$};
  \node[clear] (dots2'')     at (2,-5) {$\cdots$}; 
  \node[white] (vn+p'')     at (4,-5) {$v_{n+p}''$};

  \draw[to] (v1) to node[auto] {$g_1$} (v2); 
  \draw[to] (v2) to node[auto] {$g_2$} (dots); 
  \draw[to] (dots) to node[auto] {$g_n$} (vn+1);
  \draw[to] (vn+1) to node[auto] {$g_{n+1}$} (dots2);
  \draw[to] (dots2) to node[auto] {$g_{n+p-1}$} (vn+p);  

  \draw[to, loop left] (v1) to node[auto] {$0 ,  \ldots , g_{1}-1$} (v1); 
  \draw[to, bend left = 25] (v2) to node[auto] {\quad \qquad \qquad $0, \ldots , g_{2}-1$} (v1);    
  \draw[to, bend left = 45] (vn+1) to node[auto] {\qquad $0, \ldots , g_{n+1}-1$} (v1);     
  \draw[to, bend left = 60] (vn+p) to node[auto] {$0, \ldots , g_{n+p}-1$} (v1);     
  \draw[to, bend right = 45] (vn+p) to node[auto,swap] {$g_{n+p}$} (vn+1);     

  \draw[to] (vn+1') to node[auto] {$g_{n+1}$} (dots2');
  \draw[to] (dots2') to node[auto] {$g_{n+p-1}$} (vn+p');  
  \draw[to, bend right = 45] (vn+p') to node[auto,swap] {$g_{n+p}$} (vn+1');     
  \draw[to] (vn+1') to node[very near start, left] {$\; 0, \ldots, g_{n+1}-1$} (v1);
  \draw[to] (vn+p') to node[near start, right] {\qquad $0, \ldots, g_{n+p}-1$} (v1);

  \draw[to] (vn+1'') to node[auto] {$g_{n+1}$} (dots2'');
  \draw[to] (dots2'') to node[auto] {$g_{n+p-1}$} (vn+p'');  
  \draw[to, bend left = 45] (vn+p'') to node[auto,swap] {$g_{n+p}$} (vn+1'');     
  \draw[to] (vn+1'') to node[very near start, left] {$\; 0, \ldots, g_{n+1}-1$} (v1);
  \draw[to, bend left = 10] (vn+p'') to node[very near start, right] {$0, \ldots, g_{n+p}-1$} (v1);
  
\end{tikzpicture}
\end{center}
\caption[Fiber product covers of sofic beta-shifts.]{Fiber product cover of a sofic beta-shift with minimal $n,p$ such that $g(\beta) = g_1 \cdots g_n(g_{n+1} \cdots g_{n+p})^\infty$.} 
\label{fig_beta_fiber}
\end{figure}

The following proposition shows that the fiber product cover always has the structure seen in the previous example.

\begin{prop}
\label{prop_beta_fiber}
\index{beta-shift!fiber product cover of}
Let $\beta > 1$ such that $g(\beta)$ is eventually periodic but not periodic, then the fiber product cover of $\X_\beta$ is the graph shown in Figure \ref{fig_beta_fiber}.
\end{prop}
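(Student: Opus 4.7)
The plan is to compute the full fiber product graph directly from the description of the right Fischer cover in Proposition \ref{prop_beta_sofic} using Lemma \ref{lem_fiber_structure}, then identify its maximal essential subgraph by isolating which off-diagonal pairs actually lie on a biinfinite path. Write $F_\beta^0 = \{v_1, \ldots, v_{n+p}\}$ and $w_p = g_{n+1} \cdots g_{n+p}$, and let $(P, \LL_P)$ denote the fiber product graph on vertex set $\{(v_i, v_j) : 1 \leq i,j \leq n+p\}$. The diagonal vertices $\{(v_i, v_i)\}$ are clearly essential, and the subgraph induced by the diagonal is isomorphic to the right Fischer cover of $\X_\beta$; this accounts for the bottom row of Figure \ref{fig_beta_fiber}.

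For the off-diagonal vertices, the essential observation is the following consequence of the proof of Proposition \ref{prop_beta_2_to_1}: if $\lambda, \mu \in \X_{F_\beta}$ satisfy $\LL_\beta(\lambda) = \LL_\beta(\mu)$ and $\lambda_i \neq \mu_i$ for some $i \in \Z$, then (since the right Fischer cover is right-resolving) the two paths must disagree on the entire tail $\{j \geq i\}$, and the common label on this tail is forced to be $w_p^\infty$. Dually, using Lemma \ref{lem_beta_w_unique_r}, the two paths can only disagree on the left as long as the label remains a factor of $g(\beta)$, and in our situation this forces them to disagree on the entire past as well, with common label an infinite left extension of a cyclic shift of $w_p^\infty$. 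Hence any biinfinite path through an off-diagonal vertex $(v_i, v_j)$ of $(P, \LL_P)$ must be carried on two distinct circuits in $F_\beta$, both labelled $w_p$, one passing through $v_1$ and one not; these are precisely the two circuits exhibited in the proof of Proposition \ref{prop_beta_2_to_1}.

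The second step is to enumerate these off-diagonal cycles explicitly. The ``short'' circuit not passing through $v_1$ is the cycle $v_{n+1} \to v_{n+2} \to \cdots \to v_{n+p} \to v_{n+1}$. Let $v_{i_0} \to v_{i_0+1} \to \cdots \to v_1 \to v_2 \to \cdots \to v_{i_0}$ be the unique other circuit labelled $w_p$; its starting vertex $v_{i_0}$ is determined by requiring that the relevant cyclic shift of $w_p$ beginning at position $i_0$ is maximal. Pairing up the $k$th vertex on the short circuit with the $k$th vertex on the long circuit (for $k = 0, \ldots, p-1$) yields a cycle of off-diagonal vertices $v_{n+1}', \ldots, v_{n+p}'$ in $P$; pairing them in the reverse order yields a second, disjoint cycle $v_{n+1}'', \ldots, v_{n+p}''$. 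These are exactly the off-diagonal cycles depicted in Figure \ref{fig_beta_fiber}, and a direct application of Lemma \ref{lem_fiber_structure} shows that the edges labelled $g_{n+1}, \ldots, g_{n+p}$ close up as indicated.

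The final step is to determine which edges leave these off-diagonal cycles: for $v_{n+k}'$ (and similarly $v_{n+k}''$), the possible labels $a$ on outgoing edges are those for which both endpoints of the corresponding pair in $F_\beta$ emit an $a$-edge. For $a < g_{n+k}$ both endpoints send their $a$-edge to $v_1$, producing an edge from $v_{n+k}'$ (respectively $v_{n+k}''$) to $(v_1, v_1)$; for $a = g_{n+k}$ the edge stays inside the off-diagonal cycle, as already described. No edges enter the off-diagonal cycles from the diagonal, since any edge into an off-diagonal vertex requires two distinct paths with the same label, which by the first paragraph forces the source to be off-diagonal as well. This accounts for all essential vertices and edges in $P$ and matches Figure \ref{fig_beta_fiber} exactly. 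The main obstacle I anticipate is a careful bookkeeping in the second step, where one must pin down the correct relative offset between the two circuits labelled $w_p$ and verify that this gives rise to the two mirror-image off-diagonal cycles rather than, say, a single longer cycle; apart from that the argument is a routine unpacking of Lemma \ref{lem_fiber_structure} together with the rigidity statement on biinfinite label-coincidences.
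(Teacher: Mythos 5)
Your overall strategy is the same as the paper's: both proofs reduce everything to the rigidity statement extracted from the proof of Proposition \ref{prop_beta_2_to_1} (a sequence has two presentations only via the two circuits labelled by cyclic shifts of $w_p$, one through $v_1$ and one avoiding it), and then read off the off-diagonal cycles and their outgoing edges. In fact you are more explicit than the paper in one respect: the paper's proof only names the pairs $(u_i,u_i')$, whereas you correctly record that the ordered pairs $(u_i,u_i')$ and $(u_i',u_i)$ give the two mirror-image off-diagonal cycles $v'_{n+1},\dots,v'_{n+p}$ and $v''_{n+1},\dots,v''_{n+p}$ of Figure \ref{fig_beta_fiber}.

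However, your key rigidity step is stated with the direction of time reversed, and as written it is both false and inconsistent with your own third paragraph. Right-resolving means that a vertex emits at most one edge with a given label; hence if $\lambda_i\neq\mu_i$ but $\LL_\beta(\lambda_i)=\LL_\beta(\mu_i)$, then $s(\lambda_i)\neq s(\mu_i)$, so $\lambda_{i-1}\neq\mu_{i-1}$: disagreement propagates into the \emph{past}, not onto the tail $\{j\geq i\}$. In the future the two paths can, and typically do, converge --- namely when both presentations read a symbol $a<g_{n+k}$ and both edges terminate at $v_1$; this convergence is exactly what the edges from $v'_{n+k}$ and $v''_{n+k}$ to $(v_1,v_1)$ in Figure \ref{fig_beta_fiber} encode, so your claim that the common label on the entire future tail is forced to be $w_p^\infty$ cannot be right (it would leave the off-diagonal cycles with no exit edges at all). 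The correct statement, which is what the proofs of Propositions \ref{prop_beta_2_to_1} and \ref{prop_beta_krieger} actually deliver via Lemma \ref{lem_beta_w_unique_r}, is that the \emph{left-infinite} label ending at any position where the two presentations disagree must be a cyclic shift of $w_p^\infty$, and the two presentations of that left-ray are carried by the two $w_p$-labelled circuits. Once you swap past and future in your first two paragraphs, the remainder of your argument --- the enumeration of the two off-diagonal cycles, the edges labelled $0,\dots,g_{n+k}-1$ into $(v_1,v_1)$, and the observation that right-resolvingness forbids any edge from a diagonal vertex into an off-diagonal one --- is correct and matches the paper.
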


\begin{proof}
Let $(F_\beta, \LL_\beta)$ be the right Fischer cover of $\X_\beta$, and let $n,p$ be minimal such that $g(\beta) = g_1 \cdots g_n (g_{n+1} \cdots g_{n+p})^\infty$. By the proof of Proposition \ref{prop_beta_2_to_1}, a left-ray will have a unique presentation unless it is equal to $w^\infty$ for some cyclic permutation $w$ of the period $g_{n+1} \cdots g_{n+p}$. To find the fiber product cover, it is therefore sufficient to consider such periodic left-rays. 

By the proof of Proposition \ref{prop_beta_2_to_1}, there exist $u_0, \ldots , u_{p-1}, u'_0, \ldots, u'_{p-1} \in F_\beta^0$ with $u_i \neq u_i'$ such that there are edges labelled $g_{n+i+1}$ from $u_i$ to $u_{i+1 \pmod{p}}$ and from $u'_i$ to $u'_{i+1 \pmod{p}}$ for each $0 \leq i \leq p-1$. Now $(u_0,u'_0), \ldots, (u_{p-1},u'_{p-1})$ are the only off-diagonal vertices in the fiber product cover. For each $0 \leq i \leq p-1$, the fiber product cover has an edge labelled $g_{n+i+1}$ from $(u_i,u'_i)$ to $(u_{i+1 \pmod{p}},u'_{i+1 \pmod{p}})$ and edges labelled $0, \ldots, g_{n+i+1}-1$ from $(u_i,u'_i)$ to $(v_1,v_1)$, where $v_1$ is the first vertex of the right Fischer cover. This gives the labelled graph shown in Figure \ref{fig_beta_fiber}.
\end{proof}

\begin{remark}
\label{rem_beta_fiber_action}
Let $\beta > 1$ with $g(\beta) = g_1 \cdots g_n(g_{n+1} \cdots g_{n+p})^\infty$ for minimal $n,p$, and let $(P_\beta, \LL_{P_\beta})$ be the right fiber product cover of $\X_\beta$. Consider the $\Z / 2\Z$ action on $\X_{P_\beta}$ induced by the labelling. The element $1 \in \Z / 2\Z$ acts by fixing the part of the graph that is isomorphic to the Fischer cover and switching the two analogous irreducible components lying above (see Figure \ref{fig_beta_fiber}).
\end{remark}

\section{Flow classification of sofic beta-shifts}
\label{sec_beta_classification}
The aim of this section is to use the acquired knowledge about the structure of the fiber product covers to examine the flow equivalence of the corresponding edge shifts. If Conjecture \ref{conj_boyle} is true, this will give a complete flow classification of sofic beta-shifts.
First, a simple flow classification of beta-shifts of finite type is given. Then the Bowen-Franks groups of the covers of sofic beta-shifts considered in the previous sections are computed and shown to depend only on the sum of the entries in the period of the generating sequence. Finally, it is proved that this number is a complete invariant of flow equivalence of the edge shifts of the fiber product covers of sofic beta-shifts.

\subsection{Flow classification of beta-shifts of finite type}
The characterisation of beta-shifts of finite type given in Corollary \ref{cor_beta_sft} makes it possible to give a complete flow classification of such shifts.

\begin{prop}
\label{prop_beta_sft_bf}\index{beta-shift!of finite type!classification of}\index{beta-shift!of finite type!Bowen-Franks group of}
Given $\beta > 1$ such that $\X_\beta$ is an SFT, choose minimal $p \in \N$ such that the generating sequence is $g(\beta) = (g_1 \cdots g_p)^\infty$, then $\BF_+(\X_\beta) = - \Z / S\Z$ with $S=\sum_{j=1}^p g_j$. In particular, every SFT beta-shift is flow equivalent to a full shift.
\end{prop}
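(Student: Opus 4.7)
The plan proceeds in three steps: reduce the question to a computation on the right Fischer cover's edge shift, compute the Smith normal form of $I - A$ for the relevant adjacency matrix $A$, and then invoke Franks' theorem to identify the flow class with that of a full shift.

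First, I would use Corollary \ref{cor_beta_sft} to reduce to the case where $g(\beta)$ is periodic, say $g(\beta) = (g_1 \cdots g_p)^\infty$ with $p$ minimal, and then invoke Proposition \ref{prop_beta_2_to_1} together with Corollary \ref{cor_lfc_conj}: since the covering map $\pi \colon \X_{F_\beta} \to \X_\beta$ of the right Fischer cover is $1$ to $1$ in the periodic case, it is a conjugacy. Hence it suffices to compute $\BF_+(\X_{F_\beta})$. Applying Proposition \ref{prop_beta_sofic} in the degenerate case (so that $v_{n+1} = v_1$), the cover $F_\beta$ has vertex set $\{v_1, \ldots, v_p\}$ and adjacency matrix
\begin{displaymath}
A = \begin{pmatrix} g_1 & 1 & 0 & \cdots & 0 \\ g_2 & 0 & 1 & & 0 \\ \vdots & & & \ddots & \\ g_{p-1} & 0 & 0 & \cdots & 1 \\ g_p + 1 & 0 & 0 & \cdots & 0 \end{pmatrix},
\end{displaymath}
where the $g_p+1$ in the bottom-left entry records both the $g_p$ edges to $v_1$ labelled $0, \ldots, g_p - 1$ and the wrap-around edge labelled $g_p$ from $v_p$ back to $v_1$.

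Next, I would recognize $A$ as the transpose of the companion matrix of the polynomial $x^p - g_1 x^{p-1} - g_2 x^{p-2} - \cdots - g_{p-1} x - (g_p+1)$, whence
\begin{displaymath}
\det(I - A) = 1 - g_1 - g_2 - \cdots - g_{p-1} - (g_p+1) = -S,
\end{displaymath}
with $S = \sum_{j=1}^{p} g_j \geq 1$. In particular $\sgn \det(I-A) = -1$. To pin down the Smith normal form, I would observe that deleting the first row and first column of $I - A$ yields an upper-triangular $(p-1) \times (p-1)$ matrix with $1$s on the diagonal and $-1$s on the superdiagonal; this submatrix has determinant $1$, so the $(p-1)$-th determinantal divisor of $I - A$ is $1$. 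Consequently all invariant factors except the last are $1$, the last must equal $|\det(I-A)| = S$, and therefore $\BF(\X_{F_\beta}) \simeq \Z/S\Z$ and $\BF_+(\X_\beta) = -\Z/S\Z$.

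For the concluding statement, I recall from Section \ref{sec_fe_inv} that the full $(S+1)$-shift has Bowen-Franks invariant $-\Z/S\Z$. Both $\X_\beta$ and the full $(S+1)$-shift are irreducible SFTs, and neither is the trivial one-point shift since $\X_\beta$ has alphabet of cardinality $\geq 2$. Franks' Theorem \ref{thm_franks} then gives the asserted flow equivalence. No real obstacle is anticipated; the only delicate bookkeeping is ensuring that the "$+1$" appears correctly in the bottom-left entry of $A$ — which comes from identifying the distinction between the purely periodic case and the strictly eventually periodic case treated in Proposition \ref{prop_beta_2_to_1}.
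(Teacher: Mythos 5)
Your proposal is correct and follows essentially the same route as the paper: pass to the right Fischer cover (whose covering map is a conjugacy in the SFT case), write down the same $p\times p$ adjacency matrix with first column $(g_1,\ldots,g_{p-1},g_p+1)^{\mathrm T}$ and superdiagonal $1$s, and compute the determinant and Smith normal form of $\Id - A$ before invoking Franks' theorem. You have merely filled in the details that the paper dismisses as "straightforward" (the companion-matrix evaluation giving $\det(\Id-A)=-S$ and the determinantal-divisor argument pinning down the Smith form), and these details are all sound.
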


\noindent
Note that $\sum_{j=1}^p g_j = \sum_{i=1}^p a_i - 1$ when $e(\beta) = a_1 \cdots a_p00\cdots$ and $a_p \neq 0$.

\begin{proof}
By Proposition \ref{prop_beta_sofic}, the (non-symbolic) adjacency matrix of the underlying graph of the right Fischer cover of $\X_\beta$ is
\begin{displaymath}
A = \begin{pmatrix}
g_1      & 1 & 0 & \cdots & 0 & 0 \\ 
g_2      & 0 & 1 &             & 0 & 0 \\ 
g_3      & 0 & 0 &             & 0 & 0 \\ 
\vdots   &    &    & \ddots &    &    \\ 
g_{p-1 }& 0 & 0 &          & 0 & 1 \\ 
g_p+1      & 0 & 0 & \cdots & 0 & 0  
\end{pmatrix}.
\end{displaymath} 
Now it is straightforward to compute the complete invariant by finding the Smith normal form and determinant of $\Id - A$.
\end{proof}

\noindent
It is also not hard to construct a concrete flow-equivalence between the beta-shift considered in Proposition \ref{prop_beta_sft_bf} and the full $(S+1)$-shift.

\begin{example}
If $\beta > 1$, then the entropy of $\X_\beta$ is $\log \beta$ \cite{parry,renyi}.
In particular, beta-shifts $\X_{\beta_1}$ and $\X_{\beta_2}$ with $\beta_1 \neq \beta_2$ are never conjugate.
However, by Theorem \ref{thm_beta_existence}, there exist $1 < \beta _1 < 2$ and $2 < \beta_2 < 3$ such that $(110)^\infty$ is the generating sequence of $\X_{\beta_1}$ and $(20)^\infty$ is the generating sequence of $\X_{\beta_2}$, and the beta-shifts $\X_{\beta_1}$ and $\X_{\beta_2}$ are flow equivalent by Proposition \ref{prop_beta_sft_bf}. 
\end{example}

\subsection{Bowen-Franks groups}
The Bowen-Franks groups of the underlying graphs of the covers from Section \ref{sec_beta_covers} will be computed in this section. 

\begin{prop} 
\label{prop_beta_bf}
\index{beta-shift!sofic!Bowen-Franks group of}
Let $\beta > 1$ with sofic $\X_\beta$, and let $n,p$ be minimal such that $g(\beta) = g_1 \cdots g_n ( g_{n+1} \cdots g_{n+p})^\infty$. Let $A_K$
and $A_P$ be the adjacency matrices of the underlying graphs of respectively the Krieger cover
and the fiber product cover. Then $\BF_+(A_K) = -\Z / S\Z$
and $\BF(A_P) = (\Z / S\Z) \oplus \Z \oplus \Z$ where $S = \sum_{i=1}^p g_{n+i}$.
\end{prop}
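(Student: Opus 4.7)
By Proposition~\ref{prop_beta_krieger} the matrix $A_K$ is the adjacency matrix of the right Fischer cover from Proposition~\ref{prop_beta_sofic}: its $(i,j)$ entry is $g_i$ in column $1$, a $1$ in column $i+1$ when $i<n+p$, and a $1$ in column $n+1$ when $i=n+p$, with zeros elsewhere. The plan for the first assertion is to reduce $\Id-A_K$ to Smith normal form by explicit elementary operations over $\Z$. Writing $T_i=g_1+\cdots+g_i$, I would begin with the telescoping row sweep $r_i\mapsto r_i+r_{i-1}$ for $i=2,\ldots,n+p$; an easy induction shows that for $i<n+p$ row $i$ becomes $(1-T_i,0,\ldots,0,-1,0,\ldots,0)$ with the $-1$ in column $i+1$, while row $n+p$ becomes $(1-T_{n+p},0,\ldots,0,-1,0,\ldots,0)$ with the $-1$ still in column $n+1$. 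The column operations $c_1\mapsto c_1+(1-T_{i-1})c_i$ for $i=2,\ldots,n+p$ then clear every entry of the first column except position $(n+p,1)$, where the entry becomes $(1-T_{n+p})-(1-T_n)=T_n-T_{n+p}=-S$. After a row and column permutation the matrix is diagonal with $n+p-1$ entries $\pm 1$ and a single entry $-S$, so $\det(\Id-A_K)=-S$ and the Smith normal form is $\diag(1,\ldots,1,S)$; hence $\BF_+(A_K)=-\Z/S\Z$.

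For the fiber product cover, the plan is to read the block structure directly from Proposition~\ref{prop_beta_fiber}. Listing the $n+3p$ vertices as Fischer cover, upper copy, lower copy yields the block lower triangular form
\[
\Id-A_P=\begin{pmatrix}\Id-A_K & 0 & 0\\ -B & \Id-C & 0\\ -B & 0 & \Id-C\end{pmatrix},
\]
where $C$ is the $p\times p$ cyclic permutation matrix of a single period loop and $B$ is the $p\times(n+p)$ matrix whose only nonzero column is $B_{k,1}=g_{n+k}$. Rather than attack the Smith normal form of $\Id-A_P$ directly, I would present $\BF(A_P)$ by generators $[v_1],\ldots,[v_{n+p}],[v'_{n+1}],\ldots,[v'_{n+p}],[v''_{n+1}],\ldots,[v''_{n+p}]$ with the row relations of $\Id-A_P$. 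The first $n+p$ rows reproduce the Fischer cover presentation of $\BF(A_K)=\Z/S\Z$ on the $[v_i]$; the rows coming from the upper copy contribute the telescoping relations $[v'_{n+k}]-[v'_{n+k+1}]=g_{n+k}[v_1]$ for $1\le k<p$ together with the cyclic closure $[v'_{n+p}]-[v'_{n+1}]=g_{n+p}[v_1]$, and the lower copy contributes the analogous relations in $[v''_{n+k}]$.

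Iterating the telescoping expresses $[v'_{n+2}],\ldots,[v'_{n+p}]$ in terms of $[v'_{n+1}]$ and $[v_1]$, and summing all upper relations against the closure collapses to the single relation $S\,[v_1]=0$, which is already enforced by the Fischer cover block. Hence the upper and lower copies contribute no new torsion on the $[v_i]$ and leave $[v'_{n+1}]$ and $[v''_{n+1}]$ as two independent free generators, giving $\BF(A_P)=(\Z/S\Z)\oplus\Z\oplus\Z$. The main obstacle is the careful bookkeeping in the first step -- verifying that the stray $-1$ in position $(n+p,n+1)$ of the row-reduced matrix cooperates with the column operations to isolate precisely $-S$; once the Smith normal form of $\Id-A_K$ and the value of $S$ are in hand, the generators-and-relations argument for $A_P$ is essentially formal.
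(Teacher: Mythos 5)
Your proposal is correct and follows essentially the same route as the paper, which simply writes down $A_K$ in block form and declares the Smith form and determinant computations "straightforward" (and treats $A_P$ "in the same manner"); you have merely carried out the elementary operations explicitly, and your telescoping reduction does isolate $-S$ as claimed, while the cokernel presentation for $A_P$ correctly shows the two periodic copies each add one free generator and only the redundant relation $S[v_1]=0$. The one loose end you already flag — the stray $-1$ in position $(n+p,n+1)$ — is cleared by a single further row operation $r_{n+p}\mapsto r_{n+p}-r_n$ after the column sweep, so the argument is complete.
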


\begin{proof}
By Proposition \ref{prop_beta_krieger},
\begin{displaymath}
A_K = 
\left(\!\! \begin{array}{ c c c c c c | c c c c c c  }
g_1    & 1 & 0 & \cdots & 0 & 0 &
0        & 0 & 0 & \cdots & 0 & 0  \\
g_2    & 0 & 1 &            & 0 & 0 &
0        & 0 & 0 &            & 0 & 0   \\
g_3    & 0 & 0 &            & 0 & 0 &
0        & 0 & 0 &           & 0 & 0    \\
\vdots &   &    & \ddots &    & \vdots &
\vdots &   &    & \ddots &    & \vdots  \\
g_{n-1}    & 0 & 0 &           & 0 & 1 &
0             & 0 & 0 &            & 0 & 0  \\
g_n         & 0 & 0 & \cdots & 0 & 0 &
1             & 0 & 0 & \cdots & 0 & 0  \\
\hline
g_{n+1} &    &    &            &    &    &
0           & 1 &  0 & \cdots & 0 & 0  \\
g_{n+2} &    &    &            &   &    &
0           & 0 & 1 &            & 0 & 0   \\
g_{n+3} &    &    &            &    &   &
0           & 0 & 0 &           & 0 & 0   \\
\vdots   &   &    & 0         &    &    &
\vdots   &   &    & \ddots &    & \vdots  \\
g_{n+p-1}    &  &  &           &    &   &
0             & 0 & 0 &            & 0 & 1   \\
g_{n+p}  &    &    &            &    &    &
1             & 0 & 0 & \cdots & 0 & 0   
\end{array}\!\! \right).
\end{displaymath}
It is straightforward to find the invariant by computing the Smith form and determinant of $\Id - A_K$. The other Bowen-Franks group is computed in the same manner.
\end{proof}

\noindent
These Bowen-Franks groups only contain information about the sum of the numbers in the periodic part of the generating sequence. This is partially explained by the results of the following section. 
\subsection{Concrete constructions}
\label{sec_beta_constructions}
This section contains recipes for concrete constructions of flow equivalences reducing the complexity of beta-shifts. Let $n \in \N$ and $n-1 < \beta < n$ be given, and let $X = \X_\beta$. Define $\varphi \colon \AA_\beta \to \{ 0 ,1\}^*$ by 
\begin{displaymath}
   \varphi(j) = \overbrace{1 \cdots 1}^{j} 0,
\end{displaymath}
and extend this to $\varphi \colon \AA_\beta^* \to \{ 0 ,1\}^*$ by $\varphi(a_1 \cdots a_n) = \varphi(a_1) \cdots \varphi(a_n)$.
The shift closure of $\varphi(\BB(X))$ is the language of a shift space $X'$ flow equivalent to $X$ by Lemma \ref{lem_replace}. Define the induced map $\varphi \colon X^+ \to X'^+$ by  
$\varphi(x_1 x_2 \cdots ) = \varphi(x_1) \varphi(x_2) \cdots $.

\begin{lem}
\label{lem_beta_order_preserving}
The map $\varphi \colon X^+ \to X'^+$ constructed above is surjective and order preserving.
\end{lem}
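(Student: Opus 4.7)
The plan is to handle the two assertions separately. Order preservation follows from a direct position-by-position comparison, while surjectivity requires a greedy parsing argument whose main obstacle is showing that the parsed sequence belongs to $X^+$.

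For the order-preserving part, given $x, y \in X^+$ with $x < y$, I would take the smallest $k$ with $x_k < y_k$. The concatenations $\varphi(x_1) \cdots \varphi(x_{k-1})$ and $\varphi(y_1) \cdots \varphi(y_{k-1})$ coincide, and at the position where $\varphi(x_k) = 1^{x_k} 0$ starts inside $\varphi(x)$, the image $\varphi(y)$ begins the longer run $1^{y_k} 0$. Comparing at offset $x_k$ within this block shows $\varphi(x)$ has $0$ while $\varphi(y)$ still has $1$, giving $\varphi(x) < \varphi(y)$.

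For surjectivity, the essential observation is that $\varphi(j) = 1^j 0$ always terminates in $0$, and since $\AA_\beta = \{0, 1, \ldots, n-1\}$ the run length of ones in any $\varphi(w)$ is bounded by $n-1$. Using this, I would define a unique greedy parsing of any $y \in X'^+$ by cutting after each $0$, producing a sequence $x \in \AA_\beta^{\N}$ with $\varphi(x) = y$. The problem then reduces to showing $x \in X^+$, which by Theorem \ref{thm_renyi} amounts to $\sigma^k x \leq g(\beta)$ for every $k$.

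The main obstacle is this final verification, which I would prove by contradiction. Suppose $\sigma^k x > g(\beta)$ and let $l \geq 0$ be minimal with $x_{k+l} > g_{l+1}$ while $x_{k+i} = g_{i+1}$ for $i < l$. Since $n-1 < \beta < n$ is not an integer, the $\beta$-expansion of $1$ begins with $n-1$ and hence $g_1 = n-1$; the case $l = 0$ then forces $x_k > n-1$, which is impossible. For $l \geq 1$, the word $u = 1^{g_1} 0 1^{g_2} 0 \cdots 1^{g_l} 0 1^{g_{l+1}+1}$ is a factor of $y$ and therefore of some $\varphi(w)$ with $w \in \BB(X)$. A case analysis on where $u$ sits inside $\varphi(w)$ finishes the argument: if $u$ starts at a block boundary, say at $\varphi(w_s)$, then $w_s \cdots w_{s+l} = g_1 \cdots g_l w_{s+l}$ with $w_{s+l} \geq g_{l+1}+1$, so $\sigma^s w > g(\beta)$, contradicting $w \in \BB(X)$; any mid-block start with offset $t \geq 1$ forces the letter $w_s = g_1 + t \geq n$, which violates the alphabet bound $w_s \leq n-1$. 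Both cases are impossible, so $x \in X^+$ and $\varphi$ is surjective.
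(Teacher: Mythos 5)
Your proof is correct, and while the order-preservation half is just the position-by-position comparison that the paper dismisses with ``by construction,'' your surjectivity argument takes a genuinely different route. The paper starts from the structural fact that every $x'^+ \in X'^+$ has the form $w\,\varphi(x_2)\varphi(x_3)\cdots$ with $w$ a suffix of $\varphi(x_1)$ for some $x^+ = x_1x_2\cdots \in X^+$; since such a suffix is itself $\varphi(a)$ for some $a \le x_1$, the candidate preimage $a x_2 x_3 \cdots$ is dominated by $x^+$ and lies in $X^+$ immediately by Theorem \ref{thm_renyi}. You instead recover a candidate preimage with no a priori connection to a point of $X^+$, by greedy parsing at the zeros (correctly justified by the bound $n-1$ on runs of ones in $X'$), and then verify the R\'enyi condition from scratch: a hypothetical violation produces the factor $u = 1^{g_1}0\cdots 1^{g_l}0\,1^{g_{l+1}+1}$ of some $\varphi(w)$ with $w \in \BB(X)$, which your two-case analysis (block-boundary start forces a suffix of $w$ exceeding $g(\beta)$; mid-block start forces a letter $\ge n$) rules out, using $g_1 = n-1$. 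Your version is longer but more self-contained, working entirely at the level of the language $\BB(X')$ rather than presupposing that every right-ray of $X'$ is a shifted image of a right-ray of $X$ --- a fact about the substituted shift that the paper invokes without proof. The paper's version is shorter because the single comparison $a x_2 x_3 \cdots \le x_1 x_2 \cdots \le g(\beta)$ does all the work that your case analysis does by hand.
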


\begin{proof}
The map preserves the lexicographic order by construction.
Let $x'^+ \in X'^+$ be given. By construction, there exists $x^+ = x_1x_2 \cdots \in X^+$ such that $x'^+ = w \varphi(x_2) \varphi(x_3) \cdots$ where $w$ is a suffix of $\varphi(x_1)$. Now there exists $1 \leq a \leq x_1$ such that $w = \varphi(a)$. Since $a x_2 x_3 \cdots \leq  x_1 x_2 \cdots \leq g(\beta)$, it follows from Theorem \ref{thm_renyi} that $a x_2 x_3 \cdots \in X^+$, so $x'^+ \in \varphi(X^+)$.  
\end{proof}

\begin{thm}\index{beta-shift!flow equivalence of}
For every $\beta > 1$ there exists $1 < \beta' < 2$ such that $\X_\beta \FE \X_{\beta '}$. 
\end{thm}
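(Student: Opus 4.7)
The construction immediately preceding the theorem already produces a shift space $X'$ over $\{0,1\}$ with $X' \FE \X_\beta$ via (repeated application of) Lemma \ref{lem_replace}; the work is therefore to identify $X'$ as $\X_{\beta'}$ for some $1 < \beta' < 2$. Set $h = \varphi(g(\beta)) = 1^{g_1} 0 \, 1^{g_2} 0 \, 1^{g_3} 0 \cdots \in \{0,1\}^\N$.

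The first step is to show that $h$ satisfies the Parry condition of Theorem \ref{thm_beta_existence} (up to the standard adjustment between $e(\beta')$ and $g(\beta')$). Suppose first that $g(\beta)$ is not periodic, so that $g_k g_{k+1} \cdots < g_1 g_2 \cdots$ strictly for every $k \geq 2$. Consider an arbitrary shift $\sigma^j h$ and locate position $j$ inside the block $\varphi(g_i) = 1^{g_i} 0$ which it enters. If $\sigma^j h$ begins at the start of $\varphi(g_i)$ (with $i \geq 2$), then $\sigma^j h = \varphi(g_i g_{i+1} \cdots) < \varphi(g_1 g_2 \cdots) = h$ by the order-preserving property established in Lemma \ref{lem_beta_order_preserving}. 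If $\sigma^j h$ begins $s$ positions into the run of $1$'s ($1 \leq s \leq g_i$), then it starts with $g_i - s$ ones followed by a $0$; one then uses $g_i \leq g_1$ (which follows from $g_i g_{i+1} \cdots \leq g_1 g_2 \cdots$) together with a short sub-case on whether $g_i < g_1$ (immediate) or $g_i = g_1$ (chase the comparison into the next block) to conclude $\sigma^j h < h$. If $\sigma^j h$ begins on the terminating $0$ of $\varphi(g_i)$, then it starts with $0$ while $h$ starts with $1$ (unless $g_1 = 0$, which forces $\beta \leq 1$), so again $\sigma^j h < h$. If $g(\beta) = (g_1 \cdots g_p)^\infty$ is periodic, one instead applies the construction to $e(\beta) = g_1 \cdots g_{p-1}(g_p + 1)0^\infty$; a direct computation shows that $\varphi(e(\beta))$ is finite, satisfies the strict Parry inequality of Theorem \ref{thm_beta_existence} by the same case analysis, and that the associated generating sequence is precisely $h = \varphi(g(\beta))$.

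By Theorem \ref{thm_beta_existence}, this produces $\beta' > 1$ with $g(\beta') = h$; and since $h$ contains both $0$'s (every $\varphi$-block ends in $0$) and $1$'s (as $\beta > 1$), one has $1 < \beta' < 2$. The second step is to identify $X'$ with $\X_{\beta'}$ by comparing the languages using Theorem \ref{thm_renyi}. For one inclusion, $\BB(X')$ consists of subwords of $\varphi(w)$ for $w \in \BB(X)$; applying $\varphi$ to any such $w$ and using Theorem \ref{thm_renyi} for $\beta$, together with the order-preserving property of $\varphi$, forces every shift of every image to lie $\leq h = g(\beta')$, so $\BB(X') \subseteq \BB(\X_{\beta'})$. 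For the reverse inclusion, given a word $u \in \BB(\X_{\beta'})$, extend it to a right-ray $u y^+ \in \X_{\beta'}^+$ and apply Lemma \ref{lem_beta_order_preserving} to realise $uy^+$ (up to a bounded left shift) as $\varphi(x^+)$ for some $x^+ \in \X_\beta^+$, so that $u$ appears as a subword of some $\varphi(w)$ with $w \in \BB(X)$. Combining both inclusions gives $X' = \X_{\beta'}$, and the theorem follows from $X' \FE \X_\beta$.

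The main obstacle is the case analysis in the Parry verification: shifts of $h$ that land inside (rather than at the boundary of) a $\varphi$-block require careful handling, and the degenerate situation $g_i = g_1$ must be chased recursively through subsequent blocks. Once that verification is in place, the identification $X' = \X_{\beta'}$ follows formally from Theorem \ref{thm_renyi} and Lemma \ref{lem_beta_order_preserving}.
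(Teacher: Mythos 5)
Your plan follows the paper's proof essentially verbatim: construct $X'$ via the block substitution and Lemma \ref{lem_replace}, obtain $\beta'$ from Theorem \ref{thm_beta_existence} with $g(\beta') = \varphi(g(\beta))$, and identify $X' = \X_{\beta'}$ through the two language inclusions using Theorem \ref{thm_renyi} and the order-preserving surjectivity of Lemma \ref{lem_beta_order_preserving}. The only substantive differences are that you explicitly verify the Parry condition for $\varphi(g(\beta))$ (including the periodic case), a check the paper leaves implicit, and that in the reverse inclusion the preimage of $x'^+ \in \X_{\beta'}^+$ should be produced directly by parsing into blocks $1^j0$ (as the paper does via the extended map on $\{0,\ldots,n\}^\N$), since Lemma \ref{lem_beta_order_preserving} only gives surjectivity onto $X'^+$, which is not yet known to contain $\X_{\beta'}^+$ at that stage.
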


\begin{proof}
Let $X = \X_\beta$, and construct $X' \FE X$ and $\varphi \colon X^+ \to X'^+$ as above. Use Theorem \ref{thm_beta_existence} to choose $1 < \beta' < 2$ such that $g(\beta') = \varphi(g(\beta))$. The aim is to prove that $X' = \X_{\beta'}$.
  
Given $x'^+ = x'_1 x'_2 \cdots \in X'^+$ and $k \in \N$, let $x'_{k^+} = x'_k x'_{k+1} \cdots$. Use Lemma \ref{lem_beta_order_preserving} to choose $x^+ \in X^+$ such that $\varphi(x^+) = x'_{k^+}$. Now $x^+ \leq g(\beta)$ and $\varphi$ is order preserving, so $x'_{k^+} \leq g(\beta')$. By Theorem \ref{thm_renyi}, this means that $x'^+ \in \X_{\beta'}^+$. 

Let $x'^+ = x'_1 x'_2 \cdots \in \X_{\beta'}^+$ and let $n = \max \AA_\beta$. Consider the extension $\varphi \colon \{0, \ldots, n\}^\N \to \{0, 1\}^\N$ and note that $x'^+$ does not contain $1^{n+1}$ as a factor, so there exists $x^+ = x_1 x_2 \cdots \in \{0, \ldots, n\}^\N$ such that $\varphi(x^+) = x'^+$.
Let $k \in \N$ be given and let $x_{k^+} = x_k x_{k+1} \cdots$. Then there exists $l \geq k$ such that 
$\varphi(x_{k^+}) =  x'_l x'_{l+1} \cdots \leq g(\beta') = \varphi(g(\beta))$, but $\varphi$ is order preserving, so this means that $x_{k^+} \leq g(\beta)$. Hence, $x^+ \in \X_\beta^+$ and $x'^+ \in X'^+$ as desired.
\end{proof}

\noindent
This shows that it is sufficient to consider $1 < \beta < 2$ when trying to classify sofic beta-shifts up to flow equivalence. The next goal is to find a standard form that any sofic beta-shift can be reduced to.

\begin{lem}
\label{lem_beta_delete_0_in_beginning}
\index{beta-shift!flow equivalence of}
Let $1 < \beta < 2$ such that $g(\beta)$ is aperiodic and let $n$ be the largest number such that $1^n$ is a prefix of $g(\beta)$. Then $\X_\beta \FE \X_\beta^{1^n0 \mapsto 1^n} = \X_{\beta'}$ where $g(\beta')$ is obtained from $g(\beta)$ by deleting a $0$ immediately after each occurrence of $1^n$.
\end{lem}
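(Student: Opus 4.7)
The plan is to split the proof into two stages: first establish the flow equivalence $\X_\beta \FE \X_\beta^{1^n0 \mapsto 1^n}$, and then identify the contracted shift space with $\X_{\beta'}$.

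For the flow equivalence, I would invoke the symbol contraction construction underlying Lemma~\ref{lem_sc_words} with $w = 1^n$ and $b = 0$. The hypothesis $F(1^n) = 0 \cdot F(1^n 0)$ is immediate from Theorem~\ref{thm_renyi} together with the maximality of $n$: since $g_{n+1} = 0$, any occurrence of $1^n$ inside an element of $\X_\beta$ must be followed by $0$. The first hypothesis of Lemma~\ref{lem_sc_words} (no non-trivial $1^n0$-overlaps) is immediate. The second hypothesis may fail when $g_{n+2} = 1$, since then $1 \in F(1^n 0)$; however, as in Example~\ref{ex_gap_shift} the construction still produces a conjugacy because of the boundedness of $1$-runs. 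Concretely, in the intermediate shift $Y^{\diamond 0 \mapsto \diamond}$ the symbol $\diamond$ is always preceded by exactly $1^{n-1}$, and consecutive $\diamond$'s inside a single $1$-run are separated by exactly $n-1$ ones (both consequences of the forbiddenness of $1^{n+1}$ in $\X_\beta$). Hence within each maximal $1$-run of length $L$ in $\psi(Y^{\diamond 0 \mapsto \diamond})$, the $\diamond$-derived positions are uniquely recovered as the multiples $n, 2n, \ldots, \lfloor L/n \rfloor \cdot n$ counted from the start of the run, so $\psi$ is injective, hence a conjugacy.

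For the identification step, I would first verify via Theorem~\ref{thm_beta_existence} that $g(\beta')$ is a valid generating sequence. Each occurrence of $1^n$ in $g(\beta)$ is followed by $0$ (by an analogous shift-comparison against $g(\beta)$), and the occurrences are pairwise disjoint because $1^{n+1}$ is forbidden, so the deletion procedure is unambiguous. The strict inequalities for shifts of $g(\beta')$ then follow from the corresponding inequalities for shifts of $g(\beta)$, combined with the observation that $g(\beta) < g(\beta')$ (they agree on the first $n$ symbols while differing at position $n+1$, where $g(\beta)$ has $0$ and $g(\beta')$ has $g_{n+2} \geq 0$).

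To show $\X_\beta^{1^n 0 \mapsto 1^n} = \X_{\beta'}$, I would compare languages through Theorem~\ref{thm_renyi}. For the inclusion $\subseteq$, given $y^+$ arising as the contraction of some $x^+ \in \X_\beta^+$, each shift of $y^+$ corresponds (after reinserting the suppressed $0$'s at the uniquely determined multiples-of-$n$ positions inside each $1$-run) to a shift of $x^+$, and the bound $\leq g(\beta)$ translates into $\leq g(\beta')$ under the contraction. For the reverse inclusion $\supseteq$, given $y^+ \in \X_{\beta'}^+$, the unique decomposition $L = qn + k$ with $0 \leq k < n$ of each maximal $1$-run permits inserting $0$'s after every $n$th symbol, producing a pre-image $x^+$; verifying $x^+ \in \X_\beta^+$ reduces via the reverse translation to the hypothesis $y^+ \leq g(\beta')$.

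The main obstacle will be the lexicographic bookkeeping in the identification step: one must carefully verify that the insertion and deletion of $0$'s behave correctly with respect to strict inequalities in the shift comparisons, particularly when a shift begins in the interior of a long $1$-run arising from the merging of several $1^n 0$-blocks and the comparison with $g(\beta')$ recurses deep into the tail.
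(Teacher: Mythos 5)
Your overall strategy is the same as the paper's: obtain the flow equivalence from the symbol-contraction construction (observing that the second hypothesis of Lemma~\ref{lem_sc_words} can fail and arguing injectivity directly, as in Example~\ref{ex_gap_shift}), and then identify the contracted shift with $\X_{\beta'}$ by an order-preserving deletion/insertion correspondence checked shift-by-shift through Theorems~\ref{thm_renyi} and~\ref{thm_beta_existence}. But there is a genuine gap in the injectivity argument: you never use the hypothesis that $g(\beta)$ is aperiodic, and without it the claim is false. Recovering the $\diamond$-derived positions as ``the multiples $n, 2n, \ldots, \lfloor L/n\rfloor n$ counted from the start of the run'' presupposes that every maximal $1$-run in the \emph{contracted} shift has a start, i.e.\ is finite (indeed bounded). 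The forbiddenness of $1^{n+1}$ only bounds $1$-runs in $\X_\beta$ itself by $n$; the contraction merges consecutive $1^n0$-blocks, so $1$-runs in $\X_\beta^{1^n0\mapsto 1^n}$ have length about $qn$ whenever $(1^n0)^q \in \BB(\X_\beta)$, and these $q$ are bounded only because $1^n0$ is a prefix of the \emph{aperiodic} sequence $g(\beta)$: if $(1^n0)^\infty \in \X_\beta^+$, comparing it with $g(\beta)$ forces either $g(\beta)=(1^n0)^\infty$ or an occurrence of $1^{n+1}$ in $g(\beta)$, both excluded. In the excluded periodic case the contraction collapses the $n+1$ shifts of the periodic point $(1^n0)^\infty$ to the single fixed point $1^\infty$, so injectivity genuinely fails there. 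This boundedness is exactly the point the paper's proof makes explicit before invoking the Example~\ref{ex_gap_shift}-style argument.

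The identification step is also incomplete at precisely the place you flag as ``the main obstacle.'' A shifted ray $y_k y_{k+1}\cdots$ of the contracted shift that begins in the interior of a merged $1$-run is \emph{not} the image under the deletion map of the corresponding shifted ray of the original point: the deletion map applied to a ray starting with a partial block $1^r$, $r<n$, does not delete the following $0$, whereas the globally contracted ray has it deleted. So the bound $\leq g(\beta)$ does not ``translate'' directly into $\leq g(\beta')$ shift by shift. The paper resolves this with an explicit re-insertion recipe that treats the starting coordinate $k$ of the ray as a run boundary (insert a $0$ after $y_j$ whenever $y_{[j-ln,j]}=01^{ln}$, or $j=k+ln-1$ with $y_{[k,j]}=1^{ln}$) and only then applies order-preservation; some such device is needed to close your argument. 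A smaller slip: $g(\beta)<g(\beta')$ is not witnessed at position $n+1$ when $g_{n+2}=0$, since both sequences then carry a $0$ there.
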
 

\begin{proof}
Note that $1^{n+1} \notin \BB(\X_\beta)$, so each occurrence of $1^n$ in $\X_\beta$ is followed by $0$ and $\X_\beta^{1^n0 \mapsto 1^n}$ is well defined. There is an upper bound on $\{ k \mid (1^n 0)^k \in \BB(\X_\beta) \}$ since $1^n0$ is a prefix of $g(\beta)$ which is aperiodic, and therefore $\X_\beta \FE \X_\beta^{1^n0 \mapsto 1^n}$ by arguments similar to the ones used in Example \ref{ex_gap_shift}. 
Define $\varphi \colon \X_\beta^+ \to \{ 0, 1 \}^\N$ such that $\varphi(x^+)$ is the sequence obtained from $x^+$ by deleting one $0$ immediately after each occurrence of $1^n$. This is an order preserving map. Use Theorem \ref{thm_beta_existence} to choose $\beta'$ such that $g(\beta') = \varphi(g(\beta))$.
Given $y^+ \in \X_\beta^{1^n0 \mapsto 1^n}$ and $k \in \N$, define $x^+$ to be the sequence obtained from $y_k y_{k+1} \cdots$ by inserting $0$ after $y_j$ if there exists $l \in \N$ such that $y_{[j-ln,j]} = 01^{ln}$ or such that $j = k+ln-1$ and $y_{[k,j]} = 1^{ln}$.
Now, $x^+ \in \X_\beta^+$, so $y_k y_{k+1} \cdots = \varphi(x^+) \leq \varphi(g(\beta)) = g(\beta')$, so $y^+ \in \X_{\beta'}^+$ by Theorem \ref{thm_renyi}. A similar argument proves the other inclusion.  
\end{proof}

\begin{cor}
\label{cor_beta_add_0_in_beginning}
\index{beta-shift!flow equivalence of}
Let $1 < \beta < 2$ such that $g(\beta)$ is aperiodic and let $n$ be the largest number such that $1^n$ is a prefix of $g(\beta)$. For each $k > n / 2$, $\X_\beta \FE \X_\beta^{01^k \mapsto 01^k0} = \X_{\beta'}$ where $g(\beta')$ is obtained from $g(\beta)$ by inserting a $0$ immediately after the initial $1^k$ and after each subsequent occurrence of $01^k$.
\end{cor}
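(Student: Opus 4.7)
The plan is to deduce the corollary from Lemma \ref{lem_se_word} together with an argument parallel to the proof of Lemma \ref{lem_beta_delete_0_in_beginning}. First, I would verify the hypotheses of Lemma \ref{lem_se_word} with $w = 01^k$ and $b = 0$. Since $01^k$ contains only one $0$, any non-trivial overlap would need to align the $0$ of a shifted copy with a $1$ of the original, which is impossible; hence $\X_\beta$ admits no non-trivial $01^k$-overlaps. For the suffix hypothesis, I would observe that for $v \in F(01^k)$ with $1 \leq |v| \leq k+1$, inspection of positions shows that $01^k 0 v$ can end in $01^k 0$ only when $v = 1^k 0$, and this would force $1^{2k} \in \BB(\X_\beta)$. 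Theorem \ref{thm_renyi} together with the fact that $g(\beta)$ begins with $1^n 0$ forces every run of $1$s in $\BB(\X_\beta)$ to have length at most $n$, and $k > n/2$ with $k$ integer gives $2k \geq n+1$, contradicting $1^{2k} \in \BB(\X_\beta)$. Lemma \ref{lem_se_word} then delivers $\X_\beta \FE \X_\beta^{01^k \mapsto 01^k 0}$.

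Second, I would identify the resulting shift with a beta-shift. Let $g' \in \{0,1\}^\N$ be the sequence obtained from $g(\beta)$ by inserting a $0$ after the initial $1^k$ and after each subsequent $01^k$. I would use Theorem \ref{thm_beta_existence} to check that $g'$ is the generating sequence of a unique $\beta' > 1$ by verifying that every shifted tail of $g'$ is strictly less than $g'$: a shift of $g'$ past $m$ positions decomposes either as the insertion image of a shift of $g(\beta)$ or as a position that has just read a freshly inserted $0$, and the required strict inequality then follows from the corresponding one for $g(\beta)$ because the insertion procedure is order preserving.

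Third, in the spirit of Lemma \ref{lem_beta_delete_0_in_beginning}, I would define an order-preserving map $\varphi \colon \X_\beta^+ \to \{0,1\}^\N$ realising the same insertion rule on rays (including the initial $1^k$ when present), and mimic that proof to show $\X_\beta^{01^k \mapsto 01^k 0} = \X_{\beta'}$ via Theorem \ref{thm_renyi}: given $y^+ \in \X_\beta^{01^k \mapsto 01^k 0}$, I would invert the insertion at each tail $y_m y_{m+1} \cdots$ to obtain $x^+ \in \X_\beta^+$ with $\varphi(x^+) = y_m y_{m+1}\cdots$; monotonicity of $\varphi$ then yields $y_m y_{m+1} \cdots \leq \varphi(g(\beta)) = g(\beta')$, so $y^+ \in \X_{\beta'}^+$, and the reverse inclusion is analogous. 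Together with the first step, this gives $\X_\beta \FE \X_{\beta'}$ as claimed.

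The main obstacle will be the boundary condition at the initial $1^k$: the operation $01^k \mapsto 01^k 0$ triggers only where a $0$ literally precedes an occurrence of $1^k$ in a biinfinite sequence, whereas $g(\beta)$ has no letter to its left. The resolution is that in every biinfinite element of $\X_\beta$, any occurrence of the prefix $1^n$ of $g(\beta)$ is preceded by $0$, since $1^{n+1} \notin \BB(\X_\beta)$. This hidden $0$ supplies the $01^k$ pattern that provokes the insertion after the initial $1^k$; weaving this observation into the bookkeeping so that the same insertion rule can be read both as the action on the two-sided symbol expanded shift and as the construction of the largest right-ray of $\X_{\beta'}$ is the delicate point of the argument.
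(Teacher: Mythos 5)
Your proof is correct, but it takes a genuinely different route from the paper's. The paper disposes of the corollary in one line: it applies Lemma \ref{lem_beta_delete_0_in_beginning} to $\X_{\beta'}$ — that is, it observes that since $k > n/2$ forces $n-k < k$, the sequence $g(\beta')$ has $1^k$ as its maximal initial block of $1$s, so the deletion operation of the lemma applied to $\X_{\beta'}$ exactly undoes the insertion and returns $\X_\beta$; the flow equivalence then comes for free from the already-proved lemma. You instead prove the insertion statement from scratch: you verify the overlap and suffix hypotheses of Lemma \ref{lem_se_word} for $w = 01^k$, $b = 0$ (correctly locating the only dangerous case $v = 1^k0$ and killing it with $2k \geq n+1$), and then identify the expanded shift with $\X_{\beta'}$ by an order-preserving-map argument mirroring the proof of Lemma \ref{lem_beta_delete_0_in_beginning}, including the boundary issue that the ``initial $1^k$'' of the one-sided generating sequence is preceded by a hidden $0$ in every biinfinite point. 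What your approach buys is self-containedness and transparency: it makes explicit both places where $k > n/2$ is needed (ruling out $1^{2k} \in \BB(\X_\beta)$, and ensuring the inserted $0$s do not create new trigger patterns), and it verifies Parry's condition for $g(\beta')$, which the paper's one-liner leaves implicit. What the paper's approach buys is brevity — it reuses the already-established lemma rather than re-running its proof for the inverse operation. Both are valid; yours is simply the longer, direct road.
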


\begin{proof}
Apply Lemma \ref{lem_beta_delete_0_in_beginning} to $\X_{\beta'}$. 
\end{proof}

\begin{example}
\label{ex_beta_standard}
Consider $\beta > 1$ such that 
$g(\beta) = 1101101(0101100)^\infty$. Use Lemma \ref{lem_beta_delete_0_in_beginning} to see that $\X_\beta \FE \X_{\beta_1}$ when 
$g(\beta_1) = 11111(010110)^\infty$.
By Corollary \ref{cor_beta_add_0_in_beginning},  
$\X_{\beta_1} \FE \X_{\beta_2}$ when
\begin{displaymath} 
g(\beta_2) = 111110(010110)^\infty = 111(110010)^\infty.
\end{displaymath}
Note how this operation permutes the period of the generating sequence. Use Corollary \ref{cor_beta_add_0_in_beginning} again to show that $\X_{\beta_2} \FE \X_{\beta_3}$ when 
\begin{displaymath}
g(\beta_3) = 1110(110010)^\infty = 11(101100)^\infty.
\end{displaymath}
An additional application of Corollary \ref{cor_beta_add_0_in_beginning} will not reduce the aperiodic beginning of the generating sequence further, since it will also add an extra $0$ inside the period.
Note in particular that the length of neither the period nor the beginning of the generating sequence is a flow invariant. The sum of entries in the period of the generating sequence is a flow invariant by Proposition \ref{prop_beta_bf}, but the same is apparently not true for the sum of entries in the aperiodic beginning. Indeed, it is straightforward to use Lemma \ref{lem_beta_delete_0_in_beginning} and Corollary \ref{cor_beta_add_0_in_beginning} to show that $\X_\beta \FE \X_{\beta'}$ if
\begin{align*}
   g(\beta') &= 1^{3n}(110010)^\infty \textrm{ or } \\ 
   g(\beta') &= 1^{3n+2}(101100)^\infty   
\end{align*}
for some $n \in \N$. However, at this stage it is, for instance, still unclear whether $\X_\beta \FE \X_{\beta''}$ when $g(\beta'') = 1^{3n}(101100)^\infty$.
\end{example}

\begin{example}
\label{ex_beta_period_and_beginning}
Consider $\beta_0 > 1$ such that 
$g(\beta_0) = 11(0110010101)^\infty$. By Lemma \ref{lem_beta_delete_0_in_beginning}, $\X_{\beta_0} \FE \X_{\beta_i}$ when
\begin{align*}
  g(\beta_0) &= 11(0110010101)^\infty   \\
  g(\beta_1) &= 1111(010101011)^\infty       \\
  g(\beta_2) &= 11111(010101101)^\infty       \\
  g(\beta_3) &= 111111(010110101)^\infty       \\
  g(\beta_4) &= 1111111(011010101)^\infty       \\
                   &\vdots  
\end{align*}This illustrates how the length and position of the individual blocks of $1$s in the period are important when using Lemma \ref{lem_beta_delete_0_in_beginning} and Corollary \ref{cor_beta_add_0_in_beginning} to construct concrete flow equivalences. This information is ignored by the Bowen-Franks groups of the underlying graphs of the standard covers which only keep track of the sum of the entries in the period, and later results will suggest that this additional structure is actually \emph{not} preserved under flow equivalence (see Theorem \ref{thm_beta_classification}).
\end{example}

\begin{remark}
\label{rem_beta_standard}
Example \ref{ex_beta_standard} illustrates how the results of Lemma \ref{lem_beta_delete_0_in_beginning} and Corollary \ref{cor_beta_add_0_in_beginning} can be used to reduce a beta-shift $\X_\beta$ with $g(\beta) = w_b w_p^\infty$ to a flow equivalent beta-shift $\X_{\beta'}$ where $g(\beta') = 1^n w_{p'}^\infty$ with $n \leq \lvert w_b \rvert$, $\lvert w_{p'} \rvert \leq \lvert w_p \rvert$, and $n \leq k$ when $01^k0^i$ is a suffix of $w_{p'}$ for some $i$. Note that the number $n$ will depend not only on the sums of the entries in $w_b$ and $w_p$, but also on the length of the blocks of $1$s in $w_b$. 
\end{remark}

\subsection{Flow equivalence of fiber products}
Let $1 < \beta < 2$ with sofic $\X_\beta$, let $n,p$ be minimal such that $g(\beta) = g_1 \cdots g_n (g_{n+1} \cdots  g_{n+p})^\infty$, and let $(P_\beta, \LL_\beta)$ be the fiber product cover of $\X_\beta$. The goal of this section is to study the flow class of the reducible edge shift $\X_{P_\beta}$ defined by the underlying graph. Let $N = \sum_{i=1}^n g_i$ and $S = \sum_{i=1}^p g_{n+i}$. By Remark \ref{rem_beta_standard}, it can be assumed that $N \leq S$ without loss of generality.

Let $P_\beta^0 = \{v_1, \ldots, v_{n+p}, v'_{n+1}, \ldots, v'_{n+p}, v''_{n+1}, \ldots, v'_{n+p} \}$ as in Figure \ref{fig_beta_fiber}. Let $e \in P_\beta^1$ with $\LL_\beta(e) = 0$ and $r(e) \neq v_1$. Then there exists $f \in P_\beta^1$ such that for $\lambda \in \X_{P_\beta}$, $\lambda_0 = f$ if and only if $\lambda_1=e$. Hence, all these edges can be removed using symbol contraction, and this leaves the edge shift of the graph shown in Figure \ref{fig_beta_fiber_fe_I}.
In this graph, the vertices $v_N$ and $v_{N+S}$ both emit one edge to $v_{N+1}$ and one edge to $v_1$. Use in-amalgamation to merge these two vertices. This identifies the edges $e$ and $g$ and the edges $f$ and $h$ marked in Figure \ref{fig_beta_fiber_fe_I}. The result is a graph of the same form, where the size of $N$ is reduced by $1$. Repeat this process $N$ times to show that $\X_{P_\beta}$ is flow equivalent to the graph in Figure \ref{fig_beta_fiber_fe_II}. This leads to the following:

\begin{figure}
\begin{center}
\begin{tikzpicture}
 [bend angle=10,
   clearRound/.style = {circle, inner sep = 0pt, minimum size = 17mm},
   clear/.style = {rectangle, minimum width = 10 mm, minimum height = 6 mm, inner sep = 0pt},  
   greyRound/.style = {circle, draw, minimum size = 1 mm, inner sep = 0pt, fill=black!10},
 grey/.style = {rectangle, draw, minimum size = 6 mm, inner sep = 1pt, fill=black!10},
   white/.style = {rectangle, draw, minimum size = 6 mm, inner sep =1pt},
   to/.style = {->, shorten <= 1 pt, >=stealth', semithick}]
  
  \node[white] (v1)     at (0,0) {$v_1$};
  \node[white] (v2)     at (1.8,0) {$v_2$};
  \node[clear] (dots)   at (3.6,0) {$\cdots$};
  \node[white] (vn) at (5.4,0) {$v_{N}$};  
  \node[white] (vn+1) at (7.2,0) {$v_{N+1}$};
  \node[clear] (dots2) at (9,0) {$\cdots$}; 
  \node[white] (vn+p) at (10.8,0) {$v_{N+S}$};

  \node[white] (vn+1')     at (0,2) {$v_{N+1}'$};
  \node[clear] (dots2')     at (1.5,2) {$\cdots$}; 
  \node[white] (vn+p')     at (3,2) {$v_{N+S}'$};

  \node[white] (vn+1'')     at (0,-3) {$v_{N+1}''$};
  \node[clear] (dots2'')     at (1.5,-3) {$\cdots$}; 
  \node[white] (vn+p'')     at (3,-3) {$v_{N+S}''$};

  \draw[to] (v1) to node[auto] {$$} (v2); 
  \draw[to] (v2) to node[auto] {$$} (dots); 
  \draw[to] (dots) to node[auto] {$$} (vn);
  \draw[to] (vn) to node[auto] {$e$} (vn+1);  
  \draw[to] (vn+1) to node[auto] {$$} (dots2);
  \draw[to] (dots2) to node[auto] {$$} (vn+p);  

  \draw[to, loop left] (v1) to node[auto] {$$} (v1); 
  \draw[to, bend left = 25] (v2) to node[auto] {$$} (v1);    
  \draw[to, bend left = 30] (vn) to node[near start, below] {$f$} (v1);
  \draw[to, bend left = 35] (vn+1) to node[near start, below] {$$} (v1);       
  \draw[to, bend left = 35] (vn+p) to node[auto] {$h$} (v1);     
  \draw[to, bend right = 45] (vn+p) to node[auto,swap] {$g$} (vn+1);     

  \draw[to] (vn+1') to node[auto] {$$} (dots2');
  \draw[to] (dots2') to node[auto] {$$} (vn+p');  
  \draw[to, bend right = 45] (vn+p') to node[auto,swap] {$$} (vn+1');     
  \draw[to] (vn+1') to node[very near start, left] {$$} (v1);
  \draw[to] (vn+p') to node[near start, right] {$$} (v1);

  \draw[to] (vn+1'') to node[auto] {$$} (dots2'');
  \draw[to] (dots2'') to node[auto] {$$} (vn+p'');  
  \draw[to, bend left = 45] (vn+p'') to node[auto,swap] {$$} (vn+1'');     
  \draw[to] (vn+1'') to node[very near start, left] {$$} (v1);
  \draw[to] (vn+p'') to node[very near start, right] {$$} (v1);
\end{tikzpicture}
\end{center}
\caption[Flow equivalence of fiber product covers I.]{An unlabelled graph defining an edge shift flow equivalent to the edge shift of the underlying graph of the fiber product cover of a sofic beta-shift with minimal $n,p$ such that  $g(\beta) =  g_1 \cdots g_n (g_{n+1} \cdots g_{n+p})^\infty$. Here $N = \sum_{i=1}^n g_i$ and $S = \sum_{i=1}^p g_{n+i}$. Every vertex emits precisely two edges; one of which terminates at $v_1$.} 
\label{fig_beta_fiber_fe_I}
\end{figure}
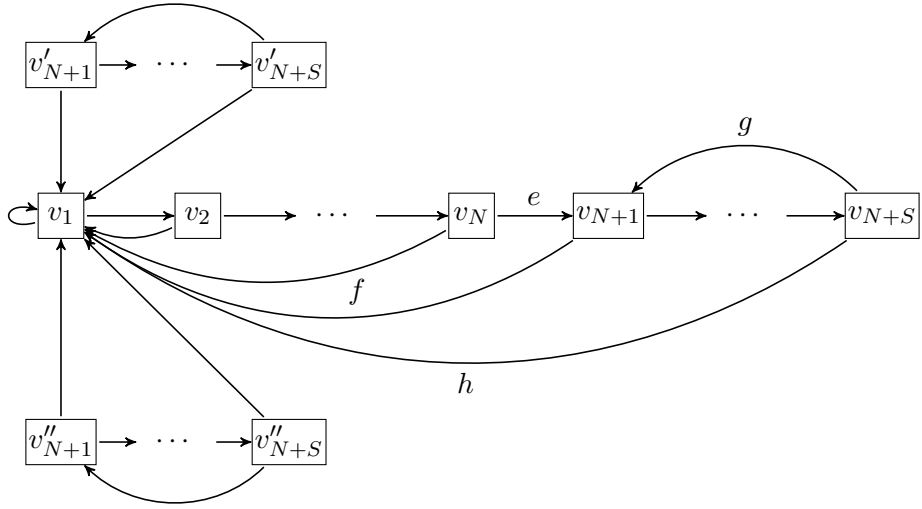

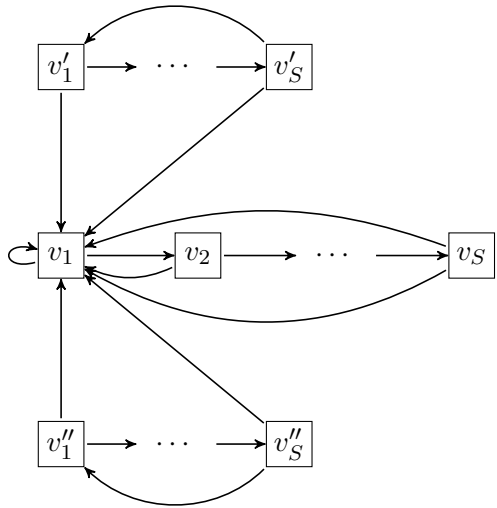
\begin{figure}
\begin{center}
\begin{tikzpicture}
 [bend angle=10,
   clearRound/.style = {circle, inner sep = 0pt, minimum size = 17mm},
   clear/.style = {rectangle, minimum width = 10 mm, minimum height = 6 mm, inner sep = 0pt},  
   greyRound/.style = {circle, draw, minimum size = 1 mm, inner sep =
      0pt, fill=black!10},
   grey/.style = {rectangle, draw, minimum size = 6 mm, inner sep =
      1pt, fill=black!10},
   white/.style = {rectangle, draw, minimum size = 6 mm, inner sep =
      1pt},
   to/.style = {->, shorten <= 1 pt, >=stealth', semithick}]
  
  \node[white] (v1)     at (0,0) {$v_1$};
  \node[white] (v2)     at (1.8,0) {$v_2$};
  \node[clear] (dots)   at (3.6,0) {$\cdots$};
  \node[white] (vp) at (5.4,0) {$v_{S}$};

  \node[white] (vn+1')     at (0,2.5) {$v_{1}'$};
  \node[clear] (dots2')     at (1.5,2.5) {$\cdots$}; 
  \node[white] (vn+p')     at (3,2.5) {$v_{S}'$};

  \node[white] (vn+1'')     at (0,-2.5) {$v_{1}''$};
  \node[clear] (dots2'')     at (1.5,-2.5) {$\cdots$}; 
  \node[white] (vn+p'')     at (3,-2.5) {$v_{S}''$};

  \draw[to] (v1) to node[auto] {$$} (v2); 
  \draw[to] (v2) to node[auto] {$$} (dots); 
  \draw[to] (dots) to node[auto] {$$} (vp);

  \draw[to, loop left] (v1) to node[auto] {$$} (v1); 
  \draw[to, bend left = 25] (v2) to node[auto] {$$} (v1);    
  \draw[to, bend left = 30] (vp) to node[near start, below] {$$} (v1);
  \draw[to, bend right = 20] (vp) to node[near start, below] {$$} (v1);

  \draw[to] (vn+1') to node[auto] {$$} (dots2');
  \draw[to] (dots2') to node[auto] {$$} (vn+p');  
  \draw[to, bend right = 45] (vn+p') to node[auto,swap] {$$} (vn+1');     
  \draw[to] (vn+1') to node[very near start, left] {$$} (v1);
  \draw[to] (vn+p') to node[near start, right] {$$} (v1);

  \draw[to] (vn+1'') to node[auto] {$$} (dots2'');
  \draw[to] (dots2'') to node[auto] {$$} (vn+p'');  
  \draw[to, bend left = 45] (vn+p'') to node[auto,swap] {$$} (vn+1'');     
  \draw[to] (vn+1'') to node[very near start, left] {$$} (v1);
  \draw[to] (vn+p'') to node[very near start, right] {$$} (v1);
\end{tikzpicture}
\end{center}
\caption[Flow equivalence of fiber product covers II.]{An unlabelled graph defining an edge shift flow equivalent to the edge shift of the underlying graph of the fiber product cover of a sofic beta-shift with minimal $n,p$ such that  $g(\beta) =  g_1 \cdots g_n (g_{n+1} \cdots g_{n+p})^\infty$. Here $S = \sum_{i=1}^p g_{n+i}$.} 
\label{fig_beta_fiber_fe_II}
\end{figure}

\begin{prop}
\label{prop_beta_fiber_fe}
For $i \in \{1,2\}$, let $\beta_i > 1$ with minimal $n_i,p_i$ such that
\begin{displaymath} 
g(\beta) =  g^i_1 \cdots g^i_{n_i} (g^i_{n_i+1} \cdots g^i_{n_i+p_i})^\infty,
\end{displaymath}
let $S_i = \sum_{i=1}^{p_i} g_{n_i+i}$, and let $(P_{\beta_i}, \LL_{P_{\beta_i}})$ be the fiber product cover of $\X_{\beta_i}$. Then there is a flow equivalence $\Phi \colon S\X_{P_{\beta_1}} \to S\X_{P_{\beta_2}}$ which commutes with the $\Z /2\Z$ actions induced by the labels if and only if $S_1 = S_2$.
\end{prop}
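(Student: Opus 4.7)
The plan is to prove both directions using the reductions described immediately before the proposition together with the Bowen-Franks computation from Proposition \ref{prop_beta_bf}.

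For the ``if'' direction, assume $S_1 = S_2 = S$. Starting from the fiber product cover $(P_{\beta_i}, \LL_{P_{\beta_i}})$ given in Figure \ref{fig_beta_fiber}, I would apply symbol contractions to remove every edge $e \in P_{\beta_i}^1$ with $\LL_{\beta_i}(e) = 0$ and $r(e) \neq v_1$; as argued in the preceding paragraph, each such edge is forced to follow a unique predecessor, so Lemma \ref{lem_sc_words} applies (symmetrically across all three components) and produces the unlabelled graph of Figure \ref{fig_beta_fiber_fe_I}. I would then iterate $N_i$ in-amalgamations merging $v_{N_i}$ with $v_{N_i+S}$ to collapse the aperiodic portion of the central component, arriving at the edge shift of the graph in Figure \ref{fig_beta_fiber_fe_II}. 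This final graph depends only on $S$, so the composed flow equivalences give $\X_{P_{\beta_1}} \FE \X_{P_{\beta_2}}$.

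The delicate issue is equivariance. The $\Z/2\Z$ action from Remark \ref{rem_beta_fiber_action} fixes the central component (the diagonal copy of the Fischer cover) pointwise and swaps the two wing components $\{v'_{n+1},\ldots,v'_{n+p}\}$ and $\{v''_{n+1},\ldots,v''_{n+p}\}$. Every symbol contraction used above is applied either to an edge inside the fixed central component or to a pair of edges exchanged by the involution, and likewise every in-amalgamation is performed at a central vertex; consequently each reduction step descends to the quotient, the induced $\Z/2\Z$ action extends to each intermediate edge shift, and the maps constructed from Lemmas \ref{lem_sc_words} and the standard in-amalgamation recipe intertwine these actions. Thus the suspension flow equivalence one obtains is equivariant, which settles the ``if'' direction.

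For the ``only if'' direction, suppose there is any flow equivalence $\Phi \colon S\X_{P_{\beta_1}} \to S\X_{P_{\beta_2}}$ (the $\Z/2\Z$-equivariance is not needed). By Proposition \ref{prop_beta_bf},
\[
\BF(P_{\beta_i}) \cong (\Z / S_i \Z) \oplus \Z \oplus \Z.
\]
Since the Bowen-Franks group is a flow invariant, these two groups are isomorphic, and comparing torsion subgroups yields $\Z / S_1 \Z \cong \Z / S_2 \Z$, hence $S_1 = S_2$.

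The principal obstacle I anticipate lies in the equivariance verification. One must track, edge by edge, how the symbol contractions interact with the involution across the ``bridge'' edges connecting the wings to the central vertex $v_1$; these edges sit at the boundary of the fixed and exchanged parts of $P_{\beta_i}^0$, and although the construction appears symmetric, carefully exhibiting the commuting square between the contraction map and the action — together with confirming that the amalgamations of $v_{N}$ with $v_{N+S}$ do not inadvertently identify edges from different wings — is the main bookkeeping burden.
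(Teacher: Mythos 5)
Your proposal is correct and follows essentially the same route as the paper: the ``only if'' direction via the flow invariance of the Bowen--Franks group together with Proposition \ref{prop_beta_bf}, and the ``if'' direction via the symbol contractions and in-amalgamations reducing both fiber product covers to the graph of Figure \ref{fig_beta_fiber_fe_II}. Your explicit attention to why each reduction step commutes with the $\Z/2\Z$ involution is in fact more careful than the paper, which disposes of equivariance in a single sentence.
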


\begin{proof}
If there is such a flow equivalence, then Theorem \ref{thm_franks} shows that the Bowen-Franks groups of the edge shifts $\X_{P_{\beta_1}}$ and $\X_{P_{\beta_2}}$ must be equal, so $S_1 = S_2$ by Proposition \ref{prop_beta_bf}. Reversely, if $S_1 = S_2$, then the preceding arguments prove that there is a flow equivalence $\Phi \colon S\X_{P_{\beta_1}} \to S\X_{P_{\beta_2}}$. Furthermore, the $\Z / 2\Z$ actions on $\X_{P_{\beta_i}}$ induced by the labels are respected by the conjugacies and symbol reductions used in the construction, so $\Phi$ commutes with the actions. 
\end{proof}

The following theorem gives a complete classification of beta-shifts up to flow equivalence if Conjecture \ref{conj_boyle} is true.

\begin{thm}\label{thm_beta_classification}
\index{beta-shift!of finite type!classification of}
\index{beta-shift!sofic!classification of}
Let $\beta_1, \beta_2 > 1$ such that $\X_{\beta_1}$ and $\X_{\beta_2}$ are both SFTs or both strictly sofic. If Conjecture \ref{conj_boyle} holds, then the following are equivalent
\begin{enumerate}
\item $\X_{\beta_1}  \FE \X_{\beta_2}$.
\item $\X_{P_1} \FE \X_{P_2}$ when $(P_i, \LL_{P_i})$ is the right fiber product cover of $\X_{\beta_i}$ for $i \in \{1,2\}$.
\item $\X_{F_1} \FE \X_{F_2}$ when $(F_i, \LL_{F_i})$ is the right Fischer cover of $\X_{\beta_i}$ for $i \in \{1,2\}$.
\item $S_1 = S_2$ when $S_i = \sum_{k=1}^{p_i} g^i_{n_i+k}$
for minimal $n_i,p_i$ such that $g(\beta_i) = g^i_1 \cdots g^i_{n_i} (g^i_{n_i+1} \cdots g^i_{n_i+p_i})^\infty$ for $i \in \{1,2\}$.
\end{enumerate}
\end{thm}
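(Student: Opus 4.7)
The plan is to verify that all four conditions are equivalent to the single integer $S$ (with $S_1 = S_2$ for the two shifts being compared). I would show that (2), (3), (4) are equivalent unconditionally while (1) $\Leftrightarrow$ (4) uses Conjecture \ref{conj_boyle}. I would split the argument into the SFT case and the strictly sofic case, since the structure of the fiber product cover differs markedly between them.

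In the SFT case, Corollary \ref{cor_beta_sft} gives that $g(\beta_i)$ is periodic, with $S_i$ the sum of the entries in the minimal period. By Proposition \ref{prop_beta_sft_bf} we have $\BF_+(\X_{\beta_i}) = -\Z/S_i\Z$, so Franks's theorem (Theorem \ref{thm_franks}) yields (1) $\Leftrightarrow$ (4) directly. Since $\X_{\beta_i}$ is conjugate to the edge shift of its right Fischer cover by Corollary \ref{cor_lfc_conj}, we get (1) $\Leftrightarrow$ (3); and since Proposition \ref{prop_beta_2_to_1} says the covering map of the right Fischer cover is $1$-to-$1$ in this case, the right fiber product cover coincides with the right Fischer cover, giving (2) $\Leftrightarrow$ (3). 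Here Conjecture \ref{conj_boyle} is not needed.

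In the strictly sofic case, Proposition \ref{prop_beta_2_to_1} ensures that both covering maps are $2$-to-$1$, so Conjecture \ref{conj_boyle} applies, and combined with Proposition \ref{prop_beta_fiber_fe} gives (1) $\Leftrightarrow$ ``equivariant flow equivalence of the $\X_{P_i}$'' $\Leftrightarrow$ $S_1 = S_2$ $\Leftrightarrow$ (4). For (3) $\Leftrightarrow$ (4), the right Fischer cover of a sofic beta-shift is irreducible (being the Fischer cover of an irreducible sofic shift), its Bowen-Franks invariant is $-\Z/S\Z$ by Proposition \ref{prop_beta_bf}, and Franks's theorem then settles the equivalence. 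For (4) $\Rightarrow$ (2), the amalgamation argument preceding Proposition \ref{prop_beta_fiber_fe} reduces $\X_{P_{\beta_i}}$ by flow equivalence to the edge shift of the graph in Figure \ref{fig_beta_fiber_fe_II}, which depends only on $S_i$; so $S_1 = S_2$ forces $\X_{P_1} \FE \X_{P_2}$. The converse (2) $\Rightarrow$ (4) uses that the Bowen-Franks group is a flow invariant of arbitrary (possibly reducible) SFTs, combined with the computation $\BF(A_{P_i}) = \Z/S_i\Z \oplus \Z \oplus \Z$ from Proposition \ref{prop_beta_bf}, which forces $\Z/S_1\Z \cong \Z/S_2\Z$ and hence $S_1 = S_2$.

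The main obstacle is conceptual rather than technical: every ingredient---the Bowen-Franks calculations of Proposition \ref{prop_beta_bf}, the amalgamation reduction, the equivariant classification in Proposition \ref{prop_beta_fiber_fe}, and Franks's theorem---has already been assembled, so the remaining work is bookkeeping. The one point requiring some care is checking that the flow equivalence produced by the amalgamation construction genuinely respects the $\Z/2\Z$ action induced by the labels (so that it can be fed into Conjecture \ref{conj_boyle}); this is built into the construction since the amalgamations merging $v_N$ with $v_{N+S}$ involve only diagonal vertices and therefore commute with the label-induced swap of the two copies of the period described in Remark \ref{rem_beta_fiber_action}.
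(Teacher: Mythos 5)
Your proof is correct and follows essentially the same route as the paper: flow invariance of the covers, the Bowen--Franks computations of Propositions \ref{prop_beta_sft_bf} and \ref{prop_beta_bf}, the amalgamation argument behind Proposition \ref{prop_beta_fiber_fe}, Franks's theorem, and Conjecture \ref{conj_boyle} only for $(4)\Rightarrow(1)$ in the strictly sofic case. Your explicit case split and your spelled-out implications $(2)\Rightarrow(4)$ and $(3)\Rightarrow(4)$ are slightly more careful than the paper's terse proof, which leaves those reverse directions implicit, but the substance is identical.
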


\begin{proof}
The right Fischer cover and the fiber product cover are flow invariant, so (1) implies (2) and (3) in general. Propositions \ref{prop_beta_sft_bf} and \ref{prop_beta_bf} show that (1) implies (4). By Propositions \ref{prop_beta_sft_bf} and \ref{prop_beta_bf}, (4) implies (3), and by the proof of Proposition \ref{prop_beta_fiber_fe}, (4) implies (2). If Conjecture \ref{conj_boyle} is true, then Proposition \ref{prop_beta_fiber_fe} also shows that (4) implies (1).
\end{proof}

\section{Perspectives} \label{sec_beta_perspectives}
If Conjecture \ref{conj_boyle} is true, then Theorem \ref{thm_beta_classification} gives a very satisfying classification of sofic beta-shifts in terms of a single integer which is easy to compute. A proof of the conjecture will rely on deep results in symbolic dynamics, and it has been beyond the scope of this work to attempt to prove it.
If the conjecture is not true, then there is no obvious route to a flow classification of sofic beta-shifts. The flow invariants considered here can naturally only distinguish beta-shifts with different values of the integer assumed to be a complete invariant, so it would be necessary to examine completely different invariants. It would, for instance, be natural to examine the left Fischer covers of sofic beta-shifts as well, but as mentioned in Example \ref{ex_beta_lfc}, it is generally hard to construct these.

The following example illustrates how hard it is to work with flow equivalence of beta-shifts without the results of Conjecture \ref{conj_boyle}:
The results of Section \ref{sec_beta_constructions} show that for every $\beta > 1$ there exists $1 < \beta' < 2$ such that $\X_\beta \FE \X_{\beta'}$ and such that $g(\beta')$ has a special form. It is still unknown whether it is possible to do further reductions, and there is no known way to construct concrete flow equivalences in the cases where this is not made impossible by the known invariants. Without Conjecture \ref{conj_boyle}, there is for example, no known way to determine whether the beta-shifts $\X_{\beta_1}$ and $\X_{\beta_2}$ with generating sequences 
\begin{displaymath}
  g(\beta_1) = 1(110)^\infty
\textrm{ and }
  g(\beta_2) = 11(110)^\infty
\end{displaymath}
are flow equivalent. It is worth noting the similarity with the problems encountered when attempting to classify gap-shifts in Section \ref{sec_gap_shifts}.

\chapter{Flow equivalence of renewal systems}
\label{chap_rs}

A renewal system is a shift space consisting of the biinfinite sequences that can be obtained as free concatenations of words from some finite generating list. This simple definition hides a surprisingly rich structure that is in many ways independent of the usual topological and dynamical structure of the shift space. 
The present work was motivated by the following problem raised by Adler: Is every irreducible shift of finite type conjugate to a renewal system? Several attempts have been made to answer this question, and the conjugacy of certain special classes of renewal systems is well understood, but there only exist a few results concerning the general problem.

The aim of this work has been to find the range of the Bowen-Franks invariant over the set of SFT renewal systems in order to answer the corresponding question for flow equivalence. The results are, however, incomplete and it is still unknown whether a large class of pairs of determinants and groups can be achieved. The most general result is obtained by combining a computation of the determinants of renewal systems in a class introduced by Hong and Shin \cite{hong_shin} with a concrete construction of certain non-cyclic Bowen-Franks groups.

\section{Background}
\index{renewal system}
\index{renewal system!generating list of|\\see{generating list}}
\index{generating list}\index{XL@$\X(L)$}
Let $\AA$ be an alphabet, let $L \subseteq \AA^*$ be a finite list of words over $\AA$, and define $\BB(L)$ to be the set of factors of $L^*$. Then $\BB(L)$ satisfies the conditions in Proposition \ref{prop_language}, so it is the language of a shift space $\X(L)$ which is said to be the \emph{renewal system}\footnote{Named by Adler for an analogy with renewal theory from probability theory.} generated by $L$. $L$ is said to be the \emph{generating list} of $\X(L)$. 

\begin{prop}
Every renewal system is an irreducible sofic shift.
\end{prop}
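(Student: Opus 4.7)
The plan is to give two short constructions: a finite labelled graph presenting $\X(L)$ (so $\X(L)$ is sofic by Fischer's theorem as recalled in the excerpt) and a direct check that any two words in the language can be connected, giving irreducibility.

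For soficity I would build the \emph{standard loop graph presentation}. Fix a single base vertex $v_0$ and, for each word $w = a_1 a_2 \cdots a_k \in L$, attach a simple loop based at $v_0$ of length $k$ by introducing fresh intermediate vertices $u_1^w, \ldots, u_{k-1}^w$ and edges $v_0 \to u_1^w \to \cdots \to u_{k-1}^w \to v_0$ labelled $a_1, a_2, \ldots, a_k$ in order. Call the resulting finite labelled graph $(G,\LL)$. By construction a finite path starting and ending at $v_0$ spells out exactly a finite concatenation of words from $L$, so the biinfinite paths in $\X_G$ project under $\LL$ to biinfinite sequences that are factors of elements of $L^\ast$, and conversely any factor of an element of $L^\ast$ is read along some path through $v_0$. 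Hence $\X_{(G,\LL)} = \X(L)$, which together with the Fischer characterisation of sofic shifts by finite labelled graphs shows that $\X(L)$ is sofic.

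For irreducibility I would argue directly on the language. Given $u, w \in \BB(\X(L))$, by definition of $\BB(L)$ there exist concatenations $W_1 = w_{i_1} \cdots w_{i_k}$ and $W_2 = w_{j_1} \cdots w_{j_l}$ with $w_{i_r}, w_{j_s} \in L$ such that $u$ is a factor of $W_1$ and $w$ is a factor of $W_2$. Write $W_1 = x_1 u s$ and $W_2 = p w x_2$ for (possibly empty) words $x_1, s, p, x_2$. Then $W_1 W_2 = x_1 u (sp) w x_2$ is again an element of $L^\ast$, so $u(sp)w \in \BB(\X(L))$, and $v := sp$ witnesses irreducibility. The only genuinely degenerate case is if $L$ contains only the empty word (so $\X(L) = \emptyset$), which must be excluded implicitly by the standing assumption that $\X(L)$ is a shift space at all.

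There is no real obstacle here; the only points to be careful about are that every letter appearing in $\X(L)$ must appear in some $w_i \in L$ (so the alphabet and labelling of $(G,\LL)$ match up with $\AA(\X(L))$), and that one may need at least one $w_i \neq \epsilon$ to guarantee that $\BB(L)$ satisfies the extension condition of Proposition~\ref{prop_language}. Both are immediate, and no appeal to anything beyond the definitions and Fischer's theorem is required.
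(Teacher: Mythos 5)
Your proof is correct and takes essentially the same route as the paper: both construct the standard loop graph presentation of $\X(L)$ and conclude soficity from the characterisation of sofic shifts by finite labelled graphs. The only divergence is in the irreducibility step, where the paper simply observes that the loop graph is an \emph{irreducible} presentation (relying on the stated fact that a sofic shift is irreducible iff it admits one), whereas you verify the language condition directly by concatenating $W_1 W_2 \in L^*$; both are valid, and your version is marginally more self-contained.
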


\begin{proof}
Let $L = \{ w_1 , \ldots , w_n\}$, let $(G, \LL)$ be the labelled graph obtained by writing the words $w_1, \ldots, w_n$ on loops starting and ending at a common vertex, and note that $(G, \LL)$ is an irreducible presentation of $\X(L)$.
\end{proof}

\index{renewal system!loop graph of}\index{finitely generated sofic system}\index{flower automata}\index{loop system}\index{even shift!as renewal system}
\noindent
The graph constructed in the proof is called the \emph{standard loop graph presentation} of $\X(L)$, and because of this presentation, renewal systems are called \emph{loop systems} or \emph{flower automata} in automata theory (e.g.\  \cite{berstel_perrin}). Renewal systems are also sometimes called \emph{finitely generated sofic systems} (e.g.\  \cite{restivo}). 
The even shift introduced in Example \ref{ex_even_def} is a renewal system generated by the list $L = \{ 00, 1\}$. In this case, the standard loop graph is equal to the left Fischer cover, but that is not true in general.

\subsection{Adler's problem}

As shown above, every renewal system is an irreducible sofic shift, and some of them are strictly sofic, but is every sofic shift -- or every SFT -- a renewal system? The following example shows that the answer is no.

\begin{example}[{\cite[pp.\ 433]{lind_marcus}}]
\label{ex_sft_not_rs}
Consider the edge shift $X$ of the graph seen in Figure \ref{fig_sft_not_rs}. Assume that $X$ is a renewal system, i.e.\ that there exists a finite list $L$ over $\{ a,b,c,d \}$ such that $X = \X(L)$. Since $a^n, c^n \in \BB(X)$ for all $n \in \N$, there must exist $k,l$ such that $a^k, c^l \in L$, but then $ac \in \BB(X)$ in contradiction with the presentation. This shows that not every irreducible SFT is a renewal system. Note, however, that $X$ is conjugate to the full $2$-shift which is a renewal system.
\end{example}

\begin{figure}
\begin{center}
\begin{tikzpicture}
  [bend angle=30,
   clearRound/.style = {circle, inner sep = 0pt, minimum size = 17mm},
   clear/.style = {rectangle, minimum width = 5 mm, minimum height = 5 mm, inner sep = 0pt},  
   greyRound/.style = {circle, draw, minimum size = 1 mm, inner sep =
      0pt, fill=black!10},
   grey/.style = {rectangle, draw, minimum size = 6 mm, inner sep =
      1pt, fill=black!10},
    white/.style = {rectangle, draw, minimum size = 6 mm, inner sep =
      1pt},
   to/.style = {->, shorten <= 1 pt, >=stealth', semithick}] 
  \node[white] (P1) at (0,0) {};
  \node[white] (P2) at (2,0) {};
  \draw[to,loop left] (P1) to node[auto] {$a$} (P1);
  \draw[to,bend left] (P1) to node[auto] {$b$} (P2);
  \draw[to,loop right] (P2) to node[auto] {$c$} (P2);
  \draw[to,bend left] (P2) to node[auto] {$d$} (P1);
\end{tikzpicture}
\end{center}
\caption[An edge shift that is not a renewal system.]{A graph for which the edge shift is not a renewal system.} 
\label{fig_sft_not_rs}
\end{figure}
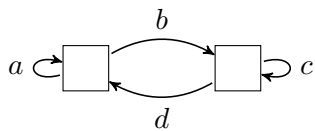

\noindent
Examples such as this naturally raise the following question:

\begin{prob}[Adler]
\index{Adler's problem}
\label{prob_adler}
Is every irreducible shift of finite type conjugate to a renewal system?
\end{prob}

\index{shift equivalence problem!and renewal systems}
\noindent
A positive solution to this problem would reduce the conjugacy problem for irreducible shifts of finite type to a question of conjugacies of SFT renewal systems, and Adler's aim \cite{goldberger_lind_smorodinsky, restivo} was to use this approach to attack the shift equivalence problem which was arguably the most important open problem in symbolic dynamics at the time (see Section \ref{sec_conjugacy}). This intended application is no longer as relevant because different means have long since been used to prove that shift equivalence and strong shift equivalence are indeed different equivalence relations \cite{kim_roush_1,kim_roush_2}, and because it has turned out that it is not particularly easy to determine whether two renewal systems are conjugate (see \cite{restivo_note} for a result about this question in a special case). However, Adler's problem remains open, and it is arguably the most important question concerning renewal systems. Indeed, most of the work done on renewal systems has been motivated by a desire to answer Adler's question \cite{goldberger_lind_smorodinsky,hong_shin_cyclic,hong_shin,johnson_madden,restivo, restivo_note,williams_rs}.

As mentioned above, there exist strictly sofic renewal systems, and the following example answers a natural variation of Adler's question by showing that there exists a strictly sofic shift which is not conjugate to a renewal system.

\begin{example}[{Williams \cite{williams_rs}}]
\index{Adler's problem!for sofic shifts} 
Let $X$ be the strictly sofic shift presented by the labelled graph in Figure \ref{fig_sofic_not_conj_rs}. Assume that there exists a renewal system $\X(L)$ over some alphabet $\AA$ and a conjugacy $\varphi \colon X \to \X(L)$.  Assume without loss of generality that $\varphi = \Phi^\infty_{[n,n]}$ for a map $\Phi \colon \BB_{2n+1}(X) \to \AA$ and that $\varphi^{-1} = \Psi^\infty_{[n,n]}$ for a map $\Psi \colon \BB_{2n+1}(\X(L)) \to \{ a,b,c \}$. The image of the fixpoint $a^\infty \in X$ is a fixpoint in $\X(L)$, so there exists $\alpha \in \AA$ such that $\Phi(a^{2n+1}) = \alpha$, and $m\in \N$ such that $\alpha^m \in L$. For each $0 \leq j \leq 2n$, let $\beta_j =\Phi(a^j b a^{2n-j})$. Then $\varphi( a^\infty b a^\infty) = \alpha^\infty \beta_0 \cdots \beta_{2n} \alpha^\infty$, so there must exist $k,l \in \N$ such that $\alpha^k \beta_0 \cdots \beta_{2n} \alpha^l \in L^*$ and $k+l \geq 2n$.
Let $x = \alpha^\infty \beta_0 \cdots \beta_{2n} \alpha^{k+l} \beta_0 \cdots \beta_{2n} \alpha^\infty \in \X(L)$. Then  $\varphi^{-1}(x) = a^\infty b a^{k+l+4n}b a^\infty$ in contradiction with the definition of $X$.
\label{ex_williams}
\end{example}

\begin{figure}
\begin{center}
\begin{tikzpicture}
  [bend angle=30,
   clearRound/.style = {circle, inner sep = 0pt, minimum size = 17mm},
   clear/.style = {rectangle, minimum width = 5 mm, minimum height = 5 mm, inner sep = 0pt},  
   greyRound/.style = {circle, draw, minimum size = 1 mm, inner sep =
      0pt, fill=black!10},
   grey/.style = {rectangle, draw, minimum size = 6 mm, inner sep =
      1pt, fill=black!10},
    white/.style = {rectangle, draw, minimum size = 6 mm, inner sep =
      1pt},
   to/.style = {->, shorten <= 1 pt, >=stealth', semithick}]
  
  \node[white] (P1) at (0,0) {};
  \node[white] (P2) at (2,0) {};
  
  \draw[to,loop left] (P1) to node[auto] {$a$} (P1);
  \draw[to,bend left] (P1) to node[auto] {$b$} (P2);
  \draw[to,loop right] (P2) to node[auto] {$a$} (P2);
  \draw[to,bend left] (P2) to node[auto] {$c$} (P1);
\end{tikzpicture}
\end{center}
\caption[A sofic shift not conjugate to a renewal system.]{Presentation of a sofic shift not conjugate to a renewal system.} 
\label{fig_sofic_not_conj_rs}
\end{figure}
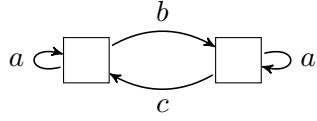

\begin{figure}
\begin{center}
\begin{tikzpicture}
  [bend angle=30,
   clearRound/.style = {circle, inner sep = 0pt, minimum size = 17mm},
   clear/.style = {rectangle, minimum width = 5 mm, minimum height = 5 mm, inner sep = 0pt},  
   greyRound/.style = {circle, draw, minimum size = 1 mm, inner sep = 0pt, fill=black!10},
   grey/.style = {rectangle, draw, minimum size = 6 mm, inner sep =
      1pt, fill=black!10},
  blackRound/.style = {circle, draw, minimum size = 1 mm, inner sep = 0pt, fill=black},
    white/.style = {rectangle, draw, minimum size = 6 mm, inner sep =
      1pt},
  whiteBig/.style = {rectangle, draw, minimum height = 5 cm, minimum width = 8 cm, inner sep =1pt},
   to/.style = {->, shorten <= 1 pt, >=stealth', semithick},
   map/.style = {->, shorten <= 2 pt, shorten >= 2 pt,  >=stealth', semithick}] 

  \node[circle, draw, minimum size = 4cm, fill=black!10] (Fm) at (-3,0) {};
  \node[clear] (rstext) at (-5.5,1.5) {Renewal};
  \node[clear] (rstext2) at ($(rstext)-(0,0.4)$) {systems};  

  \node[rectangle, draw, minimum height = 4.5 cm, minimum width = 6 cm] (sft) at (0,0)   {};
  \node[clear] (rstext) at (3.7,1.5) {SFTs};

  \node[blackRound] (xl) at (-2,0) {};
  \node[clear] (XL) at ($(xl)+(0,0.3)$) {$\X(L)$};  

  \node[blackRound] (even) at (-4,0) {};
  \node[clear] (XL) at ($(even)+(0,0.3)$) {even shift};

  \node[blackRound] (x) at (1.5,-1) {};
  \node[clear] (X) at ($(x)+(0,0.3)$) {$X$};  
  
 \node[blackRound] (xq) at (2.5,0) {};
 \node[clear] (XQ) at ($(xq)+(0,0.3)$) {?};  

    
  \draw[map] (xl) to node[auto] {$\FE$} (x);
  \draw[map] (x) to (xl);  

  \clip (-3,-2.25) rectangle (3,2.25);
  \node[circle, draw, dashed, minimum size = 10cm] (FE) at (-3,0) {};

\end{tikzpicture}
\end{center}
\caption[Flow equivalence problem for renewal systems.]{Flow equivalence problem for renewal systems. $X$ is an irreducible SFT flow equivalent to an SFT renewal system $\X(L)$. The dashed line signifies the possible border between the SFTs that are flow equivalent to renewal systems and those that are not.} 
\label{fig_rs_fe}
\end{figure}
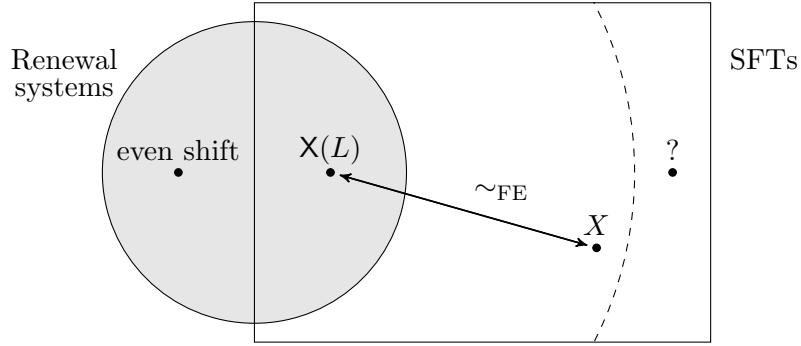

\index{Adler's problem!flow equivalence version}
\index{flow equivalence problem \\for renewal systems}
The aim of the present work has been to answer another natural variation of Adler's question: Is every SFT \emph{flow equivalent} to a renewal system? This question has apparently not been investigated before now.
It is appealing to consider flow equivalence because Theorem \ref{thm_franks} gives a complete flow classification of irreducible SFTs in terms of the Bowen-Franks invariant which is both easy to compute and easy to compare.
To answer the flow equivalence question, it is therefore sufficient to find the range of the Bowen-Franks invariant over the set of SFT renewal systems and check whether it is equal to the range over the set of irreducible SFTs.
It is easy to check that a group $G$ is the Bowen-Franks group of an irreducible SFT if and only if it is a finitely generated abelian group and that any combination of sign and Bowen-Franks group can be achieved by the Bowen-Franks invariant.
Hence, the overall strategy of the investigation of the flow equivalence question has been to attempt to construct all these combinations of groups and signs. However, it has turned out to be surprisingly hard to construct renewal systems with non-cyclic Bowen-Franks groups and/or positive determinants, so there are only partial results so far. The flow equivalence problem for renewal systems is sketched in Figure \ref{fig_rs_fe}.

Since flow equivalence is a weaker equivalence relation than conjugacy, a negative answer to the flow equivalence question will also give a negative answer to Adler's original question, while a positive answer will leave the original question open. Either way, the investigation can be expected to yield insight into the original question as well. 

A part of the investigation of the flow equivalence problem for renewal systems has been experimental, and the Bowen-Franks invariant has been computed for a large number SFT renewal systems. See Appendix \ref{app_programs} for a description of these experiments. It turns out to be difficult to find renewal systems that are not flow equivalent to full shifts in this way, but the process has generated valuable insight and some of the main results of this chapter grew out of the experimental investigation.

\subsection{Properties of generating lists}
Renewal systems have been studied intensely -- both in symbolic dynamics and in coding theory -- so there exists a well developed nomenclature, and several special cases have been studied in detail. This section introduces standard notation and recalls known relations between various standard classes of renewal systems.


\index{generating list!simple}
\index{generating list!minimal}
\index{generating list!prefix}
\index{generating list!suffix}
\index{generating list!uniquely decipherable}
\index{generating list!cyclic}
\index{generating list!pure}
\index{generating list!very pure}
\begin{definition}
A finite list $L \subseteq \AA^*$ is said to be
\begin{description}
\item{\emph{simple}} if no non-trivial concatenation of words from $L$ is an element of $L$.
\item{\emph{minimal}} if it is simple and there is no simple list $\tilde L \neq L$ such that $\X(L) = \X(\tilde L)$ and $L \subseteq \tilde L^*$.
\item{\emph{prefix}} if no word $w \in L$ is a prefix of a word $v \in L \setminus \{ w \}$. 
\item{\emph{suffix}} if no word $w \in L$ is a suffix of a word $v \in L \setminus \{ w \}$.
\item{\emph{uniquely decipherable}} if $v_1, \ldots, v_k, w_1, \ldots, w_l \in L$ and $v_1\cdots v_k = w_1 \cdots w_l$ implies that $k=l$ and $w_i = v_i$ for all $1 \leq i \leq k$.
\item{\emph{cyclic}} 
if, whenever $p,s \in \AA^*$, $v_0, \ldots, v_k, w_1, \ldots, w_l, ps \in L$, $s \neq \varepsilon$, and $v_0 \cdots v_k = sw_1 \cdots w_lp$, then $k=l$, $s= v_0$, $p = \varepsilon$, and $w_i = v_i$ for all $1 \leq i \leq k$.
\item{\emph{pure}} if $w \in L^*$ whenever there exists $n \in \N$ such that $w^n \in L^*$.
\item{\emph{very pure}} if $u,v \in L^*$ whenever there exists $uv,vu \in L^*$.
\end{description}
\end{definition}

\index{code}
\noindent In coding theory, a uniquely decipherable generating list is called a \emph{code}, and most of the terminology introduced above comes from coding theory (cf.\  \cite{berstel_perrin}). If a generating list $L$ is prefix, suffix, or cyclic, then it is automatically uniquely decipherable. The list $\{ a, abc, c \}$ is minimal and uniquely decipherable but neither prefix nor suffix, and the list $\{ aba, bab \}$ is uniquely decipherable but not cyclic, so the reverse is not true.

Restivo \cite{restivo} proved that the language of an irreducible sofic shift has a finite number of maximal monoids, and Hong and Shin \cite{hong_shin_cyclic} have used this to prove that every renewal system has a finite number of minimal generating lists.

\subsection{Irreducible generating lists}
The results of  this section will reduce the flow classification problem by proving that it is sufficient to consider generating lists which have been reduced using symbol contractions and conjugacies. This was useful in the experimental approach described in Appendix \ref{app_programs} since it allowed generating lists to be reduced to a simpler form before they were investigated. First, it is necessary to introduce some more terminology.

\begin{definition}
\label{def_partitioning}
\index{partitioning}\index{partitioning!beginning of}\index{partitioning!end of}\index{partitioning!minimal}
Let $L$ be a generating list.
A triple $(n_b,g,l)$ where $n_b,l \in \N$ and $g$ is an ordered list of words $g_1, \ldots , g_k \in L$ with $\sum_{i=1}^k \lvert g_i \rvert \geq  n_b+l-1$ is said to be a \emph{partitioning} of the factor $v_{[n_b,n_b+l-1]} \in \BB(\X(L))$ of $v = g_1 \cdots g_k$. The \emph{beginning} of the partitioning is the word $v_{[1,n_b-1]}$, and the \emph{end} is the word $v_{[n_b+l,\lvert v \rvert]}$. The partitioning is said to be \emph{minimal} if $n_b \leq \lvert g_1 \rvert$ and $n_b + l-1 > \sum_{i=1}^{k-1} \lvert g_i \rvert$. 
A partitioning of a right-ray $x^+ \in \X(L)^+$ is a pair $p = (n_b,(g_i)_{i\in\N})$ where $n_b \in \N$ and $g_i \in L$ such that $wx^+ = g_1 g_2 \cdots$ when $w$ is the \emph{beginning} consisting of the $n_b-1$ first letters of the concatenation $g_1 g_2 \cdots$. The partitioning is said to be minimal if $n_b \leq \lvert g_1 \rvert$. Partitionings of left-rays are defined analogously.
\end{definition}

Consider the generating list $L = \{ aa ,b \}$. Here $(2,[aa,b],2)$ is a minimal partitioning of the word $ab$. The beginning is the word $a$ and the end is the empty word. Clearly, $(4,[aa,aa,b],2)$ is a non-minimal partitioning of the same word.

\begin{definition}
\index{bordering}
\index{bordering!strongly}
\index{word!bordering}
\index{word!bordering!strongly}
\index{generating list!irreducible}
\index{internal word}
\index{word!internal}
Let $L \subseteq \AA^*$ be a finite list, and let $w \in \BB(\X(L)) \cup \X(L)^+$ be an allowed word or right-ray. Then $w$ is said to be \emph{left-bordering} if there exists a partitioning of $w$ with empty beginning, and \emph{strongly left-bordering} if every partitioning of $w$ has empty beginning. Right-bordering words and left-rays are defined analogously. A word $w \in \BB(\X(L))$ is said to be \emph{internal} if the list $g$ has only one element whenever $(n_b,g,l)$ is a minimal partitioning of $w$.
A generating list $L$ is said to be \emph{irreducible} if it is simple and every internal word has length 1. 
\end{definition}

\noindent
For $L = \{ aa , b \}$, the word $b$ is internal, while $aa$ and $ab$ are not.

\begin{lem}
\label{lem_irr_replace}
Let $L \subseteq \AA^*$ be a finite list of words, and let $w$ be an internal word  for $L$ for which $\X(L)$ does not admit non-trivial $w$-overlaps (i.e.\ there is no $v \in \BB(\X(L))$ with $\lvert w \rvert <\lvert v \rvert < 2 \lvert w \rvert$ such that $w$ is both a prefix and a suffix of $v$). Then $\X(L) \FE \X(M)$, where $M$ is the list obtained by replacing every occurrence of $w$ in $L$ by some $\diamond \notin \AA$.
\end{lem}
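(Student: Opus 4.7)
The plan is to apply Lemma \ref{lem_replace} to the shift space $\X(L)$ with the word $w$, and then identify the resulting shift $\X(L)^{w \mapsto \diamond}$ with the renewal system $\X(M)$. Since $\X(L)$ does not admit non-trivial $w$-overlaps by hypothesis, Lemma \ref{lem_replace} directly gives $\X(L) \FE \X(L)^{w \mapsto \diamond}$, so the problem reduces entirely to showing $\X(L)^{w \mapsto \diamond} = \X(M)$.

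To establish this equality I would compare languages. The inclusion $\BB(\X(M)) \subseteq \BB(\X(L)^{w \mapsto \diamond})$ is straightforward: a factor of a concatenation $m_1 \cdots m_k$ with $m_i \in M$ is, by construction of $M$, a factor of what one obtains by applying the substitution $w \mapsto \diamond$ to the corresponding concatenation $u_1 \cdots u_k$ in $L^*$, and hence lies in $\BB(\X(L)^{w \mapsto \diamond})$.

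For the reverse inclusion, take $v \in \BB(\X(L)^{w \mapsto \diamond})$ and lift it back by replacing every $\diamond$ with $w$, obtaining $\tilde v \in \BB(\X(L))$. Choose a partitioning of $\tilde v$ using words $u_1, \ldots, u_k \in L$. I would argue that each occurrence of $w$ inside $\tilde v$ (in particular, each $w$ that came from a $\diamond$ in $v$) lies entirely within a single $u_i$. This is the crucial use of the hypothesis that $w$ is internal: restricting the partitioning of $\tilde v$ to an occurrence of $w$ yields a minimal partitioning of $w$ itself, which by definition of internality must consist of a single list element. Hence each such $w$ sits inside one generating word, and collapsing it to $\diamond$ transforms $u_i$ into the corresponding element of $M$. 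Reassembling gives $v$ as a factor of a concatenation from $M^*$, so $v \in \BB(\X(M))$.

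The main obstacle is making this internal-argument airtight: one must verify that when an occurrence of $w$ inside $\tilde v$ straddles the boundary of two of the $u_i$'s, the induced minimal partitioning really does involve more than one list element, contradicting internality. The no-overlap assumption enters here as well, since it guarantees that the occurrences of $\diamond$ in $v$ correspond bijectively to the occurrences of $w$ in $\tilde v$ without any ambiguity about which letters belong to which inserted $w$. Once these bookkeeping issues are handled, the identification $\X(L)^{w \mapsto \diamond} = \X(M)$ is immediate and the flow equivalence follows from Lemma \ref{lem_replace}.
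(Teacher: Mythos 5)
Your proposal is correct and follows exactly the paper's route: apply Lemma \ref{lem_replace} to get $\X(L) \FE \X(L)^{w \mapsto \diamond}$, then use internality of $w$ to identify $\X(L)^{w \mapsto \diamond}$ with $\X(M)$. The paper compresses the second step into the single remark that ``$w$ can only be read inside the generating words,'' which is precisely the boundary-straddling argument you spell out, so you have simply filled in details the paper leaves implicit.
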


\begin{proof}
Apply Lemma \ref{lem_replace} to $\X(L)$ to replace each occurrence of $w$ by $\diamond$ and create the shift space $\X(L)^{w \to  \diamond} \FE \X(L) $. Since $w$ can only be read inside the generating words, $\X(L)^{w \to \diamond}  = \X(M)$.
\end{proof}

\noindent
This result makes it non-trivial to construct a list $L$ for which $\X(L)$ is an SFT not flow equivalent to the full $\{1, \ldots,\lvert L \rvert \}$-shift. 


\begin{prop}
\label{prop_irr}
For every $L \subseteq \AA^*$ there exists an irreducible generating list $M$ such that $\X(L) \FE \X(M)$.
\end{prop}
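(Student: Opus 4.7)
The plan is to proceed by induction on the total generator length $\|L\| := \sum_{g \in L} |g|$, each step producing a flow-equivalent renewal system described by a strictly shorter list. First reduce to the case that $L$ is simple: if some $g \in L$ is a non-trivial concatenation of other members of $L$, delete $g$; this does not alter $\X(L)$ and strictly decreases $\|L\|$, so iterating terminates. If moreover every internal word has length at most one, $L$ is already irreducible and we are done. Otherwise, because every internal word occurs as a factor of some $g \in L$, the set of internal words is finite, so one may select an internal word $w$ of length at least two and of \emph{maximal} length among all internal words.

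The key claim is that $\X(L)$ admits no non-trivial $w$-overlap, whence Lemma~\ref{lem_irr_replace} applies. Suppose for contradiction that $u = ww' \in \BB(\X(L))$ with $|w| < |u| < 2|w|$, so that $w$ occurs as both a prefix and a suffix of $u$. Fix a minimal partitioning $(n_b, [g_1,\ldots,g_k], |u|)$ of $u$. The prefix copy of $w$ inside $u$ is covered by an initial segment of this partitioning which, after trimming redundant terms on the right, becomes a minimal partitioning of $w$ in the sense of Definition~\ref{def_partitioning}; since $w$ is internal, this trimmed partitioning consists of a single generator, so the prefix $w$ lies entirely inside some $g_i$. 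Symmetrically the suffix $w$ lies inside some $g_j$. Because prefix $w$ and suffix $w$ overlap as factors of $u$ (the overlap has length $2|w| - |u| \geq 1$), while distinct $g_l$ occupy disjoint position ranges in the partitioning, one concludes $i = j$. Then $u$ is contained in a single generator; arguing with minimality ($n_b \leq |g_1|$ forces this generator to be $g_1$, and then $n_b + |u| - 1 > \sum_{l<k}|g_l|$ together with $|u| \leq |g_1|$ forces $k = 1$), every minimal partitioning of $u$ consists of exactly one generator. Hence $u$ is internal and strictly longer than $w$, contradicting the maximal choice of $w$.

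With the hypothesis of Lemma~\ref{lem_irr_replace} now verified, replacing every occurrence of $w$ in each generator of $L$ by a fresh symbol $\diamond \notin \AA$ produces a list $M$ with $\X(M) \FE \X(L)$. Each replacement shortens the affected generator by $|w|-1 \geq 1$, so $\|M\| < \|L\|$, and subsequently reducing $M$ to a simple sublist only shrinks $\|M\|$ further. The induction hypothesis then furnishes an irreducible list flow-equivalent to $\X(M)$, and therefore to $\X(L)$.

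The main obstacle is the overlap-free claim for the longest internal word. The delicate point is checking that the prefix and suffix copies of $w$ inside $u$ can be individually recovered as minimal partitionings in the precise sense of Definition~\ref{def_partitioning}, so that internality of $w$ is available; once this bookkeeping is pinned down, the disjointness of generator ranges in a partitioning forces the two copies to live in the same generator, and the remainder of the induction is routine.
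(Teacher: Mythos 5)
Your proof is correct and follows essentially the same route as the paper: reduce to a simple list, pick a maximal-length internal word $w$, show that any non-trivial $w$-overlap would itself be internal and longer than $w$ (contradicting maximality), apply Lemma~\ref{lem_irr_replace}, and iterate. The only differences are cosmetic — you spell out the one-line overlap claim in detail and use total generator length rather than the count of internal words of length $\lvert w\rvert$ as the termination measure — both of which are fine.
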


\begin{proof}
Assume without loss of generality that $L$ is simple.
The lengths of internal words of $L$ are bounded above by $\max_{v \in L} \lvert v \rvert$, so let $w \in \BB(\X(L))$ be an internal word of maximal length, and assume that $|w| \geq 2$. If $\X(L)$ admits a non-trivial $w$-overlap $\tilde w$, then $\tilde w$ must also be internal and $|\tilde w| > |w|$ in contradiction with the assumption on $w$. Apply Lemma \ref{lem_irr_replace} to $L$ and $w$ to produce a list $M$ for which the number of internal words of length $|w|$ is strictly less than the number of internal words of length $|w|$ in $L$, and for which $\X(L) \FE \X(M)$.
$M$ inherits the simplicity of $L$.
Repeat this process until the longest internal word has length 1.
\end{proof}

\noindent
In this way, the study of flow equivalence of renewal systems can be reduced to the study of renewal systems generated by irreducible lists. 

\begin{prop} \label{prop_irr_properties}
Let $L \subseteq \AA^*$ be finite and irreducible.
\begin{enumerate}
  \item For each $a \in \AA(\X(L))$, there exist $u,v \in
  L$ such that $a = \rl(u) = \leftl(v)$.
  \item $\BB_2(\X(L)) = \AA(\X(L))^2$.
 \item If $\X(L)$ is a 1-step shift of finite type, then it is 
 the full $\AA(\X(L))$-shift.  
\end{enumerate}
\end{prop}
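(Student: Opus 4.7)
The plan is to establish the three claims in order, with the main content in part (1) and parts (2) and (3) following as essentially immediate consequences.

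For part (1), I would argue by contradiction that every $a \in \AA(\X(L))$ must occur as the rightmost letter of some $u \in L$. Since $L$ is finite, every letter of $\AA(\X(L))$ must appear inside some word of $L$, so under the assumption that no $u \in L$ has $\rl(u) = a$ I can fix $w = w_1 \cdots w_n \in L$ with $w_i = a$ and $i < n$, and set $b = w_{i+1}$. The key step is cataloguing the minimal partitionings $(n_b', g, 2)$ of the length-$2$ factor $ab \in \BB_2(\X(L))$. The defining inequalities $n_b' \leq |g_1|$ and $n_b' + 1 > \sum_{j=1}^{k-1}|g_j|$ force $k \in \{1, 2\}$, and in the case $k = 2$ they force $n_b' = |g_1|$, so that $a = \rl(g_1)$ and $b = \leftl(g_2)$. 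Under the standing assumption, no such $k = 2$ partitioning can exist. Hence every minimal partitioning of $ab$ uses just $k = 1$ word, which means $ab$ is internal; since $|ab| = 2$, this contradicts the irreducibility of $L$. The symmetric argument, using $ba$ instead of $ab$, rules out the possibility that $a$ never starts a word of $L$.

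Part (2) is then immediate: given $a,b \in \AA(\X(L))$, use part (1) to pick $u, v \in L$ with $\rl(u) = a$ and $\leftl(v) = b$. Then $uv \in L^* \subseteq \BB(\X(L))$ has $ab$ occurring as the factor straddling the boundary between $u$ and $v$, so $ab \in \BB_2(\X(L))$. Since $a,b$ were arbitrary, $\BB_2(\X(L)) = \AA(\X(L))^2$.

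For part (3), suppose $\X(L)$ is $1$-step. By Theorem \ref{thm_m-step}, $\X(L) = \X_\FF$ for some $\FF \subseteq \AA(\X(L))^2$, and without loss of generality I may take $\FF = \AA(\X(L))^2 \setminus \BB_2(\X(L))$. By part (2) this difference is empty, so $\FF = \emptyset$ and $\X(L)$ is the full $\AA(\X(L))$-shift.

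The only technical point that requires care is the combinatorial inventory of minimal partitionings of length-$2$ factors in part (1); once that inventory is in place, parts (2) and (3) amount to bookkeeping.
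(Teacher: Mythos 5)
Your proof is correct and follows essentially the same route as the paper's: the paper's (much terser) argument likewise observes that a length-$2$ allowed word containing $a$ cannot be internal, so some minimal partitioning must place a boundary between its two letters, exhibiting the required $u$ and $v$; parts (2) and (3) then follow exactly as you describe. Your contribution is simply to spell out the combinatorial inventory of minimal partitionings of a length-$2$ word and to phrase part (1) as a contradiction, which the paper leaves implicit.
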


\begin{proof}
For $a \in \AA(\X(L))$ there must exist letters $b,c \in \AA(\X(L))$ such that $ab, ca \in \BB(X)$. By assumption, neither of these words are internal, so there must exist $u,v \in L$ such that $a = \rl(u) = \leftl(v)$. The two remaining statements follow easily from this.
\end{proof}


\subsection{Conjugacy and flow equivalence of renewal systems}
In the following, some of the most important results about the conjugacy of renewal systems are recalled and used to derive simple consequences for the flow equivalence of special classes of renewal systems.

\begin{thm}[{B\' eal and Perrin \cite{beal_perrin}}]
\label{thm_cyclic_conjugacy}
Let $L \subseteq \AA^*$ be uniquely decipherable, and let $(G, \LL)$ be the standard loop graph presentation of $\X(L)$. Then $L$ is cyclic if and only if $\LL_\infty \colon \X_G \to \X(L)$ is a conjugacy.
\end{thm}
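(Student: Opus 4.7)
The plan is to dualize bijectivity of $\LL_\infty$ as uniqueness of biinfinite $L^*$-parses. Since $(G, \LL)$ presents $\X(L)$, the map $\LL_\infty$ is a surjective one-block factor map between compact shift spaces, so its inverse is automatically continuous and shift-commuting whenever it exists; hence the content of the theorem is the equivalence ``$L$ is cyclic $\iff \LL_\infty$ is injective.'' Because every loop in $G$ is attached to the single basepoint $v_0$ and has finite length, each $\lambda \in \X_G$ passes through $v_0$ infinitely often in both directions; let $T_\lambda \subseteq \Z$ denote the set of positions $n$ with $s(\lambda_n) = v_0$. Between consecutive elements of $T_\lambda$, the path $\lambda$ traces exactly one loop and hence determines a unique word of $L$, and since distinct words of $L$ correspond to distinct loops of $G$, the pair $(\LL_\infty(\lambda), T_\lambda)$ recovers $\lambda$. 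Because $L$ cyclic implies $L$ uniquely decipherable, the forward direction reduces to showing $T_\lambda = T_\mu$ whenever $\LL_\infty(\lambda) = \LL_\infty(\mu)$.

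For the forward direction, suppose $L$ is cyclic and $T_\lambda \neq T_\mu$. If $T_\lambda \cap T_\mu$ is unbounded both above and below, any $a \in T_\lambda \triangle T_\mu$ lies strictly between two common basepoints $a' < a < b$ in $T_\lambda \cap T_\mu$, so applying unique decipherability to the two $L^*$-decompositions of the finite word $x_{[a', b-1]}$ given by $\lambda$ and $\mu$ forces the two parses to agree on $[a', b]$, contradicting $a \in T_\lambda \triangle T_\mu$. Otherwise, without loss of generality $T_\lambda \setminus T_\mu$ is infinite. For each $a \in T_\lambda \setminus T_\mu$, let $w \in L$ be the loop word $\mu$ is traversing at position $a$, and write $w = ps$ where $|p|$ is the distance from the preceding $\mu$-basepoint to $a$; both $p$ and $s$ are nonempty. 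The data $(w, p)$ takes only finitely many values, so pigeonhole produces $a < a'$ in $T_\lambda \setminus T_\mu$ with common data $(ps, p)$. Reading labels between the $\mu$-basepoint immediately preceding $a$ and the one immediately preceding $a'$, the $\mu$-parse contributes a block $(ps) w_1 \cdots w_m$ for some $w_1, \ldots, w_m \in L$, while the $\lambda$-parse of $[a, a'-1]$ contributes some $v_0 \cdots v_k \in L^*$; matching labels yields $v_0 \cdots v_k = s w_1 \cdots w_m p$ with $ps \in L$ and $s, p$ both nonempty. Cyclicity forces $p = \varepsilon$, the required contradiction.

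For the reverse direction, suppose $L$ is not cyclic, and fix a witness: $p, s \in \AA^*$ with $s \neq \varepsilon$, $ps \in L$, and $v_0, \ldots, v_k, w_1, \ldots, w_l \in L$ satisfying $v_0 \cdots v_k = s w_1 \cdots w_l p$ with the cyclic conclusion failing. Unique decipherability of $L$ forces $p \neq \varepsilon$, because otherwise the equality $v_0 \cdots v_k = s w_1 \cdots w_l$ would exhibit two distinct $L^*$-decompositions of the same word. The periodic biinfinite label sequence $x = (v_0 \cdots v_k)^\infty$ then admits two $L^*$-parses whose basepoint sets differ by $|s| \not\equiv 0 \pmod N$, where $N = |v_0 \cdots v_k|$: one using the blocks $v_0 \cdots v_k$ and one using the blocks $w_1 \cdots w_l (ps)$ shifted by $|s|$. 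These produce two distinct paths in $\X_G$ with identical image under $\LL_\infty$, so $\LL_\infty$ is not injective.

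The hard part will be the pigeonhole step in the forward direction when $T_\lambda \cap T_\mu$ is not cofinal in $\Z$ in at least one direction---the naive attempt to apply unique decipherability on a window between common basepoints breaks down exactly here, as illustrated by $L = \{ab, ab^2\}$ applied to $x = \cdots abab \cdots$, where the two parses of $(ab)^\infty$ have disjoint basepoint sets. One must manufacture a finite identity fitting the exact shape of the cyclic condition with both $p$ and $s$ nonempty, so that cyclicity delivers a genuine contradiction rather than a vacuous conclusion.
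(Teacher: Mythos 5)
Your proof is correct. Note first that the paper itself offers no proof of this theorem: it is quoted from B\'eal and Perrin \cite{beal_perrin}, so there is nothing in the text to compare your argument against. What you have written is a complete, self-contained proof along the standard lines for circular codes, and the key steps all check out: the reduction of "conjugacy" to "injectivity" via compactness is valid; the observation that a path in the loop graph is determined by its label sequence together with its set of basepoint times is correct (distinct words of $L$ label distinct loops); the unique-decipherability argument disposes of the case where the two basepoint sets share a cofinal-in-both-directions intersection; and the pigeonhole construction in the desynchronized case produces an identity $v_0\cdots v_k = s\,w_1\cdots w_m\,p$ with $ps\in L$ and $p,s$ both nonempty, exactly matching the hypothesis of the cyclicity condition, so that the conclusion $p=\epsilon$ is a genuine contradiction. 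The converse is also right: unique decipherability forces $p\neq\epsilon$ in any witness to non-cyclicity, and then $|s|\not\equiv 0 \pmod{N}$ together with the computation showing $0\in T_1\setminus T_2$ yields two distinct preimages of the periodic point $(v_0\cdots v_k)^\infty$. Two minor points worth flagging: your pigeonhole step silently allows $m=0$ (when the two desynchronized traversals of the loop $ps$ are adjacent), so you are reading the definition of cyclic as permitting the empty product $w_1\cdots w_l$ with $l=0$ — this is the standard convention and is needed for the argument; and in Case 2 you should state explicitly that boundedness of $T_\lambda\cap T_\mu$ in one direction forces $T_\lambda\setminus T_\mu$ (indeed both differences) to be infinite, which you use but only gesture at.
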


\noindent
Theorem \ref{thm_cyclic_conjugacy} can be used to prove that every cyclic generating list is also pure. The list $\{a, ab, bc, c \}$ is pure but not uniquely decipherable, and hence not cyclic, so the reverse is not true.

\begin{cor}
\label{cor_cyclic_fe_full}
If $L$ is cyclic, then $\X(L)$ is flow equivalent to a full shift.
\end{cor}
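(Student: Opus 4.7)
The plan is to reduce the statement to a fact about edge shifts by invoking Theorem \ref{thm_cyclic_conjugacy}, and then to recognize the edge shift of the standard loop graph as a symbol expansion of a bouquet of loops.

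First I would use Theorem \ref{thm_cyclic_conjugacy}: since $L$ is cyclic and in particular uniquely decipherable, the labelling map $\LL_\infty \colon \X_G \to \X(L)$ induced by the standard loop graph presentation $(G,\LL)$ of $\X(L)$ is a conjugacy. Consequently $\X(L)$ is conjugate, and hence flow equivalent, to the edge shift $\X_G$. It therefore suffices to prove that $\X_G$ is flow equivalent to the full $n$-shift, where $n = \lvert L \rvert$.

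Next I would analyse the underlying graph $G$. By the construction preceding Problem \ref{prob_adler}, $G$ consists of a single central vertex $v_0$ together with, for each $w_i \in L$ of length $k_i$, a loop of length $k_i$ attached at $v_0$ (these loops having pairwise disjoint sets of intermediate vertices). Let $H$ denote the graph with a single vertex and $n$ loops of length $1$, so that $\X_H$ is the full $n$-shift on the alphabet $\{1, \ldots, n\}$. Starting from $\X_H$, I would apply, for each $i \in \{1, \ldots, n\}$, a sequence of $k_i - 1$ symbol expansions of the letter $i$, each introducing a fresh symbol not yet in the alphabet; by Theorem \ref{thm_parry-sullivan} each such step produces a flow equivalent shift. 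The shift obtained after all expansions is precisely the edge shift of the graph obtained from $H$ by subdividing the loop labelled $i$ into a loop of $k_i$ edges, i.e.\ the edge shift $\X_G$. Combining this chain of flow equivalences with the conjugacy $\X_G \to \X(L)$ yields $\X(L) \FE \{1, \ldots, n\}^\Z$.

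There is essentially no obstacle here beyond bookkeeping, because Theorem \ref{thm_cyclic_conjugacy} does all the heavy lifting by turning a potentially subtle question about labels into a question about a transparent edge shift. The only thing to be careful about is that the symbol expansions be performed with fresh alphabet letters so that Theorem \ref{thm_parry-sullivan} applies directly; once the graph $G$ is read off correctly as a sequence of subdivisions of $H$, the conclusion is immediate, and the full $n$-shift that appears at the end only depends on $\lvert L \rvert$.
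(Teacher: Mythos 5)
Your proposal is correct and follows essentially the same route as the paper: invoke Theorem \ref{thm_cyclic_conjugacy} to replace $\X(L)$ by the edge shift of the standard loop graph, then relate that edge shift to the full $\lvert L\rvert$-shift by symbol operations (the paper contracts the loop graph down to the bouquet, you expand the bouquet up to the loop graph, which is the same flow equivalence read in the opposite direction).
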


\begin{proof}
Use Theorem \ref{thm_cyclic_conjugacy} to see that $\X(L)$ is conjugate to the edge shift of the underlying graph of the standard loop graph presentation of $\X(L)$, and note that this edge shift can be symbol-reduced to the full $|L|$-shift. 
\end{proof}




The following two results show that various nicely behaved generating lists are automatically cyclic if they generate SFT renewal systems.

\begin{prop}[{Hong and Shin \cite[Proposition 3.2]{hong_shin_cyclic}}]
\label{prop_ud_min_sft_implies_cyclic}
Let $L$ be a minimal and uniquely decipherable generating list for which $\X(L)$ is an SFT, then $L$ is cyclic.
\end{prop}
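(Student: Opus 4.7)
The plan is a proof by contradiction: assume $L$ is minimal and uniquely decipherable, $\X(L)$ is an SFT, yet $L$ is not cyclic. Negating the definition of cyclicity, one obtains $p, s \in \AA^*$ with $s \neq \varepsilon$, and $v_0, \ldots, v_k, w_1, \ldots, w_l, ps \in L$ satisfying
\[
    v_0 v_1 \cdots v_k \;=\; s\, w_1 \cdots w_l\, p,
\]
while the prescribed conclusion ($k = l$, $s = v_0$, $p = \varepsilon$ and $w_i = v_i$) fails. The subcase $p = \varepsilon$ can be discarded at once: then $s = ps \in L$, and the identity $s \cdot w_1 \cdots w_l = v_0 v_1 \cdots v_k$ becomes an equality between two $L^*$-factorisations of the same word, whence unique decipherability forces $s = v_0$, $k = l$, $w_i = v_i$, contradicting the failure. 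Thus one may also assume $p \neq \varepsilon$.

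The failure of cyclicity admits a clean geometric reformulation: the biinfinite periodic sequence $x = (v_0 \cdots v_k)^{\infty} \in \X(L)$ admits two distinct biinfinite $L$-parsings, the obvious one and a shift by $|s|$ which regroups the letters as $\cdots (ps)(w_1 \cdots w_l)(ps)(w_1 \cdots w_l) \cdots$. Unique decipherability prevents any such ambiguity at the level of finite words, but does not rule it out for biinfinite sequences. By Theorem \ref{thm_cyclic_conjugacy}, this ambiguity means the label map $\LL_\infty \colon \X_{G} \to \X(L)$ from the edge shift of the standard loop graph of $L$ is not a conjugacy.

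The SFT hypothesis enters through Corollary \ref{cor_lfc_conj}: since $\X(L)$ is an irreducible SFT, the covering map from the edge shift of its left Fischer cover is a conjugacy. The strategy is then to compare the loop graph of $L$ with the Fischer cover of $\X(L)$. The ambiguity on $x$ translates into distinct loop-graph paths having the same image in $\X(L)$, and hence after passing to the Fischer cover into a non-trivial identification between the loops labelled $v_0 \cdots v_k$ and the concatenated loops labelled $(ps)(w_1 \cdots w_l)$. Reading this identification back into generating lists yields a simple list $\tilde L \neq L$ with $\X(\tilde L) = \X(L)$ and $L \subseteq \tilde L^{*}$ --- a suitable candidate is obtained by replacing $ps \in L$ with the constituent $L$-pieces suggested by the cut-and-paste of the two parsings --- contradicting the minimality of $L$.

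The main obstacle is exhibiting this alternative generating list $\tilde L$ explicitly and verifying its three defining properties: one must prevent the reduction procedure from producing either a list that fails to be simple or one that generates a strictly smaller shift space. This is precisely where the $M$-step SFT property must be invoked, via Theorem \ref{thm_m-step}, through careful gluing and cutting arguments on sufficiently long factors of $x$. The indispensability of the SFT hypothesis is confirmed by the even shift $\X(\{00,1\})$, which is a minimal uniquely decipherable strictly sofic renewal system for which $\{00, 1\}$ is not cyclic (take $p = s = 0$, $v_0 = 00$, $l = 0$), showing that the conclusion genuinely fails without the SFT assumption.
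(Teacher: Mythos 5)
The paper gives no proof of this statement (it is quoted directly from Hong and Shin), so your argument has to stand on its own --- and as written it does not close. The setup is sound: negating cyclicity, disposing of the case $p=\varepsilon$ by unique decipherability, and observing that for $p \neq \varepsilon$ the periodic point $u^\infty$ with $u = v_0\cdots v_k$ carries two genuinely distinct $L$-parsings (each period boundary is a cut point of the $v$-parsing but lies strictly inside a copy of the generator $ps$ in the other parsing) are all correct, and the even-shift example correctly shows that the SFT hypothesis cannot be dropped. But the entire content of the proof is the step you yourself label ``the main obstacle'': producing a simple list $\tilde L \neq L$ with $\X(\tilde L)=\X(L)$ and $L \subseteq \tilde L^{*}$. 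You do not construct $\tilde L$, and the candidate you gesture at --- replacing $ps$ by ``the constituent $L$-pieces suggested by the cut-and-paste'' --- does not obviously work. Taking $\tilde L = (L\setminus\{ps\})\cup\{p,s\}$ does give $L \subseteq \tilde L^{*}$, but simplicity can fail, and $\X(\tilde L)$ is in general strictly larger than $\X(L)$: once $p$ and $s$ are free generators they may be concatenated arbitrarily, so words such as $pp$ or $ss$ become allowed even though nothing forces them into $\BB(\X(L))$. Ruling this out is exactly where the $M$-step property must be deployed, through concrete gluing arguments on long factors of $u^\infty$ via Theorem \ref{thm_m-step}, and none of that work is present.

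A secondary point: the detour through Theorem \ref{thm_cyclic_conjugacy} and Corollary \ref{cor_lfc_conj} does no work in the argument as written. You already exhibit the two parsings of $u^\infty$ directly, and the asserted ``non-trivial identification between loops'' in the Fischer cover is never converted into data about generating lists --- that conversion is precisely the missing construction. As it stands, the proposal is a plausible strategy outline with the decisive step absent, not a proof.
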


\begin{thm}[{Hong and Shin \cite[Theorem 3.9]{hong_shin_cyclic}}]
\label{thm_cyclic_equivalent}
Let $L$ be a simple generating list that is either prefix or suffix for which $\X(L)$ is an SFT, then the following are equivalent:
\begin{itemize}
\item $L$ is cyclic.
\item $L$ is pure.
\item $L$ is minimal.
\end{itemize}
\end{thm}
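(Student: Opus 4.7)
The plan is to establish the cycle of implications minimal $\Rightarrow$ cyclic $\Rightarrow$ pure $\Rightarrow$ minimal. The first two implications are essentially immediate from the surrounding material. For minimal $\Rightarrow$ cyclic, a prefix or suffix list is automatically uniquely decipherable, so Proposition \ref{prop_ud_min_sft_implies_cyclic} applies directly. For cyclic $\Rightarrow$ pure, this was already noted as a consequence of Theorem \ref{thm_cyclic_conjugacy}: when $L$ is cyclic, the map $\LL_\infty$ from the edge shift of the standard loop graph to $\X(L)$ is a conjugacy, and any closed walk of length $n\lvert w \rvert$ labelled $w^n$ must decompose into loops labelled by the individual elements of $L$, so $w \in L^*$.

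For the substantive implication pure $\Rightarrow$ minimal, I would argue by contradiction. Suppose $L$ is simple, prefix, and pure, that $\X(L)$ is an SFT, but that $L$ is not minimal; then there exists a simple list $\tilde L \neq L$ with $\X(L) = \X(\tilde L)$ and $L \subseteq \tilde L^*$. First I would dispose of the easy subcase $\tilde L \subseteq L^*$, where $L^* = \tilde L^*$: a submonoid of the free monoid $\AA^*$ has a unique ``irreducible'' generating set, namely $M_+ \setminus (M_+ \cdot M_+)$, and both $L$ and $\tilde L$, being simple, coincide with this set, so $L = \tilde L$, contradicting the choice of $\tilde L$. Hence there must exist some $v \in \tilde L \setminus L^*$, and the aim is to produce an integer $n \geq 1$ with $v^n \in L^*$, which contradicts purity of $L$.

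To produce such an $n$, I would use that $v^\infty \in \X(\tilde L) = \X(L)$ together with the prefix and SFT hypotheses. Fix a biinfinite $L$-concatenation some shift of which equals $v^\infty$. Each long factor $v^K$ of $v^\infty$ then sits inside a sequence of $L$-words, so there exist $p, s \in \AA^*$ with $\lvert p \rvert, \lvert s \rvert$ bounded by $\max_{l \in L} \lvert l \rvert$ such that $p v^K s \in L^*$, where $p$ is a suffix of some $l \in L$ and $s$ is a prefix of some $l' \in L$. Since $L$ is finite, the overhang pair $(p,s)$ ranges over a finite set as $K$ varies, so by the pigeonhole principle there are $K_1 < K_2$ producing the same $(p, s)$. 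Comparing the unique left-to-right parsings of $p v^{K_1} s$ and $p v^{K_2} s$ guaranteed by the prefix property, and using the ``left cancellation'' $x, xy \in L^* \Rightarrow y \in L^*$ valid for prefix codes, I would extract $v^{K_2 - K_1} \in L^*$, violating purity. The suffix case reduces to the prefix case by reversing every word.

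The main obstacle will be the alignment step: forcing a common breakpoint of the two parsings to fall at a $v$-block boundary, so that left cancellation actually produces $v^{K_2-K_1}$ rather than some cyclic conjugate of it. The prefix property only yields unique left-to-right parsings, so the parsings of $pv^{K_1}s$ and $pv^{K_2}s$ agree while their characters agree, but may diverge inside an $L$-word that straddles the boundary of $pv^{K_1}$. This is exactly where the SFT hypothesis enters: since $\X(L)$ is $M$-step, for $K$ sufficiently large the factor $v^K$ is intrinsically synchronizing by Theorem \ref{thm_m-step}, and one can arrange that both parsings pass through a common synchronizing block sitting inside $v^{K_1}$, which forces the required alignment. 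Making this alignment precise, and verifying that the prefix (respectively suffix) and SFT hypotheses are both genuinely used, is the technical heart of the proof.
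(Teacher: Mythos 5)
The paper does not prove this theorem; it is quoted verbatim from Hong and Shin, so there is no internal proof to compare against and I am judging your argument on its own merits.

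Your first two implications are fine and are essentially the ones the surrounding text supplies: minimal $\Rightarrow$ cyclic is Proposition \ref{prop_ud_min_sft_implies_cyclic} together with the fact (stated in the paper) that a prefix or suffix list is uniquely decipherable, and cyclic $\Rightarrow$ pure is the remark the paper makes right after Theorem \ref{thm_cyclic_conjugacy}. The subcase $\tilde L \subseteq L^*$ of the third implication is also correct: a simple list is exactly the unique irredundant generating set of the submonoid it generates, so $L^* = \tilde L^*$ forces $L = \tilde L$.

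The gap is in the main case of pure $\Rightarrow$ minimal, and it is worse than the "alignment" difficulty you flag at the end: the intermediate statement you are trying to prove is false. The only consequence of non-minimality that your argument actually uses is $v^\infty \in \X(L)$, and from this you try to extract $v^n \in L^*$. But "$v^\infty \in \X(L)$ and $v \notin L^*$ imply $v^n \in L^*$ for some $n$" already fails for $L = \{ab\}$ and $v = ba$: this $L$ is simple, prefix, pure, and generates an SFT (a single period-two orbit), $(ba)^\infty = (ab)^\infty \in \X(L)$, and yet $(ba)^n \notin L^*$ for every $n \geq 1$, since every nonempty element of $L^*$ begins with $a$. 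Your pigeonhole plus left-cancellation step correctly yields $u^m \in L^*$, hence $u \in L^*$ by purity, for some cyclic conjugate $u$ of $v$ (in the example, $u = ab$), but the passage from $u$ to $v$ cannot be made from $v^\infty \in \X(L)$ alone. The proposed repair via intrinsic synchronization cannot close this: Theorem \ref{thm_m-step} constrains the language of $\X(L)$, not the positions at which $L$-parsings break, and in the example $(ba)^K$ is intrinsically synchronizing for every large $K$ while the unique $L$-parsing of $(ab)^\infty$ never has a breakpoint at a $ba$-block boundary. Any correct proof must exploit the hypotheses $v \in \tilde L$, $L \subseteq \tilde L^*$, and $\X(\tilde L) = \X(L)$ beyond the single fact $v^\infty \in \X(L)$ --- for instance by parsing points of the form $\cdots vvv\,w\,vvv\cdots$ with $w \in L \subseteq \tilde L^*$, which lie in $\X(L)$ and can be used to pin down the alignment of breakpoints with the $v$-blocks. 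That is where the real content of Hong and Shin's argument lies, and it is missing here.
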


\begin{remark}
\label{rem_nice_lists}
Corollary \ref{cor_cyclic_fe_full}, Proposition \ref{prop_ud_min_sft_implies_cyclic}, and Theorem \ref{thm_cyclic_equivalent} show that many of the nicely behaved classes of generating lists considered above always generate renewal systems flow equivalent to full shifts.
Hence, it is necessary to go beyond these classes in order to find non-trivial results about the range of the Bowen-Franks invariant.
\end{remark}

In an investigation of Adler's original question, it is natural to attempt to compute the ranges of conjugacy invariants over the set of SFT renewal systems. If such a range can be proved to be different from the range over the set of irreducible SFTs, then this will provide a negative answer to the question. The following theorem was the first major result of this kind.

\begin{thm}[{Goldberger, Lind, and Smorodinsky \cite{goldberger_lind_smorodinsky}}]
\index{renewal system!entropy of}
\label{thm_goldberger}
If $X$ is an irreducible SFT, then there exists a renewal system with entropy $h(X)$.
\end{thm}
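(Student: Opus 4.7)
\textbf{Proof plan for Theorem \ref{thm_goldberger}.} The strategy is to convert the desired entropy into a polynomial equation and then construct a renewal system whose standard loop graph realises precisely that equation. Since entropy is a conjugacy invariant and every irreducible SFT is conjugate to an edge shift $\X_A$ by Proposition \ref{prop_higher_block}, it suffices to treat the case $X = \X_A$ for an irreducible non-negative integer matrix $A$, so that $h(X) = \log \lambda$ for $\lambda = \lambda_A$ the Perron eigenvalue of $A$. In particular $\lambda$ is a weak Perron number.

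First I would establish the following basic ``generating function'' computation. Given a uniquely decipherable list $L$ with $c_i$ words of length $i$ (for each $i$), the number $N_n$ of concatenations of elements of $L$ of total length $n$ is the $n$th coefficient of the rational function $(1 - \sum_i c_i z^i)^{-1}$ by unique decipherability; up to a uniformly bounded ``head-and-tail'' polynomial correction (because every factor of an element of $\X(L)$ is obtained by removing at most $\max_{w \in L}|w|$ letters from each end of some concatenation), $|\BB_n(\X(L))|$ has the same exponential growth rate. Consequently
\[
    h(\X(L)) = \log \mu,
\]
where $\mu > 1$ is the unique positive solution of $\sum_{i} c_i \mu^{-i} = 1$. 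A uniquely decipherable list with prescribed word lengths can then be realised over a sufficiently large alphabet as a prefix code (the Kraft inequality is automatic once $\sum c_i \mu^{-i} = 1$ with $\mu$ bounded below the alphabet size).

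Second, and this is the crux, I would show: for each weak Perron $\lambda > 1$ there exist non-negative integers $c_1,\dots,c_N$ with $\sum_{i=1}^{N} c_i \lambda^{-i} = 1$. The idea is to fix a vertex $v_0$ in the underlying graph of $A$ and let $r_n$ count first-return loops at $v_0$ of length $n$. The standard decomposition of closed walks at $v_0$ into concatenations of first-return loops gives the identity
\[
    \sum_{n \geq 1} r_n z^n \;=\; 1 - \frac{1}{1 + \sum_{n \geq 1} W_n z^n},
\]
where $W_n$ is the number of closed walks of length $n$ at $v_0$; since $A$ is primitive, $W_n \sim c \lambda^n$, so the right-hand side is analytic on $|z| < 1/\lambda$ and takes the value $1$ at $z = 1/\lambda$, giving $\sum_n r_n \lambda^{-n} = 1$. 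This is an \emph{infinite} sum. One then truncates to $n \leq N$ and restores the equation to an exact equality by introducing a bounded number of extra generators of suitable lengths, exploiting monotonicity and convexity of $z \mapsto \sum c_i z^{-i}$; that this can always be done with non-negative integer coefficients is the number-theoretic heart of the argument and is essentially equivalent to Lind's characterisation of weak Perron numbers as precisely those reals arising as SFT entropies.

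Combining these three steps: pick $c_1,\dots,c_N$ from the second step, realise them as the length-distribution of a prefix code $L$ from the first step, and conclude $h(\X(L)) = \log \lambda = h(X)$. The main obstacle is the middle step: turning the infinite first-return identity into an exact \emph{finite} representation $\sum_{i=1}^N c_i \lambda^{-i} = 1$ with $c_i \in \N_0$, while maintaining $\lambda$ as the positive root. Every other part of the argument is essentially formal, but this truncation-and-correction procedure is where Perron--Frobenius theory and the arithmetic of weak Perron numbers must be used in concert.
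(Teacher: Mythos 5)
The paper does not prove this theorem itself (it is quoted from Goldberger--Lind--Smorodinsky), so your proposal has to be judged against what can actually work, and there is a fatal gap at exactly the step you flag as the crux. Your step 3 claims that every weak Perron number $\lambda>1$ admits non-negative integers $c_1,\dots,c_N$ with $\sum_{i=1}^{N} c_i\lambda^{-i}=1$. This is false. If such a representation exists, let $j$ be the largest index with $c_j>0$ and set $Q(z)=z^{j}-c_1z^{j-1}-\cdots-c_j$; then $g(x)=\sum_i c_i x^{-i}$ is strictly decreasing on $(0,\infty)$, so $Q(x)<0$ for all $0\le x<\lambda$ and $\lambda$ is the \emph{only} non-negative root of $Q$, hence of $z^{N-j}Q(z)$ apart from $0$. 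Consequently the minimal polynomial of $\lambda$ can have no root in the open interval $(0,\lambda)$. But $\lambda=(3+\sqrt5)/2$, the Perron root of $\left(\begin{smallmatrix}2&1\\1&1\end{smallmatrix}\right)$ (the square of the golden mean), has the conjugate $(3-\sqrt5)/2\in(0,\lambda)$; likewise $(5+\sqrt5)/2$ from $\left(\begin{smallmatrix}3&1\\1&2\end{smallmatrix}\right)$. These are entropies of mixing SFTs that your construction can never reach. No amount of truncation-and-correction of the first-return series helps, because the obstruction applies to \emph{every} finite representation. (The claim is also not ``essentially equivalent to Lind's characterisation'': Lind realises Perron numbers as spectral radii of arbitrary primitive integer matrices, whereas $\sum c_i\lambda^{-i}=1$ forces $\lambda$ to be the spectral radius of a non-negative companion matrix, a strictly smaller class.)

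Your steps 1 and 2 are fine: for a uniquely decipherable $L$ the flower automaton is unambiguous and $h(\X(L))=\log\mu$ with $\sum_i c_i\mu^{-i}=1$, and Kraft gives a prefix code with any admissible length distribution. But this is precisely why the single-flower/prefix-code route is a dead end: it can only produce entropies whose defining polynomial has all non-leading coefficients non-positive. The actual proofs get around this by building renewal systems whose generating words interact, so that the minimal presentation is \emph{not} a single flower and the characteristic polynomial acquires coefficients of both signs. This is visible in the Hong--Shin refinement reproduced in Section \ref{sec_rs_H} of this thesis: the building blocks in $R$ contain words such as $\alpha\gamma_2\cdots\gamma_r\beta$ and $\beta\tilde\alpha\gamma_2\cdots\gamma_r$ sharing letters, their left Fischer covers have $2r+1$ vertices (Lemma \ref{lem_entropy_lfc}), and the resulting determinant (Equation \ref{eq_det_R}) is a genuinely mixed-sign polynomial; blocks over disjoint alphabets are then summed to hit an arbitrary Perron number. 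To repair your argument you would need to replace step 3 by a realisation lemma of that multi-block type, which is a different construction rather than a correction of the one you propose.
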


\noindent
Hong and Shin \cite{hong_shin} improved this result by showing that whenever $\log \lambda$ is the entropy of an irreducible SFT, there exists an \emph{SFT} renewal system with entropy $\log \lambda$. This shows that the range of the entropy invariant over the class of renewal systems cannot be used to answer Adler's question, and it is arguably the most powerful general result obtained in the search for an answer to Adler's question. The class $H$ of generating lists introduced in \cite{hong_shin} will be examined in greater detail in Section \ref{sec_rs_entropy} where the range of the Bowen-Franks invariant over $H$  will be computed.

\subsection{When is a renewal system of finite type?}
\label{sec_rs_when_sft}
In order to attempt to answer Adler's question by computing the range of invariants over the set of SFT renewal systems, it is desirable to have conditions on $L$ which are necessary and sufficient for $\X(L)$ to be an SFT. 

\begin{example}
Consider the lists $L_1 = \{aa, b\}$ and $L_2 = \{ab, ba\}$. $L_1$ generates the even shift which is strictly sofic, and $L_2$ generates a renewal system which is strictly sofic for a similar reason: For each $n \in \N$, $bb(ab)^n, (ab)^nb \in \BB(\X(L_2))$ but $bb(ab)^nb \notin \BB(\X(L_2))$. This kind of behaviour is often seen in strictly sofic renewal systems. However, a list will not necessarily generate a strictly sofic shift simply because it contains a word such as $aa$ or a pair of words such as $ab$ and $ba$. Indeed, if suitable words are added to the lists considered above, then the resulting generating lists will generate shifts of finite type. This is, for instance, the case for $L'_1 = \{aa, aaa, b\}$ and $L'_2 = \{ab, aba, ba, bab\}$.
\end{example}

In general, it is non-trivial to determine whether a list generates an SFT, but there are results in certain special cases, e.g.\  for the class of cyclic renewal systems where the following corollary is a consequence of Theorem \ref{thm_cyclic_conjugacy}.

\begin{cor}
\label{cor_cyclic_implies_sft}
If $L \subseteq \AA^*$ is cyclic, then $\X(L)$ is an SFT.
\end{cor}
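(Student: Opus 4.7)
The plan is to derive this directly from Theorem \ref{thm_cyclic_conjugacy} together with the fact that any edge shift is an SFT and that being an SFT is preserved under conjugacy.

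First I would observe that a cyclic generating list is automatically uniquely decipherable (as noted right after the definition of cyclic lists). Hence Theorem \ref{thm_cyclic_conjugacy} applies: the labelling map $\LL_\infty \colon \X_G \to \X(L)$ induced by the standard loop graph presentation $(G, \LL)$ of $\X(L)$ is a conjugacy between $\X_G$ and $\X(L)$.

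Next, I would recall that every edge shift is a $1$-step SFT by construction, so in particular $\X_G$ is an SFT. Since the class of SFTs is closed under conjugacy (cf.\ the theorem cited earlier in the text stating that a shift space conjugate to an SFT is itself an SFT), it follows that $\X(L)$ is an SFT.

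There is no real obstacle here: the heavy lifting has already been done by Theorem \ref{thm_cyclic_conjugacy}. The only thing to double-check is that ``cyclic implies uniquely decipherable'' has been recorded in the excerpt, which it has, so the corollary follows in two lines once that theorem is invoked.
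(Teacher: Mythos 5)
Your proof is correct and is essentially the paper's own argument: the paper likewise notes that cyclic lists are uniquely decipherable and then deduces the corollary directly from Theorem \ref{thm_cyclic_conjugacy}, using that the edge shift of the standard loop graph is an SFT and that this property is preserved under conjugacy.
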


\noindent
Consider the list $L = \{ aa, aaa, b \}$ to see that the reverse is not true in general. However, a partial reverse is given by Proposition \ref{prop_ud_min_sft_implies_cyclic}.

\begin{thm}[{Restivo \cite{restivo_question}}]
\label{thm_restivo}A renewal system generated by a very pure list is an SFT. 
\end{thm}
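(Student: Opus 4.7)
The plan is to verify the $M$-step condition of Theorem~\ref{thm_m-step} for a constant $M$ depending only on $L$, whence Theorem~\ref{thm_m-step} will give that $\X(L)$ is a shift of finite type. Let $K = \max_{g \in L} |g|$ and take $M = 2K$ initially, enlarging if the argument requires. Given $uv, vw \in \BB(\X(L))$ with $|v| \geq M$, I would fix $\alpha,\beta,\gamma,\delta \in \AA^*$ and generator decompositions
\[ \alpha u v \beta = g_1 \cdots g_k \in L^*, \qquad \gamma v w \delta = h_1 \cdots h_\ell \in L^*. \]
Each decomposition induces a set of cut points inside $v$, namely the positions $p \in \{0,1,\ldots,|v|\}$ at which a generator boundary of the parent word falls within the occurrence of $v$. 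Since every generator has length at most $K$ and $|v| \geq 2K$, both cut-point sets are non-empty and consecutive cut points differ by at most $K$.

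The immediate goal is to locate a position $p$ that is a cut point of both decompositions. If such a $p$ exists, the first decomposition writes $\alpha u v_{[1,p]}$ as an initial segment $g_1 \cdots g_j$ of itself, and the second writes $v_{[p+1,|v|]} w \delta$ as a terminal segment $h_{m+1} \cdots h_\ell$, so that concatenation gives
\[ \alpha u v w \delta = g_1 \cdots g_j \cdot h_{m+1} \cdots h_\ell \in L^*, \]
whence $uvw \in \BB(\X(L))$ and the $M$-step property holds in this case.

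The main obstacle is therefore the case in which the two cut-point sets inside $v$ are disjoint, and this is where very pureness must be essential. Without loss of generality $L$ is simple (so that no generator is a non-trivial concatenation of elements of $L$; Proposition~\ref{prop_irr} ensures a simple generating list for the same renewal system exists, and very pureness depends only on $L^*$). Assuming disjointness, I would pick a cut point $p$ of the first decomposition lying strictly inside some generator $h_m = xy$ of the second, with $x, y \neq \epsilon$ and $|x|+|y| \leq K$. By examining the two decompositions in a window of a few generators on either side of $p$ -- a step that will likely force $M$ up to roughly $3K$ in order to guarantee enough room -- I would produce an element of $L^*$ containing $yx$ as a factor, via a suitable splicing of fragments of the two parent decompositions around the disagreement. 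Then $xy = h_m \in L \subseteq L^*$ and $yx \in L^*$, so very pureness forces $x, y \in L^*$; but this presents $h_m$ as a non-trivial product of elements of $L^*$, contradicting simplicity. Hence disjointness is impossible, a common cut point exists, and the splicing of the previous paragraph completes the proof. The delicate technical step, and the one I expect to be the main obstacle, is exhibiting $yx$ as a factor of some element of $L^*$; this requires careful local bookkeeping around the position of disagreement between the two decompositions, and it is this step that is likely to drive the precise value of $M$.
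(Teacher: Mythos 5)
The paper quotes this theorem from Restivo without proof, so there is nothing internal to compare against; I am judging your argument on its own terms. Your skeleton is the right one: reduce to the $M$-step criterion of Theorem \ref{thm_m-step}, extend $uv$ and $vw$ into $L^*$, and splice the two factorisations at a position of $v$ that is a cut point of both. The splicing step itself is correct. The gap is in the only place where very pureness is actually used, namely your argument that the two cut-point sets cannot be disjoint.

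There are two problems there. First, very pureness requires $AB\in L^*$ \emph{and} $BA\in L^*$ exactly; you propose to exhibit an element of $L^*$ merely \emph{containing} $yx$ as a factor, and the jump from that to ``$yx\in L^*$'' is a non sequitur -- a factor of an $L^*$-word need not lie in $L^*$. Second, and more fundamentally, no bounded-window examination around a \emph{single} disagreement point $p$ can manufacture the conjugate pair that very pureness needs; very pure lists need not be comma-free (e.g.\ $\{ab,aab\}$ is very pure but $ab$ sits strictly inside $aab\cdot ab$), so interpretations are not forced to agree within a fixed number of generators, and this is also why $M=2K$ or $3K$ cannot be right in general. The standard repair is a pigeonhole over phases: take $M\geq K(S+2)$ with $S=\sum_{g\in L}\lvert g\rvert$, so that $v$ contains at least $S+1$ cut points of the first factorisation; at each of them record the proper prefix of its current generator that the second factorisation has consumed. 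Either some such prefix is empty -- a common cut point, and you splice -- or two cut points $p<q$ of the first factorisation see the same nonempty consumed prefix $x$, where the generator straddling $p$ is $g=xy$. Then $t:=v_{[p+1,q]}$ equals $AB$ with $A=yh_1\cdots h_r$ and $B=x$, where $AB=t\in L^*$ because $p,q$ are cuts of the first factorisation, and $BA=(xy)h_1\cdots h_r\in L^*$ because $xy=g\in L$ and each $h_i\in L$. Very pureness gives $x\in L^*$; applying it once more to the pair $(y,\,h_1\cdots h_r x)$ gives $y\in L^*$, so $g=xy$ is a nontrivial concatenation of elements of $L$, contradicting simplicity. (Your reduction to a simple list is fine in substance, though Proposition \ref{prop_irr} is the wrong citation -- it produces a flow-equivalent system over a different alphabet; what you want is simply that deleting from $L$ any word that is a nontrivial product of the others changes neither $L^*$ nor $\X(L)$.)
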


\index{subset construction}
Let $L$ be a generating list, and let $(G, \LL_G)$ be the standard loop graph of $\X(L)$. By using the \emph{subset construction}, it is possible to construct a left-resolving presentation of $\X(L)$ based on  $(G, \LL_G)$ (see e.g.\ \cite[Theorem 3.3.2]{lind_marcus}). Once a left-resolving presentation is known, another algorithm can be used to construct the left Fischer cover (\cite[Theorem 3.4.14]{lind_marcus}). However, the graph constructed by the subset construction has $2^{G^0}-1$ vertices, and even for moderately sized generating lists, it can take a long time to find the left Fischer cover using these algorithms.

Let $(F, \LL)$ be the left Fischer cover of $\X(L)$.
For $w \in \BB(\X(L))$, $\lvert s(w) \rvert = 1$ if and only if $w$ is intrinsically synchronizing, so in order to prove that $\X(L)$ is an $n$-step SFT, it is sufficient to prove that $\lvert s(w) \rvert = 1$ for all $w \in \BB_n(X)$. If $\X(L)$ is an SFT and $\lvert F^0 \rvert = r$, then $\X(L)$ is $(r^2-r)$-step, so only finitely many words have to be examined \cite[Theorem 3.4.17]{lind_marcus}. Hence,
there is an algorithm for checking whether $\X(L)$ is an SFT. However, it is 
generally difficult, to check whether a concrete list generates an SFT unless it belongs to one of the well understood classes considered above.

The following result shows that the reverse problem also has a constructive answer:

\begin{thm}[{Restivo \cite{restivo}}]
If $X$ is an SFT, then it is decidable whether $X$ is a renewal system.\label{thm_restivo_decidable}
\end{thm}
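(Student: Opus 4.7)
The plan is to reduce the decision problem to a finite search. First, observe that any renewal system is irreducible (as noted immediately after Problem \ref{prob_adler}), so the first step is to check irreducibility of the given SFT $X$; this is routine, for instance via strong connectivity of the underlying graph of the edge shift conjugate to $X$ given by Proposition \ref{prop_higher_block}. If $X$ is not irreducible, answer NO.

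Next, invoke Restivo's theorem (quoted in the paragraph preceding Proposition \ref{prop_ud_min_sft_implies_cyclic}) that the language of an irreducible sofic shift contains only finitely many maximal submonoids of $\AA^*$, each of which is rational and admits a finite minimal generating set. The key observation is that if $X = \X(L)$ for some finite list $L$, then $L^*$ is a submonoid of $\BB(X) \cup \{\epsilon\}$ and hence contained in some maximal submonoid $M$. Since the set of factors of $L^*$ is $\BB(X)$ by assumption, and the set of factors of $M$ lies in $\BB(X)$, the set of factors of $M$ must also equal $\BB(X)$. Consequently any minimal generating list $L_M$ of $M$ also generates $X$, so it suffices to test the finitely many candidates $L_M$.

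The algorithm is therefore: effectively enumerate the maximal submonoids of $\BB(X) \cup \{\epsilon\}$ together with finite generating lists $L_M$, and for each $L_M$ decide whether $\X(L_M) = X$. This last step is straightforward because both shift spaces are sofic, hence their languages are regular, and equality of regular languages is decidable — for instance by constructing the left Fischer cover of each (unique up to labelled graph isomorphism by Theorem \ref{thm_lfc_char}) and checking for labelled graph isomorphism. Declare $X$ a renewal system if and only if some $L_M$ passes the test.

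The main obstacle is the effectiveness of the enumeration step: it is not enough to know that only finitely many maximal submonoids exist, one must produce them, or equivalently, produce a computable bound $N$ on the lengths of words in a minimal generating list for $X$, after which the finitely many lists consisting of words in $\BB(X)$ of length at most $N$ can be tested by brute force. Such a bound should be derivable from the number of states in the Fischer cover of $X$ by bounding the length of a shortest loop realising each relevant return word in a fixed presentation, but justifying this carefully constitutes the technical heart of Restivo's argument.
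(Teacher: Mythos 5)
The paper does not prove this statement at all: it is quoted verbatim from Restivo's work \cite{restivo}, so there is no in-paper argument to measure your proposal against. Judged on its own terms, your outline correctly identifies the strategy one would expect (finitely many maximal submonoids of $\BB(X)\cup\{\epsilon\}$, reduction to finitely many candidate generating lists, decidable equality of the resulting sofic shifts — the last step is indeed routine via Fischer covers). But the reduction has a genuine gap beyond the effectiveness issue you flag at the end.

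The unjustified step is the assertion that each maximal submonoid $M$ ``admits a finite minimal generating set.'' Every submonoid of a free monoid has a canonical minimal generating set, namely $(M\setminus\{\epsilon\})\setminus(M\setminus\{\epsilon\})^2$, but for a rational submonoid this set can be infinite — e.g.\ $(ab^*a)^*$ is rational, yet no word $ab^na$ factors non-trivially inside it, so its minimal generating set is $\{ab^na \mid n\geq 0\}$. Rationality of the maximal submonoids (which is what Restivo's structural theorem gives you) therefore does not by itself produce a \emph{finite} candidate list $L_M$, and your correctness argument for the ``NO'' answer breaks down exactly here: if $X=\X(L)$ for a finite $L$, you only know $L^*\subseteq M$ for some maximal $M$ with the same factor set; you do not know that $M$ itself, or any of the finitely many $M_i$, is finitely generated, so the algorithm as described could reject a genuine renewal system. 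Closing this — either by showing that the relevant maximal submonoids are finitely generated, or by extracting a computable length bound $N$ so that only lists $L\subseteq\BB_{\leq N}(X)$ need be tested — is, as you half-concede, the actual content of Restivo's proof, and it is missing here. As it stands the proposal is a plausible reconstruction of the shape of the argument rather than a proof.
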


As mentioned in Remark \ref{rem_nice_lists}, the most well understood classes of generating lists are uninteresting when considering the flow equivalence of SFT renewal systems, so the next goal is to develop a new set of conditions which will guarantee that a list generates an $n$-step SFT based only on the knowledge of the partitionings of the allowed words of length less than or equal to $n$.

\begin{definition}
\index{word!strongly synchronizing}
A word $w \in \BB_n(\X(L))$ is said to be \emph{strongly synchronizing} if there exists a prefix $v$ of $w$ such that whenever $(n_b, [g_1, \ldots g_k], n )$ is partitioning of $w$, there exists $1 \leq j \leq k$ such that $(n_b, [g_1, \ldots g_j], \lvert v \rvert )$ is a partitioning of $v$ with empty end.
\end{definition}

\index{word!strongly synchronizing!is synchronizing}
\noindent
I.e.\ $w$ is strongly synchronizing if every partitioning of $w$ has a border between two generating words at a specific place in $w$.
By definition, a strongly synchronizing word $w$ is intrinsically synchronizing, and if $u \in \BB(\X(L))$ contains a strongly synchronizing factor $w$, then $u$ is also strongly synchronizing.
Hence, if every $w \in \BB_n(\X(L))$ is strongly synchronizing, then $\X(L)$ is an $n$-step SFT by Theorem \ref{thm_m-step}. This is precisely the kind of result that is sought, but it is desirable to have a less restrictive condition, and this leads to the following definition.

\begin{definition}
\index{partitioning!extendable}
\index{extendable word}
\index{word!extendable}
A partitioning $p$ of a word $w \in \BB(\X(L))$ is said to be \emph{left-extendable} if whenever $a \in \AA$ and $aw \in \BB(\X(L))$ there exists a partitioning $q$ of $aw$ with the same end as $p$. A word $w$ is said to be \emph{left-extendable} if every minimal partitioning of $w$ is left-extendable. Right extendable partitionings and words are defined analogously.
\end{definition} 

\index{word!strongly synchronizing!is extendable}
\noindent
It is easy to check that a strongly synchronizing word is automatically both left- and right-extendable

\begin{prop} \label{prop_extendable}
If the words in $\BB_n(\X(L))$ 
are all left-extendable or all right-extendable, then $\X(L)$ is an $n$-step SFT.
\end{prop}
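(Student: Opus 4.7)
The approach is to reduce the claim via Theorem \ref{thm_m-step} to a single-letter extension statement and verify that statement by an explicit construction in $L^*$. It suffices to prove: for every $u \in \BB(\X(L))$ ending in a word $v \in \BB_n(\X(L))$ and every $b \in \AA(\X(L))$ with $vb \in \BB(\X(L))$, one has $ub \in \BB(\X(L))$. Given this, the $n$-step condition of Theorem \ref{thm_m-step} is obtained by starting from $uv, vw \in \BB(\X(L))$ with $|v| \geq n$ and extending $uv$ by one letter of $w$ at a time; after each extension the last $n$ letters of the running product lie inside $vw$ and hence form a word in $\BB_n(\X(L))$ to which the single-letter statement applies. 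The case where every length-$n$ word is left-extendable is handled by the symmetric argument with the roles of left and right interchanged.

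To establish the single-letter extension under the right-extendability hypothesis, I start from a partitioning of $u$ realising it as a factor of an element of $L^*$, restrict to the last $n$ letters $v$ of $u$, and minimise to obtain a minimal partitioning $p$ of $v$. A position count shows that either (i) the beginning $w_b^p$ of $p$ is non-empty and ends in the letter $a = u_{|u|-n}$ immediately preceding $v$ in $u$, or (ii) $w_b^p$ is empty and $a = \rl(g_j)$ for the last generating word $g_j$ dropped during the minimisation (when $|u|=n$ we have $u = v$ and the claim is trivial). Since $v \in \BB_n(\X(L))$ is right-extendable by hypothesis and $vb \in \BB(\X(L))$, the definition of right-extendability supplies a partitioning $q$ of $vb$ with the same beginning $w_b^p$ as $p$; let $h_1 \cdots h_\ell \in L^*$ be the concatenation witnessing $q$.

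In case (i), splicing the initial segment of the original partitioning of $u$ (keeping the generating words up to but not including the word containing the start of $v$) with $h_1 \cdots h_\ell$ yields an element of $L^*$ in which $ub$ appears as a factor; the splicing is consistent because both partitionings agree on the common prefix $w_b^p$ of the generating word at the splice point. In case (ii), the concatenation $g_1 \cdots g_j \cdot h_1 \cdots h_\ell \in L^*$ contains $ub$ as a factor, because the portion $u_{[1,|u|-n]}$ terminates exactly at the end of $g_j$ while $vb$ occupies the initial segment of $h_1 \cdots h_\ell$. The main obstacle is precisely this position-tracking: one must verify that the minimisation places $a$ correctly relative to $w_b^p$ and that the two partitionings can be glued without disturbing their common beginning. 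Once the bookkeeping is in place, the witnesses in $L^*$ are produced directly, and the proposition follows.
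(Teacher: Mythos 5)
Your proof is correct and follows essentially the same strategy as the paper's: take a (minimal) partitioning, restrict it to the length-$n$ window at the relevant end, invoke the extendability hypothesis to obtain a partitioning of the one-letter extension agreeing on the shared side, splice the two witnesses in $L^*$, iterate letter by letter, and conclude via Theorem \ref{thm_m-step}. The only differences are cosmetic: you treat the right-extendable case explicitly and appeal to symmetry for the left one (the paper does the reverse), and your case split (i)/(ii) just makes explicit the splice-point bookkeeping that the paper leaves implicit.
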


\begin{proof}
Assume that all words of length $n$ are left-extendable, and consider $w \in \BB(\X(L))$ with $|w| \geq n$, $a \in \AA$, and $v \in \BB(\X(L))$ such that $aw , wv \in \BB(\X(L))$. Let $p = (n_b, [g_1,\cdots,g_k], \lvert wv \rvert)$ be a minimal partitioning of $wv$. Choose $1 \leq j \leq k$  such that $p' = (n_b, [g_1,\cdots,g_j], n)$ is a minimal partitioning of the word $w' = w_{[1,n]}$. By assumption, $w'$ is left-extendable and $aw' \in \BB(\X(L))$, so there exists a partitioning $q = (m_b, [h_1, \cdots , h_l],n+1)$ of $aw'$  with the same end as $p'$. Now $(m_b, [h_1, \cdots, h_l, g_{j+1}, \cdots, g_k], \lvert wv \rvert +1)$ is a partitioning of $awv$.
Apply this argument repeatedly to see that $uwv \in \BB(\X(L))$ for any $u \in \BB(\X(L))$ for which $uw \in \BB(\X(L))$.
By Theorem \ref{thm_m-step}, this implies that $\X(L)$ is an $n$-step SFT. By symmetry, an analogous result holds for right-extendable words.
\end{proof}

\noindent
Appendix \ref{app_programs} describes how Proposition \ref{prop_extendable} can be used to write a computer program that finds renewal systems of finite type. 

The condition from Proposition \ref{prop_extendable} is generally not necessary for a renewal system to be an SFT, but the following proposition gives a class of renewal systems where it is.

\begin{prop}
\label{prop_extendable_and_sft}
If $L$ has a strongly left-bordering (respectively right-bordering) word then $\X(L)$ is an $n$-step SFT if and only if all allowed words of length $n$ are left-extendable (respectively right-extendable).    
\end{prop}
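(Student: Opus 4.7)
The ``if'' direction is exactly Proposition~\ref{prop_extendable}, so the task is to prove the converse. Assume $v\in\BB(\X(L))$ is strongly left-bordering and $\X(L)$ is an $n$-step SFT. Fix $w\in\BB_n(\X(L))$ together with a minimal partitioning $p_0=(n_b,[g_1,\ldots,g_k],n)$ having end $E$, and fix $a\in\AA$ with $aw\in\BB(\X(L))$. The task reduces to exhibiting a concatenation $h_1\cdots h_j\in L^*$ whose final $|awE|$ symbols form the string $awE$: truncating the last $|E|$ symbols then supplies a partitioning of $aw$ with end exactly $E$.

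The plan is to splice $v$ in immediately after $awE$. Because $v$ is strongly left-bordering, every occurrence of $v$ as a factor of an $L^*$-concatenation starts precisely at a generating-word boundary; so once $awEv\in\BB(\X(L))$ is established, any $L^*$-realization of $awEv$ has a boundary at the end of $awE$, which is exactly what is needed. The main work is therefore to upgrade $aw$, $wE$, and $v$ to the single word $awEv\in\BB(\X(L))$, which I would do by two invocations of Theorem~\ref{thm_m-step}. First, $wE$ lies in $\BB(\X(L))$ because it is a suffix of $g_1\cdots g_k\in L^*$, and its length-$n$ overlap with $aw$ is exactly $w$, giving $awE\in\BB(\X(L))$. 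Second, since $v$ is strongly left-bordering it is a prefix of some $T\in L^*$, and then $g_1\cdots g_k\cdot T\in L^*$ realizes $Yv$ as a factor, where $Y$ is the length-$n$ suffix of $g_1\cdots g_k$. A short index check shows that $Y$ is simultaneously the length-$n$ suffix of $awE$, so a second gluing along the overlap $Y$ produces $awEv\in\BB(\X(L))$.

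It remains to cash in: fix a realization of $awEv$ as a factor of some $h_1\cdots h_m\in L^*$ starting at position $q$. The strongly left-bordering hypothesis forces the $v$-occurrence at position $q+|awE|$ to begin a new generating word, so $\sum_{i=1}^{j}|h_i|=q+|awE|-1$ for some index $j$. Then $awE$ is a suffix of $h_1\cdots h_j$, and the triple $(q,[h_1,\ldots,h_j],n+1)$ is a partitioning of $aw$ whose end is the length-$|E|$ block of $h_1\cdots h_j$ lying just past $aw$, which is exactly $E$. Since the initial minimal partitioning of $w$ was arbitrary, every minimal partitioning of $w$ is left-extendable; the right-bordering case follows by a transposed argument. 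The main obstacle is the middle paragraph, where the two $n$-step splices have to be arranged so that $v$ attaches directly to $awE$; the trick that makes this work without invoking irreducibility is the coincidence between the length-$n$ suffixes of $awE$ and of the $L^*$-word $g_1\cdots g_k$.
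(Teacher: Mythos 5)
Your proof is correct, and it rests on the same key device as the paper's: appending the strongly left-bordering word $v$ immediately after $wE$ so that any $L^*$-realization is forced to break at the end of $wE$, which hands you a partitioning of $aw$ with end $E$. The paper runs the identical construction in contrapositive form (a non-left-extendable $w$ gives $aw$ and $ww_ev$ allowed but $aww_ev$ forbidden, contradicting Theorem~\ref{thm_m-step}), which lets it skip the two explicit gluings you perform; otherwise the arguments coincide.
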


\begin{proof}
One direction was proved in Proposition \ref{prop_extendable}. Assume that $v \in L^*$ is strongly left-bordering and that $w \in \BB_n(\X(L))$ is not left-extendable. Choose $a \in \AA$ and a partitioning $p$ of $w$ such that $aw \in \BB(\X(L))$ but no partitioning $q$ of $aw$ has the same end as $p$. Let $w_e$ be the end of $p$. Then $a w w_e v$ is forbidden while both $aw$ and $w w_e v$ are allowed. The result follows by Theorem \ref{thm_m-step}, and an analogous result holds for right-extendable words by symmetry. 
\end{proof}

\noindent
Many interesting renewal systems have strongly bordering words (e.g.\  the renewal systems considered in \cite{hong_shin}), and Proposition \ref{prop_extendable_and_sft}  gives a way to determine precisely when these are SFTs. 

\section{The left Fischer cover}
\label{sec_rs_lfc}
In the attempt to find the range of the Bowen-Franks invariant over the set of SFT renewal systems, it is useful to be able to construct complicated renewal systems from simpler building blocks, but in general, it is non-trivial to study the structure of the renewal system $\X(L_1 \cup L_2)$ even if the renewal systems $\X(L_1)$ and $\X(L_2)$ are well understood.
Hence, the goal of this section is to describe the structure of the left Fischer covers of renewal systems in order to give conditions under which the Fischer cover of $\X(L_1 \cup L_2)$ can be constructed when the Fischer covers of $\X(L_1)$ and $\X(L_2)$ are known.

\subsection{Vertices of the left Fischer cover}
\index{renewal system!Fischer cover of!$P_0(L)$}\index{P0@$P_0(L)$}
\index{renewal system!Fischer cover of!construction} 
\label{sec_rs_lfc_construction}
As mentioned in Section \ref{sec_rs_when_sft}, the subset construction can be used to construct the left Fischer cover of a renewal system from the standard loop graph. This section deals with a related way of using the generating list to construct the vertices of the left Fischer cover. In the following, the left Fischer cover will be identified with the top irreducible component of the left Krieger cover via the correspondence described in Section \ref{sec_fischer}. This allows predecessor sets to be used as the vertices of the left Fischer cover.
 
Let $L$ be a generating list and define 
\begin{displaymath}
  P_0(L) = \left\{  \ldots w_{-2} w_{-1} w_{0}  \mid w_i \in L  \right \} \subseteq \X(L)^-.
\end{displaymath}
$P_0(L)$ is the predecessor set of the central vertex in the standard loop graph of $\X(L)$, but it is not necessarily the predecessor set of a right-ray in $\X(L)^+$, so it does not necessarily correspond to a vertex in the left Fischer cover of $\X(L)$.

If $p \in \BB(\X(L))$ is a prefix of some word in $L$, define
 \begin{displaymath}
	P_0(L)p = \left\{  \ldots w_{-2} w_{-1} w_{0} p  \mid w_i \in L  \right \} \subseteq \X(L)^-.
\end{displaymath}
Given a word $w \in \BB(\X(L))$, let $B(w)$ be the set of beginnings of partitionings of $w$. Then $P_\infty(w) = \bigcup_{b \in B(w)} P_0(L)b$. This makes it relatively simple to find the predecessor sets of words and right-rays in $\X(L)$ if all the partitionings are known, so this representation is useful for constructing the left Fischer covers of renewal systems, and it will be used several times in the following.

\subsection{Border points}
\label{sec_border_point}

Two lists may generate the same renewal system even though they have very different behaviour when it comes to the partitionings of allowed words, and the fact that the topological and symbolic dynamical structure of a renewal system does not mirror the behaviour of the partitionings is one of the reasons why renewal systems are hard to examine as shift spaces. The following definition adds a layer of information to the left Fischer cover of a renewal system in order to keep track of this extra structure.

\begin{definition}	\label{def_border_point}
\index{border point}\index{border point!universal}
\index{border point!generator of}
Let $L \subseteq \AA^*$ be finite, and let $(F, \LL_F)$ be the left Fischer cover of $\X(L)$. A vertex $P \in F^0$ is said to be a \emph{(universal) border point} for $L$ if there exists a (strongly) left-bordering $x^+ \in X^+$ such that $P = P_\infty(x^+)$. An intrinsically synchronizing word $w \in L^*$ is said to be a \emph{generator} of the border point $P_\infty(w) = P_\infty(w^\infty)$, and it is said to be a \emph{minimal generator} of $P$ if no prefix of $w$ is a generator of $P$.
\end{definition}

\index{renewal system!border point of}
Two lists generating the same shift space may have different border points, so the border points add information to the Fischer cover about the structure of the generating lists, and this information will be useful for studying $\X(L_1 \cup L_2)$ when the Fischer covers of $\X(L_1)$ and $\X(L_2)$ are known. If $P$ is a (universal) border point of $L$ and there is no ambiguity about which list is generating $X = \X(L)$, then the terminology will be abused slightly by saying that $P$ is a (universal) border point of $X$ or simply of the left Fischer cover.
The following lemma lists a number of simple properties of border points that will be useful in the following. 

\begin{lem}
\label{lem_border_point_general_prop}
Let $L$ be a finite list generating a renewal system with left Fischer cover $(F, \LL_F)$.
\begin{enumerate}
\item 
If $P \in F^0$ is a border point, then $P_0(L) \subseteq P$, and if $P$ is a universal border point, then $P = P_0(L)$.
\item \label{lem_bp_path_to}
If $P_1,P_2 \in F^0$ are border points and if $w_1 \in L^*$ is a generator of $P_1$, then there exists a path with label $w_1$ from $P_1$ to $P_2$. 
\item \label{lem_bp_path_implies}
If $P_1 \in F^0$ is a border point and $w \in L^*$, then there exists a unique border point $P_2 \in F^0$ with a path labelled $w$ from $P_2$ to $P_1$.
\item 
If $\X(L)$ is an SFT, then every border point of $L$ has a generator.
\item \label{lem_bp_strongly_right_implies}
If $L$ has a strongly right-bordering word $w$, then $x^+ \in \X(L)^+$ is left-bordering if and only if $P_\infty(x^+)$ is a border point.
\end{enumerate}
\end{lem}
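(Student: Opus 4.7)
The plan is to prove the five parts in order, identifying the left Fischer cover with the top irreducible component of the left Krieger cover so that its vertices are literally predecessor sets, and freely using the description $P_\infty(w) = \bigcup_{b \in B(w)} P_0(L)b$ from Section \ref{sec_rs_lfc_construction}. Parts (1)--(4) are essentially bookkeeping once one unpacks the definitions; part (5) is the main obstacle because it requires extracting global structural information about $x^+$ from purely local partitioning data.

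For (1), if $P = P_\infty(x^+)$ with $x^+$ left-bordering, then any left-ray $\ldots w_{-2}w_{-1}w_0 \in P_0(L)$ concatenates with $x^+$ along the boundary supplied by a partitioning of $x^+$ with empty beginning, giving $P_0(L) \subseteq P$. When $x^+$ is strongly left-bordering, every partitioning of $x^+$ has empty beginning, so any $y^- \in P$ must align with a block boundary at position zero; otherwise the partitioning of $y^-x^+$ would restrict to a partitioning of $x^+$ with non-empty beginning. This forces $y^- \in P_0(L)$, yielding equality.

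The crux of (2) is that a generator $w_1$ is intrinsically synchronizing: writing $P_2 = P_\infty(y^+)$ for left-bordering $y^+$, the concatenation $w_1 y^+$ is again left-bordering, hence in $X^+$, and intrinsic synchronization gives $P_\infty(w_1 y^+) = P_\infty(w_1^\infty) = P_1$. Reading $w_1$ along the (left-resolving) presentation of $w_1 y^+$ in the Fischer cover therefore produces a path from $P_1$ to $P_\infty(y^+) = P_2$. For (3), set $P_2 := P_\infty(w y^+)$ with $P_1 = P_\infty(y^+)$ and $y^+$ left-bordering; then $w y^+$ is left-bordering, $P_2$ is a border point, and its presentation supplies the required path, with uniqueness immediate from left-resolvingness. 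For (4), assume $\X(L)$ is $M$-step and let $P = P_\infty(x^+)$ with $x^+ = u_1 u_2 \cdots$, $u_i \in L$. Lemma \ref{lem_prefix_with_same_past} supplies an $n$ with $P_\infty(u_1 \cdots u_n) = P$, and enlarging $n$ makes $u_1\cdots u_n$ intrinsically synchronizing by the $M$-step property; since $u_1 \cdots u_n \in L^*$, this word is a generator of $P$.

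Part (5) is the main obstacle. The forward direction is the definition. For the converse, suppose $P_\infty(x^+)$ is a border point and let $w \in L^*$ be the given strongly right-bordering word. By (1), $w^\infty \in P_0(L) \subseteq P_\infty(x^+)$, so $w^\infty x^+ \in X$. For each $k \in \N$, the finite prefix $w\, x_1 \cdots x_k \in \BB(\X(L))$ admits a partitioning, and restricting this partitioning to the initial factor $w$ yields a partitioning of $w$ which, by the strongly right-bordering hypothesis, must have empty end. Hence the partitioning of $w\, x_1 \cdots x_k$ splits cleanly immediately after the initial $w$, and $x_1 \cdots x_k$ inherits a partitioning with empty beginning. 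A K\"onig's lemma argument on the finitely branching tree of such partial partitionings then extracts a partitioning of $x^+$ with empty beginning, certifying $x^+$ as left-bordering. The delicate point is precisely this compactness step: the partitionings at different lengths need not be consistent a priori, and it is the clean alignment forced by the strongly right-bordering hypothesis that allows them to be glued into a single global partitioning.
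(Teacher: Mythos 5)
Your overall route coincides with the paper's: parts (1)--(4) are argued exactly as in the thesis (the paper is terser --- in (1) it only records the inclusion $P_0(L) \subseteq P$ --- but your treatment of the universal case and your use of Lemma \ref{lem_prefix_with_same_past} together with the $M$-step property in (4) are both correct), and the splitting-plus-compactness argument in (5) is the same mechanism the paper uses, just carried out on finite prefixes rather than on a partitioning of the right-ray $wx^+$.

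There is, however, one step in (5) that fails as written: the claim that $w^\infty \in P_0(L)$. A strongly right-bordering word is only required to have every partitioning \emph{end} at a block boundary; nothing forces it to begin at one, so $w$ need not lie in $L^*$, and $\cdots www$ need not be a left-infinite concatenation of generating words --- it need not even be an allowed left-ray. For instance, with $L = \{ab, c\}$ the word $w = bc$ is strongly right-bordering (its unique minimal partitioning is $(2,[ab,c],2)$, with beginning $a$ and empty end), but $w \notin L^*$ and $cb \notin \BB(\X(L))$, so $w^\infty \notin \X(L)^-$. The conclusion you actually need --- that $w x_1 \cdots x_k \in \BB(\X(L))$ for every $k$ --- is still true, and the repair is short: take any partitioning of $w$; its end is empty, so $bw \in L^*$ where $b$ is its beginning, and hence $y^- b w \in P_0(L) \subseteq P_\infty(x^+)$ for every $y^- \in P_0(L)$ by part (1). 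Therefore $y^- b w x^+ \in \X(L)$ and $w x^+ \in \X(L)^+$, which is exactly what the paper invokes at this point. With that substitution the remainder of your argument for (5) goes through.
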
 

\begin{proof} \qquad \\ \vspace{-0.8 em}
\begin{enumerate}
\item 
Choose a left-bordering $x^+ \in \X(L)^+$ such that $P = P_\infty(x^+)$ and note that $y^-x^+ \in \X(L)$ for each $y^- \in P_0(L)$.
\item
Choose a left-bordering $x^+ \in \X(L)^+$ such that $P_2 = P_\infty(x^+)$. Then $P_\infty(w_1 x^+) = P_1$ since $w_1 x^+ \in \X(L)^+$ and  $w_1$ is intrinsically synchronizing, so there is a path labelled $w_1$ from $P_1$ to $P_2$.
\item
Choose a left-bordering $x^+ \in \X(L)^+$ such that $P = P_\infty(x^+)$. Since $w \in L^*$, the right-ray $w x^+$ is also left-bordering.
\item
Let $P = P_\infty(x^+)$ for some left-bordering $x^+ \in \X(L)^+$, and choose an intrinsically synchronizing prefix $w \in L^*$ of $x^+$. Then $P_\infty(x^+) = P_\infty(w)$, so $w$ is a generator of $P$.
\item
If $P_\infty(x^+)$ is a border point, then $w x^+ \in \X(L)^+$, so $x^+$ must be left-bordering. The other implication holds by definition.
\end{enumerate}
\end{proof}

\noindent
In particular, the universal border point is unique when it exists, but not every generating list has a universal border point. A predecessor set $P_\infty(x^+)$ can be a border point even though $x^+$ is not left-bordering (consider e.g.\  the right-ray $(aaaab)^\infty$ in the shift from Example \ref{ex_pos_det_1}).

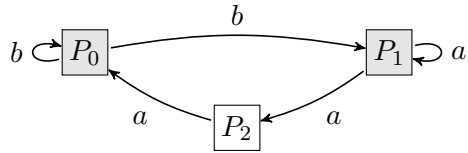
\begin{figure}[htbp]
\begin{center}
\begin{tikzpicture}
  [bend angle=10,
   clearRound/.style = {circle, inner sep = 0pt, minimum size = 17mm},
   clear/.style = {rectangle, minimum width = 5 mm, minimum height = 5 mm, inner sep = 0pt},  
   greyRound/.style = {circle, draw, minimum size = 1 mm, inner sep =
      0pt, fill=black!10},
   grey/.style = {rectangle, draw, minimum size = 6 mm, inner sep =
      1pt, fill=black!10},
    white/.style = {rectangle, draw, minimum size = 6 mm, inner sep =
      1pt},
   to/.style = {->, shorten <= 1 pt, >=stealth', semithick}]
  
  \node[grey] (P0) at (0,0) {$P_0$};
  \node[grey] (P1) at (4,0) {$P_1$};
  \node[white] (P2) at (2,-1) {$P_2$};
  
  \draw[to,loop left] (P0) to node[auto] {$b$} (P0);
  \draw[to,bend left] (P0) to node[auto] {$b$} (P1);
  \draw[to,bend left] (P1) to node[auto] {$a$} (P2);
  \draw[to,bend left] (P2) to node[auto] {$a$} (P0);
  \draw[to,loop right] (P1) to node[auto] {$a$} (P1);

\end{tikzpicture}
\end{center}
\caption[Border points.]{Left Fischer cover of the SFT renewal system $\X(L)$ generated by $L = \{ aa, aaa, b \}$ which is discussed in Example \ref{ex_border_points}. The border points are coloured grey.} 
\label{fig_aa_aaa_b}
\end{figure}

\begin{example} \label{ex_border_points}
Consider the list $L = \{ aa, aaa, b\}$ and the renewal system $\X(L)$. The set of forbidden words is $\FF = \{ bab \}$, so this is an SFT. The left Fischer cover of $\X(L)$ is shown in Figure \ref{fig_aa_aaa_b}. Note that the word $b$ is a generator of the universal border point $P_0 = P_\infty(b)$. Similarly, both $aab$  and $aa$ are generators of the border point $P_1$. On the other hand, every path terminating at $P_2$ has $a$ as a suffix, so $P_0$ is not a subset of $P_2$, and therefore $P_2$ is not a border point. Note the paths labelled $b$ from $P_0$ to $P_1$ and $P_0$, and the paths labelled $aab$ from $P_1$ to $P_0$ and $P_1$.
\end{example}

\begin{lem} \label{lem_cycle_border_point}
Let $L$ be a finite list generating an SFT renewal system with left Fischer cover $(F, \LL_F)$, and let $\gamma = \gamma_1 \cdots \gamma_l$ be a circuit in $F$ with $\LL_F(\lambda) = a_1 \cdots a_l = w$. Then there exist $1 \leq i \leq l$ and $k \in \N$ such that $s(\gamma_i)$ is a border point generated by $w' = a_i \cdots a_l w^k a_1 \cdots a_{i-1}$.
\end{lem}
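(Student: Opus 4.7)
The plan is to exploit the standard loop graph together with the SFT hypothesis. In the standard loop graph of $\X(L)$, every loop has finite length, so every biinfinite path passes through the central vertex infinitely often in both directions; consequently every element of $\X(L)$ admits a biinfinite partitioning into words from $L$. I would apply this to the periodic biinfinite sequence $x$ that is the label of $\gamma^\infty$ (i.e.\ the sequence with period $w$ after fixing an origin at $s(\gamma_1)$) to obtain a biinfinite set $B \subseteq \Z$ of partition boundaries.

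Since $B$ is biinfinite, some residue class $\{j \in \Z : j \equiv i - 1 \pmod{l}\}$ with $1 \leq i \leq l$ must contain distinct elements $j_1 < j_2$ of $B$. Writing $\tilde w = a_i a_{i+1} \cdots a_l a_1 \cdots a_{i-1}$, the factor $x_{[j_1, j_2 - 1]}$ equals $\tilde w^m$ for some $m \in \N$, and it is an element of $L^*$ because it is precisely the concatenation of the generators used in the chosen partitioning between these two boundaries. Hence $\tilde w^{nm} \in L^*$ for every $n \in \N$. Since $\X(L)$ is $M$-step for some $M$, I would pick $n$ with $nml \geq M$ and set $k + 1 = nm$; then $w' = \tilde w^{k+1} = a_i \cdots a_l w^k a_1 \cdots a_{i-1}$ lies in $L^*$ and, by Theorem \ref{thm_m-step}, is intrinsically synchronizing.

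The rotated circuit $\tilde \gamma = \gamma_i \gamma_{i+1} \cdots \gamma_l \gamma_1 \cdots \gamma_{i-1}$ is a loop at $s(\gamma_i)$ with label $\tilde w$, so $\tilde \gamma^\infty$ is a presentation of $(\tilde w)^\infty$ starting at $s(\gamma_i)$. Because $(\tilde w)^\infty$ contains the intrinsically synchronizing word $w'$ as a factor, it is itself intrinsically synchronizing; identifying the left Fischer cover with the top irreducible component of the left Krieger cover (cf.\ Section \ref{sec_fischer}) then yields $s(\gamma_i) = P_\infty((\tilde w)^\infty)$. Moreover, the partitioning $(\tilde w)^\infty = \tilde w^{nm} \cdot \tilde w^{nm} \cdots$ witnesses that $(\tilde w)^\infty$ is left-bordering, so $s(\gamma_i)$ is a border point.

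What remains is to show $P_\infty(w') = P_\infty((\tilde w)^\infty)$. The inclusion $\supseteq$ is immediate. For the reverse, take $y^- \in P_\infty(w')$; for every $n' \in \N$, $w' \tilde w^{n'} = \tilde w^{k+1+n'}$ is a factor of $(\tilde w)^\infty \in \X(L)$, so intrinsic synchronization of $w'$ yields $y^- w' \tilde w^{n'} \in \BB(\X(L))$ for all $n'$, hence $y^-(\tilde w)^\infty \in \X(L)$. Since $w'^\infty = (\tilde w)^\infty$ as right-rays, this gives $P_\infty(w') = P_\infty(w'^\infty)$ and shows that $w'$ is a generator of the border point $s(\gamma_i)$ in the sense of Definition \ref{def_border_point}. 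The hard part is this last step: the identification $P_\infty(w') = P_\infty((\tilde w)^\infty)$ crucially uses intrinsic synchronization of $w'$ and hence the SFT hypothesis, which is exactly why the argument must pass from $w$ to a sufficiently large power of a cyclic rotation rather than work with $w$ itself.
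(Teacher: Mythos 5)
Your proof is correct and follows essentially the same route as the paper's (much terser) argument: partition the periodic point $w^\infty$, use pigeonhole on the partition boundaries modulo $l$ to find a cyclic rotation whose powers lie in $L^*$, and pass to a high enough power to get intrinsic synchronization from the $M$-step property, whence $s(\gamma_i)=P_\infty(w')$ is a border point generated by $w'$. The only cosmetic point is to take the power large enough that $k\geq 1$ (your $k=nm-1$ could a priori be $0$), which is immediate since $\tilde w^{nm}\in L^*$ for all $n$.
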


\begin{proof}
Since $w^\infty \in \X(L)$, there must exist $1 \leq i \leq l$ and $j \in \N_0$ such that $w' = a_i \cdots a_l w^j a_1 \cdots a_{i-1} \in L^*$ is intrinsically synchronizing. Now $s(\lambda_i) = P_\infty(w')$ is a border point.
\end{proof}

Lemma \ref{lem_border_point_general_prop}(\ref{lem_bp_path_to}) and Lemma \ref{lem_cycle_border_point}  force the left Fischer cover of an SFT renewal system to have certain paths connecting the circuits. This is formalised in the following. 

\begin{definition}
A predecessor-separated, left-resolving, and ir\-redu\-cible labelled graph $(G, \LL)$ is said to be \emph{circuit connected}\index{labelled graph!circuit connected} if there exist vertices $P_1, \ldots,  P_n \in G^0$ such that when $\gamma$ is a circuit in $G$, there exist
\begin{itemize}
\item  $1 \leq i \leq n$ and $1 \leq l \leq |\gamma|$ such that $P_i = s(\gamma_l)$, and 
\item an intrinsically synchronizing power $w$ of $\LL(\gamma_l \ldots \gamma_{|\gamma|} \gamma_1 \ldots  \gamma_{l-1})$ for which there is a path labelled $w$ from $P_i$ to $P_j$ for each $1 \leq j \leq n$.
\end{itemize}
An unlabelled graph $G$ is said to be \emph{circuit connected}\index{graph!circuit connected} if there exists a labelling $\LL$ such that $(G, \LL)$ is circuit connected. An edge shift is said to be \emph{circuit connected}\index{edge shift!circuit connected} if the corresponding graph is circuit connected.
\end{definition}

The following proposition generalises Example \ref{ex_sft_not_rs} by showing that there is a class of irreducible SFTs that are not  renewal systems. 

\begin{prop} \label{prop_rs_circuit_connected}
If $(F, \LL_F)$ is the left Fischer cover of an SFT renewal system, then it is circuit connected.
\end{prop}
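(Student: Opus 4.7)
The natural candidates for the distinguished vertices $P_1,\ldots,P_n$ are the border points of $L$ themselves. My plan is to show that this choice realises the definition of circuit connected by combining Lemmas \ref{lem_border_point_general_prop} and \ref{lem_cycle_border_point} directly.

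First I would observe that the set of border points of $L$ is non-empty. Since $\X(L)$ is an SFT, every sufficiently long word of $\BB(\X(L))$ is intrinsically synchronizing, and in particular there exists an intrinsically synchronizing word $w \in L^\ast$; then $P_\infty(w)$ is a border point generated by $w$. Enumerate the border points as $\{P_1,\ldots,P_n\}$; these will be the distinguished vertices of the circuit-connected structure.

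Next, given an arbitrary circuit $\gamma = \gamma_1 \cdots \gamma_l$ in $F$ with $\LL_F(\gamma)=a_1\cdots a_l = w$, apply Lemma \ref{lem_cycle_border_point} to produce an index $1 \le i \le l$ and $k \in \N$ so that $s(\gamma_i)$ is a border point and
\[
  w' \;=\; a_i \cdots a_l\, w^k\, a_1 \cdots a_{i-1}
\]
is a generator of $s(\gamma_i)$. A short rewriting shows $w' = u^{k+1}$ where $u = \LL_F(\gamma_i \cdots \gamma_l \gamma_1 \cdots \gamma_{i-1})$ is precisely the cyclically rotated label of $\gamma$ starting at $\gamma_i$, so $w'$ is a (intrinsically synchronizing) power of this rotated label, as required by the definition of circuit connected. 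Since $s(\gamma_i)$ coincides with some $P_j$ in our enumeration, the first two conditions of the definition are met.

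Finally, since $w' \in L^\ast$ is a generator of the border point $s(\gamma_i)$, Lemma \ref{lem_border_point_general_prop}(\ref{lem_bp_path_to}) produces, for every border point $P_j$, a path labelled $w'$ from $s(\gamma_i)$ to $P_j$. Thus the same intrinsically synchronizing power $w'$ of the rotated label simultaneously connects $s(\gamma_i)$ to all of $P_1,\ldots,P_n$, which is exactly the third clause of the definition. I expect the only delicate bookkeeping to be the verification that $w' = u^{k+1}$ (so that $w'$ really is a power of the rotated label rather than merely ending in a prefix of it); everything else is a direct appeal to the two lemmas.
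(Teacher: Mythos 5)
Your proposal is correct and takes exactly the same route as the paper, whose proof consists of the single line "This follows from Lemmas \ref{lem_border_point_general_prop}(\ref{lem_bp_path_to}) and \ref{lem_cycle_border_point}"; you have simply spelled out the details, including the correct verification that $w' = u^{k+1}$ for the rotated label $u$.
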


\begin{proof}
This follows from Lemmas \ref{lem_border_point_general_prop}(\ref{lem_bp_path_to}) and \ref{lem_cycle_border_point}.
\end{proof}

\begin{example} \label{ex_not_connected}
Consider the edge shift $X$ presented by the graph in Figure \ref{fig_edge_shift_without_connections}. There are two loops at each vertex, but only one edge between them in each direction, so the graph is not circuit connected, and hence, there is no way to label it to produce the left Fischer cover of an SFT renewal system. It is, however, unknown whether $X$ is conjugate to a circuit connected edge shift. 
In this context, it is worth noting that the database \cite{sft_database} contains no circuit connected SFT conjugate to $X$. 
The Bowen-Franks group of $X$ is $\Z$, and this value of the complete invariant of flow equivalence of irreducible SFTs can also be achieved by a renewal system (see Theorem \ref{thm_rs_det_range}), so it is easy to check that $X$ is flow equivalent to a renewal system. 
\end{example}

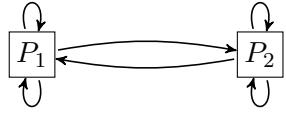
\begin{figure}
\begin{center}
\begin{tikzpicture}
  [bend angle=10,
   clearRound/.style = {circle, inner sep = 0pt, minimum size = 17mm},
   clear/.style = {rectangle, minimum width = 5 mm, minimum height = 5 mm, inner sep = 0pt},  
   greyRound/.style = {circle, draw, minimum size = 1 mm, inner sep =
      0pt, fill=black!10},
   grey/.style = {rectangle, draw, minimum size = 6 mm, inner sep =
      1pt, fill=black!10},
   white/.style = {rectangle, draw, minimum size = 6 mm, inner sep =
      1pt},
   to/.style = {->, shorten <= 1 pt, >=stealth', semithick}]
  
  \node[white] (P1) at (0,0) {$P_1$};
  \node[white] (P2) at (3,0) {$P_2$};
  
  \draw[to,loop above] (P1) to (P1);
  \draw[to,loop below] (P1) to (P1);
  \draw[to,bend left] (P1) to (P2);
  \draw[to,loop above] (P2) to (P2);
  \draw[to,loop below] (P2) to (P2);
  \draw[to,bend left] (P2) to (P1);

\end{tikzpicture}  
\end{center}
\caption[An edge shift that is not the Fischer cover of a renewal system.]{A directed graph presenting the edge shift discussed in Example \ref{ex_not_connected}. There is no way to label this graph to produce the left Fischer cover of an SFT renewal system.} 
\label{fig_edge_shift_without_connections}
\end{figure}

\begin{remark}
The strict structure enforced on the left Fischer cover of an SFT renewal system by Proposition \ref{prop_rs_circuit_connected} may make it difficult for certain SFTs (like the one in Example \ref{ex_not_connected}) to be conjugate to a renewal system, but it has not been possible to use this property to answer Adler's question since the structure is not preserved under conjugacy.
\end{remark}


\subsection{Addition}\label{sec_rs_add}
\index{renewal system!addition of}
Consider two renewal systems $\X(L_1)$ and $\X(L_2)$. The \emph{sum} $\X(L_1) + \X(L_2)$ is the renewal system $\X(L_1 \cup L_2)$. Note that this is generally not the same as the union of the shift spaces $\X(L_1)$ and $\X(L_2)$.

It is easy to check that the left Fischer cover of $\X(L_1 \cup L_2)$ will contain the left Fischer covers of $\X(L_1)$ and $\X(L_2)$ as subgraphs. However, the left Fischer cover of the sum may also contain vertices without analogues in the individual parts, and there will always be a number of connecting edges between the two subgraphs corresponding to the left Fischer covers of $\X(L_1)$ and $\X(L_2)$. Generally, this makes it difficult to study the Fischer covers of such sums even if the Fischer covers of the parts are known. The goal of this section is to develop a condition under which it is easier to keep track of the connecting edges.

%


\index{generating list!modular}
\begin{definition}
Let $L$ be a generating list with universal border point $P_0$ and let $(F, \LL_F)$ be the left Fischer cover of $\X(L)$.  
$L$ is said to be \emph{left-modular} if for all $\lambda \in F^*$ with 
$r(\lambda) = P_0$, $\LL_F(\lambda) \in L^*$ if and only if $s(\lambda)$ is a border point. \emph{Right-modular} generating lists are defined analogously.
\end{definition}

\index{renewal system!modular}
\noindent
Note that one of these implications follows from Lemma \ref{lem_border_point_general_prop}(\ref{lem_bp_path_implies}).
When $L$ is left-modular and there is no doubt about which generating list is used, the renewal system $\X(L)$ will also be said to be \emph{left-modular}. Modular renewal systems are useful building blocks and will be used in several constructions in the following.

\begin{lem} \label{lem_strongly_bordering_implies_modular}
If $L$ is a generating list with a strongly left-bordering word $w_l$ and a strongly right-bordering word $w_r$, then it is both left- and right-modular.
\end{lem}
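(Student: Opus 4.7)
The plan is to establish left-modularity; right-modularity is completely dual, using the right Fischer cover, follower sets, and the symmetric roles of $w_l$ and $w_r$. First I would verify that the hypotheses guarantee the existence of a universal border point. Extending $w_l$ to any right-ray $x^+ \in \X(L)^+$ whose first $|w_l|$ letters are $w_l$, I would observe that every partitioning of $x^+$ as a right-ray restricts to a partitioning of $w_l$; since $w_l$ is strongly left-bordering, this restriction has empty beginning, and hence so does the full partitioning of $x^+$. Thus $x^+$ is strongly left-bordering, so $P_0 := P_\infty(x^+)$ is a universal border point, equal to $P_0(L)$ by Lemma~\ref{lem_border_point_general_prop}.

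One direction of the left-modular biconditional is then immediate: if $\lambda \in F^*$ satisfies $r(\lambda) = P_0$ and $\LL_F(\lambda) = w \in L^*$, then Lemma~\ref{lem_border_point_general_prop}(\ref{lem_bp_path_implies}) yields a unique border point $P'$ admitting a $w$-labelled path to $P_0$, and since $(F,\LL_F)$ is left-resolving the $w$-labelled path terminating at $P_0$ is unique, forcing $s(\lambda) = P'$ to be a border point.

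For the nontrivial direction, suppose $\lambda \in F^*$ has $r(\lambda) = P_0$, source $P_1 = s(\lambda)$ a border point, and label $w = \LL_F(\lambda)$; the goal is to show $w \in L^*$. Since $P_1$ is a border point, $P_0 \subseteq P_1$ by Lemma~\ref{lem_border_point_general_prop}, so the periodic left-ray $w_r^\infty \in P_0(L) = P_0$ lies in $P_1$. Tracing $\lambda$ gives $w_r^\infty w \in P_0$, and because $w_l^\infty$ is a right-ray with $P_\infty(w_l^\infty) = P_0$, the biinfinite sequence $w_r^\infty \cdot w \cdot w_l^\infty$ lies in $\X(L)$. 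The decisive step is then to look at the finite factor $u = w_r \cdot w \cdot w_l \in \BB(\X(L))$: any partitioning of $u$ restricts to a partitioning of its $w_r$-prefix — which has empty end by strong right-borderness — and to a partitioning of its $w_l$-suffix — which has empty beginning by strong left-borderness. Hence the partitioning of $u$ must contain generator boundaries immediately after the $w_r$-block and immediately before the $w_l$-block, so the generators strictly between those two boundaries concatenate exactly to $w$, proving $w \in L^*$.

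The principal obstacle is the last step: carefully translating strong borderness of the standalone words $w_l$ and $w_r$ into the desired boundary behaviour when they occur as the prefix and suffix of the longer word $u$. This translation is routine once one observes that a partitioning of $u$ induces a partitioning of any of its factors in the obvious way, but it is essential that the two strongly bordering witnesses sit at opposite ends of $u$ so that the middle block $w$ is genuinely sandwiched between two forced generator boundaries.
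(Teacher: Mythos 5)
Your overall strategy is the paper's own: realise the label $w$ of $\lambda$ inside an allowed sequence sandwiched between $w_r$ on the left and $w_l$ on the right, and use strong borderness to force generator boundaries immediately after the $w_r$-block and immediately before the $w_l$-block, so that the generators in between concatenate to $w$. Your first paragraph (every right-ray beginning with $w_l$ is strongly left-bordering, whence the universal border point exists and equals $P_0(L)$) and your second paragraph (the easy direction via Lemma \ref{lem_border_point_general_prop}(\ref{lem_bp_path_implies}) and left-resolvingness) are both correct, and the final boundary argument on the factor $w_r w w_l$ is exactly right.

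The gap is in how you produce the ambient allowed sequence. You assert that $w_r^\infty$ is a left-ray lying in $P_0(L)$ and that $w_l^\infty$ is a right-ray with predecessor set $P_0$; neither follows from the hypotheses. Strong right-borderness only forces a generator boundary at the \emph{right} end of $w_r$: a partitioning of $w_r$ may have a nonempty beginning (a proper prefix of its first generator), so $w_r$ need not belong to $L^*$, and $w_r w_r$ need not even be an allowed word. For instance, with $L = \{ab, c\}$ the word $bc$ is strongly right-bordering, yet $cb$ is forbidden, so $(bc)^\infty$ is not a left-ray of $\X(L)$ at all — let alone an element of $P_0(L)$. The same objection applies to $w_l^\infty$. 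The repair is short and uses objects you already have: since $w_r$ has a partitioning with empty end, some left-ray $y^- = \cdots u_{-1}u_0 g_1 \cdots g_k \in P_0(L)$ has $w_r$ as a suffix, and $P_0(L) \subseteq P_1$ then gives a path labelled $w_r$ terminating at $P_1$; concatenating this path with $\lambda$ and with a presentation of the right-ray $w_l x^+$ from your first paragraph (in place of $w_l^\infty$) exhibits $w_r w w_l x^+ \in \X(L)^+$, after which your boundary argument on the factor $w_r w w_l$ yields $w \in L^*$ verbatim. This is precisely the paper's proof.
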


\begin{proof}
Let $(F, \LL_F)$ be the left Fischer cover of $\X(L)$, let $P \in F^0$ be a border point, and choose $x^+ \in \X(L)^+$ such that $w_lx^+ \in \X(L)^+$. Assume that there is a path from $P$ to $P_0(L) = P_\infty(w_lx^+)$ with label $w$. The word $w_r$ has a partitioning with empty end, so there is a path labelled $w_r$ terminating at $P$. It follows that $w_r w w_l x^+ \in \X(L)^+$, so $w \in L^*$. By symmetry, $L$ is also right-modular.
\end{proof}

\noindent
Consider $L = \{ abb, abbb, bb, bbb \}$ to see that not every left-modular renewal system has a strongly right-bordering word. In general, it is difficult to prove that a renewal system is left-modular, so it is useful to have the stronger condition from Lemma \ref{lem_strongly_bordering_implies_modular}.

Now it is possible to show how to find the left Fischer cover of a sum of two modular renewal systems when the left Fischer covers of the individual terms are known.
For $i \in \{1,2\}$, let $L_i$ be a left-modular generating list and let $X_i = \X(L_i)$ have alphabet $\AA_i$ and left Fischer cover $(F_i, \LL_i)$. Let $P_i \in F_i^0$ be the universal border point of $L_i$. Assume that $\AA_1 \cap \AA_2 = \emptyset$.

The idea is to consider the labelled graph $(F_+, \LL_+)$ obtained by taking the union of $(F_1, \LL_1)$ and $(F_2, \LL_2)$, identifying the two universal border points $P_1$ and $P_2$, and adding certain connecting edges. 
To do this formally, introduce a new vertex $P_+$ and define
\begin{displaymath}
F_+^0 = ( F_1^0 \cup F_2^0 \cup \{P_+\} ) \setminus \{ P_1, P_2 \}.
\end{displaymath}  
Define maps $f_i \colon F_i^0 \to F_+^0$ such that for $v \in F_i^0 \setminus \{ P_i \}$, $f_i(v)$ is the vertex in $F_+^0$ corresponding to $v$ and such that $f_i(P_i) = P_+$. For each $e \in F_i^1$, define an edge $e' \in F_+^1$ such that $s(e') = f_i(s(e))$, $r(e') = f_i(r(e))$, and $\LL_+(e') = \LL_i(e)$. For each $e \in F_1^0$ with $r(e) = P_1$ and each non-universal border point $P \in F_2^0$, draw an additional edge $e' \in F_+^1$ with $s(e') = f_1(s(e))$, $r(e') = f_2(P)$, and $\LL_+(e') = \LL_1(e)$. Draw analogous edges for each $e \in F_2^1$ with $r(e) = P_2$ and every non-universal border point $P \in F_1^0$. 
This construction
is illustrated in Figure \ref{fig_modular_add}.
 
 \begin{figure}
\begin{center}
\begin{tikzpicture}
  [bend angle=10,
   clearRound/.style = {circle, inner sep = 0pt, minimum size = 17mm},
   clear/.style = {rectangle, minimum width = 17 mm, minimum height = 6 mm, inner sep = 0pt},  
   greyRound/.style = {circle, draw, minimum size = 1 mm, inner sep =
      0pt, fill=black!10},
   grey/.style = {rectangle, draw, minimum size = 6 mm, inner sep =
      1pt, fill=black!10},
   white/.style = {rectangle, draw, minimum size = 6 mm, inner sep =
      1pt},
   lfc/.style = {circle, draw, minimum size = 3cm},
   to/.style = {->, shorten <= 1 pt, >=stealth', semithick}]
  
  \node[lfc] (F1) at (-1.5,0) {};
  \node[lfc] (F2) at (1.5,0) {};
  \node[grey] (P+) at (0,0) {$P_+$};
  \node[white] (S) at (-1.5,1) {$v$};
  \node[grey] (P) at (1.5,1) {$P$};  
  \node[clear] (F1text) at (-4,0) {$(F_1,\LL_1)$};
  \node[clear] (F1text) at (4,0) {$(F_2,\LL_2)$};

  \draw[to] (S) to node[auto,swap] {$a$} (P+);
  \draw[to] (S) to node[auto] {$a$} (P);

\end{tikzpicture}
\end{center}
\caption[Addition of modular renewal systems.]{The labelled graph $(F_+, \LL_+)$ constructed from the left Fischer covers of left-modular renewal systems $\X(L_1)$ and $\X(L_2)$. In $(F_1, \LL_1)$, the vertex $v$ emits an edge labelled $a$ to $P_1$, so in $(F_+, \LL_+)$,  the corresponding vertex emits edges labelled $a$ to $P_+$ and to every vertex corresponding to a border point $P \in F_2^0$.} 
\label{fig_modular_add}
\end{figure}
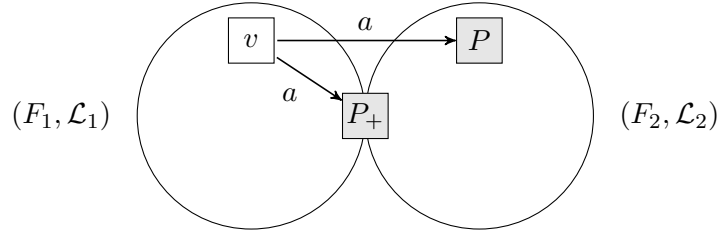

\begin{prop}
\label{prop_addition_modular}
If $L_1$ and $L_2$ are left-modular generating lists with disjoint alphabets, then $L_1 \cup L_2$ is left-modular, the left Fischer cover of $\X(L_1 \cup L_2)$ is the graph $(F_+, \LL_+)$ constructed above, and the vertex $P_+ \in F_+^0$ is the universal border point of $L_1 \cup L_2$. 
\end{prop}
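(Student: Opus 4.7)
The plan is to first verify that $(F_+, \LL_+)$ presents $\X(L_1 \cup L_2)$, then apply Theorem~\ref{thm_lfc_char} to identify it as the left Fischer cover, and finally extract the identification of $P_+$ as universal border point together with left-modularity of $L_1 \cup L_2$.

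The key structural observation is that since $\AA_1 \cap \AA_2 = \emptyset$, every $x \in \X(L_1 \cup L_2)$ decomposes canonically into maximal $\AA_i$-blocks, each of which must lie in $L_i^*$. First I would use this to check the presentation claim: any biinfinite path in $(F_+, \LL_+)$ splits at its connecting edges (and at $P_+$) into stretches staying inside the original $(F_i, \LL_i)$; by construction each such stretch begins and ends at a border point of $L_i$, so by left-modularity of $L_i$ its label lies in $L_i^*$, and the full label therefore lies in $(L_1 \cup L_2)^*$. Conversely, given $x \in \X(L_1 \cup L_2)$, a partition $x = \cdots w_{-1} w_0 w_1 \cdots$ with $w_k \in L_{i_k}$ can be lifted block by block to paths in $(F_{i_k}, \LL_{i_k})$ starting and ending at $P_{i_k}$, which are then glued via the identified vertex $P_+$ or via the connecting edges whenever $i_{k+1} \neq i_k$.

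Next I would verify that $(F_+, \LL_+)$ is left-resolving, irreducible, and predecessor-separated. Left-resolving-ness is inherited from each $(F_i, \LL_i)$, since the only additional edges arriving at any vertex carry labels from the other (disjoint) alphabet. Irreducibility follows from the irreducibility of each $(F_i, \LL_i)$ together with the presence of connecting edges in both directions. For predecessor-separation, two vertices $u \neq v$ are distinguished as follows: if both lie in the same $(F_i, \LL_i)$, the predecessor-separation of $(F_i, \LL_i)$ gives a left-ray over $\AA_i$ in the symmetric difference of their $(F_i, \LL_i)$-predecessor sets, and this left-ray remains distinguishing in $(F_+, \LL_+)$ because connecting edges carry labels from the other alphabet and therefore cannot occur in presentations of an $\AA_i$-left-ray; if $u \in F_1^0 \setminus \{P_1\}$ and $v \in F_2^0$, one observes that the $(F_+, \LL_+)$-predecessor set of $v$ restricted to $\AA_1$-left-rays coincides with the $(F_1, \LL_1)$-predecessor set of $P_1$ (by the bijection between connecting edges terminating at $v$ and $F_1$-edges terminating at $P_1$), so predecessor-separation of $(F_1, \LL_1)$ yields an $\AA_1$-left-ray distinguishing $u$ from $P_1$, and hence from $v$, in $(F_+, \LL_+)$. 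A symmetric argument handles the remaining case, so Theorem~\ref{thm_lfc_char} identifies $(F_+, \LL_+)$ with the left Fischer cover. The universal border point is then located by picking any $u_i \in L_i$ and observing that $(u_1 u_2)^\infty$ is strongly left-bordering (every partition must break at every transition between $\AA_1$- and $\AA_2$-letters) and has its natural presentation in $(F_+, \LL_+)$ passing through $P_+$; by predecessor-separation and Lemma~\ref{lem_border_point_general_prop}(1) this identifies $P_+ = P_0(L_1 \cup L_2)$. Left-modularity of $L_1 \cup L_2$ then follows by taking a path $\lambda$ in $(F_+, \LL_+)$ terminating at $P_+$, decomposing it at its connecting edges into pieces inside each $(F_i, \LL_i)$ — each of which is a path between border points — and applying left-modularity of each $L_i$ in both directions.

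The main obstacle will be the predecessor-separation argument when $u$ and $v$ lie in different components, in particular ensuring that the connecting edges into $F_2$-vertices do not create spurious agreement of predecessor sets; the crucial bookkeeping is that connecting edges terminating at a non-universal border point $v \in F_2^0$ are in label-preserving bijection with $F_1$-edges terminating at $P_1$, which reduces the distinction between $u \in F_1^0$ and $v$ to the distinction between $u$ and $P_1$ inside $(F_1, \LL_1)$.
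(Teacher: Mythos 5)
Your proposal is correct and follows essentially the same route as the paper: verify the three hypotheses of Theorem~\ref{thm_lfc_char} for $(F_+,\LL_+)$ and use left-modularity of each $L_i$ to show that the presented language is exactly $\BB(\X(L_1\cup L_2))$ (the hard containment being that a maximal $\AA_2$-block sandwiched between $\AA_1$-letters must lie in $L_2^*$); you additionally spell out the predecessor-separation across the two components, which the paper dismisses as holding ``by construction,'' and your reduction of that case to distinguishing $u$ from $P_1$ inside $(F_1,\LL_1)$ via all-$\AA_1$ left-rays is sound. One phrasing to tighten: left-modularity only applies to paths terminating at the \emph{universal} border point, so when you say each stretch ``begins and ends at a border point of $L_i$'' you should note that the maximal $\AA_i$-run, read back in $(F_i,\LL_i)$ (replacing a final connecting edge by the $F_i$-edge into $P_i$ it mirrors), necessarily terminates at $P_i$ itself --- that is what licenses the conclusion that its label lies in $L_i^*$.
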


\begin{proof}
By construction, the labelled graph $(F_+, \LL_+)$ is irreducible, left-resolving, and predecessor-separated, so by Theorem \ref{thm_lfc_char}, it is the left Fischer cover of some sofic shift $X_+$. 
Given $w \in L_1^*$, there is a path with label $w$ in the left Fischer cover of $X_1$ from some border point $P \in F_1^0$ to the universal border point $P_1$ by Lemma \ref{lem_border_point_general_prop}(\ref{lem_bp_path_implies}). Hence, there is also a path labelled $w$ in $(F_+, \LL_+)$ from the vertex corresponding to $P$ to the vertex $P_+$. This means that for every border point $Q \in F_2^0$, $(F_+, \LL_+)$ contains a path labelled $w$ from the vertex corresponding to $P$ to the vertex corresponding to $Q$. By symmetry, it follows that every element of $(L_X \cup  L_Y)^*$ has a presentation in $(F_+, \LL_+)$. Hence, $\X(L_1 \cup L_2) \subset X_+$. Note that these arguments hold even for non-modular systems. 

Assume that $awb \in \BB(X_+)$ with $a,b \in \AA_1$ and $w \in \AA_2^*$. Then there must be a path labelled $w$ in $(F_+, \LL_+)$ from a vertex corresponding to a border point $P$ of $L_2$ to $P_+$. By construction, this is only possible if there is also a path labelled $w$ from $P$ to $P_2$ in $(F_2, \LL_2)$, but $L_2$ is left-modular, so this means that $w \in L_2^*$. By symmetry, $\X(L_1 \cup L_2) = X_+$, and $P_+$ is the universal border point by construction.
\end{proof} 

\subsection{Fragmentation}\label{sec_rs_fragmentation}\index{fragmentation}
Let $X$ be a shift space over the alphabet $\AA$. Given $a \in \AA$, $k \in \N$, and new symbols $a_1, \ldots, a_k \notin \AA$  consider the map 
$f_{a,k} \colon (\AA \setminus \{ a \}) \cup \{ a_1, \ldots , a_k\} \to \AA$ defined by
$f_{a,k}(a_i) = a$ for each $1 \leq i \leq k$ and $f_{a,k}(b) = b$ when $b \in \AA \setminus \{a \}$. Let $F_{a,k} \colon ((\AA \setminus \{ a \}) \cup \{ a_1, \ldots , a_k\})^* \to \AA^*$ be the natural extension of $f_{a,k}$.  
If $w \in \AA^*$ contains $l$ copies of the symbol $a$, then the preimage $F_{a,k}^{-1}(\{w \})$ is the set consisting of the $k^l$ words that can be obtained by replacing the $a$s by the symbols $a_1, \ldots, a_k$.

\begin{definition} 
\index{renewal system!fragmentation of}
\index{shift space!fragmentation of}
\index{X*ak@$X_{a,k}$}
\index{F*ak@$F_{a,k}$}
Let $X = \X_\FF$ be a shift space over the alphabet $\AA$, let $a \in \AA$, let $a_1, \ldots, a_k \notin \AA$, and let $F_{a,k}$ be defined as above. Then the shift space $X_{a,k} = \X_{F_{a,k}^{-1}(\FF)}$ is said to be the shift obtained from $X$ by \emph{fragmenting} $a$ into $a_1, \ldots, a_k$.
\end{definition}

\index{L*@$L_{a,k}$}
Note that $X_{a,k}$ is an SFT if and only if $X$ is an SFT, and that $\BB( X_{a,k} ) = F_{a,k}^{-1}(\BB(X))$.
If $X$ is an irreducible sofic shift, then the left and right Fischer and Krieger covers of $X_{a,k}$ are obtained by replacing each edge labelled $a$ in the corresponding cover of $X$ by $k$ edges labelled $a_1, \ldots, a_k$.
Note that $X$ and $X_{a,k}$ are not generally conjugate or even flow equivalent: The full $n$-shift is, for instance, a fragmentation of the trivial shift with one element.
If $X = \X(L)$ is a renewal system, then $X_{a,k}$ is the renewal system generated by the list $L_{a,k} = F_{a,k}^{-1}(L)$. 

\begin{remark}
\label{rem_fragmentation}
\label{rem_sum_frag_commute}
Fragmentation is useful for constructing new renewal systems from known systems. Let $A$ be the symbolic adjacency matrix of the left Fischer cover of an SFT renewal system $\X(L)$ with alphabet $\AA$. Given $a \in \AA$ and $k \in \N$, define $f \colon \AA \to \N$ by $f(a) = k$ and $f(b) = 1$ for $b \neq a$. Extend $f$ to the set of finite formal sums over $\AA$ in the natural way and consider the integer matrix $f(A)$. Then $f(A)$ is the adjacency matrix of the underlying graph of the left Fischer cover of $\X(L_{a,k})$.
This construction will be very useful for constructing renewal systems with specific Bowen-Franks groups and determinants in the following.
For lists over disjoint alphabets, it follows immediately from the definitions that fragmentation and addition commute. 
\end{remark}

\subsection{Exotic determinants} \label{sec_rs_exotic}
An SFT is flow equivalent to a non-trivial full shift if and only if the Bowen-Franks group is cyclic and the determinant is negative. All the SFT renewal systems considered so far have been flow equivalent to full shifts. This is partially explained by the observation following Lemma \ref{lem_irr_replace} and Remark \ref{rem_nice_lists} which show that a generating list must be quite complicated in order to generate an SFT that is not flow equivalent to a full shift, but even outside these classes, it is difficult to find such renewal systems.

Appendix \ref{app_programs} describes an experimental method used to randomly generate and investigate SFT renewal systems, but even when considering only non-trivial irreducible lists it was surprisingly hard to find SFT renewal systems with positive determinants and/or non-cyclic Bowen-Franks groups. In fact, a lengthy search only revealed three such renewal systems -- all of which had cyclic Bowen-Franks groups and positive determinants. These examples will be presented in the following, and the results of the previous sections will be used to show how such examples can be used to generate a class of examples of renewal systems which are not flow equivalent to full shifts. 

\begin{example}\label{ex_pos_det_1}
\index{renewal system!with positive \\determinant}
Consider the irreducible generating list
\begin{displaymath}
L_1 = [ aa, bb, aaa, baa, bba, bbab, bbbbb ].
\end{displaymath}
Via the computer programs described in Appendix \ref{app_programs}, Proposition \ref{prop_extendable} can be used to prove that $\X(L_1)$ is an 8-step SFT with forbidden words 
\begin{displaymath}
\FF_1 = \{ abab, aabaaab, aabbbab, aabbbaaab, aabbbbbab\}.
\end{displaymath}
With this information, it is elementary (if somewhat tedious) to find the Fischer cover of $\X(L_1)$ which is shown in Figure \ref{fig_pos_det_1}. By Corollary \ref{cor_lfc_conj}, $\X(L_1)$ is conjugate to the edge shift of the underlying graph of the left Fischer cover, so the Bowen-Franks invariant of $\X(L_1)$ can be found by computing the determinant and Smith normal form of the corresponding edge shift. The determinant is $1$, so the Bowen-Franks group is trivial.

\begin{figure}
\begin{center}
\begin{tikzpicture}
  [bend angle=10,
   clearRound/.style = {circle, inner sep = 0pt, minimum size = 17mm},
   clear/.style = {rectangle, minimum width = 17 mm, minimum height = 6 mm, inner sep = 0pt},  
   greyRound/.style = {circle, draw, minimum size = 1 mm, inner sep =
      0pt, fill=black!10},
   grey/.style = {rectangle, draw, minimum size = 6 mm, inner sep =
      1pt, fill=black!10},
   white/.style = {rectangle, draw, minimum size = 6 mm, inner sep =
      1pt},
   to/.style = {->, shorten <= 1 pt, >=stealth', semithick}]
  
  \node[grey] (P1) at (-6,0) {};
  \node[grey] (P2) at (-4.5,0) {};
  \node[white] (P3) at (-3,0) {}; 
  \node[grey] (P4) at (-1.5,1.5) {};
  \node[white] (P5) at (-1.5,-1.5) {}; 
  \node[grey] (P6) at (0,1.5) {};
  \node[grey] (P7) at (0,-1.5) {$P_0$}; 
  \node[white] (P8) at (1.5,0) {};
  \node[grey] (P9) at (3,0) {}; 
  \node[white] (P10) at (4.5,0) {};     
  \node[white] (P11) at (6,0) {};

  \draw[to, loop below] (P1) to node[auto] {$a$} (P2); 
  \draw[to, bend left=60] (P1) to node[auto] {$a$} (P6);

  \draw[to, loop below] (P2) to node[auto] {$b$} (P2);
  \draw[to] (P2) to node[auto] {$b$} (P1);
  \draw[to, bend left=20] (P2) to node[auto] {$b$} (P4);  
  \draw[to, bend left=90] (P2) to node[auto] {$b$} (P10);      

  \draw[to] (P3) to node[auto] {$a$} (P2);
  \draw[to, bend left=30] (P3) to node[auto] {$a$} (P7);     
  \draw[->, shorten <= 1 pt, >=stealth', semithick] (P3) .. controls (-1.5,-4) and (1.5,-4) .. node[auto,swap] {$a$} (P9);   
    
  \draw[to] (P4) to node[auto] {$a$} (P3);      

  \draw[to] (P5) to node[auto,swap] {$b$} (P3);     
  \draw[to, bend right=45] (P5) to node[auto,swap] {$b$} (P11);

  \draw[to] (P6) to node[auto] {$a$} (P4);
  
  \draw[to] (P7) to node[auto,swap] {$b$} (P5);
  
  \draw[to] (P8) to node[auto] {$b$} (P6);    
  \draw[to] (P8) to node[auto,swap] {$b$} (P7);    
  
  \draw[to] (P9) to node[auto] {$b$} (P8);    
  \draw[to] (P10) to node[auto] {$b$} (P9);
  
  \draw[to] (P11) to node[auto] {$a$} (P10);  
  \draw[to, bend left=45] (P11) to node[auto] {$a$} (P8);  
  
\end{tikzpicture}
\end{center}
\caption[An exotic example.]{Left Fischer cover of the SFT renewal system $\X(L_1)$ considered in Example \ref{ex_pos_det_1}. The border points are coloured grey. $P_0$ is the universal border point and $bbab$ is a generator of $P_0$.} 
\label{fig_pos_det_1}
\end{figure}
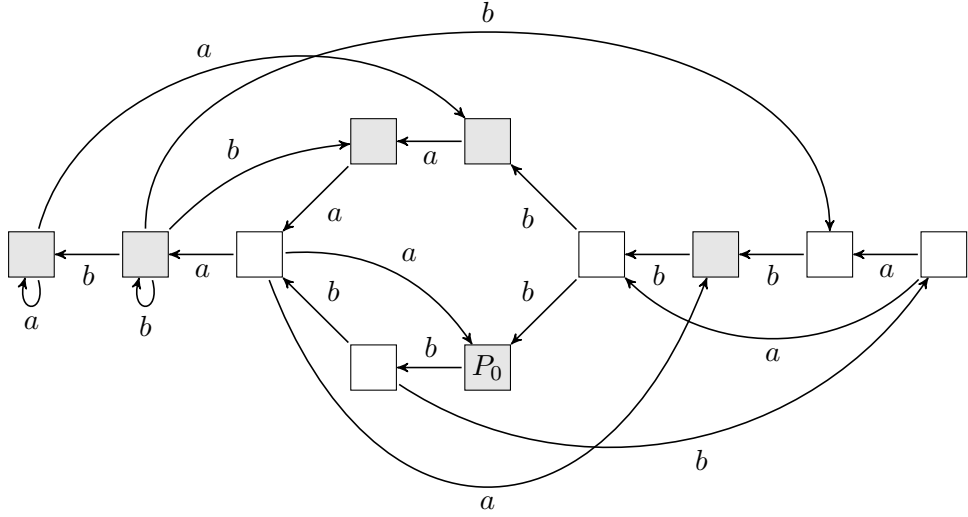
Note that $bbab$ is a strongly left-bordering word for $L_1$, and that $aabaa$ is a strongly right-bordering word for $L_1$, so $L_1$ is left-modular by Proposition \ref{prop_extendable_and_sft}. For a free letter $c$, the renewal system $\X(L_1 \cup \{ c \})$ is an SFT, and by Proposition \ref{prop_addition_modular}, the left Fischer cover of $\X(L_1 \cup \{ c \})$ can be obtained from the left Fischer cover of $\X(L_1)$ by drawing edges labelled $c$ from $P_0$ to each border point (including $P_0$). The symbolic adjacency matrix of this shift is 
\begin{displaymath}
A_1 =  \left( \begin{array}{c c c c c c c c c c c}
a & 0 & 0 & 0 & 0 & a & 0 & 0 & 0 & 0 & 0 \\
b & b & 0 & 0 & b & 0 & 0 & 0 & 0 & b & 0 \\
0 & a & 0 & 0 & 0 & 0 & a & 0 & a & 0 & 0 \\
0 & 0 & b & 0 & 0 & 0 & 0 & 0 & 0 & 0 & b \\
0 & 0 & a & 0 & 0 & 0 & 0 & 0 & 0 & 0 & 0 \\
0 & 0 & 0 & 0 & a & 0 & 0 & 0 & 0 & 0 & 0 \\
c & c & 0 & b & c & c & c & 0 & c & 0 & 0 \\
0 & 0 & 0 & 0 & 0 & b & b & 0 & 0 & 0 & 0 \\
0 & 0 & 0 & 0 & 0 & 0 & 0 & b & 0 & 0 & 0 \\
0 & 0 & 0 & 0 & 0 & 0 & 0 & 0 & b & 0 & 0 \\
0 & 0 & 0 & 0 & 0 & 0 & 0 & a & 0 & a & 0 
\end{array}
\right).
\end{displaymath}
Let $a_1, \ldots, a_{k_a}, b_1, \ldots , b_{k_b}, c_1, \dots, c_{k_c}$ be new symbols and let $L_1'$ be the list obtained by fragmenting $a$, $b$, and $c$ into these $k_a$, $k_b$, and $k_c$ fragments, respectively. Now, $\X(L_1')$ is a shift of finite type, and by Remark \ref{rem_fragmentation}, the left Fischer cover of $\X(L_1')$ can be obtained from the left Fischer cover of $\X(L_1 \cup \{ c\})$ by replacing each edge labelled $a$ by $k_a$ edges labelled $a_1, \ldots, a_{k_a}$ and so on. Hence, if $a$, $b$, and $c$ are replaced by positive integers in the symbolic adjacency matrix $A_1$ above, then the resulting matrix is the adjacency matrix of the underlying graph of the left Fischer cover of the SFT renewal system $\X(L_1')$. Using Maple, or a similar tool, it is easy to find that the determinant of $\Id - A_1$ is the following 10th order polynomial in $a$, $b$, and $c$ \begin{multline*}
p(a,b,c) = 
a^5 b^5  \\
-a^4 b^5
+a^3 b^6
-a^4 b^4 c
\qquad \qquad \qquad \qquad \qquad \qquad \qquad \qquad  \\
+a^5 b^3
+a^4 b^4
+a^4 b^3 c 
\qquad \qquad \qquad \qquad \qquad \qquad \qquad 
\\
-2 a^4 b^3
-a^4 b^2 c  
\qquad \qquad \qquad \qquad
\\
+a^4 b c
+a^3 b^3
+a^4 b^2
+a b^4 c 
 \\
\qquad \qquad
-a^4 b
-a^4 c
-a^3 b^2
-a b^3 c
-b^4 c   \\
\qquad \qquad \qquad \qquad \qquad
+a^3 b
+a^2 b^2
+a b^2 c
+b^3 c  \\
\qquad \qquad \qquad \qquad \qquad \qquad  \qquad
-a^2 b
-a b^2
-a b c
-b^2 c \\
\qquad \qquad \qquad \qquad \qquad \qquad \qquad \qquad \qquad \qquad
+a b
+a c
+b c \\
\qquad \qquad \qquad \qquad \qquad \qquad \qquad \qquad \qquad \qquad \qquad \qquad
-a
-b
-c \\
+1.
\end{multline*}
\noindent
Hence, every integer in the range of $p$ (over the positive integers) is the determinant of an SFT renewal system. In particular, there is no upper bound on the determinant of an SFT renewal system. 
%

The following two irreducible generating lists are included for completeness, and to give an idea of how complicated a generating list needs to be in order to generate an SFT renewal systems that is not flow equivalent to a full shift. Consider first \begin{displaymath}
L_2 = [ aa, ab, aaa, aba, bba, babba ].
\end{displaymath}
Using the computer programs described in Appendix \ref{app_programs}, Proposition \ref{prop_extendable} can be used to show that the renewal system $\X(L_2)$ is the 9-step SFT for which the forbidden words are 
\begin{multline*}
	\FF_2 = \{ bbbb, bbabaa, bbabbb, bbababa, bbaaabaa, bbaaabbb,\\
		         bbaabbaa, bbababbb, bbaaababa, bbaabbaba, bbaaababbb \}.
\end{multline*}
The corresponding determinant is $1$, so the Bowen-Franks group is the trivial group. For
\begin{displaymath}
L_3 = [ aabb, ba, abb, babab, babaa, babb, aaaaa, aa ],
\end{displaymath}
the programs from Appendix \ref{app_programs} can similarly use Proposition \ref{prop_extendable} to prove that $\X(L_3)$ is the 10-step SFT for which the forbidden words are 
\begin{multline*}
	\FF_3 = \{ bbbb, aabaaba, aabbaba, bbbaaba, bbabbaba, bbbaaaaba, \\
			 aababbaba, aabbaaaba. aabaaaaba, bbabbaaaba, aababbaaaba \}.
\end{multline*}
In this case, $\BF_+(\X(L_3)) = +\Z / 4\Z$.

The renewal systems $\X(L_2)$ and $\X(L_3)$ could be investigated in the same manner as $\X(L_1)$, but they are both of higher step than $\X(L_1)$, so it is not as easy to write down the left Fischer covers. As one might expect, it has not been possible to formulate a general hypothesis about when the determinant is positive based on the very limited information available in these examples. 
\end{example}

By adding and fragmenting the renewal systems from Example \ref{ex_pos_det_1} it is possible to obtain additional renewal systems with positive determinants, but this will not be done here, since the examples are not of particular interest. However, the method will be used several times in the following sections to construct complicated renewal systems from simpler building blocks.

\section{Entropy and flow equivalence}
\label{sec_rs_entropy}
As mentioned in Section \ref{sec_rs_exotic}, it is difficult to construct renewal systems with non-cyclic Bowen-Franks groups and/or positive determinants directly, so it is attractive to compute the range of the Bowen-Franks invariant over classes of renewal systems that are known to have rich structure in the hope that this will produce hitherto unseen values of the invariant. Hong and Shin \cite{hong_shin} have constructed a class $H$ of lists generating SFT renewal systems such that $\log \lambda$ is the entropy of an SFT if and only if there exists $L \in H$ with $h(\X(L)) = \log \lambda$, and this is arguably the most powerful general result known about the invariants of SFT renewal systems. In the following, the renewal systems generated by lists from $H$ will be classified up to flow equivalence, and this will produce a class of renewal systems that will eventually serve as building blocks in a construction of renewal systems with more complicated values of the Bowen-Franks invariant.   
The first section recalls the construction from \cite{hong_shin} in order to pave the way for a flow classification of $H$ in the following section.

\subsection{The class $H$}
\label{sec_rs_H}
The construction of the class $H$ of generating lists considered in \cite{hong_shin} will be modified slightly to take advantage of the fact that some of the details of the original construction are invisible up to flow equivalence. In particular, several words from the generating lists can be replaced by single symbols by using symbol reduction. Additionally, there are extra conditions on some of the variables in \cite{hong_shin} which will be omitted here since the larger class can be classified without extra work. The notation has been kept close to the notation from \cite{hong_shin} to allow comparison, but it has been simplified several places by the use of fragmentation.

\index{generating list!in $B$}
\index{B*@$B$}
Let $r \geq 2$ and let $n_1, \ldots, n_r,c_1, \ldots , c_r,d, N \in \N$, and let $W$ be the set consisting of the following words:
\begin{itemize}
\item $\alpha_i = \alpha_{i,1} \cdots \alpha_{i,n_1}$ for $1 \leq i \leq c_1$ 
\item $\tilde \alpha_i = \tilde \alpha_{i,1} \cdots \tilde \alpha_{i, n_1}$ for $1 \leq i \leq c_1$
\item $\gamma_{k,i_k} = \gamma_{k,i_k,1} \cdots \gamma_{k,i_k,n_k}$ for $2 \leq k \leq r$ and $1 \leq i_k \leq c_k$
\item $\alpha_{i_1} \gamma_{2,i_2} \cdots \gamma_{r,i_r} \beta_l^N$
         for $1 \leq i_j \leq c_j$ and $1 \leq l \leq d$
\item $\beta_l^N \tilde \alpha_{i_1} \gamma_{2,i_2} \cdots \gamma_{r,i_r}$
         for $1 \leq i_j \leq c_j$ and $1 \leq l \leq d$.
\end{itemize}
The set of generating lists of this form will be denoted $B$. In \cite{hong_shin} such renewal systems are used as basic building blocks in the construction of renewal systems with specific entropies. 

\begin{remark}
\label{rem_R_def}
\index{generating list!in $R$}
\index{R*@$R$}
Lemmas \ref{lem_a_to_aa} and \ref{lem_replace}
can be used to reduce the words $\alpha_i$, $\tilde \alpha_i$, $\gamma_{k,i_k}$, and $\beta_l^N$ to single letters, so up to flow equivalence, the list $W \in B$ considered above can be replaced by the list $W'$ consisting of the one-letter words $\alpha_i$, $\tilde \alpha_i$, and $\gamma_{k,i}$ as well as the words
\begin{itemize}
\item $\alpha_{i_1} \gamma_{2,i_2} \cdots \gamma_{r,i_r} \beta_l$
         for $1 \leq i_j \leq c_j$ and $1 \leq l \leq d$
\item $\beta_l \tilde \alpha_{i_1} \gamma_{2,i_2} \cdots \gamma_{r,i_r}$
         for $1 \leq i_j \leq c_j$ and $1 \leq l \leq d$.
\end{itemize}
Furthermore, if 
\begin{equation} \label{eq_reduced_list}
L = \{ \alpha, \tilde \alpha, 
 \alpha \gamma_2 \cdots \gamma_r \beta, \beta \tilde \alpha \gamma_2 \cdots \gamma_r \} \cup \{ \gamma_k \mid 2 \leq k \leq r\}, 
\end{equation}
then $\X(W')$ can be obtained from $\X(L)$ by fragmenting $\alpha$ to $\alpha_1, \ldots,  \alpha_{c_1}$, $\beta$ to $\beta_1, \ldots , \beta_l$ and so on.
Let $R$ be the set of generating lists of the form given in Equation \ref{eq_reduced_list}. 
\end{remark}

\index{H@$H$}\index{generating list!in $H$}
Next consider generating lists $W_1, \ldots , W_m \in B$ with disjoint alphabets, and let $W = \bigcup_{j=1}^m W_j$. 
Let $\tilde W$ be a finite set of words that do not share any letters with each other or with the words from $W$, and consider the generating list $W \cup \tilde W$. Let $\tilde H$ be the set of generating lists that can be constructed in this manner.

\begin{remark} \label{rem_H_letter_frag}
If $W \cup \tilde W \in \tilde H$ as above, then Lemma \ref{lem_replace} shows that $\X(W \cup \tilde W)$ is flow equivalent to the renewal system generated by the union of $W$ and $\lvert \tilde W \rvert$ new letters, i.e.\ $\X(W \cup \tilde W)$ is flow equivalent to a fragmentation of $\X(W \cup \{ a \})$ when $a \notin \AA(\X(W))$.
\end{remark}

\begin{lem}[{Hong and Shin \cite{hong_shin}}]
\label{lem_hs}
Let $\mu$ be a Perron  number. Then there exists $\tilde L \in \tilde H$ such that $\X(\tilde L)$ is an SFT and $h(\X(\tilde L)) = \log \mu$.
\end{lem}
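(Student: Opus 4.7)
My plan is to reduce the statement to a concrete polynomial-matching problem and to exploit the considerable flexibility built into the parameters of a list in $B$. The starting point is Lind's characterization: $\log\mu$ is the entropy of an irreducible SFT exactly when $\mu$ is a weak Perron number, and every Perron number $\mu$ is the Perron-Frobenius eigenvalue of some primitive non-negative integer matrix $A$, with $\mu$ the dominant root of its characteristic polynomial. So it suffices to build a list $\tilde L \in \tilde H$ whose associated transition matrix has dominant eigenvalue equal to $\mu$. By Proposition \ref{prop_entropy_graph}, I only need to understand a right- or left-resolving presentation of $\X(\tilde L)$.

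Next, I would analyze the entropy of $\X(W)$ for a single $W \in B$. The words in $W$ fall into three natural classes: the "short" blocks $\alpha_i, \tilde\alpha_i, \gamma_{k,i_k}$, the "forward" concatenations $\alpha_{i_1}\gamma_{2,i_2}\cdots\gamma_{r,i_r}\beta_l^N$, and the "backward" concatenations $\beta_l^N\tilde\alpha_{i_1}\gamma_{2,i_2}\cdots\gamma_{r,i_r}$. The long blocks $\beta_l^N$ act as strongly left- and right-bordering markers that synchronize the partitioning of any biinfinite sequence and so let one read off, in a right-resolving way, which $\alpha_{i_j}$ or $\tilde\alpha_{i_j}$ is currently being traversed. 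From this a presentation whose transition matrix has a characteristic polynomial expressible as a polynomial in the parameters $c_1,\ldots,c_r,d,N,n_1,\ldots,n_r$ drops out; symbolically, its dominant root is the unique positive solution $\lambda$ of an equation of the form
\[
1 = \sum_{k=1}^r c_k \lambda^{-n_k} + d\,\lambda^{-N}\prod_{k=1}^r c_k\lambda^{-n_k}\cdot(\text{combinatorial factor}),
\]
so adjusting $(c_k,n_k,d,N,r)$ (and fragmenting letters as in Section \ref{sec_rs_fragmentation}) gives a dense family of realizable dominant roots.

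The actual matching step proceeds by taking a primitive integer matrix $A$ with Perron eigenvalue $\mu$ and splitting its characteristic polynomial $p(x) = x^n - \sum a_j x^{n-j}$ into the contributions from "short" cycles and "long" cycles so that each coefficient $a_j$ is accounted for by an appropriate count of $c_k$'s and $d$'s at the corresponding length; the extra slack provided by the power $N$ of the marker $\beta_l$ lets me kill unwanted low-degree contributions, and any residual multiplicity can be soaked up by taking several lists $W_1,\ldots,W_m \in B$ with disjoint alphabets, whose union has entropy equal to the maximum of the individual entropies. Finally, the extra collection $\tilde W$ of words on a fresh alphabet, combined with Remark \ref{rem_H_letter_frag}, allows me to fine-tune the presentation without changing the dominant eigenvalue beyond what $\max_j h(\X(W_j))$ already provides.

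To conclude that $\X(\tilde L)$ is an SFT I would appeal to Proposition \ref{prop_extendable_and_sft}: each list $W_j$ contains the strongly left-bordering word $\beta_l^N \tilde\alpha_{i_1}\gamma_{2,i_2}\cdots\gamma_{r,i_r}$ and the strongly right-bordering word $\alpha_{i_1}\gamma_{2,i_2}\cdots\gamma_{r,i_r}\beta_l^N$, so it suffices to check that every allowed word of some fixed length is left-extendable — this reduces to a bounded case-check over the positions within $\beta_l^N$, since outside such a block the partitioning is forced by the markers. Because the alphabets are pairwise disjoint, no new cross-list obstructions can arise, and the SFT property for $\tilde L$ follows. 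The main obstacle, and essentially the whole content of the argument, is the combinatorial bookkeeping in the previous paragraph: showing that every Perron polynomial — not just those with a single leading Perron root but with the right sign patterns and coefficient sizes — can be realized as the characteristic polynomial (up to factors whose roots are smaller in absolute value) of a transition matrix coming from some choice of parameters in $B$. This is where the proof lives, and where I would follow the explicit recipe of Hong and Shin \cite{hong_shin}.
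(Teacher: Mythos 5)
The paper does not prove this lemma itself; it is quoted from Hong and Shin, so the only question is whether your outline would stand on its own. It would not, for two reasons. First, the step you yourself identify as ``where the proof lives'' --- showing that the parameters $(r, n_k, c_k, d, N)$ of a list in $B$, together with the lists $W_1,\dots,W_m$ and the auxiliary words $\tilde W$, can be chosen so that the resulting presentation has spectral radius exactly $\mu$ --- is precisely the content of the lemma, and you defer it entirely to the reference. Without that matching argument (which in Hong--Shin runs through Lind's theorem that every Perron number is the spectral radius of a primitive integral matrix, followed by an explicit translation of such a matrix into word counts) nothing has been established.

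Second, and more seriously as a matter of correctness, your treatment of the union $W=\bigcup_j W_j$ and of $\tilde W$ is wrong. The sum of renewal systems is $\X(L_1\cup L_2)$, the renewal system generated by the union of the lists, which permits free concatenation of words across the lists; it is emphatically not the disjoint union of the shift spaces (see Section \ref{sec_rs_add}). Consequently $h(\X(\bigcup_j W_j))$ is strictly larger than $\max_j h(\X(W_j))$: for a uniquely decipherable list the entropy is $\log$ of the largest root of $\sum_{w\in L}\lambda^{-\lvert w\rvert}=1$, and enlarging $L$ enlarges the left-hand side and hence the root. The same applies to adjoining $\tilde W$: the words on fresh letters raise the entropy, and in the Hong--Shin construction both the union and $\tilde W$ are essential tuning knobs for hitting the target polynomial, not devices that ``soak up residual multiplicity'' while leaving the dominant eigenvalue at $\max_j h(\X(W_j))$. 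Your argument for the SFT property via Proposition \ref{prop_extendable_and_sft} and the strongly bordering words is in the right spirit, but as written the entropy bookkeeping would not produce $\log\mu$.
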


Consider a generating list $\tilde L \in \tilde H$ and $p \in \N$. For each letter $a \in  \AA( \X(\tilde L))$, introduce new letters $a_1, \dots , a_p \notin \AA( \X(\tilde L))$, and let $L$ denote the generating list obtained by replacing each occurrence of $a$ in $\tilde L$ by the word $a_1\cdots a_p$.
Let $H$ denote the set of generating list that can be obtained from $\tilde H$ in this manner.

\begin{remark}
\label{rem_H_tilde_fe}
If $L$ is obtained from $\tilde L \in \tilde H$ as above, then $\X(L) \FE \X(\tilde L)$ since the modification can be achieved using symbol expansion of each $a \in \AA( \X(\tilde L))$.
\end{remark}

\begin{thm}[{Hong and Shin \cite{hong_shin}}]
Let $\lambda$ be a weak Perron number. 
Then there exists $L \in H$ such that $\X(L)$ is an SFT and $h(\X(L)) = \log \lambda$.
\label{thm_hs}\index{renewal system!entropy of} 
\end{thm}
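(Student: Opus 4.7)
The plan is to reduce the theorem directly to Lemma \ref{lem_hs} together with a short entropy calculation, exploiting the very construction by which the class $H$ is built from $\tilde H$.

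First, I would unpack the hypothesis. Since $\lambda$ is a weak Perron number, there exists $p \in \N$ such that $\mu := \lambda^p$ is a Perron number. Apply Lemma \ref{lem_hs} to $\mu$ to obtain a generating list $\tilde L \in \tilde H$ with $\X(\tilde L)$ an SFT and $h(\X(\tilde L)) = \log \mu = p \log \lambda$. Now form $L \in H$ exactly as in the construction preceding the theorem: for each letter $a \in \AA(\X(\tilde L))$ choose $p$ new symbols $a_1,\dots,a_p$, and replace every occurrence of $a$ in $\tilde L$ by the word $a_1 a_2 \cdots a_p$. By definition this produces an element of $H$.

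Next I would verify that $\X(L)$ is still an SFT. The point is that the replacement operation is just $p-1$ consecutive symbol expansions (first $a \mapsto a a_2$, then $a \mapsto a a_3$, and so on), iterated over each letter of the original alphabet. Each such symbol expansion preserves the SFT property (this is the content of the proof of Proposition \ref{prop_covers_respect_se} and was already used implicitly in Remark \ref{rem_H_tilde_fe}: if $\tilde X = \X_{\tilde \FF}$ for a finite $\tilde \FF$, then the expanded shift can be described by a finite set of forbidden words obtained by applying the replacement and adding the local constraints $b\diamond_a \notin \BB$ for $b \neq a$). Since $\X(\tilde L)$ is an SFT, so is $\X(L)$.

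Finally I would compute the entropy. There is an obvious length-preserving bijection between words of length $n$ in $\BB(\X(\tilde L))$ and certain ``phase-zero'' words of length $np$ in $\BB(\X(L))$, and every word of length $N$ in $\BB(\X(L))$ sits inside some such phase-zero word of length at most $N + 2p$. Hence
\[
|\BB_n(\X(\tilde L))| \;\le\; |\BB_{np}(\X(L))| \;\le\; p\cdot |\BB_{\lceil n/p\rceil +2}(\X(\tilde L))|\cdot |\AA(\X(L))|^{2p},
\]
and taking $\tfrac{1}{np}\log$ of the outer inequalities and letting $n\to\infty$ yields $h(\X(L)) = h(\X(\tilde L))/p = \log \mu / p = \log \lambda$, as required.

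The only genuinely substantive step is Lemma \ref{lem_hs}, which has already been granted, so the main obstacle here is really just the careful bookkeeping in the entropy estimate and the verification that the SFT property survives the letter-by-word replacement; neither is hard, but both need to be spelled out cleanly because the replacement produces sequences with a preferred phase, while $\X(L)$ is taken to be shift-invariant and so contains all $p$ shifts of each expanded sequence.
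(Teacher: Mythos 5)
Your proposal is correct and follows essentially the same route as the paper: pick $p$ with $\mu=\lambda^p$ Perron, invoke Lemma \ref{lem_hs}, and pass from $\tilde L\in\tilde H$ to $L\in H$ by the letter-to-word substitution, noting that this is a sequence of symbol expansions which preserves the SFT property and divides the entropy by $p$. The paper states these last two facts without proof, so your extra bookkeeping (modulo a harmless index slip in the second inequality, where the subscript on the $\X(\tilde L)$-side should be $n+2$ for a word of length $np$) only makes the argument more explicit.
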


\begin{proof}
By assumption, there exists $p \in \N$ such that $\mu = \lambda^p$ is a Perron number. Use Lemma \ref{lem_hs} to find $\tilde L \in \tilde H$ such that $h( \X(\tilde L) ) = \log \mu$. Next replace each letter $a$ in the alphabet by a word $a_1 \cdots a_p$ to obtain the generating list of an SFT renewal system with entropy $\log \lambda$.
\end{proof}

\subsection{Flow classification of $H$}
\label{sec_rs_H_classification}
The aim of this section is to classify the class of renewal systems generated by lists in $H$ up to flow equivalence. The idea is to prove that the building blocks in the class $R$ introduced in Remark \ref{rem_R_def} are left-modular, and to construct the Fischer covers of the corresponding renewal systems. As the following lemma shows, this will allow a classification of the renewal systems generated by lists from $H$ via addition and fragmentation.

\begin{lem}
\label{lem_entropy_fe}
For each $L \in H$, there exist $L_1, \ldots , L_m \in R$ such that $\X(L)$ is flow equivalent to a fragmentation of $\X ( \bigcup_{j=0}^m L_j )$, where $L_0 = \{ a \}$ for some $a$ that does not occur in $L_1, \ldots , L_m$. 
\end{lem}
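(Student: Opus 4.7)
The plan is to assemble the conclusion directly from the four remarks established when introducing the classes $B$, $R$, $\tilde H$, and $H$, namely Remarks \ref{rem_R_def}, \ref{rem_H_letter_frag}, \ref{rem_H_tilde_fe}, and \ref{rem_sum_frag_commute}. No new symbolic-dynamical input should be required; the work will be almost entirely combinatorial bookkeeping with alphabets.

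First I would unwind the construction of $H$. Given $L \in H$, by definition $L$ is obtained from some $\tilde L \in \tilde H$ by replacing every letter $a$ in the alphabet of $\tilde L$ by a block $a_1 \cdots a_p$ of fresh letters. Remark \ref{rem_H_tilde_fe} gives $\X(L) \FE \X(\tilde L)$, since the replacement is a sequence of symbol expansions. Next, write $\tilde L = W \cup \tilde W$ with $W = \bigcup_{j=1}^m W_j$, $W_j \in B$, where the alphabets of $W_1,\dots, W_m$ and $\tilde W$ are pairwise disjoint and each word of $\tilde W$ uses its own private alphabet. Remark \ref{rem_H_letter_frag} then produces $a \notin \AA(\X(W))$ so that $\X(\tilde L)$ is flow equivalent to a fragmentation of $\X(W \cup \{a\})$.

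Second, I would reduce each $W_j$ to an element of $R$. By Remark \ref{rem_R_def}, for each $1 \leq j \leq m$ there exists $L_j \in R$ (over a private alphabet) such that $\X(W_j)$ is obtained from $\X(L_j)$ by fragmenting each of the five generic symbols $\alpha$, $\tilde \alpha$, $\beta$, and $\gamma_k$ into the corresponding families $\alpha_i$, $\tilde \alpha_i$, $\beta_l$, $\gamma_{k,i_k}$. Setting $L_0 = \{a\}$ and using that the alphabets of $L_0, L_1, \ldots, L_m$ are disjoint, Remark \ref{rem_sum_frag_commute} allows the fragmentations on the individual $L_j$ to be performed after taking the union, so $\X(W \cup \{a\}) = \X(\bigcup_{j=1}^m W_j \cup \{a\})$ is itself a fragmentation of $\X(\bigcup_{j=0}^m L_j)$.

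Finally, I would chain the equivalences. Combining the two displays gives that $\X(\tilde L)$ is flow equivalent to a fragmentation of a fragmentation of $\X(\bigcup_{j=0}^m L_j)$; since the composition of two fragmentations (applied to disjoint batches of letters, or the same letter split in two successive stages) is again a fragmentation, this simplifies to: $\X(\tilde L)$ is flow equivalent to a fragmentation of $\X(\bigcup_{j=0}^m L_j)$. Combined with $\X(L) \FE \X(\tilde L)$ from the first step, this yields the desired conclusion.

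The main obstacle — if there is one — is not mathematical but notational: one must verify that all the alphabets involved can be chosen disjoint so that Remark \ref{rem_sum_frag_commute} applies at every stage, and that the letter $a$ produced by Remark \ref{rem_H_letter_frag} genuinely encodes the contribution of $\tilde W$ as a single symbol to be fragmented into $\lvert \tilde W \rvert$ pieces. Once this bookkeeping is done explicitly, the argument is a direct concatenation of the four remarks.
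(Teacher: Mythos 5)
Your proposal is correct and is essentially the paper's own argument made explicit: the paper's proof is a one-line citation of exactly the four remarks you chain together (Remarks \ref{rem_sum_frag_commute}, \ref{rem_R_def}, \ref{rem_H_letter_frag}, and \ref{rem_H_tilde_fe}). The only care needed is the one you already flag — performing the symbol reductions (flow equivalences) on the $W_j$ before the fragmentations, so that the two fragmentation stages act on disjoint letters and compose into a single fragmentation.
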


\begin{proof}
This follows from Remarks \ref{rem_sum_frag_commute},  \ref{rem_R_def}, \ref{rem_H_letter_frag}, and \ref{rem_H_tilde_fe}.
\end{proof}

The following lemma determines the left Fischer covers of the renewal systems generated by lists from $R$, so that Proposition \ref{prop_addition_modular} and Lemma \ref{lem_entropy_fe} can be used to find the left Fischer covers of the renewal systems generated by lists from $H$. 

\begin{lem}
\label{lem_entropy_lfc}
If $L \in R$, then $L$ is left-modular and the left Fischer cover of $\X(L)$ is the labelled graph shown in Figure \ref{fig_entropy_lfc}.
\end{lem}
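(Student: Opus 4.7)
The plan has two parts: verify left-modularity, and identify the labelled graph in Figure \ref{fig_entropy_lfc} as the left Fischer cover of $\X(L)$.

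For left-modularity, the key observation is that the letter $\beta$ occurs in $L$ only as the last letter of $u = \alpha\gamma_2\cdots\gamma_r\beta$ and as the first letter of $v = \beta\tilde\alpha\gamma_2\cdots\gamma_r$. Moreover, $u$ contains no $\tilde\alpha$ and $v$ contains no $\alpha$, so neither of the long words can begin or end strictly inside the other. Consequently, the unique partitioning of $u$ is the trivial one $(u)$, so $u$ is both strongly left-bordering and strongly right-bordering, and Lemma \ref{lem_strongly_bordering_implies_modular} immediately yields that $L$ is left-modular (in fact both left- and right-modular).

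For the Fischer cover, the plan is to apply Theorem \ref{thm_lfc_char} by showing that the graph in Figure \ref{fig_entropy_lfc} is an irreducible, left-resolving, and predecessor-separated presentation of $\X(L)$. The same dichotomy as above drives the analysis: any occurrence of $\beta$ in an element of $\X(L)$ must be covered either by $u$ ending at $\beta$ or by $v$ starting at $\beta$, while every other letter of the alphabet appears as a singleton word in $L$ and so may be freely concatenated. This is reflected in Figure \ref{fig_entropy_lfc} by a central vertex $P_0$ carrying loops and parallel edges for the single-letter words, together with a chain of auxiliary vertices between the two $\beta$-edges that tracks the internal positions inside the long words $u$ and $v$. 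The presentation of $\X(L)$ follows by translating between biinfinite paths in the graph and partitionings of elements of $\X(L)$; irreducibility is immediate, since $P_0$ is reached from every chain vertex by completing the traversal of $u$ or $v$, and every chain vertex is reached from $P_0$ by initiating such a traversal; left-resolution is verified directly label by label, the only non-trivial case being $\beta$, whose two incoming edges terminate at distinct vertices.

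The main obstacle is verifying predecessor-separation. I would identify each vertex of Figure \ref{fig_entropy_lfc} with a concrete subset of $\X(L)^-$, of the form $P_0(L)$ for the central vertex and either $P_0(L) \cdot p$ or $P_0(L) \cup P_0(L) \cdot p$ for the chain vertices, where $p$ ranges over appropriate prefixes of $u$ and $v$. These sets must then be shown to be pairwise distinct, which reduces to a finite case analysis: the length-$(r+1)$ suffix of a left-ray, together with the partitioning constraints at any occurrence of $\beta$, determines precisely which of these sets it belongs to, and each pair of vertices can be separated by exhibiting a left-ray that lies in one predecessor set but not the other. Once these three conditions are verified, Theorem \ref{thm_lfc_char} concludes the proof.
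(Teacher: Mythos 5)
Your first step contains a genuine error. You claim that the unique partitioning of $u = \alpha\gamma_2\cdots\gamma_r\beta$ is the trivial one and conclude that $u$ is strongly right-bordering. This is false: the single letters $\alpha, \gamma_2, \ldots, \gamma_r$ all belong to $L$, so the concatenation $\alpha\cdot\gamma_2\cdots\gamma_r\cdot(\beta\tilde\alpha\gamma_2\cdots\gamma_r)$ of generating words has $u$ as a proper prefix. This gives a second minimal partitioning of $u$ in which the final $\beta$ is the \emph{first} letter of a copy of $v = \beta\tilde\alpha\gamma_2\cdots\gamma_r$, with nonempty end $\tilde\alpha\gamma_2\cdots\gamma_r$. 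Your observation that neither long word can sit strictly inside the other does not rule this out, because the obstruction comes from the single-letter words covering the prefix of $u$, not from an overlap between $u$ and $v$. Since Lemma \ref{lem_strongly_bordering_implies_modular} needs both a strongly left-bordering word and a strongly right-bordering word, your argument as written does not establish left-modularity. The repair is cheap: $u$ is indeed strongly left-bordering (the letter $\alpha$ occurs only at position $1$ of generating words), and $v$ is strongly right-bordering ($\gamma_r$ occurs at the end of $v$, as a single-letter word, or at position $r$ of $u$, and the last case is excluded for the final letter of $v$ because it would force $\tilde\alpha = \alpha$); the paper instead uses the single word $uv$, which is strongly bordering on both sides. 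Either choice makes Lemma \ref{lem_strongly_bordering_implies_modular} applicable.

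For the second part, your strategy — exhibit the predecessor set of each vertex explicitly and verify irreducibility, left-resolution, and predecessor-separation so that Theorem \ref{thm_lfc_char} applies — is the same route the paper takes, and the identifications you propose ($P_0(L)\cdot p$ for one chain, $P_0(L)\cup P_0(L)\cdot p$ for the other) are the correct ones. But the substantive work is deferred to "a finite case analysis", and that analysis is exactly where the phenomenon you overlooked in part one lives: for a right-ray of the form $\beta x^+$, the predecessor set is $P_r$ only when \emph{every} partitioning of $\beta x^+$ has beginning $\alpha\gamma_2\cdots\gamma_r$, and it collapses to $P_0$ as soon as one partitioning has empty beginning; the analogous dichotomy for right-rays beginning with a suffix of $v$ produces the second chain. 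Until that case analysis is carried out — including the verification that the graph presents no words outside $\BB(\X(L))$, which is where left-modularity of the blocks between the two chains is actually used — the second half remains a plan rather than a proof.
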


\begin{proof}
Let
\begin{equation}
\label{eq_R_L}
L = \{ \alpha, \tilde \alpha,  
 \alpha \gamma_2 \cdots \gamma_r \beta, \beta \tilde \alpha \gamma_2 \cdots \gamma_r \} \cup \{ \gamma_k \mid 2 \leq k \leq r\} \in R.
\end{equation}
The word $\alpha \gamma_2 \cdots \gamma_r \beta \beta \tilde \alpha \gamma_2 \cdots \gamma_r$ is strongly left- and right-bordering, so $L$ is left- and right-modular by Lemma \ref{lem_strongly_bordering_implies_modular}. Let $P_0 = P_0(L)$.
If $x^+ \in \X(L)^+$ does not have a suffix of a product of the generating words $\alpha \gamma_2 \cdots \gamma_r \beta$ and $\beta \tilde \alpha \gamma_2 \cdots \gamma_r$ as a prefix, then $x^+$ is strongly left-bordering, so $P_\infty(x^+) = P_0$. Hence, it is sufficient to consider right-rays that do have such a prefix.

Consider first $x^+ \in \X(L)^+$ such that $\beta x^+ \in \X(L)^+$. The letter $\beta$ must come from either $\alpha \gamma_2 \cdots \gamma_r \beta$ or $\beta \tilde \alpha \gamma_2 \cdots \gamma_r$, so the beginning of a partitioning of $\beta x^+$ must be either empty or equal to $\alpha \gamma_2 \cdots \gamma_r$.
Assume first that every partitioning of $\beta x^+$ has beginning $\alpha \gamma_2 \cdots \gamma_r$ (i.e.\ that $\tilde \alpha \gamma_2 \cdots \gamma_r$ is not a prefix of $x^+$).
In this case, $\beta x^+$ must be preceded by $\alpha \gamma_2 \cdots \gamma_r$, and the corresponding predecessor sets are: 
\begin{align}
\label{eq_entropy_predecessor_sets}
  P_\infty( \alpha \gamma_2 \cdots \gamma_r \beta x^+ ) &= P_0 \nonumber \\ 
  P_\infty( \gamma_2 \cdots \gamma_r \beta x^+ ) &= P_0 \alpha = P_1 \nonumber \\ 
  &\;\, \vdots \\
  P_\infty( \gamma_r \beta x^+ ) &= P_0\alpha \gamma_2 \cdots \gamma_{r-1}  = P_{r-1} \nonumber \\
  P_\infty( \beta x^+ ) &= P_0\alpha \gamma_2 \cdots \gamma_{r-1}\gamma_{r}  = P_{r}. \nonumber
\end{align}
Assume now that there exists a partitioning of  $\beta x^+$ with empty beginning (e.g.\  $x^+ = \beta \tilde \alpha \gamma_2 \cdots \gamma_r^\infty$).
The first word used in such a partitioning must be $\beta \tilde \alpha \gamma_2 \cdots \gamma_r$. Replacing this word by the concatenation of the generating words $\alpha \gamma_2 \cdots \gamma_r \beta$, $\tilde \alpha, \gamma_2, \ldots, \gamma_r$ creates a partitioning of $\beta x^+$ with beginning $\alpha \gamma_2 \cdots \gamma_r $, so in this case, the predecessor sets are
\begin{align*}
P_\infty( \alpha \gamma_2 \cdots \gamma_r \beta x^+ ) &= P_0 \\
P_\infty( \gamma_2 \cdots \gamma_r \beta x^+ ) &= P_0 \cup P_0\alpha = P_0\\
 &\;\, \vdots  \\
 P_\infty(  \gamma_r \beta x^+) &= P_0 \cup P_0\alpha \gamma_2 \cdots \gamma_{r-1}  = P_0 \\ 
P_\infty(  \beta x^+) &= P_0 \cup P_0\alpha \gamma_2 \cdots \gamma_{r-1}\gamma_{r}  = P_0.
\end{align*}
The argument above proves that there are no right-rays such that every partitioning of $\beta x^+$ has empty beginning.

It only remains to investigate right-rays that  have a suffix of $\beta \tilde \alpha \gamma_2 \cdots \gamma_r$ as a prefix.
A partitioning of a right-ray $\gamma_r x^+$ may have empty beginning (e.g.\  $x^+ = \gamma_r^\infty$), beginning $\alpha \gamma_2 \cdots \gamma_{r-1}$ (e.g.\ $x^+ =  \beta \beta \tilde \alpha \gamma_2 \cdots \gamma_r \cdots$ or $x^+ = \beta \tilde \alpha \gamma_2 \cdots \gamma_r^\infty$), or beginning $\beta \tilde \alpha \gamma_2 \cdots \gamma_{r-1}$ (e.g.\  $x^+ = \gamma_r^\infty)$. Note that there is a partitioning with empty beginning if and only if there is a partitioning with beginning $\beta \tilde \alpha \gamma_2 \cdots \gamma_{r-1}$.  
If there exists a partitioning of $\gamma_r x^+$ with beginning $\alpha \gamma_2 \cdots \gamma_{r-1}$, then $\beta$ must be a prefix of $x^+$, so the right-ray $\gamma_r x^+$ has already been considered above.
Hence, it suffices to consider the case where there exists a partitioning of $\gamma_r x^+$ with empty beginning and a partitioning with beginning $\beta \tilde \alpha \gamma_2 \cdots \gamma_{r-1}$ but no partitioning with beginning $\alpha \gamma_2 \cdots \gamma_{r-1}$. In this case, the predecessor sets are 
\begin{align*}
  P_\infty( \gamma_{r} x^+ ) &= P_0 \cup P_0\beta \tilde \alpha \gamma_2 \cdots \gamma_{r-1} = P_{2r}\\
&\;\,\vdots \\
P_\infty( \gamma_2 \cdots \gamma_{r} x^+ ) &= P_0 \cup P_0\beta \tilde \alpha = P_{r+2}\\
 P_\infty( \tilde \alpha \gamma_2 \cdots \gamma_{r} x^+ ) &= P_0 \cup P_0\beta = P_{r+1}\\
P_\infty( \beta \tilde \alpha \gamma_2 \cdots \gamma_{r} x^+ ) &= P_0 
  \cup P_0\alpha \gamma_2 \cdots \gamma_{r} = P_0.
\end{align*}

%
%
%
%

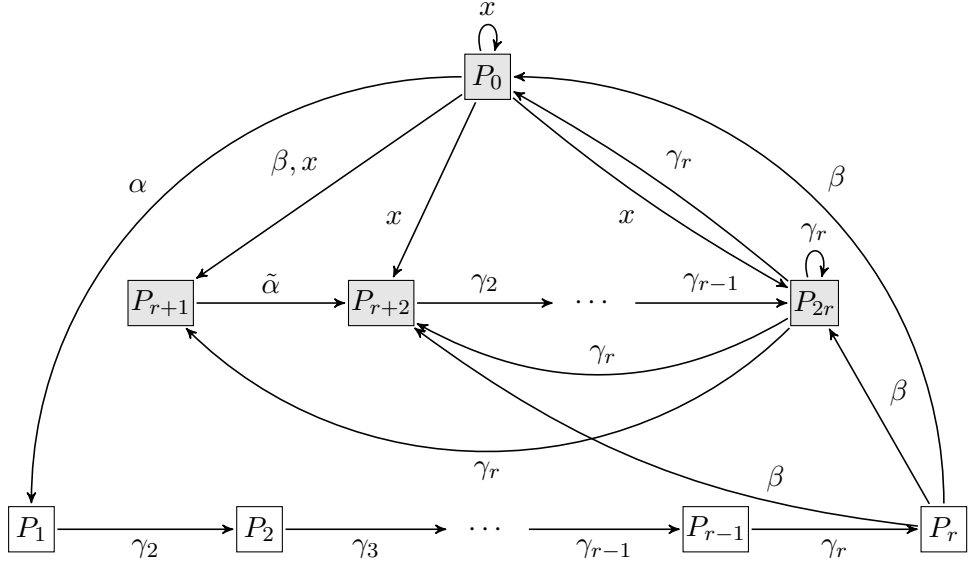
\begin{figure}
\begin{center}
\begin{tikzpicture}
  [bend angle=5,
   clearRound/.style = {circle, inner sep = 0pt, minimum size = 17mm},
   clear/.style = {rectangle, minimum width = 10 mm, minimum height = 6 mm, inner sep = 0pt},  
   greyRound/.style = {circle, draw, minimum size = 1 mm, inner sep =
      0pt, fill=black!10},
  grey/.style = {rectangle, draw, minimum size = 6 mm, inner sep =
    1pt, fill=black!10},
  white/.style = {rectangle, draw, minimum size = 6 mm, inner sep =
    1pt},
   to/.style = {->, shorten <= 1 pt, shorten >= 1 pt, >=stealth', semithick}]
  
 \node[grey] (P0) at (0,0) {$P_0$};

 \node[white] (P1) at (-6,-6) {$P_1$};
 \node[white] (P2) at (-3,-6) {$P_2$};
 \node[clear] (dots1) at (0,-6) {$\cdots$};
  \node[white] (Pr-1) at (3,-6) {$P_{r-1}$};
 \node[white] (Pr) at (6,-6) {$P_r$};

 \node[grey] (Pr+1) at (-4.3,-3) {$P_{r+1}$};
 \node[grey] (Pr+2) at (-1.4,-3) {$P_{r+2}$};
 \node[clear] (dots2) at (1.4,-3) {$\cdots$};
 \node[grey] (P2r) at (4.3,-3) {$P_{2r}$};

  \draw[to, loop above] (P0) to node[auto] {$x$} (P0);

  \draw[to, bend right=45] (P0) to node[auto,swap] {$\alpha$} (P1);
  \draw[to] (P1) to node[auto,swap] {$\gamma_2$} (P2);
  \draw[to] (P2) to node[auto,swap] {$\gamma_3$} (dots1);
  \draw[to] (dots1) to node[auto,swap] {$\gamma_{r-1}$} (Pr-1);
  \draw[to] (Pr-1) to node[auto,swap] {$\gamma_{r}$} (Pr);  
  \draw[to,bend right=45] (Pr) to node[auto,swap] {$\beta$} (P0);
 
  \draw[to, bend left=15] (Pr) to node[near start, above] {$\beta$} (Pr+2);
  \draw[to] (Pr) to node[auto,swap] {$\beta$} (P2r);
  
   \draw[to] (P0) to node[auto,swap] {$\beta,x$} (Pr+1);   
  \draw[to] (Pr+1) to node[auto] {$\tilde \alpha$} (Pr+2);
  \draw[to]  (Pr+2) to node[auto] {$\gamma_{2}$} (dots2); 
  \draw[to] (dots2) to node[auto] {$\gamma_{r-1}$} (P2r); 
  \draw[to,bend right] (P2r) to node[auto,swap] {$\gamma_{r}$} (P0);
  \draw[to,loop above] (P2r) to node[auto] {$\gamma_{r}$} (P2r); 
  \draw[to,bend left=45] (P2r) to node[auto] {$\gamma_{r}$} (Pr+1);
  \draw[to,bend left=30] (P2r) to node[auto,swap] {$\gamma_{r}$} (Pr+2);   

  \draw[to] (P0) to node[auto, swap, near end] {$x$} (Pr+2);
  \draw[to,bend right] (P0) to node[auto,swap] {$x$} (P2r);
    
\end{tikzpicture}
\end{center}
\caption[Building blocks for achieving entropies.]{Left Fischer cover of $\X(L)$ for $L$ defined in Equation \ref{eq_R_L}.
An edge labelled $x$ from a vertex $P$ to a vertex $Q$ represents a collection of edges from $P$ to $Q$ such that $Q$ receives an edge with each label from the set 
$\bigcup_{2\leq j \leq r}  \{ \gamma_j  \} \cup \{  \alpha, \tilde \alpha \}$, i.e.\ the collection fills the gaps left by the edges which are labelled explicitly. The border points are coloured grey.} 
\label{fig_entropy_lfc}
\end{figure}

Now all right-rays have been investigated, so there are exactly $2r +1$ vertices in the left Krieger cover of $\X(L)$. The vertex $P_0$ is the universal border point, and the vertices $P_{r+1}, \ldots, P_{2r}$ are border points, while none of the vertices $P_1, \ldots, P_r$ are border points. 

The equations above give the information needed to draw the left Fischer cover which is shown in Figure \ref{fig_entropy_lfc}. 
To check that this is indeed the left Fischer cover, note that it is irreducible and left-resolving, and that every edge can be inferred from the equations above.
To see that there are no edges missing, note that the vertices $P_0, P_{r+1}, \ldots, P_{2r}$ all receive edges with every letter in the alphabet, while for each $1 \leq i \leq r$, the vertex $P_i$ receives exactly the edges it should according to Equation \ref{eq_entropy_predecessor_sets}.
\end{proof}

\noindent In \cite{hong_shin} it is proved that all renewal systems in the class $B$ are SFTs. That proof will also work for the related class $R$ considered here, but the following lemma shows that the result follows easily from the structure of the left Fischer cover constructed above.

\begin{lem}
\label{lem_entropy_sft}
For each $L \in R$, $\X(L)$ is an SFT.
\end{lem}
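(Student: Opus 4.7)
My plan is to deduce this from the explicit description of the left Fischer cover $(F,\LL_F)$ of $\X(L)$ provided in Lemma \ref{lem_entropy_lfc} (Figure \ref{fig_entropy_lfc}) together with Corollary \ref{cor_lfc_conj}: since $(F,\LL_F)$ is by construction left-resolving, I only need to check that the induced covering map $\LL_\infty \colon \X_F \to \X(L)$ is injective, for then it is a conjugacy and $\X(L)$ is an SFT.

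The first step is to observe that I can instead verify the finite-type condition directly via Proposition \ref{prop_extendable_and_sft}. In the proof of Lemma \ref{lem_entropy_lfc} it was already noted that the word $w^\ast = \alpha\gamma_2\cdots\gamma_r\beta\beta\tilde\alpha\gamma_2\cdots\gamma_r$ lies in $L^\ast$ and is strongly left-bordering (and strongly right-bordering). By Proposition \ref{prop_extendable_and_sft} it therefore suffices to exhibit some $n$ such that every $w \in \BB_n(\X(L))$ is left-extendable, and then $\X(L)$ is an $n$-step SFT.

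I would take $n = N := 2|w^\ast|$, so that any $w \in \BB_n(\X(L))$ properly contains at least one full copy of each of the generating words inside its minimal partitionings. The key point is that the shape of a minimal partitioning $(n_b,[g_1,\ldots,g_k],n)$ of $w$ is determined only by the end segment $w_{[m, n]}$ (for some $m$ bounded by $|w^\ast|$), because $\alpha\gamma_2\cdots\gamma_r\beta$ and $\beta\tilde\alpha\gamma_2\cdots\gamma_r$ are the only long words in $L$ and they share only the short letters $\alpha,\tilde\alpha,\beta,\gamma_2,\ldots,\gamma_r$ as factors. So given a minimal partitioning $p$ of $w$ with beginning $b_p$, and a letter $a$ with $aw\in\BB(\X(L))$, I can use the edge $a$ in $(F,\LL_F)$ terminating at the appropriate predecessor set (read off Figure \ref{fig_entropy_lfc}) to pull back $p$ to a partitioning of $aw$ having the same end as $p$: essentially, the Fischer cover describes which concatenations of generating words agree with a given suffix, so the pull-back step reduces to a finite inspection of the transitions into $P_0,P_1,\ldots,P_{2r}$.

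The routine but slightly tedious part will be the bookkeeping in this last step — explicitly enumerating, for each of the $2r+1$ predecessor sets, how incoming edges in Figure \ref{fig_entropy_lfc} correspond to left-extensions of the partitionings of $w$ described in the analysis in Lemma \ref{lem_entropy_lfc}. The main obstacle is that partitionings of a word containing $\beta$ may begin either with the empty word or with $\alpha\gamma_2\cdots\gamma_r$, and analogously near occurrences of $\beta\tilde\alpha\gamma_2\cdots\gamma_r$, so I must keep track of both possibilities simultaneously; once $|w|\ge N$, however, the Fischer cover shows that both options remain available after prepending any admissible letter, and left-extendability follows.
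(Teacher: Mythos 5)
Your two reductions are individually sound: the covering map of a left-resolving Fischer cover is a conjugacy iff the shift is an SFT (Corollary \ref{cor_lfc_conj}), and since $w^\ast=\alpha\gamma_2\cdots\gamma_r\beta\beta\tilde\alpha\gamma_2\cdots\gamma_r$ is strongly left-bordering, Proposition \ref{prop_extendable_and_sft} reduces the problem to showing that all words of some fixed length are left-extendable. This second route is genuinely different from the paper's, which stays with your first plan: it takes two biinfinite paths in the graph of Figure \ref{fig_entropy_lfc} with the same label $x$, observes that the source vertex is forced at certain positions (eventually $P_0$ if $\gamma_r$ occurs only finitely often to the right; otherwise $P_{2r}$, or $P_0$ at position $i-r$, at each occurrence $x_i=\gamma_r$), and lets left-resolving propagate the agreement leftward from positions arbitrarily far to the right. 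That argument is a few lines once Lemma \ref{lem_entropy_lfc} is available.

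As written, though, your proposal has a genuine gap: the left-extendability of all words of length $N$ is asserted rather than proved, and the two claims offered in support do not hold. First, the "shape" of a minimal partitioning is not determined by an end segment of $w$: the word $\beta\tilde\alpha\gamma_2\cdots\gamma_r$ already has two minimal partitionings, one with empty beginning (using the generator $\beta\tilde\alpha\gamma_2\cdots\gamma_r$) and one with beginning $\alpha\gamma_2\cdots\gamma_r$ (using $\alpha\gamma_2\cdots\gamma_r\beta$ followed by single-letter generators), and both persist under arbitrary right extensions. Second, the Fischer cover cannot be used to "pull back" partitionings: its vertices are predecessor sets, and distinct partitionings of the same ray determine the same vertex, so Figure \ref{fig_entropy_lfc} carries no information about which concatenations of generators realize a given left-extension — this is exactly the deficiency that border points (Definition \ref{def_border_point}) are introduced to repair. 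To close the gap you would have to redo, for finite words, the partitioning case analysis from the proof of Lemma \ref{lem_entropy_lfc} — in particular the case of a minimal partitioning whose first generator is a long word cut after $\alpha\gamma_2\cdots\gamma_j$, prepended by an admissible letter other than $\gamma_j$ — and you would also need to justify the choice $N=2\lvert w^\ast\rvert$, which is nowhere argued. The target statement is true (indeed, once the shift is known to be an SFT, Proposition \ref{prop_extendable_and_sft} guarantees some $N$ works), but in its current form the proposal presupposes the conclusion rather than establishing it.
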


\begin{proof}
Let $L$ be defined as in Equation \ref{eq_R_L}. By Corollary \ref{cor_lfc_conj}, $\X(L)$ is an SFT if and only if the covering map of the left Fischer cover $(F, \LL)$ is injective. Assume that $\lambda, \mu$ are biinfinite paths in $F$ such that $\LL(\lambda) = \LL(\mu) = x \in X(L)$.
If there is an upper bound $l$ on the set $\{ i \in \Z \mid x_i = \gamma_r \}$ then $s(\lambda_j) = s(\mu_j) = P_0$ for all $j > l$. Since $(F, \LL)$ is left-resolving, this implies that $\lambda = \mu$.

Assume now that there is no upper bound on the set.
If $x_i = \gamma_r$, then $s(\lambda_i) = s(\mu_i) = P_{2r}$
unless $x_{i-r}\cdots x_{i-1} = \alpha \gamma_2 \ldots \gamma_{r-1}$, in which case $s(\lambda_{i-r}) = s(\mu_{i-r}) = P_0$. The left Fischer cover is left-resolving, so either way it follows that $s(\lambda_{j}) = s(\mu_{j})$ for all $j \leq i-r$. By assumption, $i$ can be arbitrarily large, so $\lambda = \mu$.
\end{proof}

\begin{lem}
\label{lem_entropy_cyclic}
Let $L \in R$ and let $X_f$ be a renewal system obtained from $\X(L)$ by fragmentation. Then the Bowen-Franks group of $X_f$ is cyclic, and the determinant is given by Equation \ref{eq_det_R}.
\end{lem}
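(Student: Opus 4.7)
The plan is to combine the structural results already established with the effect of fragmentation on adjacency matrices. First, Lemma \ref{lem_entropy_sft} shows that $\X(L)$ is an SFT, so by Corollary \ref{cor_lfc_conj} it is conjugate to the edge shift of the underlying graph of the left Fischer cover described in Lemma \ref{lem_entropy_lfc}. By Remark \ref{rem_fragmentation}, the renewal system $X_f$ obtained by fragmentation is then conjugate to the edge shift whose adjacency matrix $A$ is obtained from the symbolic adjacency matrix of Figure \ref{fig_entropy_lfc} by substituting the fragmentation multiplicities $a = f(\alpha)$, $\tilde{a} = f(\tilde\alpha)$, $b = f(\beta)$, and $c_k = f(\gamma_k)$ for $2 \leq k \leq r$ (taking $f(\sigma) = 1$ for any unfragmented symbol). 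Thus $\BF_+(X_f)$ is determined by the Smith normal form and sign of $\det(\Id - A)$.

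The main computation is then to show that $\Id - A$ reduces over $\Z$ to a diagonal matrix of the form $\diag(1, \ldots, 1, d)$, where $d$ is the determinant. I would carry this out by exploiting the two ``chain'' subgraphs $P_1 \to P_2 \to \cdots \to P_r$ and $P_{r+1} \to P_{r+2} \to \cdots \to P_{2r}$, each of which contributes a block of the form $\Id - N$ with $N$ strictly upper triangular. Using the rows indexed by $P_1, \ldots, P_{r-1}$ (respectively $P_{r+1}, \ldots, P_{2r-1}$) to clear the columns along each chain and, dually, using column operations to clear the rows of the ``return'' edges $P_r \to P_0$, $P_r \to P_{r+k}$, $P_{2r} \to P_0$, $P_{2r} \to P_{r+k}$, the block associated with each chain collapses to an identity block, and all remaining non-trivial contributions are pushed into the three rows and columns indexed by $P_0$, $P_r$, and $P_{2r}$.

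Once the chains are eliminated, only a small residual $3 \times 3$ (or smaller, after further reduction) submatrix remains, whose single invariant factor is $\det(\Id - A)$. The fact that the Smith normal form has only one non-trivial diagonal entry is precisely the statement that $\BF(X_f)$ is cyclic, and the value of this entry gives the order; expanded as a polynomial in $a, \tilde{a}, b, c_2, \ldots, c_r$, it yields Equation \ref{eq_det_R}.

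The hard part will be the bookkeeping: tracking the many edges labelled $x$ from $P_0$ and $P_{2r}$ through the sequence of row and column operations, and ensuring that the cumulative substitutions along the two chains produce the correct polynomial expression in the fragmentation variables. A cleaner alternative, which I would pursue if the direct reduction becomes unwieldy, is to compute $\det(\Id - A)$ by cofactor expansion along the sparse rows of the chain vertices, and separately to verify cyclicity by exhibiting explicit column operations that reduce $\Id - A$ to a matrix with a single non-unit entry on the diagonal; this factors the problem into an algebraic computation and a purely combinatorial one, avoiding the need to maintain a full Smith normal form throughout.
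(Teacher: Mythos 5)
Your proposal is correct and follows essentially the same route as the paper: identify $X_f$ with the edge shift of the underlying graph of the Fischer cover from Lemma \ref{lem_entropy_lfc}, substitute the fragmentation multiplicities into the symbolic adjacency matrix via Remark \ref{rem_fragmentation}, and reduce $\Id - A_f$ by row and column operations that collapse the two chains $P_1 \to \cdots \to P_r$ and $P_{r+1} \to \cdots \to P_{2r}$, leaving a single non-unit invariant factor. The paper's proof carries out exactly this reduction (adding rows $r+1$ through $2r-1$ to the row of $P_0$ and clearing the border-point columns), arriving at the matrix in Equation \ref{eq_entropy_block} whose Smith normal form is visibly $\diag(1,\ldots,1,d)$ with $d$ as in Equation \ref{eq_det_R}.
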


\begin{proof}
Let $L \in R$ be defined by Equation \ref{eq_R_L}. The symbolic adjacency matrix of the left Fischer cover of $\X(L)$ (shown in Figure \ref{fig_entropy_lfc}) is 
\begin{displaymath}
A = \left( \begin{array}{c | c c c c c  | c c c c c c}
  \gamma & \alpha & 0 & \cdots  & 0 & 0 & \gamma+\beta & \tilde \alpha' & \gamma'_2 & \cdots  & \gamma'_{r-2} & \gamma'_{r-1} \\
  \hline
  0         & 0         & \gamma_2 & \cdots & 0 & 0    & & & & & &\\
  0         & 0         & 0     &            & 0 & 0    & & & & & &\\
  \vdots & \vdots &        & \ddots &    & \vdots      & & & & 0 & &\\ 
  0         & 0         & 0     &            & 0 &  \gamma_r & & & & & &\\ 
  \beta         & 0         & 0     & \cdots & 0 & 0    & 0  & \beta & \beta & \cdots & \beta & \beta \\   
  \hline
  0         & & &    & & & 0 &  \tilde \alpha & 0     & \cdots & 0 & 0 \\
  0         & & &    & & & 0 & 0     &  \gamma_2 &            & 0 & 0 \\
  0         & & &    & & & 0 & 0     & 0     &            & 0 & 0 \\
  \vdots & & & 0 & & & \vdots   &        &        & \ddots &    & \vdots   \\
  0         & & &    & & & 0 & 0     & 0     &            & 0 &  \gamma_{r-1} \\
   \gamma_r      & & &    & & &  \gamma_r &  \gamma_r     &  \gamma_r     & \cdots &  \gamma_r &  \gamma_r \\   
 \end{array} \right),
 \end{displaymath} 
where $ \gamma = \alpha+\tilde \alpha+ \sum_{k=2}^{r-1}  \gamma_k$, $\tilde \alpha' =  \gamma -  \tilde \alpha$,  and $\gamma'_k =  \gamma -  \gamma_k$. Index the rows and columns of $A$ by $0,\ldots, 2r$ in correspondence with the names used for the vertices above, and note that the column sums of the columns $0, r+1, \ldots, 2r$ are all equal to $\alpha+\tilde\alpha +\beta+ \sum_{k=2}^{r}  \gamma_k$.

If $X_f$ is a fragmentation of $\X(L)$, then the (non-symbolic) adjacency matrix $A_f$ of the underlying graph of the left Fischer cover of $X_f$ is obtained from $A$ by replacing $\alpha, \tilde \alpha, \beta, \gamma_2, \ldots , \gamma_r$ by positive integers (see Remark \ref{rem_fragmentation}).
To put $\Id - A_f$ into Smith normal form, begin by adding each row from number $r+1$ to $2r-1$ to the first row, and subtract the first column from column $r+1, \ldots, 2r$ to obtain
\begin{multline*}
\Id - A_f \rightsquigarrow \\
\left( \begin{array}{c | c c c c c  | c c  c c c}
  1- \gamma & - \alpha & 0 & \cdots  & 0 & 0 & -\beta & 0 & \cdots  & 0 & -1 \\
  \hline
  0         & 1         & -\gamma_2 & \cdots & 0 & 0    & & & & &\\
  0         & 0         & 1     &              & 0 & 0    & & & & &\\
  \vdots & \vdots &        & \ddots &    & \vdots      & &  & 0 & &\\ 
  0         & 0         & 0     &            & 1 & - \gamma_r &  & & & &\\ 
  -\beta         & 0         & 0     & \cdots & 0 & 1    & \beta  & 0 & \cdots & 0 & 0\\   
  \hline
  0         & & &    & & & 1 & -\tilde \alpha & \cdots & 0 & 0 \\
  0         & & &    & & & 0 & 1     &     & 0 & 0 \\
  \vdots & & & 0 & & & \vdots   &         & \ddots &    & \vdots   \\
  0         & & &    & & & 0 & 0         &            & 1 & - \gamma_{r-1} \\
  - \gamma_r      & & &    & & & 0 & 0         & \cdots & 0 &  1 \\   
 \end{array} \right).
 \end{multline*} 
Using row and column addition, this matrix can be further reduced to 
\begin{equation}
\label{eq_entropy_block}
\left( \begin{array}{c | c c c   |   c c c}
  1- \gamma - b & 0 &  \cdots   & 0 & 0 & \cdots  &  t \\
  \hline
  0         & 1         &   \cdots  & 0             &  & &\\
  \vdots & \vdots &   \ddots  & \vdots      & &  0 &\\ 
  0         & 0         &   \cdots  & 1             &   &   & \\   
  \hline
  0         & &    &    & 1          & \cdots   & 0 \\
  \vdots & & 0 &    & \vdots  &  \ddots   & \vdots   \\
  -\gamma_r         & &    &    &  0         &  \cdots   &  1 \\   
 \end{array} \right)
 \begin{array}{l}
\\
 b = \alpha \beta \gamma_2 \cdots \gamma_r \\
 \\
 t = \tilde \alpha \gamma_2 \cdots \gamma_{r-1} (b - \beta) -1. \\
 \\
 \end{array}
\end{equation}
Hence, the Bowen-Franks group of $X_f$ is cyclic, and the determinant is 
\begin{multline} \label{eq_det_R}
\det(\Id - A) = 1- \gamma - b + \tilde \alpha \gamma_2 \cdots \gamma_r (b - \beta) - \gamma_r \\
                   = 1 - \alpha - \tilde \alpha - \sum_{k=2}^r \gamma_k - (\alpha+\tilde \alpha) \beta \gamma_2 \cdots \gamma_r  + \alpha \tilde \alpha \beta(\gamma_2 \cdots \gamma_r)^2.   
\end{multline}
\end{proof}

\noindent
The number in Equation \ref{eq_det_R} may be either positive or negative depending on the values of the variables. In the following section, this will be used to construct a class of renewal systems for which the determinants attain all values in $\Z$.

Now it possible to classify the renewal systems considered in \cite{hong_shin} up to flow equivalence:

\begin{thm}
For each $L \in H$, the renewal system $\X(L)$ has cyclic Bowen-Franks group and determinant given by Equation \ref{eq_entropy_det}. \label{thm_H_classification}
\end{thm}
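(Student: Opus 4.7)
The plan is to assemble the classification from the per-block analysis already developed for $R$ together with the additivity theorem for modular renewal systems. First, apply Lemma \ref{lem_entropy_fe} to write $\X(L)$, up to flow equivalence, as a fragmentation of $\X(\bigcup_{j=0}^{m} L_j)$, where $L_0 = \{a\}$ and $L_1, \ldots, L_m \in R$ have pairwise disjoint alphabets. Since $\BF_+$ is flow invariant and since fragmentation commutes with addition on disjoint alphabets (Remark \ref{rem_sum_frag_commute}), it is enough to compute $\BF_+$ for an arbitrary fragmentation of this additive combination.

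Next, by Lemma \ref{lem_entropy_lfc} each $L_j$ ($j \geq 1$) is left-modular with the explicit Fischer cover drawn in Figure \ref{fig_entropy_lfc}, and $L_0$ is trivially left-modular. Iterating Proposition \ref{prop_addition_modular} then yields the Fischer cover $(F_+, \LL_+)$ of $\X(\bigcup_{j=0}^{m} L_j)$: the universal border points $P_0^{(j)}$ collapse to a single vertex $P_+$, the remaining $2r_j$ vertices of each block attach to $P_+$ exactly as in the single-block picture, and every edge originally ending at some $P_0^{(j)}$ is duplicated so that it also ends at each non-universal border point of each other block. By Corollary \ref{cor_lfc_conj} the shift is conjugate to the edge shift of this graph, and by Remark \ref{rem_fragmentation} the fragmented renewal system has adjacency matrix obtained from the symbolic one by replacing each letter by its fragmentation multiplicity.

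Finally, reduce $\Id - A_f$ to Smith normal form. Order the vertices with $P_+$ first, followed by the internal vertices and non-universal border points of each block in turn. Inside each block perform the row and column operations used in the proof of Lemma \ref{lem_entropy_cyclic}: these reduce the internal rows and columns to the identity pattern of Equation \ref{eq_entropy_block}, concentrating every non-trivial contribution in the $P_+$ row and column. Summing these contributions over $j$ and across the non-universal border points of all other blocks produces an integer $d$ in the $(P_+, P_+)$ position, so that $\Id - A_f$ is equivalent over $\Z$ to $\diag(1, \ldots, 1, d)$. This forces the Bowen-Franks group of $\X(L)$ to be cyclic, with $d = \det(\Id - A_f)$ given by the closed form recorded in Equation \ref{eq_entropy_det} (a sum over the blocks of per-block contributions of the shape of Equation \ref{eq_det_R}, corrected by the fragmentation multiplicities and by the cross-block edges).

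The main obstacle is the book-keeping of the cross-block edges introduced by Proposition \ref{prop_addition_modular}. Every edge that previously terminated at $P_0^{(j)}$ now also terminates at the $r_k$ non-universal border points of every other block $k$, so the rows indexed by these border points pick up extra entries when the blocks are assembled. One has to check that after performing the block-internal reduction the residual contributions collect cleanly at a single entry in position $(P_+, P_+)$ rather than dispersing across the matrix; this relies on modularity (so that none of the added edges create new partitionings of allowed words) and on the fact that, within every block, exactly the same vertex $P_+$ plays the role of the universal border point. Once this check is carried out, the explicit form of $d$ follows by tracking which of the per-block invariants from Equation \ref{eq_det_R} are altered by the fragmentation.
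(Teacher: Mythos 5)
Your proposal is correct and follows essentially the same route as the paper: decompose via Lemma \ref{lem_entropy_fe}, assemble the Fischer cover with Proposition \ref{prop_addition_modular}, fragment, and then reduce $\Id - A_f$ by block-internal operations plus the observation that the cross-block edges are copies of the edges into the common universal border point, so they are eliminated by subtracting the $P_+$ column from the border-point columns. The paper's reduction lands on an $(m+1)\times(m+1)$ matrix whose lower-right block is the identity rather than directly on $\diag(1,\dots,1,d)$, but this is the same conclusion one further trivial step away.
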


\begin{proof}
By Lemma \ref{lem_entropy_fe}, there exist $L_1, \ldots , L_m \in R$, $L_0 = \{a \}$ for some letter $a$ that does not appear in any of the lists,  and a fragmentation $Y_f$ of $Y = \X( \bigcup_{j=0}^m L_j )$ such that $Y_f \FE \X(L)$. For $1 \leq j \leq m$, let 
\begin{displaymath}
  L_j = \{ \alpha_j, \tilde \alpha_j, \gamma_{j,k}, 
 \alpha_j \gamma_{j,2} \cdots \gamma_{j,r_j} \beta_j, \beta_j \tilde \alpha_j \gamma_{j,2} \cdots \gamma_{j,r_j} \mid 2 \leq k \leq r_j\} , r_j \in \N.
\end{displaymath}
Each $L_j$ is left-modular by Lemma \ref{lem_entropy_sft},
so $Y$ is an SFT, and the left Fischer cover of $Y$ can be constructed using the technique from Section \ref{sec_rs_add}: Identify the universal border points in the left Fischer covers of $\X(L_0), \ldots , \X(L_m)$, and draw additional edges to the border points corresponding to the edges terminating at the universal border points in the individual left Fischer covers. Hence, the symbolic adjacency matrix $A$ of the left Fischer cover of $Y$ is 
\begin{align*}
&A = \\
&\left( \!\!\! \begin{array}{c | c | c c c c  | c c c c | c | c }
  \gamma &             & \alpha_j & 0 & \cdots  & 0 &  \gamma+\beta_ j & \tilde \alpha'_{ j} & \cdots   & \gamma'_{ j,{r_ j-1}}  & \cdots & \gamma'_{i,k}\\
   \hline
                &\ddots    &             & & & & & & & & &  \\
   \hline
  0            &              & 0           & \gamma_{ j,2} & \cdots  & 0    & & & & & &\\
  0            &              & 0           & 0     &                & 0    & & & & & & \\
  \vdots    &              & \vdots   &        & \ddots      & \vdots     & &  & 0 & & &\\ 
  0            &              & 0           & 0     &                & \gamma_{ j,r} & & & &  & & \\ 
  \beta_ j  &              & 0           & 0     & \cdots  & 0    & 0  & \beta_ j & \cdots & \beta_ j & & \beta_ j \\   
  \hline
  0           &               &             &    & & & 0 & \tilde \alpha_j &  \cdots & 0 &  &\\
  0           &               &             &    & & & 0 & 0     &             & 0 & & \\
  \vdots   &               &             & & 0 & & \vdots   &        &      \ddots &   \vdots   & & \\
  0           &               &             &    & & & 0 & 0     &             &  \gamma_{ j,{r_ j-1}}  & &\\
  \gamma_{ j,r_ j}  & &             &    & & & \gamma_{ j,r_ j} &  \gamma_{ j,r_ j}  & \cdots &  \gamma_{ j,r_ j}  & & \gamma_{ j,r_ j}\\  
  \hline
   &                           & &    & & &  &     &  & &  \ddots & \\     
  \hline 
   &                           & &    & & &  &     &  & &   & \ddots      
\end{array} \!\!\! \right).
\end{align*} 
where $1 \leq j \leq m$,
\begin{align*}
\gamma &= a + \sum_{j=1}^m \Big( \alpha_j+\tilde\alpha_j + \sum_{k=2}^{r_j-1} \gamma_{j,k} \Big), \\
\tilde \alpha_j' &= \gamma - \tilde \alpha_j, \textrm{ and } \\
\gamma_{j,k}' &= \gamma - \gamma_{j,k}.
\end{align*}
This matrix has blocks of the same form as in the $m=1$ case considered in Lemma \ref{lem_entropy_sft}. The $j$th block is shown together with the first row and column of the matrix -- which contain the connections between the $j$th block and the universal border point $P_0$ -- and together with an extra column representing an arbitrary border point in a different block. Such a border point in another block will receive edges from the $j$th block with the same sources and labels as the edges that start in the $j$th block and terminate at the universal border point $P_0$. 

Let $Y_f$ be a fragmentation of $Y$. Then the (non-symbolic) adjacency matrix $A_f$ of the underlying graph of the left Fischer cover of $Y_f$ is obtained by replacing the entries of $A$ by non-negative integers. The entry $a$ can also be replaced by $0$. 
In order to put $\Id - A_f$ into Smith normal form, first add rows $r_j+1$ to $2r_j-1$
in the $j$th block to the first row for each $j$, and then subtract the first column from every column corresponding to a border point in any block. This will remove the entries corresponding to edges between the individual blocks. More precisely, $\Id-A_f$ is transformed into the matrix:
\begin{displaymath}
\left(\!\!\! \begin{array}{c | c | c c c c  | c c c c | c  }
  1-\gamma &    & -\alpha_j & 0 & \cdots  & 0 &  -\beta_j & 0 & \cdots   & -1  & \\
  \hline
   &\!\ddots\!       &         &  &  &   & & & & &  \\
   \hline
  0  &       & 1         & -\gamma_{j,2} & \cdots  & 0    & & & & & \\
  0   &      & 0         & 1     &                & 0    & & & & &  \\
  \vdots& & \vdots &        & \ddots      & \vdots     & &  & 0 & & \\ 
  0  &       & 0         & 0     &                & -\gamma_{j,r_j} & & & &  &  \\ 
  -\beta_j  &       & 0         & 0     & \cdots  & 1    & \beta_j  & 0 & \cdots & 0 &  \\   
  \hline
  0   &    & &    & & & 1 & -\tilde \alpha_j &  \cdots & 0 &  \\
  0    &   & &    & & & 0 & 1     &             & 0 &  \\
  \vdots & & & & 0 & & \vdots   &        &      \ddots &   \vdots   &  \\
  0   &    & &    & & & 0 & 0     &             &  -\gamma_{j,{r_j-1}}  & \\
  -\gamma_{j,r_j}  & & &    & & & 0 & 0 & \cdots &  1  & \\  
  \hline
   &     & &    & & &  &     &  & & \! \ddots  \\     
 \end{array} \!\!\! \right)
 \end{displaymath} 
By using row and column addition, and by disregarding rows and columns where the only non-zero entry is a diagonal $1$, $\Id - A$ can be further reduced to 
\begin{displaymath}
\left( \begin{array}{c c c c c c}
S                           & t_1  & t_2  & \cdots & t_m          \\
-\gamma_{1,r_1}   & 1     & 0     &            &  0               \\
-\gamma_{2,r_2}   & 0     & 1     &            &  0               \\
 \vdots                   &        &        & \ddots &  \vdots       \\
-\gamma_{m,r_m} & 0     & 0     & \cdots &  1 \\
\end{array}\right)
\quad \begin{array}{l}
b_j = \alpha_j \beta_j \gamma_{j,2} \cdots \gamma_{j,r_j} \\ \\
t_j = \tilde \alpha_j \gamma_{j,2} \cdots \gamma_{j,r-1} (b_j-\beta_j) -1 \\ \\
S = 1 - \gamma - \sum_{j=1}^m b_j
\end{array}
.
\end{displaymath}
Hence, the Bowen-Franks group is cyclic and the determinant is 
\begin{equation}
\label{eq_entropy_det}
\det( \Id - A_f ) = 1 - \gamma + \sum_{j=1}^m \left(  \gamma_{j,r_j} t_j  - b_j \right). 
\end{equation}
\end{proof}

\noindent With Theorem \ref{thm_hs}, this gives the following result.

\begin{cor}
When $\log \lambda$ is the entropy of an SFT, there exists an SFT renewal system $X(L)$  with cyclic Bowen-Franks group such that $h(\X(L)) = \log \lambda$.
\end{cor}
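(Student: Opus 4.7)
The corollary is an immediate synthesis of the two main results developed in this section, so the plan is simply to combine them. First, I would apply Theorem \ref{thm_hs} to the weak Perron number $\lambda$ (which must be a weak Perron number since $\log \lambda$ is the entropy of an SFT, by the characterisation recalled in Section \ref{sec_conjugacy_ivariants}) to obtain a generating list $L \in H$ such that $\X(L)$ is an SFT with $h(\X(L)) = \log \lambda$.

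Next, I would invoke Theorem \ref{thm_H_classification} directly on this same $L$ to conclude that the Bowen-Franks group of $\X(L)$ is cyclic (with determinant given explicitly by Equation \ref{eq_entropy_det}, though only cyclicity is needed here). Combining the two conclusions yields the desired renewal system.

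There is no real obstacle: the entire content of the corollary has been front-loaded into the two theorems it cites, and the proof is a one-line composition. The only bookkeeping is to note that the hypothesis "$\log \lambda$ is the entropy of an SFT" translates (via \cite[Theorem 11.1.5]{lind_marcus}, cited in Section \ref{sec_conjugacy_ivariants}) into "$\lambda$ is a weak Perron number," which is precisely the hypothesis of Theorem \ref{thm_hs}.
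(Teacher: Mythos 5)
Your proof is correct and is exactly the argument the paper intends: the corollary is stated immediately after Theorem \ref{thm_H_classification} with the remark ``With Theorem \ref{thm_hs}, this gives the following result,'' i.e.\ the same one-line composition of the two theorems, together with the observation that entropies of SFTs are logarithms of weak Perron numbers. Nothing further is needed.
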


\section{The quest for the range of the Bowen-Franks invariant}
\label{sec_rs_range}
All renewal systems considered until now have had cyclic Bowen-Franks groups, and most of them have been flow equivalent to full shifts.
Some of the renewal systems considered in Sections \ref{sec_rs_exotic} and \ref{sec_rs_H_classification} have had positive determinants, but it is still unclear whether every integer can be realised as the determinant of a renewal system. This will be remedied in the following,  were it will be proved that the range of the Bowen-Franks invariant over the class of SFT renewal systems contains a large class of pairs of signs and finitely generated abelian groups. The class is, however, not general enough to show that $\pm G$ is the signed Bowen-Franks group of some SFT renewal system for every finitely generated abelian group $G$, so Adler's question remains open both for conjugacy and for flow equivalence.

\subsection{Determinants}

\begin{figure}
\begin{center}
\begin{tikzpicture}
  [bend angle=10,
   clearRound/.style = {circle, inner sep = 0pt, minimum size = 17mm},
   clear/.style = {rectangle, minimum width = 17 mm, minimum height = 6 mm, inner sep = 0pt},  
   greyRound/.style = {circle, draw, minimum size = 1 mm, inner sep =
      0pt, fill=black!10},
   grey/.style = {rectangle, draw, minimum size = 6 mm, inner sep =
      1pt, fill=black!10},
   white/.style = {rectangle, draw, minimum size = 6 mm, inner sep =
      1pt},
   to/.style = {->, shorten <= 1 pt, >=stealth', semithick}]
  
  \node[grey] (P0) at (0,0) {$P_0$};
  \node[white] (P1) at (-1,-2) {$P_1$}; 
  \node[white] (P2) at (4,-2) {$P_2$}; 
  \node[grey] (P3) at (-1,2) {$P_3$};
  \node[grey] (P4) at (4,2) {$P_4$}; 

  \draw[to, loop left] (P0) to node[auto] {$\alpha, \tilde \alpha, a$} (P0); 
  \draw[to] (P0) to node[auto,swap] {$\alpha$} (P1);
  \draw[to] (P1) to node[auto,swap]  {$\gamma$} (P2);
  \draw[to] (P2) to node[auto]  {$\beta$} (P0);  
  \draw[to] (P2) to node[auto,swap]  {$\beta$} (P4);  
  \draw[to] (P0) to node[auto]  {$\alpha, \tilde \alpha, \beta, a$} (P3);  
  \draw[to,bend left] (P3) to node[auto]  {$\tilde \alpha$} (P4);  
  \draw[to,bend left] (P4) to node[auto]  {$\gamma$} (P3);  
  \draw[to,loop right] (P4) to node[auto]  {$\gamma$} (P4);
  \draw[to,bend left=20] (P4) to node[auto]  {$\gamma$} (P0);
  \draw[to] (P0) to node[near start, above]  {$\alpha,a$} (P4);      
\end{tikzpicture}
\end{center}
\caption[Renewal systems with positive determinants.]{Left Fischer cover of $X(L)$ for $L = \{ a, \alpha, \tilde \alpha, \beta, \gamma, \alpha \gamma \beta, \beta \tilde \alpha \gamma \}$. The border points are coloured grey.} 
\label{fig_LFC_pos_det_range}
\end{figure}
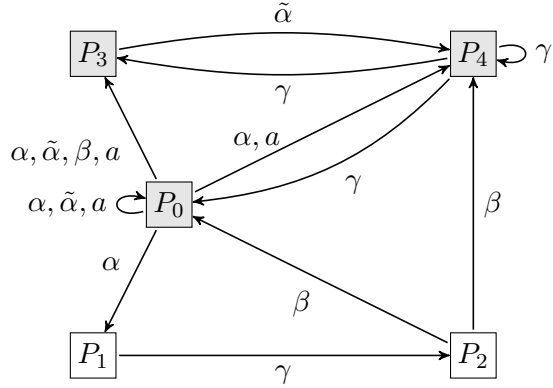

\label{sec_pos_det}
\index{renewal system!with positive \\determinant}
As mentioned in the previous section, the SFT renewal systems constructed by Hong and Shin \cite{hong_shin} exhibit positive determinants, and unlike the examples given in Section \ref{sec_rs_exotic}, these renewal systems are easy to study systematically. The following example is a special case of the renewal systems considered in Theorem \ref{thm_H_classification}, but it is presented again here, since this particular renewal system will be used as an important building block in the following.


\begin{example}
\label{ex_pos_det}
Consider the generating list 
\begin{equation}
   L = \{ a, \alpha, \tilde \alpha, \beta, \gamma, \alpha \gamma \beta, \beta \tilde \alpha \gamma \}.
\end{equation}
By Lemmas \ref{lem_entropy_lfc} and \ref{lem_entropy_sft}, $L$ is left-modular, $\X(L)$ is an SFT, and the left Fischer cover of $\X(L)$ is the labelled graph shown shown in Figure \ref{fig_LFC_pos_det_range}. The corresponding symbolic adjacency matrix is  
\begin{equation}
\label{eq_pos_det_adj}
A = \left( \begin{array}{c | c c | c c}
       a +\alpha+\tilde \alpha & \alpha    & 0 & a+\alpha +\tilde \alpha+\beta & a+\alpha \\
       \hline
       0                   &   0      & \gamma &  0                         &   0          \\
       \beta             &   0      & 0            &  0                         &   \beta     \\
\hline
       0                   &   0      & 0            &  0                         &   \tilde \alpha        \\
       \gamma        &   0      & 0            &  \gamma              &  \gamma                                      
\end{array} \right).
\end{equation}
\noindent
By fragmenting $\X(L)$, it is possible to construct an SFT renewal system for which the (non-symbolic) adjacency matrix of the underlying graph of the left Fischer cover has this form with $\alpha, \tilde \alpha, \beta, \gamma \in \N$ and $a \in \N_0$. Let $A_f$ be such a matrix. This is a special case of the shift spaces considered in Theorem \ref{thm_H_classification}, so the Bowen-Franks group is cyclic and the determinant is 
\begin{displaymath}
   \det(\Id - A_f) =  \beta \alpha \tilde \alpha \gamma^2 - \alpha \beta  \gamma - \tilde \alpha \beta  \gamma -\alpha - \tilde \alpha-\gamma-a +1.
\end{displaymath} 
\end{example} 


\begin{thm}\label{thm_rs_det_range}
Any $k \in \Z$ is the determinant of an SFT renewal system with cyclic Bowen-Franks group.
\end{thm}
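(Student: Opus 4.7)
The plan is to realise every integer as a determinant by using the one-parameter family of fragmentations already supplied by Example~\ref{ex_pos_det}. Recall that for
\begin{displaymath}
L = \{ a, \alpha, \tilde\alpha, \beta, \gamma, \alpha\gamma\beta, \beta\tilde\alpha\gamma \}
\end{displaymath}
the shift $\X(L)$ is an SFT, and every fragmentation of $\X(L)$ obtained by replacing $\alpha, \tilde\alpha, \beta, \gamma$ by positive integers and $a$ by a non-negative integer is again an SFT renewal system whose left Fischer cover has adjacency matrix $A_f$ of the form~\eqref{eq_pos_det_adj}. By Lemma~\ref{lem_entropy_cyclic} (or the $m=1$, $r=2$ case of Theorem~\ref{thm_H_classification}, extended trivially by the free letter $a$), the Bowen-Franks group of every such fragmentation is cyclic, and Example~\ref{ex_pos_det} computes
\begin{displaymath}
\det(\Id - A_f) = \beta\alpha\tilde\alpha\gamma^2 - (\alpha+\tilde\alpha)\beta\gamma - \alpha - \tilde\alpha - \gamma - a + 1.
\end{displaymath}

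Next I would specialise the parameters to collapse this polynomial to a form that sweeps out all of $\Z$. Setting $\alpha = \tilde\alpha = 1$ and $\gamma = 3$ gives
\begin{displaymath}
\det(\Id - A_f) = \beta\gamma(\gamma-2) - \gamma - 1 - a = 3\beta - 4 - a.
\end{displaymath}
Given an arbitrary $k \in \Z$, choose $\beta \in \N$ with $\beta \geq \max\{1, \lceil (k+4)/3 \rceil\}$ and set $a = 3\beta - 4 - k \in \N_0$. The resulting fragmentation is an SFT renewal system with cyclic Bowen-Franks group and determinant exactly $k$.

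There is no real obstacle: all structural work was done in Example~\ref{ex_pos_det} and Lemma~\ref{lem_entropy_cyclic}, so the argument reduces to the elementary observation that $\{3\beta - 4 - a : \beta \in \N, a \in \N_0\} = \Z$. The only thing to check is that every chosen parameter tuple is admissible (all of $\alpha, \tilde\alpha, \beta, \gamma$ are positive integers and $a$ is a non-negative integer), which is immediate from the construction.
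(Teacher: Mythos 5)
Your proposal is correct and follows essentially the same route as the paper: both specialise the determinant polynomial from Example \ref{ex_pos_det} (whose cyclicity of the Bowen--Franks group is already established there) and observe that the remaining free parameters sweep out all of $\Z$. The only difference is the choice of which parameters to freeze --- the paper takes $\alpha=\tilde\alpha=\beta=1$ and uses $\gamma^2-3\gamma-a-1$, while you take $\alpha=\tilde\alpha=1$, $\gamma=3$ and get the linear expression $3\beta-4-a$, which makes the surjectivity slightly more transparent but is not a different argument.
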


\begin{proof}
Consider the renewal system from Example \ref{ex_pos_det} in the case $\alpha = \tilde \alpha = \beta = 1$, where the determinant is
\begin{displaymath}
   \det(\Id - A_f)   =  \gamma^2 - 3\gamma - a -1,
\end{displaymath}
and note that the range of this polynomial is $\Z$.
\end{proof}
\subsection{Non-cyclic Bowen-Franks groups}\label{sec_nc_bf}
\index{renewal system!with non-cyclic Bowen-Franks \\group}
The first known examples of SFT renewal systems with non-cyclic Bowen-Franks groups are given in this section. These groups are achieved by a class of renewal systems where the only forbidden words are powers of the individual letters, and these renewal systems are classified in the following.
The investigation 
was motivated by results of the experiments described in Appendix \ref{app_programs} which suggested that such systems have adjacency matrices with interesting structure.

\index{X@$\Xd{n_1,\dots,n_k}$}
Let $k \geq 2$, $\AA = \{ a_1, \ldots , a_k \}$, and let $n_1, \ldots , n_k \geq 2$ with $\max_{i}\{n_{i}\} > 2$. The goal is to define an SFT renewal system for which the set of forbidden words is $\FF = \{ a_i^{n_{i}}  \}$. For each $1 \leq i \leq k$, define
\begin{multline}
\label{eq_Ld}
  L_i = \{  a_j a_i^l \mid j \neq i \textrm{ and } 0 < l < n_i -1\}  \\ \cup 
            \{  a_m a_j a_i^l \mid m \neq j \neq i \textrm{ and } 0 < l < n_i -1\}.
\end{multline}
Define $L = \bigcup_{i=1}^k L_i \neq \emptyset$, and $\Xd{n_1,\dots,n_k} = \X(L)$.

\begin{lem}
\label{lem_Xd}
For the renewal system $\Xd{n_1,\dots,n_k}$ introduced above, the set of forbidden words is $\{ a_i^{n_{i}}  \}$, so $\Xd{n_1,\dots,n_k}$ is an SFT. The symbolic adjacency matrix of the left Fischer cover of $\Xd{n_1,\dots,n_k}$ is the matrix in Equation \ref{eq_diag_A}. 
\end{lem}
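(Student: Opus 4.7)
The plan is to first prove that $\Xd{n_1,\dots,n_k}$ is the SFT with forbidden set $\FF = \{a_i^{n_i} : 1 \leq i \leq k\}$, and then to identify its left Fischer cover by computing predecessor sets directly and appealing to Theorem~\ref{thm_lfc_char}.

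To pin down the forbidden set I would first show $a_i^{n_i} \notin \BB(\Xd{n_1,\dots,n_k})$ by analysing runs in products of generators. Every word in $L$ terminates in a block $a_i^l$ with $1 \leq l \leq n_i-2$, and no letter is repeated outside this terminal block (because $a_j a_i^l \in L_i$ forces $j \neq i$, and $a_m a_j a_i^l \in L_i$ forces $m \neq j \neq i$, so the second letter of any generator differs from its first). Consequently, in any product $g_1 \cdots g_t \in L^*$ a maximal run of $a_i$'s either sits inside the terminal block of a single $g_k$ (length at most $n_i - 2$), or straddles exactly one boundary between $g_k$ and $g_{k+1}$, gaining at most one additional $a_i$ from the first letter of $g_{k+1}$ (the second letter of $g_{k+1}$ being forced to differ from $a_i$). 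Hence no such run reaches length $n_i$. For the converse inclusion, given a word $w$ avoiding $\FF$ I would use its block decomposition $w = a_{j_1}^{l_1} \cdots a_{j_s}^{l_s}$ (with $j_k \neq j_{k+1}$ and $1 \leq l_k \leq n_{j_k}-1$) and explicitly build a product in $L^*$ containing $w$ as a factor: each sub-maximal block ($l_k \leq n_{j_k}-2$) is placed as the terminal part of a single generator in $L_{j_k}$, while each maximal block ($l_k = n_{j_k}-1$) is split so that $n_{j_k}-2$ copies end one generator and the remaining $a_{j_k}$ begins the next. The three-letter generators $a_m a_j a_i^l$ are precisely what is needed to patch two consecutive maximal-length blocks together.

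For the Fischer cover, I observe that since $a_j^{\infty} \notin \Xd{n_1,\dots,n_k}^+$, each $x^+ \in \Xd{n_1,\dots,n_k}^+$ admits a unique decomposition $x^+ = a_j^l \, a_m \cdots$ with $a_m \neq a_j$ and $1 \leq l \leq n_j-1$. A direct calculation of predecessor sets yields
\[
  P_\infty(x^+) = \{\, y^- \in \Xd{n_1,\dots,n_k}^- : y^- \text{ does not end in } a_j^{n_j-l} \,\},
\]
which depends only on the pair $(j,l)$; exhibiting a left-ray ending in a run of $a_j$'s of the appropriate length shows that distinct pairs yield distinct predecessor sets. Every outgoing edge from vertex $(j,l)$ must carry the label $a_j$: for $l \geq 2$ the unique such edge terminates at $(j, l-1)$, and from $(j,1)$ there is an $a_j$-labelled edge to every $(m, l_m)$ with $m \neq j$ and $1 \leq l_m \leq n_m-1$. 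The resulting labelled graph is irreducible (descend each column $(j, n_j-1) \to \cdots \to (j,1)$ and then hop to any other column), left-resolving and predecessor-separated, so Theorem~\ref{thm_lfc_char} identifies it with the left Fischer cover of $\Xd{n_1,\dots,n_k}$.

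Finally, grouping the vertices by $j$ in the order $(j, n_j-1), (j, n_j-2), \ldots, (j, 1)$ and reading off the symbolic adjacency matrix produces $k$ diagonal blocks of size $(n_j-1) \times (n_j-1)$ with $a_j$ on the superdiagonal (encoding the intra-column edges), together with off-diagonal blocks from column $j$ to column $m \neq j$ whose only non-zero row (the one indexed by $(j,1)$) has every entry equal to $a_j$; this is precisely the matrix recorded as Equation~\ref{eq_diag_A}. I expect the main technical obstacle to be the explicit partitioning step in paragraph two, namely verifying that the three-letter generators are sufficient to handle every configuration of consecutive maximal-length blocks and choosing appropriate padding letters on each side of $w$ so that the first and last generator boundaries are consistent.
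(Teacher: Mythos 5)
Your proposal is correct and follows essentially the same route as the paper: verify that the forbidden set is exactly $\{a_i^{n_i}\}$ by analysing runs across generator boundaries and realising every admissible block decomposition as a factor of $L^*$, then compute $P_\infty(a_i^l a_j\cdots)$ as the left-rays not ending in $a_i^{n_i-l}$ and read off the Fischer cover via Theorem \ref{thm_lfc_char}. The only (cosmetic) discrepancy is your within-block vertex ordering $(j,n_j-1),\dots,(j,1)$, which puts $a_j$ on the superdiagonal and the nonzero off-diagonal row last, whereas Equation \ref{eq_diag_A} orders the vertices as $P_\infty(a_j), P_\infty(a_j^2),\dots$, giving the subdiagonal form with the nonzero row first.
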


\begin{proof}
Let $\FF$ be the set of forbidden words for $\Xd{n_1,\dots,n_k}$ and note that $\{ a_i^{n_{i}}  \} \subseteq \FF$ by construction. For $1 < l < n_i-1$ and $j \neq i$ the word $a_j a_i^{l}$ has a partitioning in $\Xd{n_1,\dots,n_k}$ with empty beginning and end. Hence, $a_{i_1} a_{i_2}^{l_2} a_{i_3}^{l_3} \cdots a_{i_m}^{l_m}$ has a partitioning with empty beginning and end whenever $i_j \neq i_{j+1}$, $1 < l_j < n_{i_j}$ for all $1 < j < m$, and $0 < l_m < n_{i_m} -1$. Given $i_1, \ldots , i_m \in \{1, \ldots, k\}$ with $i_j \neq i_{j+1}$ and $m \geq 2$, the word $a_{i_1} a_{i_2} \cdots a_{i_m}$ has a partitioning with empty beginning and end. Hence, every word that does not contain one of the words $ a_i^{n_{i}}$ has a partitioning.

\begin{figure}
\begin{center}
\begin{tikzpicture}
 [bend angle=10,
   clearRound/.style = {circle, inner sep = 0pt, minimum size = 17mm},
   clear/.style = {rectangle, minimum width = 17 mm, minimum height = 6 mm, inner sep = 0pt},  
   greyRound/.style = {circle, draw, minimum size = 1 mm, inner sep =
      0pt, fill=black!10},
   grey/.style = {rectangle, draw, minimum size = 6 mm, minimum height = 8mm, inner sep =
      1pt, fill=black!10},
   white/.style = {rectangle, draw, minimum size = 6 mm, minimum height = 8mm, inner sep =
      1pt},
   to/.style = {->, shorten <= 1 pt, >=stealth', semithick}]
  
  \node[white] (an1) at (0,3) {$P_\infty \left(a_i^{n_i-1} \right)$};
  \node[white] (an2) at (3,3) {$P_\infty \left(a_i^{n_i-2}\right)$};
  \node[clear] (dots) at (6,3) {$\cdots$};  
  \node[grey] (a) at (9,3) {$P_\infty \left(a_i\right)$}; 

  \node[white] (bn1) at (0, 0) {$P_\infty \left(a_j^{n_j-1}\right)$};
  \node[white] (bn2) at (3,0) {$P_\infty \left(a_j^{n_j-2}\right)$};
  \node[clear] (dots2) at (6,0) {$\cdots$};  
  \node[grey] (b) at (9,0) {$P_\infty \left(a_j \right)$};

  \draw[to] (an1) to node[auto] {$a_i$} (an2);
  \draw[to] (an2) to node[auto] {$a_i$} (dots);
  \draw[to] (dots) to node[auto] {$a_i$} (a);  

  \draw[to] (bn1) to node[auto,swap] {$a_j$} (bn2);
  \draw[to] (bn2) to node[auto,swap] {$a_j$} (dots2);
  \draw[to] (dots2) to node[auto,swap] {$a_j$} (b);  

  \draw[to, bend left] (a) to node[auto] {$a_i$} (b);
  \draw[to] (a) to node[auto, near end] {$a_i$} (bn2);
  \draw[to] (a) to node[auto,swap, very near end] {$a_i$} (bn1);  

  \draw[to, bend left] (b) to node[auto] {$a_j$} (a);
  \draw[to] (b) to node[auto,swap, near end] {$a_j$} (an2);
  \draw[to] (b) to node[auto,very near end] {$a_j$} (an1);  

\end{tikzpicture}
\end{center}
\caption[Building non-cyclic Bowen-Franks groups.]{Part of the left Fischer cover of $\Xd{n_1,\dots,n_k}$.
The entire graph can be found by varying $i$ and $j$.
The border points are  coloured grey.} 
\label{fig_Xd_LFC}
\end{figure}
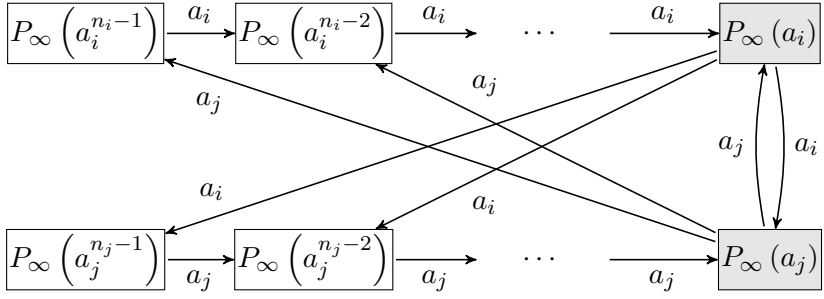

To find the left Fischer cover of  $\Xd{n_1,\dots,n_k}$, it is first necessary to determine the predecessor sets. Given $1 \leq i \leq k$ and $j \neq i$ 
\begin{align}
	P_\infty(a_i a_j \cdots) &= \{ x^- \in \Xd{n_1,\dots,n_k}^-  |  x_{-n_i +1} \cdots x_0  \neq a_i^{n_i-1} \} \nonumber \\
	P_\infty(a_i^2 a_j \cdots) &= \{ x^- \in \Xd{n_1,\dots,n_k}^-  |  x_{-n_i +2} \cdots x_0  \neq a_i^{n_i-2} \} \label{eq_rs_diag}\\
	&\vdots  \nonumber \\
	P_\infty(a_i^{n_i-1} a_j \cdots) &= \{ x^- \in \Xd{n_1,\dots,n_k}^-  |  x_0  \neq a_i \} \nonumber .
\end{align}
Only the first of these predecessor sets is a border point (this is not important in the present construction, but it will allow the construction of the left Fischer covers of sums involving renewal systems of this form in Section \ref{sec_pos_det+nc_bf}). Equation \ref{eq_rs_diag} gives all the information necessary to draw the left Fischer cover of $\Xd{n_1,\dots,n_k}$. A part of the left Fischer cover is shown in Figure \ref{fig_Xd_LFC},
and the corresponding symbolic adjacency is: \vspace{-0.5 em}\begin{align}
 	\nonumber
	&\hspace{14 pt}
	\begin{array}{c c c c}
       \overbrace{
       	\phantom{\begin{matrix}
			a_1 &  \cdots & a_1 & a_1
		\end{matrix}}
	}^{n_1-1}		
	&  
  	\overbrace{
       	\phantom{\begin{matrix}
			a_2 &  \cdots & a_2 & a_2
		\end{matrix}}
	}^{n_2-1}
	&
	\phantom{\cdots}
	&		
	\overbrace{
       	\phantom{\begin{matrix}
			a_k &  \cdots & a_k & a_k
		\end{matrix}}
	}^{n_k-1}
	\end{array} \\[-1.5em]
       &\left( \begin{array}{ c  | c |  c |  c } 
	\begin{matrix}
	0 &   \cdots & 0 & 0 \\
	a_1 &  \cdots & 0 & 0 \\
	   &     \ddots &    &    \\
	0 &   \cdots & a_1 & 0		 
	\end{matrix}		
	&
	\begin{matrix}
	a_1 &   \cdots & a_1 & a_1\\
	0 &   \cdots & 0 & 0	\\
	   &   \ddots &    &    \\
	0 &  \cdots & 0 & 0	
	\end{matrix}		
	&
	\cdots
	&
	\begin{matrix}
	a_1 &  \cdots & a_1 &  a_1\\
	0 &  \cdots & 0 & 0	\\
	   &  \ddots &    &    \\
	0 &  \cdots & 0 & 0	
	\end{matrix}		\\
	\hline
	\begin{matrix}
	a_2 &  \cdots & a_2 &  a_2\\
	0 &   \cdots & 0 & 0	\\
	   &   \ddots &    &    \\
	0 &  \cdots & 0 & 0	
	\end{matrix}		
	&
	\begin{matrix}
	0 &  \cdots & 0 & 0 \\
	a_2 &  \cdots & 0 & 0 \\
	   &   \ddots &    &    \\
	0 &  \cdots & a_2 & 0		 
	\end{matrix}		
	&
	\cdots
	&
	\begin{matrix}
	a_2 & \cdots & a_2 &  a_2\\
	0 & \cdots & 0 & 0	\\
	   &  \ddots &    &    \\
	0 &  \cdots & 0 & 0	
	\end{matrix}		\\
	\hline
	\vdots & \vdots & \ddots & \vdots \\
	\hline
	\begin{matrix}
	a_k &   \cdots & a_k &  a_k\\
	0 &  \cdots & 0 & 0	\\
	   &   \ddots &    &    \\
	0 &  \cdots & 0 & 0	
	\end{matrix}		
	&
	\begin{matrix}
	a_k &  \cdots & a_k &  a_k\\
	0 &  \cdots & 0 & 0	\\
	   &  \ddots &    &    \\
	0 & \cdots & 0 & 0	
	\end{matrix}		
	&
	\cdots
	&
	\begin{matrix}
	0 &  \cdots & 0 & 0 \\
	a_k &  \cdots & 0 & 0 \\
	   &     \ddots &    &    \\
	0 &   \cdots & a_k & 0		 
	\end{matrix}		\\
\end{array} \right). \label{eq_diag_A} 
\end{align} \end{proof}

Let $A$ be the (non-symbolic) adjacency matrix of the underlying graph of the left Fischer cover of $\Xd{n_1,\dots,n_k}$ constructed above.
Then it is possible to do the following transformation by row and column addition  
\begin{align*}
	\Id - A  &\rightsquigarrow 
	\begin{pmatrix}
	1        & 1-n_2 & 1-n_3 & \cdots  & 1-n_k  \\
	1-n_1 & 1        & 1-n_3 & \cdots  & 1-n_k  \\
	1-n_1 & 1-n_2 & 1        & \cdots  & 1-n_k  \\
       \vdots & \vdots & \vdots & \ddots & \vdots     \\
	1-n_1 & 1-n_2 & 1-n_3 & \cdots  & 1 		 
	\end{pmatrix} \\ 
	&\rightsquigarrow
	\begin{pmatrix}
	x      & 1       & 1       & \cdots  & 1  \\
	-n_1 & n_2  & 0       & \cdots  & 0  \\
	-n_1 & 0       & n_3  & \cdots  & 0  \\
       \vdots & \vdots & \vdots & \ddots & \vdots     \\
	-n_1 & 0       & 0 & \cdots  & n_k 		 
	\end{pmatrix}
	\quad x = 1-(k-1) n_1.		
\end{align*}
The determinant of this matrix is
\begin{multline*}
\det(\Id - A) = n_2 \cdots n_k \left( x + \sum_{i=2}^k \frac{n_1}{n_i} \right) \\
                   = - n_1 n_2 \cdots n_k \left( k-1 - \sum_{i=1}^k \frac{1}{n_i} \right)
                   < 0.
\end{multline*}
The inequality is strict since $k-1 - \sum_{i=1}^k \frac{1}{n_i} > \frac{k}{2} -1 \geq 0$.
Given concrete $n_1, \ldots, n_k$, it is straightforward to compute the Bowen-Franks group of $\Xd{n_1, \ldots, n_k}$, but it has not been possible to derive a general closed form for this group. For this reason, the following proposition concerns only a subclass of the renewal systems considered above, and this gives the first known examples of SFT renewal systems with non-cyclic Bowen-Franks groups. 

\begin{prop}
\label{prop_non-cyclic_BF}
Let $n_1, \ldots, n_k \geq 2$ with $n_i | n_{i-1}$ for $2 \leq i \leq k$ and $n_1 > 2$. Then $\BF_+(\Xd{n_1, \ldots, n_k}) = - \Z /m\Z \oplus \Z /n_3\Z \oplus \cdots \oplus \Z / n_k\Z$ for $m = n_1 n_2 ( k-1 - \sum_{i=1}^k \frac{1}{n_i} )$.
\end{prop}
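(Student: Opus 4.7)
My plan is to compute the Smith normal form of $\Id - A$ starting from the reduced form
\begin{displaymath}
M = \begin{pmatrix}
1-(k-1)n_1 & 1 & 1 & \cdots & 1 \\
-n_1 & n_2 & 0 & \cdots & 0 \\
-n_1 & 0 & n_3 & \cdots & 0 \\
\vdots & \vdots & \vdots & \ddots & \vdots \\
-n_1 & 0 & 0 & \cdots & n_k
\end{pmatrix}
\end{displaymath}
already derived in the excerpt, and exploit the divisibility chain $n_k \mid n_{k-1} \mid \cdots \mid n_1$ to carry out every reduction over $\Z$. The sign is free: the excerpt already established $\det(\Id - A) = -n_1 n_2 \cdots n_k \bigl(k-1-\sum 1/n_i\bigr) < 0$, so once the order of the Bowen-Franks group is identified as $m \cdot n_3 \cdots n_k$, the sign in $\BF_+$ is automatically $-$.

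The first step is to concentrate all of the ``defect'' into the $(1,1)$-entry. Since $n_i \mid n_1$ for every $i \geq 2$, I may add $(n_1/n_i)$ times column $i$ to column $1$; this kills each $-n_1$ in the first column and replaces the $(1,1)$-entry by $1-(k-1)n_1 + n_1\sum_{i=2}^k 1/n_i$, which I recognise as $-m/n_2$ after multiplying out $m = n_1n_2(k-1-\sum 1/n_i)$. Next I swap rows $1$ and $2$ so the $(1,2)$-entry $n_2$ sits at position $(1,2)$ and the $1$ moves to $(2,2)$, then use that $1$ as a pivot: subtract row $2$ from row $1$ enough times (with coefficient $n_2$) to turn row $1$ into $(m,0,-n_2,\ldots,-n_2)$, and add $(m/n_2)$ times column $2$ to column $1$ to clear the $-m/n_2$ in row $2$ (this coefficient is integral, again because each $n_i \mid n_1$ gives $m/n_2 \in \Z$). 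Stripping off the resulting $1$-pivot leaves the $(k-1)\times(k-1)$ block
\begin{displaymath}
\begin{pmatrix}
m & -n_2 & -n_2 & \cdots & -n_2 \\
0 & n_3 & 0 & \cdots & 0 \\
\vdots & & \ddots & & \vdots \\
0 & 0 & 0 & \cdots & n_k
\end{pmatrix}.
\end{displaymath}

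The remaining work is routine: because $n_j \mid n_2$ for $j \geq 3$, I can add $(n_2/n_j)$ times each of the rows $2,\ldots,k-1$ to row $1$ to clear the $-n_2$'s, leaving $\diag(m,n_3,n_4,\ldots,n_k)$. Combined with the $1$-pivot stripped off above (and the earlier $1$'s produced in the excerpt's reduction), this gives a diagonal matrix whose non-unit entries are $n_k,n_{k-1},\ldots,n_3,m$. The main point I still have to verify is the Smith divisibility $n_k \mid n_{k-1} \mid \cdots \mid n_3 \mid m$; the first chain is by hypothesis, and for the last I compute
\begin{displaymath}
m = n_1n_2(k-1) - n_1 - n_2 - n_1\sum_{i=3}^{k} \frac{n_2}{n_i},
\end{displaymath}
every term of which is divisible by $n_3$ (using $n_3 \mid n_2 \mid n_1$ and $n_3 \mid n_2/n_i$ for $i \geq 3$ since $n_i \mid n_2$ divides $n_2$ by a multiple of $n_3$, which follows from the full divisibility chain). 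Hence $\BF(\Xd{n_1,\ldots,n_k}) \cong \Z/m\Z \oplus \Z/n_3\Z \oplus \cdots \oplus \Z/n_k\Z$, and combining with the sign gives the claimed $\BF_+$.

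The main obstacle I anticipate is bookkeeping: ensuring that every row/column operation uses an integer coefficient (this is where the divisibility chain is essential and where the hypothesis $n_1 > 2$ plays a role in keeping $m/n_2$ meaningful and nonzero), and verifying the final divisibility $n_3 \mid m$ cleanly so that the diagonal entries $n_k, n_{k-1}, \ldots, n_3, m$ actually constitute the Smith normal form rather than merely an invariant factor decomposition up to further refinement. Everything else is straightforward elementary-divisor manipulation.
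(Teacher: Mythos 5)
Your reduction is correct and is essentially the paper's own argument: the paper clears the first column with exactly the same operations (adding $n_1/n_i$ times column $i$ to column $1$, which is where the divisibility chain enters) to reach the matrix with $y=-m/n_2$ in the corner, and then pivots on the $1$ to arrive at $\left(\begin{smallmatrix}0 & 1\\ m & 0\end{smallmatrix}\right)\oplus\diag(n_3,\ldots,n_k)$, which is your diagonal form up to one trivial further step. The only (harmless) slip is in your verification that $n_3\mid m$: the claim $n_3\mid n_2/n_i$ is false in general (take $n_2=n_3$), but the relevant term is $n_1(n_2/n_i)$ and $n_3\mid n_1$, so the conclusion stands --- and in any case the group identification does not require the divisibility check, since the cokernel of a diagonal integer matrix is the direct sum of the corresponding cyclic groups whether or not the entries form a divisibility chain.
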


\begin{proof}
By the arguments above, $\Xd{n_1, \ldots , n_k}$ is conjugate to an edge shift with adjacency matrix $A$ such that the following transformation can be carried out by row and column addition
\begin{displaymath}
	\Id - A \rightsquigarrow 
	\begin{pmatrix}
	y   & 1       & 1       & \cdots  & 1  \\
	0 & n_2  & 0       & \cdots  & 0  \\
	 0   & 0   & n_3  & \cdots  & 0  \\
       \vdots & \vdots & \vdots & \ddots & \vdots     \\
	 0   & 0       & 0 & \cdots  & n_k 		 
	\end{pmatrix}
	\rightsquigarrow
	\begin{pmatrix}
	 0   & 1       & 0   & \cdots  & 0  \\
	m &  0  & 0       & \cdots  & 0  \\
	 0   & 0   & n_3  & \cdots  & 0  \\
       \vdots & \vdots & \vdots & \ddots & \vdots     \\
	 0   & 0       & 0 & \cdots  & n_k 		 
	\end{pmatrix} ,	
\end{displaymath}
where $y = -n_1 \left( k-1- \sum_{i=1}^k 1/n_i \right)$.	
It follows that the Smith normal form of $\Id - A$ is $\diag(m, n_3, \ldots , n_k)$, and $\det(\Id - A) < 0$.
\end{proof}

Let  $G$ be a finite direct sum of finite cyclic groups. Then Proposition \ref{prop_non-cyclic_BF} shows that $G$ is a subgroup of the Bowen-Franks group of some SFT renewal system, but it is still unclear whether $G$ itself is also the Bowen-Franks group of a renewal system since the term $\Z / m\Z$ in the statement of Proposition \ref{prop_non-cyclic_BF}  is determined by the other terms. Furthermore, the groups constructed in Proposition \ref{prop_non-cyclic_BF} are all finite, so it is also unknown whether groups such as $\Z \oplus \Z$ can occur as subgroups of the Bowen-Franks group of an SFT renewal system. The following example shows that at least some such infinite groups can occur.

\begin{example} \label{ex_0_in_bf}
Consider the renewal system $\X(L)$ generated by the list 
$ L = \{ aa, aaa, baa, baaa, bb , bbb, abb, abbb, c \}$.
Via the computer programs described in Appendix \ref{app_programs}, Proposition \ref{prop_extendable} can be used to prove that $\X(L)$ is a $3$-step SFT and it is not hard to check that the set of forbidden words is $\FF = \{ abc, bac, cac, cbc, abab, baba, cbab \}$. The symbolic adjacency matrix of the left Fischer cover of $\X(L)$ is 
\begin{displaymath}
A = \left( \begin{array}{c | c c c | c c c}
 c & c & 0  & c & c & 0 & c \\
 \hline
 0 & a & a & 0 & 0 & 0 & a \\
 a & 0 & 0 & a & 0 & 0 & 0 \\
 0 & b & 0 & 0 & 0 & 0 & 0 \\
 \hline
 0 & 0 & 0 & b & b & b & 0 \\
 b & 0 & 0 & 0 & 0 & 0 & b \\
 0 & 0 & 0 & 0 & a & 0 & 0 
\end{array} \right). 
\end{displaymath}
Fragmenting the letter $c$ to $c_1, \ldots, c_n$ produces an SFT renewal system for which the (non-symbolic) adjacency matrix of the underlying graph of the left Fischer cover is obtained from $A$  by replacing $a$ and $b$ by $1$, and $c$ by $n$. It is easy to check that this system has determinant $0$ and Bowen-Franks group $\Z/(n+1) \oplus \Z$.
Appendix \ref{app_programs} contains further examples of renewal systems where $\Z$ is a subgroup of the Bowen-Franks group.
\end{example}

\subsection{Positive determinants and non-cyclic groups}
\label{sec_pos_det+nc_bf}
\index{renewal system!with positive \\determinant}
\index{renewal system!with non-cyclic Bowen-Franks \\group}
The determinants of all the renewal systems with non-cyclic Bowen-Franks groups investigated in the previous section were negative or zero, so the goal of this section is to construct a class of SFT renewal systems with positive determinants and non-cyclic Bowen-Franks groups by adding the renewal systems considered in Sections \ref{sec_pos_det} and \ref{sec_nc_bf}.

\begin{lem}
\label{lem_addition_Xd}
Let $L_d$ be the generating list of $\Xd{n_1, \ldots , n_k}$ as defined in Equation \ref{eq_Ld}, and let $(F_d, \LL_d)$ be the left Fischer cover of $\Xd{n_1, \ldots , n_k}$.
Let $L_m$ be a left-modular generating list for which $\X(L_m)$ is an SFT with left Fischer cover $(F_m, \LL_m)$. 
For $L_{d+m} = L_d \cup L_m \cup_{i=1}^k \{ a_i w \mid w \in L_m \}$,   $\X(L_{d+m})$ is an SFT for which the left Fischer cover is obtained by adding the following connecting edges to the disjoint union of $(F_d, \LL_d)$ and $(F_m, \LL_m)$:
\begin{itemize}
\item For each $1 \leq i \leq k$ and each $e \in F_m^0$ with $r(e) = P_0(L_m)$ draw an edge $e_i$ with $s(e_i) = s(e)$ and $r(e_i) = P_\infty(a_i a_j \ldots)$  labelled $\LL_m(e)$.
\item For each $1 \leq i \leq k$ and each border point $P \in F_m^0$ draw an edge labelled $a_i$ from $P_\infty(a_i a_j \ldots)$ to  $P$.
\end{itemize}
\end{lem}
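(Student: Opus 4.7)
The plan is to exhibit a candidate labelled graph $(F,\LL)$ built from $(F_d,\LL_d)$, $(F_m,\LL_m)$, and the connecting edges described in the statement, and then verify the three hypotheses of Theorem \ref{thm_lfc_char} — irreducibility, left-resolving, and predecessor-separation — so that $(F,\LL)$ must be the left Fischer cover of whatever sofic shift it presents. Separately, one must check that the shift space presented by $(F,\LL)$ is in fact $\X(L_{d+m})$ and that it is of finite type. Throughout, the alphabets of $L_d$ and $L_m$ are disjoint, which allows a clean separation into two regimes.

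First I would show that $(F,\LL)$ is a presentation of $\X(L_{d+m})$. The forward inclusion reduces to checking that every generator in $L_{d+m}$ has a path representative in $(F,\LL)$: generators from $L_d$ and $L_m$ are handled by the pre-existing covers, while generators of the form $a_i w$ traverse the $a_i$-edge in $(F_d,\LL_d)$ landing at the border point $P_\infty(a_i a_j\ldots)$, then follow the connecting edges into $(F_m,\LL_m)$ at an appropriate border point, and finally use the internal structure of $(F_m,\LL_m)$ to spell out $w$, ending at $P_0(L_m)$. For the reverse inclusion, use the connecting edges terminating at the vertices $P_\infty(a_i a_j\ldots)$ — these are precisely copies of the edges into $P_0(L_m)$ in $(F_m,\LL_m)$ — to observe that any path in $F$ factors through border points in an alternating pattern between the two covers, and the label of each segment between two consecutive crossings lies in $L_d^*$ or $L_m^*$ (the latter by left-modularity of $L_m$), hence the overall label lies in $L_{d+m}^*$.

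Next I would verify the three properties of Theorem \ref{thm_lfc_char}. Left-resolving is automatic from left-resolving of $(F_d,\LL_d)$ and $(F_m,\LL_m)$ together with the disjointness of the alphabets: for a fixed incoming label $a$, no vertex receives two edges because the label uniquely identifies the side on which the edge lives, and inside each side the property is inherited. Irreducibility follows from the irreducibility of both $(F_d,\LL_d)$ and $(F_m,\LL_m)$ together with the bidirectional connecting edges at border points of each side. For predecessor-separation one computes the predecessor sets of the vertices of $F$ and observes that, because the alphabets are disjoint, the predecessor set of each vertex $v \in F_d^0$ is determined by its restriction to the $L_d$-alphabet (which separates $F_d$-vertices), and symmetrically for $F_m$-vertices.

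The main obstacle is showing that $\X(L_{d+m})$ is an SFT, i.e.\ by Corollary \ref{cor_lfc_conj} that the covering map $\pi\colon \X_F \to \X(L_{d+m})$ is injective. The strategy is to use the SFT property of $\X(L_d)$ and $\X(L_m)$ plus the rigid structure imposed by the disjoint alphabets: given $x \in \X(L_{d+m})$, the alphabet transitions in $x$ partition $x$ into maximal $L_d$- and $L_m$-blocks, and within each block the covering map of the relevant side is injective by Corollary \ref{cor_lfc_conj} applied to $\X(L_d)$ and $\X(L_m)$. At the boundary between two blocks, the vertex one lands in is forced by the left-modularity of $L_m$ (which pins down the unique border point at which an $L_m$-block can begin or end) and by the description of the border points of $(F_d,\LL_d)$ in Lemma \ref{lem_Xd} (which pins down the corresponding vertex on the $F_d$-side, namely $P_\infty(a_i a_j\ldots)$ for the appropriate $i$). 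Verifying that these local matchings glue into a global injectivity statement, and in particular that no bi-infinite $x$ lies in ``only one side forever'' in an ambiguous way, is the delicate step that will require the most care.
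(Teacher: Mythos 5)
Your proposal follows the same architecture as the paper's proof: build the candidate graph, invoke Theorem \ref{thm_lfc_char} via irreducibility, left-resolving, and predecessor-separation, match the presented language to $\BB(\X(L_{d+m}))$ using left-modularity of $L_m$, and obtain the SFT property from injectivity of the covering map by tracking the alphabet transitions (forced vertices at each crossing, propagated by left-resolving, with a separate case when the crossings are bounded above). This is exactly the paper's route.

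One claim in your reverse inclusion is false as stated and needs repair: it is not true that the label of each segment between consecutive crossings lies in $L_d^*$ or $L_m^*$. The entire reason for including the extra generators $a_i w$ in $L_{d+m}$ (rather than taking just $L_d \cup L_m$) is that an $L_m$-block may be preceded by a run $a_i^{n_i-1}$ which is not the tail of any $L_d^*$-concatenation; correspondingly, the path in $F_d$ between two crossings can carry a label (for instance a single $a_i$, or $a_i^{n_i-2}$) that is not in $L_d^*$, with the final crossing letter $a_{i'}$ absorbed either into an $L_d$-generator or into an $a_{i'}w$-generator. If your decomposition were literally correct, the argument would place the presented shift inside $\X(L_d\cup L_m)$, which is strictly smaller than $\X(L_{d+m})$. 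The paper instead characterises $\BB(\X(L_{d+m}))$ as the set of factors of concatenations of words $w_m a_i w_d$ with $w_m\in L_m^*$ and $w_d\in\BB(\Xd{n_1,\ldots,n_k})$ not beginning with $a_i$, using the explicit partitionings from the proof of Lemma \ref{lem_Xd}; that computation is the piece your sketch still has to supply.
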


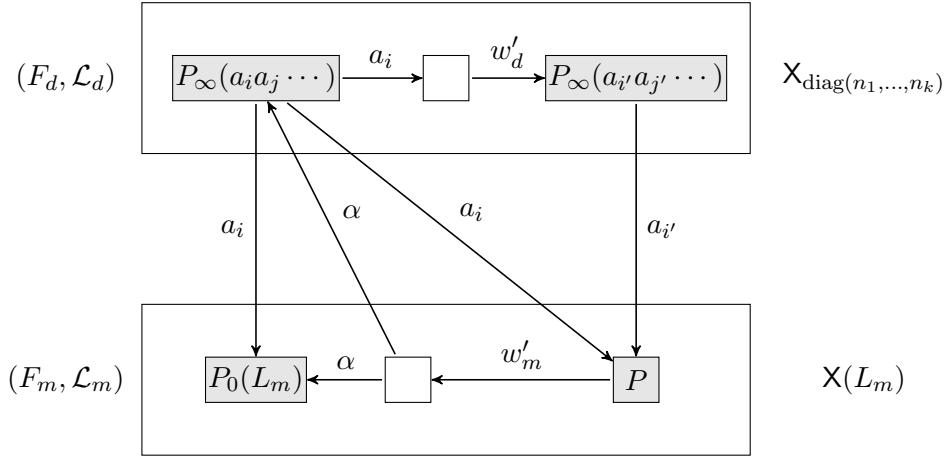
\begin{figure}
\begin{center}
\begin{tikzpicture}
  [bend angle=30,
   clearRound/.style = {circle, inner sep = 0pt, minimum size = 17mm},
   clear/.style = {rectangle, minimum width = 5 mm, minimum height = 5 mm, inner sep = 0pt},  
   greyRound/.style = {circle, draw, minimum size = 1 mm, inner sep =
      0pt, fill=black!10},
   grey/.style = {rectangle, draw, minimum size = 6 mm, inner sep =
      1pt, fill=black!10},
    white/.style = {rectangle, draw, minimum size = 6 mm, inner sep =
      1pt},
  whiteBig/.style = {rectangle, draw, minimum height = 2 cm, minimum width = 8 cm, inner sep =
      1pt},
   to/.style = {->, shorten <= 1 pt, >=stealth', semithick}] 

  \node[whiteBig] (Fd) at (0,2)   {};
  \node[whiteBig] (Fm) at (0,-2) {};
  
  \node[grey] (Pi) at (-2.5,2) {$P_\infty(a_ia_j \cdots)$};
  \node[white] (Q') at (0,2) {};  
  \node[grey] (Pi') at (2.5,2) {$P_\infty(a_{i'}a_{j'} \cdots)$};
  
  \node[grey] (P0) at (-2.5,-2) {$P_0(L_m)$};
  \node[grey] (P) at (2.5,-2) {$P$};
  \node[white] (Q) at (-0.5,-2) {};  
  \node[clear] (Fdtext) at (-5,2) {$(F_d, \LL_d)$};  
  \node[clear] (Fmtext) at (-5,-2) {$(F_m, \LL_m)$};    
  \node[clear] (Xdtext) at (5.5,2) {$\Xd{n_1, \ldots, n_k}$};  
  \node[clear] (Lmtext) at (5.5,-2) {$\X(L_m)$};    
   
  \draw[to] (Pi) to node[auto] {$a_i$} (P);
  \draw[to] (Pi) to node[auto,swap] {$a_i$} (P0);
  \draw[to] (Pi) to node[auto] {$a_i$} (Q');
  \draw[to] (Q') to node[auto] {$w'_d$} (Pi');
  \draw[to] (Pi') to node[auto] {$a_{i'}$} (P);      
  \draw[to] (P) to node[auto,swap] {$w'_m$} (Q);
  \draw[to] (Q) to node[auto,swap] {$\alpha$} (P0);
  \draw[to] (Q) to node[auto,swap] {$\alpha$} (Pi);

\end{tikzpicture}
\end{center}
\caption[Construction of the Fischer cover of a sum.]{Construction of the left Fischer cover considered in Lemma \ref{lem_addition_Xd}. Here, $w'_m \alpha = w_m \in L_m^*$ and $w'_d a_{i'} = w_d \in \BB(\Xd{n_1, \ldots, n_k})$ with $\leftl(w_d) \neq a_i$. Border points are coloured grey.} 
\label{fig_addition_Xd}
\end{figure}

\noindent 
Figure \ref{fig_addition_Xd} shows the construction of the left Fischer cover defined in Lemma \ref{lem_addition_Xd}.
The addition of $\{ a_i w \mid w \in L_m \}$ to $L_d \cup L_m$ ensures that a word $w_m \in L_m^*$ can be preceded by $a_i^{n_i-1}$ in $\X(L_{d+m})$. This will allow a simpler analysis than in the renewal system generated by $L_d \cup L_m$. 

\begin{proof}[Proof of Lemma \ref{lem_addition_Xd}]
 Let $(F_{d+m}, \LL_{d+m})$ be the labelled graph defined in the statement of the lemma and sketched in Figure \ref{fig_addition_Xd}.
The graph is left-resolving, predecessor-separated, and irreducible by construction, so by Theorem \ref{thm_lfc_char}, it is the left Fischer cover of some sofic shift $X$. The first goal is to prove that $X = \X(L_{d+m})$.
By the arguments used in the proof of Lemma \ref{lem_Xd}, any word of the form $a_{i_0}w_m a_{i_1} a_{i_2}^{l_i} \ldots a_{i_p}^{l_p}$ where 
$w_m \in L_m^*$,
$p \in \N$, 
$i_j \neq i_{j+1}$ and $l_j < n_{i_j}$ for $1 < j < p$,
and $1 \leq l_p < n_{i_p} -1$
has a partitioning with empty beginning and end in $\X(L_{d+m})$. 
Hence, the language of $\X(L_{d+m})$ is the set of factors of concatenations of words from
\begin{displaymath}
\left\{ w_m a_i w_d \mid w_m \in L_m^*, 1 \leq i \leq k, w_d \in \BB(\Xd{n_1, \ldots, n_k}), \leftl(w_d) \neq a_i \right \}.
\end{displaymath}
Since $L_m$ is left-modular, a path $\lambda \in F_m^*$ with $r(\lambda) = P_0(L_m)$ has $\LL_m(\lambda) \in L_m^*$ if and only if $s(\lambda)$ is a border point in $F_m$.
Hence, the language recognised by the left Fischer cover $(F_{d+m}, \LL_{d+m})$ is precisely the language of $\X(L_{d+m})$. This is illustrated in Figure \ref{fig_addition_Xd}.
 
It remains to show that $(F_{d+m}, \LL_{d+m})$ presents an SFT. 
Let $1 \leq i \leq k$ and let $\alpha \in \BB(\X(L_m))$, then any labelled path in $(F_{d+m}, \LL_{d+m})$ with $a_i \alpha$ as a prefix must start at $P_\infty(a_ia_j\cdots)$.
Similarly, if there is path $\lambda \in F_{d+m}^*$ with $\alpha a_i$ as a prefix of $\LL_{d+m}(\lambda)$, then there must be unique vertex $v$ emitting an edge labelled $\alpha$ to $P_0(L)$, and $s(\lambda) = v$.
Let $x \in \X_{(F_{d+m},\LL_{d+m})}$. If there is no upper bound on set of $i \in \Z$ such that $x_i \in \{a_1, \ldots, a_k\}$ and $x_{i+1} \in \AA(\X(L_m))$ or vice versa, then the arguments above and the fact that the graph is left-resolving prove that there is only one path in $(F_{d+m},\LL_{d+m})$ labelled $x$. If there is an upper bound on the set considered above, then a presentation of $x$ is eventually contained in either $F_{d}$ or $F_{m}$. It follows that the covering map of $(F_{d+m}, \LL_{d+m})$ is injective, so it presents an SFT by Corollary \ref{cor_lfc_conj}.
\end{proof}

The next step is to use Lemma \ref{lem_addition_Xd} to construct renewal systems that share features with both $\Xd{n_1, \dots, n_k}$ and the renewal systems with positive determinants considered in Example \ref{ex_pos_det}.

\begin{example}
\label{ex_pos_det+nc_bf}
Given $n_1, \ldots , n_k \geq 2$ with $\max_j n_j > 2$, consider the list $L_d$ defined in Equation \ref{eq_Ld} which generates the renewal system $\Xd{n_1, \ldots, n_k}$,
and the list 
\begin{displaymath}
L = \{ a, \alpha, \tilde \alpha, \beta, \gamma, \alpha \gamma \beta, \beta \tilde \alpha \gamma \}
\end{displaymath}
introduced in Example \ref{ex_pos_det}. $L$ is left-modular, and $\X(L)$ is an SFT, so Lemma \ref{lem_addition_Xd} can be used to find the left Fischer cover of the SFT renewal system $X_+$ generated by
\begin{displaymath}
L_{+} = L_d \cup L \cup_{i=1}^k \{ a_i w \mid w \in L \}
,\end{displaymath}
and the corresponding symbolic adjacency matrix $A_+$ is
\begin{displaymath}
\left( \!\!\! \begin{array}{c | c c c c | c c c c c | c | c c c c c  }
       b & \alpha    & 0 & b +\beta & a+\alpha                        &  b & 0 &\! \cdots \!& 0 & 0 & \!\cdots\! & b & 0  &\! \cdots \!& 0 & 0 
       \\
       \hline
       0            &   0      & \gamma &  0             & 0                 & 0 & 0           &\! \cdots \!& 0 & 0 &   & 0 & 0 &\! \cdots \!& 0 & 0      \\
       \beta      &   0      & 0            &  0             & \beta           & \beta & 0     &\! \cdots \!& 0 & 0 &  & \beta & 0 &\! \cdots \!& 0 & 0              \\
       0            &   0      & 0            &  0             & \tilde \alpha & 0 & 0           &\! \cdots \!& 0 & 0 &  & 0 & 0 &\! \cdots \!& 0 & 0      \\
       \gamma &   0      & 0            &  \gamma  & \gamma      & \gamma & 0 &\! \cdots \!& 0 & 0 &  & \gamma & 0 &\! \cdots \!& 0 & 0   \\
       \hline                          
       a_1     & 0 & 0 & a_1 & a_1 & 0     & 0  &  & 0 & 0 &  & a_1  & a_1 &\! \cdots \!& a_1 & a_1   \\       
       0         & 0 & 0 & 0     & 0     & a_1 & 0  &  & 0 & 0 &  & 0      & 0     &  & 0     & 0   \\ 
       \vdots & \vdots   & \vdots  & \vdots   & \vdots        &        &     &\! \ddots \!&    &    &  &         &        &\! \ddots \!&        &      \\          
       0         & 0 & 0 & 0     & 0     & 0     & 0  &  & 0 & 0 &  & 0      & 0     &  & 0     & 0   \\ 
       0         & 0 & 0 & 0     & 0     & 0     & 0  &  & a_1 & 0 &  & 0      & 0     &  & 0     & 0   \\
       \hline 
       \vdots &    &    &        &        &        &     &              &        &        & \!\ddots\!  &         &        &  &        &      \\                 
       \hline
       a_k     & 0 & 0 & a_k & a_k & a_k & a_k&\! \cdots \!& a_k & a_k &  & 0      & 0     & & 0     & 0   \\       
       0         & 0 & 0 & 0     & 0     & 0     & 0    &  & 0     & 0     &  & a_k      & 0     & & 0     & 0   \\ 
       \vdots & \vdots   & \vdots  & \vdots   & \vdots     &        &       &\! \ddots \!&        &        &  &         &        &\! \ddots \!&        &      \\          
       0         & 0 & 0 & 0     & 0     & 0     & 0    &  & 0     & 0     &  & 0      & 0     & & 0     & 0   \\ 
       0         & 0 & 0 & 0     & 0     & 0     & 0    &  & 0 & 0     &  & 0      & 0     & & a_k     & 0   \\
\end{array} \!\!\! \right)
\end{displaymath}
where $b = a + \alpha + \tilde \alpha$. Let $Y_+$ be a renewal system obtained from $X_+$ by a fragmentation of $a$, $\alpha$, $\tilde \alpha$, $\beta$, and $\gamma$. Then the (non-symbolic) adjacency matrix of the left Fischer cover of $Y_+$ is obtained from the matrix $A_+$ above by replacing $a_1, \ldots, a_k$ by $1$, and replacing $a$, $\alpha$, $\tilde \alpha$, $\beta$, and $\gamma$ by positive integers. Let $B_+$ be a matrix obtained in this manner. By doing row and column operations as in the construction that leads to the proof Proposition \ref{prop_non-cyclic_BF}, and by disregarding rows and columns where the only non-zero entry is a diagonal $1$, it follows that
\begin{multline*}
\Id - B_+ \rightsquigarrow \\
\left( \!\!\! \begin{array}{c | c c c c | c c c c  }
       1-b            & -\alpha & 0            & -b -\beta   & -a-\alpha     &  -b          & -b             &\! \cdots \! & -b \\
       \hline
       0               & 1          & -\gamma &  0             & 0                 & 0            & 0              &\! \cdots \! & 0  \\  
       -\beta         & 0          & 1             &  0            & -\beta          & -\beta     & -\beta      &\! \cdots \! & -\beta  \\
       0              & 0          & 0              &  1             & -\tilde \alpha & 0            & 0            &\! \cdots \! & 0 \\
       -\gamma  & 0          & 0              & - \gamma & 1-\gamma     & -\gamma & -\gamma &\! \cdots \! & -\gamma \\
       \hline                          
       -1              & 0         & 0            & -1              & -1                 & 1            & 1-n_2     & \! \cdots \! & 1-n_k \\  
       -1              & 0         & 0            & -1              & -1                 & 1-n_1     & 1            &                  & 1-n_k \\
        \vdots      & \vdots & \vdots    & \vdots       & \vdots          &  \vdots    &               & \! \ddots \! & \vdots \\
        -1             & 0         & 0            & -1              & -1                 & 1-n_1     & 1-n_2     & \! \cdots \! & 1 \\           
\end{array} \!\!\! \right).
\end{multline*}
Add the third row to the first and subtract the first column from columns $4, \ldots , k+4$ as in the proof of Lemma \ref{lem_entropy_cyclic} to obtain
\begin{multline*}
\Id - B_+ \rightsquigarrow \\
\left( \begin{array}{c | c c  c c | c c c c  }
       1-b            & -\alpha & 0            & -\beta        & -1     &  -1          & -1             &\! \cdots \! & -1 \\
       \hline
       0               & 1          & -\gamma &  0             & 0                  &             &               &       &   \\  
       -\beta         & 0          & 1             &  \beta             & 0           &             &               &       &  \\
       0              & 0          & 0              &  1             & -\tilde \alpha &             &               &       &  \\
       -\gamma  & 0          & 0              &  0               & 1                &             &               &       &  \\
       \hline                          
       -1              &            &               &                  &                    & 2            & 2-n_2     & \cdots      & 2-n_k \\  
       -1              &            &               &                 &                    & 2-n_1     & 2            &                  & 2-n_k \\
        \vdots      &            &               &                 &                    & \vdots     &               & \ddots  &  \vdots\\
        -1             &            &               &                 &                     & 2-n_1     & 2-n_2    & \cdots  & 2 \\           
\end{array}  \right).
\end{multline*}
By choosing the variables $a$, $\alpha$, $\tilde \alpha$, $\beta$, and $\gamma$ as in the proof of Theorem \ref{thm_rs_det_range}, this can be further reduced to 
\begin{displaymath}
\Id - B_+ \rightsquigarrow 
\left( \begin{array}{c | c c c c c }
x         & -1       & -1     & -1     & \cdots & -1 \\
\hline
2x-1    &  0       & -n_2 & -n_3 & \cdots & -n_k \\
0         & -n_1   &  n_2 &  0     &            & 0  \\
0         & -n_1   &  0     & n_3  &            & 0  \\
\vdots & \vdots &        &         & \ddots  & \vdots  \\
0        & -n_1     &  0    & 0      & \cdots  & n_k  \\ 
\end{array}  \right),
\end{displaymath}
where $x \in \Z$ is arbitrary. Assume that $n_i | n_{i-1}$ for $2 \leq i \leq k$. Then 
\begin{displaymath}
\Id - B_+ \rightsquigarrow 
\left( \begin{array}{c c c c c c }
x         & -\sum_{i=1}^k \frac{n_1}{n_i} & -1     & 0      & \cdots & 0 \\
2x-1    &  -(k-1)n_1                               & 0      &  0     & \cdots & 0 \\
0         &  0                                            &  n_2 &  0     &            & 0  \\
0         &  0                                            &  0     & n_3  &            & 0  \\
\vdots & \vdots                                     &         &         & \ddots  & \vdots   \\
0        &    0                                           &  0    & 0      &  \cdots  & n_k  \\ 
\end{array}  \right).
\end{displaymath}
Hence, the determinant is 
\begin{equation}
\label{eq_pos_det+nc_bf_det}
  \det(\Id - B_+) =  n_2 \cdots n_k  \left( (2x-1)\sum_{i=1}^k \frac{n_1}{n_i}   -  x(k-1)n_1 \right),
  \end{equation}
and there exists an abelian group $G$ with at most two generators such that the Bowen-Franks group of the corresponding SFT is $G \oplus \Z / n_3 \Z \oplus \cdots \oplus \Z / n_k \Z$.
For $x = 0$, the determinant is negative and the Bowen-Franks group is $\Z \big/ \big(\sum_{i=1}^k \frac{n_2 n_1}{n_i} \big)\Z \oplus \Z /n_3\Z \oplus \cdots \oplus \Z /n_k\Z$.
\end{example}

The example was motivated by a hope that the presence of the arbitrary $x$ would make it possible to prove that any direct sum of finite abelian groups is the Bowen-Franks group of an SFT renewal system, but the construction is not general enough for this. It does, however, give the first example of SFT renewal systems that simultaneously have positive determinants and non-cyclic Bowen-Franks groups.

\begin{thm}\label{thm_nc_bf_det}
Given $n_1, \ldots, n_k \geq 2$ with $n_i | n_{i-1}$ for $2 \leq i \leq k$ there exist abelian groups $G_\pm$ with at most two generators and SFT renewal systems $\X(L_\pm)$ such that $\BF_+(\X(L_\pm)) = \pm G_\pm \oplus \Z /n_1\Z \oplus \cdots \oplus \Z /n_k \Z$.
\end{thm}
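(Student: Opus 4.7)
The plan is to invoke the construction of Example \ref{ex_pos_det+nc_bf} with two auxiliary parameters prepended to the given sequence. Given $n_1,\ldots,n_k\ge 2$ with $n_i\mid n_{i-1}$, set $\tilde n_1=\tilde n_2 = M$ where $M$ is a fixed positive multiple of $n_1$ (to be specified) and $\tilde n_{i+2}=n_i$ for $1\le i\le k$. The divisibility $\tilde n_i\mid\tilde n_{i-1}$ then holds throughout the extended chain, and since $M\ge 2$ the list $(\tilde n_j)_{j=1}^{k+2}$ is a legitimate input for the construction of Section \ref{sec_nc_bf} and Example \ref{ex_pos_det+nc_bf}. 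Feeding this into the construction produces an SFT renewal system obtained by adding the list $L$ from Example \ref{ex_pos_det} (as in Lemma \ref{lem_addition_Xd}) to $\Xd{\tilde n_1,\ldots,\tilde n_{k+2}}$ and fragmenting; the free integer parameter $x$ of the example is realised by varying the multiplicities $a,\alpha,\tilde\alpha,\beta,\gamma$ exactly as in the proof of Theorem \ref{thm_rs_det_range}.

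The Bowen--Franks group is read off from the reduced matrix displayed in Example \ref{ex_pos_det+nc_bf}, which is block diagonal with a $2\times 2$ top-left block containing $x$ and $2x-1$, and a lower diagonal block $\diag(\tilde n_3,\tilde n_4,\ldots,\tilde n_{k+2})=\diag(n_1,n_2,\ldots,n_k)$. Since $n_i\mid n_{i-1}$, the cokernel of the lower block is precisely $\Z/n_1\Z\oplus\cdots\oplus\Z/n_k\Z$, while the top block contributes a quotient $G_\pm$ of $\Z^2$, hence an abelian group with at most two generators. This yields the claimed direct sum decomposition $G_\pm\oplus\Z/n_1\Z\oplus\cdots\oplus\Z/n_k\Z$.

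For the signs, the determinant formula of Equation \ref{eq_pos_det+nc_bf_det}, adapted to the $(k{+}2)$-parameter setting, takes the form $\det(\Id-B_+)=\tilde n_2\cdots\tilde n_{k+2}\bigl(A\,x-S\bigr)$ where $S=\sum_{i=1}^{k+2}\tilde n_1/\tilde n_i>0$ and $A=2S-(k+1)\tilde n_1$. Taking $x=0$ immediately gives a negative determinant, producing $-G_-\oplus\Z/n_1\Z\oplus\cdots\oplus\Z/n_k\Z$. For a positive determinant one chooses $x$ of the appropriate sign of $A$ and of sufficiently large absolute value, provided $A\neq 0$; here the freedom in selecting the multiplier $M$ (hence the values of $\tilde n_1,\tilde n_2$) is used, since increasing $M$ by a factor while keeping the other $\tilde n_i$ fixed changes $A$ by a non-degenerate affine function of $M$, so only finitely many choices of $M$ can give $A=0$ and any other $M$ works.

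The main technical obstacle is precisely the potential vanishing of the coefficient $A$ for the specific extended sequence; this is where the two extra parameters $\tilde n_1,\tilde n_2$ earn their keep, since without them the constant $x$ would be tied rigidly to $n_1,\ldots,n_k$ with no control. A secondary point requiring care is verifying that the row and column operations performed in Example \ref{ex_pos_det+nc_bf} remain valid in the extended setting and that the resulting list indeed satisfies the hypotheses of Lemma \ref{lem_addition_Xd} (in particular, left-modularity of the underlying list from $R$), but both are essentially bookkeeping, since the constructions in Section \ref{sec_rs_H} and Section \ref{sec_pos_det+nc_bf} were designed to be stable under such extensions.
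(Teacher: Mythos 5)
Your proposal is correct and follows the paper's own route: the printed proof of Theorem \ref{thm_nc_bf_det} is just ``apply Example \ref{ex_pos_det+nc_bf} and vary $x$,'' and your prepending of $\tilde n_1=\tilde n_2=M$ with $n_1\mid M$ is exactly the index shift needed so that the surviving diagonal summands are $\Z/n_1\Z\oplus\cdots\oplus\Z/n_k\Z$ rather than $\Z/n_3\Z\oplus\cdots\oplus\Z/n_k\Z$. You are in fact more careful than the paper's one-line argument, which silently assumes the coefficient of $x$ in Equation \ref{eq_pos_det+nc_bf_det} is nonzero; your use of the freedom in $M$ to guarantee this (the coefficient $4+M(2\sum_i 1/n_i-(k+1))$ is a non-constant affine function of $M$ since $n_i\ge 2$) closes that gap.
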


\begin{proof}
Consider the renewal system from Example \ref{ex_pos_det+nc_bf}. Equation \ref{eq_pos_det+nc_bf_det} shows that no matter the values of the other variables, $x$ can be chosen such that the determinant is either positive or negative.
\end{proof}

\noindent
This is the most general result about non-cyclic groups obtained in the search for the range of the Bowen-Franks invariant. 

\section{Perspectives} \label{sec_rs_perspectives}
The question raised by Adler, and the related question concerning the flow equivalence of renewal systems remain unanswered, and a significant amount of work remains before they can be solved. Indeed, one of the main conclusions of this work must be these questions are very difficult to answer despite the simple formulations.
 While it has not been possible to answer these questions by finding the range of the Bowen-Franks invariant over the set of SFT renewal systems, significant progress has been made in the search: Theorem \ref{thm_rs_det_range} shows that every $k \in \Z$ is the determinant of an SFT renewal system, Proposition \ref{prop_non-cyclic_BF} shows that a large class of non-cyclic groups appear as the Bowen-Franks groups of SFT renewal systems, and Theorem \ref{thm_nc_bf_det} shows that non-cyclic groups and positive determinants can appear simultaneously.
The complexity of the constructions used to derive these results, and the investigation of the class $H$ in Section \ref{sec_rs_entropy} illustrate the problems involved in constructing renewal systems with complicated values of the Bowen-Franks invariant. This is further illustrated by the modest number of SFT renewal systems not flow equivalent to full shifts that came out of the experimental investigation discussed in Appendix \ref{app_programs}.

The main obstruction to constructing SFT renewal systems with a specific value of the Bowen-Franks invariant is seen in the strict structure enforced on the Fischer cover by Proposition \ref{prop_rs_circuit_connected}. This complicates the construction because the connecting edges between circuits in the Fischer cover make it difficult have non-trivial diagonal elements in the Smith normal form of the associated matrices. This is seen in Proposition \ref{prop_non-cyclic_BF} and Theorem \ref{thm_nc_bf_det} where the connecting edges are responsible for the undesired extra terms in the groups.
For an investigation of Adler's original question, it seems interesting to look at this structure in SFTs such as the one given in Example \ref{ex_not_connected}. Circuit connection is not preserved by conjugacy, but if this property implies some deeper property that is preserved, then this might be used to answer the question. However, there is no evidence to suggest that this is the case. 

In this study, the techniques for adding and fragmenting renewal systems introduced in Sections \ref{sec_rs_add} and \ref{sec_rs_fragmentation} have been very useful in the construction of complicated renewal systems from simpler building blocks, and it is reasonable to assume that they can continue to play this role in additional investigations of the range of the Bowen-Franks invariant.

There is little hope that further experimental investigations of randomly generated renewal systems will yield interesting results at this stage. A continued investigation will have to deal with very complicated renewal systems, and the already significant computational difficulties of the experimental approach described in Appendix \ref{app_programs} will grow exponentially with the step of the SFT renewal systems considered.

Proposition \ref{prop_extendable} gives a useful testable condition guaranteeing that a renewal system is an SFT. The condition is necessary for renewal systems with strongly left- or right-bordering words as proved in Proposition \ref{prop_extendable_and_sft}, but not in general, so some SFTs will be missed by a search relying on this test. It would be desirable to have a comparison of the complexity of this algorithm with the complexity of the algorithm based on the subset construction sketched in Section \ref{sec_rs_when_sft}, but these computations have not been done yet. 
In theory, the algorithm based on the subset construction is far superior because it can determine precisely when a renewal system is an SFT, but in a practical investigation of a left Fischer cover with $r$ vertices it will only be possible to check all the words of length $r^2-r$ for small values of $r$. So for practical purposes, this method cannot generally be used to prove that a renewal system is strictly sofic, and in this way, it shares the fundamental problem of the method based on Proposition \ref{prop_extendable}.

One could also try to approach the problem from the other direction since Theorem \ref{thm_restivo_decidable} shows that it is decidable whether an SFT is a renewal system. This would make it possible to test whether SFTs in a class with as yet unrealised values of the Bowen-Franks invariant are in fact renewal systems. However, this is probably not a useful practical approach because of the complexity of the algorithm and the rarity of renewal systems in the class of general irreducible SFTs.

The group $\Z \oplus \Z$ is arguably the least complicated group that has not yet been proved to be the Bowen-Franks group of an SFT renewal system, so an obvious next step in the investigation of the range of the complete invariant would be to attempt to construct an SFT renewal system with this Bowen-Franks group. More generally, a reasonable strategy in the investigation would be to always aim to construct a renewal system with the least complicated combination of group and determinant that is not yet known to be achieved by a renewal system. This may solve the problem either by showing that the range of the Bowen-Franks invariant is the same over the set of SFT renewal systems as it is over the entire class of irreducible SFTs, or by providing insight enough to show that some combination of sign and Bowen-Franks group cannot be achieved by an SFT renewal system.
 
At this point, it is reasonable to expect a conjecture about the solution to Adler's question. Several such conjectures have been formulated during the investigation only to be abandoned later when acquired knowledge failed to support them or simply proved them wrong. Currently, it seems that the solutions could go either way, so the following bold statements are really based on intuition:

\begin{conj}
There exists an irreducible SFT $X$ such that no renewal system is conjugate to $X$.
\end{conj}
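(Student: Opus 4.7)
The plan is to exploit the structural obstruction in Proposition \ref{prop_rs_circuit_connected}: the left Fischer cover of every SFT renewal system must be circuit connected. The immediate difficulty, as the author emphasises right after Example \ref{ex_not_connected}, is that circuit connectedness is a property of a particular labelled graph presentation rather than an intrinsic feature of the shift space, and Fischer covers are only canonical up to the relation induced by conjugacy of shift spaces, not up to labelled graph isomorphism. The first step, and the crux of the whole strategy, is therefore to upgrade the circuit-connectedness constraint into an invariant that is genuinely conjugacy invariant.

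Concretely, I would attempt to translate the conditions in the definition of circuit connectedness into a statement about the dimension group triple $(G_X, G_X^+, \delta_X)$ of the SFT $X$ or about its Krieger dimension module. For a renewal system $\X(L)$, the standard loop graph presentation distinguishes the vertex class of the universal border point $P_0$ together with the finite family of positive elements coming from the loops labelled by words of $L$, and Proposition \ref{prop_rs_circuit_connected} forces every recurrent class in the shift endomorphism to admit a synchronising power that lands in the orbit of $P_0$. The goal of step one is to write down a quantifier pattern of this form that survives dimension-group isomorphism; any such pattern is then automatically a conjugacy invariant of the SFT by Williams's theorem.

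Once such an invariant is in hand, step two is to exhibit a candidate SFT violating it. The natural candidates are the small non-circuit-connected edge shifts with the mildest possible Bowen-Franks data, starting with the two-vertex edge shift $X$ of Figure \ref{fig_edge_shift_without_connections}, whose Bowen-Franks group is realised by renewal systems (Example \ref{ex_0_in_bf}) so that $X$ cannot be ruled out by the complete flow invariant. Supplementing the conjugacy-invariant test with Restivo's decision procedure (Theorem \ref{thm_restivo_decidable}) applied to every irreducible SFT in the database \cite{sft_database} of fixed small size strong-shift-equivalent to $X$ would either produce a conjugate renewal system (refuting the candidate) or circumscribe the search further.

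The main obstacle is the first step. All the standard conjugacy invariants (entropy, zeta function, Bowen-Franks group with sign, shift equivalence class) have either been shown to take every reasonable value on SFT renewal systems or at least have no known structural restriction there, so the invariant must capture something distinctly finer -- essentially the presence, inside the dimension module, of a finite generating family with the synchronising-return property extracted from Proposition \ref{prop_rs_circuit_connected}. If no such dimension-theoretic witness exists, the conjecture is probably false, and the attack should be inverted: enlarge the Hong--Shin construction \cite{hong_shin} and the additive-fragmentation machinery of Section \ref{sec_rs_range} into a general realisation scheme producing, for every irreducible SFT $X$, an SFT renewal system strong shift equivalent to $X$.
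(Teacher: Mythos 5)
You have not proved the statement, and neither does the paper: this is stated as an open \emph{conjecture} at the very end of Chapter \ref{chap_rs}, explicitly flagged by the author as ``based on intuition'' after several earlier conjectures had to be abandoned. There is no proof in the paper to compare against, so the only question is whether your proposal closes the gap on its own. It does not.

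The decisive step in your plan --- upgrading circuit connectedness from a property of one labelled presentation (Proposition \ref{prop_rs_circuit_connected}) into a genuine conjugacy invariant, say via the dimension group or Krieger's dimension module --- is precisely the step you leave unexecuted, and it is exactly the obstruction the paper itself identifies: the remark after Example \ref{ex_not_connected} states that circuit connection ``is not preserved under conjugacy'' and that there is no evidence it implies any deeper preserved property. Without that invariant, your step two collapses: for the candidate edge shift of Figure \ref{fig_edge_shift_without_connections} the paper only says it is \emph{unknown} whether it is conjugate to a circuit connected edge shift (the database search in \cite{sft_database} being merely suggestive), and its Bowen--Franks data is realised by renewal systems, so no known invariant separates it from the class. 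Your proposal is therefore an honest research programme --- essentially the one the author sketches in Section \ref{sec_rs_perspectives} --- but its central lemma is missing, and you yourself concede in the final paragraph that if no dimension-theoretic witness exists the conjecture may simply be false. A conditional strategy whose key hypothesis is unverified cannot be accepted as a proof of an existence statement.
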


\begin{conj}
Every irreducible SFT is flow equivalent to a renewal system.
\end{conj}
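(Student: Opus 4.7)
The plan is to reduce the conjecture, via Franks' theorem (Theorem \ref{thm_franks}) together with Proposition \ref{prop_higher_block}, to the following claim: for every finitely generated abelian group $G$ and every choice of sign $\varepsilon \in \{+,-,0\}$ (with the $0$ case interpreted as in Section \ref{sec_fe_inv}), there exists an SFT renewal system $\X(L)$ with $\BF_+(\X(L)) = \varepsilon G$. The strategy is constructive: build a library of left-modular renewal systems whose Fischer covers have explicitly computable Smith normal forms, and combine them using the addition operation of Section \ref{sec_rs_add} together with fragmentation (Section \ref{sec_rs_fragmentation}).

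First I would refine the block construction of Example \ref{ex_pos_det+nc_bf} so that the arbitrary integer $x$ appearing in Equation \ref{eq_pos_det+nc_bf_det} can be replaced by an additional tuple of integer parameters, one for each invariant factor one wishes to prescribe. Concretely, for each $n_i$ in a prescribed list of invariant factors $n_1 \mid n_2 \mid \cdots \mid n_k$ I would attach a modular ``decoration block'' (a fragmented copy of $\Xd{n_1,\ldots,n_k}$ from Section \ref{sec_nc_bf}, or a carefully fragmented variant of $\X(L)$ from Example \ref{ex_pos_det}) to a common universal border point using Proposition \ref{prop_addition_modular}. Because the lists being added are left-modular with disjoint alphabets, the symbolic adjacency matrix of the sum is block-decomposed with a controllable coupling row and column, as in the proof of Theorem \ref{thm_H_classification}. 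This gives direct access to the Smith normal form via the row/column reductions used in Lemma \ref{lem_entropy_cyclic} and Proposition \ref{prop_non-cyclic_BF}.

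Next I would handle free $\Z$ summands separately, using a generalisation of Example \ref{ex_0_in_bf}: one introduces a sub-list whose contribution to $\Id - A$ is a block with a zero invariant factor, and shows that such blocks can be stacked independently under addition of modular renewal systems. Combining all this with the sign-flipping freedom given by Theorem \ref{thm_rs_det_range} (where the determinant is seen to take every integer value, including both signs, through the quadratic polynomial in the fragmentation parameters) should in principle realise every $\varepsilon G$. The final step is to verify that the resulting system is genuinely of finite type; here I would rely on Proposition \ref{prop_extendable_and_sft}, since all the building blocks carry strongly bordering words by construction and this property is preserved under the addition of lists with disjoint alphabets.

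The main obstacle, as noted in Section \ref{sec_rs_perspectives} and reflected in the ``parasitic'' term $\Z/m\Z$ in Proposition \ref{prop_non-cyclic_BF}, is the rigidity forced on the left Fischer cover of any SFT renewal system by circuit connectedness (Proposition \ref{prop_rs_circuit_connected}): the connecting edges from every border point back to the universal border point systematically contribute an extra invariant factor that is coupled, via the determinant, to the prescribed ones. Showing that this coupling can always be cancelled — either by a judicious choice of fragmentation parameters, or by padding with an auxiliary modular block whose sole purpose is to absorb the extra factor — is the hard part, and a careful arithmetic analysis of the reduced matrix analogous to Equation \ref{eq_entropy_block} will be needed. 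If cancellation fails in some residue class, then the conjecture is false and one instead obtains a congruence obstruction to realising certain $\varepsilon G$ by renewal systems, which would be a result of equal interest.
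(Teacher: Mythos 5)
The statement you are trying to prove is stated in the paper as a \emph{conjecture}, not a theorem: the paper has no proof of it, and Section \ref{sec_rs_perspectives} explicitly records that the question remains open. Your proposal does not close that gap. What you have written is a reduction of the conjecture, via Theorem \ref{thm_franks} and the flow-invariance of the SFT property, to the claim that every pair $(\varepsilon, G)$ of sign and finitely generated abelian group is realised as $\BF_+(\X(L))$ for some SFT renewal system --- but this reduction is exactly the strategy the paper itself adopts and fails to complete. The paper's partial results (Theorem \ref{thm_rs_det_range}, Proposition \ref{prop_non-cyclic_BF}, Theorem \ref{thm_nc_bf_det}) realise all determinants with cyclic groups, and certain non-cyclic groups always accompanied by a ``parasitic'' invariant factor $\Z/m\Z$ whose order is arithmetically coupled to the prescribed ones. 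Your proposal identifies this coupling as ``the hard part'' and then defers it to ``a careful arithmetic analysis'' that is never carried out; you even concede that the analysis might instead produce a congruence obstruction refuting the conjecture. A proof cannot end with a fork between the statement and its negation.

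Concretely, the missing step is the realisation of groups such as $\Z \oplus \Z$, or more generally of an arbitrary direct sum of cyclic groups \emph{without} the coupled extra factor. Your suggested remedies --- stacking decoration blocks at a common universal border point via Proposition \ref{prop_addition_modular}, or padding with an auxiliary modular block to ``absorb'' the extra factor --- run directly into the obstruction of Proposition \ref{prop_rs_circuit_connected}: every circuit in the Fischer cover of an SFT renewal system must be connected back to the border points, so each added block contributes its own coupling row and column to $\Id - A$, and the reductions of Theorem \ref{thm_H_classification} show that in the cases analysed so far these couplings collapse the candidate free or independent invariant factors back into a single one determined by the others. No argument is given that this collapse can be avoided for a general target group, and the paper's experimental evidence (Tables \ref{tab_Z} and \ref{tab_nc_bf}) suggests the pattern persists. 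Until that step is supplied, the proposal is a research programme --- essentially the one already laid out in Sections \ref{sec_rs_range} and \ref{sec_rs_perspectives} --- rather than a proof.
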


\appendix
\chapter{Experimental investigation of renewal systems}
\label{app_programs}

This appendix describes the computer experiments used to investigate the flow equivalence problem for renewal systems discussed in Chapter \ref{chap_rs}.
Examples \ref{ex_pos_det_1} and \ref{ex_0_in_bf} rely on these programs to show that certain renewal systems are SFTs with specific forbidden words, but apart from this, the results generated by the programs have only been used as inspiration and none of the other results in Chapter \ref{chap_rs} rely on computations. The experimental investigation was primarily done using a number of programs written in C++ which will be described in the following.

Section \ref{sec_exp_preliminary} gives a description of a preliminary investigation which guided the rest of the experimental process, while Section \ref{sec_exp_programs} contains an overview of all the C++ programs used, 
and a more detailed description of the most important classes and programs in order to explain the general structure and ideas.
However, this is not meant as a proper documentation of the programs, but rather as a broad description with focus on the mathematically interesting features, so only a few details are given about the programming techniques used. The complexities of the algorithms have not been determined, but some comments are given about the parts of the programs where the majority of the computations are carried out.
Section \ref{sec_exp_results} contains the main results of the investigation, most of which have already been discussed in Chapter \ref{chap_rs} where they served as inspiration for the development of the theory. However, some of them are only presented here because they had no natural place in the previous exposition. Finally, the results and methods are discussed in Section \ref{sec_exp_discussion}.

Files with the source code of the programs used are available at: 
\begin{center}
 \url{http://www.math.ku.dk/~rune/renewal/}
\end{center}
\noindent
The programs have been compiled using \texttt{g++} under Mac OS X 10.5.8, but the portability has not been tested, so adjustments may be necessary in order to compile them on other systems. 
In particular, some of the programs execute system commands in order to create directories where logs can be stored, and these commands will only be recognised by operating systems such as Unix and Linux. Additionally, the programs rely on Maple to do linear algebra, so they will only function if this program is available. Please send any questions concerning the programs to \url{rune@math.ku.dk}.

\section{Preliminary investigation}
\label{sec_exp_preliminary}

For the preliminary investigation of the flow equivalence problem for renewal systems, a collection of procedures were constructed in Maple. Given a generating list $L$, these would inductively construct the language and set of forbidden words of the corresponding renewal system $\X(L)$. For each length $n$, the procedures would check whether there were any forbidden words of length $n$ that did not contain any known forbidden word of shorter length as a factor. If a renewal system did not exhibit any such new forbidden words for a number $k$ of steps after the $n$th step, then it would be taken as an indication that $\X(L)$ could be an $n$-step SFT. Since the allowed words of length $n$ were known, it was then straightforward to construct the higher block shift as in Proposition \ref{prop_higher_block} and to compute the Bowen-Franks invariant. 

This method clearly risks to falsely report certain renewal systems as $n$-step SFTs since it has no way to determine whether there are new forbidden words of length greater than $n+k$. This was not as big a problem as it would seem since all higher block shifts constructed in this way turned out to be flow equivalent to full shifts. I.e.\ even by considering a larger class than the one of interest, it was only possible to achieve a small part of the values of the Bowen-Franks invariants achieved by general irreducible SFTs.

A more significant problem was that it took a long time to carry out the $k$ extra steps of the induction used to filter out renewal systems that where strictly sofic or of too high step to manage.
This prevented the investigation of more complicated renewal systems where one might hope to find non-trivial values of the Bowen-Franks invariant. Indeed, results such as Lemma \ref{lem_irr_replace}, Corollary \ref{cor_cyclic_fe_full}, and Theorem \ref{thm_H_classification} suggest that a generating list must be quite complicated in order to generate an SFT renewal system that is not flow equivalent to a full shift. The desire to solve this problem led 
to the proof of Proposition \ref{prop_extendable}.

\section{Methods}
\label{sec_exp_programs}

Based on the insight gained from the preliminary investigation,
it was decided to use the following overall strategy for the investigation of a renewal system $\X(L)$: Construct the allowed words and their minimal partitionings inductively, and check at each length $n$ whether the conditions of Proposition \ref{prop_extendable} are satisfied. If this is the case, then $\X(L)$ is an $n$-step SFT, so the knowledge of the allowed words of length $n$ can be used to construct the higher block shift as in Proposition \ref{prop_higher_block}. Finally, the Bowen-Franks invariant of $\X(L)$ can be found by computing the determinant and Smith normal form of $\Id -A$ for the adjacency matrix $A$ of the higher block shift. 
It was decided to found the investigation on Proposition \ref{prop_extendable} rather than the algorithm based on the subset construction, which is described in Section \ref{sec_rs_when_sft}, since the latter was thought to require too many computations.

To implement this strategy, a number of programs were written in C++, and these will be described in the following. Focus will be on the places where novel algorithms have been constructed, so this will not be a complete documentation of the programs, and many technical details will be omitted. Section \ref{sec_programs_overview} gives a quick overview of all the classes and programs used, while the following sections give more detailed accounts of the most important classes and programs. 

\subsection{Overview over classes and programs}
\label{sec_programs_overview}

In order to study the flow equivalence of renewal systems using the strategy described above, a number of \emph{classes} were used to represent mathematical objects (such as renewal systems and partitionings) in the programs. These classes are designed to store information that defines the corresponding mathematical object (e.g.\ the generating list of a renewal system), and they contain functions that allow natural operations to be carried out on the objects (e.g.\ a function that finds the allowed words of length $n$ in a renewal system).

The classes are defined in so called \emph{header files} which are available at the address given above. The contents of the header files are described in the following list:

\begin{description}
\item[\program{renewalsystem.h}] defines the important classes
used to represent words, partitionings, and renewal systems. They are called \cclass{allowedWord}, \cclass{partitioning}, and \cclass{renewalSystem}, respectively. This file contains the bulk of the code.
\item[\program{adjacencymatrix.h}] defines the class \cclass{adjMatrix} which is used to give a compact representation of the large and sparse integer matrices that appear as the adjacency matrices of the higher block shifts of renewal systems. 
\item[\program{generator.h}] contains the class \cclass{generator} which can be used to represent the generating list of a renewal system. It defines a standard file format that is used by all programs, so that output written by one program can be read by another (the files \program{sft.txt} and \program{sofic.txt}, which are available online, give examples of this format).
\item[\program{collection.h}] defines the class \cclass{collection} which is used to represent a collection of renewal systems as a vector of objects from the class \cclass{generator}. It allows collections to be generated from a file or directly from user input. It contains functions that make it possible to carry out the operations defined in \cclass{renewalsystem.h} on all members at once.
\item[\program{symmetricRS.h}] contains the class \cclass{symmetricRS} which can be used to represent the specific class of renewal systems where the only forbidden words are powers of the individual letters as defined in Section \ref{sec_nc_bf}.
\item[\program{log.h}] contains the class \cclass{logBook} which is used to print information about the progress of a program to a file. This class contains no structures specific to renewal systems, so it could be used to generate logs in other contexts.
\item[\program{option.h}] contains the class \cclass{option} which represents optional arguments given to a program when it is run from the command line. This class contains no structures specific to renewal systems, so it could be used to add options to programs used in other contexts.
\item[\program{tools.h}] contains various general tools used by the other classes (e.g.\ to print all values contained in a list).
\end{description}

Based on these classes, a number of programs have been written in order to allow users to investigate renewal systems. They are available at the address given above and described in the following list:

\begin{description}
\item[\program{irs.cpp} \normalfont{(\textbf{I}nvestigate \textbf{r}enewal \textbf{s}ystem)}] is the main interface which allows  users to investigate renewal systems based on the classes described above. It examines all the renewal systems in a collection and prints the Bowen-Franks invariants of the ones that can be proved to be SFTs. 
\item[\program{reduce.cpp}] takes a generating list $L$ as input and uses the algorithm from the proof of Proposition \ref{prop_irr} to construct an irreducible generating list $M$ such that $\X(L) \FE \X(M)$.
\item[\program{add.cpp}] takes two files containing collections of generating lists $\{L_i \mid 1 \leq i \leq k\}$ and $\{M_j \mid 1 \leq j \leq l\}$ as input and outputs a file with the collection of generating lists $\{L_i \cup M_j \mid 1 \leq i \leq k, 1 \leq j \leq l\}$.
\item[\program{d0.cpp}] uses the class \cclass{symmetricRS} to construct a collection of generating lists for which the renewal systems only have powers of the individual letters as forbidden words as defined in Section \ref{sec_nc_bf}.
\item[\program{rename.cpp}] takes a file containing generating lists and gives each of them a name based on a text-string supplied by the user. 
\item[\program{testGenerator.cpp}] is a tutorial for the file format defined by the class \cclass{generator}.
\end{description}

The following sections will describe the main features of the classes \cclass{allowedWord}, \cclass{partitioning}, \cclass{renewalSystem}, and \cclass{adjMatrix} which contain the core of the experimental setup. The functionality of the programs \program{irs.cpp} and \program{reduce.cpp}, which allow users to use the classes to work experimentally with renewal systems, will also be described, but the remaining classes and programs are less central to the experimental investigation and contain no mathematically interesting features, so they will not be examined further.
              
\subsection{The class \cclass{allowedWord}}

The class \cclass{allowedWord} is used to represent a word $w \in \BB_n(\X(L))$ in the programs, and a key feature of the class is that it can be used to keep track of all the minimal partitionings of $w$. This is important, because the idea is to apply Proposition \ref{prop_extendable}, which relies on  the structure of the partitionings to test whether $\X(L)$ is an SFT.

The most important function in the class \cclass{allowedWord} checks whether $w$ is strongly synchronizing, left-extendable, and/or right-extendable. This is done via a pairwise comparison of all the partitionings of $w$, and it can be a time consuming computation if $w$ is a long word with many partitionings.
These comparisons are skipped if $w$ is known to have a factor that is strongly synchronizing since $w$ is automatically strongly synchronizing, left-, and right-extendable in this case.
In most renewal  systems, this allows a significant reduction of the total number of comparisons needed because a large fraction of the allowed words are strongly synchronizing. 
In the description of the class \cclass{partitioning} below, it is shown how information about the strongly synchronizing factors is passed to  \cclass{allowedWord} when a representation of a new word is created. 

\subsection{The class \cclass{partitioning}}
\label{sec_programs_partitioning}

Let $p = (n_b,[g_1, \ldots,g_k],l)$ be a minimal partitioning of $w \in \BB_l(\X(L))$. In the programs, $p$ is represented by the class \cclass{partitioning} which keeps track of $n_b$, $[g_1, \ldots,g_k]$, $l$, and $w$.

The main feature of the class \cclass{partitioning} is a function which extends $p$ in the following way: If $n_b + l-1 < \sum_{i=1}^k \lvert g_i \rvert$, define a new partitioning $p' = (n_b,[g_1, \ldots,g_k],l+1)$. If $n_b + l-1 = \sum_{i=1}^k \lvert g_i \rvert$, define a new partitioning $p' = (n_b,[g_1, \ldots,g_k,g],l+1)$ for every $g \in L$. 
When this process is carried out for all minimal partitionings of allowed words of length $l$, it will produce all minimal partitionings of allowed words of length $l+1$. In this way, the function allows an inductive construction of the allowed words and their minimal partitionings.

Let $p'$ be a new partitioning obtained by extending $p$ in this way, and let $w' \in \BB_{l+1}(\X(L))$ be the corresponding word.
When $p'$ is first constructed, it is checked whether the corresponding word $w'$ is already known to belong to $\BB_{l+1}(\X(L))$. If so, the partitioning is added to the list of partitionings of $w$. If not, then $w$ is created as a new member of the class \cclass{allowedWord}. The word $w$ is a factor of $w'$, so if $w$ is strongly synchronizing then so is $w'$. Therefore, if $w$ is known to be strongly synchronizing, then this information is passed to the representation $w'$ when it is created. As mentioned in the description of the class \cclass{allowedWord} above, this reduces the number of comparisons needed when checking whether the words of length $l+1$ are left- and/or right-extendable.

\subsection{The class \cclass{renewalSystem}}

In the class \cclass{renewalSystem}, a renewal system $\X(L)$ is naturally represented by the generating list $L$. The class contains a large number of functions that allow the investigation and manipulation of renewal systems, so only the most important of these will be described here.

The main feature of the class \cclass{renewalSystem} is a collection of functions that inductively construct lists of all the allowed words and their minimal partitionings by using the functions in the classes \cclass{partitioning} and \cclass{allowedWord} described above. The class \cclass{renewalSystem} will first find all minimal partitionings of allowed words in $\BB_1(\X(L))$ and represent these using the classes \cclass{partitioning} and \cclass{allowedWord}.
Now the partitionings of words of greater length can be constructed inductively by applying the functions from the class \cclass{partitioning} described above, so assume that all the allowed words of length $n-1$, as well as the minimal partitionings of these words, have been constructed. Then the following steps can be carried out:
\begin{description}
\item[Extend partitionings:] For each minimal partitioning $p$ of length $n-1$, use the functions from the class \cclass{partitioning} to extend $p$. This gives a list of all the minimal partitionings of length $n$, and a list of all the allowed words of length $n$.
\item[Find extendable words:] For each $w \in \BB_n(\X(L))$, use the functions from the class \cclass{allowedWord} to check whether $w$ is strongly synchronizing, left-extendable and/or right-extendable. 
\item[Repeat:] If all words of length $n$ are left-extendable or if they are all right-extendable, then the induction stops. It also stops if the total number of allowed words exceeds a predefined maximum. Otherwise, the previous steps are repeated for $n$.
\end{description}
\noindent
If the induction stops because all words of length $n$ are left-extendable or all right-extendable, then Proposition \ref{prop_extendable} proves that $\X(L)$ is an $n$-step SFT, and the Bowen-Franks invariant is then computed using functions from the class \cclass{adjMatrix} described below. 
If the induction stops because the predefined maximal number of allowed words has been reached, then the investigation of $\X(L)$ is abandoned.

\subsection{The class \cclass{adjMatrix}}

The class \cclass{adjMatrix} is invoked to find the Bowen-Franks invariant when the functions from the class \cclass{renewalSystem} described above have been used to show that $\X(L)$ is an  $n$-step SFT. In this case, the allowed words of length $n$ are known, so it is straightforward to construct the adjacency matrix $A$ of the corresponding higher block shift as in Proposition \ref{prop_higher_block}. The Bowen-Franks invariant of $\X(L)$ can then be computed by computing the Smith normal form and determinant of $\Id -A$.  However, $A$ will generally be a large and sparse matrix, so it is very inefficient to do these computations directly.

The class \cclass{adjMatrix} essentially represents $A$ as list of triples $(i,j,A_{i,j})$ where $A_{i,j} \neq 0$, and this leads to a drastic decrease in the memory needed to store $A$ when $A$ is large and sparse.
The class \cclass{adjMatrix} contains a function that finds all rows which are equal. If row number $r_1, \ldots , r_k$ are equal, then the corresponding vertices in the edge shift $\X_A$ can be merged using a state-amalgamation, and the class \cclass{adjMatrix} contains a function that transforms $A$ into the adjacency matrix of this reduced edge shift. State-amalgamations are conjugacies, so this does not change the value of the Bowen-Franks invariant.  This process is repeated until all rows are different. Generally, this leads to a drastic reduction in the size of the matrix and a corresponding increase in the sizes of the entries. After carrying out this reduction, the matrix can be effectively represented in the normal way and passed to Maple where standard tools are used to compute the determinant and Smith normal form. For large matrices, the reduction is naturally very time consuming, but it greatly increases the size of the matrices that can be treated.

\subsection{The program \program{irs.cpp}}

The program \program{irs.cpp} allows a user to apply the tools in the classes described above to an investigation of a collection of renewal systems.
If the program is executed without arguments, it will prompt the user to input generating lists of renewal systems. If the program is executed with a single argument, then this argument will be assumed to be a file name, and the collection will be generated from the contents of the file. If the program is executed with more than one argument, then the arguments will be treated as the words of a generating list, and the collection will be defined to consist of the corresponding single renewal system.

Each renewal system in the collection is investigated with the functions from the class \cclass{renewalSystem} described above. If the renewal system can be proved to be an SFT, then the adjacency matrix is represented by the class \cclass{adjMatrix}, and the Bowen-Franks invariant is calculated using the functions from \cclass{adjMatrix} as described above. The results of these computations are written to the screen and to a file. If the renewal system cannot be proved to be an SFT, information about the extendable words will be printed instead. The following options may be given to the program:
\begin{description}
\item[-n=\textit{x}:] Sets the maximal number allowed words to $x$. The investigation of a renewal system will be abandoned when the allowed words of length $m$ have been constructed if $| \BB_m(\X(L)) | \geq x$ and the conditions of Proposition \ref{prop_extendable} are not satisfied. The default is 10000.
\item[-i:] Interactive mode. If the preset maximal number of allowed words is reached before the conditions of Proposition \ref{prop_extendable} are satisfied, then the user is asked whether to continue the investigation or not. By default, the program abandons the investigation without prompting when the maximal number of allowed words is reached.
\item[-s:] Makes the program print information about strongly synchronizing, left-extendable, and right-extendable words to allow closer investigation of renewal systems that are thought to be strictly sofic. This can produce a lot of extra output and it is turned off by default.
\item[-f=\textit{x}:] Saves the results in a file named $x$. The default is a file name based on the current date and time.
\end{description}

\noindent
The files \program{sft.txt} and \program{sofic.txt}, which are available at the address given above, provide samples of the output from this program.

\subsection{The program \program{reduce.cpp}}

Given a generating list $L$, the program \program{reduce.cpp} is used to find an irreducible generating list $M$ such that $\X(L) \FE \X(M)$. This is done using a function in the class \cclass{renewalsystem} which implements the algorithm given in the proof of Proposition \ref{prop_irr}. In order to do this, the function uses the tools from the class \cclass{renewalsystem} described above to find all partitionings of all factors of all $w \in L$. With this information, it is elementary to construct a list $I$ which contains every internal word that is not contained in any longer internal word. A new generating list $L'$ is then constructed from $L$ by replacing every occurrence of an internal word $w_i \in I$ by a new symbol $a_i$. This process is repeated until all internal words have length $1$. Finally, the symbols are renamed and the list is sorted to make it easier to compare different irreducible lists output by the program.

\section{Results}
\label{sec_exp_results}

Thousands of randomly generated SFT renewal systems were examined using the methods described in the previous section. The precise number of examined lists is unknown because the lists which could not be proved to generate SFTs have not been counted and because the randomly generated lists were reduced using the program \program{reduce.cpp} and only examined if the reduced form was not already known. 
The file \program{sft.txt}, which is available at the address given above, contains the output generated by \program{irs.cpp} for a collection of $488$ such random irreducible lists generating SFT renewal systems. For a list $L$ which generates an $n$-step SFT with determinant $d$ and Bowen-Franks group $\Z / d_1 \Z \oplus \cdots \oplus  \Z / d_k \Z$, the output has the following format:
\begin{displaymath}
\textrm{Name of renewal system: } L ; n ; d ; [d_1, \ldots, d_k].
\end{displaymath}
The lists are ordered by the sum of the lengths of the generating words.
All of the randomly generated SFT renewal systems examined in this way turned out to have negative determinants and cyclic Bowen-Franks groups, so they were all flow equivalent to full shifts. This was the same pattern as seen in the preliminary investigation even though it was now possible to examine a wider range of renewal systems, so no new information about the range of the Bowen-Franks invariant was gained this way.

The file \program{sofic.txt}, which is available at the address given above, contains the output generated by \program{irs.cpp} for a collection of generating lists for which it was not possible to prove that the conditions of Proposition \ref{prop_extendable} were satisfied. For each list, the maximum length of words that have been examined is shown. Additionally, a list is given showing a sequence of the minimum of the numbers of left-extendable and right-extendable words. This gives the user data to decide whether it is feasible to continue the investigation. There is for instance no indication that this minimum will reach $0$ in any of the lists shown in \program{sofic.txt}.

Next, a less random approach was used in an attempt to find SFT renewal systems with more complicated values of the Bowen-Franks invariant. The goal was to construct SFT renewal systems with specific forbidden words and to find out why it was so hard to obtain positive determinants and non-cyclic groups. This lead to the discovery of the renewal systems with non-cyclic Bowen-Franks groups considered in Section \ref{sec_nc_bf}. The programs described in the previous section were used to examine the renewal systems and to establish hypotheses about the invariant. These hypotheses were then tested using the programs,  before a proof of Proposition \ref{prop_non-cyclic_BF} was formulated. 

It was expected that generating lists with complicated behaviour must exhibit a lot of entanglement (i.e.\ that each word must have many different partitionings), and in order to construct such lists randomly, the number of letters was decreased to two while the maximal number of words used in the generating list was increased. A lengthy investigation of random generating lists constructed in this way lead to the three exotic examples of SFT renewal systems with positive determinants considered in Section \ref{sec_rs_exotic}. The study of these systems lead to the theory of border points, and hence, to the results about the structure of the Fischer covers of sums of modular renewal systems developed in Section \ref{sec_rs_add} and used to construct the composite renewal systems in Sections \ref{sec_rs_entropy} and \ref{sec_rs_range}.

The investigation also lead to a number of SFT renewal systems with Bowen-Franks group $\Z$ (and hence determinant $0$). Note that these renewal systems are not flow equivalent to the full shift with only one symbol, since Theorem \ref{thm_franks} only gives a classification of shifts that are not in the trivial flow-class. All the known renewal systems of this kind are listed in Table \ref{tab_Z}.

\begin{table}
\begin{center}
\begin{tabular}{| l l l l |}
\hline
Generating list & Step & BF-group & Determinant \\
\hline
$\{ a, aba, bab \}$ &  3  &  $\Z$  &  0 \\
$\{ aa, aaa, abb, abbb, baa, baaa, bb, bbb \}$  &  3  &  $\Z$  &  0  \\
$\{ aa, ba, bb, aaa, aba, bbb \}$  &  5  &  $\Z$  &   0 \\
$\{ aa, ab, bb, aaa, bab, bbb \}$  &  5  &  $\Z$  &   0 \\
$\{ ab, bb, aba, bbb, abaa, aabbb \}$  &  8  &  $\Z$  &   0  \\
$\{ aa, aaa, baa, bba, abaa, bbab \}$  &  8  &  $\Z$  &   0 \\
$\{ ab, baa, bba, abba \}$  &  9  &  $\Z$  &  0 \\
$\{ a, bc, bbcbb, cbcbb \}$  &  9  &  $\Z$  &   0 \\
$\{ ab, bba, bbaa, babab, bbaaa \}$  &  9  &  $\Z$  &  0 \\
$\{ aa, ab, aaa, bab, abba, bbab \}$ & 10 & $\Z$ &  0 \\
$\{ ab, bb, aaa, aab, bbb, aaaa, baab \}$ & 10 & $\Z$ &  0  \\
\hline
\end{tabular}
\end{center}
\caption{Renewal systems with Bowen-Franks group $\Z$.}
\label{tab_Z}
\end{table}

In Example \ref{ex_0_in_bf}, it was shown that there exist SFT renewal systems for which the Bowen-Franks group has the form $\Z / n\Z \oplus \Z$. This example was obtained by adding a free letter to the list
\begin{displaymath}  
L = \{ aa, aaa, abb, abbb, baa, baaa, bb, bbb \}
\end{displaymath}
and fragmenting. Another class of interesting Bowen-Franks groups is obtained by considering the renewal systems generated by the disjoint union of $L$ with one or more copies of itself. These renewal systems are listed in Table \ref{tab_nc_bf}.
It is easy to guess how this sequence of Bowen-Franks groups can be continued, but the computations have not been done beyond this step.

\begin{table}
\begin{center}
\begin{tabular}{| l l l l |}
\hline
Generating list & Step & BF-group & Determinant \\
\hline
$L$                               & 3 & $\Z$  &  0 \\
$L \sqcup L$                & 3 & $ \Z / 3\Z \oplus \Z \oplus \Z$ & 0  \\
$L \sqcup L \sqcup L$  & 3 & $ \Z / 5\Z \oplus \Z \oplus \Z \oplus \Z$ & 0  \\
\hline
\end{tabular}
\end{center}
\caption{Renewal systems with generating lists constructed as disjoint unions of $L = \{ aa, aaa, abb, abbb, baa, baaa, bb, bbb \}$ with itself.}
\label{tab_nc_bf}
\end{table}

Addition and fragmentation of renewal systems of the form considered in Section \ref{sec_nc_bf} can be used to produce new renewal systems with non-cyclic Bowen-Franks groups, and such sums have been investigated experimentally, but it has not been possible 
to use this method to construct more general Bowen-Franks groups than the ones obtained in Proposition \ref{prop_non-cyclic_BF}.

\section{Discussion}
\label{sec_exp_discussion}

The first conclusion of the experimental investigation is that it is very hard to randomly construct an SFT renewal system that is not flow equivalent to a full shift. This is surprising, but it gives an indication of why Adler's question is hard to answer.

The computer programs described in Section \ref{sec_exp_programs} have been very useful in the investigation of concrete renewal systems, and many of the main ideas of Chapter \ref{chap_rs} grew out of a desire to understand concrete examples that turned up in the experimental investigation. As mentioned in Section \ref{sec_rs_perspectives}, a reasonable next step in the search for the range of the Bowen-Franks invariant over the set of SFT renewal systems would be to attempt to construct an SFT renewal system with Bowen-Franks group $\Z \oplus \Z$. Here, the programs could be useful for testing candidates.

The programs rely on Proposition \ref{prop_extendable} to check whether a renewal system is an SFT, and since the conditions are sufficient but not necessary, this will fail to detect some SFTs. Proposition \ref{prop_extendable_and_sft} shows that the conditions are necessary for a large class of renewal systems, but it is unknown how well the algorithm performs outside this class, and it would be interesting to examine this in detail.
As mentioned in Section \ref{sec_rs_perspectives}, it would also be interesting to know the complexities of the algorithms and to compare them with algorithms based on the subset construction. 

The class \cclass{adjMatrix} uses a combination of reductions carried out by C++ and linear algebra computations carried out in Maple to investigate the large and sparse integer matrices that appear as the adjacency matrices of the higher block shifts of renewal systems. It would be more ideal to carry out all the computations in C++, but it has not been possible to find a suitable linear algebra package, and it would take a significant amount of time to construct such a package from the bottom.

Depending on the length of the words in $L$, the program \program{reduce.cpp} can take a long time to find the corresponding irreducible list, so for many renewal systems, it is faster to simply investigate the original list using the program \program{irs.cpp}. The reduction is, however, useful if the same renewal system is going to be investigated several times, i.e.\ if it is later going to be used as a term in a sum of renewal systems. Generally, it is also easier to identify the interesting features of the irreducible generating list than of the original list. The generating lists considered in Example \ref{ex_pos_det_1} are, for example, irreducible.

If the programs were remade from the bottom, it would be useful to have them compute the left Fischer cover and the border points instead of the higher block shift. As mentioned in section \ref{sec_rs_lfc_construction}, this is possible once the partitionings of the allowed words are known. This would make it possible to compare the Fischer covers where it is probably easier to gather relevant information than in the higher block shifts. In particular, information about the border points would be useful for constructing more complicated renewal systems from simple building blocks as in the constructions used in Sections \ref{sec_nc_bf} and \ref{sec_pos_det+nc_bf}.

\backmatter
\bibliographystyle{alpha}
\bibliography{../papers/rune}{}

\cleardoublepage
\addcontentsline{toc}{chapter}{Index}
\printindex

\end{document}